\def\@makechapterhead#1{%
  \vspace*{50\p@}%
  {\parindent \z@ \raggedright \normalfont
    \interlinepenalty\@M
    \Huge\bfseries  \thechapter \quad #1\par\nobreak
    \vskip 40\p@
  }}
\setlist[enumerate,1]{label={(\arabic*)}}
\setlist[enumerate,3]{label={(\roman*)}}
\tikzset{dynnode/.style={circle,fill=white,draw,minimum size=0.2cm,inner sep=0pt}}
\renewcommand{\chaptermark}[1]{\markboth{\thechapter \ #1}{}}
\newcommand{\mathintitleydmod}[1]{\texorpdfstring{$\ydmod{}$}{YD}}
\newcommand{\ot}{\otimes}
\newcommand{\matdot}{\mathrm{.}}
\newcommand{\matcom}{\mathrm{,}}
\newcommand{\scr}[1]{\mathcal{#1}}
\renewcommand{\frak}[1]{\mathfrak{#1}}
\newcommand{\ndN}{\mathbb{N}}
\newcommand{\ndQ}{\mathbb{Q}}
\newcommand{\ndR}{\mathbb{R}}
\newcommand{\ndZ}{\mathbb{Z}}
\newcommand{\fK}{\Bbbk}
\newcommand{\brB}{\mathbb{B}}
\newcommand{\SG}{\mathbb{S}}
\newcommand{\id}{\mathrm{id}}
\newcommand{\rat}{\mathrm{rat}}
\newcommand{\Aut}{\mathrm{Aut}}
\newcommand{\Endo}{\mathrm{End}}
\newcommand{\ad}{\mathrm{ad}}
\newcommand{\coad}{\mathrm{coad}}
\newcommand{\incl}{\iota}
\newcommand{\proj}{\pi}
\newcommand{\coinva}[2]{ {{#1}}^{\mathrm{co} \, {#2}}}
\newcommand{\bimu}{\mu}
\newcommand{\bicomu}{\Delta}
\newcommand{\biunit}{\eta}
\newcommand{\bicounit}{\varepsilon}
\newcommand{\antip}{S}
\newcommand{\act}{\nu}
\newcommand{\qact}{\rhd}
\newcommand{\coact}{\delta}
\newcommand{\quact}{\triangleright}
\newcommand{\brd}{c}
\newcommand{\frobelt}{\lambda}
\newcommand{\integralelt}{\Lambda}
\newcommand{\modcat}[2]{ {}_{{#1}}{{#2}} }
\newcommand{\modcatr}[2]{ {{#2}}_{{#1}} }
\newcommand{\comodcat}[2]{ {}^{{#1}}{{#2}}}
\newcommand{\comodcatr}[2]{ {{#2}}^{{#1}} }
\newcommand{\bimodcat}[2]{ {}_{{#1}}{{#2}}_{{#1}} }
\newcommand{\bicomodcat}[2]{ {}^{{#1}}{{#2}}^{{#1} } }
\newcommand{\ydmod}[1]{ \prescript{{#1}}{{#1}}{\scr{YD}} }
\newcommand{\ydcat}[2]{ \prescript{{#1}}{{#1}}{\scr{YD}}( {#2} ) }
\newcommand{\boso}{\#}
\newcommand{\nich}{\scr{B}}
\newcommand{\roots}[1]{\Delta^{{#1}}}
\newcommand{\rootspos}[1]{\roots{{#1}}_{+}}
\newcommand{\bichar}[1]{{\chi^{{#1}}}}
\newcommand{\indchar}[1]{\lambda^{{#1}}}
\newcommand{\projcomp}[1]{\proj^{{#1}}}
\newcommand{\inclcomp}[1]{\incl^{{#1}}}
\newcommand{\refl}{R}
\newcommand{\funCD}{\Omega}
\newcommand{\funCDcoh}{\omega}
\newcommand{\subalgQ}[1]{\fK [{#1}]}
\newcommand{\filtration}{\scr{F}}
\newcommand{\filtcomod}[1]{\filtration^{#1}}
\newcommand{\Sup}[1]{\mathrm{Sup}({#1})}
\newcommand{\Supind}[1]{N_{{#1}}}
\newcommand{\intind}[1]{N^{\circ}_{{#1}}}
\newcommand{\bouind}[1]{N^{\delta}_{{#1}}}
\newcommand{\inter}[1]{{#1}^{\circ}}
\newcommand{\bound}[1]{\delta{#1}}
\newcommand{\genind}[1]{\gamma_{{#1}}}
\newcommand{\gencomp}[1]{{#1}_0}
\newcommand{\nsalg}[1]{{#1}^0}
\newcommand{\ydindfun}[1]{\scr{I}_{{#1}}}
\newcommand{\ydind}[2]{\ydindfun{{#1}}({#2})}
\newcommand{\dynk}{D}
\newcommand{\dynkcatfld}[2]{\frak{D}_{{#1}}^{{#2}}}
\newcommand{\dynkcat}[1]{\dynkcatfld{\fK}{{#1}}}
\newcommand{\extdynk}[1]{\overline{{#1}}}
\newcommand{\shapodiag}[1]{P^{{#1}}}
\newcommand{\shapoendo}{f}
\newcommand{\antishg}{\S}
\newcommand{\shift}[2]{{#2}^{\uparrow {#1}}}
\newcommand{\hiddenmessage}{}
\theoremstyle{plain}
\newtheorem{thm}{Theorem}[section]
\newtheorem*{thm*}{Theorem}
\newtheorem*{classification*}{Classification theorem}
\newtheorem{algo}[thm]{Algorithm}
\newtheorem{lem}[thm]{Lemma}
\newtheorem{cor}[thm]{Corollary}
\newtheorem{prop}[thm]{Proposition}
\theoremstyle{definition}
\newtheorem{notation}[thm]{Notation}
\newtheorem*{notation*}{Notation}
\newtheorem{defi}[thm]{Definition}
\newtheorem{conjecture}[thm]{Conjecture}
\newtheorem{exa}[thm]{Example}
\theoremstyle{remark}
\newtheorem{rema}[thm]{Remark}
\newtheorem*{rema*}{Remark}
\numberwithin{equation}{chapter}
\begin{document}

\title{Yetter-Drinfeld modules over Nichols systems and their reflections}

\date{\today}

\author{
    Kevin Wolf\\
    \textit{\footnotesize{Fachbereich Mathematik und Informatik, Philipps-Universität Marburg}} \\
    \textit{\footnotesize{Hans-Meerwein-Straße 6, 35032 Marburg, Germany}}\\
    \textit{\footnotesize{\href{mailto:wolfke@mathematik.uni-marburg.de}{wolfke@mathematik.uni-marburg.de}}}
}

\maketitle

\vspace*{\fill}
\hfill \phantom{(empty page)} \hfill 
\vspace*{\fill}
\thispagestyle{empty}

\tableofcontents
\setcounter{page}{1}

\chapter*{Acknowledgements}
\chaptermark{Acknowledgements}
\addcontentsline{toc}{chapter}{Acknowledgements}

I would like to express my deepest gratitude for my supervisor Istv\'{a}n Heckenberger, for his mentorship, guidance and for his unconditional willingness to support this work with his vast knowledge and extraordinary prescience.

Moreover, I wish to thank Katharina Schäfer, who gave a significant contribution towards describing the structure of the elements $g_{n,k}$ appearing in \cref{sect_nich_sys_group_braided_shuffles}.

I would also like to thank Hiroyuki Yamane for the invitation to Toyama and for the opportunity to learn about his work, 
as well as for his remarkable hospitality.

Finally I wish to thank the Marburg University Research Academy, for their trust and for aiding and financing my work.

\chapter*{Preface}
\chaptermark{Preface}
\addcontentsline{toc}{chapter}{Preface}

The notion of a Nichols system is quite new and appeared naturally while studying Nichols algebras $\mathcal{B}(\oplus_k V_k)$ of direct sums of Yetter-Drinfeld modules $V_k$ (\cite{HeSch}, sections 13.2-13.3). This notion proved to be useful when looking at reflections of such Nichols algebras (\cite{HeSch}, sections 13.3-13.4) and it helps studying right coideal subalgebras of Nichols algebras (\cite{HeSch}, chapter 14).
Our first goal is to give an equivalent definition of a Nichols system, which does not require as many prerequisites (\cref{sect_nich_sys_definition}). We will study reflections of Nichols systems without constructing the necessary functors explicitly, but rather using only some important properties of such functors (\cref{sect_refl_functors}, \cref{sect_nich_sys_reflections}). This approach differs from the one in \cite{HeSch} and interestingly it turns out, that the only required properties will be about how the functors changes $\ndZ$-gradings, see \Cref{prop_funCD_exist_and_properties}.

We can also use these functors to quite intrinsically construct reflection functors for Yetter-Drinfeld modules over Nichols systems. These functors are new and therefore we will discuss their fundamental properties in detail (\cref{sect_ydmod_nich_sys_definition}, \cref{sect_ydmod_nich_sys_reflections}). Using the roots of a Nichols system, we will obtain some properties about the geometry of the support of Nichols systems and their Yetter-Drinfeld modules, by looking at iterated reflections (\cref{sect_nich_sys_iterated_refl}, \cref{sect_ydmod_nich_sys_iterated_reflections}). 
For example we will show, that if a Yetter-Drinfeld module satisfies specific properties, then any proper subobject must reside in the interior of the Yetter-Drinfeld modules support  (\Cref{prop_real_subobject_lies_in_interior}). These geometric properties will be used in the following sections quite often and allow one to develop an intuition about the discussed assertions.

Having done the necessary preparations our next topic will be the description of the maximal subobject of Yetter-Drinfeld modules over Nichols systems (\cref{sect_ydmod_nich_sys_shapovalov}), which also describes the irreducibility of the modules.
Given a Yetter-Drinfeld module we will find a special morphism, that we name Shapovalov morphism (\Cref{defi_shapovalov_morphism}), whose kernel coincides with the maximal subobject of the Yetter-Drinfeld module (\Cref{prop_shapoendo_max_subobject}). This morphism will prove to be incredibly useful, as it behaves well under reflections (\Cref{prop_kernel_shapo_of_reflection}) and it can even be used to characterize properties about the reflections of the Yetter-Drinfeld module, by looking at how it behaves on specific components of the support of the module (\Cref{cor_reflection_sequence_shapo_equiv}, \Cref{cor_reflection_sequence_shapo_equiv_bound}).

We then calculate an explicit formula of the Shapovalov morphism (\Cref{thm_shapoendo_via_gnk}) in the case where the Nichols system is of group type (\cref{sect_nich_sys_group_main}). The formula relies on some special variant of braided shuffles, described in \cref{sect_nich_sys_group_braided_shuffles}, that unexpectedly have a commuting relation with the braided symmetrizer (\Cref{thm_symmetrizer_commutes_gnk}). 
We will use the formula to calculate the maximal subobject of Yetter-Drinfeld modules over Nichols systems of group type, i.e. the kernel of the corresponding Shapovalov morphism, in the components of degree $2$ (\Cref{prop_ker_shapo_deg2}).
Finally, we will use the results of this chapter to calculate the maximal subobject of some examples of Yetter-Drinfeld modules over Nichols algebras of group type (\cref{sect_nich_sys_group_examples}), namely the Fomin-Kirillov algebras of dimension $12$ and $576$ (\cite{FominKirillovAlgebra}), as well as a $1280$-dimensional Nichols algebra of nonabelian group type given by a quandle of $5$ elements appearing in the list of \cite{Zoo}.

With the explicit formula of the Shapovalov morphism, not much more is needed to ascribe the theory of reflections of Yetter-Drinfeld modules over Nichols systems of diagonal type (\cref{sect_nich_sys_diag_main}) with the reflection theory of Dynkin diagrams (\cref{sect_nich_sys_diag_dynkin}).

\thispagestyle{plain}

Throughout we will also apply and specify the theory to the Yetter-Drinfeld modules over Nichols systems that are obtained by inducing comodules of the Nichols systems (\cref{sect_ydmod_nich_sys_induced_yd_module}), a construction that is reminiscent of Verma modules in the representation theory of Lie algebras. In particular, for Nichols systems of diagonal type we will obtain the result, that such an induced objects irreducibility can be characterized by a polynomial that is given by the positive roots of its Nichols system (\Cref{prop_ydind_refl_admit_equiv}, \Cref{cor_ydind_refl_admit_equiv}). This polynomial (\Cref{defi_shapovalov_determinant}) also appears in \cite{MR1684154}, \cite{MR1938453}, \cite{MR2490263}, but most notably in a similar context in \cite{MR2840165}, and is known as the Shapovalov determinant. While in \cite{MR2840165} a similar result was obtained with a lot of technicality and explicit calculations, here, in a more general context, we will obtain this polynomial from the Shapovalov morphism, which is the reason we chose this name. We will explain how to calculate the roots and the polynomial (\Cref{algo_diag_irred}) and in \cref{sect_nich_sys_diag_examples} we will do so for some examples that appear in \cite{MR2379892} and \cite{MR2500361}.

\chapter{Introduction}

We start in \cref{sect_prelim} by laying a foundation of definitions, notations and fundamentals necessary for this theory. \Cref{prop_irred_graded_ydmodule_o} turned out to be especially important, as it is used throughout this work in multiple occasions. 

Then in \cref{sect_refl_functors} we define and discuss reflection functors of categories of Yetter-Drinfeld modules in general. These are the primary building blocks of the following chapters.

\section{Preliminaries} \label{sect_prelim}
Let $\scr{C}$ be a braided strict monoidal category and let $\fK$ be a field.

\begin{notation*}
	Let $\scr{M}$ be any braided monoidal category. We denote $\brd^{\scr{M}}$ for the braiding of $\scr{M}$.
	For a algebra $A$ in $\scr{M}$ we denote $\biunit_A$ for its unit morphism, $\bimu_A$ for its multiplication and $\modcat{A}{\scr{M}}$, $\modcatr{A}{\scr{M}}$ for the category of left and right $A$-modules, respectively.
	For a coalgebra $C$ in $\scr{M}$ we denote $\bicounit_C$ for its counit, $\bicomu_C$ for its comultiplication and $\comodcat{C}{\scr{M}}$, $\comodcatr{C}{\scr{M}}$ for the category of left and right $C$-comodules, respectively.
	The antipode of a Hopf algebra $H$ in $\scr{M}$ is denoted with $\antip_H$.
	
	Moreover for a algebra $A\in \scr{M}$ and an $A$-module $V$ in $\scr{M}$ we denote $\act_V^A$ for its $A$-action. For a coalgebra $C\in \scr{M}$ and a $C$-comodule $W$ in $\scr{M}$ we denote $\coact_W^C$ for its $C$-coaction. 
\end{notation*}

\begin{defi}
    Let $R$ be a Hopf algebra in $\scr{C}$ and $V \in \scr{C}$. Then $V$ is called a \textbf{left Yetter-Drinfeld module} (abbreviated \textbf{$\ydmod{}$-module}) \textbf{over $R$ in $\scr{C}$}, if $V$ is a left $R$-module, $V$ is a left $R$-comodule and
    \begin{align*}
        & (\bimu_R \ot \act_V^R) (\id_R \ot \brd_{R,R}^{\scr{C}} \ot \id_V) (\bicomu_R \ot \coact_V^R) 
        \\=& (\bimu_R \ot \id_V) (\id_R \ot \brd_{V,R}^{\scr{C}}) (\coact_V^R \act_V^R \ot \id_R) (\id_R \ot \brd_{R,V}^{\scr{C}}) (\bicomu_R \ot \id_V) \matdot
    \end{align*}
    We denote $\ydcat{R}{\scr{C}}$ for the category where objects are left $\ydmod{}$-modules and where morphisms are $R$-module and $R$-comodule morphisms. Additionally if $H$ is a Hopf algebra over $\fK$, then we denote $\ydmod{H} := \ydcat{H}{\scr{M}_{\fK}}$, where $\scr{M}_{\fK}$ is the category of vector spaces over $\fK$.
\end{defi}
\begin{rema}
    Let $R$ be a Hopf algebra in $\scr{C}$. Then $\ydcat{R}{\scr{C}}$ is a braided category with braiding given by
    \begin{align*}
        \brd_{V,W}^{\ydcat{R}{\scr{C}}} := (\act_W^R \ot \id_V) (\id_R \ot \brd_{V,W}^{\scr{C}}) (\coact_V^R \ot \id_W) : V \ot W \rightarrow W \ot V
    \end{align*}
    for objects $V,W \in \ydcat{R}{\scr{C}}$, see \cite{HeSch}, Section 3.4.
\end{rema}

\begin{lem} \label{lem_defining_relation_ydmodule_equivalence}
	Let $R$ be a Hopf algebra in $\scr{C}$ and let $V \in \scr{C}$ be a left $R$-module and a left $R$-comodule. Then $V$ is an object in $\ydcat{R}{\scr{C}}$ if and only if 
	\begin{align*}
		\coact_V^R \act_V^R = &(\bimu_R \ot \act_V^R)(\bimu_R \ot \brd_{R\ot V, R}^{\scr{C}}) (\id_R \ot \brd_{R,R}^{\scr{C}} \ot \id_V \ot \antip_R) \\& (\bicomu_R \ot \brd_{R, R\ot V}^{\scr{C}})(\bicomu_R \ot \coact_V^R) \matdot
	\end{align*}
\end{lem}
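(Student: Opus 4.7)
The plan is to derive each form from the other by adjoining an auxiliary $R$-strand via an extra application of $\bicomu_R$, applying $\antip_R$ to it, multiplying it into the first tensor slot, and then simplifying using the Hopf-algebra axioms $\bimu_R (\id_R \ot \antip_R) \bicomu_R = \biunit_R \bicounit_R = \bimu_R (\antip_R \ot \id_R) \bicomu_R$ together with the counitality $(\id_R \ot \bicounit_R) \bicomu_R = \id_R$. Both assertions are equalities of morphisms $R \ot V \to R \ot V$ in $\scr{C}$, so it suffices to manipulate the two composites in the string-diagram calculus of the braided category.

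For the direction $V \in \ydcat{R}{\scr{C}} \Rightarrow$ formula: starting from the original YD identity, I tensor both sides on the output with an extra $R$-strand, pre-compose with $(\id_R \ot \brd_{R,V}^{\scr{C}})(\bicomu_R \ot \id_V): R \ot V \to R \ot V \ot R$ to introduce the strand as a further comultiplicand of the input $r$, post-compose by $\antip_R$ on this strand, and multiply the strand into the first output $R$-slot. On the left-hand side of the original identity, these manipulations produce exactly the composite displayed in the lemma's formula, after consolidating iterated braidings into $\brd_{R, R \ot V}^{\scr{C}}$ and $\brd_{R \ot V, R}^{\scr{C}}$ via the hexagon axiom. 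On the right-hand side, the two adjacent strands in the first output slot combine through $\bimu_R(\id_R \ot \antip_R)\bicomu_R$ to the counit, which then collapses via counitality of $\bicomu_R$, leaving precisely $\coact_V^R \act_V^R$.

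For the converse direction, I reverse the construction: starting from the formula, I adjoin a spare $R$-strand on the right via one extra $\bicomu_R$, and instead of applying $\antip_R$ I multiply the strand carrying $\antip_R$ into the adjacent output slot. The antipode axiom $\bimu_R(\antip_R \ot \id_R)\bicomu_R = \biunit_R \bicounit_R$, applied on the resulting pair $\antip_R(r_{(3)}) r_{(4)}$, cancels the $\antip_R$, and counitality reduces the composite on one side to the left-hand side and on the other side to the right-hand side of the original YD identity (the latter after sliding the spare strand over $V$ via $\brd_{V,R}^{\scr{C}}$).

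The main obstacle will be the bookkeeping of braidings: since $\scr{C}$ is only braided and not symmetric, every strand move in these manipulations must be justified by naturality of $\brd^{\scr{C}}$ and the hexagon axioms. In particular, the composite braidings $\brd_{R, R \ot V}^{\scr{C}}$ and $\brd_{R \ot V, R}^{\scr{C}}$ appearing in the lemma's formula must be decomposed via the hexagon axioms into products of the elementary braidings $\brd_{R,R}^{\scr{C}}, \brd_{R,V}^{\scr{C}}, \brd_{V,R}^{\scr{C}}$ before the antipode axiom can be applied to the appropriate strands. Once the braidings are carefully tracked, each implication reduces to a short diagrammatic calculation using the Hopf-algebra axioms for $R$.
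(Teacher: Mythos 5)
Your proposal is correct and follows essentially the same route as the paper: the paper also obtains the formula by wrapping the two expressions of the Yetter--Drinfeld morphism with the braiding--antipode--multiplication conjugation $(\bimu_R \ot \id_V)(\id_R \ot \brd_{V,R}^{\scr{C}})(\,\cdot\, \ot \antip_R)(\id_R \ot \brd_{R,V}^{\scr{C}})(\bicomu_R \ot \id_V)$, collapsing one side to $\coact_V^R \act_V^R$ via the antipode axiom and counitality and recognising the other side as the stated composite, with the converse obtained by reversing the manipulation. The braiding bookkeeping you flag (naturality plus the hexagon decompositions of $\brd_{R,R\ot V}^{\scr{C}}$ and $\brd_{R\ot V,R}^{\scr{C}}$) is exactly what the paper leaves implicit.
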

\begin{proof}
	Assume $V$ is an object in $\ydcat{R}{\scr{C}}$. Let
	\begin{align*}
	    f :=& (\bimu_R \ot \id_V) (\id_R \ot \brd_{V,R}^{\scr{C}}) (\coact_V^R \act_V^R \ot \id_R) (\id_R \ot \brd_{R,V}^{\scr{C}}) (\bicomu_R \ot \id_V)
        \\=& (\bimu_R \ot \act_V^R) (\id_R \ot \brd_{R,R}^{\scr{C}} \ot \id_V) (\bicomu_R \ot \coact_V^R) \matdot
	\end{align*}
	Then starting with the left hand side of $f$ and then replacing it with the right hand side we obtain
	\begin{align*}
	    \coact_V^R \act_V^R 
	    =& (\bimu_R \ot \id_V) (\id_R \ot \brd_{V,R}^{\scr{C}}) \left( f \ot \antip_R \right) (\id_R \ot \brd_{R,V}^{\scr{C}}) (\bicomu_R \ot \id_V)
	    \\ =& (\bimu_R \ot \act_V^R)(\bimu_R \ot \brd_{R\ot V, R}^{\scr{C}}) (\id_R \ot \brd_{R,R}^{\scr{C}} \ot \id_V \ot \antip_R) \\& (\bicomu_R \ot \brd_{R, R\ot V}^{\scr{C}})(\bicomu_R \ot \coact_V^R)
	    \matdot
	\end{align*}
	Now conversely if this relation holds for $V$, it is straight forward to prove
	\begin{align*}
	    & (\bimu_R \ot \id_V) (\id_R \ot \brd_{V,R}^{\scr{C}}) (\coact_V^R \act_V^R \ot \id_R) (\id_R \ot \brd_{R,V}^{\scr{C}}) (\bicomu_R \ot \id_V) 
        \\=& (\bimu_R \ot \act_V^R) (\id_R \ot \brd_{R,R}^{\scr{C}} \ot \id_V) (\bicomu_R \ot \coact_V^R) \matcom
	\end{align*}
	and thus $V \in \ydcat{R}{\scr{C}}$.
\end{proof}
\begin{lem} \label{lem_induced_Q_comodule_is_YD_module}
	Let $V \in \ydcat{R}{\scr{C}}$, and let $U \in \scr{C}$ be a sub $R$-comodule of $V$. Then $R \cdot U$ is a subobject of $V$ in $\ydcat{R}{\scr{C}}$.
\end{lem}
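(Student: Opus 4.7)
The plan is to verify the three requirements for $R \cdot U$ to be a subobject in $\ydcat{R}{\scr{C}}$: (a) stability under $\act_V^R$, (b) stability under $\coact_V^R$, and (c) inheritance of the Yetter-Drinfeld compatibility. Here $R \cdot U$ is understood as the image of $\act_V^R \circ (\id_R \ot \incl) \colon R \ot U \rightarrow V$, where $\incl \colon U \hookrightarrow V$ is the canonical inclusion of the sub-comodule.

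Step (a) is immediate from associativity of the action: $\act_V^R \circ (\id_R \ot \act_V^R) = \act_V^R \circ (\bimu_R \ot \id_V)$ restricted to $R \ot R \ot U$ factors through $\act_V^R \circ (\id_R \ot \incl)$, whose image is $R \cdot U$.

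The key step is (b). I would apply \Cref{lem_defining_relation_ydmodule_equivalence}, which rewrites $\coact_V^R \act_V^R$ as a composite in which (reading right-to-left) the only morphism touching the $V$-input before the very last step is $\coact_V^R$ itself; every subsequent map until the final $\bimu_R \ot \act_V^R$ operates only on the $R$-factors (via $\bicomu_R$, the braiding, and $\antip_R$) while the $V$-factor is merely permuted. Restricting the $V$-input to $U$, the initial application of $\coact_V^R$ already lands in $R \ot U$, because $U$ is a sub-$R$-comodule of $V$; all intermediate maps preserve this, so the final $\act_V^R$ is evaluated on an argument lying in $R \ot U$, producing a term in $R \cdot U$. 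Concretely, I would record this by factoring each intermediate morphism through the appropriate subobject $R^{\ot k} \ot U \ot R^{\ot l}$, which is made possible by the sub-comodule hypothesis together with functoriality of the braiding. Hence $\coact_V^R \circ \act_V^R \circ (\id_R \ot \incl)$ factors through $\id_R \ot (\act_V^R \circ (\id_R \ot \incl))$, which means $\coact_V^R$ restricts to a coaction $R \cdot U \rightarrow R \ot (R \cdot U)$.

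For step (c), the defining equation of $\ydcat{R}{\scr{C}}$ is an equality of morphisms with codomain $R \ot V$ that holds for $V$; composing on the right with the inclusion $R \ot (R \cdot U) \hookrightarrow R \ot V$ and on the left with the restricted action and coaction from (a) and (b) shows that the same identity holds on $R \cdot U$, so $R \cdot U$ is itself a $\ydmod{}$-module and the inclusion $R \cdot U \hookrightarrow V$ is a morphism in $\ydcat{R}{\scr{C}}$. The only real obstacle is the bookkeeping in (b): tracking that the $V$-slot in the long composite of \Cref{lem_defining_relation_ydmodule_equivalence} never escapes $U$ before the final action is applied; once this is formalised, (a) and (c) are essentially formal consequences.
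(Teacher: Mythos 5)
Your proposal is correct and takes essentially the same route as the paper: the paper's proof consists of exactly two sentences, observing that $R \cdot U$ is clearly a sub-$R$-module and that \Cref{lem_defining_relation_ydmodule_equivalence} yields that it is a sub-$R$-comodule, which is precisely your steps (a) and (b) spelled out in detail (with your step (c) left implicit in the paper, as the Yetter--Drinfeld relation restricts automatically to any subobject that is both a submodule and a subcomodule).
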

\begin{proof}
	Clearly $R \cdot U$ is a sub $R$-module of $V$. By \Cref{lem_defining_relation_ydmodule_equivalence} we obtain that it is also a sub $R$-comodule of $V$.
\end{proof}

\begin{defi} \label{defi_bosonisation}
    Let $H \in \scr{C}$ and $R \in \ydcat{H}{\scr{C}}$ be Hopf algebras in the specific categories. We denote $R \boso H$ for the Hopf algebra in $\scr{C}$ with unit $\biunit_{R\boso H} := \biunit_R \ot \biunit_H$, counit $\bicounit_{R\boso H} := \bicounit_R \ot \bicounit_H$, multiplication and comultiplication
    \begin{align*}
        \bimu_{R\boso H} :=& (\bimu_R \ot \id_H) (\id_R \ot \act_{R \ot H}^{H}) \matcom \\
        \bicomu_{R\boso H} :=&  (\id_R \ot \coact_{R \ot H}^H) (\bicomu_R \ot \id_H)
    \end{align*}
    and antipode $ \bimu_{R \ot H} ( \biunit_R \bicounit_R \ot \antip_H \ot \antip_R \ot \biunit_H \bicounit_H ) \bicomu_{R \ot H} $.
\end{defi}
\begin{rema}
    To see that $R \boso H$ from \Cref{defi_bosonisation} is indeed a Hopf algebra in $\scr{C}$ refer to \cite{HeSch}, Theorem 3.8.10.
\end{rema}

\begin{prop} \label{prop_boso_is_ydcat}
	Let $H \in \scr{C}$ and $R \in \ydcat{H}{\scr{C}}$ be Hopf algebras in the specific categories. Then the functor
	\begin{align*}
	\ydcat{R}{\ydcat{H}{\scr{C}}} &\rightarrow \ydcat{R \boso H}{\scr{C}} \matcom \\
	\left( \left( V, \act^H, \coact^H \right),\act^R, \coact^R \right) &\mapsto \left(V, \act^R (\id \ot \act^H), (\id \ot \coact^H) \coact^R \right)
	\end{align*}
	and where morphisms are mapped onto oneself, is a braided strict mo\-noidal isomorphism. The inverse is given by the functor
	\begin{align*}
		(V, \act^{R \boso H}, \coact^{R \boso H}) \mapsto \left( \left( V, \act^H, \coact^H \right),\act^R, \coact^R \right)
	\end{align*}
	for all $(V, \act^{R \boso H}, \coact^{R \boso H}) \in \ydcat{R \boso H}{\scr{C}}$, where
	\begin{align*}
		\act^H &= \act^{R \boso H} (\biunit_R \ot \id_H \ot \id_V) \matcom & \coact^H &= (\bicounit_R \ot \id_H \ot \id_V ) \coact^{R \boso H} \matcom 
		\\ \act^R &= \act^{R \boso H} (\id_R \ot \biunit_H \ot \id_V) \matcom & \coact^R &= (\id_R \ot \bicounit_H  \ot \id_V) \coact^{R \boso H}
		\matdot
	\end{align*}
\end{prop}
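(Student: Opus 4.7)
The plan is to verify directly that the two given assignments are mutually inverse braided strict monoidal functors. Since both leave the underlying object in $\scr{C}$ unchanged and act as the identity on morphisms, the content lies entirely in checking that the action/coaction formulas produce the correct module, comodule, YD and braided monoidal structure on both sides.

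First I would handle the forward functor $F$. Given $((V,\act^H,\coact^H),\act^R,\coact^R)$, unitality and counitality of the combined action $\act^R(\id \ot \act^H)$ and coaction $(\id \ot \coact^H)\coact^R$ are immediate from $\biunit_{R\boso H} = \biunit_R \ot \biunit_H$ and $\bicounit_{R\boso H} = \bicounit_R \ot \bicounit_H$. Associativity against $\bimu_{R\boso H}$ unfolds into an equation that uses, besides the separate associativities of $\act^R$ and $\act^H$, the fact that $\act^R$ is a morphism in $\ydcat{H}{\scr{C}}$; this is precisely what allows one to move the $H$-strands past the $R$-strands via the braiding of $\ydcat{H}{\scr{C}}$ and align the two sides. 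Coassociativity against $\bicomu_{R\boso H}$ is handled dually, now using that $\coact^R$ is a morphism in $\ydcat{H}{\scr{C}}$.

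The main obstacle is showing that the YD compatibility for $V$ over $R\boso H$ in $\scr{C}$ is equivalent to the conjunction of the $H$-YD compatibility of $V$ in $\scr{C}$ and the $R$-YD compatibility of $V$ in $\ydcat{H}{\scr{C}}$. I would unfold both sides of the $R\boso H$-YD axiom using the bosonization formulas from \cref{defi_bosonisation}, working with the symmetric form provided by \cref{lem_defining_relation_ydmodule_equivalence}. The key observation is that the braiding of $\ydcat{H}{\scr{C}}$ is built from $\brd^{\scr{C}}$, $\act^H$ and $\coact^H$, so after substituting this into the $R$-YD condition one reproduces exactly the $H$-crossings that appear in the $R\boso H$-YD relation; the remaining terms are absorbed by the $H$-YD identity on $V$. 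This is a long diagrammatic calculation using only naturality of $\brd^{\scr{C}}$ and the Hopf algebra axioms of $R$, $H$ and $R\boso H$, with no conceptual surprises but careful bookkeeping of strands.

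For the inverse functor $G$, restricting $\act^{R\boso H}$ along $\biunit_R \ot \id_H$ and $\id_R \ot \biunit_H$, and dually composing $\coact^{R\boso H}$ with $\bicounit_R \ot \id_H$ and $\id_R \ot \bicounit_H$, yields the individual $H$- and $R$-structures. Verifying that $\act^R, \coact^R$ are morphisms in $\ydcat{H}{\scr{C}}$ and that the $R$-YD relation holds there is extracted from the $R\boso H$-YD axiom by inserting appropriate units or counits on the $H$-factors, i.e.\ the same calculation as in the forward direction, read backwards. The identities $F\circ G = \id$ and $G\circ F = \id$ on objects then follow from the unit axioms: directly for $G \circ F$, and for $F \circ G$ after combining associativity of $\act^{R\boso H}$ with the identity $\bimu_{R\boso H} \circ (\id_R \ot \biunit_H \ot \biunit_R \ot \id_H) = \id_{R \ot H}$ (and its dual for the coaction). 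Preservation of the tensor product and of the braiding reduces, after substituting the defining formulas, to a direct equality between $\brd^{\ydcat{R}{\ydcat{H}{\scr{C}}}}$ and $\brd^{\ydcat{R\boso H}{\scr{C}}}$, finishing the proof.
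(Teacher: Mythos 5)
Your proposal is structurally sound, but it takes a genuinely different route from the paper: the paper does not prove this proposition at all and simply cites \cite{HeSch}, Theorem~3.8.7, whereas you sketch a direct verification that the two assignments are mutually inverse braided strict monoidal functors. Your decomposition of the work is the right one — unitality/counitality from $\biunit_{R\boso H}=\biunit_R\ot\biunit_H$ and $\bicounit_{R\boso H}=\bicounit_R\ot\bicounit_H$, (co)associativity from the $H$-(co)linearity of $\act^R$ and $\coact^R$, the equivalence of the $R\boso H$-YD axiom with the pair of YD axioms over $H$ in $\scr{C}$ and over $R$ in $\ydcat{H}{\scr{C}}$, and the identification of the two braidings after substituting the formula for $\brd^{\ydcat{H}{\scr{C}}}$ into $\brd^{\ydcat{R}{\ydcat{H}{\scr{C}}}}$ — and the ingredients you name (naturality of $\brd^{\scr{C}}$, the Hopf algebra axioms, \Cref{lem_defining_relation_ydmodule_equivalence}) are exactly what the computation uses. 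The trade-off is clear: the citation keeps the exposition short and rests on a reference where the lengthy strand-bookkeeping has been carried out once and for all, while your approach is self-contained but, as written, leaves the central step — the unfolding of the $R\boso H$-YD relation into the two separate YD relations — as an asserted "long diagrammatic calculation" rather than an executed one. As a proof to be included verbatim it would still need that computation written out (or a citation at precisely that point); as a proof plan it is correct and complete.
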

\begin{proof}
	Refer to \cite{HeSch}, Theorem 3.8.7.
\end{proof}

\begin{prop} \label{prop_funCD_induces_wtfunCD}
    Let $\scr{D}$ be a braided strict monoidal category. Moreover let $(\funCD, \funCDcoh) : \scr{C} \rightarrow \scr{D}$ be a braided monoidal functor and let $R \in \scr{C}$ be a Hopf algebra. The induced functor
	\begin{align*}
		(\widetilde{\funCD}, \widetilde{\funCDcoh}) : \ydcat{R}{\scr{C}} \rightarrow \ydcat{\funCD(R)}{\scr{D}}
	\end{align*}
	is braided monoidal. If $\funCD$ is an isomorphism, then so is $\widetilde{\funCD}$.
\end{prop}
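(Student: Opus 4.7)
The plan is to construct $\widetilde{\funCD}$ on objects and morphisms, equip it with the coherence inherited from $\funCDcoh$, verify the braided monoidal axioms, and finally invert the construction using $\funCD^{-1}$. For $V = (V, \act_V^R, \coact_V^R) \in \ydcat{R}{\scr{C}}$ I would set
\begin{align*}
    \widetilde{\act}_V^{\funCD(R)} &:= \funCD(\act_V^R) \circ \funCDcoh_{R,V}, \\
    \widetilde{\coact}_V^{\funCD(R)} &:= \funCDcoh_{R,V}^{-1} \circ \funCD(\coact_V^R),
\end{align*}
where $\funCDcoh_{R,V}$ is invertible since $(\funCD, \funCDcoh)$ is strong, and where $\funCD(R)$ carries a Hopf algebra structure in $\scr{D}$ with multiplication, comultiplication and antipode transported through $\funCDcoh$ (a standard consequence of $\funCD$ being a braided monoidal functor). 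The module and comodule axioms for $\funCD(V)$ follow by applying $\funCD$ to the corresponding axioms for $V$ and inserting $\funCDcoh$ at the appropriate places; the Yetter-Drinfeld compatibility is cleanest to verify via \Cref{lem_defining_relation_ydmodule_equivalence}, whose formulation uses only products, coproducts, the antipode and the braiding, each of which is preserved by $\funCD$ up to $\funCDcoh$. On morphisms I set $\widetilde{\funCD}(f) := \funCD(f)$, which is automatically $\funCD(R)$-linear and colinear by applying $\funCD$ to the original linearity diagrams.

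Next I would take $\widetilde{\funCDcoh}_{V,W} := \funCDcoh_{V,W}$ as the monoidal coherence of $\widetilde{\funCD}$. The core check is that $\funCDcoh_{V,W}$ is a $\funCD(R)$-module and $\funCD(R)$-comodule morphism between the two natural Yetter-Drinfeld structures on $\funCD(V) \ot_{\scr{D}} \funCD(W)$ and on $\funCD(V \ot_{\scr{C}} W)$. Expanding the tensor-product action and coaction (which mix $\bicomu_R$, the braiding and the component structure maps), this reduces to the hexagon axioms and naturality of $(\funCD, \funCDcoh)$ as a braided monoidal functor, together with the Hopf algebra structure of $\funCD(R)$ being defined via $\funCDcoh$. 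The coherence pentagon and unit triangles for $\widetilde{\funCDcoh}$ are immediate from those of $\funCDcoh$.

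For the braided property, I would compare
\begin{align*}
    \widetilde{\funCDcoh}_{W,V} \circ \widetilde{\funCD}(\brd_{V,W}^{\ydcat{R}{\scr{C}}}) \quad \text{and} \quad \brd_{\widetilde{\funCD}(V), \widetilde{\funCD}(W)}^{\ydcat{\funCD(R)}{\scr{D}}} \circ \widetilde{\funCDcoh}_{V,W}
\end{align*}
by unfolding the Yetter-Drinfeld braiding formula from the remark after the Yetter-Drinfeld definition. After expansion both sides become an alternating composition of actions, coactions, coherences and a single occurrence of $\brd^{\scr{C}}$, respectively $\brd^{\scr{D}}$, and they agree because $\funCD$ sends $\brd^{\scr{C}}$ to $\brd^{\scr{D}}$ modulo $\funCDcoh$. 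Finally, if $\funCD$ is an isomorphism, applying the same construction to its inverse produces a functor $\ydcat{\funCD(R)}{\scr{D}} \to \ydcat{R}{\scr{C}}$; since both functors act as $\funCD$ respectively $\funCD^{-1}$ on underlying objects and morphisms, and the transported structure maps invert each other by the invertibility of the coherences, the two are mutually inverse.

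The main obstacle I expect is the bookkeeping in the second step: showing that $\widetilde{\funCDcoh}_{V,W}$ intertwines the tensor-product Yetter-Drinfeld structures requires carefully tracking where $\funCDcoh$ or $\funCDcoh^{-1}$ must be inserted, and which factor of the braiding hexagon is being invoked at each step. Once that diagram chase is settled, the Yetter-Drinfeld axiom for $\widetilde{\funCD}(V)$, the braidedness of $\widetilde{\funCD}$, and the inversion argument are essentially formal applications of the braided monoidal structure of $(\funCD, \funCDcoh)$.
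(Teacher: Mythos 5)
Your proposal is correct and follows essentially the same route as the paper: transport the (co)action through $\funCDcoh$, observe that $\funCDcoh_{X,Y}$ is itself a morphism in $\ydcat{\funCD(R)}{\scr{D}}$ so that $\widetilde{\funCDcoh}:=\funCDcoh$ makes $\widetilde{\funCD}$ monoidal, verify the identity $\funCD(\brd_{X,Y}^{\ydcat{R}{\scr{C}}})\funCDcoh_{X,Y}=\funCDcoh_{Y,X}\brd_{\funCD(X),\funCD(Y)}^{\ydcat{\funCD(R)}{\scr{D}}}$ for braidedness, and invert the construction when $\funCD$ is an isomorphism. The paper merely asserts these steps without the diagram chases you outline, so your write-up is a more detailed version of the same argument.
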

\begin{proof}
	Indeed $\widetilde{\funCD}$ is well defined and for $X,Y \in  \ydcat{R}{\scr{C}}$ canonically $\funCDcoh_{X,Y}$ is a morphism in $\ydcat{\funCD(R)}{\scr{D}}$ and thus $(\widetilde{\funCD}, \widetilde{\funCDcoh}) $ is a monoidal functor. One can than check that
	\begin{align*}
		\funCD( \brd_{X,Y}^{\ydcat{R}{\scr{C}}} ) \funCDcoh_{X,Y} = \funCDcoh_{Y,X} \brd_{\funCD(X), \funCD(Y)}^{\ydcat{\funCD(R)}{\scr{D}}} \matcom
	\end{align*}
	hence $(\widetilde{\funCD}, \widetilde{\funCDcoh}) $ is also a braided functor.
\end{proof}

\begin{defi}
    Let $A$ be an algebra in $\scr{C}$ and $C$ be a coalgebra in $\scr{C}$.
    An \textbf{$A$-bimodule} is a triple $(V,\act_l,\act_r)$, such that $V \in \scr{C}$, $\act_l : A \ot V \rightarrow V$ is a left $A$-module structure for $V$, $\act_r : V \ot A \rightarrow V$ is a right $A$-module structure for $V$ and $\act_r (\act_l \ot \id_A)=\act_l (\id_A \ot \act_r)$. 
    
    Similarly a \textbf{$C$-bicomodule} is a triple $(W,\coact_l,\coact_r)$, such that $W \in \scr{C}$, $\coact_l:W\rightarrow C \ot W$ is a left $C$-comodule structure for $W$, $\coact_r: W \rightarrow W \ot C$ is a right $C$-comodule structure for $W$ and $(\id_C \ot \coact_r)\coact_l=(\coact_l \ot \id_C)\coact_r$. 
    
    The category of $A$-bimodules is denoted $\bimodcat{A}{\scr{C}}$ and the category of $C$-bicomo\-dules is denoted $\bicomodcat{R}{\scr{C}}$.
\end{defi}

\begin{defi}
    Let $R$ be a Hopf algebra in $\scr{C}$.
    For an $R$-bimodule $(V,\act_l, \act_r) \in \bimodcat{R}{\scr{C}}$ and an $R$-bicomodule $(W,\coact_l,\coact_r) \in \bicomodcat{R}{\scr{C}}$ define
    \begin{align*}
		\ad_{V} := & \act_r (\act_l \ot \antip_R) (\id_R \ot \brd^{\scr{C}}_{R,V})(\bicomu_R \ot \id_V) : R\ot V \rightarrow V \matcom
		\\ \coad_{W} := & (\bimu_R \ot \id_W) (\id_R \ot \brd^{\scr{C}}_{W,R}) (\coact_l \ot \antip_R) \coact_r : W \rightarrow R \ot W \matdot
	\end{align*} 
\end{defi}

\begin{prop} \label{prop_ad_coad_functor}
    Let $R$ be a Hopf algebra in $\scr{C}$. The following are well-defined functors, where Morphisms are mapped onto oneself:
    \begin{align*}
        \comodcat{R}{(\bimodcat{R}{\scr{C}} ) } &\rightarrow \ydcat{R}{\scr{C}} \matcom & ((V,\act_l,\act_r),\coact) &\mapsto (V, \ad_{V}, \coact) \matcom
        \\ \modcat{R}{(\bicomodcat{R}{\scr{C}} ) } &\rightarrow \ydcat{R}{\scr{C}} \matcom & ((W,\coact_l,\coact_r),\act) &\mapsto (W, \act, \coad_{W}) \matdot
    \end{align*}
\end{prop}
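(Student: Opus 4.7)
The plan is to check well-definedness of the first functor on objects — that is, that $(V, \ad_V, \coact) \in \ydcat{R}{\scr{C}}$ whenever $((V, \act_l, \act_r), \coact) \in \comodcat{R}{(\bimodcat{R}{\scr{C}})}$ — and then deduce functoriality and the second statement symbolically. Indeed, a morphism in the source category, by definition, commutes with $\act_l$, $\act_r$ and $\coact$; since $\ad_V$ is a composite built only from $\act_l$, $\act_r$ and structure morphisms of $R$, any such morphism automatically commutes with $\ad_V$, so the first functor is well-defined on morphisms. The second functor is handled by the exact dual argument, exchanging the roles of actions and coactions throughout, and replacing \Cref{lem_defining_relation_ydmodule_equivalence} by its coaction variant.

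First I would verify that $\ad_V$ is a unital associative $R$-action on $V$. Unitality follows from $\antip_R \biunit_R = \biunit_R$, naturality of the braiding applied to $\biunit_R : \catunit \to R$, and the unit axioms of $\act_l$ and $\act_r$. Associativity is a standard braided-diagram calculation invoking coassociativity of $\bicomu_R$, the bimodule commutation axiom $\act_r(\act_l \ot \id_R) = \act_l(\id_R \ot \act_r)$, anti-multiplicativity of $\antip_R$ in the braided sense, and the convolution identity $\bimu_R(\id_R \ot \antip_R)\bicomu_R = \biunit_R \bicounit_R$, combined with hexagon and naturality manipulations of $\brd^{\scr{C}}$. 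The coaction $\coact$ is carried over unchanged and therefore remains a left $R$-coaction on $V$.

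For the Yetter-Drinfeld compatibility I would invoke \Cref{lem_defining_relation_ydmodule_equivalence} and reduce both sides of its equivalent form to a common expression. The crucial hypothesis is that $\coact$ is a morphism of $R$-bimodules into $R \ot V$ with its canonical bimodule structure, which yields explicit formulas for $\coact \act_l$ and $\coact \act_r$ as composites of $\bicomu_R$, $\bimu_R$, $\brd^{\scr{C}}$, $\act_l$, $\act_r$ and $\coact$. Substituting these into $\coact\,\ad_V$ and then applying coassociativity, the antipode convolution identity, and naturality of the braiding collapses the result onto the right-hand side appearing in the lemma. The main obstacle is precisely this final verification: although it has a transparent string-diagrammatic proof — essentially the same one as for the adjoint action of an ordinary Hopf algebra on a Hopf bimodule — the number of instances of $\brd^{\scr{C}}$ that appear makes the symbolic manipulation lengthy, and choosing the right intermediate rewrites is what carries the content. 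Once this is done, the dual argument for the second functor introduces no new ingredient.
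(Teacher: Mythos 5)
Your proposal is correct in strategy, but it is worth pointing out that the paper does not prove this proposition at all: its ``proof'' is the single line ``Refer to \cite{HeSch}, Proposition~3.7.8 and Proposition~3.7.9.'' What you have written is, in outline, exactly the content of those cited propositions: (i) functoriality on morphisms is automatic because $\ad_V$ (resp.\ $\coad_W$) is a composite of the structure maps that morphisms in the source category already intertwine; (ii) $\ad_V$ is a unital associative action by the bimodule axiom, (braided) anti-multiplicativity of $\antip_R$, and coassociativity; (iii) the Yetter--Drinfeld compatibility is checked in the equivalent form of \Cref{lem_defining_relation_ydmodule_equivalence}, using that $\coact$ is a bimodule morphism into $R \ot V$ with its codiagonal bimodule structure; (iv) the second functor follows by the dual computation. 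All of the identities you invoke are the right ones, and the argument is the classical ``adjoint action on a Hopf bimodule gives a YD module'' computation transported to a braided category. The one honest caveat, which you yourself flag, is that step (iii) is only described, not carried out: the entire mathematical content of the proposition sits in that one braided string-diagram calculation, so as written this is a correct and complete plan rather than a complete proof. Relative to the paper your route buys self-containedness at the cost of a page of braiding manipulations; the paper buys brevity by outsourcing precisely that page.
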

\begin{proof}
    Refer to \cite{HeSch}, Proposition~3.7.8 and Proposition~3.7.9.
\end{proof}

\begin{rema} \label{rema_prop_ad_coad_functor}
    Let $R$ be a Hopf algebra in $\scr{C}$. Let $V \in \modcat{R}{\scr{C}}$ and $W \in \comodcat{R}{\scr{C}}$. Since $(R,\bimu_R,\bimu_R) \in \bimodcat{R}{\scr{C}}$ and since $(V,\act^R_V,(\id_V \ot \bicounit_R)) \in \bimodcat{R}{\scr{C}}$ we have an $R$-bimodule structure on $R \ot V$, which is an $R$-comodule via $\bicomu_R \ot \id_V$.
    Similarly since $(R,\bicomu_R,\bicomu_R) \in \bicomodcat{R}{\scr{C}}$ and $(W,\coact^R_W,(\id_W \ot \biunit_R))  \in \bicomodcat{R}{\scr{C}}$ we get an $R$-bicomodule structure on $R \ot W$, that is an $R$-module via $\bimu_R \ot \id_W$.
    Hence \Cref{prop_ad_coad_functor} induces functors
    \begin{align*}
        \modcat{R}{\scr{C}}  &\rightarrow \ydcat{R}{\scr{C}} \matcom & (V,\act_l,\act_r) &\mapsto (R\ot V, \ad_{R\ot V}, \bicomu_R \ot \id_V) \matcom
        \\ \comodcat{R}{\scr{C}} &\rightarrow \ydcat{R}{\scr{C}} \matcom & (W,\coact_l,\coact_r) &\mapsto (R\ot W, \bimu_R \ot \id_W, \coad_{R\ot W}) \matcom
    \end{align*}
    where morphisms $f$ are mapped to $\id_R \ot f$ and where
    \begin{align*}
		\ad_{R\ot V} := & 
		(\bimu_R \ot \act^R_V) (\bimu_R \ot \brd_{R\ot V,R}^{\scr{C}}) (\id_R \ot \brd^{\scr{C}}_{R,R} \ot \id_V \ot \antip_R) \\
	&	(\bicomu_R \ot \brd_{R,R \ot V}^{\scr{C}}) (\bicomu_R \ot \id_{R\ot V}) \matcom
		\\ \coad_{R \ot W} := &  (\bimu_R \ot \id_{R\ot V})  (\bimu_R \ot \brd_{R \ot V,R}^{\scr{C}}) (\id_R \ot \brd^{\scr{C}}_{R,R} \ot \id_V \ot \antip_R) 
	\\& (\bicomu_R \ot \brd_{R,R\ot V}^{\scr{C}}) (\bicomu_R \ot \coact^R_V) \matdot
	\end{align*} 
\end{rema}

\begin{defi}
    Let $\Gamma$ be a monoid. An object $X \in \scr{C}$ together with objects $X(\gamma)$, $\gamma \in \Gamma$ is called \textbf{$\Gamma$-graded}, if $X = \oplus_{\gamma \in \Gamma} X(\gamma)$. For two $\Gamma$-graded objects $X,Y \in \scr{C}$ a morphism $f:X\rightarrow Y$ is called \textbf{graded}, if for all $\gamma \in \Gamma$ there exists a morphism $f_\gamma : X(\gamma) \rightarrow Y(\gamma)$, such that the diagram
    \begin{align*}\begin{tikzcd}[ampersand replacement=\&]
	X  \arrow{r}{f} \& 	Y \arrow{d}{\proj_\gamma} \\
	X(\gamma) \arrow{u}{\incl_\gamma} \arrow{r}{f_\gamma} \& Y(\gamma) 
	\end{tikzcd}\end{align*}
    commutes, where $\incl_\gamma : X(\gamma) \rightarrow X$ is the canonical inclusion and where $\proj_\gamma : Y \rightarrow Y(\gamma)$ is the canonical projection.
    
    An algebra $A \in \scr{C}$ is called \textbf{$\Gamma$-graded algebra}, if $A$ is a graded object in $\scr{C}$ such that $\bimu_A$ and $\biunit_A$ are graded morphisms.
    If $A \in \scr{C}$ is a $\Gamma$-graded algebra, then $V \in \modcat{A}{\scr{C}}$ is called \textbf{$\Gamma$-graded $A$-module}, if $V$ is a graded object in $\scr{C}$ such that $\act_V^A$ is a graded morphism.
    
    A coalgebra $C \in \scr{C}$ is called \textbf{$\Gamma$-graded coalgebra}, if $C$ is a graded object in $\scr{C}$ such that $\bicomu_C$ and $\bicounit_C$ are graded morphisms.
    If $C \in \scr{C}$ is a $\Gamma$-graded coalgebra, then $W \in \comodcat{C}{\scr{C}}$ is called \textbf{$\Gamma$-graded $C$-comodule}, if $W$ is a graded object in $\scr{C}$ such that $\coact_W^C$ is a graded morphism.
    
    Moreover a bialgebra $B \in \scr{C}$ is a \textbf{$\Gamma$-graded bialgebra}, if $B$ is both a $\Gamma$-graded algebra and coalgebra. A Hopf algebnra $R \in \scr{C}$ is a \textbf{$\Gamma$-graded Hopf algebra}, if $R$ is a graded bialgebra and $\antip_R$ is graded.
    
    Finally for a $\Gamma$-graded Hopf algebra $R \in \scr{C}$ an object $V \in \ydcat{R}{\scr{C}}$ is called \textbf{$\Gamma$-graded $\ydmod{}$-module}, if $V$ is both a $\Gamma$-graded module and comodule in $\scr{C}$.

\end{defi}
\begin{defi}
    Let $\Gamma$ be a monoid and $V \in \scr{C}$ be a $\Gamma$-graded object.
	For $\Gamma' \subset \Gamma$ let
	\begin{align*}
	    V(\Gamma') := \oplus_{\gamma \in \Gamma'} V(\gamma) \matdot
	\end{align*}
	Moreover let $\Sup{V} := \lbrace \gamma \in \Gamma \, | \, V(\gamma) \ne 0 \rbrace$ denote the \textbf{support of $V$}.
\end{defi}
\begin{rema}
    Let $\Gamma, \Gamma'$ be monoids and let $f:\Gamma \rightarrow \Gamma'$ be a monoidal morphism. If $V \in \scr{C}$ is a $\Gamma$-graded object, then
    $V$ is also a $\Gamma'$-graded object via $V(\gamma)=V(f^{-1}(\gamma))$ for all $\gamma \in f(\Gamma)$ and $V(\gamma)=0$ for all $\gamma \in \Gamma' \setminus f(\Gamma)$.
\end{rema}

The rest of this section assumes that $\scr{C}$ is a subcategory of the category where objects are sets and morphisms are maps between sets.

\begin{defi}
    Let $\Gamma$ be a partial ordered monoid with partial order denoted by $\le$.
    Let $R$ be a $\Gamma$-graded Hopf algebra in $\scr{C}$ and let $V$ be a left $R$-module. We call $V$ a \textbf{rational $R$-module}, if for all $v \in V$ there exists $\gamma_0 \in \Gamma$, such that $R(\gamma) v = 0$ for all $\gamma \in \Gamma$, $\gamma_0 \le \gamma$.
    Moreover we denote $\ydcat{R}{\scr{C}}_{\rat}$ for the category of $\ydmod{}$-modules which are rational $R$-modules.
\end{defi}

\begin{defi} 
	Let $\theta \in \ndN$, $C \in \scr{C}$ be a $\ndN_0^\theta$-graded coalgebra and $Y \in \scr{C}$ a left $C$-comodule. We denote
	\begin{align*}
	     \filtcomod{0}(Y) := {\coact^C_{Y}}^{-1}(C(0) \ot Y)=\lbrace y \in Y \, | \, \coact^C_Y(y) \in C(0) \ot Y \rbrace \matdot
	\end{align*}
\end{defi}

\begin{lem} \label{lem_irred_graded_ydmodule}
    Let $\theta \in \ndN$, $C \in \scr{C}$ be a $\ndN_0^\theta$-graded coalgebra and $Y \in \scr{C}$ a left $C$-comodule. Let $\Gamma$ be an abelian group, such that $\ndZ^\theta \subset \Gamma$. Then
    \begin{enumerate}
        \item $\coact^C_Y (\filtcomod{0}(Y)) \subset C(0) \ot \filtcomod{0}(Y)$.
        \item If $Y \ne 0$, then $\filtcomod{0}(Y) \ne 0$.
        \item $\filtcomod{0}(Y)$ is a sub $C$-comodule of $Y$.
        \item If $Y$ is $\Gamma$-graded $C$-comodule, then $\filtcomod{0}(Y)$ is a graded sub $C$-comodule of $Y$.
    \end{enumerate}
\end{lem}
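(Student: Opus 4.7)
The plan is to treat all four items by elementary element-wise arguments, repeatedly using coassociativity of $\coact^C_Y$ together with the fact that $\bicomu_C$ is graded, so that $\bicomu_C(c) \in \oplus_{\alpha+\beta=\gamma} C(\alpha) \ot C(\beta)$ for $c \in C(\gamma)$ and in particular $\bicomu_C(C(0)) \subset C(0) \ot C(0)$. For any $y \in Y$ I would expand $\coact^C_Y(y) = \sum_i c_i \ot y_i$ with the $c_i$ linearly independent and homogeneous, $c_i \in C(\gamma_i)$, which is always possible by collecting graded components and picking a homogeneous basis of the $C$-span.

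For (1), starting from $y \in \filtcomod{0}(Y)$ so that all $c_i \in C(0)$, the coassociativity identity
\begin{align*}
\sum_i \bicomu_C(c_i) \ot y_i = \sum_i c_i \ot \coact^C_Y(y_i) \matcom
\end{align*}
has left-hand side in $C(0) \ot C(0) \ot Y$. Projecting the middle factor onto $C(\delta)$ for $\delta \ne 0$ annihilates the left-hand side, and linear independence of the $c_i$ forces $(\proj_\delta \ot \id)\coact^C_Y(y_i) = 0$ for every $i$ and every $\delta \ne 0$. Hence $\coact^C_Y(y_i) \in C(0) \ot Y$, so $y_i \in \filtcomod{0}(Y)$ and $\coact^C_Y(y) \in C(0) \ot \filtcomod{0}(Y)$.

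The hard part is (2), where one must manufacture a nonzero element of $\filtcomod{0}(Y)$ starting from an arbitrary $0 \ne y \in Y$. The key idea is to pick $\gamma_*$ to be a maximal element of the finite, nonempty set $\lbrace \gamma_i \rbrace$ with respect to the componentwise partial order on $\ndN_0^\theta$, and to set $I_* := \lbrace i : \gamma_i = \gamma_* \rbrace$. By gradedness of $\bicomu_C$, the projection $(\proj_{\gamma_*} \ot \id)\bicomu_C(c_i)$ is nonzero only when $\gamma_* \le \gamma_i$, which by maximality of $\gamma_*$ forces $i \in I_*$, and for such $i$ it lands in $C(\gamma_*) \ot C(0)$. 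Thus projecting coassociativity first onto $C(\gamma_*) \ot C \ot Y$ and then onto $C(\gamma_*) \ot C(\delta) \ot Y$ for $\delta \ne 0$ kills the left-hand side entirely, and linear independence of $\lbrace c_i : i \in I_* \rbrace$ yields $\coact^C_Y(y_i) \in C(0) \ot Y$ for every $i \in I_*$. Not every such $y_i$ can vanish, because the $C(\gamma_*) \ot Y$-component of $\coact^C_Y(y)$ is exactly $\sum_{i \in I_*} c_i \ot y_i$ and is nonzero by choice of $\gamma_*$ (and the support of $\coact^C_Y(y)$ is nonempty because $y = (\bicounit_C \ot \id)\coact^C_Y(y) \ne 0$).

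Part (3) then follows immediately: $\filtcomod{0}(Y)$ is a subobject of $Y$ in $\scr{C}$ as the preimage of $C(0) \ot Y$ under $\coact^C_Y$, and (1) shows that the restricted coaction lands in $C(0) \ot \filtcomod{0}(Y) \subset C \ot \filtcomod{0}(Y)$, which is exactly the data of a sub $C$-comodule structure. For (4), I would regard $C$ as $\Gamma$-graded via the inclusion $\ndN_0^\theta \subset \ndZ^\theta \subset \Gamma$ and use that $\coact^C_Y$ is $\Gamma$-graded, so that $\coact^C_Y(Y(\gamma)) \subset \oplus_{\alpha+\beta=\gamma} C(\alpha) \ot Y(\beta)$. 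Given $y \in \filtcomod{0}(Y)$ with homogeneous decomposition $y = \sum_\gamma y^\gamma$, the summands $\coact^C_Y(y^\gamma)$ live in pairwise disjoint graded pieces of $C \ot Y$ indexed by $\gamma$, so $\coact^C_Y(y) \in C(0) \ot Y$ forces each $\coact^C_Y(y^\gamma) \in C(0) \ot Y(\gamma)$, whence $y^\gamma \in \filtcomod{0}(Y) \cap Y(\gamma)$ and $\filtcomod{0}(Y) = \oplus_\gamma (\filtcomod{0}(Y) \cap Y(\gamma))$.
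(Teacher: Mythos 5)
Your proof is correct and follows essentially the same strategy as the paper's: for (2) both arguments isolate a maximal-degree component of $\coact^C_Y(y)$ so that gradedness of $\bicomu_C$ forces the second tensor leg of its coproduct into $C(0)$, the only difference being that you take a maximal element of the componentwise partial order on $\ndN_0^\theta$ directly, whereas the paper first collapses the grading to $\ndN_0$ via the total-degree map and takes the top component. Parts (1), (3) and (4) match the paper's argument.
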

\begin{proof}
    (3) is implied by (1). (4) is then given by definition, since $\coact^C_Y$ is a graded map and $C(0) \ot Y$ is a graded subobject of $C \ot Y$ in $\scr{C}$.
    
    (1): By definition we have $\coact^C_Y (\filtcomod{0}(Y)) \subset C(0) \ot Y$. Since $\bicomu_C$ is graded, we have $\bicomu_C(C(0)) \subset C(0) \ot C(0)$. Then
    \begin{align*}
        (\id_{C(0)} \ot \coact^C_Y) \coact^C_Y (\filtcomod{0}(Y)) = (\bicomu_C \ot \id_Y) \coact^C_Y(\filtcomod{0}(Y)) = C(0) \ot C(0) \ot Y \matdot
    \end{align*}
    Applying $\bicounit_C$ yields $\coact^C_Y (\filtcomod{0}(Y)) \subset C(0) \ot \filtcomod{0}(Y)$.
    
    (2): Let $p: \ndZ^\theta \rightarrow \ndZ$, $(n_1,\ldots,n_\theta) \mapsto \sum_{i=1}^{\theta} n_i$.
    We view $C$ as a $\ndN_0$-graded coalgebra via $C(n) = C(p^{-1}(n))$ for all $n \in \ndN_0$.
    Let $0 \ne y \in Y$. Moreover let $n \in \ndN_0$ and let $y_i \in C(i) \ot Y$ for all $0 \le i \le n$, such that $y_n \ne 0$ and $\coact^C_Y(y) = \sum_{i = 1}^n y_i$. Since $\bicomu_C$ is graded, we have
    \begin{align*}
       (\bicomu_C \ot \id_Y) (y_i) \in \oplus_{i_1,i_2 \in \ndN_0}^{i_1+i_2=i} C(i_1) \ot C(i_2) \ot Y
    \end{align*}
    for all $0 \le i \le n$.
    Thus
    \begin{align*}
        & \sum_{i=1}^n (\id_C \ot \coact^C_Y)(y_i) = (\id_C \ot \coact^C_Y) \coact^C_Y (y)
        = (\bicomu_C \ot \id_Y) \coact^C_Y (y)
        \\ =& \sum_{i=0}^n(\bicomu_C \ot \id_Y)(y_i) \in \oplus_{i=1}^n \oplus_{i_1,i_2 \in \ndN_0}^{i_1+i_2 =i} C(i_1) \ot C(i_2) \ot Y
    \end{align*}
    Since by definition $(\id_C \ot \coact^C_Y)(y_k) \in C(k) \ot C \ot Y$ for all $0 \le k \le n$ we conclude
    \begin{align*}
        (\id_C \ot \coact^C_Y)(y_k) \in \oplus_{i=k}^n C(k) \ot C(i-k) \ot Y
    \end{align*}
    In particular $(\id_C \ot \coact^C_Y)(y_n) \in C(n) \ot C(0) \ot Y$. As $y_n \ne 0$, let $m \in \ndN$, $d_1,\ldots,d_m \in C(n)$ be linear independent, $z_1,\ldots,z_m \in Y \setminus \lbrace 0\rbrace $, such that $y_n = \sum_{i=1}^m d_i \ot z_i$. Then by above $\coact^C_Y(z_i) \in C(0) \ot Y$ for all $1\le i \le m$, i.e. $z_1,\ldots,z_m \in \filtcomod{0}(Y)$.
\end{proof}

\begin{defi} \label{defi_well_graded}
    Let $\theta \in \ndN$, $\Gamma$ be an abelian group such that $\ndZ^\theta \subset \Gamma$ and let $R$ be a $\ndN_0^\theta$-graded Hopf algebra in $\scr{C}$. Let $V \in \ydcat{R}{\scr{C}}$ be a $\Gamma$-graded object, where there exists $n_0 \in \Gamma$ such that $V(n_0)$ generates $V$ as an $R$-module. We say $V$ is \textbf{well graded}, if $V(n_0) = \filtcomod{0}(V)$.
\end{defi}

\begin{rema} \label{rema_defi_well_graded}
	In \Cref{defi_well_graded} we have $\Sup{V} \subset n_0 + \ndN_0^\theta$ and thus always $V(n_0) \subset \filtcomod{0}(V)$, since the $R$-coaction $\coact_V^R$ is graded. Moreover if $R$ is connected, i.e. $R(0)$ coincides with the unit object of $\scr{C}$, then
	\begin{align*}
		\filtcomod{0}(V) = \lbrace v \in V \, | \, \coact_V^R(v) = 1 \ot v \rbrace\matcom
	\end{align*}
	that is $\filtcomod{0}(V)$ coincides with the coinvariant elements of $V$.
\end{rema}

\begin{rema}
    Let $\theta \in \ndN$, $\Gamma$ be an abelian group such that $\ndZ^\theta \subset \Gamma$ and let $R$ be a $\ndN_0^\theta$-graded Hopf algebra in $\scr{C}$. Let $V \in \ydcat{R}{\scr{C}}$ be a $\Gamma$-graded object. Then for all $\gamma \in \Gamma$, $V(\gamma) \in \ydcat{R(0)}{\scr{C}}$, where the action is the by $\act^R_V$ induced action and the coaction is the by $\coact^R_V$ projected coaction. This is well defined, since $\act^R_V$ and $\coact^R_V$ are graded.
\end{rema}

\begin{prop} \label{prop_irred_graded_ydmodule_o}
	Let $\theta \in \ndN$, $\Gamma$ be an abelian group such that $\ndZ^\theta \subset \Gamma$ and let $R$ be a $\ndN_0^\theta$-graded Hopf algebra over $\fK$. Let $V \in \ydmod{R}$ be a $\Gamma$-graded object.
	Then the following are equivalent:
	\begin{enumerate}
		\item $V$ is irreducible in $\ydmod{R}$.
		\item $V$ is irreducible as an object of the category of $\Gamma$-graded objects of $\ydmod{R}$.
		\item There exists $n_0 \in \Gamma$ such that \begin{enumerate}
			\item $V(n_0)$ is irreducible in $\ydmod{R(0)}$.
			\item $V(n_0)$ generates $V$ as an $R$-module.
			\item $V$ is well graded.
		\end{enumerate}
	\end{enumerate}
\end{prop}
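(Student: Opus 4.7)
The plan is to prove $(1) \Rightarrow (2) \Rightarrow (3) \Rightarrow (1)$, with the real content in the middle implication.

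The implication $(1) \Rightarrow (2)$ is immediate: every graded subobject is in particular a subobject, so irreducibility in $\ydmod{R}$ trivially restricts to irreducibility in the category of $\Gamma$-graded objects.

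For $(2) \Rightarrow (3)$, I would first apply \Cref{lem_irred_graded_ydmodule} to obtain that $\filtcomod{0}(V)$ is a non-zero graded sub $R$-comodule of $V$. For any $\gamma \in \Sup{\filtcomod{0}(V)}$, a short coassociativity argument (using $\bicomu_R(R(0)) \subset R(0) \ot R(0)$) shows that the homogeneous component $\filtcomod{0}(V)(\gamma) = \filtcomod{0}(V) \cap V(\gamma)$ is still a sub $R$-comodule of $V$, so \Cref{lem_induced_Q_comodule_is_YD_module} produces a graded sub $\ydmod{}$-module $R \cdot \filtcomod{0}(V)(\gamma)$ of $V$. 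By hypothesis (2) this must equal $V$, which forces $\Sup{V} \subset \gamma + \ndN_0^\theta$. The key step is now a degree-concentration argument: if two distinct $\gamma_1, \gamma_2$ both occurred in $\Sup{\filtcomod{0}(V)}$, then $\gamma_1 - \gamma_2$ and $\gamma_2 - \gamma_1$ would both lie in $\ndN_0^\theta$, which is impossible since $\ndN_0^\theta$ is a pointed cone. Hence there is a unique $n_0$ with $\filtcomod{0}(V) = V(n_0)$, giving (b) and (c). For (a), a non-zero sub $\ydmod{}$-module $W \subset V(n_0)$ in $\ydmod{R(0)}$ is automatically a sub $R$-comodule of $V$, because $\coact_V^R$ restricted to $V(n_0) = \filtcomod{0}(V)$ takes values in $R(0) \ot V(n_0)$ and therefore coincides with the $R(0)$-coaction; then $R \cdot W$ is a non-zero graded sub $\ydmod{}$-module of $V$, equal to $V$ by (2), and projecting onto degree $n_0$ yields $R(0) \cdot W = V(n_0)$. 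Since $W$ is $R(0)$-stable, $W = V(n_0)$.

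For $(3) \Rightarrow (1)$, given a non-zero sub $\ydmod{}$-module $W$ of $V$, \Cref{lem_irred_graded_ydmodule} provides $0 \ne \filtcomod{0}(W)$, and directly from the definitions $\filtcomod{0}(W) \subset \filtcomod{0}(V) = V(n_0)$. I would verify that $\filtcomod{0}(W)$ is a sub $\ydmod{}$-module of $V(n_0)$ in $\ydmod{R(0)}$, the only nontrivial point being that the $R(0)$-action preserves $\filtcomod{0}$, which again reduces to the fact that $\coact_V^R$ on $V(n_0)$ lands in $R(0) \ot V(n_0)$. Irreducibility of $V(n_0)$ then forces $\filtcomod{0}(W) = V(n_0)$, and since $V(n_0)$ generates $V$ as an $R$-module we conclude $W \supset R \cdot V(n_0) = V$.

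The main obstacle is the degree-concentration step in $(2) \Rightarrow (3)$: proving that $\filtcomod{0}(V)$ is supported in a single degree essentially requires both the pointedness of $\ndN_0^\theta$ and a careful verification that each homogeneous slice of $\filtcomod{0}(V)$ is stable enough to generate a genuine sub $\ydmod{}$-module. The remaining parts are conceptually routine once one has checked that the $R$-structure on $V(n_0) = \filtcomod{0}(V)$ effectively restricts to an $R(0)$-structure.
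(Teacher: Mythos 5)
Your proposal is correct and follows essentially the same route as the paper's proof: the cycle $(1)\Rightarrow(2)\Rightarrow(3)\Rightarrow(1)$, with $\filtcomod{0}(V)$ analysed via \Cref{lem_irred_graded_ydmodule}, the sub-$\ydmod{}$-modules $R\cdot\filtcomod{0}(V)(\gamma)$ from \Cref{lem_induced_Q_comodule_is_YD_module}, and the same pointed-cone argument forcing the support of $\filtcomod{0}(V)$ into a single degree. The only local difference is that in $(3)\Rightarrow(1)$ the paper verifies $R(0)\cdot\filtcomod{0}(X)\subset\filtcomod{0}(X)$ by an explicit computation with the Yetter--Drinfeld relation, while you deduce it from gradedness and the fact that $\coact_V^R(V(n_0))\subset R(0)\ot V(n_0)$; both are valid.
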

\begin{proof}
	The following is a generalisation of the proof of \cite{HeSch}, Proposition~13.1.2.
	
	Clearly (1)$\implies$(2) holds. (2)$\implies$(3): With \Cref{lem_irred_graded_ydmodule}(2),~(3) and~(4) we obtain that $0\ne \filtcomod{0}(V)$ is a graded sub $R$-comodule of $V$. Let $n_0 \in \Gamma$, such that $\filtcomod{0}(V)(n_0) \ne 0$. By \Cref{lem_irred_graded_ydmodule}(1) we know that 
	\begin{align*}
	\coact^R_V(\filtcomod{0}(V)(n_0)) \subset H(0) \ot \filtcomod{0}(V) \matdot
	\end{align*}
	Since $\coact^R_V$ is graded, this implies $\coact^R_V(\filtcomod{0}(V)(n_0)) \subset H(0) \ot \filtcomod{0}(V)(n_0)$, i.e. that $\filtcomod{0}(V)(n_0)$ is a sub $R$-comodule of $V$. Then \Cref{lem_induced_Q_comodule_is_YD_module} implies that $R \cdot \filtcomod{0}(V)(n_0)$ is a graded subobject of $V$ in $\ydmod{R}$. Thus by (2) we obtain $R \cdot \filtcomod{0}(V)(n_0) = V$, in particular $V(\gamma) = R(\gamma - n_0) \cdot \filtcomod{0}(V)(n_0)$ for all $\gamma \in \Gamma$. This means that $V$ is generated as an $R$-module by $\filtcomod{0}(V)(n_0)$ and that 
	\begin{align*}
	    \Sup{V} \subset n_0 + \ndN_0^\theta \matdot
	\end{align*}
	Morover $n_0 \in \Sup{V}$, as $V(n_0) = R(0) \cdot \filtcomod{0}(V)(n_0) \ne 0$.
	Since $n_0 \in \Gamma$ was an arbitrary element, such that $\filtcomod{0}(V)(n_0)\ne 0$ we can conclude that if $n_1 \in \Gamma$, such that $\filtcomod{0}(V)(n_1)\ne 0$, then 
	$n_1 \in \Sup{V} \subset n_0 + \ndN_0^\theta$ and $n_0 \in n_1 + \ndN_0^\theta$, implyingt $n_0 = n_1$. 
	Hence $\filtcomod{0}(V)(\gamma) = 0$ for all $n_0 \ne \gamma \in \Gamma$ and thus $\filtcomod{0}(V)=\filtcomod{0}(V)(n_0) \subset V(n_0)$. Considering \Cref{rema_defi_well_graded} we conclude $\filtcomod{0}(V)=V(n_0)$, implying (3)(b) and (3)(c). 
	Finally let $0 \ne X$ be a subobject of $V(n_0)$ in $\ydmod{R(0)}$. By (3)(b) We have
	$
	    \coact^R_V(V(n_0)) \subset R(0) \ot V(n_0)
	$,
	i.e. the $R(0)$-comodule structure of $V(n_0)$ coincides with the $R$-comodule structure of $V$ restricted to $V(n_0)$. It follows that $X$ is a sub $R$-comodule of $V$, hence $R \cdot X$ is a subobject of $V$ in $\ydmod{R}$ by \Cref{lem_induced_Q_comodule_is_YD_module}.
    Then $ R \cdot X = V$ by (2). In particular $V(n_0) = R(0) \cdot X \subset X$, i.e. $V(n_0)=X$. This implies (3)(a).
	
	(3)$\implies$(1): Let $0\ne X \subset V$ be a subobject in $\ydmod{R}$.
	Then we get $\filtcomod{0}(X) \ne 0$ by \Cref{lem_irred_graded_ydmodule}(2). 
	By \Cref{lem_irred_graded_ydmodule}(1) we know that $\coact^{R}_V (\filtcomod{0}(X)) \subset R(0) \ot \filtcomod{0}(X)$. Let $x \in \filtcomod{0}(X)$ and $r \in R(0)$. Then by \Cref{lem_defining_relation_ydmodule_equivalence}, since $\bicomu_R$, $\antip_R$ are graded and since $\coact^R_V (x) \in R(0) \ot X$, we obtain
	\begin{align*}
	    \coact^R_V( r \cdot x) = r_{(1)} x_{(-1)} \antip_R ( r_{(3)} ) \ot r_{(2)} \cdot x_{(0)} \in R(0) \ot X \matdot
	\end{align*}
	This implies $R(0) \cdot \filtcomod{0}(X) \subset \filtcomod{0}(X)$, hence $0\ne \filtcomod{0}(X) $ is a subobject of $\filtcomod{0}(V)=V(n_0)$ in $\ydmod{R(0)}$, using (3)(c). Thus $V(n_0)=\filtcomod{0}(X) \subset X$ by (3)(a). With (3)(b) it follows that $X=V$.
\end{proof}

For the rest of this section let $H$ be a Hopf algebra over $\fK$. We view $H$ as $\ndN_0$-graded with the trivial grading $H=H(0)$.

\begin{cor} \label{cor_irred_graded_ydmodule_o}
    Let $\theta \in \ndN$, $\Gamma$ be an abelian group such that $\ndZ^\theta \subset \Gamma$ and let $R$ be a $\ndN_0^\theta$-graded Hopf algebra in $\ydmod{H}$. Let $V \in \ydcat{R}{\ydmod{H}}$ be a $\Gamma$-graded object. Then the following are equivalent:
	\begin{enumerate}
		\item $V$ is irreducible in $\ydcat{R}{\ydmod{H}}$.
		\item $V$ is irreducible as an object of the category of $\Gamma$-graded objects of $\ydcat{R}{\ydmod{H}}$.
		\item There exists $n_0 \in \ndZ$ such that \begin{enumerate}
			\item $V(n_0)$ is irreducible in $\ydmod{R(0)\boso H}$.
			\item $V(n_0)$ generates $V$ as an $R$-module.
			\item $V$ is well graded.
		\end{enumerate}
	\end{enumerate}
\end{cor}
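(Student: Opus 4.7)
The natural plan is to reduce this corollary to \Cref{prop_irred_graded_ydmodule_o} by means of the bosonisation isomorphism of \Cref{prop_boso_is_ydcat}. Under the stated hypothesis, $H$ carries the trivial $\ndN_0$-grading, so tensoring with the $\ndN_0^\theta$-grading on $R$ puts $R \ot H$ in the category of $\ndN_0^\theta$-graded objects. I would first verify that with this grading the structure morphisms of $R \boso H$ from \Cref{defi_bosonisation} are graded: this reduces to the fact that the $H$-action and $H$-coaction on every homogeneous component of $R$ have degree $0$ (because $R$ is a $\ndN_0^\theta$-graded Hopf algebra in $\ydmod{H}$ and $H$ sits in degree $0$). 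Consequently $R \boso H$ is an $\ndN_0^\theta$-graded Hopf algebra over $\fK$ with degree zero part $(R\boso H)(0) = R(0) \boso H$.

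Next I would push the $\Gamma$-grading on $V$ across the isomorphism $\ydcat{R}{\ydmod{H}} \cong \ydmod{R \boso H}$. By \Cref{prop_boso_is_ydcat} the $R \boso H$-action on $V$ equals $\act^R(\id_R \ot \act^H)$ and the coaction equals $(\id_R \ot \coact^H)\coact^R$, and since both $\act^H,\coact^H$ have degree zero while $\act^R,\coact^R$ are graded by hypothesis, the induced $R \boso H$-action and coaction on $V$ are graded. Hence $V \in \ydmod{R \boso H}$ is $\Gamma$-graded and the isomorphism of categories restricts to an isomorphism between the corresponding subcategories of $\Gamma$-graded objects, with morphisms unchanged. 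In particular, subobjects of $V$ on the two sides coincide, so irreducibility in $\ydcat{R}{\ydmod{H}}$ is equivalent to irreducibility in $\ydmod{R\boso H}$, and likewise for the graded variants, giving (1)$\Leftrightarrow$(2) and reducing (1)$\Leftrightarrow$(3) to the corresponding equivalence for $R\boso H$.

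Applying \Cref{prop_irred_graded_ydmodule_o} to $V \in \ydmod{R\boso H}$ then yields directly that (1) is equivalent to the existence of $n_0$ with $V(n_0)$ irreducible in $\ydmod{(R\boso H)(0)} = \ydmod{R(0)\boso H}$, generating $V$ as an $R\boso H$-module, and such that $V$ is well graded. The conditions (3)(a) and (3)(c) translate verbatim; for (3)(b) I would note that $V(n_0)$ generates $V$ as an $R$-module if and only if it generates $V$ as an $R \boso H$-module, because the $R \boso H$-action factors through the $R$-action after the $H$-action, and the $H$-action already preserves $V(n_0)$ (being of degree zero). Well-gradedness transfers since the degree-zero component of the coactions agree: for the $R\boso H$-coaction the projection to the degree-zero part is precisely $(\id_R \ot \coact^H)\coact^R$ landing in $R(0) \ot H \ot V$, which is equivalent to $\coact^R$ landing in $R(0) \ot V$.

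The step I expect to require the most care is the bookkeeping of gradings through the bosonisation, specifically checking that the $H$-structure on homogeneous components of $R$ really is compatible in the way needed to make $R \boso H$ a graded Hopf algebra and to make the induced action and coaction on $V$ graded. Once this is established, everything else is a formal translation through the equivalence of categories, and no new argument beyond \Cref{prop_irred_graded_ydmodule_o} is required.
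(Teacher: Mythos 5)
Your proposal is correct and follows exactly the route the paper takes: the paper's own proof is the one-line reduction via \Cref{prop_boso_is_ydcat}, \Cref{prop_irred_graded_ydmodule_o}, and the identity $(R\boso H)(n)=R(n)\boso H$, and your write-up simply supplies the grading bookkeeping that the paper leaves implicit. The details you check (gradedness of the transferred action and coaction, equivalence of generation over $R$ and over $R\boso H$, and the transfer of well-gradedness) are all accurate.
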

\begin{proof}
	The claim follows with \Cref{prop_boso_is_ydcat}, \Cref{prop_irred_graded_ydmodule_o} and the fact that $(R \boso H)(n) = R(n) \boso H$ for all $n \in \ndN_0^\theta$.
\end{proof}

\begin{defi}
    An algebra $A$ over $\fK$ equipped with a linear form $\frobelt : A \rightarrow  \fK$ is called \textbf{Frobenius algebra with Frobenius element $\frobelt$}, if $\frobelt \bimu_A : A \ot A \rightarrow \fK$ is a non-degenerate pairing.
\end{defi}
\begin{rema}
    Let $A$ be a finite dimensional Frobenius algebra over $\fK$ with Frobenius element $\frobelt$. Then
    \begin{align*}
        \phi: A \rightarrow A^*, \, \phi(a)(b) = \frobelt(ba), \text{ for all } a,b \in A \matcom
    \end{align*}
    is an isomorphism of left $A$-modules, where $A^*$ has $A$-module structure given by $af(b) = f(ba)$ for all $a,b \in A$, $f \in A^*$. In particular $\frobelt=\phi(1)$ is a generator of $A^*$ as an $A$-module.
\end{rema}

\begin{prop} \label{prop_fin_dim_hopf_alg_is_frobenius}
    Let $R$ be a finite-dimensional $\ndN_0$-graded Hopf algebra in $\ydmod{H}$. Let $N \in \ndN_0$, such that $R(N) \ne 0$ and $R(n)=0$ for all $n > N$ and $\proj_N : R \rightarrow R(N)$ be the canonical projection. Moreover let $0\ne \integralelt \in R(N)$.
    Then $R(N)= \fK \integralelt$ and $R$ is a Frobenius algebra with Frobenius element $\frobelt : R \rightarrow \fK$ given by
    \begin{align*}
        \proj_N(r) = \frobelt(r) \integralelt 
    \end{align*}
    for all $r \in R$.
\end{prop}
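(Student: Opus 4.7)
My plan is to identify $\integralelt$ as a left integral of $R$, deduce that $R(N)$ is one-dimensional from the uniqueness of integrals, and then check that the resulting $\frobelt$ is a right integral of $R^*$, from which the (braided) Larson--Sweedler theorem delivers the Frobenius property. I note that the assertion $R(N) = \fK \integralelt$ requires $R$ to be connected, i.e.\ $R(0) = \fK$ (the unit object of $\ydmod{H}$); this is implicit in the Nichols-system context of the paper but would fail, for instance, for $R = \fK[G]$ with trivial grading.

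First I would verify that $\integralelt \in R(N)$ is a left integral, i.e.\ $r \integralelt = \bicounit_R(r) \integralelt$ for all $r \in R$. Decomposing $r = \sum_k r_k$ with $r_k \in R(k)$: graded multiplication gives $r_k \integralelt \in R(k+N) = 0$ for $k > 0$, matching $\bicounit_R(r_k) = 0$; for $r_0 \in R(0) = \fK$ the unit axiom gives $r_0 \integralelt = \bicounit_R(r_0) \integralelt$. Summing, $r \integralelt = \bicounit_R(r) \integralelt$. Since the space of left integrals in a finite-dimensional Hopf algebra in $\ydmod{H}$ is one-dimensional (\cite{HeSch}), and $R(N) \neq 0$, this forces $R(N) = \fK \integralelt$. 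In particular $\frobelt$ is well-defined by $\proj_N(r) = \frobelt(r) \integralelt$, vanishes on $R(n)$ for $n < N$, and satisfies $\frobelt(\integralelt) = 1$.

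Next I would verify that $\frobelt$ is a right integral in $R^*$, i.e.\ $\frobelt(r_{(1)}) r_{(2)} = \frobelt(r) \cdot 1$ for all $r \in R$. Since $\bicomu_R$ is graded, $\frobelt(r_{(1)}) \neq 0$ only when $r_{(1)} \in R(N)$, which forces $r_{(2)} \in R(\deg r - N)$. For $r$ homogeneous of degree $k < N$ both sides vanish. For $r = \integralelt$, the counit axiom together with $R(0) = \fK$ forces the $R(N) \otimes R(0)$-component of $\bicomu_R(\integralelt)$ to be exactly $\integralelt \otimes 1$, while every other summand of $\bicomu_R(\integralelt)$ has first tensor factor in degree strictly less than $N$ and therefore contributes $0$. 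Hence $\frobelt(\integralelt_{(1)}) \integralelt_{(2)} = \frobelt(\integralelt) \cdot 1$, and linearity extends this to all of $R$.

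Finally, by the Larson--Sweedler theorem for Hopf algebras in Yetter--Drinfeld categories (\cite{HeSch}), a non-zero right integral of $R^*$ is a Frobenius form: the pairing $(a, b) \mapsto \frobelt(ab)$ is non-degenerate. Hence $R$ is Frobenius with Frobenius element $\frobelt$. The hardest part of the argument is not the grading bookkeeping but the reliance on the two substantial background results — uniqueness of integrals and Larson--Sweedler in the braided setting — both imported from \cite{HeSch}.
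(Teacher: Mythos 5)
Your argument is sound, but note that the paper does not actually prove this statement at all: its entire proof is the citation ``This is proven in \cite{HeSch}, Theorem 4.4.13(3).'' So you have supplied a real argument where the paper defers wholesale, and your route --- recognize $\integralelt$ as a left integral via the grading, invoke uniqueness of integrals to get $R(N)=\fK\integralelt$, check that $\frobelt$ is a (co)integral on $R$, and conclude non-degeneracy of $(a,b)\mapsto\frobelt(ab)$ from the braided Larson--Sweedler theorem --- is essentially the standard way such a statement is established, and the two background facts you import are themselves available in \cite{HeSch}. Your degree bookkeeping is correct throughout: homogeneous elements of positive degree kill $\integralelt$ on the left because $R(k+N)=0$, and the counit axiom pins the $R(N)\ot R(0)$-component of $\bicomu_R(\integralelt)$ to $\integralelt\ot 1$, which is exactly what makes $\frobelt(r_{(1)})r_{(2)}=\frobelt(r)1$ work. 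Your observation that the statement as printed silently requires $R(0)=\fK$ is also correct and worth making explicit: without connectedness the claim $R(N)=\fK\integralelt$ fails already for $R=\fK[G]$ with trivial grading, and connectedness is indeed a standing hypothesis of the cited Theorem 4.4.13 and holds for every algebra ($Q$, $\subalgQ{N_i}$) to which the paper later applies this proposition. The only caution I would add is that ``the space of left integrals is one-dimensional'' and the Larson--Sweedler theorem in $\ydmod{H}$ are not lighter inputs than the theorem being proved --- in \cite{HeSch} these statements and Theorem 4.4.13 are part of the same circle of results --- so your proof is best read as an honest reduction rather than an independent derivation; as such it is correct.
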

\begin{proof}
    This is proven in \cite{HeSch}, Theorem 4.4.13(3).
\end{proof}

\begin{cor} \label{cor_fin_dim_hopf_alg_is_frobenius}
    Let $R$ be a finite-dimensional $\ndN_0$-graded Hopf algebra in $\ydmod{H}$. Let $N \in \ndN_0$, such that $R(N) \ne 0$ and $R(n)=0$ for all $n > N$ and let $0\ne \integralelt \in R(N)$. Then the following hold:
    \begin{enumerate}
        \item $R(N) = \fK \integralelt$.
        \item For all $0 \ne x \in R$ there exists some $y \in R$, such that $yx=\integralelt$.
        \item $R(N)$ is contained in every non-zero left ideal of $R$.
    \end{enumerate}
\end{cor}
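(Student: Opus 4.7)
Statement (1) is immediate from \Cref{prop_fin_dim_hopf_alg_is_frobenius}. Observe that (2) is the special case of (3) applied to the left ideal $Rx$: if $R(N) = \fK \integralelt \subset Rx$, then some $y \in R$ satisfies $yx = \integralelt$. So it suffices to prove (3), and the plan is to identify $R(N)$ as the socle of the left regular module and then exploit nilpotence of the augmentation ideal $R_+ := \oplus_{n \ge 1} R(n)$.

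Since the Frobenius form $\frobelt$ vanishes outside $R(N)$ (by the defining relation $\proj_N(r) = \frobelt(r)\integralelt$), the non-degenerate pairing $\frobelt \bimu : R \ot R \to \fK$ is homogeneous of degree $-N$, so it restricts to non-degenerate pairings $R(i) \ot R(N-i) \to \fK$ for every $i$. In particular $\dim R(0) = \dim R(N) = 1$, so $R(0) = \fK \cdot 1$. Consequently $R_+$ is a two-sided ideal, and it is nilpotent since $R_+^{N+1} \subset \oplus_{n \ge N+1} R(n) = 0$.

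The key claim is that $\lbrace r \in R \, | \, R_+ r = 0 \rbrace = R(N)$. The inclusion $\supset$ follows by degree counting. For the reverse, let $r$ be a non-zero element killed by $R_+$ with top component $r_m \in R(m)$, and suppose $m < N$. The non-degenerate pairing in bidegree $(N-m, m)$ yields some $s \in R(N-m) \subset R_+$ with $sr_m \ne 0$; the top component of $sr$ is then $sr_m \in R(N) \setminus 0$ (the remaining summands $sr_j$ with $j < m$ lie in strictly lower degree), contradicting $sr = 0$. Hence $r$ has top degree $N$, and stripping off the summand $r_N \in R(N)$ (which is itself killed by $R_+$) and iterating yields $r = r_N \in R(N)$.

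Finally for (3), given a non-zero left ideal $I$ and $0 \ne x \in I$, nilpotence of $R_+$ allows us to choose $k \ge 0$ maximal with $R_+^k x \ne 0$ (with the convention $R_+^0 x := \fK x$). Then $R_+(R_+^k x) = R_+^{k+1} x = 0$, so $0 \ne R_+^k x \subset \lbrace r \, | \, R_+ r = 0 \rbrace = R(N) = \fK \integralelt$; since $R(N)$ is one-dimensional, $R_+^k x = R(N)$, and $R_+^k x \subset Rx \subset I$ yields $R(N) \subset I$. Specializing to $I = Rx$ gives (2). The main hurdle is the identification $\lbrace r \, | \, R_+ r = 0 \rbrace = R(N)$, where the graded non-degeneracy of the Frobenius pairing is used in an essential way.
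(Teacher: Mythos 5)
Your argument is correct, but it runs in the opposite direction from the paper's. The paper proves (2) first and directly: writing $x=\sum_{i=m}^N x_i$ with $x_m\ne 0$ the \emph{lowest} nonzero homogeneous component, non-degeneracy of $\frobelt\bimu_R$ gives $y$ with $\frobelt(yx_m)\ne 0$, and then the single homogeneous component $y_{N-m}$ already does the job, since $y_{N-m}x_i\in R(N-m+i)=0$ for $i>m$; thus $y_{N-m}x=y_{N-m}x_m=\frobelt(yx_m)\integralelt$, and (3) is then an immediate consequence of (1) and (2). You instead prove (3) first, by identifying $R(N)$ as the left annihilator of the augmentation ideal $R_+$ (using the graded non-degeneracy of the Frobenius pairing for the inclusion $\subset$) and then pushing an arbitrary nonzero element of a left ideal into that annihilator via nilpotence of $R_+$; (2) falls out as the special case $I=Rx$. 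Both routes rest on the same graded non-degeneracy; the paper's is shorter and constructive (it exhibits $y$ explicitly as a scalar multiple of one homogeneous component), while yours buys the extra structural fact that $R(N)$ is precisely the socle of the left regular module, at the cost of the intermediate nilpotence and annihilator arguments. One small remark: $R_+$ is a two-sided ideal purely for degree reasons, independently of $\dim R(0)=1$, so that part of your setup is not actually needed.
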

\begin{proof}
By \Cref{prop_fin_dim_hopf_alg_is_frobenius} we obtain (1) and that $R$ is a Frobenius algebra with Frobenius element $\frobelt : R \rightarrow \fK$ given by
    \begin{align*}
        \proj_N(r) = \frobelt(r) \integralelt \matcom
    \end{align*}
for all $r \in R$, where $\proj_N: R \rightarrow R(N)$ is the canonical projection. Let $0 \ne x \in R$ and let $m \in \ndN_0$, $x_i \in R(i)$ for all $m \le i \le N$, such that $x_m \ne 0$ and $x = \sum_{i=m}^N x_i$.
Since $\frobelt \bimu_R$ is non-degenrate, there exists $y \in R$, such that $\frobelt(y x_m) \ne 0$. Let $y_i \in R(i)$ for all $0 \le i \le R$, such that $y=\sum_{i=0}^N y_i$. Then
\begin{align*}
    y_{N-m} x_{m} = \proj_N(y x_m) = \frobelt(y x_m) \integralelt \matdot
\end{align*}
Observe that $y_{N-m} x_i \in R(N-m+i)$, in particular $y_{N-m} x_i=0$ for all $i > m$. Thus
\begin{align*}
    \frac{y_{N-m}}{\frobelt(y x_m)} x = \frac{y_{N-m} x_m}{\frobelt(y x_m)} = \integralelt
\end{align*}
and (2) is proven.
Finally (3) is implied by (1) and (2).
\end{proof}
\section{Reflection functors} \label{sect_refl_functors}

We define reflection functors in \Cref{defi_funCD_induced_funct}. They are constructed by using a functor between the categories of rational Yetter-Drinfeld modules of dual pairs of locally finite $\ndN_0$-graded Hopf algebras (\Cref{prop_funCD_exist_and_properties}), that reverses $\ndZ$-gradings. We will discuss in detail how gradings are preserved when applying these functors.

Let $H$ be a Hopf algebra over some field $\fK$ and denote $\scr{C} := \ydmod{H}$. Let $A,B$ be a dual pair of locally finite $\ndN_0$-graded Hopf algebras in the category $\scr{C}$. 

\begin{prop} \label{prop_funCD_exist_and_properties}
	There is a braided monoidal isomorphism
	\begin{align*}
		(\funCD, \funCDcoh): \ydcat{B}{\scr{C}}_{\rat} \rightarrow \ydcat{A}{\scr{C}}_{\rat} \matcom
	\end{align*}
	such that
	\begin{enumerate}
		\item For each $\ndZ$-graded object $V \in \ydcat{B}{\scr{C}}_{\rat}$ the object $\funCD(V)$ is $\ndZ$-graded with $\funCD(V)(n) = V(-n)$ for all $n \in \ndZ$. In particular $\funCD(V) = V$ as vector spaces.
		\item If $V,V' \in \ydcat{B}{\scr{C}}_{\rat}$ are $\ndZ$-graded and $f:V \rightarrow V'$ is a $\ndZ$-graded morphism, then $\funCD(f) = f$ as linear maps. In particular $\funCD(f): \funCD(V) \rightarrow \funCD(V')$ is a $\ndZ$-graded morphism.
	\end{enumerate}
\end{prop}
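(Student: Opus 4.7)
The plan is to build $\funCD$ by swapping module and comodule structures through the Hopf pairing $\langle \cdot, \cdot \rangle : A \ot B \to \fK$ of the dual pair, and then to check that the grading conditions follow directly. I would first recall the standard duality between rational modules and comodules attached to a dual pair of locally finite Hopf algebras inside $\scr{C}$: a rational left $B$-action $\act : B \ot V \to V$ on $V \in \scr{C}$ corresponds uniquely to an $A$-coaction $\coact : V \to A \ot V$ via the relation $\act(b \ot v) = (\langle \cdot, b \rangle \ot \id_V)\coact(v)$; symmetrically rational $A$-actions correspond to $B$-coactions. For $V \in \ydcat{B}{\scr{C}}_{\rat}$ I define $\funCD(V)$ to have the same underlying object as $V$, but with new $A$-coaction obtained by dualising $\act^B_V$ and new $A$-action obtained by dualising $\coact^B_V$, setting $\funCD(f) := f$ on morphisms.

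The main technical step is verifying that $\funCD(V)$ satisfies the Yetter--Drinfeld axiom over $A$. Using \Cref{lem_defining_relation_ydmodule_equivalence} and pairing both sides of the resulting identity against arbitrary $a \ot a' \in A \ot A$, the Hopf pairing axioms (compatibility with $\bimu$, $\bicomu$ and $\antip$) together with naturality of $\brd^{\scr{C}}$ turn the identity into precisely the YD axiom for the original $B$-action and $B$-coaction on $V$. This is the diagrammatically heaviest part of the argument and I expect it to be the main obstacle, though it is essentially routine once the pairing compatibilities are written out. Next I take the monoidal coherence $\funCDcoh_{V,W} := \id_{V \ot W}$: the tensor actions on $V \ot W$ involve $\bimu$ and $\bicomu$ of $B$, which correspond under the pairing to $\bicomu$ and $\bimu$ of $A$, matching the tensor structures on $\funCD(V) \ot \funCD(W)$. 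The braidings of both Yetter--Drinfeld categories mix one action and one coaction, and swapping each under the pairing gives the same morphism; hence $\funCD$ is braided monoidal. An analogous construction yields a functor $\ydcat{A}{\scr{C}}_{\rat} \to \ydcat{B}{\scr{C}}_{\rat}$, and mutual inverseness follows from involutivity of the structure-swap, giving the isomorphism.

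It remains to check~(1) and~(2). Because $A$ and $B$ form a graded dual pair, the pairing restricts to a non-degenerate pairing $A(n) \ot B(n) \to \fK$ and annihilates $A(r) \ot B(p)$ whenever $r \ne p$. For a $\ndZ$-graded $V \in \ydcat{B}{\scr{C}}_{\rat}$, the gradedness $\coact^B_V(V(n)) \subset \bigoplus_{p \ge 0} B(p) \ot V(n-p)$ therefore forces the dualised $A$-action to send $A(r) \ot V(n)$ into $V(n-r)$, and the gradedness of $\act^B_V$ forces the dualised $A$-coaction to send $V(n)$ into $\bigoplus_{p \ge 0} A(p) \ot V(n+p)$. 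Setting $\funCD(V)(m) := V(-m)$ converts both into graded maps for the $A$-module and $A$-comodule structure on $\funCD(V)$, giving~(1). Claim~(2) is immediate because $\funCD$ acts as the identity on underlying linear maps, so a $\ndZ$-graded morphism $f : V \to V'$ carries $V(n) = \funCD(V)(-n)$ into $V'(n) = \funCD(V')(-n)$ and thus remains $\ndZ$-graded after relabelling.
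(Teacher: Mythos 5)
Your proposal is a reconstruction of the construction that the paper deliberately does not carry out: the paper's entire proof of \Cref{prop_funCD_exist_and_properties} is a citation of \cite{HeSch}, Section 12.3 and Corollary 12.3.6, and the surrounding remark stresses that the functor is to be used as a black box through properties (1) and (2) only. What you describe — converting a rational $B$-action into an $A$-coaction and a $B$-comodule structure into a rational $A$-action via the graded dual pairing, taking the identity on underlying objects and morphisms, and reading off the degree reversal from the fact that the pairing only couples $A(n)$ with $B(n)$ — is indeed the route taken in the cited reference, so your approach is the ``same'' one, just made explicit where the paper outsources it. The grading argument for (1) and the triviality of (2) are correct and are exactly the content the paper actually needs downstream.

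That said, as a proof your text is a roadmap rather than a complete argument, and the places you wave at are precisely where the cited source does its work. In $\scr{C}=\ydmod{H}$ the compatibility axioms of a dual pair of Hopf algebras themselves involve the braiding (and the antipode), so verifying that the dualised structures satisfy the Yetter--Drinfeld condition of \Cref{lem_defining_relation_ydmodule_equivalence} over $A$, that the tensor structures match, and that the braidings correspond is a genuinely lengthy diagrammatic computation, not a routine pairing-against-$a\ot a'$ step; in particular your claim that one may take $\funCDcoh_{V,W}=\id_{V\ot W}$ is an assertion that needs checking against the precise form of the pairing axioms (the order of tensor legs in the dualisation is where braiding twists can appear). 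None of this invalidates your outline, but if the intent is a self-contained proof you would need to either carry out those verifications or, as the paper does, cite \cite{HeSch}, Corollary 12.3.6 for them.
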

\begin{proof}
	The functor discussed in \cite{HeSch}, Section 12.3 has those properties, see Corollary 12.3.6.
\end{proof}

\begin{rema}
    Remarkably the properties of \Cref{prop_funCD_exist_and_properties} are the only ones which we need in this theory. At no point we require to know explicitly how the functor changes the action and coaction.
\end{rema}

Let $(\funCD, \funCDcoh)$ be a functor with the properties from \Cref{prop_funCD_exist_and_properties}.

\begin{lem} \label{prop_funCD_properties_irred}
	Let $V \in \ydcat{B}{\scr{C}}_{\rat}$ be an irreducible object.
	Then $\funCD(V)$ is irreducible in $\ydcat{A}{\scr{C}}_{\rat}$.
\end{lem}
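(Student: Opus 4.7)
The plan is to exploit the fact that $(\funCD, \funCDcoh)$ is a \emph{braided monoidal isomorphism} of categories by \Cref{prop_funCD_exist_and_properties}, not merely an equivalence. As an isomorphism it admits a strict inverse functor $\funCD^{-1}: \ydcat{A}{\scr{C}}_{\rat} \rightarrow \ydcat{B}{\scr{C}}_{\rat}$, which in particular preserves and reflects monomorphisms, zero objects, and the relation of being a subobject. Hence irreducibility transports directly through $\funCD$.

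More concretely, suppose $U$ is a subobject of $\funCD(V)$ in $\ydcat{A}{\scr{C}}_{\rat}$, witnessed by a monomorphism $\iota: U \hookrightarrow \funCD(V)$. Applying $\funCD^{-1}$ yields a monomorphism $\funCD^{-1}(\iota): \funCD^{-1}(U) \hookrightarrow \funCD^{-1}(\funCD(V)) = V$ in $\ydcat{B}{\scr{C}}_{\rat}$. Since $V$ is irreducible, $\funCD^{-1}(U)$ is either $0$ or (isomorphic to) $V$. Applying $\funCD$ back, we conclude that $U$ is either $0$ or $\funCD(V)$, proving irreducibility of $\funCD(V)$.

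There is essentially no obstacle here; the only point worth stating explicitly is that subobjects in $\ydcat{A}{\scr{C}}_{\rat}$ are by definition given by monomorphisms in $\ydcat{A}{\scr{C}}_{\rat}$ and are preserved under category isomorphisms. One could alternatively phrase the argument by noting that, since $\funCD = V$ and $\funCD(f) = f$ as underlying linear maps (by \Cref{prop_funCD_exist_and_properties}(1)--(2)), a subspace of $\funCD(V)$ that is stable under the $A$-action and $A$-coaction pulls back to a subspace of $V$ stable under the $B$-action and $B$-coaction via $\funCD^{-1}$, and this correspondence is bijective. Either formulation makes the statement immediate.
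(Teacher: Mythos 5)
Your argument is correct and matches the paper's own proof, which likewise observes that since $\funCD$ is an isomorphism of categories, any subobject $U$ of $\funCD(V)$ can be viewed as a subobject of $V$, whence irreducibility of $V$ forces $U=0$ or $U=V$. The extra detail you supply about $\funCD^{-1}$ preserving monomorphisms and about the underlying vector spaces being unchanged is a harmless elaboration of the same idea.
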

\begin{proof}
	If $U$ is a subobject of $\funCD(V)$ in $\ydcat{C}{\scr{C}}_{\rat}$, then since $\funCD$ is an isomorphism, $U$ can be viewed as a subobject of $V$ in $\ydcat{B}{\scr{C}}_{\rat}$. Since $V$ is irreducible this implies $U=0$ or $U=V$.
\end{proof}

Let $\Gamma$ be an abelian group where there exists some $\alpha \in \Gamma$ and a subgroup $\Gamma' \subset \Gamma$, such that $\Gamma = \ndZ \alpha \oplus \Gamma'$. We view $B$ as a $\Gamma$-graded Hopf algebra with $B(n \alpha) := B(n)$ for all $n \in \ndN_0$ and $B(\gamma) = 0$ for all $\gamma \in \Gamma \setminus \ndN_0 \alpha$. Moreover we view $A$ as a $\Gamma$-graded Hopf algebra with $A(-n \alpha ) := A(n)$ for all $n \in \ndN_0$ and $A(\gamma) = 0$ for all $\gamma \in \Gamma \setminus \ndN_0 (-\alpha)$.

\begin{prop} \label{prop_gam_grad_funCD}
	 Let $V \in \ydcat{B}{\scr{C}}_{\rat}$ be a $\Gamma$-graded object. Then $\funCD(V) \in \ydcat{A}{\scr{C}}_{\rat}$ is $\Gamma$-graded with $\funCD(V)(\gamma) = V(\gamma)$ for $\gamma \in \Gamma$. Moreover if $K$ is a $\Gamma$-graded Hopf algebra in $\ydcat{B}{\scr{C}}$, then $\funCD(K)$ is a $\Gamma$-graded Hopf algebra in $\ydcat{A}{\scr{C}}$ with $\funCD(K)(\gamma) = K(\gamma)$ for $\gamma \in \Gamma$.
\end{prop}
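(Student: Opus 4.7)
My plan is to split the $\Gamma$-grading of $V$ into a family of $\ndZ$-gradings indexed by $\Gamma'$, apply $\funCD$ componentwise via property~(1) of \Cref{prop_funCD_exist_and_properties}, and reassemble into the desired $\Gamma$-grading on $\funCD(V)$. Since $B$ is supported in $\ndN_0\alpha\subset\Gamma$, the $B$-action and $B$-coaction on $V$ can only shift the $\alpha$-coordinate. Consequently, for each coset label $\gamma'\in\Gamma'$ the subspace
\[V_{\gamma'}:=\bigoplus_{n\in\ndZ}V(n\alpha+\gamma')\]
is a sub-YD-module of $V$ in $\ydcat{B}{\scr{C}}_{\rat}$, naturally $\ndZ$-graded via $V_{\gamma'}(n)=V(n\alpha+\gamma')$, and $V=\bigoplus_{\gamma'\in\Gamma'}V_{\gamma'}$ in this category.

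Since $(\funCD,\funCDcoh)$ is an isomorphism of categories, it preserves direct sums, so $\funCD(V)=\bigoplus_{\gamma'\in\Gamma'}\funCD(V_{\gamma'})$; by \Cref{prop_funCD_exist_and_properties}(1) each $\funCD(V_{\gamma'})$ is $\ndZ$-graded with $\funCD(V_{\gamma'})(n)=V(-n\alpha+\gamma')$. Declaring $\funCD(V)(n\alpha+\gamma'):=V(n\alpha+\gamma')$ then exhibits a vector-space decomposition $\funCD(V)=\bigoplus_{\gamma\in\Gamma}\funCD(V)(\gamma)$. To verify this is a $\Gamma$-grading in $\ydcat{A}{\scr{C}}_{\rat}$, I translate the $\ndZ$-graded structure of each $\funCD(V_{\gamma'})$ into the $\Gamma$-grading: the identification $A(k)=A(-k\alpha)$ turns the inclusion $A(k)\cdot\funCD(V_{\gamma'})(-m)\subset\funCD(V_{\gamma'})(-m+k)$ into $A(-k\alpha)\cdot V(m\alpha+\gamma')\subset V((m-k)\alpha+\gamma')$, i.e.\ the action is $\Gamma$-graded; an analogous calculation works for the coaction.

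For the Hopf algebra part, $\funCD(K)$ is automatically a Hopf algebra in $\ydcat{A}{\scr{C}}$ because $(\funCD,\funCDcoh)$ is a braided monoidal isomorphism, and the first part already supplies the $\Gamma$-grading on the underlying object with $\funCD(K)(\gamma)=K(\gamma)$. To check that each structure morphism is $\Gamma$-graded, I restrict the multiplication of $K$ to the pieces $K_{\gamma'_1}\otimes K_{\gamma'_2}\to K_{\gamma'_1+\gamma'_2}$, obtaining a $\ndZ$-graded morphism; \Cref{prop_funCD_exist_and_properties}(2) then forces $\funCD$ of this restriction to be $\ndZ$-graded as well, and reassembling over all $(\gamma'_1,\gamma'_2)$ gives $\Gamma$-gradedness of $\bimu_{\funCD(K)}$. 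The unit, counit, comultiplication, and antipode are handled similarly.

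The main technical delicacy is the role of the coherence $\funCDcoh_{K,K}$ in the identity $\bimu_{\funCD(K)}=\funCD(\bimu_K)\circ\funCDcoh_{K,K}$ (and its analogues): one also needs $\funCDcoh_{X,Y}$ to be a $\ndZ$-graded morphism whenever $X,Y$ are $\ndZ$-graded, which is not among the properties explicitly isolated in \Cref{prop_funCD_exist_and_properties}. This follows from the concrete construction of $\funCD$ in \cite{HeSch}, Section~12.3, where $\funCDcoh$ is the canonical vector-space identification that respects gradings tautologically; I appeal to this implicitly.
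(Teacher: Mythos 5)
Your proof is correct and follows essentially the same route as the paper's: fix the $\Gamma'$-coset, apply \Cref{prop_funCD_exist_and_properties}(1) to the resulting $\ndZ$-grading, and translate the graded action and coaction back into the $\Gamma$-grading, with the Hopf algebra part following from monoidality and \Cref{prop_funCD_exist_and_properties}(2). Your explicit decomposition $V=\bigoplus_{\gamma'\in\Gamma'}V_{\gamma'}$ into sub-Yetter-Drinfeld modules merely makes precise a step the paper leaves implicit, and your remark about the coherence $\funCDcoh$ flags an implicit appeal to the concrete construction that the paper's own proof makes as well.
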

\begin{proof}
	Let $\gamma \in \Gamma$. By assumption $\gamma = m \alpha + \gamma'$ for some $m \in \ndZ$, $\gamma' \in \Gamma'$.
	Now $V \in \ydcat{B}{\scr{C}}_{\rat}$ is $\ndZ$-graded with 
	\begin{align*}
		V(n) = V(n \alpha + \gamma')
	\end{align*}
	for $n \in \ndZ$. By \Cref{prop_funCD_exist_and_properties}(1) we obtain that $\funCD(V) \in \ydcat{A}{\scr{C}}_{\rat}$ is $\ndZ$-graded with
	\begin{align*}
	\funCD(V)(n) = V(-n \alpha + \gamma')
	\end{align*}
	for $n \in \ndZ$. This implies for $n \in \ndN_0$
	\begin{align*}
		& A(-n \alpha) V (\gamma) = A(n) V(m \alpha + \gamma') = A(n) \funCD(V)(-m) 
		\\ & \subset  \funCD(V)(n-m) = V( (-n+m) \alpha + \gamma') = V( -n \alpha + \gamma ) \matcom
	\end{align*}
	hence by construction $A(\beta) V (\gamma) \subset V(\beta + \gamma)$ for all $\beta \in \Gamma$.
	Now the above also implies
	\begin{align*}
		& \coact_{\funCD(V)}^A (V(\gamma)) = \coact_{\funCD(V)}^A(V(m\alpha + \gamma')) = \coact_{\funCD(V)}^A ( \funCD(V)(-m) ) 
		\\ &\subset \oplus_{m_1,m_2 \in \ndZ}^{m_1+m_2 = -m} A(m_1) \ot \funCD(V)(m_2)
		\\ &= \oplus_{m_1,m_2 \in \ndZ}^{m_1+m_2 = -m} A(-m_1 \alpha) \ot V(-m_2 \alpha + \gamma')
		\\ &\subset \oplus_{\gamma_1,\gamma_2 \in \Gamma}^{\gamma_1+\gamma_2 = \gamma} A(\gamma_1) \ot V(\gamma_2) \matcom
	\end{align*}
	hence the first claim is correct. Now if $K$ is a $\Gamma$-graded Hopf algebra in $\ydcat{B}{\scr{C}}$, then since $\funCD$ is braided monoidal, $\funCD(K)$ is a Hopf algebra in $\ydcat{A}{\scr{C}}$, with the same unit, multiplication, counit and comultiplication by \Cref{prop_funCD_exist_and_properties}(2). Since the grading is also the same, as discussed above, these structure morphisms remain $\Gamma$-graded.
\end{proof}

\begin{prop} \label{prop_grading_on_funCDK}
	Let $K \in \ydcat{B}{\scr{C}}_{\rat}$ be a $\Gamma$-graded Hopf algebra. Then
	\begin{enumerate}
		\item $K \boso B$ is a $\Gamma$-graded Hopf algebra in $\scr{C}$ with 
		\begin{align*}
			(K\boso B)(\gamma) &= \oplus_{\gamma_1,\gamma_2 \in \Gamma}^{\gamma_1+\gamma_2=\gamma} K(\gamma_1) \ot B(\gamma_2) \\
			&= \oplus_{n_1 \in \ndZ, n_2 \in \ndN_0}^{n_1+n_2=n} K(n_1 \alpha + \gamma') \ot B(n_2) \matcom
		\end{align*}
		where $n \in \ndZ$, $\gamma' \in \Gamma'$, $\gamma = n\alpha + \gamma' \in \Gamma$.
		\item $\funCD(K) \boso A$ is a $\Gamma$-graded Hopf algebra in $\scr{C}$ with 
		\begin{align*}
		(\funCD(K) \boso A)(\gamma) &= \oplus_{\gamma_1,\gamma_2 \in \Gamma}^{\gamma_1+\gamma_2=\gamma} K(\gamma_1) \ot A(\gamma_2) \\
		&= \oplus_{n_1 \in \ndZ, n_2 \in \ndN_0}^{n_1-n_2=n} K(n_1 \alpha + \gamma') \ot A(n_2) \matcom
		\end{align*}
		where $n \in \ndZ$, $\gamma' \in \Gamma'$, $\gamma = n\alpha + \gamma' \in \Gamma$.
	\end{enumerate}
\end{prop}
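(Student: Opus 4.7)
The plan is to verify both claims by unwinding the bosonization formulas from \Cref{defi_bosonisation} and checking degree-by-degree that the structure morphisms respect the ambient $\Gamma$-grading. As an object of $\scr{C}$ the bosonization is just the underlying tensor product, so the only substantive work is confirming that the unit, multiplication, counit, comultiplication and antipode of the bosonization are $\Gamma$-graded morphisms with respect to the naturally inherited grading on a tensor product.

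For (1), I would first observe that $K \boso B$ equals $K \ot B$ as an object of $\scr{C}$, so it carries the standard $\Gamma$-grading $(K \boso B)(\gamma) = \oplus_{\gamma_1+\gamma_2=\gamma} K(\gamma_1) \ot B(\gamma_2)$. Since $B(\gamma_2)=0$ unless $\gamma_2 \in \ndN_0 \alpha$, writing $\gamma = n\alpha + \gamma'$ reduces this to the claimed sum indexed by $n_1 \in \ndZ$, $n_2 \in \ndN_0$ with $n_1+n_2=n$. It then remains to check that the structure morphisms $\bimu_{K\boso B}$, $\bicomu_{K\boso B}$ and the antipode, as assembled from $\bimu_K, \bimu_B, \bicomu_K, \bicomu_B, \antip_K, \antip_B, \act^B_K, \coact^B_K$ and the braiding of $\scr{C}$, are $\Gamma$-graded. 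All the building blocks are $\Gamma$-graded by hypothesis: the structure morphisms of $B$ and $K$ because these are graded Hopf algebras, the action and coaction because $K$ is a $\Gamma$-graded object in $\ydcat{B}{\scr{C}}$, and the braiding of $\scr{C} = \ydmod{H}$ trivially preserves the $\Gamma$-grading since $H$ is concentrated in degree zero. Compositions of $\Gamma$-graded morphisms are $\Gamma$-graded, so the conclusion follows.

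For (2), I would first appeal to \Cref{prop_gam_grad_funCD} to obtain that $\funCD(K)$ is a $\Gamma$-graded Hopf algebra in $\ydcat{A}{\scr{C}}$ with $\funCD(K)(\gamma) = K(\gamma)$, and then repeat the argument of (1) with $A$ in place of $B$ and $\funCD(K)$ in place of $K$. The only subtlety is that on rewriting $(\funCD(K) \boso A)(\gamma) = \oplus_{\gamma_1+\gamma_2=\gamma} \funCD(K)(\gamma_1) \ot A(\gamma_2)$ in coordinates, one must use that $A(\gamma_2) = 0$ unless $\gamma_2 = -n_2 \alpha$ with $n_2 \in \ndN_0$. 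With $\gamma = n\alpha + \gamma'$ and $\gamma_1 = n_1 \alpha + \gamma'$ this forces $n_1 - n_2 = n$, yielding the indexing condition stated in the proposition.

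No genuine obstacle is expected; the whole argument is essentially bookkeeping once \Cref{prop_gam_grad_funCD} is available. The one place to remain attentive is the reindexing in (2), where the negative grading convention on $A$ turns the condition $n_1 + n_2 = n$ into $n_1 - n_2 = n$.
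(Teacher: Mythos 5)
Your proposal is correct and follows essentially the same route as the paper: part (1) by checking that all structure morphisms of the bosonization are built from $\Gamma$-graded pieces (using that the $B$-action and $B$-coaction on $K$ are graded), and part (2) by combining \Cref{prop_gam_grad_funCD} with (1) and the negative grading convention on $A$. The coordinate reindexing, including the sign flip $n_1 - n_2 = n$ in (2), is handled correctly.
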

\begin{proof}
	(1) follows quickly when using that the $B$-action and $B$-coaction of $K$ are $\Gamma$-graded and by the construction of the $\Gamma$-grading of $B$. (2)~is implied by \Cref{prop_gam_grad_funCD} and (1), as well as considering the construction of the $\Gamma$-grading of $A$.
\end{proof}

\begin{defi} \label{defi_funCD_induced_funct}
	Let $K \in \ydcat{B}{\scr{C}}_{\rat}$ be a Hopf algebra. We denote 
	\begin{align*}
		(\funCD_{K,B},\funCDcoh_{K,B}) : \ydcat{K \boso B}{\scr{C}}_{\rat} \rightarrow \ydcat{\funCD(K) \boso A}{\scr{C}}_{\rat}
	\end{align*}
	for the braided monoidal isomorphism defined such that the diagram
	\begin{align*}\begin{tikzcd}[ampersand replacement=\&]
	\ydcat{K \boso B}{\scr{C}}_{\rat}	 \arrow{d}{\cong} \arrow{r}{(\funCD_{K,B},\funCDcoh_{K,B})} \& 	\ydcat{\funCD(K) \boso A}{\scr{C}}_{\rat}  \\
	\ydcat{K}{\ydcat{B}{\scr{C}}_{\rat}} \arrow{r}{(\widetilde{\funCD},\widetilde{\funCDcoh})} \& \ydcat{\funCD(K)}{\ydcat{A}{\scr{C}}_{\rat}} \arrow{u}{\cong}
	\end{tikzcd}\end{align*}
	commutes, where $(\widetilde{\funCD},\widetilde{\funCDcoh})$ is the by $(\funCD,\funCDcoh)$ induced functor of \Cref{prop_funCD_induces_wtfunCD} and the two vertical isomorphisms are given by \Cref{prop_boso_is_ydcat}. We call $(\funCD_{K,B},\funCDcoh_{K,B})$ the \textbf{reflection functor induced by $(\funCD,\funCDcoh)$}.
\end{defi}

\begin{rema} \label{rema_explicit_action_coaction_funcdboso}
	Let $K \in \ydcat{B}{\scr{C}}_{\rat}$ be a Hopf algebra and $V \in \ydcat{K \boso B}{\scr{C}}_{\rat}$. Then the $\funCD(K) \boso A$-action and coaction of $\funCD_{K,B} (V) \in \ydcat{\funCD(K) \boso A}{\scr{C}}_{\rat}$ are
	\begin{align*}
		\act_{\funCD_{K,B} (V)}^{\funCD(K) \boso A} &= \act_V^{K \boso B} (\id_K \ot \biunit_B \ot \act_{\funCD(V)}^A) \matcom \\
		\coact_{\funCD_{K,B} (V)}^{\funCD(K) \boso A} &=  (\id_K \ot \bicounit_B \ot \coact_{\funCD(V)}^A) \coact_V^{K \boso B} \matcom
	\end{align*}
	where $\act_{\funCD(V)}^A : A \ot V \rightarrow V$ and $\coact_{\funCD(V)}^A :V \rightarrow A \ot V$ are the $A$-action and coaction of $\funCD(V) \in \ydcat{A}{\scr{C}}_{\rat}$, where $V \in \ydcat{B}{\scr{C}}_{\rat}$ has $B$-action and coaction 
	\begin{align*}
		\act_V^B &= \act_V^{K \boso B} (\biunit_K \ot \id_B \ot \id_V) \matcom & \coact_V^B &= (\bicounit_K \ot \id_B \ot \id_V) \coact_V^{K \boso B}
		\matdot
	\end{align*}
\end{rema}

\begin{lem} \label{lem_funcd_keeps_irreducibility}
	Let $K \in \ydcat{B}{\scr{C}}_{\rat}$ be a Hopf algebra and $V \in \ydcat{K\boso B}{\scr{C}}_{\rat}$.
	Let $U \subset V$ be a subset, such that $U$ is also a subobject of $\funCD_{K,B} (V)$ in $\ydcat{\funCD(K) \boso A}{\scr{C}}_{\rat}$.
	Then $U$ is a subobject of $V$ in $\ydcat{K \boso B}{\scr{C}}_{\rat}$.
	
	In particular if $V$ is an irreducible object in $\ydcat{K\boso B}{\scr{C}}_{\rat}$, then $\funCD_{K,B} (V)$ is irreducible in $\ydcat{\funCD(K) \boso A}{\scr{C}}_{\rat}$.
\end{lem}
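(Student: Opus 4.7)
The plan is to decompose the subobject condition using the bosonisation isomorphism $\ydcat{K\boso B}{\scr{C}}_{\rat} \cong \ydcat{K}{\ydcat{B}{\scr{C}}_{\rat}}$ from \Cref{prop_boso_is_ydcat} on both sides, translating closure of $U$ under the $\funCD(K)\boso A$-action and coaction into closure under four separate pieces, and then recombining them into the four structure maps that make $U$ a subobject of $V$ in $\ydcat{K\boso B}{\scr{C}}_{\rat}$.

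First I would unpack the hypothesis. By \Cref{prop_boso_is_ydcat} applied to $\funCD(K)\boso A$, assuming that $U$ is a subobject of $\funCD_{K,B}(V)$ is equivalent to $U$ being closed under the induced $\funCD(K)$-action and coaction (obtained by plugging $\biunit_A$ and $\bicounit_A$) and under the induced $A$-action and coaction (obtained by plugging $\biunit_{\funCD(K)}$ and $\bicounit_{\funCD(K)}$). Using the explicit formulas of \Cref{rema_explicit_action_coaction_funcdboso}, inserting $\biunit_A$ or $\bicounit_A$ into $\act_V^{K\boso B}(\id_K \ot \biunit_B \ot \act^A_{\funCD(V)})$ and its coaction analogue forces the $A$-piece to collapse through $\biunit_B$ or $\bicounit_B$, so that these induced $\funCD(K)$-action and coaction coincide as linear maps with the $K$-action and coaction of $V$ coming directly from the $K\boso B$-structure. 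Closure of $U$ under the former therefore yields closure under the latter.

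For the $A$-piece, \Cref{rema_explicit_action_coaction_funcdboso} identifies it exactly with the $A$-action and coaction of $\funCD(V) \in \ydcat{A}{\scr{C}}_{\rat}$, so $U$ is a subobject of $\funCD(V)$ there. Since $(\funCD,\funCDcoh)$ is an isomorphism of categories by \Cref{prop_funCD_exist_and_properties}, the argument already used in \Cref{prop_funCD_properties_irred} transfers this to: $U$ is a subobject of $V$ in $\ydcat{B}{\scr{C}}_{\rat}$, i.e. closed under the $B$-action and $B$-coaction of $V$. Combined with the previous step, $U$ is closed under all four structure maps of $V$ viewed in $\ydcat{K}{\ydcat{B}{\scr{C}}_{\rat}}$, so applying \Cref{prop_boso_is_ydcat} in the other direction identifies $U$ as a subobject of $V$ in $\ydcat{K\boso B}{\scr{C}}_{\rat}$. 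The "in particular" claim is then the contrapositive: a proper nonzero subobject of $\funCD_{K,B}(V)$ would immediately give one of $V$.

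The main obstacle is purely the bookkeeping around \Cref{rema_explicit_action_coaction_funcdboso}: one must verify that plugging in the unit or counit of each tensor factor of $\funCD(K)\boso A$ into the stated formulas really reproduces exactly the $K$- and $A$-pieces expected, and nothing more. Once that verification is in place, the remainder is a formal composition of the categorical isomorphism \Cref{prop_boso_is_ydcat} with the subobject-preserving behaviour of $\funCD$ already exploited in \Cref{prop_funCD_properties_irred}.
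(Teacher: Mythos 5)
Your proof is correct, and all the individual verifications go through: plugging $\biunit_A$ (resp.\ $\bicounit_A$) into the formulas of \Cref{rema_explicit_action_coaction_funcdboso} does collapse the $A$-piece through the unitality of $\act^A_{\funCD(V)}$ and recover exactly $\act^K_V$ and $\coact^K_V$, and plugging $\biunit_{\funCD(K)}$ (resp.\ $\bicounit_{\funCD(K)}$) recovers exactly the $A$-structure of $\funCD(V)$, so the four-fold decomposition via \Cref{prop_boso_is_ydcat} closes up as you describe. The only thing worth pointing out is that you have done considerably more work than the paper: the paper's entire proof is the single sentence that the claim follows because $\funCD_{K,B}$ is an isomorphism. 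That is, the paper treats the statement as an immediate consequence of the fact that $\funCD_{K,B}$ is a categorical isomorphism which (by \Cref{prop_funCD_exist_and_properties}) acts as the identity on underlying vector spaces, so that a subset of $V$ closed under the reflected structure maps corresponds under $\funCD_{K,B}^{-1}$ to the same subset closed under the original ones. Your argument is essentially that one-liner with the black box opened: you reconstruct the isomorphism's effect on subobjects by hand from the explicit action and coaction formulas, routing the $B$-piece through the same subobject-transfer argument as \Cref{prop_funCD_properties_irred}. What your version buys is an explicit sanity check that the abstract isomorphism really does restrict to the expected structure maps on each tensor factor; what the paper's version buys is brevity, at the cost of leaving that bookkeeping implicit.
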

\begin{proof}
    The claim follows since $\funCD_{K,B}$ is an isomorphism.
\end{proof}

\begin{prop} \label{prop_funCD_module_grading}
	Let $K \in \ydcat{B}{\scr{C}}_{\rat}$ be a $\Gamma$-graded Hopf algebra. Moreover let $V \in \ydcat{K\boso B}{\scr{C}}_{\rat}$ be a $\Gamma$-graded object. Then the object $\funCD_{K,B} (V) \in \ydcat{\funCD(K) \boso A}{\scr{C}}_{\rat}$ is $\Gamma$-graded via
	$
		\funCD_{K,B} (V) (\gamma) := V(\gamma)
	$
	for $\gamma \in \Gamma$.
\end{prop}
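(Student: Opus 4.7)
The plan is to reduce the statement to grading checks for the explicit formulas from \Cref{rema_explicit_action_coaction_funcdboso}. The key point is that the $\Gamma$-grading on $\funCD_{K,B}(V)$ as a vector space is simply inherited from $V$, and what has to be verified is that the $\funCD(K)\boso A$-action and coaction, as rewritten in that remark, are $\Gamma$-graded when the target has the grading from \Cref{prop_grading_on_funCDK}(2).

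First I would observe that $V$, viewed as a $B$-Yetter-Drinfeld module via the formulas $\act_V^B = \act_V^{K\boso B}(\biunit_K \ot \id_B \ot \id_V)$ and $\coact_V^B = (\bicounit_K \ot \id_B \ot \id_V)\coact_V^{K\boso B}$ of \Cref{prop_boso_is_ydcat}, is itself $\Gamma$-graded. This is because $\biunit_K$ factors through $K(0)$ and $\bicounit_K$ vanishes outside $K(0)$ (since $K$ is $\Gamma$-graded), so gradedness of $\act_V^{K\boso B}$ and $\coact_V^{K\boso B}$ with respect to the grading of $K \boso B$ from \Cref{prop_grading_on_funCDK}(1) descends to gradedness of $\act_V^B$ and $\coact_V^B$. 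With this, \Cref{prop_gam_grad_funCD} applies and equips $\funCD(V) \in \ydcat{A}{\scr{C}}_{\rat}$ with a $\Gamma$-grading satisfying $\funCD(V)(\gamma) = V(\gamma)$ for every $\gamma \in \Gamma$.

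Next I would check the two grading conditions for the $\funCD(K)\boso A$-structure on $\funCD_{K,B}(V)$. For the action, take homogeneous elements $k \in K(\beta_1)$, $a \in A(\beta_2)$, $v \in V(\gamma)$. Using \Cref{rema_explicit_action_coaction_funcdboso} and the fact that $\act_{\funCD(V)}^A$ is $\Gamma$-graded by the previous paragraph, the element $k \ot 1_B \ot \act_{\funCD(V)}^A(a\ot v)$ lies in $K(\beta_1) \ot B(0) \ot V(\beta_2 + \gamma) \subset (K\boso B)(\beta_1) \ot V(\beta_2 + \gamma)$; applying the graded morphism $\act_V^{K\boso B}$ lands in $V(\beta_1 + \beta_2 + \gamma)$, which is precisely the component indexed by the total $\funCD(K)\boso A$-degree. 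The coaction case is symmetric: apply $\coact_V^{K\boso B}$ first, land in the appropriate homogeneous component of $(K\boso B) \ot V$, then apply $\id_K \ot \bicounit_B \ot \coact_{\funCD(V)}^A$, using that $\bicounit_B$ vanishes outside $B(0)$ and $\coact_{\funCD(V)}^A$ is $\Gamma$-graded, to conclude that the coaction lands in $\oplus_{\gamma_1 + \gamma_2 = \gamma} (\funCD(K)\boso A)(\gamma_1) \ot V(\gamma_2)$.

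No real obstacle is expected here: the whole argument is mechanical once one has the explicit formulas of \Cref{rema_explicit_action_coaction_funcdboso} and the grading of $\funCD(K)\boso A$ from \Cref{prop_grading_on_funCDK}(2). The only point worth being careful about is keeping straight the identification $\funCD(K) = K$ as objects in $\scr{C}$ (from \Cref{prop_funCD_exist_and_properties}(1)), so that the grading of $\funCD(K)$ really is $\funCD(K)(\gamma) = K(\gamma)$ as guaranteed by \Cref{prop_gam_grad_funCD} applied to $K$ itself.
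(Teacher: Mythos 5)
Your proposal is correct and follows essentially the same route as the paper: both arguments rewrite the $\funCD(K)\boso A$-action and coaction via \Cref{rema_explicit_action_coaction_funcdboso}, invoke \Cref{prop_gam_grad_funCD} to get the $\Gamma$-grading on $\funCD(V)$ with $\funCD(V)(\gamma)=V(\gamma)$, and then chase homogeneous components through the graded structure maps of $V$ over $K\boso B$ using that $\biunit_B$ and $\bicounit_B$ only see the degree-zero part of $B$. Your explicit preliminary check that the restricted $B$-module and comodule structures on $V$ are $\Gamma$-graded (so that \Cref{prop_gam_grad_funCD} indeed applies) is a detail the paper leaves implicit, but it is not a different argument.
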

\begin{proof}
	Let $\gamma,\gamma',\gamma'' \in \Gamma$. Using \Cref{rema_explicit_action_coaction_funcdboso} and \Cref{prop_gam_grad_funCD} we obtain 
	\begin{align*}
		 \left( \funCD(K)(\gamma) \boso A(\gamma') \right) V(\gamma'')
		&= (K(\gamma) \boso \fK 1_B) ( A(\gamma') V(\gamma'') )
		\\ &\subset K(\gamma) V(\gamma' + \gamma'') \subset K(\gamma + \gamma' + \gamma'') \matdot
	\end{align*}
	Moreover, since $\bicounit_B : B \rightarrow \fK$ is $\Gamma$-graded, where $\fK(0)=\fK$, $\fK(\beta)=0$ for all $\beta \in \Gamma \setminus \lbrace 0 \rbrace$, considering \Cref{rema_explicit_action_coaction_funcdboso} we obtain
	\begin{align*}
		&\coact_{\funCD_{K,B} (V)}^{\funCD(K) \boso A} (V(\gamma)) = (\id_K \ot \bicounit_B \ot \coact_{\funCD(V)}^A) \coact_V^{K\boso B} (V(\gamma))
		\\ \subset& (\id_K \ot \bicounit_B \ot \coact_{\funCD(V)}^A) \left( \oplus_{\gamma_1,\gamma_2,\gamma_3 \in \Gamma}^{\gamma_1+\gamma_2+\gamma_3=\gamma} K(\gamma_1) \ot B(\gamma_2) \ot V(\gamma_3) \right)
		\\ =& \oplus_{\gamma_1,\gamma_3 \in \Gamma}^{\gamma_1+\gamma_3=\gamma} K(\gamma_1) \ot \fK \ot \coact_{\funCD(V)}^A(V(\gamma_3))
		\\ \subset & \oplus_{\gamma_1,\gamma_2,\gamma_3 \in \Gamma}^{\gamma_1+\gamma_2+\gamma_3=\gamma} \funCD(K)(\gamma_1) \ot A(\gamma_2) \ot V(\gamma_3) \matdot
	\end{align*}
	The claim follows.
\end{proof}

\begin{rema} \label{rema_shifting_grading_by_autom}
	Let $s \in \Aut(\Gamma)$. If $K \in \scr{C}$ is a $\Gamma$-graded Hopf algebra, then $K(\gamma) := K(s(\gamma))$ for $\gamma \in \Gamma$, defines a new $\Gamma$-grading for $K$. 
	Moreover if $V \in \ydcat{K}{\scr{C}}$ is $\Gamma$-graded in regards to the standard $\Gamma$-grading of $K$, then $V(\gamma):= V(s(\gamma))$ for $\gamma \in \Gamma$, defines a $\Gamma$-grading for $V$ in regards to the by $s$ shifted $\Gamma$-grading of $K$.
\end{rema}

\chapter{Nichols systems}

In \cref{sect_nich_sys_definition} we define Nichols systems. They also appear in \cite{HeSch}, section~13.5, however our definition is a bit different, as it requires less prerequisites. We will also discuss how the two definitions are equivalent. 

Then in \cref{sect_nich_sys_reflections} we discuss how and when we can reflect a Nichols system. An important difference between this work and \cite{HeSch}, section 13.5, is the underlying functor that we obtain by \Cref{prop_funCD_exist_and_properties}. In \cite{HeSch} this functor is defined explicitly and proofs make use of its explicit structure. The goal here was to prove similar results, using only the properties of the functor in \Cref{prop_funCD_exist_and_properties}. The main result of this section is \Cref{prop_RiQ_is_nich_sys}, where we discuss when a reflection of a Nichols system is again a Nichols system.

Finally in \cref{sect_nich_sys_iterated_refl} we look at iterated reflections of Nichols systems. Doing so, we will also define the set of roots of a Nichols system, which coincides with a set of real roots of an associated semi-Cartan graph. We will use the roots to discuss the geometry of the support of a Nichols system, i.e. the set of indices of non-zero components regarding the grading.

Let $\theta \in \ndN$ and $\mathbb{I} = \lbrace 1, \ldots, \theta \rbrace$. Let $\alpha_1, \ldots, \alpha_\theta \in \ndN_0^\theta$ be the unit vectors.
Let $H$ be a Hopf algebra over some field $\fK$ with bijective antipode and $\scr{C} := \ydmod{H}$. We view $H$ as $\ndN_0^\theta$-graded with the trivial grading $H=H(0)$.

\section{Nichols systems} \label{sect_nich_sys_definition}

\begin{defi} \label{defi_pre_nich_sys}
	Let $Q \in \scr{C}$ be a Hopf algebra and $N_1, \ldots, N_\theta \in \scr{C}$ be finite-dimensional subobjects of $Q$ in $\scr{C}$ and $N:=(N_1,\ldots,N_\theta)$. The tuple $\scr{N} := (Q,N)$ is called a \textbf{pre-Nichols system}, if
	\begin{enumerate}
		\item The algebra $Q$ is generated by $N_1,\ldots,N_\theta$.
		\item $Q$ is an $\ndN_0^\theta$-graded Hopf algebra in $\scr{C}$ with $Q(\alpha_i)=N_i$ for $i \in \mathbb{I}$.
	\end{enumerate}
	For $i \in \mathbb{I}$ we denote
	\begin{align*}
	\nsalg{\scr{N}} := Q \matcom && \scr{N}_i = N_i \matdot
	\end{align*}
	A \textbf{morphism $f :\scr{N}\rightarrow \scr{N}'$ of pre-Nichols systems $\scr{N}$ and $\scr{N}'$} is a Hopf algebra morphism $f: \nsalg{\scr{N}}\rightarrow \nsalg{\scr{N}'}$, such that $f$ induces an isomorphism $f : \scr{N}_j \rightarrow \scr{N}'_j$ for all $j \in \mathbb{I}$.
\end{defi}

\begin{rema}
    Some categorial properties of the category of Nichols systems are discussed in \cite{HeSch}, Remark~13.5.23.
\end{rema}

\begin{notation}
	Let $\scr{N}=(Q,(N_1,\ldots,N_\theta))$ be a pre-Nichols system and $i \in \mathbb{I}$. We denote $\subalgQ{N_i}$ for the subalgebra of $Q$ generated by $N_i$. 
	Moreover denote
	\begin{align*}
	\inclcomp{Q}_i : \subalgQ{N_i} \rightarrow Q \matcom && \projcomp{Q}_i : Q \rightarrow \subalgQ{N_i}
	\end{align*}
	for the canonical $\ndN_0^\theta$-graded maps, which are the identity on $N_i$. Finally let 
	\begin{align*}
	K_i := \coinva{Q}{\subalgQ{N_i}} \matcom
	\end{align*}
	where the right $\subalgQ{N_i}$-comodule structure on $Q$ is given by $(\id_Q \ot \projcomp{Q}_i)\bicomu_Q : Q \rightarrow Q \ot \subalgQ{N_i}$.
\end{notation}

\begin{rema} \label{rema_structure_of_Ki}
	Let $\scr{N}=(Q,(N_1,\ldots,N_\theta))$ be a pre-Nichols system and $i \in \mathbb{I}$. 
	Since $\bicomu_{Q}$ is graded, we obtain that $\subalgQ{N_i}$ is a $\ndN_0^\theta$-graded sub-Hopf algebra of $Q$ in $\scr{C}$, where the grading is such that $\subalgQ{N_i}(n \alpha_i)=N_i^n$ for $n \in \ndN_0$ and all other components are zero. 
	Now $K_i$ is a Hopf algebra in $\ydcat{\subalgQ{N_i}}{\scr{C}}$, where the action is the adjoint action
	\begin{align*}
	\ad = \bimu_Q (\bimu_Q \ot \antip_Q) (\id_{\subalgQ{N_i}} \ot \brd^{\scr{C}}_{\subalgQ{N_i},K_i}) (\bicomu_{\subalgQ{N_i}} \ot \id_{K_i}) : \subalgQ{N_i} \ot K_i \rightarrow K_i
	\end{align*}
	and coaction
	\begin{align*}
		(\projcomp{Q}_i \ot \id) \bicomu_{Q} : K_i \rightarrow \subalgQ{N_i} \ot K_i \matdot
	\end{align*}
	Moreover we have a Hopf algebra isomorphy
	\begin{align*}
	K_i \boso \subalgQ{N_i} \rightarrow Q \matcom \,\, x \ot a \mapsto xa \matcom && x \in K_i, \, a \in \subalgQ{N_i} \matdot
	\end{align*}
	For more details refer to \cite{HeSch}, Corollary~4.3.1.
	Since the algebra $Q$ is generated by $N_1, \ldots, N_\theta$, it follows that the algebra $K_i$ is generated by
	\begin{align*}
	(\ad \, \subalgQ{N_i}) (N_j) \matcom && j \in \mathbb{I}\setminus \lbrace i  \rbrace \matcom
	\end{align*}
	see \cite{HeSch}, Lemma~2.6.25.
	Moreover $K_i$ inherits the $\ndN_0^\theta$-grading of $Q$ and thus is an $\ndN_0^\theta$-graded Hopf algebra in $\ydcat{\subalgQ{N_i}}{\scr{C}}$, where
	\begin{align*}
		(\ad N_i)^n (N_j) = K_i(\alpha_j + n \alpha_i)
	\end{align*}
	for $ j \in \mathbb{I}\setminus \lbrace i \rbrace$, $n \in \ndN_0$. In particular $K_i(n \alpha_i) = 0$ for all $n \in \ndN$.
\end{rema}

\begin{prop} \label{prop_ad_kNi_Nj_is_subobj}
	Let $\scr{N}=(Q,(N_1,\ldots,N_\theta))$ be a pre-Nichols system, $i \in \mathbb{I}$, $j \in \mathbb{I} \setminus \lbrace i \rbrace$ and $V := (\ad \, \subalgQ{N_i})(N_j)$. 
	Then $V$ is a $\ndN_0^\theta$-graded subobject of $K_i$ in $\ydcat{\subalgQ{N_i}}{\scr{C}}$. Moreover if we view $\subalgQ{N_i}$ as $\ndN_0$-graded Hopf algebra in $\scr{C}$, then $V$ is an $\ndN_0$-graded object in $\ydcat{\subalgQ{N_i}}{\scr{C}}$ via 
	\begin{align*}
	V(k) =	(\ad N_i)^k (N_j) \matcom && k \in \ndN_0 \matcom
	\end{align*}
	generated as a $\subalgQ{N_i}$-module by $V(0) = N_j$.
\end{prop}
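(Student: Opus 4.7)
The plan is to deduce both the subobject and grading assertions from \Cref{lem_induced_Q_comodule_is_YD_module}, once $N_j$ is identified as a sub $\subalgQ{N_i}$-comodule of $K_i$. Setting $R := \subalgQ{N_i}$ and applying the lemma to $U := N_j$ inside $V := K_i \in \ydcat{\subalgQ{N_i}}{\scr{C}}$ produces $\subalgQ{N_i} \cdot N_j = (\ad\,\subalgQ{N_i})(N_j) = V$ as a subobject of $K_i$ in $\ydcat{\subalgQ{N_i}}{\scr{C}}$, which is exactly the first claim.

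The main work is therefore to check that $N_j \subset K_i$ and that the $\subalgQ{N_i}$-coaction stabilises it. Because $Q$ is algebra-generated by $N_1,\ldots,N_\theta$ with $N_k \subset Q(\alpha_k)$ for each $k$, every product of generators sits in strictly positive $\ndN_0^\theta$-degree except for the empty product, so $Q(0) = \fK\cdot 1$. The gradedness of $\bicomu_Q$ together with the fact that $0+\alpha_j$ and $\alpha_j+0$ are the only decompositions of $\alpha_j$ yields $\bicomu_Q(N_j) \subset \fK\cdot 1 \ot N_j + N_j \ot \fK\cdot 1$, and the counit axiom then pins this down to $\bicomu_Q(x) = 1 \ot x + x \ot 1$ for all $x \in N_j$. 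Applying $\id \ot \projcomp{Q}_i$ and using that $\projcomp{Q}_i$ is graded with $\subalgQ{N_i}(\alpha_j)=0$, hence $\projcomp{Q}_i(N_j)=0$, gives $(\id \ot \projcomp{Q}_i)\bicomu_Q(x) = x \ot 1$, i.e.\ $N_j \subset K_i$. Dually $(\projcomp{Q}_i \ot \id)\bicomu_Q(x) = 1 \ot x$, so the $\subalgQ{N_i}$-coaction on $N_j$ lands in $\subalgQ{N_i} \ot N_j$. Since $N_j$ is a subobject of $Q$ in $\scr{C}$ by the pre-Nichols system axioms, it is automatically a subobject of $K_i$ in $\scr{C}$, so all hypotheses of \Cref{lem_induced_Q_comodule_is_YD_module} are met.

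For the grading assertions, $K_i$ inherits the $\ndN_0^\theta$-grading of $Q$, and the adjoint action $\ad$ is $\ndN_0^\theta$-graded because all its constituent morphisms ($\bimu_Q$, $\bicomu_{\subalgQ{N_i}}$, $\antip_Q$ and the braiding) are graded. Hence $\ad(\subalgQ{N_i}(n\alpha_i))(N_j) \subset K_i(n\alpha_i + \alpha_j)$, and because $\ad$ is an algebra action we have $\ad(\subalgQ{N_i}(n\alpha_i)) = (\ad N_i)^n$ as operators on $K_i$. Thus $V = \bigoplus_{n \ge 0} (\ad N_i)^n(N_j)$ is a $\ndN_0^\theta$-graded subobject with nonzero components exactly in the degrees $n\alpha_i + \alpha_j$, and passing to the $\ndN_0$-grading on $\subalgQ{N_i}$ via the projection $n\alpha_i \mapsto n$ gives $V(k) = (\ad N_i)^k(N_j)$, with $V(0) = N_j$ generating $V$ as a $\subalgQ{N_i}$-module by the very definition of $V$. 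The only subtle step is the primitivity identity $\bicomu_Q(x) = 1 \ot x + x \ot 1$ on $N_j$, which rests on $Q(0) = \fK\cdot 1$; once that is in place the remaining conclusions follow formally from the graded Hopf algebra structure and \Cref{lem_induced_Q_comodule_is_YD_module}.
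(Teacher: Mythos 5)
Your proof is correct, but it takes a genuinely different route from the paper's. The paper works directly with graded components: using the identification $(\ad\, N_i)^k(N_j) = K_i(\alpha_j + k\alpha_i)$ from \Cref{rema_structure_of_Ki}, it observes that the graded coaction of $K_i$ must send $K_i(\alpha_j+k\alpha_i)$ into $\oplus_{k_1+k_2=k}\,\subalgQ{N_i}(k_1\alpha_i)\ot K_i(k_2\alpha_i+\alpha_j)$ --- the point being that $\subalgQ{N_i}$ is concentrated in degrees $\ndN_0\alpha_i$, so these are the only admissible decompositions of $\alpha_j+k\alpha_i$ --- and concludes in one sweep that $V$ is a graded sub-comodule, the module property being automatic from the definition of $V$. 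You instead establish that $N_j$ consists of primitive elements of $Q$ (via connectedness $Q(0)=\fK\cdot 1$ and gradedness of $\bicomu_Q$ and $\bicounit_Q$), deduce that $N_j\subset K_i$ is a sub-$\subalgQ{N_i}$-comodule, and invoke \Cref{lem_induced_Q_comodule_is_YD_module} to obtain the subobject claim, handling the grading separately through gradedness of the adjoint action together with $\subalgQ{N_i}(n\alpha_i)=N_i^n$. Both arguments are sound; yours leans on the general lemma and only needs the containment $(\ad\, N_i)^k(N_j)\subset K_i(\alpha_j+k\alpha_i)$ rather than the equality, at the cost of an extra primitivity computation that the paper effectively postpones to \Cref{lem_primitive_elts_of_adNi_Nj}.
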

\begin{proof}
	Let $\coact_{K_i} : K_i \rightarrow \subalgQ{N_i} \ot K_i$ be the graded $\subalgQ{N_i}$-coaction of $K_i$. Observe that
	\begin{align*}
		V= \oplus_{k \in \ndN_0} (\ad \, N_i)^k (N_j) = \oplus_{k \in \ndN_0}  K_i (\alpha_j + k \alpha_i) \matdot
	\end{align*}
	Then for $k \in \ndN_0$ we have
	\begin{align*}
		\coact_{K_i} ( (\ad N_i)^k (N_j) ) &\subset \oplus_{\beta_1,\beta_2 \in \ndN_0^\theta}^{\beta_1+\beta_2=\alpha_j + k \alpha_i} \subalgQ{N_i}(\beta_1) \ot K_i(\beta_2)
		\\&= \oplus_{k_1,k_2 \in \ndN_0}^{k_1+k_2=k} \subalgQ{N_i}(k_1 \alpha_i) \ot K_i(k_2 \alpha_i + \alpha_j)
		\\&= \oplus_{k_1,k_2 \in \ndN_0}^{k_1+k_2=k} \subalgQ{N_i}(k_1 \alpha_i) \ot (\ad \, N_i)^{k_2} (N_j) \matdot
	\end{align*}
	In particular, $\coact_{K_i}(V) \subset \subalgQ{N_i} \ot V$, hence $V$ is a subobject of $K_i$ in $\ydcat{\subalgQ{N_i}}{\scr{C}}$. Clearly the $\ndN_0^\theta$-grading of $V$ restricts to a $\ndN_0$-grading as claimed.
\end{proof}

\begin{lem} \label{lem_primitive_elts_of_adNi_Nj}
	Let $\scr{N}=(Q,(N_1,\ldots,N_\theta))$ be a pre-Nichols system, $i \in \mathbb{I}$, $j \in \mathbb{I} \setminus \lbrace i \rbrace$ and $V := (\ad \, \subalgQ{N_i})(N_j)$. Then the primitive elements of $Q$ in $V$ are given by $\filtcomod{0}(V)$.
\end{lem}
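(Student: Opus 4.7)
The plan is to leverage the $\ndN_0^\theta$-grading of $Q$ together with the facts that $V$ is contained in the right $\subalgQ{N_i}$-coinvariants $K_i$ and is a left $\subalgQ{N_i}$-subcomodule of $K_i$ (\Cref{prop_ad_kNi_Nj_is_subobj}). I would first reduce to the case where $v \in V$ is homogeneous of degree $k\alpha_i + \alpha_j$ for some $k \in \ndN_0$: the primitives of $Q$ inside $V$ form a graded subspace, since $\bicomu_Q$ and the maps $v \mapsto v\ot 1$, $v \mapsto 1\ot v$ are all $\ndN_0^\theta$-graded, and $\filtcomod{0}(V)$ is graded by \Cref{lem_irred_graded_ydmodule}(4).

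The key observation is that $\deg(v) = k\alpha_i+\alpha_j$ has $\alpha_j$-coefficient $1$ and $\alpha_{j'}$-coefficient $0$ for all $j' \notin \lbrace i,j\rbrace$. Hence in the graded decomposition
\begin{align*}
    \bicomu_Q(v) = \sum_{\gamma_1+\gamma_2 = k\alpha_i+\alpha_j}(\bicomu_Q(v))_{\gamma_1,\gamma_2} \in Q\ot Q,
\end{align*}
every summand has the single $\alpha_j$-weight concentrated in exactly one of the two tensor factors. Splitting accordingly, $\bicomu_Q(v) = w^I + w^{II}$, where $w^I$ collects the terms with $\gamma_2 \in \ndN_0\alpha_i$ (so the right tensor factor lies in $\subalgQ{N_i}$) and $w^{II}$ those with $\gamma_1 \in \ndN_0\alpha_i$. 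These two collections are disjoint because $\gamma_1, \gamma_2 \in \ndN_0\alpha_i$ simultaneously would force $\gamma_1+\gamma_2 \in \ndN_0\alpha_i$, contradicting $\gamma_1+\gamma_2 = k\alpha_i+\alpha_j$. Since $\projcomp{Q}_i$ is the graded projection onto $\subalgQ{N_i}$, acting as the identity on elements of degree in $\ndN_0\alpha_i$ and annihilating the rest, one reads off $(\id\ot\projcomp{Q}_i)\bicomu_Q(v) = w^I$ and $(\projcomp{Q}_i\ot\id)\bicomu_Q(v) = w^{II}$.

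The equivalence then falls out immediately. Since $v \in K_i$, the definition of $K_i$ as right $\subalgQ{N_i}$-coinvariants gives $w^I = v\ot 1$. Moreover, by \Cref{rema_structure_of_Ki} the left $\subalgQ{N_i}$-coaction on $K_i$ is $(\projcomp{Q}_i\ot\id)\bicomu_Q$, which restricts on the subcomodule $V$ to identify $w^{II} = \coact^{\subalgQ{N_i}}_V(v)$. Therefore $v$ is primitive in $Q$ iff $\bicomu_Q(v) = v\ot 1 + 1\ot v$ iff $w^{II} = 1\ot v$ iff $\coact^{\subalgQ{N_i}}_V(v) \in \subalgQ{N_i}(0)\ot V$, where the counit axiom forces the scalar in $\subalgQ{N_i}(0) = \fK 1$ to be $1$, and the last condition is exactly $v \in \filtcomod{0}(V)$. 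No step presents a genuine obstacle; the only care required is in the grading bookkeeping, specifically in using $\alpha_j \ne 0$ to ensure that $w^I$ and $w^{II}$ do not overlap.
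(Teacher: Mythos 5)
Your proof is correct and follows essentially the same route as the paper's: both hinge on the observation that, for $v$ of degree $k\alpha_i+\alpha_j$, the grading splits $\bicomu_Q(v)$ into a piece in $Q\ot\subalgQ{N_i}$ and a piece in $\subalgQ{N_i}\ot Q$, identified respectively with $v\ot 1$ and the left $\subalgQ{N_i}$-coaction. The only (harmless) difference is that you pin down $w^I=v\ot 1$ directly from the right-coinvariance equation defining $K_i$, whereas the paper recovers the same term via the counit axiom and the intersections $\subalgQ{N_i}\cap K_i=\fK$, $Q'\cap K_i=V$.
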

\begin{proof}
	By \Cref{rema_defi_well_graded} we have
	\begin{align*}
	\filtcomod{0}(V) = \lbrace v \in V \, | \,  (\projcomp{Q}_i \ot \id) \bicomu_Q (v) = 1 \ot v \rbrace \matdot
	\end{align*}
	Assume $v \in V$ is a primitive element. Since $\projcomp{Q}_i(v)=0$ we have 
	\begin{align*} 
		(\projcomp{Q}_i \ot \id) \bicomu_Q (v) = 1 \ot v \matcom
	\end{align*}
	hence $v \in \filtcomod{0}(V)$.
	
	Conversely assume $v \in \filtcomod{0}(V)$.	Let $k \in \ndN_0$, $u \in (\ad \, N_i)^k(N_j)$. Since $\bicomu_Q$ is graded we have
	\begin{align*}
	\bicomu_Q (u) \in \oplus_{\beta_1,\beta_2 \in \ndN_0^\theta}^{\beta_1+\beta_2= \alpha_j + k \alpha_i} Q(\beta_1) \ot Q(\beta_2) \matdot
	\end{align*}
	For $\beta_1,\beta_2 \in \ndN_0^\theta$, such that $\beta_1+\beta_2= \alpha_j + k \alpha_i$, there exist $k_1,k_2 \in \ndN_0$, such that $k_1+k_2=k$ and we either have $\beta_1 = \alpha_j + k_1 \alpha_i$, $\beta_2=k_2 \alpha_i$ or $\beta_1 = k_1 \alpha_i$, $\beta_2=\alpha_j + k_2\alpha_i$. Let $Q' = \oplus_{l \in \ndN_0} Q(\alpha_j + l \alpha_i)$.
	Then 
	\begin{align*}
	\bicomu_Q(u) &\in \oplus_{ k_1, k_2 \in \ndN_0}^{k_1+k_2=k}  Q(\alpha_j + k_1 \alpha_i) \ot N_i^{k_2} \oplus N_i^{k_1} \ot Q(\alpha_j + k_2 \alpha_i)
	\\&\subset Q' \ot \subalgQ{N_i}  \oplus \subalgQ{N_i} \ot Q' \matdot
	\end{align*}
	Since $v$ is a sum of such elements $u$, we obtain 
	\begin{align*} \bicomu_Q(v) \in Q' \ot \subalgQ{N_i} \oplus \subalgQ{N_i} \ot Q' \matdot \end{align*}
	Since $K_i = \coinva{Q}{\subalgQ{N_i}}$ we have $\bicomu_Q(v) \in \bicomu_Q(K_i) \subset Q \ot K_i$ and we can conclude $\bicomu_Q(v) \in Q' \ot \fK \oplus \subalgQ{N_i} \ot V$, as $\subalgQ{N_i} \cap K_i = \fK$ and $Q' \cap K_i = V$.
	Hence there exist $w \in Q'$, $x \in \subalgQ{N_i} \ot V$, such that $\bicomu_Q(v)=w \ot 1 + x$.
	As $\bicounit_Q$ is graded we get $(\id \ot \bicounit_Q) (x) = 0$. Thus
	\begin{align*}
	w = (\id\ot \bicounit_Q) (w \ot 1 + x) = (\id\ot \bicounit_Q) \bicomu_Q(v) = v \matdot
	\end{align*}
	Since $\projcomp{Q}_i(w) = 0$, $(\projcomp{Q}_i\ot \id)(x) = x$ and $v \in \filtcomod{0}(V)$ we have
	\begin{align*}
	x = (\projcomp{Q}_i \ot \id) (w \ot 1 + x)= (\projcomp{Q}_i \ot \id) \bicomu_Q (v) = 1 \ot v \matdot
	\end{align*}
	Combining the above we get $\bicomu_Q(v)=v \ot 1 + 1 \ot v$.
\end{proof}

\Cref{prop_ad_kNi_Nj_is_subobj} allows the following definition.

\begin{defi}
	Let $\scr{N}$ be a pre-Nichols system, $i \in \mathbb{I}$.
	For $j \in \mathbb{I} \setminus \lbrace i \rbrace$ we call $\scr{N}$ \textbf{$j$-well graded over $i$}, if $(\ad \, \subalgQ{\scr{N}_i})(\scr{N}_j) \in \ydcat{\subalgQ{\scr{N}_i}}{\scr{C}}$ is well graded.
\end{defi}

\begin{rema} \label{rema_adNi_Nj_irred_implies_j_graded}
	Let $\scr{N}$ be a pre-Nichols system, $i \in \mathbb{I}$ and $j \in \mathbb{I} \setminus \lbrace i \rbrace$. If $(\ad \, \subalgQ{\scr{N}_i})(\scr{N}_j) \in \ydcat{\subalgQ{\scr{N}_i}}{\scr{C}}$ is irreducible, then 
	$\scr{N}$ is $j$-well graded over $i$ by \Cref{cor_irred_graded_ydmodule_o}.
\end{rema}

\begin{prop} \label{prop_j_well_graded_charact}
	Let $\scr{N}=(Q,(N_1,\ldots,N_\theta))$ be a pre-Nichols system, $i \in \mathbb{I}$ and $j \in \mathbb{I} \setminus \lbrace i \rbrace$. The following are equivalent
	\begin{enumerate}
		\item $\scr{N}$ is $j$-well graded over $i$.
		\item The primitive elements of $Q$ in $\oplus_{k \ge 0} (\ad \, N_i)^k(N_j)$ are given by $N_j$.
	\end{enumerate}
\end{prop}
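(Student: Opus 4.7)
The plan is to observe that both conditions can be restated as the same equality of subspaces once we invoke the preceding results, so the proof is essentially an unfolding of definitions.

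First I would set $V := (\ad \, \subalgQ{N_i})(N_j) = \oplus_{k \ge 0}(\ad N_i)^k(N_j)$, and recall from \Cref{prop_ad_kNi_Nj_is_subobj} that $V$ is an $\ndN_0$-graded subobject of $K_i$ in $\ydcat{\subalgQ{N_i}}{\scr{C}}$ with $V(k) = (\ad N_i)^k(N_j)$, and that $V$ is generated as a $\subalgQ{N_i}$-module by $V(0) = N_j$. Viewing $\subalgQ{N_i}$ as an $\ndN_0$-graded Hopf algebra, the hypotheses of \Cref{defi_well_graded} are satisfied with $\theta = 1$, $\Gamma = \ndZ$, and $n_0 = 0$, so by definition $\scr{N}$ being $j$-well graded over $i$ is equivalent to the equality $N_j = V(0) = \filtcomod{0}(V)$.

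Next I would invoke \Cref{lem_primitive_elts_of_adNi_Nj}, which identifies $\filtcomod{0}(V)$ with the set of primitive elements of $Q$ that lie in $V$. Substituting this into the previous reformulation, $\scr{N}$ is $j$-well graded over $i$ if and only if the primitive elements of $Q$ in $V$ coincide with $N_j$. Since $V = \oplus_{k \ge 0}(\ad N_i)^k(N_j)$, this is precisely condition~(2), and the equivalence follows.

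There is no real obstacle here: the result is a direct combination of \Cref{prop_ad_kNi_Nj_is_subobj} (which gives the grading and the generator), \Cref{defi_well_graded} (which phrases well-gradedness as an equality $V(n_0) = \filtcomod{0}(V)$), and \Cref{lem_primitive_elts_of_adNi_Nj} (which translates $\filtcomod{0}$ into primitive elements). The only thing worth noting along the way is that the inclusion $N_j = V(0) \subset \filtcomod{0}(V)$ is automatic by \Cref{rema_defi_well_graded}, so in practice only the reverse inclusion needs to be checked in applications.
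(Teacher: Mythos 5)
Your proof is correct and follows exactly the paper's argument: unfold the definition of $j$-well graded over $i$ to the equality $N_j = \filtcomod{0}((\ad\,\subalgQ{N_i})(N_j))$ and then apply \Cref{lem_primitive_elts_of_adNi_Nj} to translate $\filtcomod{0}$ into primitive elements. The extra remarks about \Cref{prop_ad_kNi_Nj_is_subobj} and the automatic inclusion from \Cref{rema_defi_well_graded} are fine but not needed beyond what the paper already does.
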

\begin{proof}
	By definition $\scr{N}$ is $j$-well graded over $i$ if and only if 
	\begin{align*}N_j = \filtcomod{0}((\ad \, \subalgQ{N_i})(N_j)). \end{align*} With \Cref{lem_primitive_elts_of_adNi_Nj} the claim follows.
\end{proof}

\begin{defi} \label{defi_nich_sys}
	Let $\scr{N}$ be a pre-Nichols system and $i \in \mathbb{I}$.
	We call $\scr{N}$ a \textbf{Nichols system over $i$} if
	\begin{enumerate}
		\item $\subalgQ{\scr{N}_i}$ is strictly graded.
		\item $\scr{N}$ is $j$-well graded over $i$ for all $j \in \mathbb{I} \setminus \lbrace i \rbrace$.
	\end{enumerate}
\end{defi}
\begin{rema} \label{rema_nich_sys_hesch}
	\Cref{defi_nich_sys}(1) holds if and only if $\subalgQ{N_i} \cong \nich(N_i)$. In \Cref{prop_j_well_graded_charact} we showed that our definition of a Nichols system is equivalent to the one in \cite{HeSch}, see also \cite{HeSch}, Lemma~13.5.5.
\end{rema}

\begin{lem} \label{lem_ad_Ni_Nj_is_irred}
	Let $i \in \mathbb{I}$ and $\scr{N}$ be a Nichols system over $i$ and let $j \in \mathbb{I} \setminus \lbrace i \rbrace$. The following are equivalent:
	\begin{enumerate}
		\item $\scr{N}_j$ is irreducible in $\scr{C}$.
		\item $(\ad \, \subalgQ{\scr{N}_i})(\scr{N}_j)$ is irreducible in $\ydcat{\subalgQ{\scr{N}_i}}{\scr{C}}$.
	\end{enumerate}
\end{lem}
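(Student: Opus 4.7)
The plan is to apply \Cref{cor_irred_graded_ydmodule_o} to the object $V := (\ad \, \subalgQ{\scr{N}_i})(\scr{N}_j)$, viewed as an $\ndN_0$-graded object of $\ydcat{\subalgQ{\scr{N}_i}}{\scr{C}}$ via $V(k) = (\ad \scr{N}_i)^k(\scr{N}_j)$ (see \Cref{prop_ad_kNi_Nj_is_subobj}), with $R := \subalgQ{\scr{N}_i}$ treated as an $\ndN_0$-graded Hopf algebra in $\scr{C}$. Since $\scr{N}$ is a Nichols system over $i$, $\subalgQ{\scr{N}_i}$ is strictly graded, hence $\subalgQ{\scr{N}_i}(0) = \fK$ and consequently $\ydmod{\subalgQ{\scr{N}_i}(0) \boso H} = \ydmod{H} = \scr{C}$. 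Thus condition (3)(a) of \Cref{cor_irred_graded_ydmodule_o} becomes ``$V(n_0)$ is irreducible in $\scr{C}$'' for some $n_0 \in \ndZ$.

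The key observation is that the distinguished index must be $n_0 = 0$. Indeed, $\scr{N}$ being $j$-well graded over $i$ means by \Cref{defi_well_graded} that $V$ is well graded with $V(0) = \scr{N}_j = \filtcomod{0}(V)$, so condition (3)(c) holds at $n_0 = 0$; by \Cref{prop_ad_kNi_Nj_is_subobj}, $V(0) = \scr{N}_j$ also generates $V$ as an $\subalgQ{\scr{N}_i}$-module, so (3)(b) likewise holds at $n_0 = 0$.

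From here both implications are immediate. For (1) $\Rightarrow$ (2): if $\scr{N}_j = V(0)$ is irreducible in $\scr{C}$, then (3)(a), (3)(b), (3)(c) of \Cref{cor_irred_graded_ydmodule_o} all hold at $n_0 = 0$, so the corollary yields that $V$ is irreducible in $\ydcat{\subalgQ{\scr{N}_i}}{\scr{C}}$. For (2) $\Rightarrow$ (1), the corollary supplies some $n_0 \in \ndZ$ satisfying all three conditions, and the main step—the only genuine technicality—is to pin it down to $n_0 = 0$. This I would deduce as follows: condition (3)(b) together with $R$ being concentrated in non-negative degrees forces $\Sup{V} \subset n_0 + \ndN_0$, and $V(0) \ne 0$ then gives $n_0 \leq 0$; conversely, $V$ is concentrated in non-negative degrees so $V(n_0) \neq 0$ forces $n_0 \geq 0$. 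With $n_0 = 0$ established, condition (3)(a) is exactly the irreducibility of $\scr{N}_j$ in $\scr{C}$, finishing the proof.
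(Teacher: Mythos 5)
Your proof is correct and follows essentially the same route as the paper, which simply notes that $(\ad \, \subalgQ{\scr{N}_i})(\scr{N}_j)$ is well graded by the Nichols-system assumption and then invokes \Cref{cor_irred_graded_ydmodule_o}. You merely make explicit the details the paper leaves implicit, namely that $\subalgQ{\scr{N}_i}(0)=\fK$ reduces the ambient category to $\scr{C}$ and that the distinguished degree $n_0$ must be $0$.
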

\begin{proof}
	By assumption $(\ad \, \subalgQ{\scr{N}_i})(\scr{N}_j) \in \ydcat{\subalgQ{\scr{N}_i}}{\scr{C}}$ is well graded. Thus by \Cref{cor_irred_graded_ydmodule_o} the claim is implied.
\end{proof}
\section{Reflections of Nichols systems} \label{sect_nich_sys_reflections}

Let $\scr{N}$ be a pre-Nichols system. Moreover let $Q := \nsalg{\scr{N}}$ and $N_j := \scr{N}_j$ for all $j \in \mathbb{I}$.

We will construct the reflection of $\scr{N}$ step by step using the functor we obtained in \Cref{prop_funCD_exist_and_properties}. To do so, we must first talk about the rationality of $K_i \in \ydcat{\subalgQ{N_i}}{\scr{C}}$.

\begin{defi}
    Let $i \in \mathbb{I}$.
	We denote for $j \in \mathbb{I} \setminus \lbrace i \rbrace$
	\begin{align*}
	a^{\scr{N}}_{ii} &:= 2 \matcom & a^{\scr{N}}_{ij} &:= - \max \lbrace n \in \ndN_0 \, | \, (\ad N_i)^m (N_j) \ne 0 \rbrace \matdot
	\end{align*}
	We say $\scr{N}$ is \textbf{$i$-finite} if for all $j \in \mathbb{I} \setminus \lbrace i \rbrace$ we have $-a^{\scr{N}}_{ij} \in \ndN_0$, that is there is a number $m \in \ndN_0$, such that $(\ad N_i)^m (N_j) = 0$. In this case let $s^{\scr{N}}_i \in \Aut(\ndZ^\theta)$ be such that for all $j \in \mathbb{I}$ we have
	\begin{align*}
	s^{\scr{N}}_i (\alpha_j) = \alpha_j - a^{\scr{N}}_{ij} \alpha_i \matdot
	\end{align*}
\end{defi}


\begin{prop}
    Let $i \in \mathbb{I}$. We have that $\scr{N}$ is $i$-finite if and only if $K_i \in \ydcat{\subalgQ{N_i}}{\scr{C}}_{\rat}$.
\end{prop}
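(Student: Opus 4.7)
The plan is to unfold both conditions to a statement about powers of $\ad N_i$ acting on $K_i$, and then argue each direction directly. Since $\subalgQ{N_i}$ is generated as an algebra by $N_i$ with $\subalgQ{N_i}(n) = N_i^n$, and since $\ad$ is an algebra action, one has $\subalgQ{N_i}(n)\cdot v = (\ad N_i)^n(v)$ for $v \in K_i$. Moreover $(\ad N_i)^n(v) = 0$ forces $(\ad N_i)^m(v) = 0$ for every $m \geq n$. Hence $K_i \in \ydcat{\subalgQ{N_i}}{\scr{C}}_{\rat}$ if and only if for every $v \in K_i$ there exists some $m \in \ndN_0$ with $(\ad N_i)^m(v) = 0$.

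For the direction \emph{rational $\Rightarrow$ $i$-finite}: by \Cref{rema_structure_of_Ki} we have $N_j = K_i(\alpha_j) \subset K_i$ for each $j \in \mathbb{I} \setminus \lbrace i \rbrace$. Applying the reformulated rationality to each element of a basis of the finite-dimensional space $N_j$ produces an exponent, and the maximum $m$ of these finitely many exponents satisfies $(\ad N_i)^m(N_j) = 0$, so $-a^{\scr{N}}_{ij} \in \ndN_0$.

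For the direction \emph{$i$-finite $\Rightarrow$ rational}: introduce
\[
K_i^{\rat} := \lbrace v \in K_i \, | \, (\ad N_i)^n(v) = 0 \text{ for some } n \in \ndN_0 \rbrace.
\]
By $i$-finiteness each $V_j := (\ad \subalgQ{N_i})(N_j)$ is finite-dimensional and therefore contained in $K_i^{\rat}$, and by \Cref{rema_structure_of_Ki} the subspaces $V_j$ for $j \neq i$ generate $K_i$ as an algebra. It thus suffices to prove that $K_i^{\rat}$ is a subalgebra of $K_i$. The unit belongs to $K_i^{\rat}$ because $\ad(a)(1) = \bicounit(a)\cdot 1 = 0$ for $a \in \subalgQ{N_i}(n)$ with $n \geq 1$ (using that $\subalgQ{N_i}$ is connected). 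For closure under multiplication, the fact that $\bimu_{K_i}$ is a morphism in $\ydcat{\subalgQ{N_i}}{\scr{C}}$ expresses $\ad(N_i^n)(uv)$ as a sum, indexed by summands of $\bicomu_{\subalgQ{N_i}}(N_i^n) \in \bigoplus_{k+l=n} N_i^k \ot N_i^l$, of terms that pair an $(\ad N_i)^k$-action on $u$ with an $(\ad N_i)^l$-action on $v$. If $(\ad N_i)^{m_1}(u) = (\ad N_i)^{m_2}(v) = 0$, then for $n \geq m_1 + m_2$ either $k \geq m_1$ or $l \geq m_2$ in each summand, so every summand vanishes and $uv \in K_i^{\rat}$.

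The main subtlety lies in the last expansion: the $\subalgQ{N_i}$-action on $K_i \ot K_i$ in the braided category $\ydcat{\subalgQ{N_i}}{\scr{C}}$ involves braidings, both the braiding of $\scr{C}$ and the Yetter-Drinfeld braiding from the $\subalgQ{N_i}$-coaction on $K_i$. Because all these braidings permute tensor factors while preserving the $\ndN_0^\theta$-grading, the count of degrees $k$ and $l$ used to conclude vanishing is unaffected, and the argument goes through.
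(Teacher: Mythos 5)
Your proof is correct, and the direction \emph{rational $\Rightarrow$ $i$-finite} is exactly the paper's argument (finite-dimensionality of $N_j = K_i(\alpha_j)$ gives a uniform annihilating exponent). For the converse the paper is terser and leans entirely on the grading described in \Cref{rema_structure_of_Ki}: since $K_i$ is generated by the components $K_i(\alpha_j + n\alpha_i) = (\ad N_i)^n(N_j)$ with $n \le -a^{\scr{N}}_{ij}$, the support of $K_i$ meets each ray $\beta + \ndN_0\alpha_i$ in a finite set, so $N_i^n \cdot K_i(\beta) \subset K_i(\beta + n\alpha_i) = 0$ for $n$ large, and rationality follows degree by degree. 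Your route — showing that the locally nilpotent elements $K_i^{\rat}$ form a subalgebra containing the generators — is a genuinely different and more hands-on argument; it has the virtue of not needing to know where $\Sup{K_i}$ lives, at the cost of having to unwind the module structure on $K_i \ot K_i$.

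One point in that unwinding deserves tightening. After applying $\bicomu_{\subalgQ{N_i}}$ and the braiding of $\scr{C}$, the summand indexed by $(k,l)$ has the form $\ad(a_{(1)})\bigl((a_{(2)})_{(-1)}\cdot u\bigr)\cdot \ad\bigl((a_{(2)})_{(0)}\bigr)(v)$, so the first factor is $(\ad N_i)^k$ applied not to $u$ but to $h\cdot u$ for some $h \in H$. Your justification (''the braidings preserve the $\ndN_0^\theta$-grading'') does not directly apply, because you defined $K_i^{\rat}$ element-wise rather than through the grading. The fix is easy and should be stated: either observe that $K_i^{\rat}$ is $H$-stable (the iterated adjoint action $N_i^{\ot n}\ot K_i \to K_i$ is a morphism in $\scr{C}$, hence $(\ad N_i)^n(h\cdot u) \subset H\cdot (\ad N_i)^n(N_i'\ot u)$ with $N_i'=S_H^{-1}(H)\cdot N_i^{\ot n}=N_i^{\ot n}$), or note that $K_i^{\rat}$ is a graded subspace because $\ad$ is graded and then run your degree count on homogeneous elements, where the $H$-action is harmless since each $K_i(\beta)$ is an $H$-submodule. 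With that sentence added the argument is complete.
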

\begin{proof}
	Follows with \Cref{rema_structure_of_Ki} and since $N_j$ is finite-dimensional for all $j \in \mathbb{I}$.
\end{proof}

Let $i \in \mathbb{I}$ and assume $\scr{N}$ is a Nichols system over $i$.

\begin{rema}  \label{rema_mii_equals_mi}
    We have $N_i^* \in \scr{C}$ with $H$-action and $H$-coaction given by
    \begin{align*}
        (h \cdot f) (v) = f( \antip_H (h) \cdot v) \matcom && f_{(-1)}f_{(0)}(v) = \antip_H^{-1}(v_{(-1)}) f(v_{(1)})
    \end{align*}
    for all $h\in H$, $f\in N_i^*$, $v \in N_i$ for details refer to \cite{HeSch}, Lemma~4.2.2.
	Denote $\subalgQ{N_i^*}$ for the Nichols algebra of $N_i^*$, i.e. the $\ndN_0$-graded Hopf algebra generated as an algebra by $N_i^*$ with primitive elements $N_i^*$.
	Then $\subalgQ{N_i^*}$ and $\subalgQ{N_i}$ are a dual pair of locally finite $\ndN_0$-graded Hopf algebras in $\scr{C}$. For details refer to \cite{HeSch}, Corollary~7.2.8.
	In particular for $k \in \ndN$ we have $N_i^k=0$ if and only if ${N_i^*}^k=0$.
\end{rema}

\begin{defi} \label{defi_refl_yd_mod}
	Assume $\scr{N}$ is $i$-finite. We denote
	\begin{align*}
		(\funCD_i, \funCDcoh_i) : \ydcat{\subalgQ{N_i}}{\scr{C}}_{\rat} \rightarrow \ydcat{\subalgQ{N_i^*}}{\scr{C}}_{\rat}
	\end{align*}
	for the braided monoidal isomorphism from \Cref{prop_funCD_exist_and_properties}. Moreover let $\ad^{\funCD_i} : \subalgQ{N_i^*} \ot \funCD_i(K_i) \rightarrow \funCD_i(K_i)$ be the $\subalgQ{N_i^*}$-action on $\funCD_i(K_i)$.
	Finally denote 
	\begin{align*}
		\refl_i(Q) := \funCD_i(K_i) \boso \subalgQ{N_i^*}
	\end{align*}
	and let
	\begin{align*}
		(\funCD_{\refl_i},\funCDcoh_{\refl_i})  := (\funCD_{K_i, \subalgQ{N_i}},\funCDcoh_{K_i, \subalgQ{N_i}}) : \ydcat{Q}{\scr{C}}_{\rat} \rightarrow \ydcat{\refl_i(Q)}{\scr{C}}_{\rat}
	\end{align*}
	be the reflection functor induced by $(\funCD_i, \funCDcoh_i)$, see \Cref{defi_funCD_induced_funct}.	
\end{defi}

\begin{lem} \label{lem_structure_funCDiKi}
	We view $\subalgQ{N_i^*}$ as $\ndZ^\theta$-graded via $\subalgQ{N_i^*}(-n \alpha_i) = {N_i^*}^n$ for $n \in \ndN_0$ and where all other components are zero.
	\begin{enumerate}
		\item $\funCD_i(K_i) \in \ydcat{\subalgQ{N_i^*}}{\scr{C}}_{\rat}$ is a $\ndN_0^\theta$-graded Hopf algebra with components $\funCD_i(K_i)(\alpha)=K_i(\alpha) $
		for all $\alpha \in \ndN_0^\theta$.
		\item The algebra $\funCD_i(K_i)$ is generated by the spaces
		\begin{align*}
			K_i(\alpha_j + n\alpha_i) = (\ad N_i)^n (N_j) \matcom && j \in \mathbb{I} \setminus \lbrace i \rbrace,\, 0 \le n \le -a^{\scr{N}}_{ij} \matdot
		\end{align*}
	\end{enumerate}
\end{lem}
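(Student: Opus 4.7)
For part (1), my plan is to apply \Cref{prop_gam_grad_funCD} with $\Gamma = \ndZ^\theta$, $\alpha = \alpha_i$, and $\Gamma' = \bigoplus_{j \ne i} \ndZ \alpha_j$. The $\ndZ^\theta$-gradings on $\subalgQ{N_i}$ and $\subalgQ{N_i^*}$ fixed in the statement of this lemma coincide with those required in the paragraph preceding \Cref{prop_gam_grad_funCD}. Since $K_i$ is $\ndN_0^\theta$-graded by \Cref{rema_structure_of_Ki}, we view it as $\ndZ^\theta$-graded with $K_i(\gamma)=0$ for $\gamma \notin \ndN_0^\theta$. Then \Cref{prop_gam_grad_funCD} yields that $\funCD_i(K_i)$ is a $\ndZ^\theta$-graded Hopf algebra in $\ydcat{\subalgQ{N_i^*}}{\scr{C}}$ with $\funCD_i(K_i)(\gamma) = K_i(\gamma)$ for all $\gamma \in \ndZ^\theta$, which restricts to an $\ndN_0^\theta$-grading with the claimed components.

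For part (2), I first identify a finite generating set for $K_i$. By \Cref{rema_structure_of_Ki}, the algebra $K_i$ is generated by the spaces $(\ad \, \subalgQ{N_i})(N_j) = \bigoplus_{n \ge 0} (\ad N_i)^n(N_j)$ for $j \in \mathbb{I} \setminus \lbrace i \rbrace$, and we have $K_i(\alpha_j + n \alpha_i) = (\ad N_i)^n(N_j)$. Since $\scr{N}$ is $i$-finite, the definition of $a^{\scr{N}}_{ij}$ forces $(\ad N_i)^n (N_j) = 0$ whenever $n > -a^{\scr{N}}_{ij}$, so the algebra $K_i$ is in fact generated by the finitely many spaces $(\ad N_i)^n(N_j)$ with $j \ne i$ and $0 \le n \le -a^{\scr{N}}_{ij}$.

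It then remains to transfer this generation property to $\funCD_i(K_i)$. Since $(\funCD_i, \funCDcoh_i)$ is a braided monoidal isomorphism, it sends algebras to algebras and subalgebras to subalgebras: the subalgebra of $\funCD_i(K_i)$ generated by the listed spaces (which are subobjects of $\funCD_i(K_i)$ by part (1)) corresponds via $\funCD_i^{-1}$ to a subalgebra of $K_i$ containing our finite generating set, and must therefore equal all of $K_i$. I expect this transfer to be the main subtle point, since the multiplication of $\funCD_i(K_i)$ is $\funCD_i(\bimu_{K_i}) \circ \funCDcoh_{K_i,K_i}$ rather than literally that of $K_i$; however, being generated as an algebra is a categorical property preserved under monoidal isomorphisms, so the argument goes through without needing any explicit description of how $\funCD_i$ changes the action and coaction.
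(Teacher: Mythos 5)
Part (1) of your proposal matches the paper's argument exactly: it is a direct application of \Cref{prop_gam_grad_funCD} with $\Gamma = \ndZ^\theta$, $\alpha = \alpha_i$ and $\Gamma' = \oplus_{j\ne i}\ndZ\alpha_j$, and your identification of the gradings is the intended one. Likewise, your reduction of the generating set of $K_i$ to the finitely many spaces $(\ad N_i)^n(N_j)$, $0\le n\le -a^{\scr{N}}_{ij}$, via \Cref{rema_structure_of_Ki} and $i$-finiteness is exactly what the paper uses.

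Where you diverge is in how you transfer generation to $\funCD_i(K_i)$, and your categorical detour has a soft spot. The spaces $K_i(\alpha_j+n\alpha_i)$ are subobjects of $\funCD_i(K_i)$ only in $\scr{C}$, not in $\ydcat{\subalgQ{N_i^*}}{\scr{C}}$ (the $\subalgQ{N_i^*}$-action moves them between graded components), so the ``subalgebra generated by the listed spaces'' is a priori only a subspace closed under multiplication, not an object of $\ydcat{\subalgQ{N_i^*}}{\scr{C}}_{\rat}$ to which $\funCD_i^{-1}$ can be applied; the slogan that generation is preserved by monoidal isomorphisms applies cleanly only when the generators are subobjects in the category on which the functor acts. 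The paper short-circuits this with \Cref{prop_funCD_exist_and_properties}(2): since $\bimu_{K_i}$ is a graded morphism, the Hopf algebra $\funCD_i(K_i)$ has \emph{literally the same} multiplication as $K_i$ as a linear map (this is recorded in the proof of \Cref{prop_gam_grad_funCD}), and combined with part (1), which identifies the underlying graded vector spaces, the generation statement for $\funCD_i(K_i)$ becomes word-for-word the statement for $K_i$. So your conclusion is correct, but the honest justification of the transfer is precisely the citation of \Cref{prop_funCD_exist_and_properties}(2) — which, note, is still only a statement about gradings, so the paper's policy of never describing the action and coaction explicitly is respected either way.
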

\begin{proof}
	(1) is implied by \Cref{prop_gam_grad_funCD}.
	(2) is implied by (1),  \Cref{prop_funCD_exist_and_properties}(2) and \Cref{rema_structure_of_Ki}. 
\end{proof}

\begin{notation}
	If $\scr{N}$ is $i$-finite, then denote $\refl_i(N_i) := N_i^*$ and for $j \in \mathbb{I} \setminus \lbrace i \rbrace$ let 
	\begin{align*}
	\refl_i(N_j) := (\ad N_i)^{-a^{\scr{N}}_{ij}} (N_j) \matdot
	\end{align*}
	Finally let $\refl_i(\scr{N}) := (\refl_i(Q),(\refl_i(N_1), \ldots, \refl_i(N_\theta) ))$.
\end{notation}

\begin{lem} \label{lem_structure_funCDiKi2}
	Assume $\scr{N}$ is $i$-finite and let $j \in \mathbb{I} \setminus \lbrace i \rbrace$.
	If $N_j$ is irreducible in $\scr{C}$, then $\refl_i(N_j)$ is irreducible in $\scr{C}$ and for $n \in \ndN_0$ we have
		\begin{align*}
			(\ad^{\funCD_i}{N_i^*} )^n ( \refl_i(N_j) ) =
			\begin{cases}
			(\ad N_i)^{-a^{\scr{N}}_{ij}-n} (N_j), &\text{if $n \le -a^{\scr{N}}_{ij}$,} \\
			0, &\text{if $n > -a^{\scr{N}}_{ij}$.}
			\end{cases}
		\end{align*}
\end{lem}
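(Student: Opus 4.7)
The plan is to analyze the graded subobject $V := (\ad \, \subalgQ{N_i})(N_j) \subset K_i$ and then apply the functor $\funCD_i$. By \Cref{prop_ad_kNi_Nj_is_subobj}, $V$ is a $\ndN_0$-graded subobject of $K_i$ in $\ydcat{\subalgQ{N_i}}{\scr{C}}$ with $V(k) = (\ad N_i)^k(N_j)$, generated as a $\subalgQ{N_i}$-module by $V(0) = N_j$. The $i$-finiteness of $\scr{N}$ makes $V$ finite-dimensional with support $\{0, 1, \ldots, -a^{\scr{N}}_{ij}\}$. Since $\scr{N}$ is a Nichols system over $i$ and $N_j$ is irreducible in $\scr{C}$, \Cref{lem_ad_Ni_Nj_is_irred} yields that $V$ is irreducible in $\ydcat{\subalgQ{N_i}}{\scr{C}}$.

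Applying $\funCD_i$, \Cref{prop_funCD_properties_irred} implies that $\funCD_i(V)$ is irreducible in $\ydcat{\subalgQ{N_i^*}}{\scr{C}}_{\rat}$, while \Cref{prop_funCD_exist_and_properties}(1) gives $\funCD_i(V)(n) = V(-n)$, so $\funCD_i(V)$ has $\ndZ$-support $\{a^{\scr{N}}_{ij}, a^{\scr{N}}_{ij}+1, \ldots, 0\}$. Now apply \Cref{cor_irred_graded_ydmodule_o} with $\theta = 1$, $\Gamma = \ndZ$ and $R = \subalgQ{N_i^*}$: irreducibility yields some $n_0 \in \ndZ$ such that $\funCD_i(V)(n_0)$ is irreducible in $\ydmod{\subalgQ{N_i^*}(0)\boso H}$, generates $\funCD_i(V)$ as a $\subalgQ{N_i^*}$-module, and satisfies $\Sup{\funCD_i(V)} \subset n_0 + \ndN_0$ by well-gradedness. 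The containment forces $n_0 = a^{\scr{N}}_{ij}$, hence $\funCD_i(V)(n_0) = V(-a^{\scr{N}}_{ij}) = \refl_i(N_j)$. Since $\subalgQ{N_i^*}(0) = \fK$ and $\fK \boso H = H$, this proves that $\refl_i(N_j)$ is irreducible in $\scr{C}$.

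The explicit formula then follows by tracking degrees: the action $\ad^{\funCD_i}$ of $\subalgQ{N_i^*}(n) = {N_i^*}^n$ raises the $\ndZ$-degree of $\funCD_i(V)$ by $n$, and since $\refl_i(N_j) = \funCD_i(V)(n_0)$ generates $\funCD_i(V)$, one obtains
\begin{align*}
(\ad^{\funCD_i} N_i^*)^n (\refl_i(N_j)) = \funCD_i(V)(a^{\scr{N}}_{ij} + n) = V(-a^{\scr{N}}_{ij} - n)
\end{align*}
for all $n \in \ndN_0$. Using the description of $V$ this equals $(\ad N_i)^{-a^{\scr{N}}_{ij} - n}(N_j)$ when $n \le -a^{\scr{N}}_{ij}$ and vanishes otherwise.

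The point most in need of careful verification is that $\refl_i(N_j)$ is really the generating component supplied by \Cref{cor_irred_graded_ydmodule_o}. This depends on reading the reversal in \Cref{prop_funCD_exist_and_properties}(1) correctly, combined with the fact that $\subalgQ{N_i^*}$ is $\ndN_0$-graded as a Hopf algebra in $\scr{C}$, so its action can only raise the $\ndZ$-degree on $\funCD_i(V)$ and the generating component must therefore sit at the smallest value in its support, namely at $a^{\scr{N}}_{ij}$.
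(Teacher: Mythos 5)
Your proposal is correct and follows essentially the same route as the paper: identify $V=(\ad\,\subalgQ{N_i})(N_j)$ as irreducible via \Cref{lem_ad_Ni_Nj_is_irred}, transport it through $\funCD_i$ using \Cref{prop_funCD_exist_and_properties} and \Cref{prop_funCD_properties_irred}, and apply \Cref{cor_irred_graded_ydmodule_o}(1)$\implies$(3) to the reversed grading, whose smallest-degree component is $\refl_i(N_j)$. Your extra care in justifying that the generating component sits at degree $a^{\scr{N}}_{ij}$ is exactly the point the paper treats more tersely ("the component of smallest degree"), and your degree-tracking for the explicit formula matches the paper's.
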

\begin{proof}
	Let $V := (\ad \, \subalgQ{N_i})(N_j) \subset K_i$.
	By \Cref{lem_ad_Ni_Nj_is_irred} we obtain that $V \in \ydcat{\subalgQ{N_i}}{\scr{C}}$ is irreducible. Then $\funCD_i(V) \in \ydcat{\subalgQ{N_i^*}}{\scr{C}} \cong \ydmod{\subalgQ{N_i^*} \boso H}$ is irreducible and $\ndZ$-graded via $\funCD_i(V)(n)=V(-n)$ for all $n \in \ndZ$ by \Cref{prop_funCD_exist_and_properties} and \Cref{prop_funCD_properties_irred}. Since the component of smallest degree is $\funCD_i(V)(a^{\scr{N}}_{ij})=V(-a^{\scr{N}}_{ij})=\refl_i(N_j)$, \Cref{cor_irred_graded_ydmodule_o}(1)$\implies$(3) yields that $\refl_i(N_j)$ is irreducible in $\scr{C}$ and  
	\begin{align*}
		(\ad^{\funCD_i}{N_i^*} )^n ( \refl_i(N_j) ) = \funCD_i(V)(a^{\scr{N}}_{ij}+n) =
			(\ad N_i)^{-a^{\scr{N}}_{ij}-n} (N_j)
	\end{align*}
	for all $0 \le n \le -a^{\scr{N}}_{ij}$.
\end{proof}

\begin{prop} \label{prop_RiQ_is_nich_sys}
	Assume $\scr{N}$ is $i$-finite. Moreover assume that $N_j$ is irreducible in $\scr{C}$ for all $j \in \mathbb{I}\setminus \lbrace i \rbrace$.
	\begin{enumerate}
		\item $\refl_i(N_j)$ is irreducible in $\scr{C}$ for all $j \in \mathbb{I}\setminus \lbrace i \rbrace$.
		\item $\refl_i(\scr{N})$ is a pre-Nichols system, that is
		\begin{enumerate}
			\item The algebra $\refl_i(Q)$ is generated by $\refl_i(N_1), \ldots, \refl_i(N_\theta)$.
			\item $\refl_i(Q)$ is a $\ndN_0^\theta$-graded Hopf algebra in $\scr{C}$, where the grading is given by 	$\refl_i(Q)(\alpha_j) = \refl_i(N_j)$ for $j \in \mathbb{I}$.
		\end{enumerate}
		\item $\refl_i(\scr{N})$ is a Nichols system over $i$.
		\item $\refl_i(\scr{N})$ is $i$-finite.
	\end{enumerate}
\end{prop}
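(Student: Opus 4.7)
The four claims fall into place in order. Parts (1) and (4) follow immediately from \Cref{lem_structure_funCDiKi2}, which gives both irreducibility of each $\refl_i(N_j)$ and the vanishing $(\ad^{\funCD_i} N_i^*)^n(\refl_i(N_j)) = 0$ for $n > -a^{\scr{N}}_{ij}$, whence $-a^{\refl_i(\scr{N})}_{ij} \le -a^{\scr{N}}_{ij} < \infty$. The substance is therefore in (2) and (3), both of which proceed through the explicit structure of $\refl_i(Q) = \funCD_i(K_i) \boso \subalgQ{N_i^*}$.

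For (2a) the plan is to use \Cref{lem_structure_funCDiKi}(2) to reduce generation of $\refl_i(Q)$ to generation of $\funCD_i(K_i)$ by the spaces $K_i(\alpha_j + n\alpha_i) = (\ad N_i)^n(N_j)$ with $j\neq i$, $0 \le n \le -a^{\scr{N}}_{ij}$. By \Cref{lem_structure_funCDiKi2} each such space equals $(\ad^{\funCD_i} N_i^*)^{-a^{\scr{N}}_{ij}-n}(\refl_i(N_j))$; since the adjoint action of $\subalgQ{N_i^*}$ on $\funCD_i(K_i)$ realised inside the bosonisation $\refl_i(Q)$ coincides with $\ad^{\funCD_i}$, these elements all lie in the subalgebra of $\refl_i(Q)$ generated by the $\refl_i(N_j)$ for $j \ne i$ together with $\refl_i(N_i)=N_i^*$. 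Since $N_i^*$ in turn generates $\subalgQ{N_i^*}$ as an algebra, (2a) follows.

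For (2b) my strategy is to invoke \Cref{prop_grading_on_funCDK}(2) with $\Gamma=\ndZ^\theta$, $\alpha=\alpha_i$, $\Gamma' = \bigoplus_{j\ne i}\ndZ \alpha_j$ to endow $\refl_i(Q)$ with a natural $\ndZ^\theta$-grading, and then to relabel it via \Cref{rema_shifting_grading_by_autom} using the involution $s^{\scr{N}}_i$. In the natural grading each generator $(\ad N_i)^n(N_j)$ of $\funCD_i(K_i)$ sits in degree $\alpha_j + n\alpha_i$ and $N_i^*$ in degree $-\alpha_i$; after the shift by $s^{\scr{N}}_i$ they land in degrees $\alpha_j + (-a^{\scr{N}}_{ij}-n)\alpha_i$ and $\alpha_i$ respectively, all of which lie in $\ndN_0^\theta$. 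Since by (2a) these elements generate $\refl_i(Q)$ as an algebra and the shifted grading is multiplicative, the entire support lies in $\ndN_0^\theta$. Direct inspection of the same formula shows that the $\alpha_j$-component in this shifted grading is precisely $\refl_i(N_j)$ for each $j\in\mathbb{I}$.

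For (3), part (a) is immediate since $\subalgQ{\refl_i(N_i)} = \subalgQ{N_i^*}$ is a Nichols algebra and so strictly graded by construction. Part (b) is the central argument. Setting $V := (\ad \subalgQ{N_i})(N_j)$, \Cref{lem_ad_Ni_Nj_is_irred} gives irreducibility of $V$ in $\ydcat{\subalgQ{N_i}}{\scr{C}}$, and \Cref{prop_funCD_properties_irred} combined with \Cref{prop_gam_grad_funCD} transports this to an irreducible $\ndZ^\theta$-graded object $\funCD_i(V) \in \ydcat{\subalgQ{N_i^*}}{\scr{C}}$ whose support along $\alpha_i$ is $\{a^{\scr{N}}_{ij},\ldots,0\}$. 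Applying \Cref{cor_irred_graded_ydmodule_o} then shows $\funCD_i(V)$ is well graded with bottom component $\refl_i(N_j)$ generating it as a $\subalgQ{N_i^*}$-module, so that $(\ad \subalgQ{N_i^*})(\refl_i(N_j)) = \funCD_i(V)$ and $\filtcomod{0}(\funCD_i(V)) = \refl_i(N_j)$. Finally, \Cref{lem_primitive_elts_of_adNi_Nj} applied inside the pre-Nichols system $\refl_i(\scr{N})$ established in (2) identifies this $\filtcomod{0}$ with the primitive elements of $\refl_i(Q)$ in $(\ad \subalgQ{N_i^*})(\refl_i(N_j))$, and \Cref{prop_j_well_graded_charact} then yields $j$-well-gradedness over $i$. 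The main obstacle I anticipate is the grading bookkeeping in (2b), together with the need to confirm that the adjoint action of $\subalgQ{N_i^*}$ on $\funCD_i(K_i)$ inside the bosonisation genuinely agrees with $\ad^{\funCD_i}$, which is the point that makes both (2a) and the identification $(\ad \subalgQ{N_i^*})(\refl_i(N_j)) = \funCD_i(V)$ in (3)(b) go through cleanly.
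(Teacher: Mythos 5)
Your proposal is correct and follows essentially the same route as the paper: (1) and (4) from \Cref{lem_structure_funCDiKi2}, (2)(a) by reducing to the generators of $\funCD_i(K_i)$ and descending via $\ad^{\funCD_i}N_i^*$, (2)(b) by the grading of \Cref{prop_grading_on_funCDK}(2) shifted by $s^{\scr{N}}_i$, and (3) from irreducibility of $\funCD_i(V)$. The one point you flag as an anticipated obstacle — that the bosonisation product realises $\ad^{\funCD_i}$ — is exactly the short computation the paper supplies, namely $\bimu_{\refl_i(Q)}(1\ot a, x\ot 1)=(\ad^{\funCD_i}a)(x)\ot 1 + a_{(-1)}\cdot x\ot a_{(0)}$ for primitive $a\in N_i^*$, which isolates $(\ad^{\funCD_i}a)(x)$ inside $\widetilde{Q}$.
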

\begin{proof}
	(1) and (4) are implied by \Cref{lem_structure_funCDiKi2}.
	(2)(a): Let $\widetilde{Q}$ be the subalgebra of $\refl_i(Q)$ generated by $\refl_i(N_1), \ldots, \refl_i(N_\theta)$. Since $N_i^* = \refl_i(N_i) \subset \widetilde{Q}$ and for $a,b \in \subalgQ{N_i^*}$ we have
	\begin{align*}
		\bimu_{\refl_i(Q)} (1 \ot a, 1 \ot b) = \bicounit_{\subalgQ{N_i^*}} (a_{(1)}) 1 \ot a_{(2)} b = 1 \ot ab \matcom
	\end{align*}
	we obtain that $\subalgQ{N_i^*} \subset \widetilde{Q}$. Now it remains to show that $\funCD_i(K_i) \subset \widetilde{Q}$. By \Cref{lem_structure_funCDiKi}(2) this narrows to showing $(\ad N_i)^n (N_j) \subset \widetilde{Q}$ for all $j \in \mathbb{I} \setminus \lbrace i \rbrace$, $0 \le n \le -a^{\scr{N}}_{ij}$. For $n=-a^{\scr{N}}_{ij}$ this is given, as $\refl_i(N_j) \subset \widetilde{Q}$. 
	So assume $( \ad N_i)^n (N_j) \subset \widetilde{Q}$ for some $1\le n \le -a^{\scr{N}}_{ij}$. We show $(\ad N_i)^{n-1} (N_j) \subset \widetilde{Q}$.
	Now for $x \in (\ad N_i)^n (N_j) \subset \widetilde{Q}$, $a \in N_i^*$ we obtain since $a$ is primitive
	\begin{align*}
		\widetilde{Q} \ni \bimu_{\refl_i(Q)} (1 \ot a, x \ot 1) = (\ad^{\funCD_i} a)(x) \ot 1 + a_{(-1)} \cdot x \ot a_{(0)} \matdot
	\end{align*}
	Since $a_{(-1)} \cdot x \ot a_{(0)} \in \widetilde{Q}$ we obtain $(\ad^{\funCD_i} a)(x) \in \widetilde{Q}$. Then \Cref{lem_structure_funCDiKi2} yields
	\begin{align*}
		(\ad N_i)^{n-1} (N_j) 
		=& (\ad^{\funCD_i} N_i^*)^{-a^{\scr{N}}_{ij}-n+1} (\refl_i(N_j))
		\\=& (\ad^{\funCD_i} N_i^*) \left( (\ad^{\funCD_i} N_i^*)^{-a^{\scr{N}}_{ij}-n} (\refl_i(N_j)) \right)
		\\=& (\ad^{\funCD_i} N_i^*) \left( \ad(N_i)^n (N_j) \right) \subset \widetilde{Q} \matcom
	\end{align*}
	implying (1) inductively.
	
	(2)(b): $K_i$ is an $\ndZ^\theta$-graded Hopf algebra in $\scr{C}$ by extending the $\ndN_0^\theta$-grading with zero components. Hence using \Cref{prop_grading_on_funCDK}(2) we obtain that $\refl_i(Q)$ is a $\ndZ^\theta$-graded Hopf algebra in $\scr{C}$ by
	\begin{align*}
		\refl_i(Q) ( \gamma) = \oplus_{m_1 \in \ndZ, m_2 \in \ndN_0}^{m_1-m_2=n_i} K_i (m_1 \alpha_i + \sum_{j \in \mathbb{I}\setminus \lbrace i \rbrace} n_j \alpha_j) \ot {N_i^*}^{m_2} \matcom
	\end{align*}
	where $n_1, \ldots, n_\theta \in \ndZ$, $\gamma = \sum_{j \in \mathbb{I}} n_j \alpha_j \in \ndZ^\theta$ (observe that this is not $\ndN_0^\theta$-graded, as $\refl_i(Q)(-\alpha_i)=N_i^* \ne 0$). Observe that
	\begin{align*}
	    s^{\scr{N}}_i ( \gamma ) = -n_i \alpha_i + \sum_{j \in \mathbb{I}\setminus \lbrace i \rbrace} n_j (\alpha_j - a^{\scr{N}}_{ij} \alpha_i)) \matdot
	\end{align*}
	Hence by applying $s^{\scr{N}}_i$ to that grading as in \Cref{rema_shifting_grading_by_autom} we get that $\refl_i(Q)$ is a $\ndZ^\theta$-graded Hopf algebra in $\scr{C}$ by
	\begin{align*}
		\refl_i(Q) ( \gamma) = \oplus_{m_1 \in \ndZ, m_2 \in \ndN_0}^{m_1-m_2=-n_i - \sum_{j \in \mathbb{I}\setminus \lbrace i \rbrace} n_j  a^{\scr{N}}_{ij}} K_i (m_1 \alpha_i + \sum_{j \in \mathbb{I}\setminus \lbrace i \rbrace} n_j \alpha_j ) \ot {N_i^*}^{m_2} \matdot
	\end{align*}
	By substituting $m_1$ in the sum above with $-m_1 - \sum_{j \in \mathbb{I}\setminus \lbrace i \rbrace} n_ja_{ij}$ we obtain that $\refl_i(Q)$ is a $\ndZ^\theta$-graded Hopf algebra in $\scr{C}$ by
	\begin{align*}
		\refl_i(Q) ( \gamma) = \oplus_{m_1 \in \ndZ, m_2 \in \ndN_0}^{m_1+m_2=n_i} K_i (-m_1 \alpha_i + \sum_{j \in \mathbb{I}\setminus \lbrace i \rbrace} n_j (\alpha_j - a^{\scr{N}}_{ij} \alpha_i)) \ot {N_i^*}^{m_2} \matdot
	\end{align*}
	For this grading we have $\refl_i(N_i) = K_i(0) \ot N_i^* \subset \refl_i(Q)(\alpha_i)$ and for $j \in \mathbb{I} \setminus \lbrace i \rbrace$ we have 
	\begin{align*}
		\refl_i(N_j) = (\ad N_i)^{-a^{\scr{N}}_{ij}} (N_j) = K_i(\alpha_j - a^{\scr{N}}_{ij} \alpha_i) \ot {N_i^*}^0 \subset \refl_i(Q)(\alpha_j) \matdot
	\end{align*}
	Thus by (2)(a) we obtain $\refl_i(N_j) = \refl_i(Q)(\alpha_j)$ for all $j \in \mathbb{I}$ and that this $\ndZ^\theta$-grading on $\refl_i(Q)$ is in fact a $\ndN_0^\theta$-grading on $\refl_i(Q)$.
	
	(3): The primitive elements of $\subalgQ{\refl_i(N_i)} = \subalgQ{N_i^*}$ are given by $N_i^*$ by definition. Now let $j \in \mathbb{I} \setminus \lbrace i \rbrace $.
	By \Cref{lem_ad_Ni_Nj_is_irred} we know that the object $V := (\ad \, \subalgQ{N_i})(N_j) \in \ydcat{\subalgQ{N_i}}{\scr{C}}$ is irreducible. In \Cref{lem_structure_funCDiKi2} we see that $(\ad^{\funCD_i}{\subalgQ{N_i^*}} ) ( \refl_i(N_j) ) = \funCD_i(V)$.
	Now $\funCD_i(V) \in \ydcat{\subalgQ{N_i^*}}{\scr{C}}$ is irreducible by \Cref{prop_funCD_properties_irred} and as discussed in \Cref{rema_adNi_Nj_irred_implies_j_graded} we obtain that $\refl_i(\scr{N})$ is $j$-well graded over $i$.
\end{proof}

\begin{defi}
	If $\scr{N}$ is $i$-finite and $N_j$ is irreducible in $\scr{C}$ for all $j \in \mathbb{I}\setminus \lbrace i \rbrace$, then we call the Nichols system $\refl_i(\scr{N})$ over $i$ the \textbf{$i$-th reflection of $\scr{N}$}.
\end{defi}

\begin{rema}
	The notations for Nichols systems and its reflections are consistent: Indeed if $\scr{N}$ is $i$-finite and $N_j$ is irreducible in $\scr{C}$ for all $j \in \mathbb{I}\setminus \lbrace i \rbrace$, then
	$\nsalg{\refl_i(\scr{N})} = \refl_i(Q) = \refl_i (\nsalg{\scr{N}})$ and ${\refl_i(\scr{N})}_j = \refl_i(N_j) = \refl_i ( {\scr{N}}_j)$ for all $j \in \mathbb{I}$.
\end{rema}

\begin{notation}
    For $j \in \mathbb{I}$ let 
	\begin{align*}
	    m^{\scr{N}}_j := \max \lbrace m \in \ndN_0 \, | \, 
		Q(m \alpha_j)=\scr{N}_{j}^m \ne 0 \rbrace \matdot
	\end{align*}
\end{notation}

\begin{prop} \label{prop_sup_of_reflection}
    Assume $\scr{N}$ is $i$-finite, $m^{\scr{N}}_i \in \ndN_0$ and that $N_j$ is irreducible in $\scr{C}$ for all $j \in \mathbb{I}\setminus \lbrace i \rbrace$. Then 
    \begin{align*}
        \Sup{Q} = m^{\scr{N}}_i \alpha_i + s^{\scr{N}}_i (\Sup{\refl_i(Q)})\matdot
    \end{align*}
    In particular $\Sup{Q} \subset m^{\scr{N}}_i \alpha_i + s^{\scr{N}}_i (\ndN_0^\theta) $.
\end{prop}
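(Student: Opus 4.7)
The plan is to prove the equality of supports by writing down both sides via the bosonisation decompositions of $Q$ and $\refl_i(Q)$, then observing that a simple substitution in the summation indices realises the bijection $\gamma \mapsto m^{\scr{N}}_i \alpha_i + s^{\scr{N}}_i(\gamma)$. The final ``in particular'' statement is then immediate from the fact that $\refl_i(\scr{N})$ is a pre-Nichols system, whose support lies in $\ndN_0^\theta$ by \Cref{prop_RiQ_is_nich_sys}(2)(b).

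First I would recall from \Cref{rema_structure_of_Ki} that $Q \cong K_i \boso \subalgQ{N_i}$ as $\ndN_0^\theta$-graded Hopf algebras, giving for every $\gamma \in \ndN_0^\theta$ the decomposition
\begin{align*}
Q(\gamma) = \oplus_{m=0}^{m^{\scr{N}}_i} K_i(\gamma - m \alpha_i) \ot N_i^{m} \matcom
\end{align*}
where by the definition of $m^{\scr{N}}_i$ the sum is truncated at $m^{\scr{N}}_i$. Hence $\gamma \in \Sup{Q}$ if and only if $K_i(\gamma - m \alpha_i) \ne 0$ for some $m \in \lbrace 0, \ldots, m^{\scr{N}}_i \rbrace$. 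Next, from the computation in the proof of \Cref{prop_RiQ_is_nich_sys}(2)(b), the $\ndN_0^\theta$-grading on $\refl_i(Q) = \funCD_i(K_i) \boso \subalgQ{N_i^*}$ satisfies, for $\delta = \sum_j n_j \alpha_j \in \ndN_0^\theta$,
\begin{align*}
\refl_i(Q)(\delta) = \oplus_{m_1 \in \ndZ,\, m_2 \in \ndN_0}^{m_1 + m_2 = n_i} K_i\bigl(-m_1 \alpha_i + \sum_{j \ne i} n_j (\alpha_j - a^{\scr{N}}_{ij} \alpha_i)\bigr) \ot {N_i^*}^{m_2} \matcom
\end{align*}
using that $\funCD_i(K_i)(\beta) = K_i(\beta)$ as vector spaces by \Cref{prop_gam_grad_funCD}. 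By \Cref{rema_mii_equals_mi}, ${N_i^*}^{m_2} \ne 0$ precisely when $m_2 \in \lbrace 0, \ldots, m^{\scr{N}}_i \rbrace$, so $\delta \in \Sup{\refl_i(Q)}$ if and only if there is $m_2 \in \lbrace 0, \ldots, m^{\scr{N}}_i \rbrace$ such that $K_i(-(n_i - m_2)\alpha_i + \sum_{j\ne i} n_j(\alpha_j - a^{\scr{N}}_{ij} \alpha_i)) \ne 0$.

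The second step is to compare these two conditions under the map $\delta \mapsto \gamma := m^{\scr{N}}_i \alpha_i + s^{\scr{N}}_i(\delta)$. A direct calculation with $s^{\scr{N}}_i(\alpha_i) = -\alpha_i$ and $s^{\scr{N}}_i(\alpha_j) = \alpha_j - a^{\scr{N}}_{ij}\alpha_i$ shows that the $\alpha_j$-coordinate of $\gamma$ coincides with $n_j$ for $j \ne i$ and the $\alpha_i$-coordinate is $n_i' := m^{\scr{N}}_i - n_i - \sum_{j \ne i} n_j a^{\scr{N}}_{ij}$. Substituting the reindexing $m = m^{\scr{N}}_i - m_2$ into the condition from Step~1, one checks that $\gamma - m\alpha_i$ equals the argument of $K_i$ appearing in the condition from Step~2; hence the two nonvanishing conditions become identical. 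This yields a bijection between $\Sup{\refl_i(Q)}$ and $\Sup{Q} - m^{\scr{N}}_i \alpha_i$ through $s^{\scr{N}}_i$, proving the equality. The ``in particular'' statement then follows from $\Sup{\refl_i(Q)} \subset \ndN_0^\theta$.

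The computation itself is entirely routine bookkeeping; the main thing to get right is the orientation of the substitution $m \leftrightarrow m^{\scr{N}}_i - m_2$ and keeping track of the sign conventions in $s^{\scr{N}}_i$. There are no conceptual obstacles — all the necessary grading information has already been assembled in \Cref{rema_structure_of_Ki}, \Cref{prop_gam_grad_funCD}, \Cref{rema_mii_equals_mi} and the proof of \Cref{prop_RiQ_is_nich_sys}(2)(b).
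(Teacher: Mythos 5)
Your proposal is correct and follows essentially the same route as the paper's proof: both compare the bosonisation gradings $Q(\gamma)=\oplus_m K_i(\gamma-m\alpha_i)\ot N_i^m$ and the $\ndN_0^\theta$-grading of $\refl_i(Q)$ computed in the proof of \Cref{prop_RiQ_is_nich_sys}(2)(b), use \Cref{rema_mii_equals_mi} to match the ranges of the $N_i$- and $N_i^*$-powers, and identify the two nonvanishing conditions under the substitution $m\leftrightarrow m^{\scr{N}}_i-m_2$. The only cosmetic difference is that the paper performs the reindexing on the $Q$-side and then reads off both inclusions explicitly (using that $\gamma\mapsto m^{\scr{N}}_i\alpha_i+s^{\scr{N}}_i(\gamma)$ is an involution), which you compress into the word ``bijection''; nothing substantive is missing.
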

\begin{proof}
    Let $n_1,\ldots,n_\theta \in \ndN_0$, $\gamma = \sum_{j \in \mathbb{I}} n_j \alpha_j \in \ndN_0^\theta$ and let 
    \begin{align*}
        n_i' := m^{\scr{N}}_i-n_i - \sum_{j \in \mathbb{I}\setminus \lbrace i \rbrace} n_j  a^{\scr{N}}_{ij} 
    \end{align*}
    Then 
    \begin{align*}
         m^{\scr{N}}_i \alpha_i + s^{\scr{N}}_i (\gamma) = n_i' \alpha_i + \sum_{j \in \mathbb{I}\setminus \lbrace i \rbrace} n_j  \alpha_j
    \end{align*}
    and by construction we have
    \begin{align*}
        Q ( m^{\scr{N}}_i \alpha_i + s^{\scr{N}}_i (\gamma) ) = \oplus_{m_1 \in \ndZ, 0 \le m_2 \le m^{\scr{N}}_i }^{m_1+m_2=n_i'} K_i (m_1 \alpha_i + \sum_{j \in \mathbb{I}\setminus \lbrace i \rbrace} n_j  \alpha_j ) \ot N_i^{m_2} \matdot
    \end{align*}
    Now substituting $m_1$ with $ -m_1  - \sum_{j \in \mathbb{I}\setminus \lbrace i \rbrace} n_j  a^{\scr{N}}_{ij}  $ and $m_2$ with $m^{\scr{N}}_i - m_2$ yields
    \begin{align*}
        &Q ( m^{\scr{N}}_i \alpha_i + s^{\scr{N}}_i (\gamma) ) 
        \\=& \oplus_{m_1 \in \ndZ, 0 \le m_2 \le m^{\scr{N}}_i }^{m_1 + m_2=n_i} K_i (-m_1 \alpha_i + \sum_{j \in \mathbb{I}\setminus \lbrace i \rbrace} n_j  (\alpha_j- a^{\scr{N}}_{ij} \alpha_i) ) \ot N_i^{m^{\scr{N}}_i - m_2} \matdot
    \end{align*}
    Moreover as we worked out in the proof of \Cref{prop_RiQ_is_nich_sys}(2)(b) we have
    \begin{align*}
		\refl_i(Q) ( \gamma) = \oplus_{m_1 \in \ndZ, m_2 \in \ndN_0}^{m_1+m_2=n_i} K_i (-m_1 \alpha_i + \sum_{j \in \mathbb{I}\setminus \lbrace i \rbrace} n_j (\alpha_j - a^{\scr{N}}_{ij} \alpha_i)) \ot {N_i^*}^{m_2} \matdot
	\end{align*}
	Considering \Cref{rema_mii_equals_mi}, ${N_i^*}^{m_2} \ne 0$ if and only if $0 \le m_2 \le m^{\scr{N}}_i$. We conclude that $Q ( m^{\scr{N}}_i \alpha_i + s^{\scr{N}}_i (\gamma) ) = 0$ if and only if $\refl_i(Q) ( \gamma)=0$. We obtain
	\begin{align*}
	    m^{\scr{N}}_i \alpha_i + s^{\scr{N}}_i (\Sup{\refl_i(Q)}) \subset \Sup{Q}. 
	\end{align*}
	If $\gamma \in \Sup{Q}$, then since $\gamma = m^{\scr{N}}_i \alpha_i + s^{\scr{N}}_i ( m^{\scr{N}}_i \alpha_i + s^{\scr{N}}_i ( \gamma) )$ we obtain that $m^{\scr{N}}_i \alpha_i + s^{\scr{N}}_i ( \gamma) \in \Sup{\refl_i(Q)}$ and $\gamma \in m^{\scr{N}}_i \alpha_i + s^{\scr{N}}_i (\Sup{\refl_i(Q)})$.
\end{proof}

\section{Iterated reflections} \label{sect_nich_sys_iterated_refl}
    In this section we will study what happens if we reflect a Nichols system multiple times. Here we will come across the set of roots of a Nichols system and will discuss how they can be used to describe the geometric shape of the support of a Nichols system. In particular we will describe the edges of that shape.

	Let $\scr{N}$ be a pre-Nichols system, $Q := \nsalg{\scr{N}}$ and assume that $\scr{N}_j$ is irreducible in $\scr{C}$ for all $j \in \mathbb{I}$.
	
	To look at iterated reflections of $\scr{N}$ we first have to make sure the preconditions of \Cref{prop_RiQ_is_nich_sys} are met after each reflection.
\begin{defi}
	Let $k \in \ndN_0$, $i_1, \ldots, i_k \in \mathbb{I}$.
	\begin{enumerate}
		\item We say $\scr{N}$ \textbf{admits the reflection sequence $(i_1,\ldots, i_k)$}, if $k=0$ or $\scr{N}$ is a Nichols-system over $i_1$, $\scr{N}$ is $i_1$-finite and $\refl_{i_1}(\scr{N})$ admits the reflection sequence $(i_2,\ldots, i_k)$.
		\item We say $\scr{N}$ \textbf{admits all reflections}, if for all $n \in \ndN_0$, $i \in \mathbb{I}^n$,
		$\scr{N}$ admits the reflection sequence $i$.
	\end{enumerate}
\end{defi}

\begin{rema}
	Let $k \in \ndN$, $i = (i_1,\ldots, i_k) \in \mathbb{I}^k$ and assume $\scr{N}$ admits the reflection sequence $i$. Then by definition we have
	\begin{align*}
		\refl_{i}(\scr{N})_{j} &= \begin{cases}
			{\refl_{(i_1,\ldots,i_{k-1})}(\scr{N})_{i_k}}^* & \text{if $j = i_k$,} \\
			(\ad \refl_{(i_1,\ldots,i_{k-1})}(\scr{N})_{i_k})^{-a^{\scr{N}}_{i,j}} (\refl_{(i_1,\ldots,i_{k-1})}(\scr{N})_{j})	& \text{if $j \ne i_k$.} 
		\end{cases} \matdot
	\end{align*}
\end{rema}

We extend our notation to handle iterated reflections.

\begin{notation}
	Let $k \in \ndN_0$, $i = (i_1,\ldots, i_k) \in \mathbb{I}^k$. 
	We abbreviate
	\begin{align*} \refl_i = \refl_{i_k} \cdots \refl_{i_1} \end{align*}
	to describe the iterated reflections of Nichols systems, starting with the $i_1$-th reflection, up to the $i_k$-th reflection. 
	Assume $\scr{N}$ admits the reflection sequence $(i_1,\ldots,i_{k-1})$. 
	Denote $m^{\scr{N}}_{()}=0$ and if $k \ge  1$ denote
	\begin{align*}
		m^{\scr{N}}_{i} := m^{\refl_{(i_1,\ldots,i_{k-1})}(\scr{N})}_{i_k}  = \max \lbrace m \in \ndN_0 \, | \, 
		{\refl_{(i_1,\ldots,i_{k-1})}(\scr{N})_{i_k}^m }
		 \ne 0 \rbrace \matdot
	\end{align*}
	Moreover let $a^{\scr{N}}_{i,i_k} = 2$ and for $j \in \mathbb{I}\setminus \lbrace i_k \rbrace$ define
	\begin{align*}
		a^{\scr{N}}_{i,j} :=& \, a^{\refl_{(i_1,\ldots,i_{k-1})}(\scr{N})}_{i_k j} \\ =& -\max \lbrace a \in \ndN_0 \, | \, (\ad \refl_{(i_1,\ldots,i_{k-1})}(\scr{N})_{i_k})^{a} (\refl_{(i_1,\ldots,i_{k-1})}(\scr{N})_{j}) \ne 0 \rbrace 
		\matdot
	\end{align*}
	If $\refl_{(i_1,\ldots,i_{k-1})}(\scr{N})$ is $i_k$-finite, 
	then let $s^{\scr{N}}_{(i_1,\ldots,i_{k-1}),i_k} \in \Aut(\ndZ^\theta)$ be such that for all $j \in \mathbb{I}$
	\begin{align*}
		s^{\scr{N}}_{(i_1,\ldots,i_{k-1}),i_k} ( \alpha_j) = \alpha_j - a^{\scr{N}}_{i,j} \alpha_{i_k} \matdot
	\end{align*}
	Finally define $s^{\scr{N}}_i=s^{\scr{N}}_{(),i_1} s^{\scr{N}}_{(i_1),i_2} \cdots s^{\scr{N}}_{(i_1,\ldots,i_{k-1}),i_k}$ and $s^{\scr{N}}_{()}=\id_{\ndZ^\theta}$.
\end{notation}

\begin{defi}
    Let $k \in \ndN_0$, $i \in \mathbb{I}^k$. If $\scr{N}$ admits the reflection sequence $i$, then for $j \in \mathbb{I}$ the element $s_i^{\scr{N}}(\alpha_j) \in \ndZ^\theta$ is called a \textbf{root of~$\scr{N}$}. The set of all roots is denoted $\roots{\scr{N}}$. 
    Furthermore we denote $\rootspos{\scr{N}} = \roots{\scr{N}} \cap \ndN_0^\theta$ for the set of \textbf{positive roots of $\scr{N}$}. 
\end{defi}

\begin{rema}
    In \Cref{lem_ti_explicit}(2) we will give an explicit formula for the roots of $\scr{N}$. In the proof of \Cref{prop_roots_pos_and_neg} we will see, that the set of roots coincides with the set of real roots of a semi-Cartan graph.
\end{rema}

\begin{prop} \label{prop_roots_pos_and_neg}
    Assume $\scr{N}$ admits all reflections. Then 
    \begin{align*}
        \roots{\scr{N}} = \rootspos{\scr{N}} \cup - \rootspos{\scr{N}} \matdot 
    \end{align*}
\end{prop}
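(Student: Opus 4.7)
The plan is to organize the iterated reflections of $\scr{N}$ into a semi-Cartan graph in the sense of \cite{HeSch}, Chapter~8, and to identify $\roots{\scr{N}}$ with the set of real roots at the base vertex; the claim will then reduce to the classical statement that real roots of such a graph decompose into positive and negative ones.

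Concretely, I will construct a graph $\scr{X}$ whose vertices are isomorphism classes of Nichols systems of the form $\refl_i(\scr{N})$ for admissible reflection sequences $i$, with $\theta$ edges labeled by $\ell\in\mathbb{I}$ going from $\refl_i(\scr{N})$ to $\refl_\ell\refl_i(\scr{N})$, and with Cartan matrix $(a^{\scr{N}}_{(i,\ell),j})_{\ell,j\in\mathbb{I}}$ at the vertex $\refl_i(\scr{N})$. The assumption that $\scr{N}$ admits all reflections guarantees that each such matrix is a generalized Cartan matrix ($a^{\scr{N}}_{(i,\ell),\ell}=2$ and $a^{\scr{N}}_{(i,\ell),j}\in-\ndN_0$ for $j\ne\ell$). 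The semi-Cartan graph axioms then require most substantially the involutivity $\refl_\ell\refl_\ell(x)\cong x$ at every vertex, so that the induced $\ndZ^\theta$-automorphisms satisfy $s^{\refl_\ell(x)}_\ell\circ s^x_\ell=\id_{\ndZ^\theta}$. This I will derive by applying \Cref{prop_funCD_exist_and_properties} twice, using that $\funCD$ is a braided monoidal isomorphism and that the duality between $\subalgQ{N_i}$ and $\subalgQ{N_i^*}$ from \Cref{rema_mii_equals_mi} is itself involutive; compatibility of the Cartan entries along edges of $\scr{X}$ then follows from \Cref{prop_RiQ_is_nich_sys} together with the way reflections interact with $\ad$.

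With $\scr{X}$ in place, the set $\roots{\scr{N}}$ is by construction the set of real roots of $\scr{X}$ at the base vertex $\scr{N}$. Invoking the classical decomposition for real roots of a semi-Cartan graph (see \cite{HeSch}), every such root lies in $\ndN_0^\theta\cup(-\ndN_0^\theta)$, and combining this with the defining equality $\rootspos{\scr{N}}=\roots{\scr{N}}\cap\ndN_0^\theta$ yields the asserted identity. The hard part will be verifying the semi-Cartan graph axioms, especially the involutivity of reflections at the level of Nichols systems: while this is intuitively built into $\funCD$, turning the intuition into an honest isomorphism $\refl_\ell\refl_\ell(\scr{N})\cong\scr{N}$ requires careful bookkeeping of the bosonizations $\funCD(K)\boso A$, the $H$-action, and the $\ndZ^\theta$-gradings involved; the remaining axioms then reduce to routine translations of conventions.
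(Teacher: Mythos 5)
Your proposal is correct in outline and rests on the same ultimate ingredient as the paper — the decomposition theorem for real roots of a semi-Cartan graph (\cite{HeSch}, Theorem 14.2.12) — but it reaches that theorem by a different route. You propose to build a semi-Cartan graph directly out of the iterated reflections $\refl_i(\scr{N})$ and to verify its axioms from scratch, singling out the involutivity $a^{\refl_\ell(x)}_{\ell j}=a^{x}_{\ell j}$ (equivalently $s^{\refl_\ell(x)}_\ell=s^{x}_\ell$) as the hard step; note that this identity is in fact already available from \Cref{lem_structure_funCDiKi2}, since $(\ad^{\funCD_i}N_i^*)^n(\refl_i(N_j))$ vanishes exactly when $n>-a^{\scr{N}}_{ij}$, so a second application of $\funCD$ is not needed. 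The paper instead sidesteps the axiom verification entirely: it passes to the Yetter–Drinfeld module $M=\oplus_{j}N_j$, shows via \Cref{rema_nich_sys_hesch} that $(\ad_Q N_i)^m(N_j)\cong(\ad_{\nich(M)}N_i)^m(N_j)$, and iterates this to identify the Cartan matrices $(a^{\refl_i(\scr{N})}_{jk})$ with those of the semi-Cartan graph of $M$ already constructed and verified in \cite{HeSch}, Definition 13.6.3, so that $\roots{\scr{N}}$ is literally the set of real roots of that graph. Your approach buys a self-contained construction that never leaves the category of Nichols systems, at the cost of redoing the axiom checks; the paper's identification with $\nich(M)$ is shorter but leans on the prior construction in \cite{HeSch}. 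Either way the conclusion follows, so there is no gap — only more bookkeeping on your side than you would ultimately need.
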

\begin{proof} 
    Let $M= \oplus_{i=1}^\theta N_i \in \scr{C}$.
    As discussed in \Cref{rema_nich_sys_hesch}, we know that $(\ad_Q N_i)^m (N_j) \cong (\ad_{\nich(M)} N_i)^m (N_j)$ for all $i,j \in \mathbb{I}$, $i \ne j$, where $\ad_Q : \subalgQ{N_i} \ot K_i \rightarrow K_i$ and $\ad_{\nich(M)} : \nich(N_i) \ot \coinva{\nich(M)}{\nich(N_i)} \rightarrow \coinva{\nich(M)}{\nich(N_i)}$ are the adjoint actions. In particular
    \begin{align*}
        a^{\scr{N}}_{ij} =& -\max \lbrace a \in \ndN_0 \, | \, (\ad_{\nich(M)} N_{i})^{a} (N_j) \ne 0 \rbrace \matdot
    \end{align*}
    Inductively repeating this argument for all $k \in \ndN$, $(i_1,\ldots,i_k) \in \mathbb{I}^k$ and $\refl_i(M)= \oplus_{j=1}^\theta \refl_i (\scr{N})_j \in \scr{C}$ we obtain for $j \in \mathbb{I}$, $j \ne i_k$
    \begin{align*}
        &a^{\scr{N}}_{i,j} = \\ & -\max \lbrace a \in \ndN_0 \, | \, (\ad_{\nich(\refl_i(M))} \refl_{(i_1,\ldots,i_{k-1})}(\scr{N})_{i_k})^{a} (\refl_{(i_1,\ldots,i_{k-1})}(\scr{N})_{j}) \ne 0 \rbrace \matdot
    \end{align*}
    Hence the Cartan matrices of the semi-Cartan graph of $M$ (\cite{HeSch}, Definition 13.6.3) are given by $(a^{\refl_i(\scr{N})}_{jk})_{1\le j,k \le \theta}$, $i \in \mathbb{I}^k$, $k \in \ndN_0$ and thus the set of real roots of this semi-Cartan graph at $M$ coincides with $\roots{\scr{N}}$.
    Then \cite{HeSch}, Theorem 14.2.12 implies the claim.
\end{proof}

\begin{lem} \label{lem_sup_of_iterated_refl}
    Let $k \in \ndN$, $i = (i_1,\ldots, i_k) \in \mathbb{I}^k$ and assume $\scr{N}$ admits the reflection sequence $i$ and $m^{\scr{N}}_i \in \ndN_0$. Then 
    \begin{align*}
        \Sup{\refl_{(i_1,\ldots,i_{k-1})}(Q)}= m^{\scr{N}}_i \alpha_{i_k} + s^{\scr{N}}_{(i_1,\ldots,i_{k-1}),i_k} (\Sup{\refl_i(Q)}) \matdot
    \end{align*}
\end{lem}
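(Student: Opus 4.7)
The plan is to recognise the statement as a direct iteration of \Cref{prop_sup_of_reflection}: I will apply that proposition to the Nichols system $\scr{N}' := \refl_{(i_1,\ldots,i_{k-1})}(\scr{N})$ with the reflection index $i_k$. Once the hypotheses of \Cref{prop_sup_of_reflection} are verified for $\scr{N}'$, the conclusion translates, via the definitions of $m^{\scr{N}}_i$, $s^{\scr{N}}_{(i_1,\ldots,i_{k-1}),i_k}$ and $\refl_i(\scr{N}) = \refl_{i_k}(\scr{N}')$, precisely into the claim.

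First I verify the hypotheses of \Cref{prop_sup_of_reflection} for $\scr{N}'$ and the index $i_k$. That $\scr{N}'$ is $i_k$-finite and is a Nichols system over $i_k$ is built into the assumption that $\scr{N}$ admits the reflection sequence $i = (i_1,\ldots,i_k)$. That $m^{\scr{N}'}_{i_k} \in \ndN_0$ is immediate from the definition $m^{\scr{N}}_i = m^{\scr{N}'}_{i_k}$ together with the standing assumption $m^{\scr{N}}_i \in \ndN_0$.

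The only slightly nontrivial ingredient is irreducibility of $\scr{N}'_j$ for all $j \in \mathbb{I} \setminus \lbrace i_k \rbrace$ (in fact for all $j$). I will show by induction on $0 \le \ell \le k-1$ that each $\refl_{(i_1,\ldots,i_\ell)}(\scr{N})_j$ is irreducible in $\scr{C}$ for every $j \in \mathbb{I}$. The base case $\ell = 0$ is the standing hypothesis on $\scr{N}$. For the step, \Cref{prop_RiQ_is_nich_sys}(1) supplies irreducibility of $\refl_{i_{\ell+1}}(\refl_{(i_1,\ldots,i_\ell)}(\scr{N}))_j$ for $j \ne i_{\ell+1}$; for $j = i_{\ell+1}$, the reflected component equals the dual $\refl_{(i_1,\ldots,i_\ell)}(\scr{N})_{i_{\ell+1}}^*$, which is irreducible in $\scr{C} = \ydmod{H}$ because $H$ has bijective antipode and taking duals of finite-dimensional objects is an anti-equivalence that preserves irreducibility.

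With the hypotheses in hand, \Cref{prop_sup_of_reflection} applied to $\scr{N}'$ and $i_k$ yields
\begin{align*}
\Sup{\nsalg{\scr{N}'}} = m^{\scr{N}'}_{i_k}\alpha_{i_k} + s^{\scr{N}'}_{i_k}(\Sup{\refl_{i_k}(\nsalg{\scr{N}'})})\matdot
\end{align*}
Substituting $\nsalg{\scr{N}'} = \refl_{(i_1,\ldots,i_{k-1})}(Q)$, $m^{\scr{N}'}_{i_k} = m^{\scr{N}}_i$, $s^{\scr{N}'}_{i_k} = s^{\scr{N}}_{(i_1,\ldots,i_{k-1}),i_k}$, and $\refl_{i_k}(\nsalg{\scr{N}'}) = \refl_i(Q)$ gives exactly the claimed identity. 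The only real work is the short induction on irreducibility; everything else is unpacking notation.
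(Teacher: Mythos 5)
Your proof is correct and is exactly the paper's argument: the paper's proof of this lemma reads simply "Follows from \Cref{prop_sup_of_reflection}," i.e.\ apply that proposition to $\refl_{(i_1,\ldots,i_{k-1})}(\scr{N})$ at the index $i_k$ and unpack the notation. Your explicit induction verifying that every component of every intermediate reflection stays irreducible (including the dualised $i_\ell$-th component) is a detail the paper leaves implicit, and it is handled correctly.
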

\begin{proof}
	Follows from \Cref{prop_sup_of_reflection}.
\end{proof}

\begin{lem} \label{lem_ti_explicit}
    Let $k \in \ndN_0$, $i = (i_1,\ldots, i_k) \in \mathbb{I}^k$ and assume $\scr{N}$ admits the reflection sequence $i$.
	For $1 \le r < r'$ let
	\begin{align*}
		C_r^{r'} := \lbrace (b_1,\ldots,b_a) \, | \, a \ge 2, r=b_1 < \ldots < b_a = r'  \rbrace \matdot
	\end{align*}
	Then for $1\le l \le k-1$ we have
	\begin{align*}
	s^{\scr{N}}_{(i_1,\ldots,i_l)} (\alpha_{i_k}) &= \alpha_{i_k} + \sum_{k'=1}^{l} (-a^{\scr{N}}_{(i_1,\ldots,i_{k'}),i_{k}}) s^{\scr{N}}_{(i_1,\ldots,i_{k'-1})} (\alpha_{i_{k'}}) \tag{1} \matcom
	\\
	s^{\scr{N}}_{(i_1,\ldots,i_{k-1})} (\alpha_{i_k})  &= \alpha_{i_k} + \sum_{r=1}^{k-1} \left( \sum_{(b_1, \ldots, b_{a}) \in C_r^k} 
	\prod_{c=1}^{a-1} -a^{\scr{N}}_{(i_1,\ldots,i_{b_{c}}),i_{b_{c+1}}} \right) \alpha_{i_r} \tag{2} \matdot
	\end{align*}
\end{lem}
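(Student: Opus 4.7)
The plan is to prove part (1) by induction on $l$, and then derive part (2) from (1) by a second induction on $k$ combined with a combinatorial reindexing.

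For (1), the base case $l=0$ is immediate, since $s_{()}^{\scr{N}} = \id$ and the empty sum is zero. For the inductive step, I would use the factorization
\begin{align*}
s^{\scr{N}}_{(i_1,\ldots,i_l)} = s^{\scr{N}}_{(i_1,\ldots,i_{l-1})} \circ s^{\scr{N}}_{(i_1,\ldots,i_{l-1}),i_l}
\end{align*}
and the defining identity $s^{\scr{N}}_{(i_1,\ldots,i_{l-1}),i_l}(\alpha_j) = \alpha_j - a^{\scr{N}}_{(i_1,\ldots,i_l),j}\alpha_{i_l}$ applied to $\alpha_{i_k}$. This yields
\begin{align*}
s^{\scr{N}}_{(i_1,\ldots,i_l)}(\alpha_{i_k}) = s^{\scr{N}}_{(i_1,\ldots,i_{l-1})}(\alpha_{i_k}) + (-a^{\scr{N}}_{(i_1,\ldots,i_l),i_k}) \, s^{\scr{N}}_{(i_1,\ldots,i_{l-1})}(\alpha_{i_l}),
\end{align*}
and plugging the induction hypothesis into the first summand gives exactly the $k'=l$ term to complete the sum up to $k'=l$.

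For (2), I would induct on $k$. The case $k=1$ gives the empty sum and is trivial. For the step, apply (1) with $l=k-1$ to obtain
\begin{align*}
s^{\scr{N}}_{(i_1,\ldots,i_{k-1})}(\alpha_{i_k}) = \alpha_{i_k} + \sum_{k'=1}^{k-1} (-a^{\scr{N}}_{(i_1,\ldots,i_{k'}),i_k}) \, s^{\scr{N}}_{(i_1,\ldots,i_{k'-1})}(\alpha_{i_{k'}}),
\end{align*}
and then invoke the induction hypothesis to expand each $s^{\scr{N}}_{(i_1,\ldots,i_{k'-1})}(\alpha_{i_{k'}})$ via formula (2). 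Collecting the coefficient of $\alpha_{i_r}$ for each $1 \le r \le k-1$ gives a contribution from $k'=r$ (the single term $-a^{\scr{N}}_{(i_1,\ldots,i_r),i_k}$, matching the length-two sequence $(r,k) \in C_r^k$) and contributions from $r < k' \le k-1$ (involving sequences in $C_r^{k'}$).

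The main (and only nontrivial) step is the combinatorial reorganization: I would argue that the map
\begin{align*}
(b_1,\ldots,b_{a}) \in C_r^{k'} \;\longmapsto\; (b_1,\ldots,b_{a},k) \in C_r^k
\end{align*}
is a bijection from $\bigsqcup_{k'=r+1}^{k-1} C_r^{k'}$ onto the subset of $C_r^k$ consisting of sequences of length $\ge 3$, while the length-two sequence $(r,k) \in C_r^k$ accounts for the $k'=r$ contribution. Under this bijection the factor $-a^{\scr{N}}_{(i_1,\ldots,i_{k'}),i_k}$ pulled out from (1) becomes precisely the missing final factor $-a^{\scr{N}}_{(i_1,\ldots,i_{b_{a}}),i_{b_{a+1}}}$ in the product. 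Assembling these contributions yields the sum $\sum_{(b_1,\ldots,b_a) \in C_r^k} \prod_{c=1}^{a-1} -a^{\scr{N}}_{(i_1,\ldots,i_{b_c}),i_{b_{c+1}}}$ for the coefficient of $\alpha_{i_r}$, completing the induction. No additional structure from the Nichols system theory is needed beyond the defining identity for $s^{\scr{N}}_{(i_1,\ldots,i_{l-1}),i_l}$ already established; the entire lemma is purely combinatorial once (1) is in hand.
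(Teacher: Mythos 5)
Your proposal is correct and follows essentially the same route as the paper: part (1) by induction on $l$ via the factorization $s^{\scr{N}}_{(i_1,\ldots,i_l)} = s^{\scr{N}}_{(i_1,\ldots,i_{l-1})} s^{\scr{N}}_{(i_1,\ldots,i_{l-1}),i_l}$ and the defining identity, and part (2) by induction on $k$, expanding via (1) and collecting the coefficient of each $\alpha_{i_r}$. Your explicit bijection $(b_1,\ldots,b_a)\mapsto(b_1,\ldots,b_a,k)$ just spells out the reindexing the paper leaves implicit when it equates the collected coefficients with the sum over $C_r^k$.
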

\begin{proof}
	(1): Follows inductively for $l$: The claim is given if $l=1$ by definition. Moreover
	\begin{align*}
	s^{\scr{N}}_{(i_1,\ldots,i_l)} (\alpha_{i_k}) &= s^{\scr{N}}_{(i_1,\ldots,i_{l-1})} ( \alpha_{i_k} - a^{\scr{N}}_{(i_1,\ldots,i_{l}),i_k} \alpha_{i_l} ) 
	\\ &= s^{\scr{N}}_{(i_1,\ldots,i_{l-1})} ( \alpha_{i_k} ) - a^{\scr{N}}_{(i_1,\ldots,i_{l}),i_k} s^{\scr{N}}_{(i_1,\ldots,i_{l-1})}(\alpha_{i_l}) \matdot
	\end{align*}
	
	(2): We do induction on $k$. The claim is given for $k=1$. For $k \ge 2$ using (1) we obtain
	\begin{align*}
	s^{\scr{N}}_{(i_1,\ldots,i_{k-1})} (\alpha_{i_k}) &= \alpha_{i_k} + \sum_{k'=1}^{k-1} (-a^{\scr{N}}_{(i_1,\ldots,i_{k'}),i_{k}}) s^{\scr{N}}_{(i_1,\ldots,i_{k'-1})} (\alpha_{i_{k'}}) \matdot
	\end{align*}
	Using the induction hypothesis we get
	\begin{align*}
	&\sum_{k'=1}^{k-1} (-a^{\scr{N}}_{(i_1,\ldots,i_{k'}),i_{k}}) s^{\scr{N}}_{(i_1,\ldots,i_{k'-1})} (\alpha_{i_{k'}})
	\\=& \sum_{k'=1}^{k-1} (-a^{\scr{N}}_{(i_1,\ldots,i_{k'}),i_{k}}) \left( \alpha_{i_{k'}} + \sum_{r=1}^{k'-1} \left( \sum_{(b_1, \ldots, b_{a}) \in C_r^{k'}} 
	\prod_{c=1}^{a-1} -a^{\scr{N}}_{(i_1,\ldots,i_{b_{c}}),i_{b_{c+1}}} \right) \alpha_{i_r} \right) \matdot
	\end{align*}
	The coefficients for $\alpha_{i_r}$, $1\le r \le k-1$ in this sum (by differentiating the indices $i_1,\ldots,i_k$, even if they are not necessarily pairwise distinct) are
	\begin{align*}
		&  -a^{\scr{N}}_{(i_1,\ldots,i_{r}),i_{k}} + \sum_{k'=r+1}^{k-1} -a^{\scr{N}}_{(i_1,\ldots,i_{k'}),i_{k}} \left( \sum_{(b_1, \ldots, b_{a}) \in C_r^{k'}} 
		\prod_{c=1}^{a-1} -a^{\scr{N}}_{(i_1,\ldots,i_{b_{c}}),i_{b_{c+1}}} \right)
		\\=& \sum_{(b_1, \ldots, b_{a}) \in C_r^k} 
		\prod_{c=1}^{a-1} -a^{\scr{N}}_{(i_1,\ldots,i_{b_{c}}),i_{b_{c+1}}} \matdot
	\end{align*}
	This finishes the induction.
\end{proof}

\begin{notation} \label{nota_bety_sys}
    Let $k \in \ndN_0$, $i = (i_1,\ldots, i_k) \in \mathbb{I}^k$ and assume $\scr{N}$ admits the reflection sequence $i$.
	For $m=(m_1,\ldots,m_k) \in \ndN_0^k$ let
	\begin{align*}
	\beta^{\scr{N}}_{i,m} =& - \sum_{r=1}^{k} m_r s^{\scr{N}}_{(i_1,\ldots,i_r)}(\alpha_{i_r})   \in \ndZ_0^\theta \matdot
	\end{align*}
	Moreover if $m^{\scr{N}}_{(i_1,\ldots,i_r)} \in \ndN_0$ for all $1 \le r \le k-1$, then for $0 \le m \le m^{\scr{N}}_i$ let $\beta^{\scr{N}}_{i,m} := \beta^{\scr{N}}_{i,(m^{\scr{N}}_{(i_1)},\ldots,m^{\scr{N}}_{(i_1,\ldots,i_{k-1})}),m)}$, where $\beta^{\scr{N}}_{(),0}=0$. If also $m^{\scr{N}}_i \in \ndN_0$, then let $\beta^{\scr{N}}_i := \beta^{\scr{N}}_{i,m^{\scr{N}}_i}$.
\end{notation}


\begin{rema}
    Let $k \in \ndN_0$, $i = (i_1,\ldots, i_k) \in \mathbb{I}^k$ and assume $\scr{N}$ admits the reflection sequence $i$ and $m^{\scr{N}}_{(i_1,\ldots,i_r)} \in \ndN_0$ for all $1 \le r \le k$.
	By \Cref{lem_ti_explicit}(2), the vectors $\beta^{\scr{N}}_{i,m} \in \ndZ_0^\theta$, where $m \in \ndN_0^k$, rely only on the variables
	$a^{\scr{N}}_{(i_1,\ldots,i_{r}),j}$, $1 \le r \le k-1$, $j \in \lbrace i_{r+1},\ldots,i_k \rbrace$. In \Cref{prop_sup_iterated_reflection} we will show that $\beta^{\scr{N}}_{i} \in \ndN_0^\theta$ and in \Cref{prop_beta_explicit} we will give an explicit description of $\beta^{\scr{N}}_{i,m}$.
\end{rema}

\begin{lem} \label{lem_rels_beta_i}
    Let $k \in \ndN_0$, $i = (i_1,\ldots, i_k) \in \mathbb{I}^k$ and assume $\scr{N}$ admits the reflection sequence $i$.
	For $m \in \ndN_0^k$ the following relation holds
	\begin{align*}
		\beta^{\scr{N}}_{i,m} = \beta^{\scr{N}}_{(i_1,\ldots,i_{k-1}),(m_1,\ldots,m_{k-1})} - m_k s^{\scr{N}}_i(\alpha_{i_k}) \matdot
	\end{align*}
	In particular if $m^{\scr{N}}_{(i_1,\ldots,i_r)} \in \ndN_0$ for all $1 \le r \le k$, then 
	\begin{align*}
	    \beta^{\scr{N}}_{i} = \beta^{\scr{N}}_{(i_1,\ldots,i_{k-1})} - m^{\scr{N}}_i s^{\scr{N}}_i(\alpha_{i_k}) \matdot
	\end{align*}
\end{lem}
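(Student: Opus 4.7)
My plan is that this lemma is essentially a direct unfolding of \Cref{nota_bety_sys}, so the proof should be quite short and the only real step is splitting off the last summand.

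First, I would expand $\beta^{\scr{N}}_{i,m}$ using its defining formula
\[
\beta^{\scr{N}}_{i,m} = -\sum_{r=1}^{k} m_r s^{\scr{N}}_{(i_1,\ldots,i_r)}(\alpha_{i_r}),
\]
and split the sum as the $r \le k-1$ part plus the $r=k$ term. The truncated sum is, by definition, exactly $\beta^{\scr{N}}_{(i_1,\ldots,i_{k-1}),(m_1,\ldots,m_{k-1})}$ (here one must be a little careful with the $k=1$ edge case, which is handled by the convention $\beta^{\scr{N}}_{(),0}=0$ stated in \Cref{nota_bety_sys}). The remaining term is $-m_k s^{\scr{N}}_{(i_1,\ldots,i_k)}(\alpha_{i_k})$, and since $s^{\scr{N}}_i = s^{\scr{N}}_{(i_1,\ldots,i_k)}$ by our notation, this gives the first identity.

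For the particular case, I would simply specialise to $m_r = m^{\scr{N}}_{(i_1,\ldots,i_r)}$ for $1 \le r \le k-1$ and $m_k = m^{\scr{N}}_i$, which is well-defined under the hypothesis $m^{\scr{N}}_{(i_1,\ldots,i_r)} \in \ndN_0$ for all $1 \le r \le k$. Under this substitution both sides transform correspondingly into $\beta^{\scr{N}}_i$ and $\beta^{\scr{N}}_{(i_1,\ldots,i_{k-1})}$, yielding the displayed formula.

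There is no real obstacle here; the only thing to watch is bookkeeping: making sure the convention $\beta^{\scr{N}}_{(),0}=0$ is invoked when $k=1$, and that the index matches between the double-subscript and single-subscript versions of $\beta^{\scr{N}}$ in the special case.
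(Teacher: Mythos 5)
Your proof is correct and matches the paper's approach exactly: the paper's own proof is simply ``Follows directly from the definition,'' and your unfolding of the defining sum from \Cref{nota_bety_sys}, splitting off the $r=k$ term, and then specialising $m_r = m^{\scr{N}}_{(i_1,\ldots,i_r)}$ is precisely that computation made explicit. Your attention to the $k=1$ convention $\beta^{\scr{N}}_{(),0}=0$ is appropriate bookkeeping and nothing more is needed.
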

\begin{proof}
	Follows directly from the definition.
\end{proof}

\begin{lem} \label{lem_rels_beta_i2_sys}
    Let $k \in \ndN_0$, $i = (i_1,\ldots, i_k) \in \mathbb{I}^k$, assume $\scr{N}$ admits the reflection sequence $i$ and $m^{\scr{N}}_{(i_1,\ldots,i_r)} \in \ndN_0$ for all $1 \le r \le k$.
	The following relations hold
	\begin{align*}
	m^{\scr{N}}_{(i_1,\ldots,i_k,i_k)} = m^{\scr{N}}_{(i_1,\ldots,i_k)} \matcom &&
	\beta^{\scr{N}}_{(i_1,\ldots,i_k,i_k)} = \beta^{\scr{N}}_{(i_1,\ldots,i_{k-1})} \matdot
	\end{align*}
\end{lem}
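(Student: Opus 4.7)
The plan is to reduce both identities to the single-reflection relation established in \Cref{lem_rels_beta_i}, combined with the observation that reflecting twice at the same index $i_k$ acts as $-\id$ on $\alpha_{i_k}$.

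First, for the dimension claim, I would set $\scr{N}' := \refl_{(i_1,\ldots,i_{k-1})}(\scr{N})$ and $\scr{N}'' := \refl_{i_k}(\scr{N}')$. By definition of the $m$-notation,
\begin{align*}
m^{\scr{N}}_{(i_1,\ldots,i_k,i_k)} = m^{\scr{N}''}_{i_k} = \max\{n \in \ndN_0 \,|\, (\scr{N}''_{i_k})^n \ne 0\} \matdot
\end{align*}
But by construction $\scr{N}''_{i_k} = \refl_{i_k}(\scr{N}')_{i_k} = (\scr{N}'_{i_k})^*$, and by \Cref{rema_mii_equals_mi} we know that $(\scr{N}'_{i_k})^n = 0 \iff ((\scr{N}'_{i_k})^*)^n = 0$. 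Therefore $m^{\scr{N}''}_{i_k} = m^{\scr{N}'}_{i_k} = m^{\scr{N}}_{(i_1,\ldots,i_k)}$, which is the first identity.

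Next, to handle $\beta$, I would first observe that $s^{\scr{N}''}_{i_k}(\alpha_{i_k}) = \alpha_{i_k} - a^{\scr{N}''}_{i_k, i_k} \alpha_{i_k} = -\alpha_{i_k}$, since by convention $a^{\scr{N}''}_{i_k, i_k} = 2$. Consequently, unwrapping the definition of $s^{\scr{N}}_{(i_1,\ldots,i_k,i_k)}$ as a composition of elementary $s$-maps,
\begin{align*}
s^{\scr{N}}_{(i_1,\ldots,i_k,i_k)}(\alpha_{i_k}) = s^{\scr{N}}_{(i_1,\ldots,i_k)} \bigl( s^{\scr{N}''}_{i_k}(\alpha_{i_k}) \bigr) = -s^{\scr{N}}_{(i_1,\ldots,i_k)}(\alpha_{i_k}) \matdot
\end{align*}

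Finally, I apply \Cref{lem_rels_beta_i} twice. Applied to the sequence $(i_1,\ldots,i_k,i_k)$, and inserting the two identities above, it yields
\begin{align*}
\beta^{\scr{N}}_{(i_1,\ldots,i_k,i_k)} &= \beta^{\scr{N}}_{(i_1,\ldots,i_k)} - m^{\scr{N}}_{(i_1,\ldots,i_k,i_k)} \, s^{\scr{N}}_{(i_1,\ldots,i_k,i_k)}(\alpha_{i_k}) \\
&= \beta^{\scr{N}}_{(i_1,\ldots,i_k)} + m^{\scr{N}}_{(i_1,\ldots,i_k)} \, s^{\scr{N}}_{(i_1,\ldots,i_k)}(\alpha_{i_k}) \matdot
\end{align*}
Applying \Cref{lem_rels_beta_i} once more to rewrite $\beta^{\scr{N}}_{(i_1,\ldots,i_k)} = \beta^{\scr{N}}_{(i_1,\ldots,i_{k-1})} - m^{\scr{N}}_{(i_1,\ldots,i_k)} s^{\scr{N}}_{(i_1,\ldots,i_k)}(\alpha_{i_k})$, the two correction terms cancel and I obtain $\beta^{\scr{N}}_{(i_1,\ldots,i_k,i_k)} = \beta^{\scr{N}}_{(i_1,\ldots,i_{k-1})}$, as claimed. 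There is no substantial obstacle here — the whole argument is essentially bookkeeping, provided one keeps clear track of which Nichols system each quantity is being computed over.
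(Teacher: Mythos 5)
Your proof is correct and follows essentially the same route as the paper: the first identity is exactly the duality observation from \Cref{rema_mii_equals_mi} (that $N^n=0$ iff $(N^*)^n=0$), and the second identity is obtained by applying \Cref{lem_rels_beta_i} to both $(i_1,\ldots,i_k,i_k)$ and $(i_1,\ldots,i_k)$ together with the sign flip $s^{\scr{N}}_{(i_1,\ldots,i_k,i_k)}(\alpha_{i_k}) = -s^{\scr{N}}_{(i_1,\ldots,i_k)}(\alpha_{i_k})$, which is precisely the paper's argument written out in slightly more detail.
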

\begin{proof}
	The first relation is discussed in \Cref{rema_mii_equals_mi}. Using \Cref{lem_rels_beta_i} we get
	\begin{align*}
		\beta^{\scr{N}}_{(i_1,\ldots,i_k,i_k)} = \beta^{\scr{N}}_{i} - m^{\scr{N}}_{(i_1,\ldots,i_k,i_k)} s^{\scr{N}}_{(i_1,\ldots,i_k,i_k)}(\alpha_{i_k}) = \beta^{\scr{N}}_i + m^{\scr{N}}_i s^{\scr{N}}_i(\alpha_{i_k}) 
	\end{align*}
	and $\beta^{\scr{N}}_i  = \beta^{\scr{N}}_{(i_1,\ldots,i_{k-1})}-m^{\scr{N}}_i s^{\scr{N}}_i(\alpha_{i_k})$.
\end{proof}

\begin{prop} \label{prop_beta_explicit}
    Let $k \in \ndN_0$, $i = (i_1,\ldots, i_k) \in \mathbb{I}^k$ and assume $\scr{N}$ admits the reflection sequence $i$.
	For $1 \le r < r'$ let
	\begin{align*}
	C_r^{r'} := \lbrace (b_1,\ldots,b_a) \, | \, a \ge 2, r=b_1 < \ldots < b_a = r'  \rbrace \matdot
	\end{align*}
	Moreover let $m =(m_1,\ldots,m_k) \in \ndN_0^k$. Then
	\begin{align*}
		\beta^{\scr{N}}_{i,m} = \sum_{r=1}^k \left( {m}_r + \sum_{l=r+1}^{k} {m}_l \sum_{(b_1, \ldots, b_{a}) \in C_{r}^{l}} \prod_{c=1}^{a-1} -a^{\scr{N}}_{(i_1,\ldots,i_{b_{c}}),i_{b_{c+1}}} \right) \alpha_{i_r}
		\matdot
	\end{align*}
\end{prop}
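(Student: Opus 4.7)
The plan is a direct calculation using the unraveling of $\beta^{\scr{N}}_{i,m}$ via Lemma \ref{lem_ti_explicit}(2).

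First, I would rewrite the defining sum so that each $s^{\scr{N}}$-expression is in the form directly covered by Lemma \ref{lem_ti_explicit}(2). Since $a^{\scr{N}}_{(i_1,\ldots,i_r),i_r} = 2$ by definition, $s^{\scr{N}}_{(i_1,\ldots,i_{r-1}),i_r}(\alpha_{i_r}) = -\alpha_{i_r}$, and therefore
\begin{align*}
s^{\scr{N}}_{(i_1,\ldots,i_r)}(\alpha_{i_r}) = s^{\scr{N}}_{(i_1,\ldots,i_{r-1})}\bigl(s^{\scr{N}}_{(i_1,\ldots,i_{r-1}),i_r}(\alpha_{i_r})\bigr) = -s^{\scr{N}}_{(i_1,\ldots,i_{r-1})}(\alpha_{i_r}).
\end{align*}
Substituting this into the definition of $\beta^{\scr{N}}_{i,m}$ cancels the leading minus sign, giving
\begin{align*}
\beta^{\scr{N}}_{i,m} = \sum_{r=1}^{k} m_r\, s^{\scr{N}}_{(i_1,\ldots,i_{r-1})}(\alpha_{i_r}).
\end{align*}

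Next I would apply Lemma \ref{lem_ti_explicit}(2) (with $k$ replaced by $r$) to each summand, writing
\begin{align*}
s^{\scr{N}}_{(i_1,\ldots,i_{r-1})}(\alpha_{i_r}) = \alpha_{i_r} + \sum_{t=1}^{r-1} \left( \sum_{(b_1,\ldots,b_a) \in C_t^r} \prod_{c=1}^{a-1} -a^{\scr{N}}_{(i_1,\ldots,i_{b_c}),i_{b_{c+1}}} \right) \alpha_{i_t}.
\end{align*}
This decomposes $\beta^{\scr{N}}_{i,m}$ into a diagonal contribution $\sum_{r=1}^{k} m_r \alpha_{i_r}$ and a double sum whose generic term has $m_r$ as an outer weight and an $\alpha_{i_t}$ with $t < r$.

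Finally I would swap the order of summation in the off-diagonal part. Renaming the outer index $r \mapsto l$ and the inner index $t \mapsto r$, the double sum becomes
\begin{align*}
\sum_{r=1}^{k-1} \left( \sum_{l=r+1}^{k} m_l \sum_{(b_1,\ldots,b_a) \in C_r^l} \prod_{c=1}^{a-1} -a^{\scr{N}}_{(i_1,\ldots,i_{b_c}),i_{b_{c+1}}} \right) \alpha_{i_r}.
\end{align*}
Combining with the diagonal contribution and extending the outer sum to $r=k$ (where the inner sum is empty) yields the claimed formula. There is no real obstacle here since all the combinatorics has already been established in \ref{lem_ti_explicit}(2); the only subtlety to mind is the sign flip coming from $a^{\scr{N}}_{(i_1,\ldots,i_r),i_r} = 2$, which reconciles the minus sign in the definition of $\beta^{\scr{N}}_{i,m}$ with the absence of a global minus sign in the stated formula.
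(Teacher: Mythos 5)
Your proposal is correct and follows essentially the same route as the paper: rewrite $\beta^{\scr{N}}_{i,m}$ using $s^{\scr{N}}_{(i_1,\ldots,i_r)}(\alpha_{i_r}) = -s^{\scr{N}}_{(i_1,\ldots,i_{r-1})}(\alpha_{i_r})$, expand each term via Lemma~\ref{lem_ti_explicit}(2), and collect the coefficients of each $\alpha_{i_r}$. The only difference is that you justify the sign flip explicitly, which the paper leaves implicit in its first displayed equality.
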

\begin{proof}
	By \Cref{lem_ti_explicit}(2) we obtain
	\begin{align*}
		 \beta^{\scr{N}}_{i,m} &= -\sum_{l=1}^k {m}_{l} s^{\scr{N}}_{(i_1,\ldots,i_{l})}(\alpha_{i_{l}})
		 = \sum_{l=1}^k {m}_{l} s^{\scr{N}}_{(i_1,\ldots,i_{l-1})}(\alpha_{i_{l}})
		 \\ &= \sum_{l=1}^k {m}_{l} \left( \alpha_{i_{l}} + \sum_{r=1}^{{l}-1} \left( \sum_{(b_1, \ldots, b_{a}) \in C_r^{l}} 
		 \prod_{c=1}^{a-1} -a^{\scr{N}}_{(i_1,\ldots,i_{b_{c}}),i_{b_{c+1}}} \right) \alpha_{i_r} \right) \matdot
	\end{align*}
	The coefficients for $\alpha_{i_r}$, $1\le r \le k$ in this sum (by differentiating the indices $i_1,\ldots,i_k$, even if they are not necessarily pairwise distinct) are
	\begin{align*}
		{m}_r + \sum_{l=r+1}^{k} {m}_l \sum_{(b_1, \ldots, b_{a}) \in C_{r}^{l}} \prod_{c=1}^{a-1} -a^{\scr{N}}_{(i_1,\ldots,i_{b_{c}}),i_{b_{c+1}}} \matcom
	\end{align*}
	implying the claim.
\end{proof}

\begin{prop} \label{prop_sup_iterated_reflection}
    Let $k \in \ndN_0$, $i = (i_1,\ldots, i_k) \in \mathbb{I}^k$, assume $\scr{N}$ admits the reflection sequence $i$ and $m^{\scr{N}}_{(i_1,\ldots,i_r)} \in \ndN_0$ for all $1 \le r \le k$.
    Then 
    \begin{align*} \Sup{Q} = \beta^{\scr{N}}_i + s^{\scr{N}}_i ( \Sup{\refl_i(Q)}) . \end{align*}
    In particular we obtain $\beta^{\scr{N}}_i \in \Sup{Q} \subset \ndN_0^\theta$.
\end{prop}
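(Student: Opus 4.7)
The natural strategy is induction on $k$, with \Cref{lem_sup_of_iterated_refl} providing the single-step reflection and \Cref{lem_rels_beta_i} linking the recursion of $\beta^{\scr{N}}_i$ to the geometry.

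For the base case $k = 0$ the statement is trivial since $\beta^{\scr{N}}_{()} = 0$, $s^{\scr{N}}_{()} = \id_{\ndZ^\theta}$ and $\refl_{()}(Q) = Q$. For the inductive step, assume the statement for the sequence $(i_1,\ldots,i_{k-1})$, so that
\begin{align*}
\Sup{Q} = \beta^{\scr{N}}_{(i_1,\ldots,i_{k-1})} + s^{\scr{N}}_{(i_1,\ldots,i_{k-1})} \bigl( \Sup{\refl_{(i_1,\ldots,i_{k-1})}(Q)} \bigr) \matdot
\end{align*}
Now I would plug in \Cref{lem_sup_of_iterated_refl} applied to the system $\refl_{(i_1,\ldots,i_{k-1})}(\scr{N})$ and the reflection index $i_k$, which gives
\begin{align*}
\Sup{\refl_{(i_1,\ldots,i_{k-1})}(Q)} = m^{\scr{N}}_i \alpha_{i_k} + s^{\scr{N}}_{(i_1,\ldots,i_{k-1}),i_k}\bigl( \Sup{\refl_i(Q)} \bigr) \matdot
\end{align*}
Applying the linear map $s^{\scr{N}}_{(i_1,\ldots,i_{k-1})}$ and using $s^{\scr{N}}_i = s^{\scr{N}}_{(i_1,\ldots,i_{k-1})} \circ s^{\scr{N}}_{(i_1,\ldots,i_{k-1}),i_k}$, the induction hypothesis becomes
\begin{align*}
\Sup{Q} = \beta^{\scr{N}}_{(i_1,\ldots,i_{k-1})} + m^{\scr{N}}_i \, s^{\scr{N}}_{(i_1,\ldots,i_{k-1})}(\alpha_{i_k}) + s^{\scr{N}}_i \bigl( \Sup{\refl_i(Q)} \bigr) \matdot
\end{align*}

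To finish it suffices to show $\beta^{\scr{N}}_{(i_1,\ldots,i_{k-1})} + m^{\scr{N}}_i \, s^{\scr{N}}_{(i_1,\ldots,i_{k-1})}(\alpha_{i_k}) = \beta^{\scr{N}}_i$. By \Cref{lem_rels_beta_i} the right-hand side equals $\beta^{\scr{N}}_{(i_1,\ldots,i_{k-1})} - m^{\scr{N}}_i \, s^{\scr{N}}_i(\alpha_{i_k})$, so the required identity reduces to $s^{\scr{N}}_{(i_1,\ldots,i_{k-1})}(\alpha_{i_k}) = -s^{\scr{N}}_i(\alpha_{i_k})$. This last identity is immediate from $a^{\scr{N}}_{i,i_k} = 2$, because $s^{\scr{N}}_{(i_1,\ldots,i_{k-1}),i_k}(\alpha_{i_k}) = \alpha_{i_k} - 2\alpha_{i_k} = -\alpha_{i_k}$ and hence $s^{\scr{N}}_i(\alpha_{i_k}) = s^{\scr{N}}_{(i_1,\ldots,i_{k-1})}(-\alpha_{i_k}) = -s^{\scr{N}}_{(i_1,\ldots,i_{k-1})}(\alpha_{i_k})$.

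For the ``in particular'' statement, I would note that $0 \in \Sup{\refl_i(Q)}$ (the unit of $\refl_i(Q)$ lives in degree $0$), so $\beta^{\scr{N}}_i = \beta^{\scr{N}}_i + s^{\scr{N}}_i(0)$ is an element of $\Sup{Q} \subset \ndN_0^\theta$ by construction of the $\ndN_0^\theta$-grading of $Q$. There is no serious obstacle here; the content is purely a bookkeeping verification that the recursive definition of $\beta^{\scr{N}}_i$ matches the shift produced by one additional reflection, and the combinatorial heavy lifting has already been packaged into the earlier lemmas.
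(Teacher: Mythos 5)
Your proof is correct and follows exactly the paper's argument: induction on $k$, a single application of \Cref{lem_sup_of_iterated_refl} to pass from step $k-1$ to step $k$, and the identity $\beta^{\scr{N}}_{(i_1,\ldots,i_{k-1})} - m^{\scr{N}}_i s^{\scr{N}}_i(\alpha_{i_k}) = \beta^{\scr{N}}_i$ from \Cref{lem_rels_beta_i}. Your closing observation that $0 \in \Sup{\refl_i(Q)}$ handles the ``in particular'' clause, which the paper leaves implicit.
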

\begin{proof}
    We do induction over $k$: For $k=0$ the claim is trivial. Now assume $k \ge 1$ and
    \begin{align*}
        \Sup{Q} = \beta^{\scr{N}}_{(i_1, \ldots, i_{k-1})} + s^{\scr{N}}_{(i_1, \ldots, i_{k-1})} ( \Sup{\refl_{(i_1, \ldots, i_{k-1})}(Q)}) \matdot
    \end{align*}
    By \Cref{lem_sup_of_iterated_refl} we have
    \begin{align*}
        \Sup{\refl_{(i_1,\ldots,i_{k-1})}(Q)}= m^{\scr{N}}_i \alpha_{i_k} + s^{\scr{N}}_{(i_1,\ldots,i_{k-1}),i_k} (\Sup{\refl_i(Q)}) \matdot
    \end{align*}
    Now 
    $
    \alpha_{i_k}  = -s^{\scr{N}}_{(i_1,\ldots,i_{k-1}),i_k} (\alpha_{i_k})
    $ 
    and $s^{\scr{N}}_{(i_1, \ldots, i_{k-1})} s^{\scr{N}}_{(i_1,\ldots,i_{k-1}),i_k} = s^{\scr{N}}_i$. Hence
    \begin{align*}
        \Sup{Q} = \beta^{\scr{N}}_{(i_1, \ldots, i_{k-1})} - m^{\scr{N}}_i s^{\scr{N}}_{i} ( \alpha_{i_k} ) + s^{\scr{N}}_{i} (\Sup{\refl_i(Q)}) \matdot
    \end{align*}
    Finally $\beta^{\scr{N}}_{(i_1, \ldots, i_{k-1})} - m^{\scr{N}}_i s^{\scr{N}}_{i} ( \alpha_{i_k} ) = \beta^{\scr{N}}_i$ by \Cref{lem_rels_beta_i}.
\end{proof}

\begin{cor} \label{cor_sup_iterated_reflection}
    Let $k \in \ndN_0$, $i = (i_1,\ldots, i_k) \in \mathbb{I}^k$, assume $\scr{N}$ admits the reflection sequence $i$ and $m^{\scr{N}}_{(i_1,\ldots,i_r)} \in \ndN_0$ for all $1 \le r \le k$. Then we have $\beta^{\scr{N}}_{i,m} \in \Sup{Q}$ for all $0 \le m \le m^{\scr{N}}_i$.
\end{cor}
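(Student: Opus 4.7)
The plan is to reduce the statement to \Cref{prop_sup_iterated_reflection} applied to the shorter sequence $(i_1,\ldots,i_{k-1})$, combined with the observation that the single-index element $m\alpha_{i_k}$ lies in $\Sup{\refl_{(i_1,\ldots,i_{k-1})}(Q)}$ for all $0 \le m \le m^{\scr{N}}_i$. The case $k=0$ is trivial since then $\beta^{\scr{N}}_{(),0}=0 \in \Sup{Q}$. So we may assume $k\ge 1$.

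First, I would rewrite $\beta^{\scr{N}}_{i,m}$ using \Cref{lem_rels_beta_i} in the form
\begin{align*}
    \beta^{\scr{N}}_{i,m} = \beta^{\scr{N}}_{(i_1,\ldots,i_{k-1})} - m\, s^{\scr{N}}_i(\alpha_{i_k}) \matdot
\end{align*}
Since $s^{\scr{N}}_i = s^{\scr{N}}_{(i_1,\ldots,i_{k-1})} \cdot s^{\scr{N}}_{(i_1,\ldots,i_{k-1}),i_k}$ and $s^{\scr{N}}_{(i_1,\ldots,i_{k-1}),i_k}(\alpha_{i_k}) = -\alpha_{i_k}$ (because $a^{\scr{N}}_{i,i_k}=2$), this becomes
\begin{align*}
    \beta^{\scr{N}}_{i,m} = \beta^{\scr{N}}_{(i_1,\ldots,i_{k-1})} + s^{\scr{N}}_{(i_1,\ldots,i_{k-1})}(m\alpha_{i_k}) \matdot
\end{align*}

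Next, I apply \Cref{prop_sup_iterated_reflection} to the reflection sequence $(i_1,\ldots,i_{k-1})$ (whose hypotheses are inherited from those on $i$) to obtain
\begin{align*}
    \Sup{Q} = \beta^{\scr{N}}_{(i_1,\ldots,i_{k-1})} + s^{\scr{N}}_{(i_1,\ldots,i_{k-1})}\!\left(\Sup{\refl_{(i_1,\ldots,i_{k-1})}(Q)}\right) \matdot
\end{align*}
So the claim reduces to showing $m\alpha_{i_k} \in \Sup{\refl_{(i_1,\ldots,i_{k-1})}(Q)}$ for $0\le m \le m^{\scr{N}}_i$.

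The final step is this last non-vanishing statement, which I expect to be the only genuinely non-formal part. Since $\refl_{(i_1,\ldots,i_{k-1})}(\scr{N})$ is a Nichols system over $i_k$, the subalgebra $\subalgQ{\refl_{(i_1,\ldots,i_{k-1})}(\scr{N})_{i_k}}$ is generated in degree one by $\refl_{(i_1,\ldots,i_{k-1})}(\scr{N})_{i_k}$, whose $m^{\scr{N}}_i$-th power is non-zero by definition of $m^{\scr{N}}_i$. Since every homogeneous component of degree $m$ with $0 \le m < m^{\scr{N}}_i$ is mapped onto the component of degree $m+1$ by right-multiplication with $\refl_{(i_1,\ldots,i_{k-1})}(\scr{N})_{i_k}$, surjectivity forces $\refl_{(i_1,\ldots,i_{k-1})}(\scr{N})_{i_k}^m \ne 0$ for all $0\le m \le m^{\scr{N}}_i$. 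As this space is contained in $\refl_{(i_1,\ldots,i_{k-1})}(Q)(m\alpha_{i_k})$ by the grading, we conclude $m\alpha_{i_k}\in \Sup{\refl_{(i_1,\ldots,i_{k-1})}(Q)}$, which finishes the proof.
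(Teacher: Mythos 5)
Your proof is correct and takes essentially the same route as the paper: both reduce the claim to \Cref{prop_sup_iterated_reflection} via the recursion of \Cref{lem_rels_beta_i}. The only (cosmetic) difference is the direction of the reduction — the paper writes $\beta^{\scr{N}}_{i,m} = \beta^{\scr{N}}_i + s^{\scr{N}}_i\bigl((m^{\scr{N}}_i-m)\alpha_{i_k}\bigr)$ and applies the proposition to the full sequence $i$, using $m^{\scr{N}}_{(i_1,\ldots,i_k,i_k)}=m^{\scr{N}}_i$ from \Cref{lem_rels_beta_i2_sys}, whereas you write $\beta^{\scr{N}}_{i,m}=\beta^{\scr{N}}_{(i_1,\ldots,i_{k-1})}+s^{\scr{N}}_{(i_1,\ldots,i_{k-1})}(m\alpha_{i_k})$ and apply it to the truncated sequence, with both versions ultimately resting on the same observation that $\refl_{(i_1,\ldots,i_{k-1})}(\scr{N})_{i_k}^{\,m}\ne 0$ for all $0\le m\le m^{\scr{N}}_i$.
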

\begin{proof}
    By \Cref{lem_rels_beta_i2_sys} we have $m^{\scr{N}}_i = m^{\scr{N}}_{(i_1,\ldots,i_k,i_k)}$, hence by definition $(m^{\scr{N}}_i - m) \alpha_{i_k} \in \Sup{\refl_i(Q)}$. Now \Cref{lem_rels_beta_i} and \Cref{lem_rels_beta_i2_sys} imply that
    \begin{align*}
        \beta^{\scr{N}}_{i,m} =& \beta^{\scr{N}}_{(i_1,\ldots,i_{k-1})} - m s^{\scr{N}}_i(\alpha_{i_k})
         = \beta^{\scr{N}}_{(i_1,\ldots,i_{k},i_k)} - m s^{\scr{N}}_i(\alpha_{i_k})
        \\ =& \beta^{\scr{N}}_{i} - m^{\scr{N}}_{(i_1,\ldots,i_{k},i_k)}  s^{\scr{N}}_{(i_1,\ldots,i_{k},i_k)} (\alpha_{i_k}) - m s^{\scr{N}}_i(\alpha_{i_k})
        \\ =& \beta^{\scr{N}}_{i} + m^{\scr{N}}_{i}  s^{\scr{N}}_{i} (\alpha_{i_k}) - m s^{\scr{N}}_i(\alpha_{i_k})
        \\ =& \beta^{\scr{N}}_i + s^{\scr{N}}_i( (m^{\scr{N}}_i - m) \alpha_{i_k} )
    \end{align*}
    Finally $\beta^{\scr{N}}_i + s^{\scr{N}}_i( (m^{\scr{N}}_i - m) \alpha_{i_k} ) \in \Sup{Q} $ by \Cref{prop_sup_iterated_reflection}.
\end{proof}

\begin{cor} \label{cor_sup_iterated_reflection2}
    Let $k,l \in \ndN_0$. Moreover let $i = (i_1,\ldots, i_k) \in \mathbb{I}^k$, $j=(j_1,\ldots,j_l) \in \mathbb{I}^l$ and assume $\scr{N}$ admits both reflection sequences $i$ and $j$. Moreover assume $m^{\scr{N}}_{(i_1,\ldots,i_r)} \in \ndN_0$ for $1 \le r \le k$ as well as $m^{\scr{N}}_{(j_1,\ldots,j_r)} \in \ndN_0$ for $1 \le r \le l$.
    If $s^{\scr{N}}_i ( \Sup{\refl_j(Q)})=s^{\scr{N}}_j ( \Sup{\refl_j(Q)})$, then we have $\beta^{\scr{N}}_i = \beta^{\scr{N}}_j$.
    In particular if $s_i^{\scr{N}}(\Sup{Q}) = \Sup{Q}$, then $\beta^{\scr{N}}_i=0$.
\end{cor}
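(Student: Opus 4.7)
The plan is to apply \Cref{prop_sup_iterated_reflection} separately to the two reflection sequences $i$ and $j$, yielding the two decompositions
\begin{align*}
\Sup{Q} &= \beta^{\scr{N}}_i + s^{\scr{N}}_i(\Sup{\refl_i(Q)}) \matcom &
\Sup{Q} &= \beta^{\scr{N}}_j + s^{\scr{N}}_j(\Sup{\refl_j(Q)}) \matdot
\end{align*}
Using the hypothesis $s^{\scr{N}}_i(\Sup{\refl_j(Q)}) = s^{\scr{N}}_j(\Sup{\refl_j(Q)})$, I would rewrite the second decomposition as $\Sup{Q} = \beta^{\scr{N}}_j + s^{\scr{N}}_i(\Sup{\refl_j(Q)})$, so that both $s$-images appearing on the right are instances of $s^{\scr{N}}_i$.

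Next I would apply the automorphism $(s^{\scr{N}}_i)^{-1}$ to both identities, which yields the equality of translated sets
\begin{align*}
(s^{\scr{N}}_i)^{-1}(\beta^{\scr{N}}_i) + \Sup{\refl_i(Q)} = (s^{\scr{N}}_i)^{-1}(\beta^{\scr{N}}_j) + \Sup{\refl_j(Q)} \matdot
\end{align*}
The key observation is that both $\Sup{\refl_i(Q)}$ and $\Sup{\refl_j(Q)}$ are subsets of $\ndN_0^\theta$ (since the reflected Hopf algebras are $\ndN_0^\theta$-graded, see \Cref{prop_RiQ_is_nich_sys}(2)(b), applied inductively along the reflection sequences) and both contain $0$ (coming from the unit in degree zero).

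Substituting $0 \in \Sup{\refl_j(Q)}$ on the right and solving gives $(s^{\scr{N}}_i)^{-1}(\beta^{\scr{N}}_j - \beta^{\scr{N}}_i) \in \Sup{\refl_i(Q)} \subset \ndN_0^\theta$, and symmetrically substituting $0 \in \Sup{\refl_i(Q)}$ on the left gives $(s^{\scr{N}}_i)^{-1}(\beta^{\scr{N}}_i - \beta^{\scr{N}}_j) \in \Sup{\refl_j(Q)} \subset \ndN_0^\theta$. Since $\ndN_0^\theta \cap (-\ndN_0^\theta) = \lbrace 0 \rbrace$, this forces $(s^{\scr{N}}_i)^{-1}(\beta^{\scr{N}}_j - \beta^{\scr{N}}_i) = 0$, and as $s^{\scr{N}}_i$ is invertible, $\beta^{\scr{N}}_i = \beta^{\scr{N}}_j$.

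I don't expect a genuine obstacle: the entire argument is a short consequence of the support formula of \Cref{prop_sup_iterated_reflection} combined with the elementary fact that the positive cone $\ndN_0^\theta$ contains no nonzero vector together with its negative. For the ``in particular'' statement I would specialize to $j = ()$, so that $\refl_j(Q) = Q$, $s^{\scr{N}}_j = \id_{\ndZ^\theta}$, and $\beta^{\scr{N}}_j = 0$; the main hypothesis then becomes $s^{\scr{N}}_i(\Sup{Q}) = \Sup{Q}$ and the conclusion becomes $\beta^{\scr{N}}_i = 0$.
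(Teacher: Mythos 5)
Your proof is correct and follows essentially the same route as the paper: both apply \Cref{prop_sup_iterated_reflection} to the two sequences, use the hypothesis to replace $s^{\scr{N}}_j$ by $s^{\scr{N}}_i$, apply $(s^{\scr{N}}_i)^{-1}$, and conclude from $0 \in \Sup{\refl_i(Q)} \cap \Sup{\refl_j(Q)}$ and $\Sup{\refl_i(Q)}, \Sup{\refl_j(Q)} \subset \ndN_0^\theta$ that the translation vector vanishes. Your handling of the ``in particular'' statement via $j=()$ is also the intended specialization.
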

\begin{proof}
By assumption and \Cref{prop_sup_iterated_reflection} we have 
\begin{align*}
    \beta^{\scr{N}}_i + s^{\scr{N}}_i ( \Sup{\refl_i(Q)}) &= \Sup{Q} = \beta^{\scr{N}}_j + s^{\scr{N}}_j ( \Sup{\refl_j(Q)}) 
    \\&= \beta^{\scr{N}}_j + s^{\scr{N}}_i ( \Sup{\refl_j(Q)})  \matdot
\end{align*} 
Applying ${s^{\scr{N}}_i }^{-1}$ to that equation yields
\begin{align*}
    {s^{\scr{N}}_i }^{-1}(\beta^{\scr{N}}_i-\beta^{\scr{N}}_j) + \Sup{\refl_i(Q)} = \Sup{\refl_j(Q)} \matdot
\end{align*} 
Since $0 \in \Sup{\refl_i(Q)}$ and $\Sup{\refl_j(Q)} \subset \ndN_0^\theta$ we obtain ${s^{\scr{N}}_i }^{-1}(\beta^{\scr{N}}_i-\beta^{\scr{N}}_j) \in \ndN_0^\theta$. Since $0 \in \Sup{\refl_j(Q)}$ as well as $\Sup{\refl_i(Q)} \subset \ndN_0^\theta$ we can conclude ${s^{\scr{N}}_i }^{-1}(\beta^{\scr{N}}_i-\beta^{\scr{N}}_j)=0$, i.e. $\beta^{\scr{N}}_i=\beta^{\scr{N}}_j$.
\end{proof}

\begin{prop} \label{prop_support_is_spanned_by_beta_i_sys}
	Assume $\scr{N}$ admits all reflections and $m^{\scr{N}}_i \in \ndN_0$ for all $i \in \mathbb{I}^k$, $k \in \ndN_0$. Let $M' \subset \ndR^\theta$ be the convex hull spanned by $\beta^{\scr{N}}_i \in \Sup{Q}$, $i \in \mathbb{I}^k$, $k \in \ndN_0$ and let $M:= M' \cap \ndN_0^\theta$. Then $\Sup{Q} \subset M$. In particular the convex hull of $\Sup{Q}$ equals $M'$.
\end{prop}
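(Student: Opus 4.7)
The plan is a separating hyperplane argument. Since $\Sup{Q} \subset \ndN_0^\theta$ because $Q$ is $\ndN_0^\theta$-graded, it suffices to prove $\Sup{Q} \subset M'$. Assume for contradiction that some $\gamma \in \Sup{Q}$ lies outside $M'$; by separation one obtains a linear functional $\phi : \ndR^\theta \rightarrow \ndR$ with rational coefficients such that $\phi(\gamma) > c := \sup_i \phi(\beta^{\scr{N}}_i)$, the supremum being taken over all reflection sequences $i$ admissible for $\scr{N}$.

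The core of the argument will be to produce an admissible $i$ with $\phi(s^{\scr{N}}_i(\alpha_j)) \le 0$ for every $j \in \mathbb{I}$. Given such $i$, \Cref{prop_sup_iterated_reflection} yields $\gamma - \beta^{\scr{N}}_i = s^{\scr{N}}_i(\gamma')$ for some $\gamma' \in \Sup{\refl_i(Q)} \subset \ndN_0^\theta$. Writing $\gamma' = \sum_{j \in \mathbb{I}} n_j \alpha_j$ with $n_j \in \ndN_0$ and applying $\phi$, one obtains
\begin{align*}
    \phi(\gamma) = \phi(\beta^{\scr{N}}_i) + \sum_{j \in \mathbb{I}} n_j \phi(s^{\scr{N}}_i(\alpha_j)) \le \phi(\beta^{\scr{N}}_i) \le c \matcom
\end{align*}
contradicting $\phi(\gamma) > c$.

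Such $i$ will be constructed iteratively. Start with $i^{(0)} := ()$; if at step $k$ we have $\phi(s^{\scr{N}}_{i^{(k)}}(\alpha_j)) \le 0$ for all $j$, terminate with $i := i^{(k)}$. Otherwise, pick $j_{k+1} \in \mathbb{I}$ with $\phi(s^{\scr{N}}_{i^{(k)}}(\alpha_{j_{k+1}})) > 0$ and set $i^{(k+1)} := (i^{(k)}, j_{k+1})$. The identity $s^{\scr{N}}_{i^{(k+1)}}(\alpha_{j_{k+1}}) = -s^{\scr{N}}_{i^{(k)}}(\alpha_{j_{k+1}})$, combined with \Cref{lem_rels_beta_i}, yields
\begin{align*}
    \phi(\beta^{\scr{N}}_{i^{(k+1)}}) = \phi(\beta^{\scr{N}}_{i^{(k)}}) + m^{\scr{N}}_{i^{(k+1)}} \phi(s^{\scr{N}}_{i^{(k)}}(\alpha_{j_{k+1}})) \matcom
\end{align*}
which is strictly greater than $\phi(\beta^{\scr{N}}_{i^{(k)}})$ as $m^{\scr{N}}_{i^{(k+1)}} \ge 1$ by hypothesis.

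The main obstacle is proving termination of this procedure. The sequence $(\phi(\beta^{\scr{N}}_{i^{(k)}}))_k$ is strictly increasing and bounded above by $c$; since $\phi$ has rational coefficients and each $\beta^{\scr{N}}_{i^{(k)}}$ is an integer vector, the sequence takes values in a fixed finitely generated subgroup of $\ndQ$, hence in a discrete subset of $\ndR$. Therefore it must stabilize after finitely many steps, yielding the required $i$ and completing the proof of $\Sup{Q} \subset M$. The \emph{in particular} assertion then follows at once from $\{\beta^{\scr{N}}_i\}_i \subset \Sup{Q} \subset M$ together with the convexity of $M'$: taking convex hulls gives $M' \subset \mathrm{conv}(\Sup{Q}) \subset \mathrm{conv}(M) \subset M'$.
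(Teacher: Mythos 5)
Your argument is correct in substance and takes a genuinely different route from the paper. The paper's proof introduces the translated cones $M_i := \beta^{\scr{N}}_i + s^{\scr{N}}_i(\ndN_0^\theta)$, notes that $\Sup{Q} \subset M_i$ by \Cref{prop_sup_iterated_reflection}, computes that the neighbouring vertices satisfy $\beta^{\scr{N}}_{(i_1,\ldots,i_k,j)} = \beta^{\scr{N}}_i + m^{\scr{N}}_{(i_1,\ldots,i_k,j)} s^{\scr{N}}_i(\alpha_j)$, and then asserts that $M$ equals the intersection of all the $M_i$, i.e.\ that the $M_i$ are the tangent cones of $M'$ at its vertices and that intersecting them recovers $M$. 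You instead argue by contradiction with a separating functional $\phi$ and run a greedy descent: as long as $\phi(s^{\scr{N}}_{i^{(k)}}(\alpha_j))>0$ for some $j$ you append that $j$, which by \Cref{lem_rels_beta_i} and $s^{\scr{N}}_{i^{(k+1)}}(\alpha_{j_{k+1}})=-s^{\scr{N}}_{i^{(k)}}(\alpha_{j_{k+1}})$ strictly increases $\phi(\beta^{\scr{N}}_{i^{(k)}})$ within a discrete bounded set, so the process terminates with a sequence $i$ satisfying $\phi\circ s^{\scr{N}}_i\le 0$ on $\ndN_0^\theta$, whence $\phi(\gamma)\le\phi(\beta^{\scr{N}}_i)$ by \Cref{prop_sup_iterated_reflection}. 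This is the familiar ``move into the antidominant chamber'' argument, and it actually supplies a justification for the step the paper leaves implicit, namely that $\bigcap_i M_i \cap \ndN_0^\theta \subset M$. The one point you should make explicit is the very first step: strict separation of $\gamma$ from $M'$ by a rational functional with $c=\sup_i\phi(\beta^{\scr{N}}_i)<\infty$ is automatic only when the set $\lbrace \beta^{\scr{N}}_i \rbrace$ is finite, so that $M'$ is a closed polytope; the proposition does not assume this, and for an infinite vertex set $M'$ need not be closed, a lattice point can lie in $\overline{M'}\setminus M'$, and a rational perturbation of a real separating functional can destroy the bound (and with it the termination of your descent). The paper's own proof is equally terse at the corresponding point, so this is a shared caveat rather than a defect specific to your route; in the finite-dimensional situations where the proposition is actually applied your proof is complete. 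Also, at the end of the termination argument the word ``stabilize'' should be ``terminate'': a strictly increasing sequence cannot stabilize, it simply cannot continue.
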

\begin{proof}
	For $k \in \ndN_0$, $i \in \mathbb{I}^k$ let $M_i = \beta^{\scr{N}}_i + s^{\scr{N}}_i(\ndN_0^\theta)$.
	By \Cref{prop_sup_iterated_reflection} we have $\Sup{Q} \subset M_i$.
	Now for all $j \in \mathbb{I}$ we have
	\begin{align*}
		\beta^{\scr{N}}_{(i_1,\ldots,i_{k},j)} = \beta^{\scr{N}}_i - m^{\scr{N}}_{(i_1,\ldots,i_k,j)} s^{\scr{N}}_{(i_1,\ldots,i_k,j)}(\alpha_j)
		=  \beta^{\scr{N}}_i + m^{\scr{N}}_{(i_1,\ldots,i_k,j)} s^{\scr{N}}_{i}(\alpha_j) \matdot
	\end{align*}
	In particular for $j=i_k$ we obtain $\beta^{\scr{N}}_{(i_1,\ldots,i_{k-1})} = \beta^{\scr{N}}_i + m^{\scr{N}}_{i} s^{\scr{N}}_{i}(\alpha_{i_k})$, using \Cref{lem_rels_beta_i2_sys}. 
	Thus we obtain that $M$ is the intersection of all $M_i$, $i \in \mathbb{I}^k$, $k \in \ndN_0$. Since $\Sup{Q} \subset M_i$ for all $i \in \mathbb{I}^k$, $k \in \ndN_0$, we can conclude that $\Sup{Q} \subset M$.
\end{proof}
\chapter{\mathintitleydmod{}-modules over Nichols systems}

In \cref{sect_ydmod_nich_sys_definition} we will quickly discuss some properties of $\ydmod{}$-modules over Nichols systems that are important when it comes to reflections.

In \cref{sect_ydmod_nich_sys_reflections} we will then use our previously defined reflection functors to the full extend to define reflections of such $\ydmod{}$-modules. We will discuss how some important properties are preserved when reflecting, the most important result being \Cref{prop_funCD_refl_generator}.

Moreover in \cref{sect_ydmod_nich_sys_iterated_reflections} we look at iterated reflections. We will again describe the geometric shape of the support of a reflectible $\ydmod{}$-module using the roots of the Nichols system. We will see how the vertices of that shape are actually the generating components of given reflections. Finally in \Cref{cor_real_subobject_lies_in_interior} we will discuss how there is a graded maximal subobject in the interior of that shape of such an $\ydmod{}$-module.

Now in \cref{sect_ydmod_nich_sys_induced_yd_module} we will do a construction of induced $\ydmod{}$-modules over Nichols systems that is similar to Verma modules in representation theory of Lie algebras. We will also study how reflections behave on these induced $\ydmod{}$-modules. In particular in \Cref{cor_ydind_irred_all_refl} we will characterize irreducibility via reflectiveness, if the Nichols system if finite dimensional.

Finally in \cref{sect_ydmod_nich_sys_shapovalov} we will study the maximal subobject of homogeneously generated $\ydmod{}$-modules over Nichols systems. We will do so by defining a special morphism, whose kernel gives the maximal subobject. We will also see that this morphism behaves well with reflections and actually tells us a lot about what reflection sequences a $\ydmod{}$-module admits. 

Let $H$ be a Hopf algebra over some field $\fK$ with bijective antipode and $\scr{C} := \ydmod{H}$. Let $\scr{N}$ be a pre-Nichols system and let $Q := \nsalg{\scr{N}}$, $N_j := \scr{N}_j$ for all $j \in \mathbb{I}$. Let $\Gamma$ be an abelian group, such that $\ndZ^\theta \subset \Gamma$. We view $Q$ as $\Gamma$-graded by continuing the $\ndN_0^\theta$-grading with zero components.

\section{\mathintitleydmod{}-modules over Nichols systems} \label{sect_ydmod_nich_sys_definition}

\begin{defi}
	Let $V \in \ydcat{Q}{\scr{C}}$ be a $\Gamma$-graded object.
	We say $V$ is \textbf{homogeneously generated}, if there exists an $\gamma \in \Gamma$, such that $V$ is generated by $V(\gamma)$ as a $Q$-module. We denote $\genind{V} := \gamma$ and $\gencomp{V} := V(\gamma)$.
\end{defi}

\begin{rema}
	Let $V \in \ydcat{Q}{\scr{C}}$ be a homogeneously generated $\Gamma$-graded object. Then $\Sup{V} \subset \genind{V} + \ndN_0^\theta$ and
	\begin{align*}
	V(\genind{V} + m \alpha_j) = Q(m \alpha_j) \cdot V(\genind{V}) = N_j^m \cdot \gencomp{V}
	\end{align*}
	for all $j \in \mathbb{I}$, $m \in \ndN_0$ by definition of $Q$.
\end{rema}

\begin{prop} \label{prop_irred_module_is_hom_gen}
	If $V \in \ydcat{Q}{\scr{C}}$ is a irreducible $\Gamma$-graded object, then $V$ is homogeneously generated and $\gencomp{V}$ is irreducible in $\scr{C}$.
\end{prop}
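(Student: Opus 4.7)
The plan is to deduce this directly from \Cref{cor_irred_graded_ydmodule_o} applied to $R := Q$. First I would observe that $Q$ qualifies as input to the corollary: by the definition of a pre-Nichols system, $Q$ is an $\ndN_0^\theta$-graded Hopf algebra in $\ydmod{H}$, and $\ndZ^\theta \subset \Gamma$ by assumption.

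The key preliminary reduction is to identify $Q(0)$. Since $Q$ is generated as an algebra by $N_1,\ldots,N_\theta$ with $N_j \subset Q(\alpha_j)$, and since $\bimu_Q,\biunit_Q$ are graded, every nontrivial product of generators lives in a strictly positive degree of $\ndN_0^\theta$. Hence $Q(0) = \fK\cdot 1_Q$, so $Q(0)\boso H \cong H$ and consequently
\begin{align*}
\ydmod{Q(0)\boso H} \;\cong\; \ydmod{H} \;=\; \scr{C}.
\end{align*}

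Now I would invoke the implication $(1)\Rightarrow(3)$ of \Cref{cor_irred_graded_ydmodule_o}. Since $V$ is an irreducible $\Gamma$-graded object of $\ydcat{Q}{\scr{C}}$ (this suffices regardless of which of the two possible readings of ``irreducible'' one chooses, because (1) and (2) are equivalent there), there exists $n_0 \in \Gamma$ such that $V(n_0)$ generates $V$ as a $Q$-module and $V(n_0)$ is irreducible in $\ydmod{Q(0)\boso H} = \scr{C}$. Setting $\genind{V} := n_0$ and $\gencomp{V} := V(n_0)$, this is precisely the statement that $V$ is homogeneously generated and $\gencomp{V}$ is irreducible in $\scr{C}$.

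There is no real obstacle: the only nontrivial point is the identification $Q(0) = \fK$, which follows from the fact that $Q$ is generated by elements of strictly positive degree. Everything else is a direct quote of the preceding corollary.
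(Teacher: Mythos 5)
Your proposal is correct and follows exactly the paper's own route: a direct application of \Cref{cor_irred_graded_ydmodule_o}(1)$\implies$(3) to $R=Q$. The only difference is that you spell out the identification $Q(0)=\fK$ (hence $\ydmod{Q(0)\boso H}\cong\scr{C}$), which the paper leaves implicit when it asserts that $V(n_0)$ is irreducible in $\scr{C}$.
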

\begin{proof}
    By \Cref{cor_irred_graded_ydmodule_o}(1)$\implies$(3) there exists $n_0 \in \Gamma$ such that $V(n_0)$ is irreducible in $\scr{C}$ and generates $V$ as a $Q$-module.
\end{proof}

\begin{lem} \label{lem_Ni_generated_submodule_is_yd}
	Let $i \in \mathbb{I}$ and let $V \in \ydcat{Q}{\scr{C}}$ be homogeneously generated $\Gamma$-graded. Then 
	$W := \subalgQ{N_i} \cdot \gencomp{V}$
	is a $\ndN_0$-graded object in $\ydcat{\subalgQ{N_i}}{\scr{C}}$, where the $\subalgQ{N_i}$-coaction is the restriction $\coact^Q_V$ onto $W$ and with $W(n) = N_i^n \cdot \gencomp{V}$ for $n \in \ndN_0$.
\end{lem}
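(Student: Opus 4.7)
The plan is to exploit the $\Gamma$-grading of $V$ throughout: since $V$ is homogeneously generated and $\coact^Q_V$ is graded, essentially everything is dictated by the grading. First I would establish the $\ndN_0$-grading of $W$. Since $V = Q \cdot \gencomp{V}$ with $\gencomp{V} = V(\genind{V})$ and $Q$ is $\ndN_0^\theta$-graded with $Q(n \alpha_i) = N_i^n$ (the only way to land in degree $n \alpha_i$ via products of homogeneous generators of $Q$ is through $n$-fold products of elements of $N_i$), the graded components of $V$ along the $\alpha_i$-axis through $\genind{V}$ are exactly $V(\genind{V} + n \alpha_i) = N_i^n \cdot \gencomp{V}$. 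Hence the spaces $N_i^n \cdot \gencomp{V}$ for $n \in \ndN_0$ lie in pairwise distinct $\Gamma$-graded components of $V$, their sum is direct, and $W = \bigoplus_{n \in \ndN_0} W(n)$ with $W(n) = N_i^n \cdot \gencomp{V}$.

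Next I would show that $\coact^Q_V$ restricts to a map $W \to \subalgQ{N_i} \ot W$. Since $\coact^Q_V$ is $\Gamma$-graded and $\Sup{V} \subset \genind{V} + \ndN_0^\theta$, for $w \in W(n) = V(\genind{V} + n \alpha_i)$ one has
\begin{align*}
\coact^Q_V(w) \in \bigoplus_{\gamma_1 + \gamma_2 = \genind{V} + n\alpha_i} Q(\gamma_1) \ot V(\gamma_2),
\end{align*}
where a non-zero summand forces $\gamma_1 \in \ndN_0^\theta$ and $\gamma_2 \in \genind{V} + \ndN_0^\theta$. A short degree argument then shows $\gamma_1 = k \alpha_i$ and $\gamma_2 = \genind{V} + (n-k)\alpha_i$ for some $0 \le k \le n$, so that $\coact^Q_V(w) \in \bigoplus_{k=0}^n \subalgQ{N_i}(k) \ot W(n-k) \subset \subalgQ{N_i} \ot W$.

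Finally, the $\subalgQ{N_i}$-module structure on $W$ is the restriction of the $Q$-module structure along the inclusion $\subalgQ{N_i} \hookrightarrow Q$, and the $\subalgQ{N_i}$-comodule structure is the corestriction of $\coact^Q_V|_W$; coassociativity and counitality of the latter follow from those of $\coact^Q_V$ because $\subalgQ{N_i}$ is a sub-Hopf algebra of $Q$ (cf.\ \Cref{rema_structure_of_Ki}). Both the restricted action and the corestricted coaction are graded by construction. The Yetter-Drinfeld compatibility for $W$ over $\subalgQ{N_i}$ is then inherited from that of $V$ over $Q$: the defining equation of \Cref{lem_defining_relation_ydmodule_equivalence} for $V$ over $Q$, restricted to inputs from $\subalgQ{N_i} \ot W$, becomes precisely the defining equation for $W$ over $\subalgQ{N_i}$. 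Hence $W \in \ydcat{\subalgQ{N_i}}{\scr{C}}$.

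I do not expect a serious obstacle here; the only care needed is the degree bookkeeping in the second step, which forces the image of the coaction on $W$ to lie in $\subalgQ{N_i} \ot W$ rather than in a larger piece of $Q \ot V$.
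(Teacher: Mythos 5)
Your proof is correct and follows essentially the same route as the paper: the paper's own proof is exactly the degree-bookkeeping computation you give in your second step, showing $\coact_V^Q(V(\genind{V}+k\alpha_i)) \subset \oplus_{n_1+n_2=k} N_i^{n_1} \ot N_i^{n_2}\cdot\gencomp{V} \subset \subalgQ{N_i}\ot W$ using that the coaction is $\Gamma$-graded and the supports of $Q$ and $V$ are confined to $\ndN_0^\theta$ and $\genind{V}+\ndN_0^\theta$. Your additional remarks on the directness of the sum $\oplus_n W(n)$ and the inheritance of the Yetter--Drinfeld compatibility are left implicit in the paper but are the correct justifications.
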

\begin{proof}
Indeed for $0 \le k \le m_i$ we have since $\coact_V^Q$ is $\Gamma$-graded
\begin{align*}
\coact_V^{Q} (N_i^k \cdot \gencomp{V}) =& \coact_V^Q (V(\genind{V}+ k \alpha_i)) 
 \subset  \oplus_{\gamma_1,\gamma_2 \in \Gamma}^{\gamma_1 + \gamma_2  = \genind{V} + k \alpha_i} Q(\gamma_1) \ot V(\gamma_2)
\\=& \oplus_{n_1,n_2 \in \ndN_0}^{n_1 + n_2 = k } \subalgQ{N_i}(n_1 \alpha_i) \ot V(n_2 \alpha_i + \genind{V})
\\=& \oplus_{n_1,n_2 \in \ndN_0}^{n_1 + n_2 = k } {N_i}^{n_1} \ot {N_i}^{n_2} \cdot \gencomp{V}
\subset  \subalgQ{N_i} \ot W \matcom
\end{align*}
hence $\coact_V^{Q}(W) \subset \subalgQ{N_i} \ot W$ and we obtain $W \in \ydcat{\subalgQ{N_i}}{\scr{C}}$.
\end{proof}

\Cref{lem_Ni_generated_submodule_is_yd} allows the following definition.

\begin{defi}
	Let $i \in \mathbb{I}$ and let $V \in \ydcat{Q}{\scr{C}}$ be homogeneously generated $\Gamma$-graded. If $\subalgQ{N_i} \cdot \gencomp{V} \in \ydcat{\subalgQ{N_i}}{\scr{C}}$ is well graded, then we call $V$ \textbf{$i$-well graded}.
\end{defi}

\begin{rema} \label{rema_i_well_graded_characterisation}
	Let $i \in \mathbb{I}$ and $V \in \ydcat{Q}{\scr{C}}$ be homogeneously generated $\Gamma$-graded. As discussed in \Cref{rema_defi_well_graded} the following are equivalent:
	\begin{enumerate}
		\item $V$ is $i$-well graded.
		\item $\lbrace v \in \subalgQ{N_i} \cdot \gencomp{V} \, | \, \coact_V^Q(v) = 1 \ot v \rbrace = \gencomp{V} $.
	\end{enumerate}
\end{rema}

\begin{lem} \label{lem_gencompV_irred_iff_Ni_V_irred}
	Let $i \in \mathbb{I}$ and $V \in \ydcat{Q}{\scr{C}}$ be homogeneously generated $i$-well $\Gamma$-graded. The following are equivalent:
	\begin{enumerate}
		\item $\gencomp{V}$ is irreducible in $\scr{C}$.
		\item $\subalgQ{N_i}\cdot \gencomp{V} $ is irreducible in $\ydcat{\subalgQ{N_i}}{\scr{C}}$.
	\end{enumerate}
\end{lem}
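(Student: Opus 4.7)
The plan is to reduce the statement to \Cref{cor_irred_graded_ydmodule_o} applied to the object $W := \subalgQ{N_i} \cdot \gencomp{V}$. By \Cref{lem_Ni_generated_submodule_is_yd}, $W$ is $\ndN_0$-graded in $\ydcat{\subalgQ{N_i}}{\scr{C}}$ with $W(n) = N_i^n \cdot \gencomp{V}$, and in particular $W(0) = \gencomp{V}$. The Hopf algebra $\subalgQ{N_i}$ is $\ndN_0$-graded in $\scr{C}$ with degree-zero component $\subalgQ{N_i}(0) = \fK$, so $\subalgQ{N_i}(0) \boso H = H$ and the category appearing in condition (3)(a) of \Cref{cor_irred_graded_ydmodule_o} is exactly $\scr{C} = \ydmod{H}$.

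The case $\gencomp{V} = 0$ is trivial (both sides fail), so I would assume $\gencomp{V} \neq 0$. Then \Cref{cor_irred_graded_ydmodule_o} tells us that (2) is equivalent to the existence of some $n_0 \in \ndZ$ for which (i) $W(n_0)$ is irreducible in $\scr{C}$, (ii) $W(n_0)$ generates $W$ as a $\subalgQ{N_i}$-module, and (iii) $W$ is well graded. Since $V$ is $i$-well graded by hypothesis, $W$ is well graded with generating degree $0$. The key observation is that in a well-graded module the generating degree is uniquely determined: it is the unique index at which $\filtcomod{0}(W)$ sits (\Cref{defi_well_graded}, \Cref{rema_defi_well_graded}). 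Hence the $n_0$ produced by the corollary must equal $0$, and condition (i) reads exactly as (1). Conversely, assuming (1), the triple $(n_0, W(n_0), W)$ with $n_0 = 0$ satisfies (i)–(iii) by (1) together with the $i$-well-gradedness of $V$, so (2) follows.

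There is no real obstacle here: the argument is essentially bookkeeping on \Cref{cor_irred_graded_ydmodule_o}. The only minor point demanding attention is the uniqueness of $n_0$, which is needed to match the corollary's $n_0$ with the generating degree $0$ supplied by the $i$-well-gradedness hypothesis, and this is immediate from the definition of well-gradedness via $\filtcomod{0}(W)$.
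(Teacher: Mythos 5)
Your argument is correct and is exactly the route the paper takes: its proof of this lemma is simply ``Implied by \Cref{cor_irred_graded_ydmodule_o}'', and your write-up supplies the bookkeeping (identifying $\ydmod{\subalgQ{N_i}(0)\boso H}$ with $\scr{C}$ and pinning the corollary's $n_0$ to $0$ via uniqueness of the generating degree) that the paper leaves implicit.
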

\begin{proof}
Implied by \Cref{cor_irred_graded_ydmodule_o}.
\end{proof}

\begin{prop} \label{prop_irreducible_module_is_well_graded}
	Let $i \in \mathbb{I}$ and $V \in \ydcat{Q}{\scr{C}}$ be a $\Gamma$-graded irreducible object. Then $V$ is $i$-well $\Gamma$-graded.
\end{prop}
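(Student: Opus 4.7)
The plan is to exploit the irreducibility of $V$ through \Cref{cor_irred_graded_ydmodule_o}, which pins down $\gencomp{V}$ as the ``$Q$-coinvariants'' $\filtcomod{0}(V)$, and then to observe that on $W := \subalgQ{N_i}\cdot \gencomp{V}$ the $\subalgQ{N_i}$-coaction is literally the restriction of the $Q$-coaction, so the two candidate ``$\filtcomod{0}$ pieces'' agree.

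Concretely, I first invoke \Cref{prop_irred_module_is_hom_gen} to get that $V$ is homogeneously generated with $\gencomp{V}$ irreducible in $\scr{C}$, and then \Cref{cor_irred_graded_ydmodule_o} applied to $V$ as an irreducible $\Gamma$-graded object yields $\gencomp{V} = V(\genind{V}) = \filtcomod{0}(V)$. Since $Q$ is connected, \Cref{rema_defi_well_graded} rewrites this as $\gencomp{V} = \{v \in V \mid \coact_V^Q(v) = 1 \ot v\}$. By \Cref{rema_i_well_graded_characterisation} it then suffices to show that the analogous set computed inside $W$ using $\coact_W^{\subalgQ{N_i}}$ still equals $\gencomp{V}$.

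The crucial observation, already implicit in \Cref{lem_Ni_generated_submodule_is_yd}, is that $\coact_V^Q$ sends $W$ into $\subalgQ{N_i} \ot W$: each homogeneous component $V(\genind{V} + n\alpha_i) = N_i^n \cdot \gencomp{V}$ of $W$ has $\coact_V^Q$-image inside $\bigoplus_{\gamma_1 + \gamma_2 = \genind{V} + n\alpha_i} Q(\gamma_1) \ot V(\gamma_2)$, and the support constraints $\Sup{Q} \subset \ndN_0^\theta$ together with $\Sup{V} \subset \genind{V} + \ndN_0^\theta$ force $\gamma_1$ and $\gamma_2 - \genind{V}$ to be nonnegative multiples of $\alpha_i$, placing the first factor in $\subalgQ{N_i}$ and the second in $W$. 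Consequently $\coact_W^{\subalgQ{N_i}} = \coact_V^Q|_W$, and the hypothesis $\coact_W^{\subalgQ{N_i}}(w) = 1 \ot w$ for $w \in W$ becomes $w \in \filtcomod{0}(V) \cap W = \gencomp{V}$. The only step demanding any thought is this grading/support identification of the two coactions on $W$; once it is in hand the desired equality is tautological.
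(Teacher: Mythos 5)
Your proof is correct, but it takes a genuinely different route from the paper's. You apply \Cref{cor_irred_graded_ydmodule_o}(1)$\implies$(3) to $V$ itself, with its $\Gamma$-grading over the $\ndN_0^\theta$-graded algebra $Q$, to get $\gencomp{V}=\filtcomod{0}(V)$, and then deduce $i$-well-gradedness from the observation that the coinvariants of $W:=\subalgQ{N_i}\cdot\gencomp{V}$ for the restricted coaction are just $W\cap\filtcomod{0}(V)=\gencomp{V}$ — a one-line restriction argument, with the compatibility of the two coactions supplied by \Cref{lem_Ni_generated_submodule_is_yd} (and in fact already built into the formulation of \Cref{rema_i_well_graded_characterisation}(2), which is stated in terms of $\coact_V^Q$). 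The paper instead regrades $Q$ and $V$ along a homomorphism $p:\Gamma\rightarrow\ndZ$ with $p(\alpha_i)=0$ and $p(\alpha_j)=1$ for $j\ne i$, so that $Q(0)=\subalgQ{N_i}$ and $W$ becomes the component of $V$ of smallest degree; \Cref{cor_irred_graded_ydmodule_o}(1)$\implies$(3) then shows that $W$ is \emph{irreducible} in $\ydcat{\subalgQ{N_i}}{\scr{C}}$, and irreducibility implies well-gradedness by the same corollary. The paper's regrading buys the strictly stronger conclusion that $W$ is irreducible (which \Cref{lem_gencompV_irred_iff_Ni_V_irred} can only recover once $i$-well-gradedness is already known); your argument is more economical when well-gradedness is all that is wanted, since the coinvariant condition is manifestly inherited by any subcomodule containing $\gencomp{V}$.
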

\begin{proof}
By \Cref{prop_irred_module_is_hom_gen} we obtain that $V$ is homogeneously generated.
Regarding \Cref{cor_irred_graded_ydmodule_o} it is enough to show that $\subalgQ{N_i}\cdot \gencomp{V} $ is irreducible in $\ydcat{\subalgQ{N_i}}{\scr{C}}$.
Let $p : \Gamma \rightarrow \ndZ$ be a group homomorphism, such that $p(\alpha_i)=0$ and $p(\alpha_j) = 1$ for all $j \in \mathbb{I} \setminus \lbrace i \rbrace$. Then $Q$ is $\ndN_0$-graded by $Q(n) = Q(p^{-1}(n))$ for $n \in \ndN_0$ and $V \in \ydcat{Q}{\scr{C}}_{\rat}$ is $\ndZ$-graded by $V(n) = V(p^{-1}(n))$ for $n \in \ndZ$.
In particular $Q(0) = \subalgQ{N_i}$ and the component of $V$ of smallest degree is $V(p(\genind{V})) =\subalgQ{N_i}\cdot \gencomp{V}$.
Then \Cref{cor_irred_graded_ydmodule_o}(1)$\implies$(3) implies that $\subalgQ{N_i}\cdot \gencomp{V}$ is irreducible in $\ydcat{Q(0)}{\scr{C}} = \ydcat{\subalgQ{N_i}}{\scr{C}}$.
\end{proof}
\section{Reflections of \mathintitleydmod{}-modules over Nichols systems} \label{sect_ydmod_nich_sys_reflections}

We start this section with the definition of a reflection of a $\ydmod{}$-module over a Nichols system and by discussing how that changes the grading, where the main work was already done in \cref{sect_refl_functors}. We will continue by discussing how the important properties regarding reflections are preserved in \Cref{prop_funCD_refl_generator} and what happens if we do the same reflection twice in \Cref{lem_mii_equals_mi}.

Let $i \in \mathbb{I}$ and $\scr{N}$ be a Nichols system over $i$. Moreover let $Q := \nsalg{\scr{N}}$ and $N_j := \scr{N}_j$ for all $j \in \mathbb{I}$. Assume that $\scr{N}$ is $i$-finite and that all $N_j$, $j \in \mathbb{I} \setminus \lbrace i \rbrace$ are irreducible in $\scr{C}$. Let $\Gamma$ be an abelian group, such that $\ndZ^\theta \subset \Gamma$. Finally let $t_i \in \Aut(\Gamma)$ be such that $t_i (\alpha) = s^{\scr{N}}_i(\alpha)$ for all $\alpha  \in \ndZ^\theta$ and $t_i^2 = \id_{\Gamma}$. We view both $Q$ and $\refl_i(Q)$ as $\Gamma$-graded by continuing the $\ndN_0^\theta$-grading with zero components.

If needed recall \Cref{defi_refl_yd_mod}, where the reflection functor $\funCD_{\refl_i}$ is defined.

\begin{defi}
	For $V \in \ydcat{Q}{\scr{C}}_{\rat}$ we denote 
	\begin{align*}
		\refl_i(V) := \funCD_{\refl_i}(V) \in \ydcat{\refl_i(Q)}{\scr{C}}_{\rat} \matdot
	\end{align*}
		$\refl_i(V)$ is called the \textbf{$i$-th reflection of $V$}.
\end{defi}

\begin{prop} \label{prop_refl_of_irreducible_module_is_irreducible}
	If $V \in \ydcat{Q}{\scr{C}}_{\rat}$ is irreducible, then $\refl_i(V)$ is irreducible in $\ydcat{\refl_i(Q)}{\scr{C}}_{\rat}$.
\end{prop}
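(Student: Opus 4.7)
The plan is to reduce this directly to \Cref{lem_funcd_keeps_irreducibility}, which already states the essential fact that the reflection functor induced by $(\funCD,\funCDcoh)$ preserves irreducibility. More precisely, by \Cref{defi_refl_yd_mod} the functor $\funCD_{\refl_i}$ is defined as $\funCD_{K_i,\subalgQ{N_i}}$, so taking $B := \subalgQ{N_i}$, $A := \subalgQ{N_i^*}$ and $K := K_i$ in \Cref{lem_funcd_keeps_irreducibility} yields exactly what we need: since $K_i \boso \subalgQ{N_i} \cong Q$ and $\funCD_i(K_i) \boso \subalgQ{N_i^*} = \refl_i(Q)$, the irreducibility of $V$ in $\ydcat{Q}{\scr{C}}_{\rat}$ transfers to the irreducibility of $\funCD_{\refl_i}(V) = \refl_i(V)$ in $\ydcat{\refl_i(Q)}{\scr{C}}_{\rat}$.

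The conceptual reason is that $\funCD_{\refl_i}$ is a braided monoidal \emph{isomorphism}, so it maps the subobject lattice of $V$ bijectively onto the subobject lattice of $\refl_i(V)$. Consequently, if there were a nontrivial proper subobject $U \subsetneq \refl_i(V)$ in $\ydcat{\refl_i(Q)}{\scr{C}}_{\rat}$, applying the inverse functor would produce a nontrivial proper subobject of $V$, contradicting irreducibility.

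Hence the entire proof can be written as a one-line invocation of \Cref{lem_funcd_keeps_irreducibility}. There is no real obstacle: the only thing to verify is that the identification $K_i \boso \subalgQ{N_i} \cong Q$ used throughout (see \Cref{rema_structure_of_Ki}) is consistent with the definition of $\funCD_{\refl_i}$ in \Cref{defi_refl_yd_mod}, which is immediate from the construction.
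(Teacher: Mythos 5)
Your proposal is correct and coincides with the paper's own proof, which likewise consists of a single invocation of \Cref{lem_funcd_keeps_irreducibility} applied to $K = K_i$, $B = \subalgQ{N_i}$, using that $\refl_i(V) = \funCD_{\refl_i}(V) = \funCD_{K_i,\subalgQ{N_i}}(V)$. The additional remark about the subobject lattice is exactly the mechanism behind that lemma, so nothing further is needed.
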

\begin{proof}
	This is implied by \Cref{lem_funcd_keeps_irreducibility}.
\end{proof}

\begin{rema}
	If $V \in \ydcat{Q}{\scr{C}}_{\rat}$, then $\refl_i(V) = V$ as vector spaces, hence $\funCD_{\refl_i}$ defines a $\refl_i(Q)$-module and comodule structure on $V$.
\end{rema}

\begin{prop} \label{prop_grading_of_reflected_yd_module}
	Let $V \in \ydcat{Q}{\scr{C}}_{\rat}$ be a $\Gamma$-graded object.
	Then $\refl_i (V) \in \ydcat{\refl_i(Q)}{\scr{C}}_{\rat}$ is $\Gamma$-graded with $\refl_i (V) (\gamma) = V(t_i(\gamma))$ for $\gamma \in \Gamma$.
\end{prop}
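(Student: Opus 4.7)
The plan is to combine Proposition \ref{prop_funCD_module_grading} with the grading shift from Remark \ref{rema_shifting_grading_by_autom}, being careful to track the two different $\Gamma$-gradings on $\refl_i(Q)$ that appear in the construction.

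First, under the identification $Q \cong K_i \boso \subalgQ{N_i}$ from Remark \ref{rema_structure_of_Ki}, view $V$ as a $\Gamma$-graded object in $\ydcat{K_i \boso \subalgQ{N_i}}{\scr{C}}_{\rat}$. Endowing $\subalgQ{N_i}$ with the $\Gamma$-grading $\subalgQ{N_i}(n\alpha_i) = N_i^n$ for $n \in \ndN_0$ (and zero elsewhere), we are in the setup of Proposition \ref{prop_funCD_module_grading} with $B = \subalgQ{N_i}$, $A = \subalgQ{N_i^*}$, $K = K_i$ and $\alpha = \alpha_i$. That proposition then produces a $\Gamma$-grading on $\funCD_{\refl_i}(V) = \refl_i(V)$ defined by $\refl_i(V)^{(0)}(\gamma) := V(\gamma)$, compatible with the $\Gamma$-grading on $\refl_i(Q) = \funCD_i(K_i) \boso \subalgQ{N_i^*}$ described in Proposition \ref{prop_grading_on_funCDK}(2).

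Next, recall from the proof of Proposition \ref{prop_RiQ_is_nich_sys}(2)(b) that the $\ndN_0^\theta$-grading on $\refl_i(Q)$ used throughout this section (extended to a $\Gamma$-grading by zero) is obtained from the grading of Proposition \ref{prop_grading_on_funCDK}(2) by applying the shift $s_i^{\scr{N}} \in \Aut(\ndZ^\theta)$ in the sense of Remark \ref{rema_shifting_grading_by_autom}. Since $t_i \in \Aut(\Gamma)$ extends $s_i^{\scr{N}}$ and satisfies $t_i^2 = \id_\Gamma$, applying the same remark to $\refl_i(V)^{(0)}$ via $t_i$ yields a new $\Gamma$-grading of $\refl_i(V)$ as an object of $\ydcat{\refl_i(Q)}{\scr{C}}_{\rat}$ (with respect to the shifted grading on $\refl_i(Q)$), given by
\[
    \refl_i(V)(\gamma) := \refl_i(V)^{(0)}(t_i(\gamma)) = V(t_i(\gamma)) \matcom
\]
which is the claimed formula.

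The main point to check is that the two shifts—on $\refl_i(Q)$ and on $\refl_i(V)$—use the same automorphism of $\Gamma$, so that compatibility of the action and coaction with the grading is preserved by Remark \ref{rema_shifting_grading_by_autom}. This holds because $t_i|_{\ndZ^\theta} = s_i^{\scr{N}}$ and because $t_i$ is an involution of $\Gamma$; no further property of $t_i$ is required, and all grading conditions follow immediately from Proposition \ref{prop_funCD_module_grading} and the definition of a shifted grading.
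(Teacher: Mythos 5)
Your proof is correct and takes essentially the same route as the paper: apply \Cref{prop_funCD_module_grading} to get the grading $\refl_i(V)(\gamma)=V(\gamma)$ relative to the grading of $\refl_i(Q)$ from \Cref{prop_grading_on_funCDK}(2), then transport it through the $s_i^{\scr{N}}$-shift from the proof of \Cref{prop_RiQ_is_nich_sys}(2)(b) via \Cref{rema_shifting_grading_by_autom}. The paper's own proof is just a terser version of the same argument; your extra care in noting that the shift on $\refl_i(V)$ must use the same automorphism $t_i$ extending $s_i^{\scr{N}}$ is exactly the point the remark relies on.
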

\begin{proof}
	By \Cref{prop_funCD_module_grading}, $\refl_i (V) \in \ydcat{\refl_i(Q)}{\scr{C}}_{\rat}$ is $\Gamma$-graded with grading $\refl_i (V) (\alpha) = V(\alpha)$ for $\alpha \in \Gamma$, where the $\Gamma$-grading of $\refl_i(Q)$ is given by $Q$ as in \Cref{prop_grading_on_funCDK}(2). 
	Now the $\ndN_0^\theta$-grading on $\refl_i(Q)$ that we use is given by shifting this grading by the the automorphism $s^{\scr{N}}_i$, see the proof of \Cref{prop_RiQ_is_nich_sys}(2)(b).
	Hence with concerning \Cref{rema_shifting_grading_by_autom}, the claim is implied.
\end{proof}

\begin{notation}
    For $j \in \mathbb{I}$ and $V \in \ydcat{Q}{\scr{C}}$ we denote
    \begin{align*}
        m^{V}_j = \max \lbrace m \in \ndN_0 \, | \,  N_j^{m} \cdot \gencomp{V} \ne 0 \rbrace \matdot
    \end{align*}
\end{notation}

\begin{rema}
    Let $j \in \mathbb{I}$ and $V \in \ydcat{Q}{\scr{C}}$. Clearly $m^{V}_j \le m^{\scr{N}}_j$ for all $j  \in \mathbb{I}$. If $V \in \ydcat{Q}{\scr{C}}_{\rat}$, then by definition we have $m^V_j \in \ndN_0$.
\end{rema}

\begin{lem} \label{lem_Nidual_act_on_Ni_act_on_V}
	Let $V \in \ydcat{Q}{\scr{C}}_{\rat}$ be homogeneously generated $i$-well $\Gamma$-graded. Moreover let $m_i := m_i^V$ and let 
	\begin{align*}
	W := \subalgQ{N_i} \cdot \gencomp{V} \in \ydcat{\subalgQ{N_i}}{\scr{C}} \matdot
	\end{align*}
	Assume $\gencomp{V}$ is irreducible in $\scr{C}$. Then
	\begin{enumerate}
		\item $N_i^{m_i} \cdot \gencomp{V}$ is irreducible in $\scr{C}$.
		\item $\funCD_i(W) \in \ydcat{\subalgQ{N_i^*}}{\scr{C}}_{\rat}$ is generated by $N_i^{m_i} \cdot \gencomp{V}$, that is we have for $n \in \ndN_0$
		\begin{align*}
		{N_i^*}^n \cdot (N_i^{m_i} \cdot \gencomp{V})= 
		\begin{cases}
		N_i^{m_i-n} \cdot \gencomp{V} & \text{if $n \le m_i$,} \\
		0 & \text{if $n > m_i$.}
		\end{cases}
		\end{align*}
		\item $\funCD_i(W) = \subalgQ{N_i^*} \cdot \left( N_i^{m_i} \cdot \gencomp{V} \right) $ is well graded.
	\end{enumerate}
\end{lem}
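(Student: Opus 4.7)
The plan is to transport $W = \subalgQ{N_i} \cdot \gencomp{V}$ through the reflection functor $\funCD_i$ and then read off all three conclusions by applying \Cref{cor_irred_graded_ydmodule_o} to $\funCD_i(W)$. First, since $V$ is $i$-well graded and $\gencomp{V}$ is irreducible in $\scr{C}$, \Cref{lem_gencompV_irred_iff_Ni_V_irred} tells us that $W$ is irreducible in $\ydcat{\subalgQ{N_i}}{\scr{C}}$. I view $W$ as $\ndZ$-graded via $W(n) = N_i^n \cdot \gencomp{V}$ for $n \in \ndN_0$, zero otherwise; this is consistent with the $\ndN_0$-grading of $\subalgQ{N_i}$ used to construct $\funCD_i$. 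The support lies in $[0, m_i]$, so $\subalgQ{N_i}(n) \cdot W = 0$ for $n > m_i$, which in particular makes $W \in \ydcat{\subalgQ{N_i}}{\scr{C}}_{\rat}$.

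Next I apply $\funCD_i$. By \Cref{prop_funCD_properties_irred} the object $\funCD_i(W) \in \ydcat{\subalgQ{N_i^*}}{\scr{C}}_{\rat}$ is irreducible, and by \Cref{prop_funCD_exist_and_properties}(1) it is $\ndZ$-graded with $\funCD_i(W)(n) = W(-n)$. Thus $\funCD_i(W)$ is supported on $[-m_i, 0]$, with smallest-degree component $\funCD_i(W)(-m_i) = N_i^{m_i} \cdot \gencomp{V}$. Now invoke \Cref{cor_irred_graded_ydmodule_o} with $\theta = 1$, $\Gamma = \ndZ$ and $R = \subalgQ{N_i^*}$; since $\subalgQ{N_i^*}$ is a Nichols algebra, $\subalgQ{N_i^*}(0) = \fK$ and hence $\ydmod{\subalgQ{N_i^*}(0) \boso H} = \scr{C}$. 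The implication $(1)\Rightarrow(3)$ yields some $n_0 \in \ndZ$ such that $\funCD_i(W)(n_0)$ is irreducible in $\scr{C}$, generates $\funCD_i(W)$ as an $\subalgQ{N_i^*}$-module, and makes $\funCD_i(W)$ well graded. The support constraints force $n_0 = -m_i$, which directly proves (1) and (3).

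The explicit action formula in (2) then comes from grading compatibility: since ${N_i^*}^n = \subalgQ{N_i^*}(n)$ shifts $\ndZ$-degree by $n$, we have ${N_i^*}^n \cdot (N_i^{m_i} \cdot \gencomp{V}) \subset \funCD_i(W)(-m_i + n) = W(m_i - n) = N_i^{m_i - n} \cdot \gencomp{V}$. Equality in the range $0 \le n \le m_i$ follows because $\funCD_i(W)(-m_i)$ generates $\funCD_i(W)$ and $\subalgQ{N_i^*}(n) = {N_i^*}^n$ is the entire degree-$n$ part of $\subalgQ{N_i^*}$, so the action must fill out all of $\funCD_i(W)(-m_i + n)$; vanishing for $n > m_i$ is automatic from the support. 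The only genuine bookkeeping subtlety is keeping track of the grading reversal imposed by \Cref{prop_funCD_exist_and_properties}(1) and matching it against the natural $\ndN_0$-grading on $\subalgQ{N_i^*}$; once this is done, the argument reduces entirely to previously established results.
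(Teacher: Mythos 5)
Your proof is correct and follows essentially the same route as the paper: establish irreducibility of $W$ via \Cref{lem_gencompV_irred_iff_Ni_V_irred}, push it through $\funCD_i$ using \Cref{prop_funCD_properties_irred} and the grading reversal of \Cref{prop_funCD_exist_and_properties}(1), identify the smallest-degree component as $N_i^{m_i}\cdot\gencomp{V}$, and conclude with \Cref{cor_irred_graded_ydmodule_o}(1)$\implies$(3). Your additional bookkeeping for part (2) only makes explicit what the paper leaves implicit in that final invocation.
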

\begin{proof}
	By \Cref{lem_gencompV_irred_iff_Ni_V_irred} we know that $W$ is irreducible in $\ydcat{\subalgQ{N_i}}{\scr{C}}$. Hence $\funCD_i (W) \in \ydcat{\subalgQ{N_i^*}}{\scr{C}}$ is irreducible and $\ndZ$-graded by \Cref{prop_funCD_exist_and_properties} and \Cref{prop_funCD_properties_irred} via $\funCD_i (W)(n)=W(-n)$, $n \in \ndZ$.
	Observe that the component of smallest degree is
	\begin{align*}
	\funCD_i (W)(-m_i)=W(m_i)=N_i^{m_i} \cdot \gencomp{V} \matdot
	\end{align*}
	Thus \Cref{cor_irred_graded_ydmodule_o}(1)$\implies$(3) implies (1), (2) and~(3).
\end{proof}

\begin{prop} \label{prop_funCD_refl_generator}
	Let $V \in \ydcat{Q}{\scr{C}}_{\rat}$ be homogeneously generated $i$-well $\Gamma$-graded and let $m_i = m_i^V$. Assume that $\gencomp{V}$ is irreducible.
	Then the object $\refl_i(V) \in \ydcat{\refl_i(Q)}{\scr{C}}_{\rat}$ is homogeneously generated $i$-well $\Gamma$-graded, such that $\genind{\refl_i(V)} = t_i(\genind{V}) - m_i \alpha_i$ and
	\begin{align*} 
		 \gencomp{\refl_i(V)}= V(\genind{V} + m_i \alpha_i) = {N_i}^{m_i} \cdot \gencomp{V} \matdot
	\end{align*}
	Moreover $\gencomp{\refl_i(V)}$ is irreducible in $\scr{C}$.
\end{prop}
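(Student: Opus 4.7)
The plan is to produce the candidate generator via the grading formula of \Cref{prop_grading_of_reflected_yd_module}, transfer the relevant irreducibility and structural data through \Cref{lem_Nidual_act_on_Ni_act_on_V}, and then verify the generation and $i$-well-gradedness claims from there.

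First I identify the candidate. By \Cref{prop_grading_of_reflected_yd_module} one has $\refl_i(V)(\gamma) = V(t_i(\gamma))$ for all $\gamma \in \Gamma$. Setting $\gamma_0 := t_i(\genind{V}) - m_i \alpha_i$ and using $t_i^2 = \id_\Gamma$ together with $t_i(\alpha_i) = s^{\scr{N}}_i(\alpha_i) = -\alpha_i$, one computes $t_i(\gamma_0) = \genind{V} + m_i \alpha_i$, so $\refl_i(V)(\gamma_0) = V(\genind{V} + m_i \alpha_i) = N_i^{m_i} \cdot \gencomp{V}$ by homogeneous generatedness of $V$. This is the candidate for $\gencomp{\refl_i(V)}$. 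Since $\gencomp{V}$ is irreducible in $\scr{C}$ and $V$ is $i$-well graded, \Cref{lem_gencompV_irred_iff_Ni_V_irred} yields that $W := \subalgQ{N_i} \cdot \gencomp{V}$ is irreducible in $\ydcat{\subalgQ{N_i}}{\scr{C}}$, so \Cref{lem_Nidual_act_on_Ni_act_on_V} applies and delivers three items: $N_i^{m_i} \cdot \gencomp{V}$ is irreducible in $\scr{C}$ (the \emph{moreover} claim); $\subalgQ{N_i^*} \cdot (N_i^{m_i} \cdot \gencomp{V}) = \funCD_i(W)$ as subsets of $V$; and $\funCD_i(W)$ is well graded in $\ydcat{\subalgQ{N_i^*}}{\scr{C}}$.

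Next I show that $N_i^{m_i} \cdot \gencomp{V}$ generates $\refl_i(V)$ as a $\refl_i(Q)$-module, which forces $\genind{\refl_i(V)} = \gamma_0$. Using the decomposition $\refl_i(Q) = \funCD_i(K_i) \boso \subalgQ{N_i^*}$ and the factored description of its action given in \Cref{rema_explicit_action_coaction_funcdboso}, one obtains
$\refl_i(Q) \cdot (N_i^{m_i} \cdot \gencomp{V}) = \funCD_i(K_i) \cdot \bigl( \subalgQ{N_i^*} \cdot (N_i^{m_i} \cdot \gencomp{V}) \bigr) = \funCD_i(K_i) \cdot W$
by the preceding paragraph. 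Now $\widetilde{\funCD_i}$ transports the $K_i$-action on $V$ to the $\funCD_i(K_i)$-action on $\funCD_i(V)$ by postcomposition with $\funCDcoh_{K_i,V}^{-1}$, which is an isomorphism of vector spaces, so the two actions have the same image on $W$. Hence $\funCD_i(K_i) \cdot W = K_i \cdot W$, and by the Hopf-algebra identification $K_i \boso \subalgQ{N_i} \cong Q$ from \Cref{rema_structure_of_Ki} this equals $Q \cdot \gencomp{V} = V = \refl_i(V)$.

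It remains to show that $\refl_i(V)$ is $i$-well graded, i.e. that $\subalgQ{N_i^*} \cdot \gencomp{\refl_i(V)}$ is well graded in $\ydcat{\subalgQ{N_i^*}}{\scr{C}}$. By what was established this subset equals $\funCD_i(W)$, and by \Cref{rema_explicit_action_coaction_funcdboso} the $\subalgQ{N_i^*}$-coaction induced from the $\refl_i(Q)$-coaction of $\refl_i(V)$ (obtained by projecting the $\funCD_i(K_i)$-factor via its counit) coincides with the intrinsic $\subalgQ{N_i^*}$-coaction of $\funCD_i(W)$, so \Cref{lem_Nidual_act_on_Ni_act_on_V}(3) finishes the argument. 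The most delicate point of the proof is the identification $\funCD_i(K_i) \cdot W = K_i \cdot W$: formally the two actions differ by the vector-space coherence isomorphism $\funCDcoh_i^{-1}$ built into $\widetilde{\funCD_i}$, so while they are not equal as linear maps their images agree, which is exactly what the argument needs.
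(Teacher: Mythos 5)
Your proof is correct and shares the paper's overall skeleton — the candidate generator is read off from the grading of \Cref{prop_grading_of_reflected_yd_module}, and both the irreducibility of the generating component and the $i$-well-gradedness of the reflection come from \Cref{lem_Nidual_act_on_Ni_act_on_V} — but you execute the generation step by a different route. The paper proves, for each $\beta$ in the support, the componentwise identity $\refl_i(V)(\beta)=\refl_i(Q)(\beta-\genind{\refl_i(V)})\cdot\refl_i(V)(\genind{\refl_i(V)})$ by unwinding the explicit $\ndN_0^\theta$-gradings of $Q\cong K_i\boso\subalgQ{N_i}$ and $\refl_i(Q)=\funCD_i(K_i)\boso\subalgQ{N_i^*}$ and then invoking the explicit formula of \Cref{lem_Nidual_act_on_Ni_act_on_V}(2) together with $N_i^k\cdot\gencomp{V}=0$ for $k>m_i$. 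You instead argue globally: $\refl_i(Q)\cdot(N_i^{m_i}\cdot\gencomp{V})=\funCD_i(K_i)\cdot\bigl(\subalgQ{N_i^*}\cdot(N_i^{m_i}\cdot\gencomp{V})\bigr)=\funCD_i(K_i)\cdot W=K_i\cdot W=Q\cdot\gencomp{V}=V$, which needs only part (3) of that lemma and the agreement of the $\funCD_i(K_i)$-action on $\refl_i(V)$ with the $K_i$-action on $V$. This is cleaner and spares the index bookkeeping; what the paper's computation buys in exchange is the explicit graded decomposition of each component, though that also follows from generation plus gradedness of the action. One small correction to your closing caveat: the agreement of the two $K_i$-actions is not merely an agreement of images "up to the coherence isomorphism" — \Cref{rema_explicit_action_coaction_funcdboso} states $\act_{\refl_i(V)}^{\refl_i(Q)}=\act_V^{Q}(\id_{K_i}\ot\biunit_{\subalgQ{N_i}}\ot\act_{\funCD_i(V)}^{\subalgQ{N_i^*}})$, so after restricting to $K_i\ot\fK 1\ot V$ the two actions coincide as linear maps, which is exactly what your chain of equalities uses and makes the detour through $\funCDcoh$ unnecessary.
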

\begin{proof}
	Recall the $\Gamma$-grading of $\refl_i(V)$ as described in \Cref{prop_grading_of_reflected_yd_module} and observe
		\begin{align*} 
	\refl_i(V)( t_i(\genind{V}) - m_i \alpha_i ) = V(\genind{V} + m_i \alpha_i) = {N_i}^{m_i} \cdot \gencomp{V} \matdot
	\end{align*}
	Below we are going to show for $\beta \in \Gamma$
	\begin{align*} 
		\refl_i(V)(\beta) = \refl_i(Q)(\beta - t_i(\genind{V}) + m_i \alpha_i) \cdot \refl_i(V)(t_i(\genind{V}) - m_i \alpha_i) \matdot \tag{i}
	\end{align*}
	Then $\refl_i(V)$ is homogeneously generated with $\genind{\refl_i(V)} = t_i(\genind{V}) - m_i \alpha_i$ and $\gencomp{\refl_i(V)}={N_i}^{m_i} \cdot \gencomp{V}$. From here \Cref{lem_Nidual_act_on_Ni_act_on_V}(3) implies that $\refl_i(V)$ is $i$-well graded and
	\Cref{lem_Nidual_act_on_Ni_act_on_V}(1) implies that $\gencomp{\refl_i(V)}$ is irreducible in $\scr{C}$, finishing the proof.
	
	It is enough to show (i) for $\beta \in \Sup{\refl_i (V) }$. Let $\gamma := t_i(\beta)$. Now to show (i) is to show
	\begin{align*}
		V(\gamma) = \refl_i(Q)(t_i(\gamma) - t_i(\genind{V}) + m_i \alpha_i) \cdot \left( {N_i}^{m_i} \cdot \gencomp{V} \right) \matdot
	\end{align*}
	As $0 \ne \refl_{i}(V)(\beta) = V(\gamma)= Q(\gamma-\genind{V}) \gencomp{V}$ and $Q(\gamma') = 0$ for $\gamma' \in \Gamma \setminus \ndN_0^\theta$, we can conclude $\gamma = \genind{V} + \sum_{j\in \mathbb{I}} n_j \alpha_j$ for some $n_1, \ldots, n_\theta \in \ndN_0$.
	Now recall that the $\ndN_0^\theta$-grading of $Q$ is given by \Cref{prop_grading_on_funCDK}(1), i.e.
	\begin{align*}
		Q(\gamma - \genind{V}) = \oplus_{ k_1, k_2 \in \ndN_0}^{k_1+k_2 = n_i}  K_i(k_1 \alpha_i + \sum_{j \in \mathbb{I} \setminus \lbrace i \rbrace} n_j \alpha_j ) \ot  {N_i}^{k_2} \matdot
	\end{align*}
	Observe that 
	\begin{align*}
	t_i (t_i(\gamma) - t_i(\genind{V}) + m_i \alpha_i) = \gamma-\genind{V} - m_i \alpha_i = \sum_{j\in \mathbb{I}} n_j \alpha_j - m_i \alpha_i
	\end{align*}
	and recall from the proof of \Cref{prop_RiQ_is_nich_sys}(2)(b) the $\ndN_0^\theta$-grading of $\refl_i(Q)$, which yields
	\begin{align*}
		&\refl_i(Q)(t_i(\gamma) - t_i(\genind{V}) + m_i \alpha_i)
		\\ =& \oplus_{k_1 , k_2 \in \ndN_0}^{k_1-k_2 = n_i - m_i} K_i(k_1 \alpha_i + \sum_{j \in \mathbb{I} \setminus \lbrace i \rbrace} n_j \alpha_j ) \ot {N_i^*}^{k_2}  \matdot
	\end{align*}
	Thus by \Cref{lem_Nidual_act_on_Ni_act_on_V}(2) we obtain
	\begin{align*}
		&\refl_i(Q)(t_i(\gamma) - t_i(\genind{V}) + m_i \alpha_i) \cdot \left( {N_i}^{m_i} \cdot \gencomp{V} \right)
		\\ =& \oplus_{k_1, k_2 \in \ndN_0}^{k_1-k_2 = n_i - m_i} K_i(k_1 \alpha_i + \sum_{j \in \mathbb{I} \setminus \lbrace i \rbrace} n_j \alpha_j ) \cdot \left( {N_i^*}^{k_2} \cdot \left( {N_i}^{m_i} \cdot \gencomp{V} \right) \right)
		\\ =& \oplus_{ k_1 \in \ndN_0, 0\le k_2 \le m_i}^{k_1-k_2 = n_i - m_i} K_i(k_1 \alpha_i + \sum_{j \in \mathbb{I} \setminus \lbrace i \rbrace} n_j \alpha_j ) \cdot  \left( {N_i}^{m_i-k_2} \cdot \gencomp{V} \right) 
		\\ =&  \oplus_{ k_1 \in \ndN_0, 0\le k_2 \le m_i}^{k_1+k_2 = n_i}   K_i(k_1 \alpha_i + \sum_{j \in \mathbb{I} \setminus \lbrace i \rbrace} n_j \alpha_j ) \cdot  \left( {N_i}^{k_2} \cdot \gencomp{V} \right)
		\\ =& Q(\gamma - \genind{V}) \cdot \gencomp{V} = V(\gamma) \matcom
	\end{align*}
	where the second last equality uses $N_i^k \cdot \gencomp{V} = 0$ for all $k > m_i$.
\end{proof}

\begin{lem} \label{lem_mii_equals_mi}
	Let $V \in \ydcat{Q}{\scr{C}}_{\rat}$ be a homogeneously generated $i$-well $\Gamma$-graded object and assume $\gencomp{V}$ is irreducible. 
	Then we have $m^V_i=m^{\refl_i(V)}_i$, $\genind{\refl_i (\refl_i(V))} = \genind{V}$ and
	\begin{align*} 
		 \gencomp{\refl_i (\refl_i(V))}= \gencomp{V} \matdot
	\end{align*}
\end{lem}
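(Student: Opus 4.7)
The plan is to apply \Cref{prop_funCD_refl_generator} twice. Setting $m_i := m^V_i$, the first application (to $V$) yields that $\refl_i(V)$ is homogeneously generated $i$-well $\Gamma$-graded with $\genind{\refl_i(V)} = t_i(\genind{V}) - m_i \alpha_i$, $\gencomp{\refl_i(V)} = N_i^{m_i} \cdot \gencomp{V}$, and moreover $\gencomp{\refl_i(V)}$ is irreducible in $\scr{C}$.

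Before applying the proposition a second time, I would check that the running hypotheses transfer from $\scr{N}$ to $\refl_i(\scr{N})$. By \Cref{prop_RiQ_is_nich_sys}, $\refl_i(\scr{N})$ is a Nichols system over $i$, is $i$-finite, and satisfies $\refl_i(\scr{N})_j$ irreducible in $\scr{C}$ for each $j \ne i$. Moreover, \Cref{lem_structure_funCDiKi2} shows that $a^{\refl_i(\scr{N})}_{ij} = a^{\scr{N}}_{ij}$ for every $j \in \mathbb{I}$, so the automorphism $t_i$ associated with $\refl_i(\scr{N})$ coincides with the one associated with $\scr{N}$.

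A second application of \Cref{prop_funCD_refl_generator}, now to $\refl_i(V) \in \ydcat{\refl_i(Q)}{\scr{C}}_{\rat}$, then gives
\[
\gencomp{\refl_i(\refl_i(V))} = {N_i^*}^{m^{\refl_i(V)}_i} \cdot \bigl(N_i^{m_i} \cdot \gencomp{V}\bigr)
\quad\text{and}\quad
\genind{\refl_i(\refl_i(V))} = t_i(\genind{\refl_i(V)}) - m^{\refl_i(V)}_i \alpha_i.
\]
By \Cref{lem_Nidual_act_on_Ni_act_on_V}(2), ${N_i^*}^n \cdot (N_i^{m_i} \cdot \gencomp{V}) = N_i^{m_i-n} \cdot \gencomp{V}$ for $n \le m_i$ and vanishes otherwise; so the maximality in the definition of $m^{\refl_i(V)}_i$ together with $\gencomp{V} \ne 0$ forces $m^{\refl_i(V)}_i = m_i$, proving the first assertion and yielding $\gencomp{\refl_i(\refl_i(V))} = \gencomp{V}$.

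For the generating index, $a^{\scr{N}}_{ii} = 2$ gives $t_i(\alpha_i) = -\alpha_i$, hence by $t_i^2 = \id_\Gamma$ we obtain
\[
\genind{\refl_i(\refl_i(V))} = t_i\bigl(t_i(\genind{V}) - m_i \alpha_i\bigr) - m_i \alpha_i = \genind{V} + m_i \alpha_i - m_i \alpha_i = \genind{V}.
\]
The only conceptual step is verifying that $\refl_i$ can legitimately be applied a second time (i.e.\ that the reflected data in $\refl_i(\scr{N})$ satisfies the same $i$-finite / irreducibility hypotheses and that $t_i$ is unchanged under reflection); once this is established, everything reduces to the explicit formulas in \Cref{prop_funCD_refl_generator} combined with the duality statement \Cref{lem_Nidual_act_on_Ni_act_on_V}(2).
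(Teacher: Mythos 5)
Your proposal is correct and follows essentially the same route as the paper: both arguments rest on \Cref{prop_funCD_refl_generator} together with the duality formula \Cref{lem_Nidual_act_on_Ni_act_on_V}(2) to identify $m^{\refl_i(V)}_i$ with $m^V_i$, and then compute the generating index via $t_i^2=\id_\Gamma$ and $t_i(\alpha_i)=-\alpha_i$. Your extra verification that the hypotheses and the automorphism $t_i$ transfer to $\refl_i(\scr{N})$ (via \Cref{prop_RiQ_is_nich_sys} and \Cref{lem_structure_funCDiKi2}) is a point the paper leaves implicit, but it does not change the substance of the argument.
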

\begin{proof}
Since with \Cref{prop_funCD_refl_generator} and \Cref{lem_Nidual_act_on_Ni_act_on_V}(2) for $m \in \ndN_0$ we have
\begin{align*}
{\refl_i(N)_i}^{m} \cdot \gencomp{ \refl_i(V)} = {N_i^*}^m \cdot \left( {N_i}^{m^V_i} \cdot \gencomp{V} \right) 
	= \begin{cases}
		N_i^{m^V_i-m} \cdot \gencomp{V} & \text{if $m \le m^V_i$,} \\
		0 & \text{if $m > m^V_i$.}
	\end{cases}
\end{align*}
we obtain $m^V_i=m^{\refl_i(V)}_i$. Thus by \Cref{prop_funCD_refl_generator}
\begin{align*}
\genind{\refl_i (\refl_i(V))} = t_i(\genind{\refl_i(V)}) - m^V_i \alpha_i = t_i(t_i(\genind{V}) - m^V_i \alpha_i) - m^V_i \alpha_i  = \genind{V}
\end{align*}
and hence $\gencomp{\refl_i (\refl_i(V))}= \gencomp{V}$.
\end{proof}
\section{Iterated reflections} \label{sect_ydmod_nich_sys_iterated_reflections}

Similar to \cref{sect_nich_sys_iterated_refl} we will study what happens if we reflect a $\ydmod{}$-module of a Nichols system several times. We study the grading and the geometric shape of the support of such a reflectible $\ydmod{}$-module. As it turns out the edges and in particular the vertices of that shape play an important role, as the latter describes the generating components of given reflections of the $\ydmod{}$-module. We will show in \Cref{prop_real_subobject_lies_in_interior} and \Cref{cor_real_subobject_lies_in_interior}, that a proper $\ydmod{}$-subobject of an object admitting all reflections cannot have an area on any of the edges and therefore there must exist a maximal graded subobject in the interior of that shape. 

Let $\scr{N}$ be a pre-Nichols system, $Q := \nsalg{\scr{N}}$ and assume that $\scr{N}_j$ is irreducible in $\scr{C}$ for all $j \in \mathbb{I}$. Let $0 \ne V \in \ydcat{Q}{\scr{C}}_{\rat}$ be homogeneously generated $\Gamma$-graded and assume $\gencomp{V}$ is irreducible in~$\scr{C}$. Again let $\Gamma$ be an abelian group, such that $\ndZ^\theta \subset \Gamma$.
	
To look at iterated reflections of $V$ we have to make sure the preconditions of \Cref{prop_funCD_refl_generator} are met after each reflection.
\begin{defi}
	Let $k \in \ndN_0$, $i_1, \ldots, i_k \in \mathbb{I}$.
	\begin{enumerate}
		\item Assume $\scr{N}$ admits the reflection sequence $(i_1,\ldots, i_k)$. We say $V$ \textbf{admits the reflection sequence $(i_1,\ldots, i_k)$}, if $k=0$ or $V$ is $i_1$-well graded and $\refl_{i_1}(V) \in \ydcat{\refl_{i_1}(Q)}{\scr{C}}_{\rat}$ admits the reflection sequence $(i_2,\ldots, i_k)$.
		\item Assume $\scr{N}$ admits all reflections. We say $V$ \textbf{admits all reflections}, if for all $n \in \ndN_0$, $i \in \mathbb{I}^n$, $V$ admits the reflection sequence $i$.
	\end{enumerate}
\end{defi}

\begin{prop} \label{prop_irred_admits_all_reflections}
    Let $k \in \ndN$, $i=(i_1,\ldots,i_k) \in \mathbb{I}^k$. If $\scr{N}$ admits the reflection sequence $i$ and $V$ is an irreducible object in $\ydcat{Q}{\scr{C}}$, then $V$ admits the reflection sequence $i$. 
\end{prop}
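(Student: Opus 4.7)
The plan is a straightforward induction on the length $k$ of the reflection sequence $i = (i_1, \ldots, i_k)$. Unwinding the recursive definition of ``admits the reflection sequence'', at each step the task decomposes into two parts: verifying that the current module is $i_1$-well graded, and verifying that its first reflection admits the truncated sequence $(i_2, \ldots, i_k)$.

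For the base case $k = 1$, I only need to show that $V$ is $i_1$-well graded, which follows immediately from \Cref{prop_irreducible_module_is_well_graded} applied to the irreducible $\Gamma$-graded object $V$. Note that by \Cref{prop_irred_module_is_hom_gen}, $V$ is then automatically homogeneously generated with $\gencomp{V}$ irreducible in $\scr{C}$, so the reflection $\refl_{i_1}(V) \in \ydcat{\refl_{i_1}(Q)}{\scr{C}}_{\rat}$ is well defined (using that $\scr{N}$ is $i_1$-finite, as it admits the sequence $i$, together with the standing hypothesis that all $\scr{N}_j$ are irreducible in $\scr{C}$).

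For the inductive step, assume the statement for all sequences of length $k - 1$. By the base case $V$ is $i_1$-well graded, so it remains to show that $\refl_{i_1}(V)$ admits $(i_2, \ldots, i_k)$. Since $\scr{N}$ admits $(i_1, \ldots, i_k)$, the definition for pre-Nichols systems directly yields that $\refl_{i_1}(\scr{N})$ admits $(i_2, \ldots, i_k)$, and \Cref{prop_RiQ_is_nich_sys}(1) (together with the fact that the dual of a finite-dimensional irreducible object is irreducible, which covers the $i_1$-component $\refl_{i_1}(\scr{N})_{i_1} = N_{i_1}^*$) ensures that every generator of $\refl_{i_1}(\scr{N})$ is again irreducible in $\scr{C}$. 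At the same time, \Cref{prop_refl_of_irreducible_module_is_irreducible} yields that $\refl_{i_1}(V)$ is irreducible in $\ydcat{\refl_{i_1}(Q)}{\scr{C}}_{\rat}$. Hence the induction hypothesis applies to the pair $(\refl_{i_1}(\scr{N}), \refl_{i_1}(V))$ and produces the required reflection sequence. I do not expect any real obstacle here: all the heavy lifting — preservation of irreducibility under reflection, preservation of the Nichols system structure under reflection, and well-gradedness of irreducible graded objects — has already been done in the preceding propositions, and this statement is essentially just their clean recursive packaging.
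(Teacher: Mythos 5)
Your proof is correct and follows essentially the same route as the paper: the paper's proof likewise combines \Cref{prop_irreducible_module_is_well_graded} (to get $i_1$-well gradedness) with \Cref{prop_refl_of_irreducible_module_is_irreducible} (to preserve irreducibility under reflection) and concludes by induction. Your write-up merely makes explicit the bookkeeping about the reflected Nichols system and the standing irreducibility hypotheses that the paper leaves implicit.
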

\begin{proof}
	By \Cref{prop_irreducible_module_is_well_graded} $V$ is $i_1$-well graded and by \Cref{prop_refl_of_irreducible_module_is_irreducible} $\refl_{i_1}(V)$ is irreducible in $\ydcat{\refl_{i_1}(Q)}{\scr{C}}$, hence the claim follows inductively.
\end{proof}

\begin{notation}
	Let $k \in \ndN$, $i = (i_1,\ldots, i_k) \in \mathbb{I}^k$. 
	Assume $\scr{N}$ admits the reflection sequence $i$ and assume $V$ admits the reflection sequence $(i_1,\ldots,i_{k-1})$.
	Let $m^V_{()}=0$ and denote
	\begin{align*}
		m^V_{i} :=&\, m^{\refl_{(i_1,\ldots,i_{k-1})}(V)}_{i_k} \\ =& \max \lbrace m \in \ndN_0 \, | \, 
		{\refl_{(i_1,\ldots,i_{k-1})}(\scr{N})_{i_k}}^m 
		\cdot \gencomp{\refl_{(i_1,\ldots,i_{k-1})}(V)} \ne 0 \rbrace \matdot
	\end{align*}
    For $0 \le m \le m^V_i$ let 
    \begin{align*}
        \beta^{V}_{i,m} := \beta^{\scr{N}}_{i,(m^V_{(i_1)},\ldots,m^V_{(i_1,\ldots,i_{k-1})}),m)} \matcom
    \end{align*}
    where $\beta^V_{(),0}=0$ and let $\beta^{V}_i := \beta^V_{i,m^V_i}$.

Moreover let $t_{(i_1,\ldots,i_{k-1}),i_k} \in \Aut(\Gamma)$ be such that $t_{(i_1,\ldots,i_{k-1}),i_k}^2 = \id_{\Gamma}$ and for all $j \in \mathbb{I}$
\begin{align*}
	t_{(i_1,\ldots,i_{k-1}),i_k} ( \alpha_j) = s^{\scr{N}}_{(i_1,\ldots,i_{k-1}),i_k} ( \alpha_j) \matdot
\end{align*}
Also let $t_i=t_{(),i_1} t_{(i_1),i_2} \cdots t_{(i_1,\ldots,i_{k-1}),i_k}$ and $t_{()}=\id_\Gamma$.
\end{notation}

\begin{prop} \label{prop_grading_of_iterated_refl_mod}
    Let $k \in \ndN$, $i \in \mathbb{I}^k$ and assume $V$ admits the reflection sequence $i$.
	The reflection $\refl_i(V) \in \ydcat{\refl_i(Q)}{\scr{C}}_{\rat}$ is $\Gamma$-graded with $\refl_i(V)(\gamma) = V(t_i(\gamma))$. 
\end{prop}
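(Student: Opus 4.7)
The plan is to proceed by induction on $k$. The base case $k=1$ is precisely \Cref{prop_grading_of_reflected_yd_module}, which already establishes that $\refl_{i_1}(V)$ is $\Gamma$-graded with $\refl_{i_1}(V)(\gamma) = V(t_{i_1}(\gamma))$, using that $V$ admits the reflection sequence $(i_1)$ (hence is $i_1$-well graded, homogeneously generated, with irreducible generating component).

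For the inductive step, assume the statement holds for the reflection sequence $(i_1,\ldots,i_{k-1})$, so $\refl_{(i_1,\ldots,i_{k-1})}(V)$ is $\Gamma$-graded via
\begin{align*}
\refl_{(i_1,\ldots,i_{k-1})}(V)(\gamma) = V(t_{(i_1,\ldots,i_{k-1})}(\gamma))
\end{align*}
for all $\gamma \in \Gamma$. By the definition of admitting the reflection sequence $i=(i_1,\ldots,i_k)$, the object $\refl_{(i_1,\ldots,i_{k-1})}(V)$ is $i_k$-well graded. Moreover, iterated application of \Cref{prop_funCD_refl_generator} shows that $\refl_{(i_1,\ldots,i_{k-1})}(V)$ is homogeneously generated with irreducible generating component in $\scr{C}$, since each successive reflection preserves these properties. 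In particular, all hypotheses of \Cref{prop_grading_of_reflected_yd_module} are satisfied for $\refl_{(i_1,\ldots,i_{k-1})}(V)$ with respect to the index $i_k$.

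Applying \Cref{prop_grading_of_reflected_yd_module} yields that $\refl_i(V) = \refl_{i_k}(\refl_{(i_1,\ldots,i_{k-1})}(V))$ is $\Gamma$-graded with
\begin{align*}
\refl_i(V)(\gamma) = \refl_{(i_1,\ldots,i_{k-1})}(V)(t_{(i_1,\ldots,i_{k-1}),i_k}(\gamma))
\end{align*}
for all $\gamma \in \Gamma$. Composing with the induction hypothesis gives
\begin{align*}
\refl_i(V)(\gamma) = V\bigl(t_{(i_1,\ldots,i_{k-1})}(t_{(i_1,\ldots,i_{k-1}),i_k}(\gamma))\bigr) = V(t_i(\gamma)),
\end{align*}
where the last equality is the definition $t_i = t_{(i_1)} t_{(i_1),i_2} \cdots t_{(i_1,\ldots,i_{k-1}),i_k} = t_{(i_1,\ldots,i_{k-1})} \cdot t_{(i_1,\ldots,i_{k-1}),i_k}$.

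No step appears genuinely hard: the content lies in \Cref{prop_grading_of_reflected_yd_module} and \Cref{prop_funCD_refl_generator}, which together provide both the grading transformation for one reflection and the propagation of the well-gradedness/irreducibility hypotheses through successive reflections. The only care required is the bookkeeping of the automorphism $t_i$ and the explicit verification that the inductive hypotheses of \Cref{prop_grading_of_reflected_yd_module} continue to hold after each reflection, both of which follow directly from the definition of admitting a reflection sequence combined with \Cref{prop_funCD_refl_generator}.
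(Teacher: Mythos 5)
Your proof is correct and follows exactly the route the paper takes, which simply says the claim "follows inductively from" the single-reflection grading proposition; your write-up just makes the induction explicit. The only minor observation is that the grading proposition you invoke requires only that the module be a $\Gamma$-graded rational Yetter-Drinfeld module (the standing hypotheses on the reflected Nichols system being supplied by "$\scr{N}$ admits the reflection sequence $i$"), so the careful propagation of homogeneous generation and irreducibility of the generating component via \Cref{prop_funCD_refl_generator} is harmless but not actually needed for this statement.
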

\begin{proof}
	Follows inductively from \Cref{prop_grading_of_reflected_yd_module}.
\end{proof}

\begin{lem} \label{lem_generator_index_explicit}
    Let $k \in \ndN$, $i = (i_1,\ldots, i_k) \in \mathbb{I}^k$ and assume $V$ admits the reflection sequence $i$.
	The following hold:
	\begin{align*}
		\genind{\refl_i(V)} &= t_{(i_1,\ldots,i_{k-1}),i_k} (\genind{\refl_{(i_1,\ldots,i_{k-1})}(V)}) - m^V_i \alpha_{i_k} \tag{1} \matdot \\
		\gencomp{\refl_i(V)} &= \refl_{(i_1,\ldots,i_{k-1})}(V)(\genind{\refl_{(i_1,\ldots,i_{k-1})}(V)} + m^V_i \alpha_{i_k}) \tag{2}  \\ &=\refl_{(i_1,\ldots,i_{k-1})}(\scr{N})_{i_k}^{m^V_i} 
		\cdot \gencomp{\refl_{(i_1,\ldots,i_{k-1})}(V)} \matdot 
	\end{align*}
\end{lem}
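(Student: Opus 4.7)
The plan is to deduce both identities as direct applications of \Cref{prop_funCD_refl_generator} to the object $W := \refl_{(i_1,\ldots,i_{k-1})}(V) \in \ydcat{\refl_{(i_1,\ldots,i_{k-1})}(Q)}{\scr{C}}_{\rat}$ with reflection index $i_k$ and ambient pre-Nichols system $\refl_{(i_1,\ldots,i_{k-1})}(\scr{N})$. The substantive work is just to check that all hypotheses of that proposition transfer to $W$, which is most cleanly organized by induction on $k$.

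For the base case $k=1$ the empty-reflection conventions give $\refl_{()}(V)=V$, $\refl_{()}(\scr{N})=\scr{N}$, $m^V_{()}=0$, $t_{()}=\id_{\Gamma}$, and $t_{(),i_1}=t_{i_1}$, so identities (1) and (2) are literally the conclusion of \Cref{prop_funCD_refl_generator} applied to $V$ at index $i_1$; the hypotheses there are exactly what it means for $\scr{N}$ to admit the reflection sequence $(i_1)$ together with the standing assumption that $\gencomp{V}$ is irreducible and $V$ is $i_1$-well graded (the latter being the definition of $V$ admitting $(i_1)$).

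For the inductive step assume $k\ge 2$ and that the lemma holds for the tuple $(i_1,\ldots,i_{k-1})$. By induction hypothesis $W$ is homogeneously generated $\Gamma$-graded with $\gencomp{W}$ irreducible in $\scr{C}$. The assumption that $V$ admits the reflection sequence $i$ unwinds, by definition, to say that $W$ is $i_k$-well graded and that $\refl_{(i_1,\ldots,i_{k-1})}(\scr{N})$ is a Nichols system over $i_k$ which is $i_k$-finite; iterated application of \Cref{prop_RiQ_is_nich_sys}(1) moreover gives that $\refl_{(i_1,\ldots,i_{k-1})}(\scr{N})_j$ is irreducible in $\scr{C}$ for every $j\ne i_k$. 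Hence \Cref{prop_funCD_refl_generator} applies to $W$ at index $i_k$ and yields
\begin{align*}
\genind{\refl_{i_k}(W)} &= t_{(i_1,\ldots,i_{k-1}),i_k}\bigl(\genind{W}\bigr) - m^W_{i_k}\alpha_{i_k}, \\
\gencomp{\refl_{i_k}(W)} &= W\bigl(\genind{W}+m^W_{i_k}\alpha_{i_k}\bigr) = \refl_{(i_1,\ldots,i_{k-1})}(\scr{N})_{i_k}^{\,m^W_{i_k}}\cdot\gencomp{W}.
\end{align*}
By construction $\refl_{i_k}(W)=\refl_i(V)$ and $m^W_{i_k}=m^V_i$, so these are precisely (1) and (2).

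The only bookkeeping point is matching notation: the $t$ appearing in the conclusion of \Cref{prop_funCD_refl_generator} applied to $\refl_{(i_1,\ldots,i_{k-1})}(\scr{N})$ is by definition the automorphism whose action on $\ndZ^\theta$ agrees with $s^{\refl_{(i_1,\ldots,i_{k-1})}(\scr{N})}_{i_k}=s^{\scr{N}}_{(i_1,\ldots,i_{k-1}),i_k}$, which is exactly the definition of $t_{(i_1,\ldots,i_{k-1}),i_k}$. I expect no genuine obstacle in this proof; the entire content is the reduction step.
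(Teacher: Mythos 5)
Your proof is correct and follows the same route as the paper, whose entire proof is the single sentence that both identities follow from \Cref{prop_funCD_refl_generator}; you simply make explicit the inductive verification that the hypotheses of that proposition transfer to $\refl_{(i_1,\ldots,i_{k-1})}(V)$ at index $i_k$. No issues.
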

\begin{proof}
	Both (1) and (2) follow from \Cref{prop_funCD_refl_generator}.
\end{proof}

\begin{lem} \label{lem_rels_beta_i2}
    Let $k \in \ndN$, $i = (i_1,\ldots, i_k) \in \mathbb{I}^k$ and assume $V$ admits the reflection sequence $i$.
	The following relations hold:
	\begin{align*}
	m^V_{(i_1,\ldots,i_k,i_k)} = m^V_{(i_1,\ldots,i_k)} \matcom &&
	\beta^V_{(i_1,\ldots,i_k,i_k)} = \beta^V_{(i_1,\ldots,i_{k-1})} \matdot
	\end{align*}
\end{lem}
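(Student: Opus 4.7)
The plan is to mirror, step by step, the proof of the Nichols-system analog \Cref{lem_rels_beta_i2_sys}, substituting in the $V$-versions of the multiplicities. The key observation is that by definition $\beta^V_{i,m}$ is just $\beta^{\scr{N}}_{i,\widetilde m}$ with $\widetilde m = (m^V_{(i_1)},\ldots,m^V_{(i_1,\ldots,i_{k-1})},m)$, so the formal identities from \Cref{lem_rels_beta_i} transfer verbatim with $m^{\scr{N}}$ replaced by $m^V$ in the tuple.

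First I would prove the multiplicity identity $m^V_{(i_1,\ldots,i_k,i_k)} = m^V_{(i_1,\ldots,i_k)}$. By the notational convention, the left-hand side equals $m^{\refl_{(i_1,\ldots,i_k)}(V)}_{i_k}$ and the right-hand side equals $m^{\refl_{(i_1,\ldots,i_{k-1})}(V)}_{i_k}$. Setting $W := \refl_{(i_1,\ldots,i_{k-1})}(V)$, \Cref{prop_funCD_refl_generator} applied to the last reflection in the sequence ensures that $W$ is homogeneously generated and $i_k$-well $\Gamma$-graded with $\gencomp{W}$ irreducible in $\scr{C}$. Hence \Cref{lem_mii_equals_mi} applied to $W$ and the index $i_k$ gives $m^W_{i_k} = m^{\refl_{i_k}(W)}_{i_k}$, which is exactly the desired equality.

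Next I would establish the $V$-analog of \Cref{lem_rels_beta_i}: for any $m \in \ndN_0^k$,
\begin{align*}
\beta^V_{i,m} = \beta^V_{(i_1,\ldots,i_{k-1}),(m_1,\ldots,m_{k-1})} - m_k\, s^{\scr{N}}_i(\alpha_{i_k}),
\end{align*}
and in particular $\beta^V_i = \beta^V_{(i_1,\ldots,i_{k-1})} - m^V_i\, s^{\scr{N}}_i(\alpha_{i_k})$. This follows immediately from \Cref{lem_rels_beta_i} applied to the tuple $\widetilde m$ defining $\beta^V$, since the splitting off of the last coordinate is the same operation in both definitions.

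Finally, applying the above relation to the sequence $(i_1,\ldots,i_k,i_k)$ gives
\begin{align*}
\beta^V_{(i_1,\ldots,i_k,i_k)} = \beta^V_{(i_1,\ldots,i_k)} - m^V_{(i_1,\ldots,i_k,i_k)}\, s^{\scr{N}}_{(i_1,\ldots,i_k,i_k)}(\alpha_{i_k}).
\end{align*}
Because $a^{\scr{N}}_{(i_1,\ldots,i_k),i_k} = 2$ by definition, $s^{\scr{N}}_{(i_1,\ldots,i_k),i_k}(\alpha_{i_k}) = -\alpha_{i_k}$, and therefore $s^{\scr{N}}_{(i_1,\ldots,i_k,i_k)}(\alpha_{i_k}) = s^{\scr{N}}_i(-\alpha_{i_k}) = -s^{\scr{N}}_i(\alpha_{i_k})$. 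Combining this with the first part of the lemma ($m^V_{(i_1,\ldots,i_k,i_k)} = m^V_i$) and the relation just proven for $\beta^V_i$, we obtain
\begin{align*}
\beta^V_{(i_1,\ldots,i_k,i_k)} = \beta^V_i + m^V_i\, s^{\scr{N}}_i(\alpha_{i_k}) = \beta^V_{(i_1,\ldots,i_{k-1})},
\end{align*}
completing the proof. The argument is essentially bookkeeping; there is no real obstacle, only the need to keep straight that the $V$-objects $\beta^V$ are defined through the $\scr{N}$-formula, so all the algebraic identities from \Cref{lem_rels_beta_i} propagate, while the multiplicity identity is the genuinely new input supplied by \Cref{lem_mii_equals_mi}.
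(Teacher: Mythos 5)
Your proof is correct and follows exactly the route the paper takes: the multiplicity identity comes from \Cref{lem_mii_equals_mi} applied to $\refl_{(i_1,\ldots,i_{k-1})}(V)$ (whose hypotheses are guaranteed by \Cref{prop_funCD_refl_generator} and the admission of the reflection sequence), and the $\beta$-identity is the same telescoping via \Cref{lem_rels_beta_i} and $s^{\scr{N}}_{(i_1,\ldots,i_k,i_k)}(\alpha_{i_k})=-s^{\scr{N}}_i(\alpha_{i_k})$ used in \Cref{lem_rels_beta_i2_sys}. You have merely written out the details the paper leaves implicit.
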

\begin{proof}
	The first relation is given by \Cref{lem_mii_equals_mi}. Then similar to the proof of \Cref{lem_rels_beta_i2_sys} the second relation follows from \Cref{lem_rels_beta_i}.
\end{proof}

\begin{prop} \label{prop_generator_index_explicit}
    Let $k \in \ndN$, $i = (i_1,\ldots, i_k) \in \mathbb{I}^k$ and assume $V$ admits the reflection sequence $i$.
	We have $\genind{\refl_i(V)} = t_i^{-1}(\genind{V}+\beta^V_i)$ and
	\begin{align*}
		\gencomp{\refl_i(V)} = V(\genind{V}+\beta^V_i) = Q(\beta^V_i) \cdot \gencomp{V}.
	\end{align*}
	In particular, $\beta^V_i \in \ndN_0^\theta$.
\end{prop}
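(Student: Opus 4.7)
My plan is to proceed by induction on $k$, using \Cref{prop_funCD_refl_generator} as the engine for a single reflection, and the decomposition $t_i = t_{(i_1,\ldots,i_{k-1})} \cdot t_{(i_1,\ldots,i_{k-1}),i_k}$ together with the recursive relation for $\beta^V_i$ from \Cref{lem_rels_beta_i} to glue consecutive reflections together.

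The base case $k=1$ is essentially \Cref{prop_funCD_refl_generator}: one checks $\beta^V_{(i_1)} = -m^V_{i_1} s^{\scr{N}}_{i_1}(\alpha_{i_1}) = m^V_{i_1}\alpha_{i_1}$, whence $t_{i_1}^{-1}(\genind{V}+\beta^V_{(i_1)}) = t_{i_1}(\genind{V}) + m^V_{i_1} t_{i_1}(\alpha_{i_1}) = t_{i_1}(\genind{V}) - m^V_{i_1}\alpha_{i_1}$, matching the formula from \Cref{prop_funCD_refl_generator}. For the induction step, \Cref{lem_generator_index_explicit}(1) gives $\genind{\refl_i(V)} = t_{(i_1,\ldots,i_{k-1}),i_k}(\genind{\refl_{(i_1,\ldots,i_{k-1})}(V)}) - m^V_i \alpha_{i_k}$; substituting the induction hypothesis for $\genind{\refl_{(i_1,\ldots,i_{k-1})}(V)}$ and using $t_{(i_1,\ldots,i_{k-1}),i_k} \cdot t_{(i_1,\ldots,i_{k-1})}^{-1} = t_i^{-1}$ on $\Gamma$ (which is the correct direction since each factor is self-inverse), the task reduces to verifying the identity
\begin{align*}
t_{(i_1,\ldots,i_{k-1}),i_k} \cdot t_{(i_1,\ldots,i_{k-1})}^{-1}\bigl(\beta^V_i - \beta^V_{(i_1,\ldots,i_{k-1})}\bigr) = -m^V_i \alpha_{i_k}.
\end{align*}
The left hand side equals $s^{\scr{N}}_{(i_1,\ldots,i_{k-1}),i_k}\bigl(m^V_i \alpha_{i_k}\bigr) = -m^V_i\alpha_{i_k}$ after applying the relation $\beta^V_i - \beta^V_{(i_1,\ldots,i_{k-1})} = -m^V_i s^{\scr{N}}_i(\alpha_{i_k}) = m^V_i s^{\scr{N}}_{(i_1,\ldots,i_{k-1})}(\alpha_{i_k})$ coming from \Cref{lem_rels_beta_i} applied to the specific choice of multiplicities defining $\beta^V$.

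For the second assertion I plan to use \Cref{lem_generator_index_explicit}(2) together with \Cref{prop_grading_of_iterated_refl_mod}: the latter rewrites $\refl_{(i_1,\ldots,i_{k-1})}(V)(\gamma) = V(t_{(i_1,\ldots,i_{k-1})}(\gamma))$, and substituting $\gamma = \genind{\refl_{(i_1,\ldots,i_{k-1})}(V)} + m^V_i\alpha_{i_k}$ produces
\begin{align*}
V\bigl(\genind{V} + \beta^V_{(i_1,\ldots,i_{k-1})} + m^V_i s^{\scr{N}}_{(i_1,\ldots,i_{k-1})}(\alpha_{i_k})\bigr) = V(\genind{V}+\beta^V_i),
\end{align*}
using the same recursive relation for $\beta^V_i$. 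The equality $V(\genind{V}+\beta^V_i) = Q(\beta^V_i)\cdot \gencomp{V}$ is then immediate from homogeneous generation, once $\beta^V_i \in \ndN_0^\theta$ is established; but this last positivity follows automatically because $\gencomp{\refl_i(V)} \ne 0$ forces $\genind{V}+\beta^V_i \in \Sup{V} \subset \genind{V} + \ndN_0^\theta$.

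I do not anticipate a substantial obstacle; the entire argument is a careful bookkeeping exercise comparing the recursion built into the definition of $\beta^V_i$ against the recursion built into iterated reflections via \Cref{lem_generator_index_explicit}. The only mildly delicate point is making sure the directions of $t_i$ and its inverse are handled correctly, since the factors $t_{(i_1,\ldots,i_r),i_{r+1}}$ do not commute in general — this is exactly why $t_i^{-1}$ rather than $t_i$ appears in the statement.
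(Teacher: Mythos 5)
Your proposal is correct and follows essentially the same route as the paper's proof: induction on $k$, feeding the induction hypothesis into \Cref{lem_generator_index_explicit}(1), using $t_i^{-1} = t_{(i_1,\ldots,i_{k-1}),i_k}\, t_{(i_1,\ldots,i_{k-1})}^{-1}$ together with the recursion $\beta^V_i = \beta^V_{(i_1,\ldots,i_{k-1})} - m^V_i s^{\scr{N}}_i(\alpha_{i_k})$ from \Cref{lem_rels_beta_i}, and then reading off the second assertion and the positivity of $\beta^V_i$ from \Cref{prop_grading_of_iterated_refl_mod} and the non-vanishing of $\gencomp{\refl_i(V)}$. All the bookkeeping, including the sign flip $s^{\scr{N}}_i(\alpha_{i_k}) = -s^{\scr{N}}_{(i_1,\ldots,i_{k-1})}(\alpha_{i_k})$ and the direction of the $t$-factors, checks out.
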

\begin{proof}
	It is enough to show $\genind{\refl_i(V)} = t_i^{-1}(\genind{V}+\beta^V_i)$, since then with \Cref{prop_grading_of_iterated_refl_mod} we have
	\begin{align*}
		0 \ne \gencomp{\refl_i(V)} = \refl_i(V)(t_i^{-1}(\genind{V}+\beta^V_i)) = V(\genind{V}+\beta^V_i) \matdot
	\end{align*}
	We do this by induction on $k$: If $k=0$, then $\beta^V_i = 0$, $t_i^{-1}=\id_{\Gamma}$ and the claim holds. Assume $k \ge 1$.	
	By \Cref{lem_generator_index_explicit}(1) and by induction hypothesis we obtain
	\begin{align*}
		\genind{\refl_i(V)} 
		=& t_{(i_1,\ldots,i_{k-1}),i_k} (\genind{\refl_{(i_1,\ldots,i_{k-1})}(V)}) - m^V_i \alpha_{i_k} 
		\\=& t_{(i_1,\ldots,i_{k-1}),i_k} ( t_{(i_1,\ldots,i_{k-1})}^{-1}(\genind{V}+\beta^V_{(i_1,\ldots,i_{k-1})}) ) - m^V_i \alpha_{i_k} 
		\\ =& t_i^{-1} ( \genind{V}+\beta^V_{(i_1,\ldots,i_{k-1})} - m^V_i t_i( \alpha_{i_k}) )
		= t_i^{-1} ( \genind{V}+\beta^V_i )
		\matcom
	\end{align*}
	where the last equality is given by \Cref{lem_rels_beta_i}.
\end{proof}

\begin{cor} \label{cor_generator_index_explicit}
    Let $k \in \ndN$, $i = (i_1,\ldots, i_k) \in \mathbb{I}^k$, assume $\scr{N}$ admits the reflection sequence $i$ and assume $V$ admits the reflection sequence $(i_1,\ldots,i_{k-1})$.
	For $0 \le m \le m^V_i$ we have
	\begin{align*}
	\refl_{(i_1,\ldots,i_{k-1})}(\scr{N})_{i_k}^{m} \cdot \gencomp{\refl_{(i_1,\ldots,i_{k-1})}(V)} = V(\genind{V}+\beta^V_{i,m}) \matdot
	\end{align*}
	In particular, $\beta^V_{i,m} \in \ndN_0^\theta$.
\end{cor}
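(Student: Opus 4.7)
The plan is to reduce this to \Cref{prop_generator_index_explicit} applied to the shorter sequence $(i_1,\ldots,i_{k-1})$, which $V$ does admit, and then translate via the $\Gamma$-grading. Let me set $V' := \refl_{(i_1,\ldots,i_{k-1})}(V)$ and $\scr{N}' := \refl_{(i_1,\ldots,i_{k-1})}(\scr{N})$. Because $V'$ is homogeneously generated (this is built into the definition of admitting the reflection sequence, via the inductive application of \Cref{prop_funCD_refl_generator}) and $\scr{N}'_{i_k} = \nsalg{\scr{N}'}(\alpha_{i_k})$, the graded structure yields
\begin{align*}
    \scr{N}'_{i_k}{}^m \cdot \gencomp{V'} = V'(\genind{V'} + m\alpha_{i_k}) \matdot
\end{align*}

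First I would use \Cref{prop_grading_of_iterated_refl_mod} to rewrite the right-hand side as $V(t_{(i_1,\ldots,i_{k-1})}(\genind{V'} + m\alpha_{i_k}))$. Next, I would apply \Cref{prop_generator_index_explicit} to $V$ along $(i_1,\ldots,i_{k-1})$ to express
\begin{align*}
    \genind{V'} = t_{(i_1,\ldots,i_{k-1})}^{-1}(\genind{V} + \beta^V_{(i_1,\ldots,i_{k-1})}) = t_{(i_1,\ldots,i_{k-1})}(\genind{V} + \beta^V_{(i_1,\ldots,i_{k-1})}) \matcom
\end{align*}
using $t_{(i_1,\ldots,i_{k-1})}^2 = \id_\Gamma$. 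Substituting, we obtain
\begin{align*}
    t_{(i_1,\ldots,i_{k-1})}(\genind{V'} + m\alpha_{i_k}) = \genind{V} + \beta^V_{(i_1,\ldots,i_{k-1})} + m \cdot t_{(i_1,\ldots,i_{k-1})}(\alpha_{i_k}) \matdot
\end{align*}

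Next I would identify the last term. Since $s^{\scr{N}}_{(i_1,\ldots,i_{k-1}),i_k}(\alpha_{i_k}) = -\alpha_{i_k}$ and $s^{\scr{N}}_i = s^{\scr{N}}_{(i_1,\ldots,i_{k-1})} s^{\scr{N}}_{(i_1,\ldots,i_{k-1}),i_k}$, we get $t_{(i_1,\ldots,i_{k-1})}(\alpha_{i_k}) = -s^{\scr{N}}_i(\alpha_{i_k})$. Now \Cref{lem_rels_beta_i} (applied to the tuple $(m^V_{(i_1)}, \ldots, m^V_{(i_1,\ldots,i_{k-1})}, m)$) gives
\begin{align*}
    \beta^V_{i,m} = \beta^V_{(i_1,\ldots,i_{k-1})} - m \cdot s^{\scr{N}}_i(\alpha_{i_k}) \matcom
\end{align*}
so the chain of equalities collapses to $\scr{N}'_{i_k}{}^m \cdot \gencomp{V'} = V(\genind{V} + \beta^V_{i,m})$, as required.

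Finally, for the ``in particular'' claim: by definition of $m^V_i$, for $0 \le m \le m^V_i$ we have $\scr{N}'_{i_k}{}^m \cdot \gencomp{V'} \ne 0$, hence $V(\genind{V} + \beta^V_{i,m}) \ne 0$. Since $\Sup{V} \subset \genind{V} + \ndN_0^\theta$, this forces $\beta^V_{i,m} \in \ndN_0^\theta$. There is no real obstacle here — the proof is essentially bookkeeping using the already-established grading compatibility of \Cref{prop_grading_of_iterated_refl_mod} and the recursion \Cref{lem_rels_beta_i}; the only subtlety is keeping straight that the relation $t^2 = \id$ lets us replace $t^{-1}$ with $t$ in \Cref{prop_generator_index_explicit}.
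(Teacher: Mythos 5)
Your argument is the same as the paper's: homogeneous generation gives the first identification, then \Cref{prop_generator_index_explicit} and \Cref{prop_grading_of_iterated_refl_mod} translate the grading back to $V$, and \Cref{lem_rels_beta_i} together with $t_{(i_1,\ldots,i_{k-1})}(\alpha_{i_k}) = -t_i(\alpha_{i_k})$ identifies the resulting index with $\beta^V_{i,m}$. One caveat: your claim that $t_{(i_1,\ldots,i_{k-1})}^2 = \id_\Gamma$ is false in general — only the individual factors $t_{(i_1,\ldots,i_{r-1}),i_r}$ are involutions, not their composite — but this is harmless, since the substitution you perform only uses $t_{(i_1,\ldots,i_{k-1})} \circ t_{(i_1,\ldots,i_{k-1})}^{-1} = \id$, so you should simply keep $t_{(i_1,\ldots,i_{k-1})}^{-1}$ rather than replacing it by $t_{(i_1,\ldots,i_{k-1})}$.
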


\begin{proof}
	We conclude this from \Cref{prop_generator_index_explicit} and \Cref{prop_grading_of_iterated_refl_mod}:
	\begin{align*}
	&{\refl_{(i_1,\ldots,i_{k-1})}(\scr{N})_{i_k}}^{m} 
	\cdot \gencomp{\refl_{(i_1,\ldots,i_{k-1})}(V)}
	\\=& \refl_{(i_1,\ldots,i_{k-1})}(V) (\genind{\refl_{(i_1,\ldots,i_{k-1})}(V)} + m \alpha_{i_k})
	\\=& \refl_{(i_1,\ldots,i_{k-1})}(V) (t_{(i_1,\ldots,i_{k-1})}^{-1}(\genind{V} + \beta^V_{(i_1,\ldots,i_{k-1})}) + m \alpha_{i_k})
	\\=&
	V(\genind{V}+ \beta^V_{(i_1,\ldots,i_{k-1})} + m t_{(i_1,\ldots,i_{k-1})}(\alpha_{i_k})) = V(\genind{V}+\beta^V_{i,m})
	\matcom
	\end{align*}
	since $t_{(i_1,\ldots,i_{k-1})}(\alpha_{i_k}) = -t_i (\alpha_{i_k})$.
\end{proof}

\begin{cor} \label{cor_generator_index_explicit2}
    Let $k \in \ndN$, $i \in \mathbb{I}^k$ and assume $V$ admits the reflection sequence $i$.
    The following are equivalent:
    \begin{enumerate}
        \item $s_i^{\scr{N}}(\beta^V_i) = \beta^V_i$.
        \item $\beta^V_i = 0$.
        \item $\genind{\refl_i(V)}=t_i^{-1}(\genind{V})$ and $\gencomp{\refl_i(V)} = \gencomp{V}$.
    \end{enumerate}
\end{cor}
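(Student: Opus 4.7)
The plan is to prove the chain (2)$\implies$(1), (2)$\Leftrightarrow$(3) directly from \Cref{prop_generator_index_explicit}, and then invest the real work in (1)$\implies$(2). Indeed, (2)$\implies$(1) is trivial because $s_i^{\scr{N}}(0)=0$. For (2)$\implies$(3), plugging $\beta^V_i = 0$ into the formulas $\genind{\refl_i(V)} = t_i^{-1}(\genind{V}+\beta^V_i)$ and $\gencomp{\refl_i(V)} = V(\genind{V}+\beta^V_i)$ of \Cref{prop_generator_index_explicit} gives (3) at once. For (3)$\implies$(2), comparing the assumed $\genind{\refl_i(V)} = t_i^{-1}(\genind{V})$ against $\genind{\refl_i(V)} = t_i^{-1}(\genind{V}+\beta^V_i)$ and using injectivity of $t_i$ yields $\beta^V_i = 0$ (alternatively, compare $\gencomp{\refl_i(V)} = V(\genind{V}+\beta^V_i)$ with $\gencomp{\refl_i(V)} = V(\genind{V})$, and use that $V$'s $\Gamma$-grading is a direct sum and $\gencomp{V}\ne 0$).

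For the nontrivial direction (1)$\implies$(2), the strategy is to first establish that $-\beta^V_i$ lies in the cone $s_i^{\scr{N}}(\ndN_0^\theta)$, and then use the assumed $s_i^{\scr{N}}$-invariance of $\beta^V_i$ to transport $-\beta^V_i$ into $\ndN_0^\theta$. Combined with $\beta^V_i \in \ndN_0^\theta$ (from \Cref{cor_generator_index_explicit}), this forces $\beta^V_i = 0$.

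Concretely, the first step will be to note that by \Cref{prop_grading_of_iterated_refl_mod} one has $\Sup{V} = t_i(\Sup{\refl_i(V)})$, and that $\refl_i(V)$ is homogeneously generated at $\genind{\refl_i(V)}$ (iterated \Cref{prop_funCD_refl_generator}), so $\Sup{\refl_i(V)} \subset \genind{\refl_i(V)} + \ndN_0^\theta$. Applying $t_i$, using that $t_i$ agrees with $s_i^{\scr{N}}$ on $\ndZ^\theta$ and that $t_i(\genind{\refl_i(V)}) = \genind{V}+\beta^V_i$ by \Cref{prop_generator_index_explicit}, we obtain
\begin{align*}
    \Sup{V} \subset \genind{V} + \beta^V_i + s_i^{\scr{N}}(\ndN_0^\theta) \matdot
\end{align*}
Since $\genind{V} \in \Sup{V}$, this yields $-\beta^V_i \in s_i^{\scr{N}}(\ndN_0^\theta)$. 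Now, writing $-\beta^V_i = s_i^{\scr{N}}(y)$ for some $y \in \ndN_0^\theta$ and using assumption (1) (hence also $(s_i^{\scr{N}})^{-1}(\beta^V_i)=\beta^V_i$), we find $y = -(s_i^{\scr{N}})^{-1}(\beta^V_i) = -\beta^V_i$, so $-\beta^V_i \in \ndN_0^\theta$. Together with $\beta^V_i \in \ndN_0^\theta$, this gives $\beta^V_i = 0$.

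The main obstacle is recognizing that the right object to track is the support inclusion $\Sup{V} \subset \genind{V} + \beta^V_i + s_i^{\scr{N}}(\ndN_0^\theta)$; once this is on the table, (1)$\implies$(2) reduces to the one-line cone argument. A minor technicality is checking that $t_i|_{\ndZ^\theta} = s_i^{\scr{N}}$, which is immediate from the construction of $t_i$ as the composition of the involutions $t_{(i_1,\ldots,i_{r-1}),i_r}$, each agreeing on $\ndZ^\theta$ with the simple reflection $s^{\scr{N}}_{(i_1,\ldots,i_{r-1}),i_r}$.
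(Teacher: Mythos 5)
Your proposal is correct and follows essentially the same route as the paper: the paper also derives the equivalence of (2) and (3) directly from \Cref{prop_generator_index_explicit}, and for (1)$\implies$(2) it uses the single containment $t_i^{-1}(\genind{V}) \in \Sup{\refl_i(V)} \subset \genind{\refl_i(V)} + \ndN_0^\theta$ to conclude $-{s^{\scr{N}}_i}^{-1}(\beta^V_i) \in \ndN_0^\theta$, which is exactly your statement $-\beta^V_i \in s_i^{\scr{N}}(\ndN_0^\theta)$ phrased via the inverse automorphism. The final step (combining with $\beta^V_i \in \ndN_0^\theta$ and the $s_i^{\scr{N}}$-invariance from (1)) is identical in both arguments.
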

\begin{proof}
Clearly (2) implies (1). By \Cref{prop_generator_index_explicit} we have 
\begin{align*}
    \genind{\refl_i(V)} = t_i^{-1}(\genind{V}+\beta^V_i) =  t_i^{-1}(\genind{V}) + {s^{\scr{N}}_i}^{-1}(\beta^V_i) \matdot
\end{align*} 
Hence by \Cref{prop_grading_of_iterated_refl_mod} (2) and (3) are equivalent. Now 
\begin{align*}
    0 \ne V(\genind{V}) = \refl_i(V)(t_i^{-1}(\genind{V})) = \refl_i(V) (\genind{\refl_i(V)} - {s^{\scr{N}}_i}^{-1}(\beta^V_i)) \matdot
\end{align*}
Since $\Sup{\refl_i(V)} \subset \genind{\refl_i(V)} + \ndN_0^\theta$ it follows that $-{s^{\scr{N}}_i}^{-1}(\beta^V_i) \in \ndN_0^\theta$. If (1) holds, then this means $-\beta^V_i \in \ndN_0^\theta$. We also have $\beta^V_i \in \ndN_0^\theta$ by \Cref{prop_generator_index_explicit}, hence $\beta^V_i=0$.
\end{proof}

\begin{defi}
	Assume $V$ admits all reflections. For $k \in \ndN_0$, $i = (i_1,\ldots, i_k) \in \mathbb{I}^k$ we call $\gencomp{\refl_i(V)}=V(\genind{V}+\beta^V_i)$ the \textbf{$i$-th vertex of $V$} and $\oplus_{m=0}^{m_i^V} V(\genind{V}+\beta^V_{i,m})$ the \textbf{$i$-th edge of $V$}. Moreover we define the following subsets of $\ndN_0^\theta$
	\begin{align*}
	\Supind{V} :=& \lbrace n \in \ndN_0^\theta \, | \, \genind{V}+n \in \Sup{V} \rbrace \matcom \\
	\bouind{V} :=& \lbrace  \beta^V_{i,m} \, | \, k \in \ndN, i=(i_1,\ldots,i_k) \in \mathbb{I}^k, 0 \le m \le m^V_i \rbrace \matcom \\
	\intind{V} :=& \Supind{V} \setminus \bouind{V} \matdot
	\end{align*}
	Now define $\bound{V} := V(\genind{V} + \bouind{V})$, called the \textbf{edges of $V$} and $\inter{V} := V(\genind{V} + \intind{V})$, called the \textbf{interior of $V$}.
\end{defi}

\begin{rema}
    Geometrically speaking the interior of $V$ contains all points except the edges of $\Supind{V}$, i.e. it also contains the inside points of the faces of $\Supind{V}$.
\end{rema}

\begin{rema}
	Assume $V$ admits all reflections.
	The following hold:
	\begin{enumerate}
		\item $\Sup{V} = \genind{V} + \Supind{V}$,
		\item $\bouind{V} \subset \Supind{V}$,
		\item $V = \inter{V} \oplus \bound{V}$.
	\end{enumerate}
	Indeed: While (1) and (3) are given by definition, (2) follows from \Cref{cor_generator_index_explicit}: For $k \in \ndN$, $i=(i_1,\ldots,i_k) \in \mathbb{I}^k$, $0 \le m \le m^V_i$ we have by definition of $m^V_i$:
	\begin{align*}
		0 \ne {\refl_{(i_1,\ldots,i_{k-1})}(\scr{N})_{i_k}}^{m} 
		\cdot \gencomp{\refl_{(i_1,\ldots,i_{k-1})}(V)}
		=
		V(\genind{V}+ \beta^V_{i,m}) \matdot
	\end{align*}
\end{rema}

\begin{prop} \label{prop_support_is_spanned_by_beta_i}
	Assume $V$ admits all reflections, let $M' \subset \ndR^\theta$ be the convex hull spanned by the points $\beta^V_i \in \bouind{V}$, $i \in \mathbb{I}^k$, $k \in \ndN_0$ and let $M:= M' \cap \ndN_0^\theta$. Then $\Supind{V} \subset M$. In particular, the convex hull of $\Supind{V}$ coincides with $M'$.
\end{prop}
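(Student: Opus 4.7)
The plan is to imitate the proof of Proposition~\ref{prop_support_is_spanned_by_beta_i_sys} at the level of $\ydmod{}$-modules, replacing each statement about $\scr{N}$ by its module-theoretic analogue. For $k \in \ndN_0$ and $i = (i_1,\ldots,i_k) \in \mathbb{I}^k$ set $M_i := \beta^V_i + s^{\scr{N}}_i(\ndN_0^\theta)$. The argument proceeds in three steps.

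First I would show $\Supind{V} \subset M_i$ for every such $i$. Since $V$ admits the reflection sequence $i$, iterated application of Proposition~\ref{prop_funCD_refl_generator} shows that $\refl_i(V) \in \ydcat{\refl_i(Q)}{\scr{C}}_{\rat}$ is homogeneously generated, so $\Sup{\refl_i(V)} \subset \genind{\refl_i(V)} + \ndN_0^\theta$. Combining $\genind{\refl_i(V)} = t_i^{-1}(\genind{V}+\beta^V_i)$ from Proposition~\ref{prop_generator_index_explicit}, the identity $\refl_i(V)(\gamma) = V(t_i(\gamma))$ from Proposition~\ref{prop_grading_of_iterated_refl_mod}, and the fact that $t_i$ agrees with $s^{\scr{N}}_i$ on $\ndZ^\theta$, this inclusion translates to $\Sup{V} \subset \genind{V} + \beta^V_i + s^{\scr{N}}_i(\ndN_0^\theta)$, i.e. $\Supind{V} \subset M_i$.

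Second, I would derive
\begin{align*}
\beta^V_{(i_1,\ldots,i_k,j)} = \beta^V_i + m^V_{(i_1,\ldots,i_k,j)}\, s^{\scr{N}}_i(\alpha_j) \qquad (j \in \mathbb{I})
\end{align*}
directly from Lemma~\ref{lem_rels_beta_i}, using that $s^{\scr{N}}_{(i_1,\ldots,i_k,j)}(\alpha_j) = -s^{\scr{N}}_i(\alpha_j)$; in the case $j = i_k$ one moreover invokes Lemma~\ref{lem_rels_beta_i2} to identify the left-hand side with $\beta^V_{(i_1,\ldots,i_{k-1})}$. This shows that every vertex obtained from $i$ by adjoining one further symbol lies on an extremal ray of the cone $M_i$ emanating from the apex $\beta^V_i$; iterating, all vertices $\beta^V_{i'}$ lie in $M_i$, and hence $M' \subset M_i$ for every $i$.

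Combining both steps yields
\begin{align*}
\Supind{V} \subset \bigcap_{k \in \ndN_0,\, i \in \mathbb{I}^k} M_i \cap \ndN_0^\theta = M,
\end{align*}
where the final equality is the polyhedral observation that the intersection of all cones $M_i$ inside $\ndN_0^\theta$ is precisely the convex hull $M'$ of the $\beta^V_i$, since each $M_i$ is a translated simplicial cone whose extremal rays (by the second step) pass through the neighbouring vertices. The in-particular part is then immediate: every $\beta^V_i$ belongs to $\Supind{V}$ by Corollary~\ref{cor_generator_index_explicit}, so the convex hull of $\Supind{V}$ contains $M'$, while the preceding inclusion shows it is contained in $M'$. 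The main technical obstacle is exactly this identification $\bigcap_i M_i \cap \ndN_0^\theta = M$; this is the same polyhedral claim already invoked (without detailed proof) in Proposition~\ref{prop_support_is_spanned_by_beta_i_sys}, and the reasoning transfers verbatim, since the local reflection data at each vertex $\beta^V_i$ encodes precisely the facets of $M'$ meeting that vertex.
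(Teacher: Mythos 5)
Your proposal is correct and follows essentially the same route as the paper: establish $\Supind{V}\subset M_i=\beta^V_i+s^{\scr{N}}_i(\ndN_0^\theta)$ via \Cref{prop_generator_index_explicit} and \Cref{prop_grading_of_iterated_refl_mod}, use \Cref{lem_rels_beta_i} and \Cref{lem_rels_beta_i2} to show the neighbouring vertices $\beta^V_{(i_1,\ldots,i_k,j)}$ lie on the extremal rays of $M_i$, and conclude by identifying $M$ with $\bigcap_i M_i\cap\ndN_0^\theta$. The polyhedral identification you flag as the main obstacle is indeed asserted rather than proved in the paper as well (both here and in \Cref{prop_support_is_spanned_by_beta_i_sys}), so your treatment matches the paper's level of detail.
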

\begin{proof}
    The proof is similar to the proof of \Cref{prop_support_is_spanned_by_beta_i_sys}:
	For $k \in \ndN_0$, $i \in \mathbb{I}^k$ let $M_i = \beta^V_i + s^{\scr{N}}_i(\ndN_0^\theta)$.
	Considering \Cref{prop_generator_index_explicit} and since for $n \in \ndN_0^\theta$ we have 
	\begin{align*}
	\refl_i(Q)(n) \cdot \gencomp{\refl_i(V)} = \refl_i(V)(t_i^{-1}(\genind{V} + \beta^V_i) + n) = V(\genind{V}+\beta^V_i +s^{\scr{N}}_i(n))
	\end{align*}
	we obtain $\Supind{V} \subset M_i$.
	Now for all $j \in \mathbb{I}$ we have
	\begin{align*}
		\beta^V_{(i_1,\ldots,i_{k},j)} = \beta^V_i - m^V_{(i_1,\ldots,i_k,j)} s^{\scr{N}}_{(i_1,\ldots,i_k,j)}(\alpha_j)
		=  \beta^V_i + m^V_{(i_1,\ldots,i_k,j)} s^{\scr{N}}_{i}(\alpha_j) \matdot
	\end{align*}
	In particular for $j=i_k$ we obtain $\beta^V_{(i_1,\ldots,i_{k-1})} = \beta^V_i + m^V_{i} s^{\scr{N}}_{i}(\alpha_{i_k})$, using \Cref{lem_rels_beta_i2}. Thus we obtain that $M$ is the intersection of all $M_i$, $i \in \mathbb{I}^k$, $k \in \ndN_0$. Since $\Supind{V} \subset M_i$ for all $i \in \mathbb{I}^k$, $k \in \ndN_0$, we can conclude that $\Supind{V} \subset M$.
\end{proof}

\begin{lem} \label{lem_iterated_refl_edges_are_irred}
	Let $k \in \ndN$, $i = (i_1,\ldots, i_k) \in \mathbb{I}^k$, assume $V$ admits the reflection sequence $i$.
	Then the $i$-th edge of $V$
	\begin{align*}
	\oplus_{m=0}^{m^V_i} V(\genind{V}+\beta^V_{i,m}) \in \ydcat{\subalgQ{\refl_{(i_1,\ldots,i_{k-1})}(\scr{N})_{i_k}}}{\scr{C}}
	\end{align*}
	is irreducible.
\end{lem}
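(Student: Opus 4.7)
The plan is to recognize the $i$-th edge of $V$ as the sub-$\ydmod{}$-module generated by $\gencomp{W}$ over the subalgebra $\subalgQ{N'}$, where $W := \refl_{(i_1,\ldots,i_{k-1})}(V)$ and $N' := \refl_{(i_1,\ldots,i_{k-1})}(\scr{N})_{i_k}$, and then invoke the previously established criterion (\Cref{lem_gencompV_irred_iff_Ni_V_irred}) that reduces irreducibility of such a submodule to irreducibility of its generator in $\scr{C}$.

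First I would apply \Cref{prop_funCD_refl_generator} inductively along the reflection sequence $(i_1,\ldots,i_{k-1})$ to conclude that $W \in \ydcat{\refl_{(i_1,\ldots,i_{k-1})}(Q)}{\scr{C}}_{\rat}$ is homogeneously generated $\Gamma$-graded and that $\gencomp{W}$ is irreducible in $\scr{C}$. Since $V$ admits the reflection sequence $(i_1,\ldots,i_k)$, by definition $W$ is additionally $i_k$-well graded, so that all hypotheses of \Cref{lem_gencompV_irred_iff_Ni_V_irred} are satisfied for $W$ at the index $i_k$.

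Next, using \Cref{cor_generator_index_explicit}, for each $0 \le m \le m^V_i$ (noting that $m^V_i = m^W_{i_k}$ by definition) one has
\begin{align*}
V(\genind{V}+\beta^V_{i,m}) \;=\; N'^{m} \cdot \gencomp{W},
\end{align*}
so summing over $m$ identifies the $i$-th edge of $V$ with
\begin{align*}
\bigoplus_{m=0}^{m^V_i} V(\genind{V}+\beta^V_{i,m}) \;=\; \bigoplus_{m=0}^{m^W_{i_k}} N'^{m} \cdot \gencomp{W} \;=\; \subalgQ{N'} \cdot \gencomp{W}.
\end{align*}
By \Cref{lem_Ni_generated_submodule_is_yd} applied to $W$, this is indeed a $\ndN_0$-graded subobject in $\ydcat{\subalgQ{N'}}{\scr{C}}$.

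Finally, \Cref{lem_gencompV_irred_iff_Ni_V_irred} applied to the homogeneously generated $i_k$-well $\Gamma$-graded object $W$ with irreducible generator $\gencomp{W}$ yields that $\subalgQ{N'} \cdot \gencomp{W}$ is irreducible in $\ydcat{\subalgQ{N'}}{\scr{C}}$, which is the claim. There is no real obstacle beyond bookkeeping: the work was already done when establishing \Cref{prop_funCD_refl_generator} and \Cref{cor_generator_index_explicit}, and the only thing to check is that the edge—defined combinatorially via the vectors $\beta^V_{i,m}$—coincides with the sub-$\ydmod{}$-module over $\subalgQ{N'}$ generated by the irreducible component $\gencomp{W}$.
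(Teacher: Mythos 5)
Your proposal is correct and follows essentially the same route as the paper, which simply cites \Cref{cor_generator_index_explicit} (to identify the edge with $\subalgQ{\refl_{(i_1,\ldots,i_{k-1})}(\scr{N})_{i_k}} \cdot \gencomp{\refl_{(i_1,\ldots,i_{k-1})}(V)}$) together with \Cref{lem_gencompV_irred_iff_Ni_V_irred}. You have merely spelled out the bookkeeping — including the inductive use of \Cref{prop_funCD_refl_generator} to guarantee irreducibility of the generating component — that the paper leaves implicit.
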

\begin{proof}
	This is directly implied by \Cref{cor_generator_index_explicit} and \Cref{lem_gencompV_irred_iff_Ni_V_irred}.
\end{proof}

\begin{prop} \label{prop_real_subobject_lies_in_interior}
	Assume $V$ admits all reflections. 
	Let $U \subsetneq V$ be a graded subobject in $\ydcat{Q}{\scr{C}}$. Then $U \subset \inter{V}$.
\end{prop}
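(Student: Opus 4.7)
The plan is to show, for every $k \in \ndN$, $i = (i_1,\ldots,i_k) \in \mathbb{I}^k$ and $0 \le m \le m^V_i$, that $U(\genind{V} + \beta^V_{i,m}) = 0$. Since $U$ is $\Gamma$-graded and $V = \inter{V} \oplus \bound{V}$ with $\bound{V}$ spanned by exactly the components $V(\genind{V} + \beta^V_{i,m})$ for these indices, this immediately yields $U \subset \inter{V}$. I reduce the problem further: for each such $i$, I show that $U$ intersects the $i$-th edge
\[
E_i := \bigoplus_{m=0}^{m^V_i} V(\genind{V} + \beta^V_{i,m})
\]
trivially.

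By \Cref{cor_generator_index_explicit}, $E_i = \subalgQ{\refl_{(i_1,\ldots,i_{k-1})}(\scr{N})_{i_k}} \cdot \gencomp{\refl_{(i_1,\ldots,i_{k-1})}(V)}$, and by \Cref{lem_Ni_generated_submodule_is_yd} it is a graded subobject of $\refl_{(i_1,\ldots,i_{k-1})}(V)$ in $\ydcat{\subalgQ{\refl_{(i_1,\ldots,i_{k-1})}(\scr{N})_{i_k}}}{\scr{C}}$; by \Cref{lem_iterated_refl_edges_are_irred} it is irreducible there. Since each reflection functor is a braided monoidal isomorphism, $U$ persists as a $\Gamma$-graded subobject of $\refl_{(i_1,\ldots,i_{k-1})}(V)$ in $\ydcat{\refl_{(i_1,\ldots,i_{k-1})}(Q)}{\scr{C}}$. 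I then claim that $U \cap E_i$ is a subobject of $E_i$ in $\ydcat{\subalgQ{\refl_{(i_1,\ldots,i_{k-1})}(\scr{N})_{i_k}}}{\scr{C}}$. Closure under the subalgebra action is immediate. For the coaction, which on $E_i$ is the restriction of the ambient $\refl_{(i_1,\ldots,i_{k-1})}(Q)$-coaction by \Cref{lem_Ni_generated_submodule_is_yd}, the ambient coaction applied to $u \in U \cap E_i$ lies simultaneously in $\refl_{(i_1,\ldots,i_{k-1})}(Q) \otimes U$ and in $\subalgQ{\refl_{(i_1,\ldots,i_{k-1})}(\scr{N})_{i_k}} \otimes E_i$, whose intersection equals $\subalgQ{\refl_{(i_1,\ldots,i_{k-1})}(\scr{N})_{i_k}} \otimes (U \cap E_i)$ by a standard tensor-product argument (extend a basis of the subalgebra to one of the full algebra).

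Now by irreducibility of $E_i$, either $U \cap E_i = 0$ or $U \cap E_i = E_i$. In the latter case $U$ contains $\gencomp{\refl_{(i_1,\ldots,i_{k-1})}(V)} = V(\genind{V} + \beta^V_{(i_1,\ldots,i_{k-1})})$, which by definition generates $\refl_{(i_1,\ldots,i_{k-1})}(V)$ as a $\refl_{(i_1,\ldots,i_{k-1})}(Q)$-module; since $U$ is closed under this action and the reflection functor leaves the underlying vector space unchanged, this forces $U = \refl_{(i_1,\ldots,i_{k-1})}(V) = V$, contradicting $U \subsetneq V$. Hence $U \cap E_i = 0$, as desired. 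The only technically delicate step is the coaction-closure verification in the middle paragraph, but it is a short linear-algebra manipulation once one has the ambient coaction decomposition from \Cref{lem_Ni_generated_submodule_is_yd}; the remainder is a clean combination of the irreducibility of the edges and the module-generation property of the vertex at one end of each edge.
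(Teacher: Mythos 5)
Your proof is correct and follows essentially the same route as the paper's: reflect along $(i_1,\ldots,i_{k-1})$, observe that the $i$-th edge is the irreducible object $\subalgQ{\refl_{(i_1,\ldots,i_{k-1})}(\scr{N})_{i_k}} \cdot \gencomp{\refl_{(i_1,\ldots,i_{k-1})}(V)}$, so a nonzero intersection with the reflected $U$ forces the whole edge, hence the generating component, hence $U = V$. The only difference is that you spell out the coaction-closure of $U \cap E_i$ via the tensor-intersection argument, which the paper leaves implicit; that is a harmless (and slightly more careful) elaboration of the same step.
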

\begin{proof}
	Assume $U \cap \bound{V} \ne 0$. We show that then $U=V$. 
	Let $k \in \ndN$, $i=(i_1, \ldots, i_k) \in \mathbb{I}^k$, $0 \le m \le m^V_i$, such that $U\cap V(\genind{V}+\beta^V_{i,m}) \ne 0$ and denote $i' = (i_1,\ldots,i_{k-1})$. Observe that $\refl_{i'}(U) \subset \refl_{i'}(V)$ is a subobject in $\ydcat{\refl_{i'}(Q)}{\scr{C}}_{\rat}$.	
	Moreover $\refl_{i'}(U) \in \ydcat{\subalgQ{\refl_{i'}(\scr{N})_{i_k}}}{\scr{C}}$ with induced $\subalgQ{\refl_{i'}(\scr{N})_{i_k}}$-action and projected $\subalgQ{\refl_{i'}(\scr{N})_{i_k}}$-coaction.
	Now 
	$
	U \cap \oplus_{m=0}^{m^V_i} V(\genind{V}+\beta^V_{i,m}) \ne 0 \matcom
	$
	i.e.
	\begin{align*}
	    0 \ne \refl_{i'}(U) \cap \oplus_{m=0}^{m^V_i} V(\genind{V}+\beta^V_{i,m}) \in \ydcat{\subalgQ{\refl_{i'}(\scr{N})_{i_k}}}{\scr{C}}
	    \matcom
	\end{align*}
	hence by \Cref{lem_iterated_refl_edges_are_irred} $\oplus_{m=0}^{m^V_i} V(\genind{V}+\beta^V_{i,m}) \subset \refl_{i'}(U)$. Now with \Cref{cor_generator_index_explicit} this implies $\gencomp{\refl_{i'}(V)} = V(\genind{V}+\beta^V_{i,0}) \subset  \refl_{i'}(U)$. Now since $\gencomp{\refl_{i'}(V)}$ generates $\refl_{i'}(V)$ as a $\refl_i(Q)$-module, we can conclude that $\refl_{i'}(U) = \refl_{i'} (V)$, i.e. $U=V$.
\end{proof}

\begin{cor} \label{cor_real_subobject_lies_in_interior}
	Assume $V$ admits all reflections. There exists a unique graded subobject $U \subset \inter{V}$ of $V$ in $\ydcat{Q}{\scr{C}}$, such that every proper graded subobject of $V$ in $\ydcat{Q}{\scr{C}}$ is contained in $U$.
	Moreover $V/U$ is irreducible in $\ydcat{Q}{\scr{C}}$.
\end{cor}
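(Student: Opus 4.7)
The plan is to construct $U$ as the sum of all proper graded subobjects of $V$ and then verify the three required properties using \Cref{prop_real_subobject_lies_in_interior} together with \Cref{cor_irred_graded_ydmodule_o}.

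Concretely, I would set
\begin{align*}
    U := \sum_{W \subsetneq V \text{ graded subobject}} W \subset V,
\end{align*}
where the sum is taken over all graded subobjects of $V$ in $\ydcat{Q}{\scr{C}}$ that are proper. This is a graded subobject of $V$, since a sum of graded submodules and subcomodules is again a graded submodule and subcomodule (the grading on $V$ and the graded module/comodule structure pass to arbitrary sums of graded subobjects). By \Cref{prop_real_subobject_lies_in_interior}, every summand is contained in $\inter{V}$, so $U \subset \inter{V}$.

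The next step is to argue that $U$ is itself \emph{proper}. For this I would observe that $0 \in \bouind{V}$ via $\beta^V_{(),0} = 0$, so $\gencomp{V} = V(\genind{V}+0) \subset \bound{V}$, and hence $\gencomp{V} \not\subset \inter{V}$. Since $\inter{V}$ and $\bound{V}$ are complementary summands of $V$, the inclusion $U \subset \inter{V}$ forces $U \subsetneq V$. In particular $U$ is itself one of the objects in the defining sum, and therefore every proper graded subobject of $V$ is contained in $U$; this immediately yields uniqueness, as any other such subobject would be both contained in $U$ and would contain $U$.

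It remains to show $V/U$ is irreducible in $\ydcat{Q}{\scr{C}}$. Since $U$ is a graded subobject, the quotient $V/U$ is a $\Gamma$-graded object of $\ydcat{Q}{\scr{C}}$ with $(V/U)(\gamma) = V(\gamma)/U(\gamma)$. By \Cref{cor_irred_graded_ydmodule_o}(1)$\Leftrightarrow$(2) applied to $V/U$, it suffices to verify graded irreducibility. So let $X \subset V/U$ be a non-zero graded subobject and let $U' \subset V$ be its preimage under the canonical graded projection $V \to V/U$; then $U'$ is a graded subobject of $V$ with $U \subsetneq U'$. By the maximality of $U$ among proper graded subobjects, $U'$ cannot be proper, hence $U' = V$ and $X = V/U$. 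The main subtlety is precisely this last passage between the graded and the ambient categorical notion of irreducibility, which is handled by \Cref{cor_irred_graded_ydmodule_o}; everything else is a direct bookkeeping of gradings together with \Cref{prop_real_subobject_lies_in_interior}.
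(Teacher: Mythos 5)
Your proposal is correct and follows essentially the same route as the paper: define $U$ as the sum of all proper graded subobjects, deduce $U \subset \inter{V}$ from \Cref{prop_real_subobject_lies_in_interior}, and pass from graded irreducibility of $V/U$ to irreducibility in $\ydcat{Q}{\scr{C}}$ via \Cref{cor_irred_graded_ydmodule_o}. The only differences are that you spell out the properness of $U$ (via $\gencomp{V}\subset\bound{V}$, which the paper leaves implicit) — note only that $0\in\bouind{V}$ should be witnessed by $\beta^V_{(i_1),0}=0$ for some $i_1\in\mathbb{I}$ rather than by the empty sequence, since $\bouind{V}$ is indexed over $k\in\ndN$.
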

\begin{proof}
	Let $U$ be the sum of all proper graded subobjects of $V$ in $\ydcat{Q}{\scr{C}}$. By \Cref{prop_real_subobject_lies_in_interior} we obtain $U \subset \inter{V}$. Clearly $V/U$ is irreducible in the category of $\Gamma$-graded objects in $\ydcat{Q}{\scr{C}}$. Then $V/U$ is also irreducible in $\ydcat{Q}{\scr{C}}$ by \Cref{cor_irred_graded_ydmodule_o}(2)$\implies$(1).
\end{proof}

\begin{rema}
    We will discuss more about the maximal subobject of objects in $\ydcat{Q}{\scr{C}}$ in \cref{sect_ydmod_nich_sys_shapovalov}.
\end{rema}

\begin{prop}
Assume $V$ admits all reflections. Moreover let $U,W \in \ydcat{Q}{\scr{C}}_{\rat}$ be $\Gamma$-graded objects such that
\begin{align*}
0 \rightarrow U \rightarrow V \rightarrow W \rightarrow 0
\end{align*}
is an exact sequence in the category of $\Gamma$-graded objects in $\ydcat{Q}{\scr{C}}_{\rat}$ and $W \ne 0$.
Then $\Supind{U} \subset \intind{V}$, $\bouind{V} \subset \Supind{W}$ and $\Supind{W} = \Supind{V} \setminus M$, where 
\begin{align*}M = \lbrace n \in \Supind{U} \, | \, U(n) \rightarrow V(n) \text{ is bijective} \rbrace. \end{align*}
\end{prop}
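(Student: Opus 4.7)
The plan is to reduce everything to the component-wise short exact sequence
\begin{align*}
0 \to U(\alpha) \to V(\alpha) \to W(\alpha) \to 0,
\end{align*}
which holds in $\scr{C}$ for every $\alpha \in \Gamma$, since the given sequence is exact in the category of $\Gamma$-graded objects. The key observation is that $W(\alpha) = 0$ if and only if the inclusion $U(\alpha) \hookrightarrow V(\alpha)$ is a bijection.

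The first step settles claim (1) and fixes a common reference index. Since $W \ne 0$, the graded subobject $U$ is proper in $V$, so \Cref{prop_real_subobject_lies_in_interior} yields $U \subset \inter{V}$, which is precisely $\Supind{U} \subset \intind{V}$. The same inclusion shows $V(\genind{V}) \not\subset U$, so the image of $\gencomp{V}$ generates $W$ as a $Q$-module; hence $W$ is homogeneously generated with $\genind{W} = \genind{V}$, and throughout the rest of the argument I read $\Supind{U}$, $\Supind{W}$, and the set $M$ relative to this common index.

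Claim (2) then follows at once: for $n \in \bouind{V}$ one has $V(\genind{V}+n) \ne 0$ but $n \notin \intind{V}$, so (1) forces $U(\genind{V}+n) = 0$, whence $W(\genind{V}+n) \cong V(\genind{V}+n) \ne 0$, i.e.\ $n \in \Supind{W}$. For claim (3), unwinding definitions, $n \in \Supind{W}$ is equivalent to $V(\genind{V}+n) \ne 0$ together with non-bijectivity of the inclusion $U(\genind{V}+n) \hookrightarrow V(\genind{V}+n)$. The case $n \notin \Supind{U}$, i.e.\ $U(\genind{V}+n) = 0$, automatically gives non-bijectivity (since $V(\genind{V}+n) \ne 0$) and forces $n \notin M$ trivially; the case $n \in \Supind{U}$ with the inclusion non-bijective is precisely $n \in \Supind{U} \setminus M$. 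In both cases $n \in \Supind{V} \setminus M$, and conversely every such $n$ satisfies exactly one of the two cases, whence $W(\genind{V}+n) \ne 0$.

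No deep ideas are needed beyond \Cref{prop_real_subobject_lies_in_interior}; the only real subtlety is the consistent choice of reference index for the various $\Supind{}$'s, which is legitimated by the fact that $U$ lies in the interior of $V$ and hence $W$ inherits $\genind{V}$ as its generating degree. After that observation, the rest is a diagram chase in the component-wise short exact sequence.
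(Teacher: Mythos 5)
Your proof is correct and follows essentially the same route as the paper: \Cref{prop_real_subobject_lies_in_interior} gives $\Supind{U}\subset\intind{V}$, component-wise exactness gives $\Supind{W}=\Supind{V}\setminus M$, and the two combine to give $\bouind{V}\subset\Supind{W}$. Your extra care in fixing $\genind{W}=\genind{V}$ as the common reference index is a legitimate point the paper leaves implicit, but it does not change the argument.
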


\begin{proof}
First since $W \ne 0$, $U$ can be viewed as a proper graded subobject of $V$ and thus by \Cref{prop_real_subobject_lies_in_interior} we obtain $\Supind{U} \subset \intind{V}$. As the sequence is exact, we know that $\Supind{W} = \Supind{V} \setminus M$. Since $\Supind{U} \cap \bouind{V} = 0$ we can conclude $\bouind{V} \subset \Supind{W}$.
\end{proof}
\section{The induced \mathintitleydmod{}-module} \label{sect_ydmod_nich_sys_induced_yd_module}

We now construct and study a special set of $\ydmod{}$-modules over Nichols systems that are obtained by comodules of the Nichols systems. 
These induced objects can also be found in a different context in \cite{HeSch}, Proposition~4.5.1(1).
We will see in \Cref{prop_action_induces_epimorphism_from_induced_module}, that every $\ydmod{}$-modules over a Nichols system can be realized as a quotient of such an induced $\ydmod{}$-module. Finally in \Cref{cor_ydind_irred_all_refl} we will study how reflections can be used to characterize irreducibility of induced $\ydmod{}$-modules, if the Nichols system is finite-dimensional. We will study these induced objects in more detail in the case where the Nichols system is of diagonal type in \cref{sect_nich_sys_diag_main}.

Let $\scr{N}$ be a pre-Nichols system and let $Q := \nsalg{\scr{N}}$ and $N_j := \scr{N}_j$ for all $j \in \mathbb{I}$. 
Let $\Gamma$ be an abelian group, such that $\ndZ^\theta \subset \Gamma$.

Recall from \Cref{rema_prop_ad_coad_functor} that we have a functor
\begin{align*}
    \ydindfun{Q} : \comodcat{Q}{\scr{C}} \rightarrow \ydcat{Q}{\scr{C}}, 
\end{align*}
where a $Q$-comodule $U \in \scr{C}$ gets mapped to $\ydind{Q}{U}=Q \ot U \in \ydcat{Q}{\scr{C}}$ with $Q$-action $\bimu_Q \ot \id_U$ and $Q$-coaction
\begin{align*}
	\coad_{Q\ot U} = & (\bimu_Q \ot \id_{Q \ot U})  (\bimu_Q \ot \brd_{Q \ot U,Q}^{\scr{C}}) (\id_Q \ot \brd^{\scr{C}}_{Q,Q} \ot \id_U \ot \antip_Q) \\& 
	(\bicomu_Q \ot \brd_{Q,Q\ot U}^{\scr{C}}) (\bicomu_Q \ot \coact^Q_U)  \matcom
\end{align*}
and where a $Q$-comodule morphism $f$ gets mapped to $\ydind{Q}{f}=\id_Q \ot f$.

\begin{rema}
    The above construction of an induced $\ydmod{}$-module is very reminiscent of Verma modules in the representation theory of Lie algebras.
\end{rema}

\begin{rema} \label{rema_induced_yd_structure_boso}
	Let $U \in \scr{C}$ and denote $\widetilde{Q} := Q \boso H$.
	In \cite{HeSch}, Proposition~4.5.1(1) it is shown that $\widetilde{Q} \ot_H U \in \ydmod{\widetilde{Q}}$, where the $\widetilde{Q}$-action is $\act_{\widetilde{Q} \ot_H U}^{\widetilde{Q}} := \bimu_{\widetilde{Q}} \ot \id_U$ and the $\widetilde{Q}$-coaction $\coact_{\widetilde{Q} \ot_H U}^{\widetilde{Q}}$ is given by 
	\begin{align*}
		\coact_{\widetilde{Q} \ot_H U}^{\widetilde{Q}}((q \boso h) \ot u) = (q \boso h)_{(1)} (1 \boso u_{(-1)}) \antip_{\widetilde{Q}}((q \boso h)_{(3)}) \ot (q \boso h)_{(2)} \ot u_{(0)}
	\end{align*}
	for all $q \in Q, h \in H, u \in U$. With \Cref{prop_boso_is_ydcat} we obtain that $\widetilde{Q} \ot_H U \in \ydcat{Q}{\scr{C}}$ with $Q$-action $\act_{\widetilde{Q} \ot_H U}^Q = \act_{\widetilde{Q} \ot_H U}^{\widetilde{Q}} (\id_Q \ot \biunit_H \ot \id_{\widetilde{Q} \ot U})$ and $Q$-coaction $\coact_{\widetilde{Q} \ot_H U}^Q = (\id_Q \ot \bicounit_H \ot \id_{\widetilde{Q} \ot U}) \coact_{\widetilde{Q} \ot_H U}^{\widetilde{Q}}$.
	Below we show that $\ydind{Q}{U}$ and $\widetilde{Q} \ot_H U$ are isomorphic in $\ydcat{Q}{\scr{C}}$, where $U$ has trivial left $Q$-coaction $\biunit_Q \ot \id_U$.
\end{rema}

\begin{prop}
	Let $U \in \scr{C}$ and denote $\widetilde{Q} := Q \boso H$. We view $U$ as a left $Q$-comodule via coaction $\biunit_Q \ot \id_U$. Then 
	\begin{align*}
	\phi : \widetilde{Q} \ot_H U \rightarrow \ydind{Q}{U}, \, (q \boso h) \ot u \mapsto q \ot h \cdot u \matcom && q \in Q, h \in H, u \in U
	\end{align*}
	is a well defined isomorphism in the category $\ydcat{Q}{\scr{C}}$. The inverse is given by $\phi^{-1}(q \ot u)= ( q \boso 1) \ot u$ for all $q\in Q, u\in U$.
\end{prop}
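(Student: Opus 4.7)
The plan is to proceed by first establishing well-definedness and bijectivity, then verifying the module and comodule properties separately, relying on the structural results already proven.

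I would begin with well-definedness of $\phi$. Using the multiplication on the bosonisation $\widetilde{Q}$ one computes $(q \boso h)(1 \boso h') = q \boso hh'$ (since $h_{(1)} \cdot 1_Q = \bicounit_H(h_{(1)}) 1_Q$ by the unit being an $H$-module morphism). Thus for $h' \in H$, the balance
\begin{align*}
\phi((q \boso h)(1 \boso h') \ot u) = \phi((q \boso hh') \ot u) = q \ot hh' \cdot u = \phi((q \boso h) \ot h' \cdot u)
\end{align*}
shows $\phi$ descends to $\widetilde{Q} \ot_H U$. The candidate inverse $\psi : Q \ot U \to \widetilde{Q} \ot_H U$, $\psi(q \ot u) = (q \boso 1) \ot u$, is manifestly a well-defined linear map. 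Then $\phi \psi(q \ot u) = q \ot u$ is immediate, and $\psi \phi((q \boso h) \ot u) = (q \boso 1) \ot h \cdot u = (q \boso 1)(1 \boso h) \ot u = (q \boso h) \ot u$ using the balance in $\widetilde{Q} \ot_H U$. Hence $\phi$ is a linear isomorphism with inverse $\psi$.

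Next I would check that $\phi$ is a morphism in $\scr{C} = \ydmod{H}$: the $H$-structure on $\widetilde{Q} \ot_H U$ inherited from \Cref{rema_induced_yd_structure_boso} through \Cref{prop_boso_is_ydcat} corresponds precisely to the tensor-product $H$-action and $H$-coaction on $Q \ot U$ once one identifies $(q \boso h) \ot u$ with $q \ot h \cdot u$. Then I would verify the $Q$-module property. By the formula in \Cref{prop_boso_is_ydcat}, the $Q$-action on $\widetilde{Q} \ot_H U$ is $\act^Q_{\widetilde{Q} \ot_H U} = \act^{\widetilde{Q}}_{\widetilde{Q} \ot_H U}(\id_Q \ot \biunit_H \ot \id)$, hence $q' \cdot ((q \boso h) \ot u) = (q' \boso 1)(q \boso h) \ot u = q'q \boso h \ot u$. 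Applying $\phi$ gives $q'q \ot h \cdot u = q' \cdot \phi((q \boso h) \ot u)$, confirming $Q$-linearity.

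The main obstacle is verifying that $\phi$ is a $Q$-comodule morphism, and this is where I would spend most of the effort. On the source, $\coact^Q_{\widetilde{Q} \ot_H U}$ is obtained from the $\widetilde{Q}$-coaction of \Cref{rema_induced_yd_structure_boso} by applying $\bicounit_H$ to the middle factor; on the target, the coaction is the explicit $\coad_{Q \ot U}$ from \Cref{rema_prop_ad_coad_functor}, simplified by the trivial $Q$-coaction $\coact^Q_U = \biunit_Q \ot \id_U$ on $U$. Using this simplification, $\coad_{Q \ot U}(q \ot u)$ reduces to
\begin{align*}
(\bimu_Q \ot \id_{Q \ot U})(\bimu_Q \ot \brd^{\scr{C}}_{Q \ot U, Q})(\id_Q \ot \brd^{\scr{C}}_{Q,Q} \ot \id_U \ot \antip_Q)(\bicomu_Q \ot \brd^{\scr{C}}_{Q, Q \ot U})(\bicomu_Q(q) \ot 1_Q \ot u),
\end{align*}
in which the braiding $\brd^{\scr{C}}$ produces the $H$-coaction of $Q$ acting on $u$. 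On the bosonisation side, one expands $\bicomu_{\widetilde{Q}}$ and $\antip_{\widetilde{Q}}$ from \Cref{defi_bosonisation} in terms of $\bicomu_Q$, $\antip_Q$ and the $H$-structure, and computes the image under $(\id_Q \ot \phi) \coact^Q_{\widetilde{Q} \ot_H U}$ on $(q \boso 1) \ot u$. After collecting the $H$-coactions of $q_{(1)}, q_{(2)}, q_{(3)}$ and $u_{[-1]}$ and applying $\bicounit_H$, the two expressions become literally the same string of structure morphisms. I expect this to be a direct but somewhat lengthy diagrammatic verification; the cleanest execution is to draw it in braided string-diagram form, where the triviality of the $Q$-coaction on $U$ kills a strand and the braided antipode axiom (a Hopf-algebra-in-a-braided-category identity) collapses the remaining diagram to exactly $\coad_{Q \ot U}$.
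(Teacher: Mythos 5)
Your proposal is correct and follows essentially the same route as the paper's proof: establish well-definedness and bijectivity, verify $Q$-linearity from the explicit multiplication of the bosonisation, and prove the comodule property by expanding $\bicomu_{\widetilde{Q}}$ and $\antip_{\widetilde{Q}}$ against the coadjoint coaction $\coad_{Q\ot U}$ simplified by the trivial coaction on $U$. The only cosmetic differences are that the paper checks that $\phi^{-1}$ (rather than $\phi$) is a comodule morphism and carries out the final matching in Sweedler notation instead of string diagrams.
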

\begin{proof}
It is straight forward to verify that $\phi$ is a well defined isomorphism in $\scr{C}$ with claimed inverse. Now since
\begin{align*}
	&\phi (\bimu_{\widetilde{Q}} \ot \id_U) ((q\boso 1) \ot (q' \boso h) \ot u)
	= \phi ((q q' \boso h) \ot u) 
	\\&= qq' \ot h \cdot u 
	 = (\bimu_Q \ot \id_U) (\id_Q \ot \phi) (q \ot (q' \boso h) \ot u)
\end{align*}
for all $ q,q' \in Q$, $h \in H$, $u \in U $, we obtain that $\phi$ is a $Q$-module morphism. Observe that
\begin{align*}
(\id_Q \ot \bicounit_H)\antip_{\widetilde{Q}}(q \boso 1) = \antip_H ( q_{(-1)} ) \cdot \antip_R (q_{(0)}) \matdot
\end{align*}
Hence for $q \in Q$, $u \in U$ we have
\begin{align*}
	&\coact_{\widetilde{Q} \ot_H U}^Q \phi^{-1} (q \ot u) = \coact_{\widetilde{Q} \ot_H U}^Q (q \boso 1 \ot u)
	\\=& q_{(1)} \left( \left( q_{(2)(-1)} q_{(3)(-3)} u_{(-1)} \antip_H(q_{(3)(-1)}) \right) \cdot \antip_Q(q_{(3)(0)}) \right)  \ot (q_{(2)(0)} \boso q_{(3)(-2)} ) \ot u_{(0)}
	\\=& q_{(1)} \left( \left( q_{(2)(-1)} q_{(3)(-3)} u_{(-1)} \antip_H(q_{(3)(-1)}) \right) \cdot \antip_Q(q_{(3)(0)}) \right) \\& \ot (q_{(2)(0)} \boso 1 ) \ot q_{(3)(-2)} \cdot u_{(0)}
	\\=& q_{(1)} \left( \left( q_{(2)(-1)} (q_{(3)(-1)} \cdot u)_{(-1)} \right) \cdot \antip_Q(q_{(3)(0)} \right) \ot (q_{(2)(0)} \boso 1) \ot (q_{(3)(-1)} \cdot u)_{(0)}
	\\=& (\id_Q \ot \phi^{-1}) \coact_{\ydind{Q}{U}}^Q (q \ot u) \matdot
\end{align*}
This proves that $\phi^{-1}$, and thus $\phi$, is a $Q$-comodule morphism.
\end{proof}

\begin{prop}
	Let $U \in \comodcat{Q}{\scr{C}}$ and $\gamma \in \Gamma$. $\ydind{Q}{U}$ is homogeneously generated $\Gamma$-graded via $\ydind{Q}{U}(\gamma + n) = Q(n) \ot U$ for $n \in \ndN_0^\theta$ and $V(\gamma')=0$ for all $\gamma' \in \Gamma \setminus \gamma + \ndN_0$. Moreover $\genind{\ydind{Q}{U}} = \gamma$ and $\gencomp{\ydind{Q}{U}} = U$.
\end{prop}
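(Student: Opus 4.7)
The plan is to check three things in sequence: that the stated decomposition is indeed a $\Gamma$-grading on the underlying object of $\scr{C}$, that the $Q$-action and $Q$-coaction respect it, and then to read off the homogeneous-generation data. The decomposition $\ydind{Q}{U} = Q \ot U = \bigoplus_{n \in \ndN_0^\theta} Q(n) \ot U$ is just the $\ndN_0^\theta$-grading of $Q$ tensored with $U$ and reindexed by $n \mapsto \gamma + n$, so it clearly defines a $\Gamma$-grading with $U$ placed in degree $\gamma$.

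The $Q$-action $\bimu_Q \ot \id_U$ is graded because $\bimu_Q$ is a structure map of the $\ndN_0^\theta$-graded Hopf algebra $Q$: one has $Q(m) \cdot (Q(n) \ot U) \subset Q(m+n) \ot U$. For the coaction $\coad_{Q \ot U}$, I will trace degrees through its defining formula. Every constituent map is $\Gamma$-graded: $\bicomu_Q$, $\bimu_Q$ and $\antip_Q$ because $Q$ is a $\Gamma$-graded Hopf algebra; the braidings $\brd^{\scr{C}}$ in $\scr{C} = \ydmod{H}$ preserve $\Gamma$-degrees because $H$ carries only the trivial $\Gamma$-grading, so the braidings act solely through the $H$-action and $H$-coaction; and $\coact^Q_U$, with $U$ sitting entirely in degree $\gamma$ and $Q(0) = \fK\cdot 1_Q$ (since $Q$ is generated as an algebra by $N_1,\dots,N_\theta$ which live in strictly positive degrees $\alpha_i$), must land in $Q(0) \ot U$, i.e., be the trivial coaction $\biunit_Q \ot \id_U$; this is precisely the situation of the preceding proposition. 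A straightforward degree count then shows that for $q \in Q(n)$ and $u \in U$ every term of $\coad_{Q\ot U}(q \ot u)$ lies in
\begin{equation*}
\bigoplus_{n_1+n_2 = n} Q(n_1) \ot Q(n_2) \ot U \; = \; \bigoplus_{n_1+n_2 = n} Q(n_1) \ot \ydind{Q}{U}(\gamma + n_2),
\end{equation*}
which is the condition for $\coad_{Q\ot U}$ to be a graded morphism.

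Finally, since $\ydind{Q}{U}(\gamma) = Q(0) \ot U = \fK\cdot 1_Q \ot U \cong U$ and the action yields $Q \cdot (\fK\cdot 1_Q \ot U) = Q \ot U = \ydind{Q}{U}$, the object $\ydind{Q}{U}$ is homogeneously generated by its degree-$\gamma$ component, so $\genind{\ydind{Q}{U}} = \gamma$ and $\gencomp{\ydind{Q}{U}} = U$, as claimed.

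The main conceptual point is the gradedness of $\coad_{Q\ot U}$, and this is not really an obstacle but mechanical bookkeeping: once one records that $H$ is trivially $\Gamma$-graded (so all braidings in $\scr{C}$ preserve $\Gamma$-degrees) and that $\bicomu_Q,\bimu_Q,\antip_Q$ are graded by hypothesis on $Q$, the five-factor composition defining $\coad_{Q\ot U}$ is visibly graded, and no further subtlety arises.
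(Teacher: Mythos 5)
Your overall route is the same as the paper's: verify directly that the stated decomposition is a grading compatible with the structure maps, then observe $Q(n)\ot U = Q(n)\cdot(\fK 1_Q\ot U)$ to get homogeneous generation. The treatment of the action and the final generation step are fine and match the paper, whose own proof disposes of the grading with a single ``clearly''.

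The gap is in your claim that $\coact^Q_U$ ``must land in $Q(0)\ot U$, i.e., be the trivial coaction''. Nothing in the hypotheses forces this: $U$ is an arbitrary object of $\comodcat{Q}{\scr{C}}$, and your justification --- ``$U$ sits entirely in degree $\gamma$'' --- presupposes that $U$, so placed, is a \emph{graded} $Q$-comodule, which is not given; it is in effect part of what the gradedness of $\coad_{Q\ot U}$ is supposed to establish, so the reasoning is circular. The claim can genuinely fail: take $U=\fK u\oplus\fK v$ with $\coact^Q_U(u)=1\ot u+x\ot v$ and $\coact^Q_U(v)=1\ot v$ for a primitive $x$ of degree $\alpha_1$; counit and coassociativity hold, yet a direct computation gives $\coad_{Q\ot U}(1\ot u)=1\ot 1\ot u+x\ot 1\ot v$, which has a component in $Q(\alpha_1)\ot\bigl(Q\ot U\bigr)(\gamma)$, so $\coad_{Q\ot U}$ is \emph{not} graded for the stated grading. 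Your degree count goes through exactly when $\coact^Q_U(U)\subset Q(0)\ot U$, so triviality of the coaction is a genuine input rather than a consequence; it is the hypothesis under which the remark and proposition preceding this statement operate (there $U$ explicitly carries the trivial coaction $\biunit_Q\ot\id_U$), and the paper's one-line proof silently assumes it as well. To close the gap you must either impose that hypothesis or restrict to comodules whose coaction lands in $Q(0)\ot U$; as written, the step asserting triviality of $\coact^Q_U$ does not follow.
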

\begin{proof}
	Clearly $\ydind{Q}{U}$ is $\Gamma$-graded with the claimed grading. For $n \in \ndN_0^\theta$ we have $\ydind{Q}{U}(\gamma + n) = Q(n) \ot U = Q(n) (\fK \ot U) = Q(n) \ydind{Q}{U}(\gamma)$.
\end{proof}

\begin{lem} \label{lem_action_is_yd_morphism}
    Let $V \in \ydcat{Q}{\scr{C}}$ and let $U$ be a $Q$-subcomodule of $V$. Then the restriction of $\act_V^Q$ to $\ydind{Q}{U} \subset Q \ot V$, i.e.
	\begin{align*}
	\pi : \ydind{Q}{U} \rightarrow V, \, x \mapsto \act_V^Q(x) \matcom
	\end{align*}
	is a morphism in $\ydcat{Q}{\scr{C}}$.
\end{lem}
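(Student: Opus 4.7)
The plan is to check that $\pi$ is both a $Q$-module morphism and a $Q$-comodule morphism.

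The module part is essentially immediate from associativity of $\act_V^Q$: since $\ydind{Q}{U} = Q \ot U$ carries the action $\bimu_Q \ot \id_U$, we have $\pi((ab) \ot u) = (ab) \cdot u = a \cdot (b \cdot u) = a \cdot \pi(b \ot u)$ for $a, b \in Q$, $u \in U$, so $\pi(\bimu_Q \ot \id_U) = \act_V^Q(\id_Q \ot \pi)$.

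For the comodule part, the key observation is that since $U$ is a sub $Q$-comodule of $V$, the coaction $\coact_U^Q$ is the restriction of $\coact_V^Q$, that is $\coact_V^Q \incl = (\id_Q \ot \incl) \coact_U^Q$, where $\incl : U \hookrightarrow V$ is the inclusion. Using naturality of the braiding in $\scr{C}$ throughout, a direct comparison of the formulas for $\coad_{Q \ot U}$ and $\coad_{Q \ot V}$ in \Cref{rema_prop_ad_coad_functor} then yields
\begin{align*}
\coad_{Q \ot V} (\id_Q \ot \incl) = (\id_{Q \ot Q} \ot \incl)\, \coad_{Q \ot U}.
\end{align*}

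Next, I would compare the explicit formula for $\coad_{Q \ot V}$ in \Cref{rema_prop_ad_coad_functor} with the right-hand side of the YD relation in \Cref{lem_defining_relation_ydmodule_equivalence}: the latter is obtained from the former by replacing the outer $\bimu_Q \ot \id_{Q \ot V}$ by $\bimu_Q \ot \act_V^Q = (\id_Q \ot \act_V^Q)(\bimu_Q \ot \id_{Q \ot V})$. Since $V \in \ydcat{Q}{\scr{C}}$, this gives
\begin{align*}
\coact_V^Q \act_V^Q = (\id_Q \ot \act_V^Q)\, \coad_{Q \ot V}.
\end{align*}
Precomposing with $\id_Q \ot \incl$ and inserting the preceding identity yields $\coact_V^Q \pi = (\id_Q \ot \pi) \coad_{Q \ot U}$, which is exactly the condition for $\pi$ to be a $Q$-comodule morphism.

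The only mildly delicate step is recognizing that the right-hand side of the YD relation in \Cref{lem_defining_relation_ydmodule_equivalence} coincides with $(\id_Q \ot \act_V^Q) \coad_{Q \ot V}$; once this match is made, everything else is a routine application of naturality of the braiding and associativity of the action.
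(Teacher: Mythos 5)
Your proposal is correct and takes essentially the same route as the paper: the paper's proof also disposes of the module part as elementary and, for the comodule part, applies \Cref{lem_defining_relation_ydmodule_equivalence} to $q \ot v$ with $v \in U$ and identifies the resulting expression with $(\id_Q \ot \act_V^Q)\,\coact_{\ydind{Q}{U}}^Q$, which is exactly your identification of the Yetter--Drinfeld relation's right-hand side with $(\id_Q \ot \act_V^Q)\,\coad_{Q\ot V}$ combined with the restriction of the coaction to the subcomodule $U$. Your version merely makes the intermediate compatibility $\coad_{Q\ot V}(\id_Q\ot\incl)=(\id_{Q\ot Q}\ot\incl)\,\coad_{Q\ot U}$ explicit, which the paper leaves implicit.
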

\begin{proof}
It is elementary to verify that $\pi$ is a $Q$-module morphism in $\scr{C}$. 
For $q \in Q, v \in U$ we have by \Cref{lem_defining_relation_ydmodule_equivalence} 
\begin{align*}
	\coact_V^Q \pi (q \ot v) =&(\bimu_Q \ot \act_V^Q)(\bimu_Q \ot \brd_{Q\ot V, Q}^{\scr{C}}) (\id_Q \ot \brd_{Q,Q}^{\scr{C}} \ot \id_V \ot \antip_Q) \\& (\bicomu_Q \ot \brd_{Q, Q\ot V}^{\scr{C}})(\bicomu_Q \ot \coact_V^Q) (q \ot v)
	\\=&(\id_Q \ot \act_V^Q) \coact_{\ydind{Q}{U}}^Q (q \ot v) = (\id_Q \ot \pi ) \coact_{\ydind{Q}{U}}^Q  (q \ot v) \matdot
\end{align*}
Hence $p$ is a $Q$-comodule morphism.
\end{proof}

\begin{rema}
    Let $V \in \ydcat{Q}{\scr{C}}$ be a homogeneously generated $\Gamma$-graded object.
    Observe that since $\coact_V^Q$ is graded we have for $v \in \gencomp{V}$
    \begin{align*}
        \coact_V^Q(v) \in Q(0) \ot V_0 = \fK \ot V_0 \matcom
    \end{align*}
    hence $\coact_V^Q(v) = 1 \ot v$. This implies that $\gencomp{V}$ is a $Q$-subcomodule of $V$.
\end{rema}

\begin{prop} \label{prop_action_induces_epimorphism_from_induced_module}
	Let $V \in \ydcat{Q}{\scr{C}}$ be a homogeneously generated $\Gamma$-graded object. Then the restriction of $\act_V^Q$ to $\ydind{Q}{\gencomp{V}} \subset Q \ot V$, i.e.
	\begin{align*}
	\pi : \ydind{Q}{\gencomp{V}} \rightarrow V, \, q \ot v \mapsto \act_V^Q(q \ot v) \matcom
	\end{align*}
	is a graded epimorphism in $\ydcat{Q}{\scr{C}}$. In particular $V \cong \ydind{Q}{\gencomp{V}} / \ker \pi $.
\end{prop}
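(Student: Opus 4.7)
The plan is to assemble this from three pieces already at hand. First, as noted in the remark immediately preceding the proposition, the graded $Q$-coaction being compatible with the grading forces $\coact_V^Q(v) = 1 \ot v$ for every $v \in \gencomp{V}$, so $\gencomp{V}$ is a $Q$-subcomodule of $V$. That is precisely the hypothesis needed to apply \Cref{lem_action_is_yd_morphism} to $U = \gencomp{V}$, which yields that $\pi$ is a morphism in $\ydcat{Q}{\scr{C}}$.

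Next I would verify that $\pi$ is $\Gamma$-graded. By the preceding proposition, the grading on $\ydind{Q}{\gencomp{V}}$ is given by $\ydind{Q}{\gencomp{V}}(\genind{V} + n) = Q(n) \ot \gencomp{V}$ for $n \in \ndN_0^\theta$, and zero outside $\genind{V} + \ndN_0^\theta$. Since $\act_V^Q$ is a graded morphism and $\gencomp{V} = V(\genind{V})$, we have
\begin{align*}
    \pi\bigl(\ydind{Q}{\gencomp{V}}(\genind{V}+n)\bigr) = \act_V^Q\bigl(Q(n) \ot V(\genind{V})\bigr) \subset V(\genind{V}+n)
\end{align*}
for all $n \in \ndN_0^\theta$, which is the required gradedness.

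Finally, surjectivity is immediate from the definition of homogeneous generation: $V = Q \cdot \gencomp{V} = \act_V^Q(Q \ot \gencomp{V}) = \pi(\ydind{Q}{\gencomp{V}})$. The concluding isomorphism $V \cong \ydind{Q}{\gencomp{V}}/\ker \pi$ then follows from the first isomorphism theorem applied in the abelian category $\ydcat{Q}{\scr{C}}$, since kernels and cokernels of $\ydmod{}$-module morphisms over $Q$ exist and behave as expected. There is no real obstacle here; the work was done in \Cref{lem_action_is_yd_morphism} and in the construction of $\ydindfun{Q}$ via \Cref{rema_prop_ad_coad_functor}, and the proposition is essentially a bookkeeping corollary.
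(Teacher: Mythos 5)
Your proof is correct and follows the same route as the paper: apply \Cref{lem_action_is_yd_morphism} to $U = \gencomp{V}$ (using the preceding remark that $\gencomp{V}$ is a $Q$-subcomodule), then note that gradedness and surjectivity follow from the gradedness of $\act_V^Q$ and the definition of homogeneous generation. The paper's own proof is just a terser version of exactly this.
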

\begin{proof}
\Cref{lem_action_is_yd_morphism} implies that $\pi$ is a morphism in $\ydcat{Q}{\scr{C}}$. By definition of $V$, $\pi$ is a graded epimorphism.
\end{proof}

\begin{rema}
    Let $V \in \ydcat{Q}{\scr{C}}$ be a homogeneously generated $\Gamma$-graded object. If $\ydind{Q}{\gencomp{V}}$ is irreducible in $\ydcat{Q}{\scr{C}}$, then $\pi: \ydind{Q}{\gencomp{V}} \rightarrow V$ from \Cref{prop_action_induces_epimorphism_from_induced_module} is an isomorphism.
\end{rema}

\begin{lem} \label{lem_ydind_mi}
    Let $k \in \ndN$, $i = (i_1,\ldots,i_k) \in \mathbb{I}^k$ and assume $\scr{N}$ admits the reflection sequence $i$. Let $U \in \comodcat{Q}{\scr{C}}$ be irreducible, assume $\ydind{Q}{U} \in \ydcat{Q}{\scr{C}}_{\rat}$ and that $\ydind{Q}{U}$ admits the reflection sequence $(i_1,\ldots,i_{k-1})$. Then  $m^{\scr{N}}_{(i_1,\ldots,i_r)} \in \ndN_0$ for all $1 \le r \le k$ and
    \begin{align*}
        m^{\ydind{Q}{U}}_i &= m^{\scr{N}}_i
        \matdot
    \end{align*}
    In particular $\beta^{\ydind{Q}{U}}_{i,m} = \beta^{\scr{N}}_{i,m}$ for all $0\le m \le m_i^{\scr{N}}$.
\end{lem}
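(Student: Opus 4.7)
The plan is to proceed by induction on $k$, simultaneously proving $m^{\scr{N}}_{(i_1,\ldots,i_k)} \in \ndN_0$ and $m^{\ydind{Q}{U}}_{(i_1,\ldots,i_k)} = m^{\scr{N}}_{(i_1,\ldots,i_k)}$, writing $V := \ydind{Q}{U}$ throughout. The base case $k=1$ is immediate: since $V = Q \ot U$ carries the $Q$-action $\bimu_Q \ot \id_U$ with $\gencomp{V} = U \ne 0$, we have $N_{i_1}^m \cdot U = N_{i_1}^m \ot U$, which is nonzero iff $N_{i_1}^m \ne 0$. Hence $m^V_{(i_1)} = m^{\scr{N}}_{(i_1)}$ in $\ndN_0 \cup \lbrace \infty \rbrace$, and rationality of $V$ forces finiteness.

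For the inductive step, I assume the claim with $k-1$ replacing $k$. Since then $m^V_{(i_1,\ldots,i_r)} = m^{\scr{N}}_{(i_1,\ldots,i_r)}$ for $1 \le r \le k-1$, the explicit formula \Cref{prop_beta_explicit} gives $\beta^V_{i,m} = \beta^{\scr{N}}_{i,m}$ for every $m \in \ndN_0$, which already settles the ``in particular'' clause. Next I combine the homogeneous generation of $\refl_{(i_1,\ldots,i_{k-1})}(V)$, the Nichols-system structure of $\refl_{(i_1,\ldots,i_{k-1})}(\scr{N})$ over $i_k$ (which via \Cref{rema_structure_of_Ki} yields $\refl_{(i_1,\ldots,i_{k-1})}(Q)(m\alpha_{i_k}) = \refl_{(i_1,\ldots,i_{k-1})}(\scr{N})_{i_k}^m$), and the grading/generator identifications of \Cref{prop_grading_of_iterated_refl_mod} and \Cref{prop_generator_index_explicit} to derive the key identity
\begin{align*}
\refl_{(i_1,\ldots,i_{k-1})}(\scr{N})_{i_k}^m \cdot \gencomp{\refl_{(i_1,\ldots,i_{k-1})}(V)} = V(\genind{V} + \beta^{\scr{N}}_{i,m}) = Q(\beta^{\scr{N}}_{i,m}) \ot U
\end{align*}
for every $m \in \ndN_0$. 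Consequently this vanishes exactly when $\beta^{\scr{N}}_{i,m} \notin \Sup{Q}$.

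It remains to test this condition. By the inductive hypothesis, $m^{\scr{N}}_{(i_1,\ldots,i_r)} \in \ndN_0$ for $r \le k-1$, so \Cref{prop_sup_iterated_reflection} applies to the truncated sequence $(i_1,\ldots,i_{k-1})$. Combined with the identity $\beta^{\scr{N}}_{i,m} = \beta^{\scr{N}}_{(i_1,\ldots,i_{k-1})} + m\, s^{\scr{N}}_{(i_1,\ldots,i_{k-1})}(\alpha_{i_k})$, which follows from \Cref{lem_rels_beta_i} together with $s^{\scr{N}}_{(i_1,\ldots,i_{k-1}),i_k}(\alpha_{i_k}) = -\alpha_{i_k}$, this yields $\beta^{\scr{N}}_{i,m} \in \Sup{Q}$ if and only if $\refl_{(i_1,\ldots,i_{k-1})}(\scr{N})_{i_k}^m \ne 0$. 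Hence $m^V_i = m^{\scr{N}}_i$ in $\ndN_0 \cup \lbrace \infty \rbrace$, and rationality of $V$ forces $m^V_i$, and therefore $m^{\scr{N}}_i$, to be finite.

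The main obstacle is that $m^{\scr{N}}_i \in \ndN_0$ is itself part of the conclusion, so the full iterated-reflection support formula for the sequence $i$ is not yet available; the induction circumvents this by relying only on the $(k-1)$-truncated version of \Cref{prop_sup_iterated_reflection}, with finiteness at length $k$ dropping out automatically from the rationality of $V$ at the end.
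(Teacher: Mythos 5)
Your proof is correct and follows essentially the same route as the paper: induction on $k$, the identity $\refl_{(i_1,\ldots,i_{k-1})}(\scr{N})_{i_k}^m \cdot \gencomp{\refl_{(i_1,\ldots,i_{k-1})}(V)} = Q(\beta^{\scr{N}}_{i,m}) \ot U$ extracted from the proof of \Cref{cor_generator_index_explicit}, and the support criterion of \Cref{prop_sup_iterated_reflection} applied only to the truncated sequence. Your explicit remarks that rationality supplies finiteness of $m_i^V$ and that the induction avoids the apparent circularity in invoking \Cref{prop_sup_iterated_reflection} make precise what the paper leaves implicit.
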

\begin{proof}
    We do induction on $k$. If $k=1$, then
    \begin{align*}
        \ndN_0 \ni m_{(i_1)}^{\ydind{Q}{U}} = \max \lbrace m \in \ndN_0 \mid N_{i_1}^m \ot U \ne 0 \rbrace = m_{(i_1)}^{\scr{N}} \matdot
    \end{align*}
    Assume $k \ge 2$ and that the claim holds for $k-1$.
    As seen in the proof of \Cref{cor_generator_index_explicit} we have for $m \in \ndN_0$
    \begin{align*}
        &\refl_{(i_1,\ldots,i_{k-1})}(\scr{N})_{i_k}^m \cdot \gencomp{\refl_{(i_1,\ldots,i_{k-1})}(\ydind{Q}{U})} 
        \\=& Q\left( \beta_{(i_1,\ldots,i_{k-1})}^{\ydind{Q}{U}} + m s^{\scr{N}}_{(i_1,\ldots,i_{k-1})} (\alpha_{i_k}) \right) \ot U
    \end{align*}
    By induction hypothesis we know $\beta_{(i_1,\ldots,i_{k-1})}^{\ydind{Q}{U}} = \beta_{(i_1,\ldots,i_{k-1})}^{\scr{N}}$.
    Moreover by \Cref{prop_sup_iterated_reflection} we have $\beta_{(i_1,\ldots,i_{k-1})}^{\scr{N}} + m s^{\scr{N}}_{(i_1,\ldots,i_{k-1})} (\alpha_{i_k}) \in \Sup{Q}$ if and only if $m \alpha_{i_k} \in \Sup{\refl_{(i_1,\ldots,i_{k-1})}(Q)}$, i.e. if and only if $m \le m^{\scr{N}}_i$.
    We conclude that $m^{\scr{N}}_i= m^{\ydind{Q}{U}}_i  \in \ndN_0$.
\end{proof}

\begin{rema}
    Let $p : \ndZ^\theta \rightarrow \ndZ$ be the group homomorphism given by $p(\alpha_j)=1$ for all $j \in \mathbb{I}$. $Q$ is $\ndN_0$-graded via $Q(n) := Q(p^{-1}(n))$. Let 
    \begin{align*}
        N := \max \lbrace n \in \ndN_0 \, | \, Q(n) \ne 0 \rbrace = \max \lbrace p(\gamma) \, | \, \gamma \in \Sup{Q} \rbrace \matdot
    \end{align*} 
    If $Q$ is finite-dimensional, then by \Cref{cor_fin_dim_hopf_alg_is_frobenius} we know that $Q(N)$ is one dimensional. In particular, there exists a an index we denote $N_Q^{\max} \in \ndN_0^\theta$, such that 
    \begin{align*}
        \lbrace \gamma \in \Sup{Q} \, |\, p(\gamma) \ge N \rbrace = \lbrace  N_Q^{\max} \rbrace \matdot
    \end{align*}
    and $Q(N_Q^{\max}) = Q(N)$ is one-dimensional.
\end{rema}

Let $0 \ne \integralelt \in Q(N_Q^{\max})$, i.e. $Q(N_Q^{\max}) = \fK \integralelt$.

\begin{lem} \label{lem_ydind_frobenius_elt}
    If $Q$ is finite-dimensional, then the following hold:
    \begin{enumerate}
        \item For all $0 \ne x \in Q$ there exists $y \in Q$, such that $yx = \integralelt$.
        \item $Q(N_Q^{\max})$ is contained in every left ideal of $Q$.
    \end{enumerate}
\end{lem}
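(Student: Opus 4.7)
The plan is to reduce both claims to \Cref{cor_fin_dim_hopf_alg_is_frobenius}, applied to $Q$ viewed as an $\ndN_0$-graded Hopf algebra in $\ydmod{H}$ via the projection $p:\ndZ^\theta \rightarrow \ndZ$ from the remark preceding the lemma.

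First, I would note that with respect to this $\ndN_0$-grading, $Q(N)$ is precisely $Q(N_Q^{\max})$ by the very definition of $N_Q^{\max}$, and hence $0 \ne \integralelt \in Q(N)$. Thus all hypotheses of \Cref{cor_fin_dim_hopf_alg_is_frobenius} are satisfied for $Q$ with this element $\integralelt$.

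For (1), given any $0 \ne x \in Q$, \Cref{cor_fin_dim_hopf_alg_is_frobenius}(2) directly yields some $y \in Q$ with $yx = \integralelt$. For (2), let $L \subset Q$ be a non-zero left ideal and pick any $0 \ne x \in L$. By (1), there exists $y \in Q$ with $yx = \integralelt \in L$, and thus $Q(N_Q^{\max}) = \fK \integralelt \subset L$.

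There is no real obstacle here; the only subtlety is to make sure the $\ndN_0$-grading induced by $p$ is compatible with the hypotheses of \Cref{cor_fin_dim_hopf_alg_is_frobenius}, namely that the top component of $Q$ in the $\ndN_0$-grading is one-dimensional and spanned by $\integralelt$, which was already established in the remark preceding the lemma statement.
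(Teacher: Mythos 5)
Your proof is correct and follows exactly the route the paper takes: the paper's proof is the one-line "Implied by \Cref{cor_fin_dim_hopf_alg_is_frobenius}", and you have simply spelled out the reduction (identifying $Q(N)$ with $Q(N_Q^{\max})$ under the $\ndN_0$-grading induced by $p$ and then quoting parts (2) and (3) of that corollary, or deriving (2) from (1) as you do). The only cosmetic remark is that claim (2) should be read as "every \emph{non-zero} left ideal", as in \Cref{cor_fin_dim_hopf_alg_is_frobenius}(3), which your argument implicitly assumes when picking $0 \ne x \in L$.
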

\begin{proof}
    Implied by \Cref{cor_fin_dim_hopf_alg_is_frobenius}.
\end{proof}

\begin{prop} \label{prop_ydind_integral_comp_is_edge}
    Let $U \in \scr{C}$ be an irreducible object. Assume that $Q$ is finite-dimensional and that $\ydind{Q}{U} \in \ydcat{Q}{\scr{C}}_{\rat}$ admits all reflections. Then there is some $k \in \ndN_0$, $i \in \mathbb{I}^k$, such that $N_Q^{\max} = \beta^{\ydind{Q}{U}}_i$ and
    \begin{align*}
    \gencomp{\refl_i (\ydind{Q}{U})} = \fK \integralelt \ot U \matdot
    \end{align*}
    In particular $\fK \integralelt \ot U \subset \bound{\ydind{Q}{U}}$.
\end{prop}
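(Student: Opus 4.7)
The strategy is to recognise $N_Q^{\max}$ as a vertex of the convex hull of $\Sup{Q}$ and then to transfer this information into the reflection language via \Cref{prop_support_is_spanned_by_beta_i_sys}. Let $p$ denote the functional from the preceding remark, extended linearly to $\ndR^\theta$. Because $Q(N) = \fK \integralelt$ is one-dimensional and supported in grading $N_Q^{\max}$, the remark shows that $N_Q^{\max}$ is the unique maximiser of $p$ on $\Sup{Q}$. A short convexity argument then implies that $N_Q^{\max}$ is an extreme point (a vertex) of the convex hull of $\Sup{Q}$ in $\ndR^\theta$.

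Since $\ydind{Q}{U}$ admits all reflections, so does $\scr{N}$, and since $Q$ is finite-dimensional, $m^{\scr{N}}_j \in \ndN_0$ for every finite sequence $j$. Hence \Cref{prop_support_is_spanned_by_beta_i_sys} applies and expresses the convex hull of $\Sup{Q}$ as the convex hull of the set $\lbrace \beta^{\scr{N}}_j \mid k \in \ndN_0,\ j \in \mathbb{I}^k \rbrace \subset \Sup{Q}$. As every vertex of the convex hull of a subset $S$ of $\ndR^\theta$ must lie in $S$, we conclude that $N_Q^{\max} = \beta^{\scr{N}}_i$ for some $k \in \ndN_0$ and $i \in \mathbb{I}^k$.

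Having fixed such an $i$, the hypotheses of \Cref{lem_ydind_mi} are satisfied because $U$ is irreducible, $\ydind{Q}{U} \in \ydcat{Q}{\scr{C}}_{\rat}$ and $\ydind{Q}{U}$ admits the reflection sequence $(i_1,\ldots,i_{k-1})$; the lemma yields $\beta^{\ydind{Q}{U}}_i = \beta^{\scr{N}}_i = N_Q^{\max}$. Applying \Cref{prop_generator_index_explicit} with $V = \ydind{Q}{U}$ and remembering that $\gencomp{\ydind{Q}{U}}$ corresponds to $1 \ot U$ inside $Q \ot U$ with trivial action on the first tensor factor, we obtain
\begin{align*}
\gencomp{\refl_i(\ydind{Q}{U})} = Q(\beta^{\ydind{Q}{U}}_i) \cdot \gencomp{\ydind{Q}{U}} = Q(N_Q^{\max}) \cdot (1 \ot U) = \fK \integralelt \ot U,
\end{align*}
establishing the main equality. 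The inclusion $\fK \integralelt \ot U \subset \bound{\ydind{Q}{U}}$ is then immediate, since $\gencomp{\refl_i(V)} = V(\genind{V} + \beta^V_i)$ is by definition the $i$-th vertex of $V$ and hence part of $\bound{V}$.

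The main obstacle is the clean verification that $N_Q^{\max}$ is genuinely a vertex, rather than only a boundary point, of the polytope. This hinges on the uniqueness of the $p$-maximiser in $\Sup{Q}$, which is where the Frobenius property of $Q$ supplied by \Cref{cor_fin_dim_hopf_alg_is_frobenius} is essential; the remainder is a direct combination of the structural results on iterated reflections from the preceding section with the identification $\beta^{\ydind{Q}{U}}_i = \beta^{\scr{N}}_i$.
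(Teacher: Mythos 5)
Your proof is correct and follows essentially the same route as the paper: identify $N_Q^{\max}$ as a vertex of the convex hull of the support, invoke the proposition expressing that hull as the convex hull of the $\beta_i$'s to conclude $N_Q^{\max}=\beta_i$ for some reflection sequence $i$, and finish with \Cref{prop_generator_index_explicit}. The only cosmetic difference is that you pass through the Nichols-system version \Cref{prop_support_is_spanned_by_beta_i_sys} together with \Cref{lem_ydind_mi} to identify $\beta^{\scr{N}}_i$ with $\beta^{\ydind{Q}{U}}_i$, whereas the paper applies the module version \Cref{prop_support_is_spanned_by_beta_i} to $\ydind{Q}{U}$ directly.
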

\begin{proof}
    Let $M$ be the convex hull spanned by $\Supind{\ydind{Q}{U}}$. Since $Q$ is finite dimensional, $\Supind{\ydind{Q}{U}}$ is a finite set. By construction, $N_Q^{\max}$ lies on a vertex of $M$. By \Cref{prop_support_is_spanned_by_beta_i} $M$ coincides with the convex hull spanned by the vectors $\beta_i^{\ydind{Q}{U}}$, $k \in \ndN_0$, $i \in \mathbb{I}^k$. Hence there exists some $k \in \ndN_0$, $i \in \mathbb{I}^k$, such that $N_Q^{\max} = \beta_i^{\ydind{Q}{U}}$. Then by \Cref{prop_generator_index_explicit} we obtain
    \begin{align*}
        \gencomp{\refl_i (\ydind{Q}{U})} = \ydind{Q}{U}(\genind{\ydind{Q}{U}} + N_Q^{\max}) = Q(N_Q^{\max}) \ot U = \fK \integralelt \ot U 
    \end{align*}
    By definition this component lies on an edge of $\ydind{Q}{U}$.
\end{proof}

\begin{lem} \label{lem_ydind_highest_graded_vector_lies_in_every_submodule}
    Let $U \in \comodcat{Q}{\scr{C}}$ and assume $Q$ is finite-dimensional. Let $V$ be a non-zero sub $Q$-module of $\ydind{Q}{U}$. Then there exists $0 \ne u \in U$, such that $\integralelt \ot u \in V$. In particular $V \cap (\fK \integralelt \ot U) \ne 0$.
\end{lem}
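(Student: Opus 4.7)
The plan is to start from an arbitrary non-zero $v \in V$ and, by multiplying by a carefully chosen element of $Q$, produce an element of the form $\integralelt \ot u \in V$ with $0 \ne u \in U$. The main tool is the Frobenius property of $Q$ from \Cref{cor_fin_dim_hopf_alg_is_frobenius}(2): every non-zero $x \in Q$ admits some $y \in Q$ with $yx = \integralelt$. I will combine this with a grading argument to kill the ``lower-order'' terms that would otherwise contribute noise.

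First I would equip $Q$ with the $\ndN_0$-grading coming from the total-degree homomorphism $p : \ndN_0^\theta \to \ndN_0$, $p(\alpha_j) := 1$. Under this coarser grading the top component is still $Q(N) = Q(N_Q^{\max}) = \fK \integralelt$, because $N_Q^{\max}$ is the unique element of $\Sup{Q}$ of maximal $p$-value. Now pick any $0 \ne v \in V$ and decompose $v = \sum_{j \ge 0} v_j$ with $v_j \in Q(j) \ot U$. Let $m$ be minimal with $v_m \ne 0$ and write $v_m = \sum_i x_i \ot u_i$ with $x_i \in Q(m)$ and the $u_i \in U$ linearly independent; after relabelling, $x_1 \ne 0$.

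Next, applying \Cref{cor_fin_dim_hopf_alg_is_frobenius}(2) to $x_1$ yields some $y' \in Q$ with $y' x_1 = \integralelt$. Taking $y$ to be the $\ndN_0$-graded component $y'_{N-m}$ of $y'$ still satisfies $y x_1 = \integralelt$, since all other graded summands of $y' x_1$ land in $Q(k+m) \ne Q(N)$ and must vanish in the equality $\sum_k y'_k x_1 = \integralelt$. For every index $i$, then, $y x_i \in Q(N) = \fK \integralelt$, so $y x_i = \lambda_i \integralelt$ with $\lambda_1 = 1$. For the higher-degree pieces, $y \cdot v_j \in Q(N - m + j) \ot U = 0$ whenever $j > m$, by maximality of $N$.

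Combining these observations, $y \cdot v = y \cdot v_m = \integralelt \ot u$ where $u := \sum_i \lambda_i u_i \ne 0$ by the linear independence of the $u_i$ together with $\lambda_1 = 1$. Since $V$ is a sub $Q$-module, $y \cdot v \in V$, which produces the required element of $V \cap (\fK \integralelt \ot U)$. There is no significant obstacle; the only point to be careful about is ensuring that the $y$ obtained from the Frobenius property can be taken $\ndN_0$-homogeneous of degree $N - m$, which is immediate from extracting the relevant graded component.
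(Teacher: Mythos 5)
Your proof is correct and follows essentially the same route as the paper: pick the lowest total-degree component of $v$, use the Frobenius property (\Cref{cor_fin_dim_hopf_alg_is_frobenius}(2)) to find a homogeneous $y$ of complementary degree sending one tensor factor to $\integralelt$, and observe that the top component $Q(N_Q^{\max})=\fK\integralelt$ being one-dimensional forces all other products into $\fK\integralelt\ot U$ or into zero. The only (harmless) difference is that you work with the coarse $\ndN_0$-grading from the start, which absorbs the paper's separate argument for components $x_n$ with $n\ne m$ but $p(n)=p(m)$ into the coefficients $\lambda_i$.
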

\begin{proof}
    Let $0 \ne x \in V$ and for all $n \in \ndN_0^\theta$ let $x_n \in Q(n) \ot U$, such that $x=\sum_{n \in \ndN_0^\theta} x_n$. Let $p : \ndZ^\theta \rightarrow \ndZ$ be the group homomorphism given by $p(\alpha_j)=1$ for all $j \in \mathbb{I}$. Let us fix one $m \in \ndN_0^\theta$, such that $x_m \ne 0$ and
    \begin{align*}
        p(m) = \min \lbrace p(n) \,|\, n \in \ndN_0, x_n \ne 0 \rbrace \matdot
    \end{align*}
    Let $k \in \ndN$, $u_1, \ldots, u_k \in U$ be linear independent elements and $q_1,\ldots q_k \in Q(m) \setminus \lbrace 0 \rbrace$, such that $x_m = \sum_{j=1}^k q_j \ot u_j$.
    By \Cref{lem_ydind_frobenius_elt} there exists some $y \in Q$, such that $y q_1 = \integralelt$. Since $Q$ is graded, we can achieve that $y \in Q(N_Q^{\max} - m)$. Thus $y q_j \in Q(N_Q^{\max}) =\fK \integralelt$ for all $2 \le j \le k$ and hence there exist $c_2, \ldots, c_k \in \fK$, such that $yq_j = c_j \integralelt$ for all $2 \le j \le k$. In particular $u := u_1 + \sum_{j=2}^k c_j u_j \ne 0$ and 
    \begin{align*}
        y x_m =  \sum_{j=1}^k yq_j \ot u_j = \integralelt \ot u_1 + \sum_{j=2}^k c_j \integralelt \ot u_j = \integralelt \ot u \matdot
    \end{align*}
    Recall that by definition of $N_Q^{\max}$ we have 
    \begin{align*}
        \lbrace n' \in \Sup{Q} \, | \, p(n') \ge p(N_Q^{\max}) \rbrace = \lbrace N_Q^{\max} \rbrace \matdot
    \end{align*}
    Now let $m\ne n \in \ndN_0^\theta$, such that $x_n \ne 0$. By definition of $m$ we have $p(m) \le p(n)$, hence
    \begin{align*}
        p(N_Q^{\max}-m + n) = p(N_Q^{\max}) -p(m) + p(n) \ge p(N_Q^{\max})
    \end{align*}
    and thus $N_Q^{\max}-m + n \notin \Sup{Q}$, since $n \ne m$. Then we have $y x_n \in Q(N_Q^{\max}-m + n) \ot U = 0$, i.e. $y x_n = 0$. In total this gives $V \ni  y x = y x_m = \integralelt \ot u$.
\end{proof}

\begin{cor} \label{cor_ydind_irred_all_refl}
	Assume that $Q$ is finite dimensional and that $\scr{N}$ admits all reflections. Let $U \in \comodcat{Q}{\scr{C}}$ be an irreducible object in $\scr{C}$. Then the following are equivalent:
	\begin{enumerate}
		\item $\ydind{Q}{U}$ admits all reflections.
		\item $\ydind{Q}{U}$ is irreducible in $\ydcat{Q}{\scr{C}}$.
	\end{enumerate}
\end{cor}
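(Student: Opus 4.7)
The plan is to verify each direction separately, with the bulk of the work being the direction $(1) \Rightarrow (2)$.

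First, note that both directions require $\ydind{Q}{U}$ to live in $\ydcat{Q}{\scr{C}}_{\rat}$, which follows immediately since $Q$ is finite-dimensional: for every $x \in Q \ot U$ only finitely many graded components of $Q$ are non-zero, so $Q(\gamma) \cdot x = 0$ for all sufficiently large $\gamma$.

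For $(2) \Rightarrow (1)$, since $\scr{N}$ admits all reflections by hypothesis, and $\ydind{Q}{U}$ is assumed to be irreducible in $\ydcat{Q}{\scr{C}}$, I would apply \Cref{prop_irred_admits_all_reflections} directly: for every $k \in \ndN_0$ and every $i \in \mathbb{I}^k$, the irreducible object $\ydind{Q}{U}$ admits the reflection sequence $i$, which is precisely the statement that $\ydind{Q}{U}$ admits all reflections. This direction is essentially formal.

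For $(1) \Rightarrow (2)$, suppose $\ydind{Q}{U}$ admits all reflections. By \Cref{cor_real_subobject_lies_in_interior}, there is a maximal graded subobject $W \subset \inter{\ydind{Q}{U}}$ of $\ydind{Q}{U}$ in $\ydcat{Q}{\scr{C}}$, containing every proper graded subobject, and with $\ydind{Q}{U}/W$ irreducible in $\ydcat{Q}{\scr{C}}$. It suffices to show $W = 0$, for then $\ydind{Q}{U} \cong \ydind{Q}{U}/W$ is irreducible. The key observation is that by \Cref{prop_ydind_integral_comp_is_edge}, the component $\fK \integralelt \ot U$ is contained in $\bound{\ydind{Q}{U}}$, hence meets $\inter{\ydind{Q}{U}}$ trivially. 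On the other hand, if $W \ne 0$, then $W$ is a non-zero sub $Q$-module of $\ydind{Q}{U}$, so \Cref{lem_ydind_highest_graded_vector_lies_in_every_submodule} yields $W \cap (\fK \integralelt \ot U) \ne 0$, contradicting $W \subset \inter{\ydind{Q}{U}}$. Therefore $W = 0$ and $\ydind{Q}{U}$ is irreducible.

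The main obstacle I anticipate is only bookkeeping: one must be careful that the maximal subobject from \Cref{cor_real_subobject_lies_in_interior} is constructed only from graded subobjects, but this causes no trouble here because \Cref{lem_ydind_highest_graded_vector_lies_in_every_submodule} applies to arbitrary non-zero sub $Q$-modules, and in particular to the underlying $Q$-module of a graded subobject. Everything else is a direct appeal to results already established in the paper, so no additional calculations are needed.
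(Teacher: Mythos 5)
Your proposal is correct and follows essentially the same route as the paper: the direction (2)$\implies$(1) via \Cref{prop_irred_admits_all_reflections}, and the direction (1)$\implies$(2) by combining \Cref{lem_ydind_highest_graded_vector_lies_in_every_submodule} with \Cref{prop_ydind_integral_comp_is_edge} to force any non-zero graded subobject to meet $\bound{\ydind{Q}{U}}$, contradicting containment in the interior. The only cosmetic difference is that you phrase the contradiction in terms of the maximal subobject from \Cref{cor_real_subobject_lies_in_interior} being zero, whereas the paper first shows there are no proper non-zero graded subobjects and then invokes that corollary; these are the same argument.
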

\begin{proof}
	(2)$\implies$(1) is given \Cref{prop_irred_admits_all_reflections}. Assume $\ydind{Q}{U}$ admits all reflections. Let $V \in \ydcat{Q}{\scr{C}}$ be a graded non-zero subobject of $\ydind{Q}{U}$. By \Cref{lem_ydind_highest_graded_vector_lies_in_every_submodule} we know that $V \cap (\fK \integralelt \ot U) \ne 0$ and by \Cref{prop_ydind_integral_comp_is_edge} we have $\fK \integralelt \ot U \subset \bound{\ydind{Q}{U}}$, thus $V \cap \bound{\ydind{Q}{U}} \ne 0$. 
	Since by \Cref{prop_real_subobject_lies_in_interior} we know that proper graded subobjects of $\ydind{Q}{U}$ are contained in $\inter{\ydind{Q}{U}}$, we obtain that $V=\ydind{Q}{U}$. Hence $\ydind{Q}{U}$ has no proper graded non-zero subobjects. Then \Cref{cor_real_subobject_lies_in_interior} gives that $\ydind{Q}{U}$ is irreducible in $\ydcat{Q}{\scr{C}}$.
\end{proof}

\section{Shapovalov morphism and maximal subobject} \label{sect_ydmod_nich_sys_shapovalov}

The goal of this section is to study the maximal subobject of a $\ydmod{}$-module over a Nichols system. We have already seen in \Cref{cor_real_subobject_lies_in_interior}, that if the Nichols system admits all reflections, this subobject lies in the interior of the $\ydmod{}$-module. As it turns out in general the subobject coincides with the kernel of a special morphism we call Shapovalov morphism (\Cref{prop_shapoendo_max_subobject}). Studying this morphism we find that it behaves well under reflections (\Cref{prop_kernel_shapo_of_reflection}) and we can also use it to determine which reflection sequences a $\ydmod{}$-module admits (\Cref{cor_reflection_sequence_shapo_equiv} and \Cref{cor_reflection_sequence_shapo_equiv_bound}). In \cref{chapt_nich_sys_group} and \cref{chapt_nich_sys_diag} we will calculate the kernel of the Shapovalov morphism in specific application areas.

Let $\scr{N}$ be a pre-Nichols system and let $Q := \nsalg{\scr{N}}$ and $N_j := \scr{N}_j$ for all $j \in \mathbb{I}$. 
Let $\Gamma$ be an abelian group, such that $\ndZ^\theta \subset \Gamma$.
Let $V \in \ydcat{Q}{\scr{C}}$ be a homogeneously generated $\Gamma$-graded object and let $\proj_0 : V \rightarrow \gencomp{V}$ be the canonical projection.

We view $Q \ot \gencomp{V}$ as a $\Gamma$-graded object with $(\genind{V}+n)$-th component $Q(n) \ot \gencomp{V}$ for all $n \in \ndN_0^\theta$.
\begin{defi} \label{defi_shapovalov_morphism}
    Let $\shapoendo_V$ be the graded morphism in the category $\scr{C}$ that is defined as follows: 
    \begin{align*}
        \shapoendo_V & :  V \xrightarrow{\coact_V^Q} Q \ot V \xrightarrow{\id_Q \ot \proj_0} Q \ot \gencomp{V} \matdot 
    \end{align*}
    $\shapoendo_V$ is called the \textbf{Shapovalov morphism of $V$}.
\end{defi}

\begin{rema}
    The name was chosen because in \cref{sect_nich_sys_diag_main} we will obtain a polynomial that is known as the \textit{Shapovalov determinant}, by calculating the Kernel of $\shapoendo_V$ in the case where $V$ is given as an induced module and the Nichols system is of diagonal type.
\end{rema}

\begin{rema}
    In general $\shapoendo_V : V \rightarrow \ydind{Q}{\gencomp{V}}$ is not a morphism in the category $\ydcat{Q}{\scr{C}}$: 
    Assume $Q$ is a Nichols algebra of diagonal type with generators $x_1,\ldots,x_\theta$ and let $N_j = \fK x_j$ for all $j \in \mathbb{I}$. 
    Let $V \in \ydcat{Q}{\scr{C}}$ be homogeneously generated $\Gamma$-graded, such that $\gencomp{V}$ is one-dimensional and let $0\ne v \in \gencomp{V}$. Finally for $j \in \mathbb{I}$ let $r_j \in \fK$, such that
    \begin{align*}
        \brd_{\gencomp{V},N_j}^{\scr{C}} \brd_{N_j,\gencomp{V}}^{\scr{C}} ( x_j \ot v) = r_j x_j \ot v \matdot
    \end{align*}
    Let $j \in \mathbb{I}$, such that $r_j \ne 1$.
    Then since $\antip_Q(x_j) = -x_j$, $\coact_V^Q (v) = 1 \ot v$, $(\id_Q \ot \bicomu_Q) \bicomu_Q (x_j) = 1\ot 1 \ot x_j + 1 \ot x_j \ot 1 + x_j \ot 1 \ot 1 $ and by \Cref{lem_defining_relation_ydmodule_equivalence} we have
    \begin{align*}
       & \shapoendo_V \act_V^Q (x_j \ot v) = (\id_Q \ot \proj_0)\coact_V^Q(x_j \cdot v) 
        \\ &=  (\id_Q \ot \proj_0) \left( 1 \ot x_j \cdot v + (1-r_j) x_j \ot v \right)
        = (1-r_j) x_j \ot v
    \end{align*}
    Conversely $(\bimu_Q \ot \id_{\gencomp{V}})(\id_Q \ot \shapoendo_V) (x_j \ot v) = x_j \ot v$. Hence $\shapoendo_V$ is no $Q$-module morphism.
    Similarly
    \begin{align*}
        (\id_Q \ot \shapoendo_V) \coact_V^Q (x_j \cdot v) = (1-r) ( 1 \ot x_j \ot v + x_j \ot 1 \ot v)
    \end{align*}
    and 
    \begin{align*}
        \coad_{Q\ot \gencomp{V}} \shapoendo_V (x_j \cdot v) = (1-r) \left( 1 \ot x_j \ot v + (1-r) x_j \ot 1 \ot v\right) \matcom
    \end{align*}
    hence in general $\shapoendo_V$ is no $Q$-comodule morphism.
\end{rema}

\begin{lem} \label{lem_proj0_act_is_bicounit}
    We have
    \begin{align*}
        \proj_0 \act^Q_V = \bicounit_Q \ot \proj_0 \matdot
    \end{align*}
\end{lem}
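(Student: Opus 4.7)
The plan is to reduce to a graded-component check. Both sides of the claimed equation define graded morphisms $Q \ot V \to \gencomp{V}$, where we regard $\fK$ as concentrated in degree $0$ and $\gencomp{V} = V(\genind{V})$ as concentrated in degree $\genind{V}$. So it suffices to fix $\gamma, \gamma' \in \Gamma$ and verify the equality on the component $Q(\gamma) \ot V(\gamma')$.

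First I would argue that the graded component $Q(\gamma)$ contributes nothing outside $\gamma = 0$. Since $\act_V^Q$ is a graded morphism, $\act_V^Q(Q(\gamma) \ot V(\gamma')) \subset V(\gamma + \gamma')$, so $\proj_0 \act_V^Q$ restricted to $Q(\gamma) \ot V(\gamma')$ is zero unless $\gamma + \gamma' = \genind{V}$. Because $V$ is homogeneously generated we have $\Sup{V} \subset \genind{V} + \ndN_0^\theta$, and by definition of pre-Nichols system we have $\Sup{Q} \subset \ndN_0^\theta$. Writing $\gamma' = \genind{V} + n$ with $n \in \ndN_0^\theta$, the equation $\gamma + \gamma' = \genind{V}$ forces $\gamma + n = 0$ in $\Gamma$, hence $\gamma = n = 0$. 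On the right-hand side, $\bicounit_Q \ot \proj_0$ restricted to $Q(\gamma) \ot V(\gamma')$ is zero whenever $\gamma \ne 0$ (since $\bicounit_Q$ is a graded morphism into $\fK$, which lives only in degree $0$) or $\gamma' \ne \genind{V}$ (since $\proj_0$ kills $V(\gamma')$). Thus both morphisms vanish off of $Q(0) \ot V(\genind{V}) = Q(0) \ot \gencomp{V}$.

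Next I would handle the remaining case $\gamma = 0$, $\gamma' = \genind{V}$. The key observation here is that $Q(0) = \fK \cdot 1_Q$: indeed $Q$ is, by definition of a pre-Nichols system, generated as an algebra by $N_1, \ldots, N_\theta$ sitting in the strictly positive degrees $\alpha_1, \ldots, \alpha_\theta$, so any product of generators of total degree $0$ in $\ndN_0^\theta$ is the empty product $1_Q$. Hence every $q \in Q(0)$ can be written as $q = \bicounit_Q(q) \cdot 1_Q$, using $\bicounit_Q(1_Q) = 1_{\fK}$. For $v \in \gencomp{V}$ this gives
\begin{align*}
\proj_0 \act_V^Q(q \ot v) = \proj_0(\bicounit_Q(q) \, v) = \bicounit_Q(q) \, v = (\bicounit_Q \ot \proj_0)(q \ot v),
\end{align*}
since $v \in \gencomp{V}$ and $\proj_0$ is the identity on $\gencomp{V}$. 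Combining this with the vanishing established in the previous paragraph yields the lemma.

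I do not foresee a real obstacle: the lemma is purely a compatibility statement between the grading, the unit/counit of $Q$, and the fact that $V$ is generated in its bottom degree. The only subtlety worth flagging is the identification $Q(0) = \fK \cdot 1_Q$, which relies on $N_1, \ldots, N_\theta$ all lying in strictly positive degree and generating $Q$ as an algebra — both conditions built into \Cref{defi_pre_nich_sys}.
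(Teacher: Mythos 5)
Your proof is correct and follows essentially the same route as the paper's: both reduce to checking the identity on graded components, observe that both sides vanish on $Q(n)\ot V(\genind{V}+n')$ unless $n=n'=0$, and then verify the remaining case on $Q(0)\ot\gencomp{V}$ by identifying $Q(0)$ with $\fK$. Your version is slightly more explicit about why $Q(0)=\fK\cdot 1_Q$ and about the support of $V$, but the argument is the same.
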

\begin{proof}
    Since $\act^Q_V$ is graded we have for all $n,n' \in \ndN_0$, such that $n\ne 0$ or $n' \ne 0$, that
    \begin{align*}
        \proj_0 \act^Q_V (Q(n) \ot V(\genind{V}+n')) \subset \proj_0 ( V(\genind{V}+n+n') ) = 0 \matdot 
    \end{align*}
    Similarly $(\bicounit_Q \ot \proj_0)(Q(n) \ot V(\genind{V}+n')) = 0$. Finally for $\lambda \in \fK=Q(0)$, $v \in \gencomp{V}$ we have
    \begin{align*}
        \proj_0 \act^Q_V (\lambda \ot v) = \lambda v = (\bicounit \ot \proj_0) (\lambda \ot v) \matdot
    \end{align*}
    Hence the two morphisms coincide on all components of $Q \ot V$.
\end{proof}

\begin{lem} \label{lem_shapovalov_endo_product}
    Let $x,y \in Q$, $v \in \gencomp{V}$. Then we have
    \begin{align*}
        \shapoendo_V \left( (xy) \cdot v \right) =& (\bimu_Q \ot \id_{\gencomp{V}})(\bimu_Q \ot \brd^{\scr{C}}_{\gencomp{V},Q}) (\id_Q \ot \brd^{\scr{C}}_{Q, Q\ot \gencomp{V}}) \\&(\id_Q \ot \antip_Q \ot \id_Q \ot \id_{\gencomp{V}}) \left(\bicomu_Q(x) \ot \shapoendo_V(y \cdot v) \right)
        \matdot
    \end{align*}
    In particular $\ker \shapoendo_V$ is a sub $Q$-module of $V$.
\end{lem}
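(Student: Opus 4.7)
I will first expand $\shapoendo_V((xy)\cdot v) = (\id_Q \otimes \proj_0) \coact_V^Q \act_V^Q (x \otimes (y \cdot v))$ by substituting the Yetter-Drinfeld identity of \Cref{lem_defining_relation_ydmodule_equivalence} for $\coact_V^Q \act_V^Q$. This rewrites the right-hand side as a seven-term composite whose outermost map is $(\bimu_Q \otimes \act_V^Q)$ and whose innermost map is $\bicomu_Q \otimes \coact_V^Q$.

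Next I will post-compose with $\id_Q \otimes \proj_0$ and invoke \Cref{lem_proj0_act_is_bicounit} to replace the last two morphisms $(\id_Q \otimes \proj_0)(\bimu_Q \otimes \act_V^Q)$ by $(\bimu_Q \otimes \bicounit_Q \otimes \proj_0)$. Using naturality of the braidings of $\scr{C}$, which applies because $\bicounit_Q$ and $\proj_0$ are morphisms in $\scr{C}$, together with the fact that $\brd^{\scr{C}}_{X,\fK}$ is the canonical identification, I will pull $\bicounit_Q$ and $\proj_0$ through the preceding braiding $\brd^{\scr{C}}_{Q \otimes V, Q}$: the $\proj_0$ turns this braiding into $\brd^{\scr{C}}_{\gencomp{V}, Q}$, while the $\bicounit_Q$ consumes the middle leg coming from the iterated coproduct on $x$. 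The coassociativity identity $(\id_Q \otimes \bicounit_Q \otimes \id_Q)(\bicomu_Q \otimes \id_Q)\bicomu_Q = \bicomu_Q$ then collapses the double coproduct of $x$ to a single $\bicomu_Q(x)$, with the antipode landing on its second leg. What remains at the end of the expression is precisely $(\id_Q \otimes \proj_0)\coact_V^Q(y \cdot v) = \shapoendo_V(y \cdot v)$, and collecting the outer multiplications and braidings produces the claimed formula.

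For the final assertion, observe that since $V$ is homogeneously generated, every $w \in V$ can be written as a finite sum $w = \sum_i y_i \cdot v_i$ with $y_i \in Q$ and $v_i \in \gencomp{V}$. Let $F_x$ denote the linear operator
\begin{align*}
z \mapsto (\bimu_Q \otimes \id_{\gencomp{V}})(\bimu_Q \otimes \brd^{\scr{C}}_{\gencomp{V}, Q})(\id_Q \otimes \brd^{\scr{C}}_{Q, Q \otimes \gencomp{V}})(\id_Q \otimes \antip_Q \otimes \id_Q \otimes \id_{\gencomp{V}})(\bicomu_Q(x) \otimes z)
\end{align*}
appearing in the formula. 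Linearity then gives
\begin{align*}
\shapoendo_V(x \cdot w) = \sum_i F_x(\shapoendo_V(y_i \cdot v_i)) = F_x(\shapoendo_V(w)),
\end{align*}
so $w \in \ker \shapoendo_V$ forces $\shapoendo_V(x \cdot w) = 0$; hence $\ker \shapoendo_V$ is closed under the $Q$-action and is a sub $Q$-module of $V$.

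The principal obstacle will be the bookkeeping of the many braidings in the expanded Yetter-Drinfeld expression: no new identities are required beyond naturality of the braiding, $\brd^{\scr{C}}_{X, \fK} = \id$, coassociativity together with the counit axiom, and \Cref{lem_proj0_act_is_bicounit}, but they have to be applied in just the right order so that the counit collapse and the re-emergence of $\shapoendo_V(y \cdot v)$ line up with the shape of the claimed formula.
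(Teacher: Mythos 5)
Your proposal is correct and follows essentially the same route as the paper's proof: expand $\coact_V^Q\act_V^Q$ via \Cref{lem_defining_relation_ydmodule_equivalence}, replace $\proj_0\act_V^Q$ by $\bicounit_Q\ot\proj_0$ using \Cref{lem_proj0_act_is_bicounit}, and let naturality of the braiding together with the counit axiom collapse the iterated coproduct of $x$. Your treatment of the "in particular" clause is slightly more explicit than the paper's one-line conclusion (which only remarks that $y\cdot v\in\ker\shapoendo_V$ forces $x(y\cdot v)\in\ker\shapoendo_V$), but the substance is the same.
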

\begin{proof}
    Since $\proj_0 \act^Q_V = \bicounit_Q \ot \proj_0$ by \Cref{lem_proj0_act_is_bicounit} we obtain by \Cref{lem_defining_relation_ydmodule_equivalence} that
    \begin{align*}
        \shapoendo_V \left( x \cdot (y \cdot v) \right) =& (\id_Q \ot \proj_0) \coact_V^Q \act^Q_V (x \ot y\cdot v) 
        \\=& (\bimu_Q \ot \proj_0 \act_V^Q)(\bimu_Q \ot \brd_{Q\ot V, Q}^{\scr{C}}) (\id_Q \ot \brd_{Q,Q}^{\scr{C}} \ot \id_V \ot \antip_Q) \\&  (\bicomu_Q \ot \brd_{Q, Q\ot V}^{\scr{C}}) (\bicomu_Q(x) \ot \coact_V^Q(y\cdot v)) 
        \\=& (\bimu_Q \ot \id_{\gencomp{V}}) (\bimu_Q \ot \brd_{\gencomp{V}, Q}^{\scr{C}}) (\id_Q \ot \brd_{Q, Q\ot \gencomp{V}}^{\scr{C}})  \\&  (\id_Q \ot \antip_Q \ot \id_Q \ot \id_{\gencomp{V}})(\bicomu_Q(x) \ot (\id_Q\ot \proj_0) \coact_V^Q(y\cdot v)) \matdot
    \end{align*}
    This proves the relation.
    Finally if $y \cdot v \in \ker \shapoendo_V$, then this relation implies $x (y \cdot v) \in \ker \shapoendo_V$, hence $\ker f$ is a sub $Q$-module of $V$.
\end{proof}

\begin{prop} \label{prop_shapoendo_max_subobject}
    Assume $\gencomp{V}$ is irreducible in $\scr{C}$. Then $\ker \shapoendo_V$ is a graded subobject of $V \in \ydcat{Q}{\scr{C}}$, such that for every graded subobject $U\ne V$ of $V \in \ydcat{Q}{\scr{C}}$ we have $U \subset \ker \shapoendo_V$. Moreover $V$ is irreducible in $\ydcat{Q}{\scr{C}}$ if and only if $\ker \shapoendo_V = 0$.
\end{prop}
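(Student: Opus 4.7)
My plan is to verify the three assertions in order. The main tools are coassociativity of the $Q$-coaction on $V$ and the irreducibility hypothesis on $\gencomp{V}$.

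First I would show that $\ker\shapoendo_V$ is a graded subobject of $V$ in $\ydcat{Q}{\scr{C}}$. Since $\coact_V^Q$ and $\id_Q \ot \proj_0$ are graded morphisms in $\scr{C}$, their composition $\shapoendo_V$ is graded, so $\ker\shapoendo_V$ is automatically a graded sub-$H$-module and sub-$H$-comodule of $V$. That $\ker\shapoendo_V$ is a sub-$Q$-module is exactly \Cref{lem_shapovalov_endo_product}. For the sub-$Q$-comodule property, I would combine coassociativity of $\coact_V^Q$ with the definition of $\shapoendo_V$ to derive the identity
\begin{align*}
(\id_Q \ot \shapoendo_V)\, \coact_V^Q \;=\; (\bicomu_Q \ot \id_{\gencomp{V}})\, \shapoendo_V;
\end{align*}
for $v \in \ker \shapoendo_V$ the right-hand side vanishes, forcing $\coact_V^Q(v) \in \ker(\id_Q \ot \shapoendo_V) = Q \ot \ker \shapoendo_V$.

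Next I would show that every graded subobject $U \subsetneq V$ in $\ydcat{Q}{\scr{C}}$ satisfies $U \subset \ker \shapoendo_V$. The key point is that $U(\genind{V})$ inherits a subobject structure in $\scr{C}$ from $V$, because the $H$-action and $H$-coaction on $V$ are $\Gamma$-graded (with $H$ concentrated in degree $0$) and preserve $U$. Since $\gencomp{V}$ is irreducible in $\scr{C}$ by hypothesis, either $U(\genind{V}) = 0$ or $U(\genind{V}) = \gencomp{V}$. The second case forces $U \supset Q \cdot \gencomp{V} = V$, contradicting $U \ne V$. Hence $\proj_0(U) = U(\genind{V}) = 0$, and since $\coact_V^Q(U) \subset Q \ot U$, I conclude $\shapoendo_V(U) = (\id_Q \ot \proj_0)\coact_V^Q(U) = 0$.

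For the final equivalence: For any $v \in \gencomp{V}$, gradedness of $\coact_V^Q$ together with $\Sup{V} \subset \genind{V} + \ndN_0^\theta$ forces $\coact_V^Q(v) \in Q(0) \ot \gencomp{V} = \fK \ot \gencomp{V}$, and the counit axiom then gives $\coact_V^Q(v) = 1 \ot v$, so $\shapoendo_V|_{\gencomp{V}}$ is injective. If $V$ is irreducible in $\ydcat{Q}{\scr{C}}$, then $\ker\shapoendo_V$ is a proper subobject (as $\shapoendo_V$ is nonzero on $\gencomp{V} \ne 0$), hence must vanish. Conversely, if $\ker\shapoendo_V = 0$, the second paragraph shows $V$ has no proper graded subobject, so $V$ is irreducible as a $\Gamma$-graded object; then \Cref{cor_irred_graded_ydmodule_o}~(2)$\Rightarrow$(1) yields irreducibility in $\ydcat{Q}{\scr{C}}$. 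The only nontrivial technical point is the coassociativity identity giving the sub-$Q$-comodule property; the rest reduces to the irreducibility of $\gencomp{V}$ and the fact that $\gencomp{V}$ generates $V$ as a $Q$-module.
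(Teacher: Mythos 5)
Your proposal is correct and follows essentially the same route as the paper's proof: the sub-$Q$-comodule property via the coassociativity identity $(\id_Q \ot \shapoendo_V)\coact_V^Q = (\bicomu_Q \ot \id_{\gencomp{V}})\shapoendo_V$, the sub-$Q$-module property via \Cref{lem_shapovalov_endo_product}, the containment $U \subset \ker\shapoendo_V$ from $U(\genind{V})=0$ forced by irreducibility of $\gencomp{V}$, and the final equivalence via \Cref{cor_irred_graded_ydmodule_o}. You only spell out a few steps the paper leaves implicit (the subobject structure on $U(\genind{V})$ and the injectivity of $\shapoendo_V$ on $\gencomp{V}$), which is fine.
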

\begin{proof}
    Let $U \ne V$ be a graded subobject of $V \in \ydcat{Q}{\scr{C}}$. Then $U(\genind{V}) = 0$, since $\gencomp{V}=V(\genind{V})$ is irreducible, generates $V$ as a $Q$-module and $U \ne V$.
    We have
    \begin{align*}
        \shapoendo_V(U) = (\id_Q \ot \proj_0) \coact_V^Q (U) \subset Q \ot U(\genind{V}) = 0 \matcom
    \end{align*}
    hence  $U \subset \ker \shapoendo_V$.

    $\ker  \shapoendo_V$ is graded in $\scr{C}$, since $\shapoendo_V$ is graded, i.e.
    \begin{align*}
        \ker \shapoendo_V= \oplus_{n\in \ndN_0^\theta} \ker ( \shapoendo_V(\genind{V}+n) )
    \end{align*}
    and $\ker ( \shapoendo_V(\genind{V}+n) ) \subset V(\genind{V}+n)$ for all $n \in \ndN_0^\theta$.
    Now by definition we have
    \begin{align*}
        &(\id_Q \ot \shapoendo_V) \coact_V^Q (\ker \shapoendo_V)
        = (\id_Q \ot (\id_Q \ot \proj_0)\coact_V^Q) \coact_V^Q \left( \ker \shapoendo_V \right)
        \\&= (\bicomu_Q \ot \proj_0) \coact_V^Q \left( \ker \shapoendo_V \right)
        = (\bicomu_Q \ot \id_{\gencomp{V}}) \shapoendo_V \left( \ker \shapoendo_V \right) = 0
        \matdot
    \end{align*}
    Hence $\coact_V^Q \left( \ker \shapoendo_V \right) \subset Q \ot (\ker \shapoendo_V)$ and $\ker \shapoendo_V$ is a sub $Q$-comodule of $V$. 
    Now by \Cref{lem_shapovalov_endo_product} we obtain that $\ker \shapoendo_V$ is a sub $Q$-module of~$V$.
    This implies that $\ker \shapoendo_V$ is a graded subobject of $V$ in $\ydcat{Q}{\scr{C}}$.
    
    Overall we have also proven that $V / \ker \shapoendo_V$ is irreducible in the category of $\Gamma$-graded objects in $\ydcat{Q}{\scr{C}}$, hence by \Cref{cor_irred_graded_ydmodule_o} it is also irreducible in $\ydcat{Q}{\scr{C}}$. This implies the last claim.
\end{proof}

\begin{cor} \label{cor_induced_irreducible_shapoendo}
	Assume that $Q$ is finite dimensional and that $\scr{N}$ admits all reflections. Let $U \in \comodcat{Q}{\scr{C}}$ be an irreducible object in $\scr{C}$ and let $\Lambda \in Q$ be defined as in \Cref{lem_ydind_highest_graded_vector_lies_in_every_submodule}. Then the following are equivalent:
	\begin{enumerate}
		\item $\ydind{Q}{U}$ is irreducible in $\ydcat{Q}{\scr{C}}$.
		\item $\shapoendo_{\ydind{Q}{U}}( \Lambda \ot u) \ne 0$ for all $0 \ne u \in U$.
	\end{enumerate}
\end{cor}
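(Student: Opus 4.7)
The plan is to combine \Cref{prop_shapoendo_max_subobject} with \Cref{lem_ydind_highest_graded_vector_lies_in_every_submodule}, each of which applies essentially out of the box. First I would note that $\gencomp{\ydind{Q}{U}}=U$ is irreducible in $\scr{C}$ by hypothesis, so \Cref{prop_shapoendo_max_subobject} yields that $\ker \shapoendo_{\ydind{Q}{U}}$ is a graded subobject of $\ydind{Q}{U}$ in $\ydcat{Q}{\scr{C}}$ containing every proper graded subobject, and that $\ydind{Q}{U}$ is irreducible in $\ydcat{Q}{\scr{C}}$ if and only if $\ker \shapoendo_{\ydind{Q}{U}}=0$. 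Thus (1) is equivalent to $\ker \shapoendo_{\ydind{Q}{U}} = 0$, and it remains to check that this vanishing is equivalent to (2).

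For the direction (1)$\Rightarrow$(2), I would argue by contrapositive: if $\shapoendo_{\ydind{Q}{U}}(\integralelt \ot u)=0$ for some $0 \ne u \in U$, then the non-zero element $\integralelt \ot u$ lies in $\ker \shapoendo_{\ydind{Q}{U}}$, so the kernel is non-zero and $\ydind{Q}{U}$ is not irreducible.

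For the converse (2)$\Rightarrow$(1), I would again argue by contrapositive. If $\ydind{Q}{U}$ is not irreducible in $\ydcat{Q}{\scr{C}}$, then $\ker \shapoendo_{\ydind{Q}{U}}$ is a non-zero graded subobject of $\ydind{Q}{U}$; in particular it is a non-zero sub $Q$-module of $\ydind{Q}{U}$. Since $Q$ is finite-dimensional, \Cref{lem_ydind_highest_graded_vector_lies_in_every_submodule} applies and produces some $0 \ne u \in U$ with $\integralelt \ot u \in \ker \shapoendo_{\ydind{Q}{U}}$, i.e. $\shapoendo_{\ydind{Q}{U}}(\integralelt \ot u)=0$, violating (2).

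There is no real obstacle here: both implications are one-line contrapositives once the two cited results are invoked. The only mild subtlety is verifying the hypotheses of each result (irreducibility of $\gencomp{\ydind{Q}{U}}=U$ for \Cref{prop_shapoendo_max_subobject}, and finite-dimensionality of $Q$ for \Cref{lem_ydind_highest_graded_vector_lies_in_every_submodule}), both of which are explicitly part of our standing assumptions. Observe that the assumption that $\scr{N}$ admits all reflections is not actually needed for this equivalence on its own; it enters only implicitly through the definition of $\integralelt$ and because in applications one will typically already be in that setting.
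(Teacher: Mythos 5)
Your proof is correct and follows essentially the same route as the paper: reduce (1) to the vanishing of $\ker \shapoendo_{\ydind{Q}{U}}$ via \Cref{prop_shapoendo_max_subobject}, and use \Cref{lem_ydind_highest_graded_vector_lies_in_every_submodule} to produce an element $\integralelt \ot u$ in any non-zero submodule. The only (immaterial) difference is that you apply the lemma directly to $\ker \shapoendo_{\ydind{Q}{U}}$, while the paper applies it to an arbitrary proper non-zero subobject and then invokes its containment in the kernel; your closing remark that the "admits all reflections" hypothesis is not actually used is also accurate.
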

\begin{proof}
    By \Cref{prop_shapoendo_max_subobject} (1) is equivalent to $\ker \shapoendo_{\ydind{Q}{U}} = 0$. Hence clearly (1) implies (2). Assume $V$ is a proper non-zero subobject of $\ydind{Q}{U}$. Then by \Cref{lem_ydind_highest_graded_vector_lies_in_every_submodule} there is $0\ne u \in U$, such that $\Lambda \ot u \in V$. Then $\Lambda \ot u \in \ker \shapoendo_{\ydind{Q}{U}}$ by \Cref{prop_shapoendo_max_subobject}. Hence (2) does not hold.
\end{proof}

\begin{cor} \label{cor_i_well_graded_shapo_kernel}
    Assume $\gencomp{V}$ is irreducible in $\scr{C}$. Let $i \in \mathbb{I}$. The following are equivalent.
    \begin{enumerate}
        \item $V$ is a $i$-well graded.
        \item $\ker \shapoendo_V (\genind{V}+ \ndN \alpha_i) = 0$.
    \end{enumerate}
\end{cor}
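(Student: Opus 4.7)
The natural approach is to pivot through the $\subalgQ{N_i}$-submodule $W:=\subalgQ{N_i}\cdot\gencomp{V}$ of $V$ that underlies the definition of $i$-well gradedness. Since $\coact_V^Q$ is $\Gamma$-graded, one has $\coact_V^Q(W(\genind{V}+n\alpha_i))\subset \subalgQ{N_i}\ot W$ for every $n\ge 0$, so the $\subalgQ{N_i}$-coaction on $W$ provided by \Cref{lem_Ni_generated_submodule_is_yd} is literally the restriction $\coact_V^Q|_W$. By counitality, this identifies $\filtcomod{0}(W)$ with the set of $w\in W$ satisfying $\coact_V^Q(w)=1\ot w$, as already noted in \Cref{rema_i_well_graded_characterisation}, and $V$ being $i$-well graded just means $\filtcomod{0}(W)\subset\gencomp{V}$ (the reverse inclusion being automatic).

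For the implication (2)$\Rightarrow$(1), I would take $w\in\filtcomod{0}(W)$ and decompose it into its $\ndN_0$-homogeneous components, each of which still lies in $\filtcomod{0}(W)$ by \Cref{lem_irred_graded_ydmodule}(4) applied to the $\ndN_0$-graded coalgebra $\subalgQ{N_i}$ and comodule $W$. For a homogeneous piece $w_n\in W(\genind{V}+n\alpha_i)$ with $n\ge 1$, the equality $\coact_V^Q(w_n)=1\ot w_n$ yields $\shapoendo_V(w_n)=1\ot\proj_0(w_n)=0$, so hypothesis (2) forces $w_n=0$. Thus $\filtcomod{0}(W)\subset\gencomp{V}$, i.e.\ $V$ is $i$-well graded.

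For the implication (1)$\Rightarrow$(2), I would invoke \Cref{prop_shapoendo_max_subobject}, which is available because $\gencomp{V}$ is irreducible, to view $U:=\ker\shapoendo_V$ as a graded subobject of $V$ in $\ydcat{Q}{\scr{C}}$, and I would note $U(\genind{V})=0$ since $\shapoendo_V$ restricts to the injection $v\mapsto 1\ot v$ on $\gencomp{V}$. The key observation is then that $U\cap W$ is a graded subobject of $W$ in $\ydcat{\subalgQ{N_i}}{\scr{C}}$: it is a sub $\subalgQ{N_i}$-module because $U$ is a sub $Q$-module, and the computation
\begin{align*}
\coact_V^Q(U\cap W)\subset (Q\ot U)\cap(\subalgQ{N_i}\ot W)=\subalgQ{N_i}\ot(U\cap W)
\end{align*}
makes it a sub $\subalgQ{N_i}$-comodule. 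Under (1) and the irreducibility of $\gencomp{V}$, \Cref{lem_gencompV_irred_iff_Ni_V_irred} makes $W$ itself irreducible in $\ydcat{\subalgQ{N_i}}{\scr{C}}$, whence $U\cap W\in\lbrace 0,W\rbrace$; the option $U\cap W=W$ is excluded because $(U\cap W)(\genind{V})=0\ne\gencomp{V}$. Hence $U\cap W=0$, which in particular is (2).

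The only slightly delicate point I anticipate is the compatibility verification that $U\cap W$ inherits a $\subalgQ{N_i}$-coaction from the ambient $Q$-coaction on $U$; once the intersection formula above is in place everything else is extracted directly from the structural results already proved in the excerpt.
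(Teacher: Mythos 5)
Your proposal is correct: both directions go through, and the auxiliary verifications (that the $\subalgQ{N_i}$-coaction on $W=\subalgQ{N_i}\cdot\gencomp{V}$ is the restriction of $\coact_V^Q$, that $(Q\ot U)\cap(\subalgQ{N_i}\ot W)=\subalgQ{N_i}\ot(U\cap W)$, and that $V(\genind{V}+n\alpha_i)=W(n)$ so the kernel components in question really live inside $W$) all hold. The paper pivots through the same object $W$, but it compresses the whole argument into one chain: it first records that $\shapoendo_V(\genind{V}+n\alpha_i)$ literally equals the $n$-th component of the Shapovalov morphism $\shapoendo_W$ of $W$ viewed as an object of $\ydcat{\subalgQ{N_i}}{\scr{C}}$, and then reads off (1) $\Leftrightarrow$ $W$ irreducible $\Leftrightarrow$ $\ker\shapoendo_W=0$ $\Leftrightarrow$ (2) from \Cref{cor_irred_graded_ydmodule_o} and \Cref{prop_shapoendo_max_subobject} applied wholesale to $W$. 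You instead treat the two implications separately: your (2)$\Rightarrow$(1) argues directly with $\filtcomod{0}(W)$ and \Cref{lem_irred_graded_ydmodule}(4), bypassing irreducibility altogether, and your (1)$\Rightarrow$(2) re-establishes by hand that $\ker\shapoendo_V\cap W$ is a graded subobject of the irreducible $W$. What the paper's identification buys is economy — both directions fall out of already-proved statements about $W$ at once; what your route buys is that the forward direction (2)$\Rightarrow$(1) is visibly elementary and does not invoke the irreducibility of $\gencomp{V}$ at that stage.
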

\begin{proof}
    Recall that by \Cref{lem_Ni_generated_submodule_is_yd} we know $\subalgQ{N_i} \cdot \gencomp{V} \in \ydcat{\subalgQ{N_i}}{\scr{C}}$ and $\shapoendo_V(\genind{V} + n \alpha_i) = \shapoendo_{\subalgQ{N_i} \cdot \gencomp{V}} (n) : N_i^n \cdot \gencomp{V} \rightarrow N_i^n \ot \gencomp{V}$ for all $n \in \ndN_0$.

    By \Cref{cor_irred_graded_ydmodule_o} $V$ is $i$-well graded if and only if $\subalgQ{N_i} \cdot \gencomp{V}$ is irreducible in $\ydcat{\subalgQ{N_i}}{\scr{C}}$. By \Cref{prop_shapoendo_max_subobject} this is equivalent to
    \begin{align*}
        \ker \shapoendo_{\subalgQ{N_i} \cdot \gencomp{V}} = 0 \matdot
    \end{align*}
    By the argument above, this is equivalent to (2).
\end{proof}

Assume $V\in \ydcat{Q}{\scr{C}}_{\rat}$ and that $\gencomp{V}$ and $N_j$ are irreducible in~$\scr{C}$ for all $j\in \mathbb{I}$.

\begin{prop} \label{prop_kernel_shapo_of_reflection}
    Let $i \in \mathbb{I}$, assume $\scr{N}$ is an $i$-finite Nichols system over $i$ and that $V$ is a $i$-well graded. Then as objects in $\ydcat{\refl_i(Q)}{\scr{C}}_{\rat}$ we have
    \begin{align*}
        \ker \shapoendo_{\refl_i(V)} = \refl_i( \ker \shapoendo_V) \matdot
    \end{align*}
    In particular $\ker \shapoendo_{\refl_i(V)} =  \ker \shapoendo_V $ as objects in $\scr{C}$
\end{prop}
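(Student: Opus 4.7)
The plan is to exploit \Cref{prop_shapoendo_max_subobject}, which characterizes $\ker \shapoendo_V$ categorically as the unique maximal proper graded subobject of $V$ in $\ydcat{Q}{\scr{C}}$, and then transport this characterization across the reflection equivalence. First I would verify that the hypotheses of \Cref{prop_shapoendo_max_subobject} also hold for $\refl_i(V)$: by \Cref{prop_funCD_refl_generator}, which applies because $V$ is $i$-well graded with $\gencomp{V}$ irreducible, $\scr{N}$ is an $i$-finite Nichols system over $i$, and by the running assumption all $N_j$ are irreducible, the object $\refl_i(V) \in \ydcat{\refl_i(Q)}{\scr{C}}_{\rat}$ is homogeneously generated $\Gamma$-graded with irreducible generating component. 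Thus $\ker \shapoendo_{\refl_i(V)}$ is characterized as the unique maximal proper graded subobject of $\refl_i(V)$ in $\ydcat{\refl_i(Q)}{\scr{C}}$.

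Next I would invoke that $(\funCD_{\refl_i}, \funCDcoh_{\refl_i})$ is a braided monoidal isomorphism of categories by \Cref{defi_funCD_induced_funct}, so it induces an inclusion-preserving bijection between subobjects of $V$ in $\ydcat{Q}{\scr{C}}_{\rat}$ and subobjects of $\refl_i(V)$ in $\ydcat{\refl_i(Q)}{\scr{C}}_{\rat}$. By \Cref{prop_grading_of_reflected_yd_module} this bijection restricts to $\Gamma$-graded subobjects. Consequently it must carry the maximal proper graded subobject of $V$ to the maximal proper graded subobject of $\refl_i(V)$, which by the uniqueness just recorded yields $\refl_i(\ker \shapoendo_V) = \ker \shapoendo_{\refl_i(V)}$ as objects in $\ydcat{\refl_i(Q)}{\scr{C}}_{\rat}$.

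Finally, the "in particular" claim is immediate from the fact that $\funCD_i$ is the identity on underlying vector spaces by \Cref{prop_funCD_exist_and_properties}(1), a property that propagates to $\funCD_{\refl_i}$ through the construction in \Cref{defi_funCD_induced_funct}. I do not foresee any serious obstacle; the proof is essentially categorical, and all of its substance is absorbed into the previously established \Cref{prop_shapoendo_max_subobject}, \Cref{prop_funCD_refl_generator}, and the grading tracking from \Cref{prop_grading_of_reflected_yd_module}.
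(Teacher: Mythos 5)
Your argument is correct and is essentially the paper's own proof: the paper likewise applies \Cref{prop_shapoendo_max_subobject} on both sides (obtaining $\refl_i(\ker \shapoendo_V) \subset \ker \shapoendo_{\refl_i(V)}$ from maximality over $\refl_i(Q)$, and the reverse inclusion by transporting $\ker \shapoendo_{\refl_i(V)}$ back via \Cref{lem_funcd_keeps_irreducibility} and using maximality over $Q$), which is just the two-inclusion phrasing of your observation that the reflection isomorphism identifies the lattices of proper graded subobjects and hence their unique maximal elements. No gaps.
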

\begin{proof}
    $\refl_i( \ker \shapoendo_V)$ is graded subobject of $\refl_i(V)$ in $\ydcat{\refl_i(Q)}{\scr{C}}_{\rat}$. Then by \Cref{prop_shapoendo_max_subobject} we have $ \refl_i( \ker \shapoendo_V) \subset \ker \shapoendo_{\refl_i(V)}$. Hence it is enough to show, that $\ker \shapoendo_{\refl_i(V)} =  \ker \shapoendo_V $ as objects in $\scr{C}$. Now by \Cref{lem_funcd_keeps_irreducibility} we obtain that $\ker \shapoendo_{\refl_i(V)}$ is a subobject of $V$ in $\ydcat{Q}{\scr{C}}_{\rat}$. Hence by \Cref{prop_shapoendo_max_subobject} we have $\ker \shapoendo_{\refl_i(V)} \subset \ker \shapoendo_V$, which together with the above implies equality.
\end{proof}

\begin{cor} \label{cor_reflection_sequence_shapo_equiv}
    Let $k \in \ndN$, $i=(i_1,\ldots,i_k)\in \mathbb{I}^k$. Assume $\scr{N}$ admits the reflection sequence $i$. The following are equivalent.
    \begin{enumerate}
        \item $V$ admits the reflection sequence $i$.
        \item $V$ admits the reflection sequence $(i_1,\ldots,i_{k-1})$ and for $0 \le m \le m_i^V$ we have
        \begin{align*}
            \ker \shapoendo_V (\genind{V} + \beta^V_{i,m}) = 0 \matdot
        \end{align*}
    \end{enumerate}
\end{cor}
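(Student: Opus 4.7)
Let $i' := (i_1, \ldots, i_{k-1})$. The plan is to unwind the recursion in the definition of admitting a reflection sequence: by definition, $V$ admits the sequence $i$ if and only if $V$ admits $i'$ and $W := \refl_{i'}(V)$ is $i_k$-well graded. So, assuming $V$ admits $i'$ (which is common to both (1) and (2)), it suffices to show that $W$ is $i_k$-well graded iff $\ker \shapoendo_V(\genind{V}+\beta^V_{i,m})=0$ for $0\le m\le m^V_i$. The first step is to apply \Cref{cor_i_well_graded_shapo_kernel} to $W$, which rephrases $i_k$-well gradedness as $\ker \shapoendo_W(\genind{W}+n\alpha_{i_k})=0$ for all $n\in \ndN$.

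The second step is to iterate \Cref{prop_kernel_shapo_of_reflection} along $i'$ (at each stage the current Nichols system is a Nichols system over the next index and is finite there, and the current module is well graded, all because $V$ admits $i'$), which yields $\ker \shapoendo_W = \refl_{i'}(\ker \shapoendo_V)$ as graded objects in $\ydcat{\refl_{i'}(Q)}{\scr{C}}_{\rat}$. Combining with \Cref{prop_grading_of_iterated_refl_mod} gives the pointwise identification
\begin{align*}
\ker \shapoendo_W(\gamma) \;=\; (\ker \shapoendo_V)(t_{i'}(\gamma))
\end{align*}
for every $\gamma \in \Gamma$, reducing everything to a computation of $t_{i'}(\genind{W}+n\alpha_{i_k})$ in terms of $\beta^V_{i,n}$.

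The main obstacle is this explicit degree bookkeeping. Using \Cref{prop_generator_index_explicit}, $\genind{W} = t_{i'}^{-1}(\genind{V}+\beta^V_{i'})$. By \Cref{lem_rels_beta_i}, $\beta^V_{i,n} = \beta^V_{i'} - n\, s^{\scr{N}}_i(\alpha_{i_k})$; and since $s^{\scr{N}}_i = s^{\scr{N}}_{i'} \circ s^{\scr{N}}_{i',i_k}$ with $s^{\scr{N}}_{i',i_k}(\alpha_{i_k})=-\alpha_{i_k}$, we obtain $s^{\scr{N}}_i(\alpha_{i_k}) = -t_{i'}(\alpha_{i_k})$ (recall $t_{i'}|_{\ndZ^\theta}=s^{\scr{N}}_{i'}$). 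Hence $t_{i'}(\genind{W}+n\alpha_{i_k}) = \genind{V}+\beta^V_{i,n}$ for every $n$, which is the key identity.

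It remains to handle the range of $n$. For $n > m^V_i$ the component $W(\genind{W}+n\alpha_{i_k})$ vanishes automatically: since $W$ is homogeneously generated by $\gencomp{W}$ and $\refl_{i'}(\scr{N})$ is a Nichols system over $i_k$ (by iterating \Cref{prop_RiQ_is_nich_sys}), we have $W(\genind{W}+n\alpha_{i_k}) = \refl_{i'}(Q)(n\alpha_{i_k}) \cdot \gencomp{W} = \refl_{i'}(\scr{N})_{i_k}^n \cdot \gencomp{W}$, which is zero for $n > m^V_i$ by definition of $m^V_i$; rationality of $W$ guarantees $m^V_i \in \ndN_0$. Finally, the $n=0$ case is trivial because $\shapoendo_W$ acts as $v \mapsto 1\otimes v$ on $\gencomp{W}$ (the coaction is graded and $\genind{W}$ is the smallest degree), so $\ker \shapoendo_W(\genind{W})=0$, corresponding to $\beta^V_{i,0}=\beta^V_{i'}$. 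Thus the conditions $\ker \shapoendo_W(\genind{W}+n\alpha_{i_k})=0$ for $n\in \ndN$ collapse to $\ker \shapoendo_V(\genind{V}+\beta^V_{i,m})=0$ for $0 \le m \le m^V_i$, completing the equivalence.
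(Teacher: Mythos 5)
Your proof is correct and follows essentially the same route as the paper: reduce to $i_k$-well-gradedness of $\refl_{(i_1,\ldots,i_{k-1})}(V)$ via \Cref{cor_i_well_graded_shapo_kernel}, transport the kernel back with \Cref{prop_kernel_shapo_of_reflection}, and match degrees using the $\beta^V_{i,m}$. The only difference is that you re-derive the degree identification $t_{(i_1,\ldots,i_{k-1})}(\genind{\refl_{(i_1,\ldots,i_{k-1})}(V)}+m\alpha_{i_k})=\genind{V}+\beta^V_{i,m}$ by hand (and carefully check the range of $m$), whereas the paper simply cites \Cref{cor_generator_index_explicit} for exactly this fact.
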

\begin{proof}
    By \Cref{cor_i_well_graded_shapo_kernel} we know that (1) holds if and only if $V$ admits the reflection sequence $(i_1,\ldots,i_{k-1})$ and
    \begin{align*}
        \ker \shapoendo_{\refl_{(i_1,\ldots,i_{k-1})}(V)} (\genind{\refl_{(i_1,\ldots,i_{k-1})}(V)}+ \ndN \alpha_{i_k}) = 0 \matdot
    \end{align*}
    By \Cref{prop_kernel_shapo_of_reflection} the latter is equivalent to    
    \begin{align*}
       \refl_{(i_1,\ldots,i_{k-1})}( \ker \shapoendo_{V} ) (\genind{\refl_{(i_1,\ldots,i_{k-1})}(V)}+ \ndN \alpha_{i_k}) = 0 \matdot
    \end{align*}
    Finally by \Cref{cor_generator_index_explicit} this is equivalent to
    \begin{align*}
         \ker \shapoendo_V (\genind{V} + \beta^V_{i,m}) = 0
    \end{align*}
    for all $0 \le m \le m_i^V$.
\end{proof}

\begin{cor} \label{cor_reflection_sequence_shapo_equiv_bound}
    Assume $\scr{N}$ admits all reflections. Let $M$ be the convex hull of $\Supind{V}$ and let $\bound{M}$ be the set of points in $\ndN_0^\theta$, that lie on an edge of $M$. The following are equivalent.
    \begin{enumerate}
        \item $V$ admits all reflections.
        \item $\ker \shapoendo_V (\genind{V}+ \bound{M}) = 0$.
    \end{enumerate}
\end{cor}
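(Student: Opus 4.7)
The strategy is to deduce this from \Cref{cor_reflection_sequence_shapo_equiv} by identifying $\bound{M}$ geometrically with the indices $\bouind{V}$ (intersected with $\Supind{V}$). The forward inclusion $\bouind{V} \subset \bound{M}$ is direct: for a reflection sequence $i = (i_1,\ldots,i_k)$ admitted by $V$ and $0 \le m \le m^V_i$, \Cref{lem_rels_beta_i} gives $\beta^V_{i,m} = \beta^V_{(i_1,\ldots,i_{k-1})} - m\, s^{\scr{N}}_i(\alpha_{i_k})$, so the points $\beta^V_{i,0}, \ldots, \beta^V_{i,m^V_i}$ interpolate linearly between the two polytope vertices $\beta^V_{(i_1,\ldots,i_{k-1})}$ and $\beta^V_i$ of $M$ (\Cref{prop_support_is_spanned_by_beta_i}), and these vertices are reflection-adjacent. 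Conversely, every integer point of $\bound{M} \cap \Supind{V}$ is of such a form: the vertices of $M$ are among the $\beta^V_j$, and the reflection-adjacency relation $\beta^V_{(i_1,\ldots,i_{k-1})} \leftrightarrow \beta^V_i$ exhausts the edges of $M$, as can be read off the half-space description of $M$ appearing in the proof of \Cref{prop_support_is_spanned_by_beta_i}.

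For (1)$\Rightarrow$(2), assume $V$ admits all reflections, so every $\beta^V_{i,m}$ is defined. Applying \Cref{cor_reflection_sequence_shapo_equiv} to each reflection sequence $i$ shows that $\ker \shapoendo_V$ vanishes on $\genind{V} + \bouind{V}$. For indices $n \in \bound{M} \setminus \Supind{V}$ the vanishing is trivial since $V(\genind{V}+n) = 0$, and for $n \in \bound{M} \cap \Supind{V}$ the reverse geometric inclusion above writes $n = \beta^V_{i,m}$ for some admissible pair $(i, m)$.

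For (2)$\Rightarrow$(1), I would induct on the length $k$ of the reflection sequence $i = (i_1,\ldots,i_k)$. The base $k = 0$ is trivial. For the inductive step, the hypothesis gives that $V$ admits $(i_1,\ldots,i_{k-1})$, so $m^V_i$ and every $\beta^V_{i,m}$ with $0 \le m \le m^V_i$ are well-defined, and by the forward inclusion $\bouind{V} \subset \bound{M}$ they lie in $\bound{M}$. Hypothesis (2) then yields $\ker \shapoendo_V(\genind{V} + \beta^V_{i,m}) = 0$ for each such $m$, so by \Cref{cor_reflection_sequence_shapo_equiv} $V$ admits $i$. The main obstacle will be the reverse geometric inclusion $\bound{M} \cap \Supind{V} \subset \bouind{V}$: one must verify that every edge of the polytope $M$ arises from a single reflection step in some chain, rather than from a more intricate combinatorial configuration of the vectors $\beta^V_j$, which ultimately relies on the Weyl-groupoid and semi-Cartan-graph structure invoked in the proof of \Cref{prop_roots_pos_and_neg}.
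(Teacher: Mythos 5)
Your argument is essentially the paper's: both directions reduce to \Cref{cor_reflection_sequence_shapo_equiv} together with the identification of $\bound{M}$ with $\bouind{V}$, and your induction for (2)$\Rightarrow$(1) is exactly the paper's (which cites \Cref{cor_generator_index_explicit} for $\beta^V_{i,m}\in\bound{M}$ and then applies \Cref{cor_reflection_sequence_shapo_equiv}). The one place you diverge is in where you locate the ``main obstacle'': the reverse inclusion $\bound{M}\cap\Supind{V}\subset\bouind{V}$ does not need the semi-Cartan-graph machinery behind \Cref{prop_roots_pos_and_neg}; it is already contained in the proof of \Cref{prop_support_is_spanned_by_beta_i}, which exhibits $M$ as the intersection of the cones $M_i=\beta^V_i+s^{\scr{N}}_i(\ndN_0^\theta)$ and shows that the neighbours of the vertex $\beta^V_i$ along the extremal rays of $M_i$ are exactly the $\beta^V_{(i_1,\ldots,i_k,j)}$ --- so every edge of $M$ is one of these segments, and since each $s^{\scr{N}}_i(\alpha_{i_k})$ is primitive in $\ndZ^\theta$ (being the image of a unit vector under an automorphism), every lattice point on such an edge is some $\beta^V_{i,m}$. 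The same cone picture also repairs your forward-inclusion justification: a segment joining two vertices of a polytope need not be an edge, so ``interpolates between two vertices'' alone does not put $\beta^V_{i,m}$ on $\bound{M}$; what does is that these points lie on an extremal ray of the cone $M_{(i_1,\ldots,i_{k-1})}\supset\Supind{V}$ (note this single-cone containment holds as soon as $V$ admits $(i_1,\ldots,i_{k-1})$, which is all that is available inside your induction).
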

\begin{proof}
    If (1) holds, then $\bound{M} = \bouind{V}$ by \Cref{prop_support_is_spanned_by_beta_i}. Hence (2) holds by \Cref{cor_reflection_sequence_shapo_equiv}. Now assume (2) holds.  Let $k \in \ndN$, $i=(i_1,\ldots,i_k)\in \mathbb{I}^k$ and assume $V$ admits the reflection sequence $(i_1,\ldots,i_{k-1})$. Let $0 \le m \le m_i^V$. Then indeed $\beta_{i,m}^V \in \bound{M}$ by \Cref{cor_generator_index_explicit}. Hence (2) implies that $\ker \shapoendo_V (\genind{V} + \beta^V_{i,m}) = 0$, which in combination with \Cref{cor_reflection_sequence_shapo_equiv} implies (1).
\end{proof}

\begin{rema}
    Let $k \in \ndN_0$.
    Intuitively speaking, \Cref{cor_reflection_sequence_shapo_equiv} says that $V$ admitting a reflection sequence $i \in \mathbb{I}^k$ in addition to admitting the prior reflection sequence in $\mathbb{I}^{k-1}$, is given if and only if $\shapoendo_V$ does not vanish on the corresponding edge of $V$. In addition \Cref{cor_reflection_sequence_shapo_equiv_bound} says
    $V$ admits all reflections, if and only if $\shapoendo_V$ does not vanish on any edge of V. Compare this also to \Cref{prop_shapoendo_max_subobject}, where it is stated that $V$ is irreducible in $\ydcat{Q}{\scr{C}}$ if and only if $\shapoendo_V$ does not vanish anywhere.
    
    Then \Cref{cor_induced_irreducible_shapoendo} states that for induced $\ydmod{}$-modules $\ydind{Q}{U}$, $U \in \comodcat{Q}{\scr{C}}$, where $Q$ is finite-dimensional, that $\shapoendo_{\ydind{Q}{U}}$ does not vanish anywhere, if and only if it does not vanish on the maximal degree component of $\ydind{Q}{U}$. Since this component lies on an edge, this explains intuitively why \Cref{cor_ydind_irred_all_refl}(1)$\implies$(2) holds.
\end{rema}

\chapter{Nichols systems of group-type} \label{chapt_nich_sys_group}

In this chapter we want to calculate the Shapovalov morphism and its kernel of $\ydmod{}$-modules, i.e. the maximal subobject of the $\ydmod{}$-module, for Nichols systems of group type. To realize that, we must first look at the theory of braided shuffles in \cref{sect_nich_sys_group_braided_shuffles}. Here we introduce a particular kind of shuffle elements in the braid group and show in \cref{sect_nich_sys_group_main}, \Cref{thm_shapoendo_via_gnk}, that we can give an explicit formula for the Shapovalov morphism on specific components using these shuffle elements. We use this formula to calculate the Kernel in degree $2$ under some conditions, see \Cref{prop_ker_shapo_deg2}. Finally we calculate the kernel for some specific examples of Nichols algebras in \cref{sect_nich_sys_group_examples}.

Let $\theta \in \ndN$ and $\mathbb{I}=\lbrace 1,\ldots, \theta \rbrace$.
Let $\fK$ be a field, $H$ a Hopf algebra over $\fK$ with bijective antipode and let $\scr{C}=\ydmod{H}$.

\section{Braided shuffles} \label{sect_nich_sys_group_braided_shuffles}

After discussing some fundamentals, the focus of this section lies on the elements $g_{n,k} \in \fK \brB_{n+1}$ for given $0 \le k \le n$, introduced in \Cref{nota_gnk}. In \Cref{thm_symmetrizer_commutes_gnk} will show that they have a commuting relation with the braided symmetrizer, which is a rare and unexpected property.
These elements turn out to be essential when calculating the Shapovalov morphism in \cref{sect_nich_sys_group_main}.

For $n \in \ndN$ let $\SG_n$ be the symmetric group and $\brB_{n}$ the group given by generators $c_1,\ldots,c_{n-1}$ and relations
\begin{align*}
    c_i c_{i+1} c_i = c_{i+1} c_i c_{i+1} \matcom && c_i c_j = c_j c_i \matcom
\end{align*}
for all $1 \le i \le n-2$ and $i+2 \le j \le n-1$. Here $\brB_{1}$ is the trivial group with one element.
For $n'>n$ we canonically view $\SG_n$ and $\brB_n$ as subgroups of $\SG_{n'}$ and $\brB_{n'}$, respectively.
For $1 \le i \le n-1$ denote $s_i = (i (i+1)) \in \SG_n$. 

\begin{rema}
    For all $\pi \in \SG_n$ there exists a minimal $k \in \ndN_0$ and $(i_1,\ldots,i_k)$ with $1\le i_1,\ldots,i_k \le n-1$, such that $\pi=s_{i_1} \cdots s_{i_k}$. Such $(i_1,\ldots,i_k)$ is called \textbf{reduced decomposition of $\pi$}. 
\end{rema}

\begin{prop} \label{prop_matsumoto_section}
    Let $n \in \ndN$. Then
    \begin{align*}
        \SG_n \rightarrow \brB_{n}, \, \pi \mapsto c_{\pi} \matcom
    \end{align*}
    where $c_{\pi}=c_{i_1} \cdots c_{i_k}$, where $(i_1,\ldots,i_k)$, $k \in \ndN_0$ is a reduced decomposition of $\pi$, is a well-defined map.
\end{prop}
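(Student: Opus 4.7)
The plan is to invoke Matsumoto's theorem on reduced decompositions in the symmetric group. Concretely, Matsumoto's theorem states that any two reduced decompositions of a fixed $\pi \in \SG_n$ can be transformed into one another by a finite sequence of the braid moves $s_i s_{i+1} s_i \leftrightarrow s_{i+1} s_i s_{i+1}$ for $1 \le i \le n-2$ and the commutation moves $s_i s_j \leftrightarrow s_j s_i$ for $|i-j| \ge 2$ (no moves of the form $s_i s_i \leftrightarrow e$ are needed, as those would change the length). Since the generators $c_i$ of $\brB_n$ satisfy precisely these same relations by definition, every such move applied at the level of reduced words leaves the product $c_{i_1}\cdots c_{i_k}$ in $\brB_n$ invariant. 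Hence $c_\pi$ is independent of the choice of reduced decomposition, and the map is well-defined.

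First I would fix $\pi \in \SG_n$ and two reduced decompositions $(i_1,\ldots,i_k)$ and $(j_1,\ldots,j_l)$ of $\pi$. Since both are reduced decompositions of the same element, they have the same length, so $k = l$. Then I would cite Matsumoto's theorem to obtain a sequence of reduced words $w^{(0)}, w^{(1)}, \ldots, w^{(m)}$ with $w^{(0)} = (i_1,\ldots,i_k)$, $w^{(m)} = (j_1,\ldots,j_k)$, such that each $w^{(r+1)}$ is obtained from $w^{(r)}$ by a single braid or commutation move. For each such move, the corresponding products $c_{w^{(r)}_1} \cdots c_{w^{(r)}_k}$ and $c_{w^{(r+1)}_1} \cdots c_{w^{(r+1)}_k}$ agree in $\brB_n$ by the defining relations of $\brB_n$. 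Concatenating these equalities gives $c_{i_1} \cdots c_{i_k} = c_{j_1}\cdots c_{j_k}$, which is precisely well-definedness of $\pi \mapsto c_\pi$.

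The only non-trivial ingredient is Matsumoto's theorem itself, which I would simply cite (it is classical, see e.g. \cite{HeSch} if such a reference is available there, or any standard reference on Coxeter groups). Once that is in hand, the proof reduces to the observation that the braid group relations are exactly the "Matsumoto moves" for the Coxeter presentation of $\SG_n$, so there is no obstacle beyond this citation.
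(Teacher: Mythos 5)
Your argument is correct and is exactly the standard proof of the existence of the Matsumoto section: Matsumoto's word property says any two reduced words for $\pi$ are linked by braid and commutation moves alone, and these are precisely the defining relations of $\brB_n$. The paper simply cites \cite{HeSch}, Theorem~1.7.2 for this fact, so you are reproducing the content of the cited result rather than taking a different route.
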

\begin{proof}
    Refer to \cite{HeSch}, Theorem 1.7.2.
\end{proof}
\begin{rema}
    The map in \Cref{prop_matsumoto_section} is called \textbf{Matsumoto section}. It is a section for the surjective group homomorphism given by
    \begin{align*} \brB_{n} \rightarrow \SG_n, \, c_i \mapsto s_i \matdot \end{align*}
\end{rema}

\begin{defi}
    For $n\in \ndN$ the element
    \begin{align*}
        S_n := \sum_{\pi \in \SG_n} c_{\pi} \in \fK \brB_n
    \end{align*}
    is called the \textbf{braided symmetrizer}. Moreover for $0\le k \le n$ a permutation $\pi \in \SG_n$ is called \textbf{$k$-shuffle}, if
    \begin{align*}
        &\pi(1)< \ldots < \pi(k) \matcom  &\pi(k+1) < \ldots < \pi(n) \matdot
    \end{align*}
    The set of all $k$-shuffles is denoted $\SG_{k,n-k}$. Finally denote
    \begin{align*}
        S_{k,n-k} = \sum_{\pi \in \SG_{k,n-k}} c_{\pi^{-1}}  \in \fK \brB_n \matdot
    \end{align*}
\end{defi}

\begin{notation}
    For $n \in \ndN$, $i \in \ndN_0$ let $\shift{i}{\cdot} : \brB_n \rightarrow \brB_{n+i}$ be the injevtive group homomorphism, such that for all $1\le j \le n-1$ we have
    \begin{align*}
       \shift{i}{c_j} = c_{j+i} \matdot
    \end{align*}
\end{notation}

\begin{lem} \label{lem_symmetrizer_inductive_formula}
    Let $n \in \ndN$ and $1 \le k \le n$. Let $\varphi : \fK \brB_{n} \rightarrow \fK \brB_{n}$ be the algebra antimorphism such that we have $\varphi (c_i) = c_i$ for all $1\le i\le n-1$. The following relations hold:
    \begin{align*}
            \varphi (S_n) &= S_n \matcom \tag{1} \\
            S_{n-k+1,k} &= S_{n-k+1,k-1} + c_{n-k+1} \cdots c_n S_{n-k,k} \matcom \tag{2} \\
            S_{n+1} &= S_{n-k+1} \shift{{n-k+1}}{S_{k}} S_{n-k+1,k} \matcom \tag{3} \\
            S_{n+1} &= \varphi(S_{n-k+1,k}) S_{n-k+1} \shift{{n-k+1}}{S_{k}}  \matdot \tag{4}
    \end{align*}
\end{lem}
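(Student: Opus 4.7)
The plan is to prove the four identities in order using the Matsumoto section and standard coset decompositions.

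For (1), I would first show $\varphi(c_\pi) = c_{\pi^{-1}}$ for every $\pi \in \SG_n$: if $(i_1,\ldots,i_r)$ is a reduced decomposition of $\pi$, then $(i_r,\ldots,i_1)$ is a reduced decomposition of $\pi^{-1}$, so by \Cref{prop_matsumoto_section} we have $\varphi(c_\pi) = c_{i_r}\cdots c_{i_1} = c_{\pi^{-1}}$. Summing and reindexing $\pi \mapsto \pi^{-1}$ then yields $\varphi(S_n) = S_n$.

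For (2), I would split the set of $(n-k+1)$-shuffles $\pi \in \SG_{n+1}$ according to the position of the maximal value $n+1$. Since both blocks $\pi(1)<\cdots<\pi(n-k+1)$ and $\pi(n-k+2)<\cdots<\pi(n+1)$ are increasing, $n+1$ sits either at position $n+1$ (Case A) or at position $n-k+1$ (Case B). Case A gives a length-preserving bijection onto $\SG_{n-k+1,k-1} \subset \SG_n$ via restriction, contributing $S_{n-k+1,k-1}$. In Case B, defining $\pi' \in \SG_{n-k,k}$ by $\pi'(j)=\pi(j)$ for $j \le n-k$ and $\pi'(j)=\pi(j+1)$ for $n-k+1 \le j \le n$, a direct computation shows $\pi = \pi' \cdot (s_n \cdots s_{n-k+1})$, hence $\pi^{-1} = (s_{n-k+1}\cdots s_n)\pi'^{-1}$. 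A count of inversions (the new inversions are exactly the $k$ pairs $(n-k+1,j)$ with $n-k+2 \le j \le n+1$) gives $\ell(\pi^{-1}) = \ell(\pi'^{-1}) + k$, so by \Cref{prop_matsumoto_section} we conclude $c_{\pi^{-1}} = c_{n-k+1}\cdots c_n \cdot c_{\pi'^{-1}}$. Summing over Case B produces $c_{n-k+1}\cdots c_n S_{n-k,k}$, completing (2).

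For (3) and (4), the essential ingredient is the classical fact that $\SG_{n-k+1,k}$ is a system of minimal length coset representatives for the Young subgroup $\SG_{n-k+1} \times \SG_k \subset \SG_{n+1}$. For the left cosets, every $\pi \in \SG_{n+1}$ is uniquely of the form $\pi = \rho \sigma^{-1}$ with $\rho \in \SG_{n-k+1}\times\SG_k$, $\sigma \in \SG_{n-k+1,k}$, and with $\ell(\pi) = \ell(\rho) + \ell(\sigma)$; by Matsumoto this factors $c_\pi = c_\rho c_{\sigma^{-1}}$, so summing gives $S_{n+1} = (\sum_\rho c_\rho)(\sum_\sigma c_{\sigma^{-1}}) = S_{n-k+1}\shift{n-k+1}{S_k} \cdot S_{n-k+1,k}$, establishing (3). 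Dually, for the right cosets, $\pi = \sigma \rho$ uniquely with $\sigma \in \SG_{n-k+1,k}$ and $\rho \in \SG_{n-k+1}\times\SG_k$ and lengths adding, yielding $S_{n+1} = (\sum_\sigma c_\sigma)(\sum_\rho c_\rho)$. Applying the identity $c_\sigma = \varphi(c_{\sigma^{-1}})$ from the proof of (1) gives $\sum_\sigma c_\sigma = \varphi(S_{n-k+1,k})$, producing (4).

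The main obstacle is the length count in Case B of part (2): one needs to verify carefully that inserting $n+1$ at position $n-k+1$ contributes exactly the $k$ claimed new inversions, and that concatenating $(n-k+1,\ldots,n)$ with a reduced expression for $\pi'^{-1}$ remains reduced, so that the factorization of $c_{\pi^{-1}}$ under the Matsumoto section holds as displayed. Everything else is routine bookkeeping for reduced decompositions.
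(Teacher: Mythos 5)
Your proof is correct. It differs from the paper's in how much it does from scratch: the paper proves (1) exactly as you do (via $\varphi(c_\pi)=c_{\pi^{-1}}$ and reindexing), but it simply cites \cite{HeSch}, Corollary~1.8.8 for (2) and (3), and then obtains (4) by applying $\varphi$ to (3), using (1) together with the commutation $S_{n-k+1}\shift{n-k+1}{S_k}=\shift{n-k+1}{S_k}S_{n-k+1}$. You instead give self-contained arguments: your case split on the position of $n+1$ for (2) (with the inversion count $\ell(\pi)=\ell(\pi')+k$ justifying the factorization of $c_{\pi^{-1}}$ under the Matsumoto section) and your use of the Young-subgroup coset decomposition with additive lengths for (3) are exactly the standard proofs behind the cited corollary, so nothing is lost. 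For (4) you take a genuinely different route, using the opposite-sided coset decomposition $\pi=\sigma\rho$ directly rather than transporting (3) through $\varphi$; this avoids needing the commutation of $S_{n-k+1}$ with $\shift{n-k+1}{S_k}$, at the cost of invoking both one-sided versions of the length-additive factorization. (One cosmetic remark: your labels "left" versus "right" cosets in (3) and (4) are swapped relative to the usual convention for $\pi W_J$ versus $W_J\pi$, but the decompositions themselves and the length additivity are stated correctly, so this does not affect the argument.)
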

\begin{proof}
    For all $\pi \in \SG_n$ we have $\varphi(c_{\pi}) = c_{\pi^{-1}}$ and thus
    \begin{align*}
        \varphi (S_n) = \sum_{\pi \in \SG_n} \varphi(c_{\pi}) = \sum_{\pi \in \SG_n} c_{\pi^{-1}} = \sum_{\pi \in \SG_n} c_{\pi} = S_n \matdot
    \end{align*}
    Now (2) and (3) are proven in \cite{HeSch}, Corollary 1.8.8. (4) is implied by (1), (3) and the fact that $S_{n-k+1} \shift{{n-k+1}}{S_{k}} =  \shift{{n-k+1}}{S_{k}} S_{n-k+1} $.
\end{proof}

\begin{lem} \label{lem_reduced_decomposition_shifts}
    Let $n \in \ndN$, $\pi \in \SG_{n}$ and let $(i_1,\ldots,i_l)$, $l\in \ndN_0$ be a reduced decomposition of $\pi$.
    The following holds:
    \begin{enumerate}
        \item Let $1\le j \le n-1$. $(i_1,\ldots,i_l,j)$ is a reduced decomposition of $\pi s_j$ if and only if $\pi(j) < \pi(j+1) $.
        \item Let $1 \le k \le n$. $(i_1,\ldots,i_l,n,n-1,\ldots,k)$ is a reduced decomposition of the permutation $\pi s_{n} s_{n-1} \cdots s_k \in \SG_{n+1}$.
        \item Let $1 \le k \le n-1$ and assume $\pi(j) < \pi(n)$ for all $k \le j\le n-1$.
        Then $(i_1,\ldots,i_l,n-1,\ldots,k)$ is a reduced decomposition of the permutation $\pi s_{n-1} \cdots s_k$.
        \item Let $1 \le k \le n$ and assume $\pi \in \SG_{n-k,k}$. Then 
        \begin{align*}
            (i_1,\ldots,i_l,n, \ldots, n-k+1, n, \ldots, n-k+2, \ldots,n,n-1, n)
        \end{align*}
        is a reduced decomposition of the permuation
        \begin{align*}
            \pi (s_{n} \cdots s_{n-k+1}) (s_n \cdots s_{n-k+2}) \cdots (s_n s_{n-1}) (s_n) \matdot
        \end{align*}
    \end{enumerate}
\end{lem}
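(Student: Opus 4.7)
The plan is to derive all four parts from the standard length-based criterion: for $\sigma \in \SG_m$ of length $l$ with reduced decomposition $(i_1,\ldots,i_l)$, the tuple $(i_1,\ldots,i_l,j)$ is a reduced decomposition of $\sigma s_j$ iff $\ell(\sigma s_j)=l+1$, which by the characterisation of length as the number of inversions is equivalent to $\sigma(j)<\sigma(j+1)$. This is exactly the statement of (1).

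For (2) and (3) I would proceed by induction, at each step applying (1) to append one more generator on the right. The key preliminary calculation I would do once is: for $r\le m$, the permutation $s_m s_{m-1}\cdots s_r$ sends $r\mapsto m+1$ and $j\mapsto j-1$ for $r+1\le j\le m+1$, fixing everything else. Consequently, for any $\tau\in\SG_{m+1}$ and $r\le j\le m$,
\begin{align*}
(\tau\cdot s_m s_{m-1}\cdots s_{j+1})(j)=\tau(j), \quad (\tau\cdot s_m s_{m-1}\cdots s_{j+1})(j+1)=\tau(m+1).
\end{align*}
For (2) I would take $\tau=\pi$, viewed in $\SG_{n+1}$ with $\pi(n+1)=n+1$; then the right-hand values give $\pi(j)\le n<n+1=\pi(n+1)$, so appending $s_j$ is justified by (1), and an induction on the number of appended generators closes the argument. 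For (3) the analogous calculation with the block starting at $s_{n-1}$ yields $(\pi\cdot s_{n-1}\cdots s_{j+1})(j)=\pi(j)$ and $(\pi\cdot s_{n-1}\cdots s_{j+1})(j+1)=\pi(n)$, so the hypothesis $\pi(j)<\pi(n)$ for $k\le j\le n-1$ is exactly what is needed to invoke (1) at each step.

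For (4) I would chain (2) and (3). First apply (2) with $k$ replaced by $n-k+1$ to the shuffle $\pi$ (viewed in $\SG_{n+1}$ with $\pi(n+1)=n+1$), obtaining a reduced decomposition of $\pi_1:=\pi\cdot s_n s_{n-1}\cdots s_{n-k+1}$. Then, for $r=2,3,\ldots,k$, I would append the block $s_n s_{n-1}\cdots s_{n-k+r}$ in two phases: append $s_n$ by (1), and then append $s_{n-1}\cdots s_{n-k+r}$ by (3). To verify the hypotheses, I would use the cycle description above to compute that $\pi_{r}$ agrees with $\pi$ on $\{1,\ldots,n-k\}$ and, on the positions $n-k+r+1,\ldots,n+1$, acts by a controlled reshuffling of the values $\pi(n-k+1),\ldots,\pi(n),n+1$; in particular $\pi_r(n+1)=n+1$ and the values at positions $n-k+r,\ldots,n$ are a prefix of the increasing list $\pi(n-k+r),\pi(n-k+r+1),\ldots,\pi(n)$, so both the inequality $\pi_r(n)<\pi_r(n+1)$ needed to append $s_n$ and the inequalities $\pi_r'(j)<\pi_r'(n)$ for $n-k+r+1\le j\le n-1$ (where $\pi_r'=\pi_r s_n$) needed to invoke (3) follow from the shuffle condition $\pi(n-k+1)<\ldots<\pi(n)$.

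The main obstacle is the bookkeeping in (4): one has to track how the intermediate permutations act on the last $k+1$ positions and check at each of the $k$ blocks that the hypothesis of (3) is indeed satisfied. The shuffle hypothesis on $\pi$ is essential precisely here, as otherwise the length could fail to increase and the proposed decomposition would not be reduced.
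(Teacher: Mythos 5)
Your proposal follows essentially the same route as the paper: (1) is the standard length/inversion criterion (which the paper simply cites), (2) and (3) are proved by induction, appending one generator at a time via (1), and (4) is obtained by chaining, with an explicit description of the intermediate permutations on the last $k+1$ positions. One small correction in (4): the parenthetical claim that $\pi_r(n+1)=n+1$ is false — after $r$ blocks one has $\pi_r(n+1)=\pi(n+1-r)$, while the value $n+1$ sits at position $n-k+1$ from the first block onward — but this claim is not load-bearing, since the inequalities you actually need (namely $\pi_r(n)<\pi_r(n+1)$ to append $s_n$, and the hypothesis of (3) for the rest of the block) follow from the shuffle condition exactly as you say, because positions $n-k+r+1,\ldots,n+1$ of $\pi_r$ carry the increasing values $\pi(n-k+1),\ldots,\pi(n+1-r)$.
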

\begin{proof}
    (1) is proven in \cite{HeSch}, Theorem 1.7.2(1).
    
    (2): Clearly $(i_1,\ldots,i_l,n)$ is a reduced decomposition of $\pi s_{n}$, since $\pi \in \SG_n$. Assume for $1 \le j \le n-k$ that $(i_1,\ldots,i_l,n,\ldots,n-j+1)$ is a reduced decomposition of $\pi s_{n} s_{n-1} \cdots s_{n-j+1}$. Then
    \begin{align*} 
         &\pi s_{n} s_{n-1} \cdots s_{n-j+1} (n-j) = \pi(n-j) \\
        &< n+1 = \pi s_{n} s_{n-1} \cdots s_{n-j+1} (n-j+1) \matcom
    \end{align*}
    hence by (1) we get that $(i_1,\ldots,i_l,n,\ldots,n-j)$ is a reduced decomposition of $\pi s_{n} s_{n-1} \cdots s_{n-j}$.
    
    (3): Since $\pi(n-1) < \pi(n)$ using (1) we obtain that $(i_1,\ldots,i_l,n-1)$ is a reduced decomposition of $\pi s_{n-1}$. Assume for $1 \le j \le n-k-1$ that $(i_1,\ldots,i_l,n-1,\ldots,n-j)$ is a reduced decomposition of $\pi s_{n-1} \cdots s_{n-j}$. Then since $k \le n-j-1 \le n-2$
    \begin{align*} 
         &\pi s_{n-1} \cdots s_{n-j} (n-j-1) = \pi(n-j-1) \\
        &< \pi(n) = \pi s_{n-1} \cdots s_{n-j} (n-j) \matcom
    \end{align*}
    hence by (1) we get that $(i_1,\ldots,i_l,n,\ldots,n-j-1)$ is a reduced decomposition of $\pi s_{n-1} \cdots s_{n-j-1}$.
    
    (4): By (2) we know that $(i_1,\ldots,i_l,n,n-1,\ldots,n-k+1)$ is a reduced decomposition of $\pi s_{n} s_{n-1} \cdots s_{n-k+1}$. Assume that for $1\le l \le k-1$ we have that
    \begin{align*}
            (i_1,\ldots,i_l,n, \ldots, n-k+1, n, \ldots, n-k+2, \ldots,n,\ldots,n-k+l)
    \end{align*}
    is a reduced decomposition of 
    \begin{align*} 
        \pi' = \pi (s_{n} \cdots s_{n-k+1}) (s_n \cdots s_{n-k+2}) \cdots (s_n \cdots s_{n-k+l}) \matdot
    \end{align*}
    We show that 
    \begin{align*}
            (i_1,\ldots,i_l,n, \ldots, n-k+1, n, \ldots, n-k+2, \ldots,n,\ldots,n-k+l+1)
    \end{align*}
    is a reduced decomposition of $\pi' (s_n \cdots s_{n-k+l+1})$, finishing the proof. We have for $1\le l \le n+1$
    \begin{align*}
        \pi'(j) = \begin{cases} \pi(j) & \text{if $1 \le j \le n-k$,} \\ 
        \pi(n+1 -(j-(n-k+1))) & \text{if $n-k+1\le j \le n-k+l$,} \\
        \pi(j-l) & \text{if $n-k+l+1 \le j \le n+1$.}
        \end{cases}
    \end{align*} 
    In particular since $\pi \in \SG_{n-k,k}$ we have $\pi(n-k+1) < \cdots < \pi(n-l+1)$ and thus
    \begin{align*}
        \pi'(n-k+l+1) < \pi'(n-k+l+2) < \cdots < \pi'(n+1) \matdot
    \end{align*}
    Using (3) the above claim is implied.
\end{proof}

\begin{notation}
    Let $\omega_{1}=\id_{\brB_1}$ and for $n \in \ndN$ let
    \begin{align*}
        \omega_{n+1} = (c_n) (c_{n-1} c_n) \cdots (c_{1} \cdots c_n) \in \brB_{n+1} \matdot
    \end{align*}
\end{notation}

\begin{rema} \label{rema_longest_word_permutation}
    Let $n \in \ndN$. The element $\omega_{n}$ is equal to the image $c_{\pi}$ of the Matsumoto section of the permutation $\pi \in \SG_n$, such that $\pi(k) = n-k+1$ for all $1\le k \le n$. $\omega_{n}$ is the longest occurring product of generators in the image under the Matsumoto section of $\SG_n$.
\end{rema}

\begin{lem} \label{lem_longest_word_rels}
    Let $n\in \ndN$ and $\varphi : \fK \brB_{n} \rightarrow \fK \brB_{n}$ be the algebra antimorphism such that $\varphi (c_i) = c_i$ for all $1\le i\le n-1$. Let $\psi : \fK \brB_n \rightarrow \fK \brB_n$ be the algebra morphism, such that $\psi(c_j) = c_{n-j}$ for all $1\le j\le n-1$. Finally let $a \in \fK \brB_n$. The following relations hold:
    \begin{align*}
        \omega_{n+1} &= \shift{1}{\omega_{n}} c_1 \cdots c_n  \matcom \tag{1}\\
        \varphi(\omega_{n}) &= \omega_{n} \matcom \tag{2} \\
        \omega_{n} a &= \psi(a) \omega_{n} \matcom \tag{3} \\
        \omega_{n} S_n &= S_n \omega_{n}  \matdot \tag{4}
    \end{align*}
\end{lem}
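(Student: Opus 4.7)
\emph{Plan.} All four parts reduce to the identification in \Cref{rema_longest_word_permutation} that $\omega_n = c_{w_0}$ is the Matsumoto image of the longest permutation $w_0 \in \SG_n$ given by $w_0(k) = n-k+1$, combined with the basic fact that $c_{\pi\sigma} = c_\pi c_\sigma$ whenever $\ell(\pi)+\ell(\sigma) = \ell(\pi\sigma)$. My plan is to dispose of (1) and (2) first (both essentially unfoldings of the relevant definitions), then prove (3), from which (4) follows quickly.

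For (1), I will simply expand the defining product of $\omega_{n+1}$: the first $n-1$ of its $n$ bracketed blocks, namely $(c_n)(c_{n-1}c_n)\cdots(c_2\cdots c_n)$, are precisely the images of the corresponding blocks of $\omega_n$ under $\shift{1}{\cdot}$, and the remaining block is $c_1 c_2 \cdots c_n$. For (2), $\varphi$ reverses the order of products of the $c_i$'s, so $\varphi(c_\pi) = c_{\pi^{-1}}$ for every $\pi \in \SG_n$; since $w_0$ is an involution this yields $\varphi(\omega_n) = c_{w_0^{-1}} = c_{w_0} = \omega_n$.

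For (3), since $\psi$ is an algebra morphism and the two maps $a \mapsto \omega_n a$ and $a \mapsto \psi(a)\omega_n$ are both linear and compatible with multiplication in the appropriate sense, it is enough to verify the identity for the generators $a = c_j$, $1 \le j \le n-1$, and then bootstrap by induction on word length. The crucial step is a direct check that $w_0 s_j = s_{n-j} w_0$ in $\SG_n$ (from $w_0 s_j w_0^{-1} = s_{n-j}$, immediate from the action on $\mathbb{I}$). Setting $\pi := w_0 s_j = s_{n-j} w_0$ one has $\ell(\pi) = \ell(w_0) - 1$, so Matsumoto gives both $c_\pi c_j = c_{w_0} = \omega_n$ and $c_{n-j} c_\pi = c_{w_0} = \omega_n$; since $c_j$ is invertible in $\brB_n$, eliminating $c_\pi$ yields $\omega_n c_j = c_{n-j} \omega_n$. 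Finally (4) follows: applying (3) with $a = S_n$ gives $\omega_n S_n = \psi(S_n)\omega_n$, and conjugation $\pi \mapsto w_0 \pi w_0^{-1}$ is a length-preserving bijection of $\SG_n$ sending $s_j$ to $s_{n-j}$, which transports reduced decompositions to reduced decompositions, whence $\psi(c_\pi) = c_{w_0 \pi w_0^{-1}}$ for every $\pi$ and summing gives $\psi(S_n) = S_n$.

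The only mildly delicate step is (3), where one must pass from the equality of permutations $w_0 s_j = s_{n-j} w_0$ to the braid-group identity without the Matsumoto section directly providing it; the trick is to introduce the intermediate element $\pi = w_0 s_j$, use Matsumoto to express $\omega_n$ as a product through $\pi$ from both sides, and then cancel using invertibility of $c_j$ in $\brB_n$.
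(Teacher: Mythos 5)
Your proof is correct, and parts (1), (2) and (4) follow the paper's own argument essentially verbatim (expand the definition; use $\varphi(c_\pi)=c_{\pi^{-1}}$ and that the longest element is an involution; use $\psi(c_\pi)=c_{w_0\pi w_0^{-1}}$ to get $\psi(S_n)=S_n$ and then apply (3)). Where you genuinely diverge is (3). The paper stays entirely inside $\brB_n$: after reducing to $a=c_j$, it pushes $c_j$ through the blocks of $\omega_n$ using the braid relations, which requires an auxiliary inner induction to establish $(c_{n-1})\cdots(c_{n-j}\cdots c_{n-1})c_{n-1}=c_{n-j}(c_{n-1})\cdots(c_{n-j}\cdots c_{n-1})$. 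You instead work at the level of $\SG_n$: from $w_0 s_j w_0^{-1}=s_{n-j}$ and $\ell(w_0 s_j)=\ell(w_0)-1$ you get $c_\pi c_j=\omega_n=c_{n-j}c_\pi$ for $\pi=w_0s_j$ via the Matsumoto section, and cancel $c_\pi$ using invertibility of $c_j$ in the braid group. Your route is shorter and more conceptual, reducing the identity to a standard Coxeter-group fact about the longest element; its only cost is that it needs the \emph{left-handed} multiplicativity $c_{s_{n-j}\pi}=c_{n-j}c_\pi$ when lengths add, whereas the paper only records the right-handed version (\Cref{lem_reduced_decomposition_shifts}(1)) — this is easily obtained by applying $\varphi$ to the right-handed statement for $\pi^{-1}$, but it is a step you should make explicit. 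The paper's computation, by contrast, is fully self-contained given the braid relations and produces the intermediate identity it reuses nowhere else.
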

\begin{proof}
    (1) is directly implied by definition. (2): Let $\pi \in \SG_n$, such that $\pi(k) = n-k+1$ for all $1\le k \le n$. Then $\pi = \pi^{-1}$ and thus
    \begin{align*}
        \varphi(\omega_{n}) = \varphi(c_{\pi}) = c_{\pi^{-1}} = c_{\pi} = \omega_{n} \matdot
    \end{align*}
    (3): It is enough to show the relation for $a=c_j$, $1\le j \le n-1$.
    Since for $1\le k \le j \le n-2$ we have
    \begin{align*}
        (c_k \cdots c_{n-1}) c_{j} &= c_k \cdots c_{j-1} c_{j} c_{j+1} c_{j} c_{j+2} \cdots c_{n-1} 
        \\&= c_k \cdots c_{j-1} c_{j+1} c_{j} c_{j+1} c_{j+2} \cdots c_{n-1} 
        =  c_{j+1} (c_k \cdots c_{n-1})\matcom
    \end{align*}
    we obtain for $1\le j \le n-1$
    \begin{align*}
         \omega_n c_j 
        = (c_{n-1}) \cdots (c_{n-j} \cdots c_{n-1}) c_{n-1} (c_{n-1-j} \cdots c_{n-1}) \cdots (c_1 \cdots c_{n-1})
    \end{align*}
    Below we inductively will show:
    \begin{align*}
        (c_{n-1}) \cdots (c_{n-j} \cdots c_{n-1}) c_{n-1} = c_{n-j} (c_{n-1}) \cdots (c_{n-j} \cdots c_{n-1}) \matcom \tag{a}
    \end{align*}
    implying $\omega_n c_j = \psi(c_j) \omega_n $, proving (3).
    Clearly (a) holds for $j=1$. Assume $j \ge 2$ and
    \begin{align*}
        (c_{n-1}) \cdots (c_{n-j+1} \cdots c_{n-1}) c_{n-1} = c_{n-j+1} (c_{n-1}) \cdots (c_{n-j+1} \cdots c_{n-1}) \matdot
    \end{align*}
    Observe that $(c_{n-1}) \cdots (c_{n-j+1} \cdots c_{n-1}) = \shift{n-j}{\omega_{j}}$, hence \begin{align*}\shift{n-j}{\omega_{j}} c_{n-1} = c_{n-j+1} \shift{n-j}{\omega_{j}} \matdot \end{align*}
    Using (1) and~(2) we obtain
    \begin{align*} 
        &(c_{n-1}) \cdots (c_{n-j} \cdots c_{n-1}) = \shift{n-j-1}{\omega_{j+1}} = \varphi(\shift{n-j-1}{\omega_{j+1}} ) 
        \\&= \varphi( \shift{n-j}{\omega_{j}} c_{n-j-1} \cdots c_{n-1}) = c_{n-1} \cdots c_{n-j-1} \shift{n-j}{\omega_{j}} \matdot
    \end{align*}
    Thus we conclude
    \begin{align*} 
        &(c_{n-1}) \cdots (c_{n-j} \cdots c_{n-1}) c_{n-1} = c_{n-1} \cdots c_{n-j-1} c_{n-j+1} \shift{n-j}{\omega_{j}} 
        \\&= c_{n-1} \cdots c_{n-j+2} c_{n-j+1} c_{n-j} c_{n-j+1} c_{n-j-1}  \shift{n-j}{\omega_{j}} 
        \\&= c_{n-1} \cdots c_{n-j+2} c_{n-j} c_{n-j+1} c_{n-j} c_{n-j-1}  \shift{n-j}{\omega_{j}}
        \\&= c_{n-j} c_{n-1} \cdots c_{n-j-1}  \shift{n-j}{\omega_{j}} = c_{n-j} (c_{n-1}) \cdots (c_{n-j} \cdots c_{n-1}) \matdot
    \end{align*}
    (4): Let $\tilde{\omega} \in \SG_n$, such that $\tilde{\omega}(j) = n-j+1$ for all $1\le j \le n$. Since for all $\pi \in \SG_n$ we have $\psi(c_{\pi}) = c_{\tilde{\omega} \pi \tilde{\omega}}$, we obtain that $\psi(S_n)=S_n$. Hence~(4) is implied by (3).
\end{proof}

\begin{notation} \label{nota_gnk}
    For $n \in \ndN_0$, $0\le k \le n$ let $\antishg_{n,k} \subset \SG_{n+1}$ be the subset of all $\pi \in \SG_{n+1}$, such that
    \begin{align*} 
        &\pi(1)< \ldots < \pi(n-k) \matcom \\& \pi(n-k+1)=n+1 \matcom \\ &\pi(n-k+2) > \ldots > \pi(n+1) \matdot
    \end{align*}
    Moreover let
    \begin{align*}
        g_{n,k} = \sum_{\pi \in \antishg_{n,k}} c_{\pi^{-1}} \in \fK \brB_{n+1} \matdot
    \end{align*}
    For $k < 0$ and $k > n$ let $g_{n,k}=0 \in \fK \brB_{n+1}$.
\end{notation}

\begin{rema}
    We will see that the elements $g_{n,k}$ interchange with the braided symmetrizer (\Cref{thm_symmetrizer_commutes_gnk}), which is a rare property. Also they play an important role when calculating the Shapovalov endomorphism (\Cref{thm_shapoendo_via_gnk}).
\end{rema}

\begin{prop} \label{prop_gnk_by_hnk}
    Let $n \in \ndN$. For $0 \le k \le n$ define the permutation $\tilde{\omega} = (s_{n} \cdots s_{n-k+1}) (s_{n} \cdots s_{n-k+2}) \cdots (s_n)$ (if $k=0$, then $\tilde{\omega}=\id_{\SG_{n+1}}$). We have
    \begin{align*}
        \antishg_{n,k} &= \SG_{n-k,k} \tilde{\omega} \matcom \\
        g_{n,k} &= \shift{n-k}{\omega_{k+1}} S_{n-k,k} \matdot 
    \end{align*}
\end{prop}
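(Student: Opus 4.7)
The plan is to establish the two equalities in turn, with the second following quickly from \Cref{lem_reduced_decomposition_shifts}(4) once the first is in hand.

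For the first equality $\antishg_{n,k} = \SG_{n-k,k}\tilde{\omega}$, I would first describe $\tilde{\omega}$ explicitly as a permutation. Either by induction on $k$ using the defining product, or by direct computation, one verifies that $\tilde{\omega}$ fixes $1,\ldots,n-k$ and reverses the block $\{n-k+1,\ldots,n+1\}$, i.e.\ $\tilde{\omega}(n-k+1)=n+1$ and $\tilde{\omega}(n-k+1+i)=n+1-i$ for $1\le i\le k$. Then, viewing any $\sigma\in\SG_{n-k,k}$ inside $\SG_{n+1}$ via the canonical inclusion that fixes $n+1$, I would compute for the product:
\begin{align*}
(\sigma\tilde{\omega})(j) &= \sigma(j) \text{ for } 1\le j\le n-k, \\
(\sigma\tilde{\omega})(n-k+1) &= \sigma(n+1)=n+1, \\
(\sigma\tilde{\omega})(n-k+1+i) &= \sigma(n+1-i) \text{ for } 1\le i\le k.
\end{align*}
The shuffle conditions $\sigma(1)<\ldots<\sigma(n-k)$ and $\sigma(n-k+1)<\ldots<\sigma(n)$ translate verbatim to the defining conditions of $\antishg_{n,k}$, so $\sigma\tilde{\omega}\in\antishg_{n,k}$. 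Injectivity is clear since $\tilde{\omega}$ is invertible, and surjectivity follows by a cardinality count: both sets have exactly $\binom{n}{k}$ elements, since each element of $\antishg_{n,k}$ is determined by its values on $\{1,\ldots,n-k\}$.

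For the second equality I would exploit \Cref{lem_reduced_decomposition_shifts}(4): for any $\sigma\in\SG_{n-k,k}$ with reduced decomposition $(i_1,\ldots,i_l)$, concatenation with the tail $(n,n-1,\ldots,n-k+1,\;n,\ldots,n-k+2,\;\ldots,\;n,n-1,\;n)$ produces a reduced decomposition of $\sigma\tilde{\omega}$. Reversing this reduced decomposition gives one for $(\sigma\tilde{\omega})^{-1}$. The reversed tail, read as a word in the $c_i$, becomes
\begin{align*}
(c_n)(c_{n-1}c_n)(c_{n-2}c_{n-1}c_n)\cdots(c_{n-k+1}c_{n-k+2}\cdots c_n),
\end{align*}
which is exactly $\shift{n-k}{\omega_{k+1}}$ by the definition of $\omega_{k+1}$. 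The reversed head $(i_l,\ldots,i_1)$ is a reduced decomposition of $\sigma^{-1}$. Since the concatenation is reduced, \Cref{prop_matsumoto_section} gives $c_{(\sigma\tilde{\omega})^{-1}} = \shift{n-k}{\omega_{k+1}}\cdot c_{\sigma^{-1}}$. Summing over $\sigma\in\SG_{n-k,k}$ and using the first equality yields
\begin{align*}
g_{n,k} = \sum_{\sigma\in\SG_{n-k,k}} c_{(\sigma\tilde{\omega})^{-1}} = \shift{n-k}{\omega_{k+1}}\sum_{\sigma\in\SG_{n-k,k}} c_{\sigma^{-1}} = \shift{n-k}{\omega_{k+1}}\,S_{n-k,k}.
\end{align*}

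The main obstacle is nothing deep; it is just the careful bookkeeping around composition order and the identification of the reversed tail with $\shift{n-k}{\omega_{k+1}}$. Once \Cref{lem_reduced_decomposition_shifts}(4) is invoked, the Matsumoto property makes the rest automatic.
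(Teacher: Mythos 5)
Your proof is correct and follows essentially the same route as the paper: the explicit description of $\tilde{\omega}$, the verification that $\SG_{n-k,k}\tilde{\omega}\subset\antishg_{n,k}$, and the use of \Cref{lem_reduced_decomposition_shifts}(4) together with the Matsumoto section to extract $\shift{n-k}{\omega_{k+1}}$ as a left factor of each $c_{(\sigma\tilde{\omega})^{-1}}$. The only (harmless) deviation is that you settle the reverse inclusion by a cardinality count, whereas the paper verifies directly that $\pi\tilde{\omega}\in\SG_{n-k,k}$ for every $\pi\in\antishg_{n,k}$ and uses $\tilde{\omega}^2=\id$.
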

\begin{proof}
    We have for $1\le j \le n+1$
    \begin{align*}
        \tilde{\omega}(j) = \begin{cases}
        j & \text{if $1\le j \le n-k$,} \\
        n+1-(j-(n-k+1)) &\text{if $n-k+1 \le j \le n+1 $.}
        \end{cases}
    \end{align*}
    Let $\pi \in \SG_{n-k,k}$. Then $\pi \tilde{\omega}(n-k+1) = n+1$ and for $1 \le i \le n-k-1$ and $n-k+2 \le j \le n$ we have since $n-k+2 \le 2n-k+2-j \le n$
    \begin{align*}
        \pi \tilde{\omega} (i) &= \pi(i) < \pi(i+1) = \pi \tilde{\omega} (i+1) \matcom \\
        \pi \tilde{\omega} (j) &= \pi(2n-k+2-j) > \pi(2n-k+2-j-1) = \pi \tilde{\omega} (j+1)
        \matcom
    \end{align*}
    hence $\pi \tilde{\omega} \in \antishg_{n,k}$.
    
    Now let $\pi \in \antishg_{n,k}$. Then we have $\pi \tilde{\omega} (n+1) = \pi (n-k+1) = n+1$, hence $\pi \tilde{\omega} \in \SG_{n}$. Moreover for $1\le i \le n-k-1$ and $n-k+1 \le j \le n-1$ we obtain since $n-k+3 \le 2n-k+2-j \le n+1$
    \begin{align*}
        \pi \tilde{\omega} (i) &= \pi(i) < \pi(i+1) = \pi \tilde{\omega} (i+1) \matcom \\
        \pi \tilde{\omega} (j) &= \pi(2n-k+2-j) < \pi(2n-k+2-j-1) = \pi \tilde{\omega} (j+1) \matcom
    \end{align*}
    hence $\pi \tilde{\omega} \in \SG_{n-k,k}$.
    Since $\tilde{\omega}^2=\id_{\SG_{n+1}}$ we get $\pi = (\pi \tilde{\omega}) \tilde{\omega} \in \SG_{n-k,k} \tilde{\omega}$ and thus we can conclude $\antishg_{n,k}= \SG_{n-k,k} \tilde{\omega} $.
    
    Now for $\pi \in \SG_{n-k,k}$ by \Cref{lem_reduced_decomposition_shifts}(4) we obtain that
    \begin{align*}
        c_{(\pi \tilde{\omega})^{-1}} = (c_n) (c_{n-1} c_n) \cdots (c_{n-k+1} \cdots c_n) c_{\pi^{-1}} = \shift{n-k}{\omega_{k+1}} c_{\pi^{-1}} \matdot
    \end{align*}
    Combining this with the above result we obtain
    \begin{align*}
        g_{n,k} &= \sum_{\pi \in \SG_{n-k,k}} c_{(\pi \tilde{\omega})^{-1}} 
        = \shift{n-k}{\omega_{k+1}}  \sum_{\pi \in \SG_{n-k,k}} c_{\pi^{-1}} = \shift{n-k}{\omega_{k+1}}  S_{n-k,k} \matcom
    \end{align*}
    finishing the proof.
\end{proof}

\begin{rema} \label{rema_gnk}
    We have $\antishg_{n,0} = \lbrace \id_{\SG_{n+1}} \rbrace$ and $g_{n,0} = 1$ for all $n \in \ndN_0$. Moreover $\# \antishg_{n,k} = \# \SG_{n-k,k} = \binom{n}{k}$ and for $\pi \in \antishg_{n,k}$ we have $\pi(1)=1$ or $\pi(n+1)=1$: Indeed if $\pi(1) \ne 1$, then $1 < \pi(1) < \ldots < \pi(n-k)$. Thus we must have $\pi(n+1)=1$.
\end{rema}

\begin{exa} \label{exa_gnk}
    We have $\antishg_{1,1} = \lbrace (12) \rbrace$, hence $g_{1,0} = 1$ and $g_{1,1} = c_1$. Moreover we have
    \begin{align*}
         \antishg_{2,1} = \lbrace (23), (123) \rbrace \matcom && \antishg_{2,2} = \lbrace (13) \rbrace \matcom
    \end{align*}
    hence we have $g_{2,0}=1$, $g_{2,1} = c_2 + c_2 c_1$ and $g_{2,2} = c_2 c_1 c_2$. 
    Observe that
    \begin{align*}
        \SG_{2,1} = \lbrace \id_{\SG_3}, (23), (123) \rbrace, && \SG_{1,2} = \lbrace \id_{\SG_3} , (12), (132) \rbrace, && \SG_{0,3} = \lbrace \id_{\SG_3} \rbrace \matdot
    \end{align*}
    Using \Cref{prop_gnk_by_hnk} we obtain $g_{3,0}=1$, $g_{3,1} = c_3(1+c_2 + c_2 c_1)$, $g_{3,2} = c_3c_2c_3 (1+c_1 + c_1 c_2)$ and $g_{3,3} = c_3 c_2 c_3 c_1 c_2 c_3$. 
\end{exa}
    
\begin{lem} \label{lem_gnk_inductive_formulas}
    For $n \in \ndN$, $1\le k \le n$ we have
    \begin{align*}
        g_{n,k} + g_{n,k-1} = \shift{n-k+1}{\omega_{k}} S_{n-k+1,k} \matcom \tag{1} \\
        g_{n,k} =  g_{n-1,k-1} c_n \cdots c_1 + \shift{1}{g_{n-1,k}} \matdot \tag{2}
    \end{align*}
\end{lem}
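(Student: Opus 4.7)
The plan is to prove (1) via the formula $g_{n,k} = \shift{n-k}{\omega_{k+1}} S_{n-k,k}$ from \Cref{prop_gnk_by_hnk} together with the inductive formulas already established for $\omega_{n+1}$ and the shuffle symmetrizers, and to prove (2) by directly decomposing the index set $\antishg_{n,k}$.

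For (1), I would start from
\begin{align*}
  g_{n,k} &= \shift{n-k}{\omega_{k+1}} S_{n-k,k}, &
  g_{n,k-1} &= \shift{n-k+1}{\omega_{k}} S_{n-k+1,k-1}.
\end{align*}
By \Cref{lem_longest_word_rels}(1), $\omega_{k+1} = \shift{1}{\omega_{k}} c_1 \cdots c_k$, so after shifting by $n-k$ we obtain
\begin{align*}
  \shift{n-k}{\omega_{k+1}} = \shift{n-k+1}{\omega_{k}}\, c_{n-k+1}\cdots c_n.
\end{align*}
Substituting this into the expression for $g_{n,k}$ and using \Cref{lem_symmetrizer_inductive_formula}(2), namely $S_{n-k+1,k} = S_{n-k+1,k-1} + c_{n-k+1}\cdots c_n\, S_{n-k,k}$, yields
\begin{align*}
  g_{n,k}+g_{n,k-1} = \shift{n-k+1}{\omega_{k}}\bigl(c_{n-k+1}\cdots c_n\, S_{n-k,k} + S_{n-k+1,k-1}\bigr) = \shift{n-k+1}{\omega_{k}} S_{n-k+1,k}.
\end{align*}
This is essentially a routine substitution, so no real obstacle here.

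For (2), the plan is to partition $\antishg_{n,k}$ using \Cref{rema_gnk}: every $\pi\in\antishg_{n,k}$ satisfies $\pi(1)=1$ or $\pi(n+1)=1$, and these two cases are disjoint whenever $n\ge 1$. Define, in the first case, $\sigma\in\SG_n$ by $\sigma(j)=\pi(j+1)-1$; a direct check shows $\sigma\in\antishg_{n-1,k}$ and the assignment is a bijection between the $\pi(1)=1$ subset and $\antishg_{n-1,k}$. In the second case, define $\sigma'\in\SG_n$ by $\sigma'(j)=\pi(j)-1$ for $1\le j\le n$; again one checks $\sigma'\in\antishg_{n-1,k-1}$ and that the assignment is bijective.

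It remains to translate these bijections to the level of the Matsumoto section. In the first case, $\pi^{-1}$ fixes $1$ and agrees with a shift of $\sigma^{-1}$ on $\{2,\dots,n+1\}$, so $c_{\pi^{-1}}=\shift{1}{c_{\sigma^{-1}}}$ by reading off a reduced decomposition from one of $\sigma^{-1}$. In the second case, $\pi^{-1}=\tilde\sigma'^{-1}\cdot s_n s_{n-1}\cdots s_1$, where $\tilde\sigma'^{-1}$ denotes the extension of $\sigma'^{-1}$ by fixing $n+1$; applying \Cref{lem_reduced_decomposition_shifts}(2) with $k=1$ to any reduced decomposition of $\sigma'^{-1}$ shows that the concatenation with $(n,n-1,\dots,1)$ is reduced, hence $c_{\pi^{-1}}=c_{\sigma'^{-1}}\,c_n c_{n-1}\cdots c_1$. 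Summing over the two cases gives
\begin{align*}
  g_{n,k} = \sum_{\sigma\in\antishg_{n-1,k}}\shift{1}{c_{\sigma^{-1}}} + \sum_{\sigma'\in\antishg_{n-1,k-1}} c_{\sigma'^{-1}}\,c_n\cdots c_1 = \shift{1}{g_{n-1,k}} + g_{n-1,k-1}\,c_n\cdots c_1,
\end{align*}
and the edge cases $k=0$ and $k=n$ fall out of the convention $g_{n,k}=0$ for $k<0$ or $k>n$. The main (mild) obstacle is showing that the decomposition $\pi^{-1}=\tilde\sigma'^{-1}\cdot s_n\cdots s_1$ is length-additive so that it corresponds to the product $c_{\sigma'^{-1}}c_n\cdots c_1$ under the Matsumoto section, but this is precisely what \Cref{lem_reduced_decomposition_shifts}(2) provides.
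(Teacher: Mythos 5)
Your proposal is correct and follows essentially the same route as the paper: part (1) is the same substitution of $\shift{n-k}{\omega_{k+1}} = \shift{n-k+1}{\omega_k}\,c_{n-k+1}\cdots c_n$ into \Cref{prop_gnk_by_hnk} combined with \Cref{lem_symmetrizer_inductive_formula}(2), and part (2) is the same disjoint decomposition of $\antishg_{n,k}$ according to whether $\pi(1)=1$ or $\pi(n+1)=1$, with \Cref{lem_reduced_decomposition_shifts}(2) supplying the length-additivity needed to pass through the Matsumoto section. No gaps.
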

\begin{proof}
    (1): Using \Cref{prop_gnk_by_hnk} and \Cref{lem_longest_word_rels}(1) we obtain
    \begin{align*}
        g_{n,k} = \shift{n-k}{\omega_{k+1}} S_{n-k,k} = \shift{n-k+1}{\omega_{k}} c_{n-k+1} \cdots c_n S_{n-k,k} 
        \matdot 
    \end{align*}
    Now applying \Cref{lem_symmetrizer_inductive_formula}(2) this simplifies to
    \begin{align*}
        \shift{n-k+1}{\omega_{k}} S_{n-k+1,k} - \shift{n-k+1}{\omega_{k}} S_{n-k+1,k-1} = \shift{n-k+1}{\omega_{k}} S_{n-k+1,k} - g_{n,k-1} \matdot
    \end{align*}
    
    (2): For $\pi \in \SG_n$ let $\shift{1}{\pi} \in \SG_{n+1}$ be the permutation that fixes $1$ and maps $2\le i \le n+1$ to $\pi(i-1)+1$.
    Assuming $\antishg_{n-1,k}$ is the empty set for $k<0$ or $k\ge n$, we will show the following:
    \begin{enumerate}[label=(\alph*)]
        \item $\antishg_{n,k} = s_{1} \cdots s_n \antishg_{n-1,k-1}  \cup \shift{1}{\antishg_{n-1,k}}$.
        \item The union in (1) is disjoint.
    \end{enumerate}
    Then (2) is implied by (a), (b) and \Cref{lem_reduced_decomposition_shifts}(2).
    
    (a): It is straight forward to check $\shift{1}{\antishg_{n-1,k}} \subset \antishg_{n,k}$. If $k \ge 1$ and $\pi \in \antishg_{n-1,k-1}$ then
    \begin{align*}
        s_1 \cdots s_n \pi (n-k+1) = s_1 \cdots s_n (n) = n+1
    \end{align*}
    and for $1\le i \le n-k-1$ and $n-k+2 \le j \le n$ we have
    \begin{align*}
         s_1 \cdots s_n \pi (i) & = \pi (i) +1 < \pi (i+1) +1 =  s_1 \cdots s_n \pi (i+1)
         \matcom 
         \\ s_1 \cdots s_n \pi (j) &= \pi(j) +1 > \pi(j+1) +1 = s_1 \cdots s_n \pi(j+1)
        \matcom
    \end{align*}
    hence $ s_{1} \cdots s_n \antishg_{n-1,k-1} \subset \antishg_{n,k}$. Now if $\pi \in \antishg_{n,k}$ and $\pi(1)=1$, then the permutation that maps $1\le i\le n$ to $\pi(i+1)-1$ is an element in $\antishg_{n-1,k}$, hence $\pi \in \shift{1}{\antishg_{n-1,k}}$. If $\pi(1) \ne 1$, then $\pi(n+1) = 1$, see \Cref{rema_gnk}. In particular $\pi(i) \ne 1$ for all $1\le i \le n$. We also have $s_n \cdots s_1 \pi  (n+1) = n+1$, hence $s_n \cdots s_1 \pi \in \SG_n$. Moreover we get $
    s_n \cdots s_1 \pi (n-k+1) = s_n \cdots s_1 (n+1) = n
    $
    and for $1\le i \le n-k-1$ and $n-k+2 \le j \le n-1$ we have
    \begin{align*}
        s_n \cdots s_1 \pi (i) & = \pi (i) - 1 < \pi (i+1) -1 =  s_n \cdots s_1 \pi (i+1)
         \matcom 
         \\ s_n \cdots s_1 \pi (j) &= \pi (j) - 1 > \pi (j+1) - 1 = s_n \cdots s_1 \pi (j+1)
        \matcom
    \end{align*}
    thus $s_n \cdots s_1 \pi \in \antishg_{n-1,k-1}$ and $\pi = s_1 \cdots s_n s_n \cdots s_1 \pi \in s_1 \cdots s_n \antishg_{n-1,k-1}$.
    
    (b): Let $\pi \in \antishg_{n-1,k-1}$. Then $s_1 \cdots s_n \pi (1) = \pi(1) + 1 \ne 1$.
    Since any permutation in $\shift{1}{\antishg_{n-1,k}}$ fixes $1$, the union is disjoint.
\end{proof}

\begin{thm} \label{thm_symmetrizer_commutes_gnk}
    Let $n \in \ndN$ and let $\varphi : \fK \brB_{n+1} \rightarrow \fK \brB_{n+1}$ be the algebra antimorphism such that we have $\varphi (c_i) = c_i$ for all $1\le i\le n$.
    Then for $0 \le k \le n$ we have
    \begin{align*}
        S_{n+1} g_{n,k} = \varphi(g_{n,k}) S_{n+1} \matdot
    \end{align*}
\end{thm}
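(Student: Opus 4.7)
The plan is to induct on $k$, with base case $k=0$ trivial since $g_{n,0}=1$, giving $S_{n+1}=S_{n+1}$. For the inductive step, I would assume the identity holds for $k-1$ and derive it for $k$ using \Cref{lem_gnk_inductive_formulas}(1), which rewrites $g_{n,k} = \shift{n-k+1}{\omega_{k}} S_{n-k+1,k} - g_{n,k-1}$. Applying $\varphi$, and using that $\varphi$ is an antimorphism together with $\varphi(\omega_k)=\omega_k$ and $\varphi(S_{n-k+1,k})$ (via \Cref{lem_longest_word_rels}(2) and \Cref{lem_symmetrizer_inductive_formula}(1)), gives
\begin{align*}
\varphi(g_{n,k}) = \varphi(S_{n-k+1,k})\,\shift{n-k+1}{\omega_{k}} - \varphi(g_{n,k-1}).
\end{align*}
After subtracting the induction hypothesis contribution from both sides, the whole inductive step reduces to proving the auxiliary identity
\begin{align*}
S_{n+1}\,\shift{n-k+1}{\omega_{k}}\,S_{n-k+1,k} \;=\; \varphi(S_{n-k+1,k})\,\shift{n-k+1}{\omega_{k}}\,S_{n+1}. \tag{$\ast$}
\end{align*}

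To prove $(\ast)$, I would factor $S_{n+1}$ on the left side using \Cref{lem_symmetrizer_inductive_formula}(4), namely $S_{n+1} = \varphi(S_{n-k+1,k})\,S_{n-k+1}\,\shift{n-k+1}{S_{k}}$, and factor $S_{n+1}$ on the right side using \Cref{lem_symmetrizer_inductive_formula}(3), namely $S_{n+1} = S_{n-k+1}\,\shift{n-k+1}{S_{k}}\,S_{n-k+1,k}$. Both sides then acquire the common outer factors $\varphi(S_{n-k+1,k})$ on the left and $S_{n-k+1,k}$ on the right, so $(\ast)$ collapses to the equality
\begin{align*}
S_{n-k+1}\,\shift{n-k+1}{S_{k}}\,\shift{n-k+1}{\omega_{k}} \;=\; \shift{n-k+1}{\omega_{k}}\,S_{n-k+1}\,\shift{n-k+1}{S_{k}}
\end{align*}
in $\fK\brB_{n+1}$.

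The final verification is the one substantive computation. The generators appearing in $\shift{n-k+1}{\omega_{k}}$ and $\shift{n-k+1}{S_{k}}$ lie in $\shift{n-k+1}{\brB_k}$, that is, among $c_{n-k+2},\ldots,c_{n}$, which commute with all generators $c_{1},\ldots,c_{n-k}$ occurring in $S_{n-k+1}$. Hence $\shift{n-k+1}{\omega_{k}}$ commutes with $S_{n-k+1}$, and after swapping them one is left with proving $\shift{n-k+1}{\omega_{k}\,S_{k}} = \shift{n-k+1}{S_{k}\,\omega_{k}}$, which is exactly \Cref{lem_longest_word_rels}(4) under the shift. I expect the main (though still mild) obstacle to be keeping the two different factorisations of $S_{n+1}$ and the role of $\varphi$ aligned correctly; once one sees that the antimorphism $\varphi$ is exactly what is needed to pair (3) with (4), the commuting argument is short and relies only on the disjointness of the generator supports plus the known commutation $\omega_k S_k = S_k \omega_k$.
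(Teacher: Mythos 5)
Your proof is correct and follows essentially the same route as the paper's: induction on $k$ via \Cref{lem_gnk_inductive_formulas}(1), with the inductive step resolved by playing the two factorisations of $S_{n+1}$ from \Cref{lem_symmetrizer_inductive_formula}(3) and (4) against each other and using \Cref{lem_longest_word_rels}(2),(4) plus the disjointness of generator supports to commute $\shift{n-k+1}{\omega_k}$ into place. The only difference is organisational — you isolate the identity $(\ast)$ and prove it separately, while the paper runs the same manipulations as a single chain of equalities starting from $S_{n+1}g_{n,k}$.
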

\begin{proof}
    Clearly the claim holds for $k=0$, since $g_{n,0} = 1$. Assume $k \ge 1$ and that $S_{n+1} g_{n,k-1} = \varphi(g_{n,k-1}) S_{n+1}$.
    In the following calculation we use \Cref{lem_gnk_inductive_formulas}(1) in the first equation, \Cref{lem_symmetrizer_inductive_formula}(4) in the second equation, \Cref{lem_longest_word_rels}(2) and (4) in the third equation, \Cref{lem_symmetrizer_inductive_formula}(3) in the fourth equation and again \Cref{lem_gnk_inductive_formulas}(1) in the last equation:
    \begin{align*}
        &  S_{n+1} g_{n,k} = S_{n+1} \shift{n-k+1}{\omega_{k}} S_{n-k+1,k} - S_{n+1} g_{n,k-1} \\
        &= \varphi(S_{n-k+1,k} ) S_{n-k+1} \shift{n-k+1}{S_k} \shift{n-k+1}{\omega_{k}} S_{n-k+1,k} - S_{n+1} g_{n,k-1}  \\
        &= \varphi( S_{n-k+1,k} ) \varphi(\shift{n-k+1}{\omega_{k}} ) S_{n-k+1} \shift{n-k+1}{S_k} S_{n-k+1,k} - S_{n+1} g_{n,k-1} \\
        &= \varphi( \shift{n-k+1}{\omega_{k}} S_{n-k+1,k} ) S_{n+1} - S_{n+1} g_{n,k-1} \\
        &= \varphi( g_{n,k} ) S_{n+1} + \varphi( g_{n,k-1} ) S_{n+1} - S_{n+1} g_{n,k-1}
        \matdot
    \end{align*}
    This simplifies to $\varphi( g_{n,k} ) S_{n+1}$ by assumption.
\end{proof}

\section{Nichols systems of group-type} \label{sect_nich_sys_group_main}

We are now ready to give a explicit formula for the Shapovalov morphism on specific components, if some conditions are met, see \Cref{thm_shapoendo_via_gnk}. In particular, most of these conditions are met if the corresponding component of the Nichols system is of group-type, see \Cref{exa_group_type}. At the end of the section in \Cref{prop_ker_shapo_deg2} we use the formula to calculate parts of the maximal subobject in the componenent of the $\ydmod{}$-module of degree $2$. The explicit formula will also prove very useful when we go over to Nichols systems of diagonal type in \cref{sect_nich_sys_diag_main}.

Denote $\fK^{\times} = \fK \setminus \lbrace 0 \rbrace$.
Let $\scr{N}$ be a pre-Nichols system, let $Q := \nsalg{\scr{N}}$ and $N_j := \scr{N}_j$ for all $j \in \mathbb{I}$. 
Let $\Gamma$ be an abelian group, such that $\ndZ^\theta \subset \Gamma$.
Let $V \in \ydcat{Q}{\scr{C}}$ be a homogeneously generated $\Gamma$-graded object. 
We fix $i \in \mathbb{I}$, assume that $\subalgQ{N_i}$ is strictly graded and that there exists $\lambda_i \in \fK^{\times}$, such that
\begin{align} \label{equ_braiding_squared_lambda_id}
    \brd^{\scr{C}}_{\gencomp{V}, N_i} \brd^{\scr{C}}_{N_i, \gencomp{V}} = \lambda_i \, \id_{N_i \ot \gencomp{V}} \matdot
\end{align}

\begin{exa} \label{exa_group_type}
    Such $\lambda_i$ exists in the following setting:
    Assume $N_i$ is irreducible in $\scr{C}$ and that $\gencomp{V}$ is one-dimensional. Let $X$ be a basis of $N_i$ and $0\ne v \in \gencomp{V}$. 
    We assume that $N_i$ is of group-type, that is for all $x \in X$, there exists a group-like element $g_x \in H$, such that
    \begin{align*}
        \coact_{N_i}^H (x) = g_x \ot x \matdot
    \end{align*}
    Moreover assume that $H$ is generated by group-like elements and that the elements $g_x$, $x \in X$ are linearly independent.
    Observe that if $a \in N_i$, such that $a\ne 0$ and $\coact_{N_i}^H (a) = g \ot a$ for some group-like element $g \in H$, then $g=g_x$ and $a \in \fK x$ for some $x \in X$.
    By \Cref{lem_induced_Q_comodule_is_YD_module} we obtain that for $x \in  X$ the set $Hx$ is a subobject of $N_i$. Since $N_i$ is irreducible we obtain $Hx=N_i$.
    $Hx$ is spanned by elements $g\cdot x$, where $g$ is group-like. Such $g$ is invertible with inverse $S(g)$ and we have
    \begin{align*}
       \coact_{N_i}^H (g \cdot x) = g g_x g^{-1} \ot g\cdot x.
    \end{align*}
    By the above we conclude that there exists some $y \in X$, such that $g g_x g^{-1} = g_y$ and $g\cdot x \in \fK y$. Since $Hx = N_i$ we conclude that all $g_z$, $z \in X$ are conjugate.
    Now there exists some $\lambda' \in \fK^{\times}$, such that $g_x \cdot v = \lambda' v$. Let $y \in X$, $g \in H$ and $\mu \in \fK^{\times}$, such that $g$ is group-like, $g_y = g g_x g^{-1}$ and $g\cdot x = \mu y$. 
    We obtain $g_y \cdot v = g g_x g^{-1} \cdot v = \lambda' v$, hence $g_z \cdot v = \lambda' v$ for all $z \in X$. Now let $g_v \in H$ be the group-like element with $\coact_{\gencomp{V}} (v) = g_v \ot v$. For each group-like element $h \in H$ we have $h \cdot v \in \fK^{\times} v$ and thus
    \begin{align*}
        g_v \ot v = \coact_{N_i}^H(v) 
        = h g_v h^{-1} \ot v \matcom
    \end{align*}
    i.e. $h g_v = g_v h$. Since $g_v \cdot x \in N_i$ and
    \begin{align*}
        \coact_{N_i}^H (g_v \cdot x) = g_v g_x g_v^{-1} \ot g_v \cdot x = g_x \ot g_v \cdot x,
    \end{align*}
    we conclude that $g_v \cdot x \in \fK^{\times} x$. Let $\lambda'' \in \fK^{\times}$, such that $g_v \cdot x = \lambda'' x$. Since
    \begin{align*}
        g_v \cdot y = \mu^{-1} g_v g \cdot x = \mu^{-1} g g_v \cdot x = \lambda'' \mu^{-1}g \cdot x = \lambda'' y
    \end{align*}
    we have $g_v \cdot z = \lambda'' z$ for all $z \in X$.
    This implies 
    \begin{align*}
        \brd^{\scr{C}}_{\gencomp{V}, N_i} \brd^{\scr{C}}_{N_i, \gencomp{V}} (z \ot v) = \lambda' \brd^{\scr{C}}_{\gencomp{V}, N_i} (v \ot z) = \lambda' \lambda'' z \ot v
    \end{align*}
    for all $z \in X$, i.e. $\brd^{\scr{C}}_{\gencomp{V}, N_i} \brd^{\scr{C}}_{N_i, \gencomp{V}} = \lambda' \lambda'' \, \id_{N_i \ot \gencomp{V}}$.
\end{exa}

Let $n \in \ndN$. We view $\fK \brB_n$ as a subset of $\Endo_{\scr{C}}(N_i^{\ot n})$ by identifying $c_j$ with the endomorphism
\begin{align*}
    \id_{N_i^{\ot_{j-1}}} \ot \brd^{\scr{C}}_{N_i,N_i} \ot \id_{N_i^{\ot n-j-1 }} 
\end{align*}
for all $1\le j\le n-1$. Since $\subalgQ{N_i}$ is strictly graded, we know that
\begin{align*}
    N_i^n \cong N_i^{\ot n} / \ker S_n \matcom
\end{align*}
for details refer to \cite{HeSch}, Corollary 1.9.7. By \Cref{thm_symmetrizer_commutes_gnk} we obtain that $g_{n,k}(\ker S_{n+1}) \subset \ker S_{n+1}$, hence $g_{n,k}\in \fK \brB_{n+1}$ yields a well defined endomorphism
\begin{align*}
    g_{n,k} : N_i^{n+1} \rightarrow N_i^{n+1}
\end{align*}
for all $k \in \ndZ$.

\begin{lem} \label{lem_shapovalov_endo_product_rack_case}
    Let $x \in N_i$, $y \in Q$, $v \in \gencomp{V}$. Then we have
    \begin{align*}
        \shapoendo_V \left( (xy) \cdot v \right) =& \left(
        \bimu_Q \ot \id_{\gencomp{V}}
        - \lambda_i (\bimu_Q \brd^{\scr{C}}_{Q, Q}  \ot \id_{\gencomp{V}}) \right) \left(x \ot \shapoendo_V(y \cdot v) \right)
        \matdot
    \end{align*}
\end{lem}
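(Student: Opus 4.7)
The plan is to specialize Lemma \ref{lem_shapovalov_endo_product} to the case where $x \in N_i$, using two key facts: first that $x$ is a primitive element of $Q$, and second that hypothesis \eqref{equ_braiding_squared_lambda_id} allows us to replace a compound braiding by the scalar $\lambda_i$.

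First I would note that since $\subalgQ{N_i}$ is strictly graded, we have $\subalgQ{N_i} \cong \nich(N_i)$ by Remark \ref{rema_nich_sys_hesch}, so $N_i$ coincides with the space of primitive elements of $\subalgQ{N_i}$. Since $\subalgQ{N_i} \hookrightarrow Q$ is a Hopf algebra inclusion, every $x \in N_i$ satisfies $\bicomu_Q(x) = x \ot 1 + 1 \ot x$, whence $\antip_Q(x) = -x$. I would then substitute this expression for $\bicomu_Q(x)$ into the formula supplied by Lemma \ref{lem_shapovalov_endo_product} and split the result into two summands.

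For the summand arising from $x \ot 1$, the antipode yields $\antip_Q(1) = 1$, and all braidings involving the unit $1 \in Q$ are trivial because $\biunit_Q: \fK \to Q$ is a morphism in $\scr{C}$ (so by naturality $\brd^{\scr{C}}$ acts as the canonical isomorphism on $1$). A direct bookkeeping shows that this summand collapses to $(\bimu_Q \ot \id_{\gencomp{V}})(x \ot \shapoendo_V(y \cdot v))$. For the summand arising from $1 \ot x$, the antipode contributes a sign $-1$, and I would use the hexagonal identity
\begin{align*}
\brd^{\scr{C}}_{Q, Q \ot \gencomp{V}} = (\id_Q \ot \brd^{\scr{C}}_{Q, \gencomp{V}})(\brd^{\scr{C}}_{Q, Q} \ot \id_{\gencomp{V}})
\end{align*}
to split the outer braiding. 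The essential observation is that $\brd^{\scr{C}}_{Q, Q}(x \ot q) = (x_{(-1)} \cdot q) \ot x_{(0)}$ has its second tensor factor again in $N_i$, since $N_i$ is an $H$-subcomodule of $Q$. Consequently the composition $\brd^{\scr{C}}_{\gencomp{V}, Q} \brd^{\scr{C}}_{Q, \gencomp{V}}$ that subsequently appears is evaluated on an element of $N_i \ot \gencomp{V}$, so by \eqref{equ_braiding_squared_lambda_id} it reduces to multiplication by $\lambda_i$. What remains is a single $\brd^{\scr{C}}_{Q, Q}$ followed by $\bimu_Q$, producing exactly $-\lambda_i (\bimu_Q \brd^{\scr{C}}_{Q, Q} \ot \id_{\gencomp{V}})(x \ot \shapoendo_V(y \cdot v))$.

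The main obstacle is the bookkeeping in the $1 \ot x$ summand: one must carefully track the tensor positions through the composition of braidings, and must verify that after the hexagonal splitting the right-hand factor of $\brd^{\scr{C}}_{Q,Q}(x \ot -)$ indeed lands in $N_i$, so that hypothesis \eqref{equ_braiding_squared_lambda_id} applies. Once this is settled, the two pieces assemble into the stated formula.
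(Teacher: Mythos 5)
Your proposal is correct and follows essentially the same route as the paper: apply \Cref{lem_shapovalov_endo_product}, use that $x \in N_i$ is primitive (so $\bicomu_Q(x) = x \ot 1 + 1 \ot x$ and $\antip_Q(x) = -x$), collapse the unit summand, and reduce the other summand via the hexagon identity, the observation that $\brd^{\scr{C}}_{Q,Q}(N_i \ot Q) \subset Q \ot N_i$, and \cref{equ_braiding_squared_lambda_id}. The only cosmetic difference is that you justify primitivity via strict gradedness of $\subalgQ{N_i}$, whereas it already follows from the $\ndN_0^\theta$-grading of $Q$ with $Q(\alpha_i)=N_i$; this does not affect correctness.
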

\begin{proof}
    Combining \Cref{lem_shapovalov_endo_product} and the fact that $x$ is primitive we obtain that $\shapoendo_V \left( (xy) \cdot v \right)$ simplifies to
    \begin{align*}
        & (\bimu_Q \ot \id_{\gencomp{V}})(\bimu_Q \ot \brd^{\scr{C}}_{\gencomp{V},Q}) (\id_Q \ot \brd^{\scr{C}}_{Q, Q\ot \gencomp{V}}) \left( (x \ot 1 - 1 \ot x) \ot \shapoendo_V(y \cdot v) \right)
        \\=& \left( \bimu_Q \ot \id_{\gencomp{V}}  - (\bimu_Q \ot \id_{\gencomp{V}})(\id_Q \ot \brd^{\scr{C}}_{\gencomp{V},Q}) \brd^{\scr{C}}_{Q, Q\ot \gencomp{V}} \right) \left( x \ot \shapoendo_V(y \cdot v) \right)
    \end{align*}
    Observe that $\brd^{\scr{C}}_{Q,Q} ( N_i \ot Q) \subset Q \ot N_i$. Hence \cref{equ_braiding_squared_lambda_id} implies the claim.
\end{proof}

\begin{thm} \label{thm_shapoendo_via_gnk}
    Let $n \in \ndN_0$. For $x \in N_i^{n+1}$ and $v \in \gencomp{V}$ we have
    \begin{align*}
        \shapoendo_V (x \cdot v) = (1-\lambda_i) \sum_{k=0}^n (-\lambda_i)^k g_{n,k}(x) \ot v \matdot
    \end{align*}
\end{thm}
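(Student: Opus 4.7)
The plan is to prove the formula by induction on $n$. For the base case $n=0$, since $\coact_V^Q$ is graded and $v \in \gencomp{V}=V(\genind{V})$, we have $\coact_V^Q(v)\in Q(0)\ot V(\genind{V}) = \fK \ot \gencomp{V}$, hence $\coact_V^Q(v)=1\ot v$ and so $\shapoendo_V(v) = 1 \ot v$. Applying \Cref{lem_shapovalov_endo_product_rack_case} with $y=1$ and using $\brd^{\scr{C}}_{Q,Q}(x \ot 1) = 1 \ot x$ in the strict braided category, we obtain $\shapoendo_V(x \cdot v) = x \ot v - \lambda_i\, x \ot v = (1-\lambda_i)(x \ot v)$ for $x \in N_i$, matching the claim since $g_{0,0}=1$.

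For the inductive step, by linearity I may assume $x = x_1 \cdot y$ with $x_1 \in N_i$ and $y \in N_i^n$. Combining \Cref{lem_shapovalov_endo_product_rack_case} with the inductive hypothesis applied to $y$ gives
\begin{align*}
\shapoendo_V(x \cdot v) = (1-\lambda_i)\sum_{k=0}^{n-1}(-\lambda_i)^k \bigl[\bimu_Q - \lambda_i \bimu_Q \brd^{\scr{C}}_{Q,Q}\bigr](x_1 \ot g_{n-1,k}(y)) \ot v.
\end{align*}
I next identify the two operators on the right as braid elements acting on $N_i^{n+1}$. By naturality of the braiding with respect to the quotient $\pi_n : N_i^{\ot n} \to N_i^n$, and since $\brd^{\scr{C}}_{N_i, N_i^{\ot n}}$ is realized on $N_i^{\ot (n+1)}$ by the braid $c_n c_{n-1} \cdots c_1$ moving the first factor past the other $n$, a short calculation with any lift $\tilde y \in N_i^{\ot n}$ of $y$ yields
\begin{align*}
\bimu_Q(x_1 \ot g_{n-1,k}(y)) &= \shift{1}{g_{n-1,k}}(x_1 \ot y), \\
\bimu_Q \brd^{\scr{C}}_{Q,Q}(x_1 \ot g_{n-1,k}(y)) &= (c_n c_{n-1} \cdots c_1)\shift{1}{g_{n-1,k}}(x_1 \ot y)
\end{align*}
inside $N_i^{n+1}$.

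What remains is a combinatorial identity in $\fK\brB_{n+1}$. I first establish the commutation
\begin{align*}
b \cdot (c_n c_{n-1} \cdots c_1) = (c_n c_{n-1} \cdots c_1) \cdot \shift{1}{b}, \qquad b \in \fK\brB_n,
\end{align*}
which by multiplicativity reduces to the generators $b = c_j$ ($1\le j\le n-1$) and follows from the braid relations ($c_j c_l = c_l c_j$ for $|j-l|\ge 2$ together with $c_j c_{j+1} c_j = c_{j+1} c_j c_{j+1}$). Applying this with $b = g_{n-1,k-1}$ to rewrite the second summand, reindexing by $k \mapsto k+1$, and invoking the recursion $g_{n,k} = g_{n-1,k-1}\, c_n \cdots c_1 + \shift{1}{g_{n-1,k}}$ from \Cref{lem_gnk_inductive_formulas}(2) (with the convention $g_{n-1,-1} = g_{n-1,n} = 0$) collapses the two sums to $\sum_{k=0}^n (-\lambda_i)^k g_{n,k}(x_1 \ot y)$, completing the induction. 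The main obstacle will be the precise identification of $\bimu_Q \brd^{\scr{C}}_{Q,Q}|_{N_i \ot N_i^n}$ as a braid element acting on $N_i^{n+1}$: neither of the factors $c_n c_{n-1} \cdots c_1$ and $\shift{1}{g_{n-1,k}}$ need descend to $N_i^{n+1}$ individually, but their product equals $g_{n-1,k}(c_n \cdots c_1)$ by the above commutation, which together with \Cref{thm_symmetrizer_commutes_gnk} makes the action well defined and aligns it with the recursion of \Cref{lem_gnk_inductive_formulas}(2).
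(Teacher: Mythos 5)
Your proposal is correct and follows essentially the same route as the paper's proof: induction on $n$ with base case $\shapoendo_V(x\cdot v)=(1-\lambda_i)x\ot v$, the inductive step via \Cref{lem_shapovalov_endo_product_rack_case}, identification of the two operators as $\shift{1}{g_{n-1,k}}$ and $g_{n-1,k}c_n\cdots c_1$ acting on $N_i^{n+1}$, and the recursion of \Cref{lem_gnk_inductive_formulas}(2) after reindexing. Your explicit commutation $b\,(c_n\cdots c_1)=(c_n\cdots c_1)\shift{1}{b}$ and the appeal to \Cref{thm_symmetrizer_commutes_gnk} for well-definedness only spell out steps the paper leaves implicit or handles in the preamble to the theorem.
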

\begin{proof}
    We do induction on $n$. If $n=0$, then $x \in N_i$ and thus by setting $y=1$ in  \Cref{lem_shapovalov_endo_product_rack_case} and the fact that $\shapoendo_V(v) = 1 \ot v$ we obtain
    \begin{align*}
        \shapoendo_V (x \cdot v) = (1-\lambda_i) x \ot v \matdot
    \end{align*}
    Now assume $n \ge 1$. Let $x' \in N_i$, $y \in N_i^n$, such that $x=x' y$. 
    We have
    \begin{align*}
        x' g_{n-1,k}(y) = \shift{1}{g_{n-1,k}}(x'y) = \shift{1}{g_{n-1,k}}(x)
    \end{align*}
    as well as
    \begin{align*}
        \bimu_Q \brd^{\scr{C}}_{Q, Q} (x' \ot g_{n-1,k}(y) ) = (g_{n-1,k} c_n \cdots c_1)(x'y) = (g_{n-1,k} c_n \cdots c_1)(x) \matdot
    \end{align*}
    Then by \Cref{lem_shapovalov_endo_product_rack_case} and induction hypothesis we obtain
    \begin{align*}
        &\shapoendo_V (x \cdot v) = \left(
        \bimu_Q \ot \id_{\gencomp{V}}
        - \lambda_i (\bimu_Q \brd^{\scr{C}}_{Q, Q}  \ot \id_{\gencomp{V}}) \right) \left(x' \ot \shapoendo_V(y \cdot v) \right)
        \\=& (1-\lambda_i) \sum_{k=0}^{n-1}  (-\lambda_i)^k \left(
        \bimu_Q \ot \id_{\gencomp{V}}
        - \lambda_i (\bimu_Q \brd^{\scr{C}}_{Q, Q}  \ot \id_{\gencomp{V}}) \right)  \left(x' \ot g_{n-1,k}(y) \ot v \right)
        \\=& (1-\lambda_i) \sum_{k=0}^{n-1}  (-\lambda_i)^k \left( \shift{1}{g_{n-1,k}}
        - \lambda_i g_{n-1,k} c_n \cdots c_1 \right)(x) \ot v 
        \matdot
    \end{align*}
    Since $g_{n-1,n}=0$ and $g_{n-1,-1}=0$ this further simplifies to
    \begin{align*}
       \shapoendo_V (x \cdot v) = (1-\lambda_i) \sum_{k=0}^{n}  (-\lambda_i)^k \left( \shift{1}{g_{n-1,k}} + g_{n-1,k-1} c_n \cdots c_1 \right)(x) \ot v \matdot
    \end{align*}
    Finally \Cref{lem_gnk_inductive_formulas} yields $\shift{1}{g_{n-1,k}} + g_{n-1,k-1} c_n \cdots c_1 = g_{n,k}$.
\end{proof}

\begin{rema}
    If $\lambda_i = 1$, then by \Cref{thm_shapoendo_via_gnk} we get that 
    \begin{align*}
        \ker \shapoendo_V \cap \subalgQ{N_i} \cdot \gencomp{V} = \oplus_{n \ge 1} N_i^n \cdot \gencomp{V} \matdot 
    \end{align*}
    If $\lambda_i \ne 1$, then
    \begin{align*}
        \ker \shapoendo_V \cap \subalgQ{N_i} \cdot \gencomp{V} \subset \oplus_{n \ge 2} N_i^n \cdot \gencomp{V} \matdot 
    \end{align*}
\end{rema}

Assume $\lambda_i \ne 1$. We now want to calculate $\ker \shapoendo_V \cap N_i^2 \cdot \gencomp{V}$ in the case where there is a basis of $N_i$, such that for all $x,y \in N_i$ in that basis there exists a minimal number $m \in \ndN_0$, such that the elements $x\ot y, c_1(x \ot y), \ldots, c_1^m(x\ot y)$ are linearly independent in $N_i^{\ot 2}$ and
    \begin{align*}c_1^{m+1} (x \ot y) \in \fK x \ot y \matdot \end{align*}
    
\begin{exa} \label{exa_rack_cocycle}
    Assume $N_i$ is defined via quandle and cocycle (for details refer to \cite{MR1994219}), that is that there exists a basis $X$ of $N_i$ as well as an action $\qact: X \times X \rightarrow X$, such that $X$ is a quandle and there exists a 2-cocycle $q : X \ot X \rightarrow \fK^\times$, such that that for $x,y \in X$ we have that
    \begin{align*}
        \brd^{\scr{C}}_{N_i,N_i} (x \ot y) = q(x,y) x \qact y \ot x \matdot
    \end{align*}
    Then such $m$ exists for all $x,y \in X$, since $N_i$ is finite-dimensional. Here $N_i$ is of group-type like in \Cref{exa_group_type}, where $H$ could be the group algebra over the enveloping group of $X$.
\end{exa}

Let $x,y \in N_i$. Assume there is a minimal number $m \in \ndN_0$, such that the elements $x\ot y, c_1(x \ot y), \ldots, c_1^m(x\ot y) \in N_i^{\ot 2}$ are linearly independent and that $c_1^{m+1} (x \ot y) \in \fK x \ot y$. Let $q \in \fK$ be such that 
\begin{align*}
    c_1^{m+1} (x \ot y) = q x \ot y 
    \matdot
\end{align*}
Observe that since $c_1 S_2 = S_2 c_1$ we have a well-defined endomorphism
\begin{align*}
    c_1 : N_i^2 \rightarrow N_i^2 \matdot
\end{align*}
We obtain $c_1^{m+1}(xy) = q xy$.


\begin{lem} \label{lem1_ker_shapo_deg2}
    If $q = (-1)^{m+1}$ then $c_1(xy),\ldots, c_1^{m}(xy)$ are linearly independent and
    \begin{align*}
        \sum_{k=1}^m (-1)^k c_1^k(xy) = - xy \matdot 
    \end{align*} 
    If $q \ne (-1)^{m+1}$ then $xy, c_1(xy),\ldots, c_1^{m}(xy)$ are linearly independent.
\end{lem}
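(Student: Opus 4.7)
The plan is to work inside the finite-dimensional subspace $W \subset N_i^{\ot 2}$ spanned by the vectors $c_1^k(x\ot y)$ for $0 \le k \le m$. By hypothesis these vectors form a basis of $W$, and the relation $c_1^{m+1}(x\ot y) = q(x\ot y)$ means that $c_1$ restricts to an endomorphism of $W$ whose matrix in this basis is the companion matrix of $t^{m+1} - q \in \fK[t]$. Since $N_i^2 \cong N_i^{\ot 2}/\ker S_2$ with $S_2 = 1 + c_1$, the kernel of the projection $\proj : W \to N_i^2$ equals $W \cap \ker S_2$, which is precisely the $(-1)$-eigenspace of $c_1|_W$.

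The main observation is then that $-1$ is an eigenvalue of $c_1|_W$ if and only if it is a root of $t^{m+1} - q$, i.e.\ if and only if $q = (-1)^{m+1}$. Because companion matrices have geometric multiplicity one at every eigenvalue, this eigenspace is at most one-dimensional, independently of the characteristic of $\fK$. A direct computation with the candidate vector $v := \sum_{k=0}^{m} (-1)^k c_1^k(x \ot y)$ shows that $c_1(v) + v = (1 + (-1)^m q)(x \ot y)$, which vanishes precisely when $q = (-1)^{m+1}$; hence $v$ spans the $(-1)$-eigenspace in that case.

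Both cases of the lemma follow at once. If $q \ne (-1)^{m+1}$, then $\proj$ is injective, giving the linear independence of $xy, c_1(xy), \ldots, c_1^m(xy)$ in $N_i^2$. If $q = (-1)^{m+1}$, then $\ker \proj = \fK v$, yielding in $N_i^2$ the single relation $\sum_{k=0}^{m} (-1)^k c_1^k(xy) = 0$; rearranging gives $\sum_{k=1}^m (-1)^k c_1^k(xy) = -xy$, and because the coefficient of $x \ot y$ in $v$ equals $1$, the remaining vectors $c_1(xy), \ldots, c_1^m(xy)$ project to a linearly independent family in $N_i^2$. The only subtle point worth flagging is the appeal to the geometric multiplicity of companion matrices, which is what keeps the argument uniform across all characteristics, since otherwise $-1$ might a priori be a repeated root of $t^{m+1}-q$ when $\mathrm{char}(\fK)$ divides $m+1$.
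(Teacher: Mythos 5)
Your proof is correct, and it takes a genuinely different (more structural) route than the paper's. Both arguments start from the same reduction: since $N_i^2 \cong N_i^{\ot 2}/\ker S_2$ with $S_2 = 1+c_1$, the kernel of the projection restricted to $W := \mathrm{span}\lbrace c_1^k(x\ot y) : 0 \le k \le m\rbrace$ is exactly $\ker\bigl((1+c_1)|_W\bigr)$. The paper computes this kernel by hand: it applies $1+c_1$ to a general combination $\sum_k \mu_k c_1^k(x\ot y)$, reads off the coefficient recursions $\mu_{k-1}+\mu_k=0$ together with a boundary condition involving $q$ from linear independence in $N_i^{\ot 2}$, and solves them separately in the two cases. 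You instead identify $c_1|_W$ with the companion matrix of $t^{m+1}-q$, so that $-1$ is an eigenvalue precisely when $q=(-1)^{m+1}$, and you use the non-derogatory property of companion matrices to bound the $(-1)$-eigenspace by dimension one; the single computation $c_1(v)+v=(1+(-1)^m q)(x\ot y)$ for $v=\sum_{k=0}^m(-1)^k c_1^k(x\ot y)$ then identifies the kernel exactly, and the fact that $v$ has coefficient $1$ on $x\ot y$ yields the residual independence of $c_1(xy),\ldots,c_1^m(xy)$. What your approach buys is economy and a clean explanation of why the argument is characteristic-independent (the geometric multiplicity of a companion matrix is always one); what the paper's approach buys is complete self-containedness, never invoking anything beyond coefficient comparison. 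The two are logically equivalent: the paper's recursion-solving is precisely a hands-on proof that the $(-1)$-eigenspace of a cyclic operator is at most one-dimensional.
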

\begin{proof}
    Assume $q = (-1)^{m+1}$. Then
    \begin{align*}
        S_2 \left( \sum_{k=0}^m (-1)^k c_1^k(x\ot y)\right) &= \sum_{k=0}^m (-1)^k (1+c_1) c_1^k(x\ot y) = 0
    \end{align*}
    since $(-1)^m c_1^{m+1}(x\ot y) = (-1)^{2m+1} (x \ot y) = -x \ot y$. This implies that $\sum_{k=0}^m (-1)^k c_1^k(xy) = 0$. Now let $\mu_1,\ldots, \mu_m \in \fK$ and assume
    \begin{align*}
        \sum_{k=1}^m \mu_i c_1^k(xy) = 0 \matdot
    \end{align*}
    We obtain
    \begin{align*}
        0 &= S\left(\sum_{k=1}^m \mu_k c_1^k(x \ot y)\right) = \sum_{k=1}^m \mu_k (1+c_1) c_1^k(x\ot y) \\&= (-1)^{m+1} \mu_m (x\ot y) + \mu_{1} c_1(x\ot y) + \sum_{k=2}^{m} (\mu_{k-1} + \mu_{k}) c_1^k(x\ot y)
    \end{align*}
    This implies $\mu_m=\mu_1=\mu_{k-1}+\mu_k = 0$ for all $2\le k \le m$, i.e. $\mu_k=0$ for all $1\le k \le m$. Similarly if $q \ne (-1)^{m+1}$ and $\mu_0,\ldots,\mu_m \in \fK$ are given such that $\sum_{k=0}^m \mu_i c_1^k(xy) = 0$,
    then similarly to the above we obtain
    \begin{align*}
        0 = (\mu_0 + q\mu_m) (x\ot y) +\sum_{k=1}^{m} (\mu_{k-1} + \mu_{k}) c_1^k(x\ot y) \matdot
    \end{align*}
    This implies $\mu_0 = -q \mu_m$, as well as $\mu_{k-1}=-\mu_k$ for all $1 \le k \le m$. Hence $q \mu_m = -\mu_0 = (-1)^{m+1} \mu_m$ and since $q\ne (-1)^{m+1}$, this implies $\mu_0 = \mu_1 = \ldots = \mu_m = 0$.
\end{proof}

\begin{lem} \label{lem2_ker_shapo_deg2}
    For $k \in \ndN_0$ and $v \in \gencomp{V}$ we have
    \begin{align*}
        \shapoendo_V ( c_1^k(xy) \cdot v ) = (1-\lambda_i) (c_1^k(xy) - \lambda_i c_1^{k+1}(xy)) \ot v \matdot
    \end{align*}
\end{lem}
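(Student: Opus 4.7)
The plan is to derive this as a direct consequence of \Cref{thm_shapoendo_via_gnk}, specialised to the case $n = 1$, applied to the element $c_1^k(xy) \in N_i^2$. Since $c_1$ commutes with $S_2$ on $N_i^{\ot 2}$, the induced endomorphism $c_1 : N_i^2 \to N_i^2$ is well-defined, so $c_1^k(xy)$ makes sense as an element of $N_i^2$ for every $k \in \ndN_0$, and in particular lies in the domain of the formula from \Cref{thm_shapoendo_via_gnk}.

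First I would apply \Cref{thm_shapoendo_via_gnk} with $n = 1$ to the element $c_1^k(xy) \in N_i^2$, which yields
\begin{align*}
    \shapoendo_V ( c_1^k(xy) \cdot v ) = (1-\lambda_i) \sum_{l=0}^{1} (-\lambda_i)^l g_{1,l}( c_1^k(xy) ) \ot v \matdot
\end{align*}
Next I would substitute the explicit values $g_{1,0} = 1$ and $g_{1,1} = c_1$ recorded in \Cref{exa_gnk}. This immediately reduces the right hand side to
\begin{align*}
    (1-\lambda_i) \bigl( c_1^k(xy) - \lambda_i\, c_1( c_1^k(xy) ) \bigr) \ot v = (1-\lambda_i) \bigl( c_1^k(xy) - \lambda_i\, c_1^{k+1}(xy) \bigr) \ot v \matcom
\end{align*}
which is exactly the claimed identity.

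In short, there is no real obstacle here: the lemma is a one-line specialisation of \Cref{thm_shapoendo_via_gnk}, using only the base-case formulas $g_{1,0}=1$, $g_{1,1}=c_1$. The only thing worth being explicit about is the well-definedness of $c_1^k$ as an operator on $N_i^2$, which is already ensured by \Cref{thm_symmetrizer_commutes_gnk} (together with the strictly-graded hypothesis on $\subalgQ{N_i}$).
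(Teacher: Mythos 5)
Your proposal is correct and matches the paper's proof, which likewise obtains the lemma as a direct specialisation of \Cref{thm_shapoendo_via_gnk} to $n=1$ with $g_{1,0}=1$ and $g_{1,1}=c_1$. Your remark on the well-definedness of $c_1$ on $N_i^2$ is the same observation the paper records just before the lemma (via $c_1 S_2 = S_2 c_1$), so nothing is missing.
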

\begin{proof}
    The claim is a direct implication of \Cref{thm_shapoendo_via_gnk}.
\end{proof}

\begin{lem} \label{lem3_ker_shapo_deg2}
    Assume $q \ne (-1)^{m+1}$. Then for 
    \begin{align*}
        a = \sum_{k=0}^m \lambda_i^k c_1^k(xy) \in N_i^2
    \end{align*}
    and $v \in \gencomp{V}$ we have $a \ne 0$ and
    \begin{align*}
        \shapoendo_V(a \cdot v) = (1 - \lambda_i)(1 - q \lambda_i^{m+1}) x y \ot v
    \end{align*}
\end{lem}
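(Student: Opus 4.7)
The plan is to proceed in two short steps, both essentially bookkeeping with the tools already set up.

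First, to see that $a \ne 0$, I would invoke \Cref{lem1_ker_shapo_deg2}: under the hypothesis $q \ne (-1)^{m+1}$, the elements $xy, c_1(xy), \ldots, c_1^m(xy) \in N_i^2$ are linearly independent. Since $\lambda_i \in \fK^{\times}$, the coefficients $\lambda_i^k$ ($0 \le k \le m$) are all nonzero, so $a = \sum_{k=0}^m \lambda_i^k c_1^k(xy)$ is a nontrivial linear combination of linearly independent vectors, hence $a \ne 0$.

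Second, for the computation of $\shapoendo_V(a \cdot v)$, I would use $\fK$-linearity of $\shapoendo_V$ together with \Cref{lem2_ker_shapo_deg2} applied to each summand. This yields
\begin{align*}
\shapoendo_V(a \cdot v) &= \sum_{k=0}^m \lambda_i^k \shapoendo_V(c_1^k(xy) \cdot v) \\
&= (1 - \lambda_i) \sum_{k=0}^m \lambda_i^k \left( c_1^k(xy) - \lambda_i c_1^{k+1}(xy) \right) \ot v \\
&= (1 - \lambda_i) \left( \sum_{k=0}^m \lambda_i^k c_1^k(xy) - \sum_{k=0}^m \lambda_i^{k+1} c_1^{k+1}(xy) \right) \ot v \matdot
\end{align*}
The two inner sums telescope, leaving only the $k=0$ term of the first and the $k=m$ term of the second, giving $\left( xy - \lambda_i^{m+1} c_1^{m+1}(xy) \right) \ot v$. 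Finally, using the defining relation $c_1^{m+1}(xy) = q\, xy$ yields the claimed expression $(1 - \lambda_i)(1 - q \lambda_i^{m+1}) xy \ot v$.

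There is no real obstacle here: the non-triviality of $a$ is immediate from \Cref{lem1_ker_shapo_deg2}, and the evaluation of $\shapoendo_V(a \cdot v)$ is a one-line telescoping argument once \Cref{lem2_ker_shapo_deg2} is in place. The only thing to be careful about is making sure the telescoping is written on the level of $N_i^2$ (rather than $N_i^{\ot 2}$), which is fine because $c_1$ descends to a well-defined endomorphism of $N_i^2$ as already noted in the setup preceding the lemma.
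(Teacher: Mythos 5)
Your proof is correct and follows essentially the same route as the paper's: $a \ne 0$ via \Cref{lem1_ker_shapo_deg2}, then linearity and \Cref{lem2_ker_shapo_deg2} followed by telescoping and the relation $c_1^{m+1}(xy)=q\,xy$ in $N_i^2$. The only difference is that you write out the telescoping explicitly, which the paper leaves implicit.
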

\begin{proof}
    $a$ is non-zero by \Cref{lem1_ker_shapo_deg2}. Using \Cref{lem2_ker_shapo_deg2} we get
    \begin{align*}
         \shapoendo_V(a \cdot v) = (1-\lambda_i) \sum_{k=0}^m \lambda_i^k  (c_1^k(xy) - \lambda_i c_1^{k+1}(xy)) \ot v
    \end{align*}
    This is a telescoping sum and simplifies to $(1 - \lambda_i)(1 - q \lambda_i^{m+1}) x y \ot v$.
\end{proof}

\begin{lem} \label{lem4_ker_shapo_deg2}
    Assume $q = (-1)^{m+1}$. Then for 
    \begin{align*}
        a = \sum_{k=1}^m (\sum_{l=0}^{k-1} (-1)^{k-1-l} \lambda_i^l) c_1^k(xy) \in N_i^2
    \end{align*}
    and $v \in \gencomp{V}$ we have $a \ne 0$ and
    \begin{align*}
        \shapoendo_V(a \cdot v) = (1 - \lambda_i)(\sum_{l=0}^m (-1)^{l} \lambda_i^l) x y \ot v
    \end{align*}
\end{lem}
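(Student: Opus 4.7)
The plan is to mimic the strategy of \Cref{lem3_ker_shapo_deg2}, but use the linear relation $xy = \sum_{k=1}^m (-1)^{k+1} c_1^k(xy)$ from \Cref{lem1_ker_shapo_deg2} at the end instead of pure linear independence. First, note that the coefficient of $c_1^1(xy)$ in $a$ is $\sum_{l=0}^{0} (-1)^{-l}\lambda_i^l = 1$, so since by \Cref{lem1_ker_shapo_deg2} the elements $c_1(xy), \ldots, c_1^m(xy)$ are linearly independent (in the case $q = (-1)^{m+1}$), we immediately conclude $a \ne 0$.

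For the computation of $\shapoendo_V(a \cdot v)$, I would abbreviate $\alpha_k := \sum_{l=0}^{k-1} (-1)^{k-1-l}\lambda_i^l$ and apply \Cref{lem2_ker_shapo_deg2} term by term:
\begin{align*}
\shapoendo_V(a\cdot v) = (1-\lambda_i)\sum_{k=1}^m \alpha_k\bigl(c_1^k(xy) - \lambda_i c_1^{k+1}(xy)\bigr)\ot v.
\end{align*}
Re-indexing the second piece and collecting coefficients gives
\begin{align*}
\alpha_1 c_1(xy) + \sum_{k=2}^m (\alpha_k - \lambda_i \alpha_{k-1}) c_1^k(xy) - \lambda_i \alpha_m c_1^{m+1}(xy).
\end{align*}
The key algebraic identity is $\alpha_k - \lambda_i \alpha_{k-1} = (-1)^{k-1}$, which follows by writing out both sums: the terms with $\lambda_i^l$ for $1\le l \le k-1$ cancel since $(-1)^{k-1-l} + (-1)^{k-l} = 0$, leaving only the $l=0$ contribution $(-1)^{k-1}$. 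Combined with $\alpha_1 = 1 = (-1)^0$, the sum becomes $\sum_{k=1}^m (-1)^{k-1} c_1^k(xy) - \lambda_i \alpha_m c_1^{m+1}(xy)$.

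Now I would use $q = (-1)^{m+1}$ together with \Cref{lem1_ker_shapo_deg2}, which gives $\sum_{k=1}^m (-1)^{k-1} c_1^k(xy) = xy$ and $c_1^{m+1}(xy) = (-1)^{m+1} xy$. Substituting, the expression reduces to $(1 - \lambda_i(-1)^{m+1}\alpha_m) xy$. Finally, I would verify by a straightforward reindex $l\mapsto l+1$ that $\lambda_i(-1)^{m+1}\alpha_m = -\sum_{l=1}^m (-1)^l \lambda_i^l$, yielding $1 - \lambda_i(-1)^{m+1}\alpha_m = \sum_{l=0}^m (-1)^l \lambda_i^l$ and completing the proof. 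The only delicate point is the bookkeeping in establishing $\alpha_k - \lambda_i\alpha_{k-1} = (-1)^{k-1}$ and converting the boundary term $c_1^{m+1}(xy)$ correctly; no new conceptual input beyond the preceding three lemmas is required.
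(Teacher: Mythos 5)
Your proposal is correct and follows essentially the same route as the paper: apply \Cref{lem2_ker_shapo_deg2} termwise, reindex the shifted sum so that the interior coefficients collapse to $(-1)^{k-1}$, and then use both relations from \Cref{lem1_ker_shapo_deg2} (the identity $\sum_{k=1}^m(-1)^k c_1^k(xy)=-xy$ and $c_1^{m+1}(xy)=(-1)^{m+1}xy$) to land on $\bigl(\sum_{l=0}^m(-1)^l\lambda_i^l\bigr)xy$. The only cosmetic difference is that you name the coefficients $\alpha_k$ and isolate the identity $\alpha_k-\lambda_i\alpha_{k-1}=(-1)^{k-1}$, where the paper carries the double sums explicitly; the bookkeeping is identical.
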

\begin{proof}
    $a$ is non-zero by \Cref{lem1_ker_shapo_deg2}.
    Using \Cref{lem2_ker_shapo_deg2} we get
    \begin{align*}
         \shapoendo_V(a \cdot v) = (1-\lambda_i) \sum_{k=1}^m (\sum_{l=0}^{k-1} (-1)^{k-1-l} \lambda_i^l) (c_1^k(xy) - \lambda_i c_1^{k+1}(xy)) \ot v
    \end{align*}
    Now rewriting the sums, most terms will cancel each other out:
    \begin{align*}
        &\sum_{k=1}^m (\sum_{l=0}^{k-1} (-1)^{k-1-l} \lambda_i^l) (c_1^k(xy) - \lambda_i c_1^{k+1}(xy))
        \\ =& \sum_{k=1}^m (\sum_{l=0}^{k-1} (-1)^{k-1-l} \lambda_i^l) c_1^k(xy) + \sum_{k=2}^{m+1} (\sum_{l=1}^{k-1} (-1)^{k-l} \lambda_i^{l}) c_1^{k}(xy) 
        \\ =& c_1(xy) + \sum_{k=2}^m (-1)^{k-1} c_1^k(xy) + (\sum_{l=1}^{m} (-1)^{m+1-l} \lambda_i^{l}) c_1^{m+1}(xy) 
        \\ =& \sum_{k=1}^m (-1)^{k-1} c_1^k(xy) + (\sum_{l=1}^{m} (-1)^{l} \lambda_i^{l}) xy  \matdot
    \end{align*}
    By \Cref{lem1_ker_shapo_deg2} the left summand is $xy$, hence the whole sum simplifies to $(\sum_{l=0}^m (-1)^{l} \lambda_i^l) x y$.
\end{proof}

Let $O \subset N_i^2$ be the subspace spanned by $xy, c_1(xy),\ldots, c_1^{m}(xy)$.


\begin{prop} \label{prop_ker_shapo_deg2}
    We have $\ker \shapoendo_V \cap O \cdot \gencomp{V} \ne 0$ if and only if $q \ne (-1)^{m+1}$ and $q \lambda_i^{m+1} = 1$ or if $q = (-1)^{m+1}$ and $\sum_{l=0}^{m} (-1)^{l} \lambda_i^l=0$. 
    In this case $\ker \shapoendo_V \cap O \cdot \gencomp{V} = a \cdot \gencomp{V}$, where 
    \begin{align*}
        a = \begin{cases}
            \sum_{k=0}^{m} \lambda_i^k c_1^k(xy) \matcom & \text{if $q \ne (-1)^{m+1}$,} \\
            \sum_{k=1}^{m} (\sum_{l=0}^{k-1} (-1)^{k-1-l} \lambda_i^l) c_1^k(xy) \matcom & \text{else.}
        \end{cases}
    \end{align*}
\end{prop}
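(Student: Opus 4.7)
The strategy is to introduce the auxiliary $\fK$-linear map
\begin{align*}
\phi_O : O \ot \gencomp{V} \to N_i^2 \ot \gencomp{V}, \quad e \ot v \mapsto (e - \lambda_i c_1(e)) \ot v,
\end{align*}
so that by \Cref{lem2_ker_shapo_deg2}, the composition of $\shapoendo_V$ with the action map $\pi : O \ot \gencomp{V} \to V$, $e \ot v \mapsto e \cdot v$, satisfies $\shapoendo_V \circ \pi = (1-\lambda_i)\phi_O$. Since $\lambda_i \ne 1$ by assumption and any element of $O \cdot \gencomp{V}$ lies in the image of $\pi$, this reduces the problem to the computation $\ker \shapoendo_V \cap O \cdot \gencomp{V} = \pi(\ker \phi_O)$. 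I then analyse the two cases from \Cref{lem1_ker_shapo_deg2} separately.

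If $q \ne (-1)^{m+1}$, then $e_k := c_1^k(xy)$, $0 \le k \le m$, form a basis of $O$ and are linearly independent in $N_i^2$ by \Cref{lem1_ker_shapo_deg2}. Writing an arbitrary $w \in O \ot \gencomp{V}$ as $w = \sum_{k=0}^m e_k \ot v_k$ with $v_k \in \gencomp{V}$ and using $c_1^{m+1}(xy) = q e_0$, the equation $\phi_O(w) = 0$ collapses, after collecting coefficients in the basis $e_0,\ldots,e_m$ of $N_i^2$, to the system $v_0 = \lambda_i q v_m$ together with $v_k = \lambda_i v_{k-1}$ for $1 \le k \le m$. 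Solving the second set yields $v_k = \lambda_i^k v_0$, and substituting into the first gives $(1 - q\lambda_i^{m+1}) v_0 = 0$. Either $q \lambda_i^{m+1} \ne 1$, in which case $v_0 = 0$ forces $w = 0$, or $q\lambda_i^{m+1} = 1$, in which case $v_0 \in \gencomp{V}$ is free and $\ker \phi_O = a \ot \gencomp{V}$ with $a = \sum_{k=0}^m \lambda_i^k c_1^k(xy)$. Applying $\pi$ and comparing with \Cref{lem3_ker_shapo_deg2} (for the nonvanishing of $a$) yields $\ker \shapoendo_V \cap O \cdot \gencomp{V} = a \cdot \gencomp{V}$.

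If $q = (-1)^{m+1}$, then by \Cref{lem1_ker_shapo_deg2} $e_1, \ldots, e_m$ form a basis of $O$, and one has $e_0 = \sum_{k=1}^m (-1)^{k+1} e_k$ together with $e_{m+1} = q e_0 = \sum_{k=1}^m (-1)^{m+k} e_k$. Writing $w = \sum_{k=1}^m e_k \ot v_k$ and expanding $\phi_O(w)$ in the basis $e_1,\ldots,e_m$ of $N_i^2$ yields the inhomogeneous recurrence $v_1 = \lambda_i (-1)^{m+1} v_m$ and $v_k = \lambda_i v_{k-1} + \lambda_i (-1)^{m+k} v_m$ for $2 \le k \le m$. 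Iterating this recurrence forward gives $v_k = (-1)^{m+1} \lambda_i \mu_k v_m$ with $\mu_k$ as in \Cref{lem4_ker_shapo_deg2}, and the terminal equation at $k = m$ becomes the consistency condition $(-1)^{m+1} \lambda_i \mu_m = 1$, which after rewriting is equivalent to $\sum_{l=0}^m (-1)^l \lambda_i^l = 0$. When this condition fails, $v_m = 0$ and hence $w = 0$; when it holds, $v_m$ is a free parameter in $\gencomp{V}$ and $\ker \phi_O = a \ot \gencomp{V}$, whence $\pi(\ker \phi_O) = a \cdot \gencomp{V}$, concluding the proof. The main technical obstacle is this recurrence calculation in the second case: unrolling the inhomogeneous term cleanly and matching its terminal consistency equation with the scalar $\sum_{l=0}^m (-1)^l \lambda_i^l$ of \Cref{lem4_ker_shapo_deg2} requires careful bookkeeping with alternating signs, but once done the comparison with the explicit $a$ becomes immediate.
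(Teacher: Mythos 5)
Your proof is correct and follows essentially the same route as the paper's: both reduce the claim to the linear system obtained by expanding the Shapovalov morphism on the spanning set $c_1^k(xy)$ of $O$ via \Cref{lem2_ker_shapo_deg2} and \Cref{lem1_ker_shapo_deg2}, solve the resulting recurrence for the coefficients, and settle the existence question through \Cref{lem3_ker_shapo_deg2} and \Cref{lem4_ker_shapo_deg2}. The only substantive difference is that by working with vector coefficients $v_k \in \gencomp{V}$ in $\ker \phi_O$ you treat arbitrary elements of $O \cdot \gencomp{V}$, whereas the paper's proof literally only considers elements of the form $b \cdot v$ with $b \in O$ and a single $v \in \gencomp{V}$; your bookkeeping therefore covers the case $\dim \gencomp{V} > 1$ without the implicit reduction the paper leaves to the reader.
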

\begin{proof}
    Let $0\ne v \in \gencomp{V}$ and $b \in O$, such that $b \cdot v \in \ker \shapoendo_V \cap O \cdot \gencomp{V}$. Assume $q \ne (-1)^{m+1}$. Moreover let $\mu_0,\ldots,\mu_{m} \in \fK$, such that $b = \sum_{k=0}^{m} \mu_k c_1^k(xy)$.  Then by \Cref{lem2_ker_shapo_deg2}
    \begin{align*}
        0 =& (1-\lambda_i) \sum_{k=0}^{m} \mu_k (c_1^k(xy) - \lambda_i c_1^{k+1}(xy)) \ot v
        \\ =& (1-\lambda_i) \left( ( \mu_0 - \mu_{m} \lambda_i q)xy + \sum_{k=1}^{m} (\mu_k -\mu_{k-1}\lambda_{i}) c_1^k(xy)  \right) \ot v.
    \end{align*}
    This implies $ \mu_k = \mu_{k-1} \lambda_i$ for all $1 \le k \le m$, hence $b \in \fK a$.
    
    Now assume $q = (-1)^{m+1}$. Moreover let $\mu_1,\ldots,\mu_{m} \in \fK$, such that $b = \sum_{k=1}^{m} \mu_k c_1^k(xy)$ (exists by \Cref{lem1_ker_shapo_deg2}). Again by \Cref{lem2_ker_shapo_deg2} we obtain
    \begin{align*}
        0 = (1-\lambda_i) \sum_{k=1}^{m} \mu_k (c_1^k(xy) - \lambda_i c_1^{k+1}(xy)) \ot v.
    \end{align*}
    Then we have using also \Cref{lem1_ker_shapo_deg2}
    \begin{align*}
     0 & =\sum_{k=1}^{m} \mu_k (c_1^k(xy) - \lambda_i c_1^{k+1}(xy))
    \\&= \mu_1 c_1(xy) + (-1)^{m} \mu_{m} \lambda_i xy + \sum_{k=2}^{m} (\mu_k -\mu_{k-1}\lambda_{i}) c_1^k(xy) 
    \\&= \mu_1 c_1(xy) + \mu_{m} \lambda_i \sum_{k=1}^m (-1)^{m+k+1} c_1^k(xy) + \sum_{k=2}^{m} (\mu_k -\mu_{k-1}\lambda_{i}) c_1^k(xy).
    \end{align*}
    For all $1 \le k \le m$ the coefficient of $c_1^k(xy)$ must be $0$, hence we obtain $\mu_1 + \mu_{m} \lambda_i (-1)^{m} = 0$ and
    \begin{align*}
         \mu_{m} \lambda_i (-1)^{m+k+1} +\mu_k -\mu_{k-1}\lambda_{i} = 0
    \end{align*}
    for all $2 \le k \le m$. This gives
    \begin{align*}
        \mu_k = \lambda_i \mu_{k-1} + (-1)^{k-1} \mu_{1},
    \end{align*}
    for all $2 \le k \le m$, which inductively implies $b = \mu_1 a \in \fK a$.
    
    In both cases we get $\ker \shapoendo_V \cap O \cdot \gencomp{V} \subset a \cdot \gencomp{V}$. Hence either we have equality, or $\ker \shapoendo_V \cap O \cdot \gencomp{V} = 0$, depending on the conditions in \Cref{lem3_ker_shapo_deg2} and \Cref{lem4_ker_shapo_deg2}.
\end{proof}

\section{Examples} \label{sect_nich_sys_group_examples}

We will now calculate the kernel of the shapovalov morphism for specific examples of Nichols algebras. The results hint that some results of the previous section should also hold in degree $\ge 3$, see \Cref{conj_kernel_shapo}.

Similar to the previous section let $\scr{N}$ be a pre-Nichols system, $Q := \nsalg{\scr{N}}$ and $N_j := \scr{N}_j$ for all $j \in \mathbb{I}$. 
Let $\Gamma$ be an abelian group, such that $\ndZ^\theta \subset \Gamma$.
Let $V \in \ydcat{Q}{\scr{C}}$ be a homogeneously generated $\Gamma$-graded object. 
We fix $t \in \mathbb{I}$, assume that $\subalgQ{N_t}$ is strictly graded and that there exists $\lambda_t \in \fK^{\times}$, such that \begin{align*} 
    \brd^{\scr{C}}_{\gencomp{V}, N_t} \brd^{\scr{C}}_{N_t, \gencomp{V}} = \lambda_t \, \id_{N_t \ot \gencomp{V}} \matdot
\end{align*}

\begin{exa}
    Let $n\ge 3$, $X= \lbrace (ij) \mid 1\le i<j \le n \rbrace \subset S_n$ the quandle of transpositions with quandle action $\pi \quact \sigma = \pi \sigma \pi^{-1}$ for all $\pi, \sigma \in X$. Let $q : X \times X \rightarrow \fK^{\times}$ be the cocycle defined by
    \begin{align*}
        q(\pi, (ij)) = \begin{cases}
            1, & \text{if $\pi(i) < \pi(j)$}, \\
            -1, & \text{else.}
        \end{cases}
    \end{align*}
    for all $\pi \in X$, $1 \le i< j\le n$.
    Assume that $N_t$ is defined by $X$ and $q$ like in \Cref{exa_rack_cocycle}. Let $x_{\pi}, \pi \in X$ be the corresponding basis elements of $N_t$.
    For $n \le 5$ the algebras $\subalgQ{N_t}$ are known as the Fomin-Kirillov algebras (\cite{FominKirillovAlgebra}). For $n \in \lbrace 3,4,5 \rbrace$ the dimension of $\subalgQ{N_t}$ is $12$, $576$, $8294400$, respectively. Assume $\lambda_t \ne 1$.
    First we want to calculate $\ker \shapoendo_V \cap N_t^2 \cdot \gencomp{V}$.
    For $1\le i,j,k,l \le n$, $\# \lbrace i,j,k \rbrace = 3$ and $\# \lbrace i,j,k,l \rbrace = 4$ we have
    \begin{align*}
        (c_1^m( x_{(ij)} \ot x_{(ij)}))_{0\le m \le 1} =& (x_{(ij)} \ot x_{(ij)}, -x_{(ij)} \ot x_{(ij)} ), \\
        (c_1^m( x_{(ij)} \ot x_{(jk)}))_{0\le m \le 3} =& (x_{(ij)} \ot x_{(jk)}, q((ij),(jk)) x_{(ik)} \ot x_{(ij)}, \\
        & q((ij),(jk)) q((ik),(ij)) x_{(jk)} \ot x_{(ik)}, -  x_{(ij)} \ot x_{(jk)}),
        \\
        (c_1^m( x_{(ij)} \ot x_{(kl)}))_{0\le m \le 2} =& (x_{(ij)} \ot x_{(kl)}, x_{(kl)} \ot x_{(ij)}, x_{ij} \ot x_{kl} ).
    \end{align*}
    Hence we can associate each orbit to a $2$-cycle $(ij)$, a $3$-cycle $(ijk)$ or to a permutation of type $(ij)(kl)$.
    In the notation from before for these orbits we get $m_{(ij)} = 0$, $q_{(ij)} = 1$, $m_{(ijk)} = 2$, $q_{(ijk)} = -1$ and $m_{(ij)(kl)} = 1$, $q_{(ij)(kl)} = 1$. By \Cref{lem1_ker_shapo_deg2} we obtain that the orbits of $N_t^2$ under the action of $\fK \brB_2$ are $O_{(ij)} = 0$ and
    \begin{align*}
        O_{(ijk)} &= \mathrm{Span}_{\fK} \lbrace x_{(ik)} x_{(ij)}, x_{(jk)} x_{(ik)}\rbrace, \\
        O_{(ij)(kl)} &= \mathrm{Span}_{\fK} \lbrace x_{(kl)} x_{(ij)} \rbrace .
    \end{align*}
    \Cref{prop_ker_shapo_deg2} yields: Since $\lambda_t \ne 1$ we get $\ker \shapoendo_V \cap O_{(ij)(kl)} \cdot \gencomp{V} = 0$. Moreover if $1-\lambda_t+\lambda_t^2 \ne 0$, then $\ker \shapoendo_V \cap O_{(ijk)} \cdot \gencomp{V} = 0$ and $\ker \shapoendo_V \cap N_t^2 \cdot \gencomp{V} = 0$. If $1-\lambda_t+\lambda_t^2 = 0$, that is if $\lambda_t=-\omega$, where $\omega$ is a primitive third root of $1$, then
    \begin{align} \label{equ_example_foki}
        \ker \shapoendo_V \cap O_{(ijk)} \cdot \gencomp{V} = \left(  x_{(ik)} x_{(ij)} + (\lambda_t-1)  q((ik),(ij)) x_{(jk)} x_{(ik)} \right) \cdot \gencomp{V}.
    \end{align}
    Observe that we have $\frac{n(n-1)(n-2)}{3}$ of those components.
    
    Now using $GAP$ and its package $GBNP$ (\cite{GAP}, \cite{GBNP}), for $n=3$ and $n=4$ we can calculate the entire $\ker \shapoendo_V \cap \subalgQ{N_t} \cdot \gencomp{V}$ using Gröbner basis.
    For $n=3$ the results are: If $1-\lambda_t+\lambda_t^2 \ne 0$, then $\ker \shapoendo_V \cap \subalgQ{N_t } \cdot \gencomp{V} = 0$ and if $1-\lambda_t+\lambda_t^2 = 0$, then $\ker \shapoendo_V \cap \subalgQ{N_t } \cdot \gencomp{V} = A \cdot \gencomp{V}$, where $A$ is the left ideal of $N_t$ generated by the $2$ elements of degree $2$ that we have seen in \cref{equ_example_foki}, that is
    \begin{align*}
        &x_{(13)} x_{(12)} - (\lambda_t -1) x_{(23)} x_{(13)},
        \\
        &x_{(12)} x_{(13)} + (\lambda_t -1) x_{(23)} x_{(12)}.
    \end{align*}
    $A$ has dimension $6$.
    Now for $n=4$ the result differs, since we might get a new kernel in degree $3$: First similar to before, if $(1-\lambda_t+\lambda_t^2)( \lambda_t^2 + 1 )  \ne 0$, then $\ker \shapoendo_V \cap \subalgQ{N_t } \cdot \gencomp{V} = 0$ and if $1-\lambda_t+\lambda_t^2 = 0$, then $\ker \shapoendo_V \cap \subalgQ{N_t } \cdot \gencomp{V} = A \cdot \gencomp{V}$, where $A$ is the left ideal of $N_t$ generated by the $8$ elements of degree $2$ in \cref{equ_example_foki}. Here $A$ has dimension $528$. Now if $\lambda_t^2 + 1 = 0$, we obtain $\ker \shapoendo_V \cap \subalgQ{N_t } \cdot \gencomp{V} = A \cdot \gencomp{V}$, where $A$ is a $384$-dimensional left ideal of $N_t$ generated by $6$ elements of degree $3$ that we obtain by acting on
    \begin{align*}
        &x_{(14)} x_{(13)} x_{(23)} + (1 - \lambda_t) x_{(14)} x_{(12)} x_{(13)} + x_{(13)} x_{(24)} x_{(34)} \\ &- \lambda_t x_{(13)} x_{(23)} x_{(24)} - x_{(12)} x_{(13)} x_{(24)}.
    \end{align*}
    Observe that $\lambda_t^2 + 1 = 0$ if and only if $1-\lambda_t + \lambda_t^2 - \lambda_t^3 = 0$, since $\lambda_t \ne 1$, which hints at some richer structure.
    
    We conclude that if $n=3$, then $V$ is $t$-well-graded if and only if $\subalgQ{N_t} \cdot \gencomp{V}$ is irreducible, which is if and only if $1-\lambda_t+\lambda_t^2 \ne 0$, see \Cref{prop_shapoendo_max_subobject} or \Cref{cor_i_well_graded_shapo_kernel}.
    If $n=4$ then $V$ is $t$-well-graded if and only if \begin{align*}
        (1-\lambda_t+\lambda_t^2)( 1-\lambda_t + \lambda_t^2 - \lambda_t^3)  \ne 0.
    \end{align*}
\end{exa}

We give another example with a slightly different behaviour.

\begin{exa}
     Let $X = \ndZ / 5\ndZ$ be the quandle given by 
     \begin{align*}
         i \quact j = -i + 2j
     \end{align*}
     for all $i,j \in X$. Let $q : X \times X \rightarrow \fK^{\times}$,  $(i,j)\mapsto -1$ be the constant cocycle.
     Assume that $N_t$ is defined by $X$ and $q$ like in \Cref{exa_rack_cocycle}. $\subalgQ{N_t}$ has dimension $1280$ and appears in the list of \cite{Zoo}. Let $x_{i}, i \in X$ be the corresponding basis elements of $N_t$. Assume $\lambda_t \ne 1$.
     We first calculate $\ker \shapoendo_V \cap N_t^2 \cdot \gencomp{V}$.
    For $0\le i \le 4$ we have
    \begin{align*}
        (c_1^m( x_{i} \ot x_{i}))_{0\le m \le 1} =& (x_{i} \ot x_{i}, -x_{i} \ot x_{i} ), \\
        (c_1^m( x_{i} \ot x_{i+1}))_{0\le m \le 4} =& (x_{i} \ot x_{i+1}, - x_{i+2} \ot x_{i}, x_{i+3} \ot x_{i+2}, -x_{i+1} \ot x_{i+3} \\
        & x_{i} \ot x_{i+1} ).
    \end{align*}
    Hence the first orbit vanishes in $N_t^2$.
    Observe that in the second orbit for each $0 \le k \le 4$ we have precisely one element of the form $x_j \ot x_{j+k}$. Hence for each $0\le i \le 4$ we get a different orbit, when starting with $x_i \ot x_{i+1}$, and those are all possible orbits. So in the notation from before the orbits of $N_t^2$ under the action of $\fK \brB_2$ are the five orbits $O_i$, $0 \le i \le 4$, generated by $x_i x_{i+1}$.
    We have that $m_{i} = 3$, $q_{i} = 1$ and by \Cref{lem1_ker_shapo_deg2} 
    \begin{align*}
        O_{i} &= \mathrm{Span}_{\fK} \lbrace x_{i+2} x_{i}, x_{i+3} x_{i+2}, x_{i+1} x_{i+3}\rbrace.
    \end{align*}
    \Cref{prop_ker_shapo_deg2} yields: If $1-\lambda_t+\lambda_t^2 - \lambda_t^3 \ne 0$, then $\ker \shapoendo_V \cap O_{(ijk)} \cdot \gencomp{V} = 0$ and $\ker \shapoendo_V \cap N_t^2 \cdot \gencomp{V} = 0$. If $1-\lambda_t+\lambda_t^2 - \lambda_t^3 = 0$, that is if $\lambda_t^2=-1$, then
    \begin{align} \label{equ_example_nichv5}
    \begin{split}
        &\ker \shapoendo_V \cap O_{(ijk)} \cdot \gencomp{V} \\ =& \left(  x_{i+2} x_{i} + (1-\lambda_t) x_{i+3} x_{i+2} + (1-\lambda_t+\lambda_t^2) x_{i+1} x_{i+3} \right) \cdot \gencomp{V}.
    \end{split}\end{align}
    Calculating the entire $\ker \shapoendo_V \cap \subalgQ{N_t} \cdot \gencomp{V}$ with the help of GAP and Gröbner basis, gives the following: If $ (\lambda_t^2 + 1 ) (1-\lambda_t+\lambda_t^2 - \lambda_t^3 + \lambda_t^4)  \ne 0$, then $\ker \shapoendo_V \cap \subalgQ{N_t } \cdot \gencomp{V} = 0$ and if $\lambda_t^2  =-1$, then $\ker \shapoendo_V \cap \subalgQ{N_t } \cdot \gencomp{V} = A \cdot \gencomp{V}$, where $A$ is the left ideal of $N_t$ generated by the $5$ elements of degree $2$ in \cref{equ_example_nichv5}. Here $A$ has dimension $1202$. Now if $1-\lambda_t+\lambda_t^2 - \lambda_t^3 + \lambda_t^4 = 0$, that is $\lambda_t = - \omega$, where $\omega$ is a primitive fifth root of $1$, we obtain $\ker \shapoendo_V \cap \subalgQ{N_t } \cdot \gencomp{V} = A \cdot \gencomp{V}$, where $A$ is a $640$-dimensional left ideal of $N_t$ generated by $4$ elements of degree $4$ that we obtain by acting on
    \begin{align*}
        &x_3 x_0 x_3 x_4 + (-\lambda_t^2 + \lambda_t^3 - \lambda_t^4) x_2x_3x_0x_3 + \lambda_t^2 x_2 x_0 x_3 x_1 \\ &+ (\lambda_t^3 -\lambda_t^4)x_1x_2x_3x_0 + \lambda_t^3 x_1 x_2 x_0 x_4 + (-\lambda_t + \lambda_t^2 - \lambda_t^3) x_0 x_3 x_1 x_4 \\
        & - \lambda_t^3 x_0 x_2 x_1 x_3 - \lambda_t^2 x_0 x_2 x_0 x_1 + (\lambda_t- \lambda_t^2 + 2 \lambda_t^3 - \lambda_t^4) x_0 x_1 x_3 x_1.
    \end{align*}
   
    We conclude that $V$ is $t$-well-graded if and only if $\subalgQ{N_t} \cdot \gencomp{V}$ is irreducible, which is if and only if  \begin{align*}
        (1-\lambda_t+\lambda_t^2-\lambda_t^3)( 1-\lambda_t + \lambda_t^2 - \lambda_t^3+\lambda_t^4)  \ne 0.
    \end{align*}
\end{exa}

\begin{rema}
    The last section only calculates $\ker \shapoendo_V \cap \subalgQ{N_t} \cdot \gencomp{V}$ explicitly in degree $2$. The method used to prove the main result \Cref{prop_ker_shapo_deg2} relied on \Cref{lem1_ker_shapo_deg2} and \Cref{lem2_ker_shapo_deg2}, where the former makes use of the simple form of the braided symmetrizer $S_2 = 1 + c_1$ and the latter makes use of the simple form of $g_{1,1} = c_1$. To prove similar statements in degree $\ge 3$ will therefor be either a lot more difficult, or require a different approach.
    However the examples discussed above hint heavily, that the statements in degree $\ge 3$ should be similar. In particular, they justify the following conjecture.
\end{rema}

\begin{conjecture} \label{conj_kernel_shapo}
    There is a class of objects $N_t$, such that the following holds:
    Let $k \ge 2$ and $O \subset N_t^k$ be an orbit of $N_i^k$ under the action of $\fK \brB_k$. Then we either have $\ker \shapoendo_V \cap O \cdot \gencomp{V} = 0$ or there exists an $a \in O$, such that $\ker \shapoendo_V \cap O \cdot \gencomp{V} = a \cdot \gencomp{V} \ne 0$. Also there exists some $l \ge 2$, such that the latter holds if and only if $\sum_{i=0}^l (-1)^i \lambda_t^i = 0$.
    
    In particular, there exist some numbers $l_1, \ldots, l_s \ge 2$, $s \ge 1$, such that $V$ is $t$-well-graded (i.e. $\subalgQ{N_t} \cdot \gencomp{V}$ is irreducible) if and only if
    \begin{align*}
        \prod_{j = 1}^s \left( \sum_{i=0}^{l_j} (-1)^i \lambda_t^i \right) \ne 0.
    \end{align*}
\end{conjecture}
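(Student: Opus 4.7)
The plan is to restrict attention to the natural candidate class, namely Nichols algebras $\subalgQ{N_t}$ associated to a quandle $X$ with a $2$-cocycle $q$ as in \Cref{exa_rack_cocycle}, and to upgrade the degree-$2$ computation of \Cref{prop_ker_shapo_deg2} to arbitrary degree by analyzing the operator
\begin{align*}
\Phi_k := \sum_{j=0}^{k-1} (-\lambda_t)^j g_{k-1,j} \in \fK\brB_k
\end{align*}
orbit by orbit. First, for such $N_t$, the basis $X^k$ of $N_t^{\ot k}$ is permuted up to $q$-scalars by the generators $c_1,\ldots,c_{k-1}$, so $N_t^{\ot k}$ decomposes as a direct sum $\bigoplus_{O}\fK O$ over $\brB_k$-orbits $O \subset X^k$. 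This decomposition descends to $N_t^k = N_t^{\ot k}/\ker S_k$, and since $\Phi_k$ commutes with each $\brB_k$-orbit (every $g_{k-1,j}$ does), the equality $\shapoendo_V(x\cdot v)=(1-\lambda_t)\Phi_k(x)\ot v$ from \Cref{thm_shapoendo_via_gnk} reduces the computation of $\ker\shapoendo_V\cap O\cdot\gencomp{V}$ to computing the kernel of $\Phi_k$ on the finite-dimensional space $\fK O/(\fK O \cap \ker S_k)$.

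Second, on each orbit quotient I would try to produce an explicit adapted basis in which $\Phi_k$ becomes block-cyclic, so that $\det(\Phi_k|_{\fK O/\ker S_k})$ factors as a product of ``cyclotomic-like'' polynomials $\sum_{i=0}^{l}(-\lambda_t)^i = \frac{1-(-\lambda_t)^{l+1}}{1-(-\lambda_t)}$, with each $l$ recording the length of a cyclic sub-action. The key tools here are the inductive identity $g_{n,k} = g_{n-1,k-1}c_n\cdots c_1 + \shift{1}{g_{n-1,k}}$ of \Cref{lem_gnk_inductive_formulas}(2), which gives a recursion for $\Phi_k$ in terms of $\Phi_{k-1}$ and a shift, and the commutation $S_{k}g_{k-1,j}=\varphi(g_{k-1,j})S_k$ of \Cref{thm_symmetrizer_commutes_gnk}, which allows one to identify $\ker S_k$-relations on $\fK O$ compatible with the $\Phi_k$-action. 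Assuming such an adapted basis exists, by \Cref{prop_shapoendo_max_subobject} one deduces the kernel description claimed in the first part of the conjecture, and by collecting the finitely many cyclotomic factors appearing across all orbits in all degrees (finite because $\dim_\fK \subalgQ{N_t}<\infty$ in the class of interest), one obtains the ``in particular'' statement via \Cref{cor_i_well_graded_shapo_kernel}.

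The main obstacle is precisely the step of producing that block-cyclic adapted basis. For $k=2$ the braid group $\brB_2=\langle c_1\rangle$ is cyclic, so each orbit is a single $c_1$-orbit and the analysis of \Cref{prop_ker_shapo_deg2} uses only the minimal polynomial of $c_1$; for $k\ge 3$ the orbits are genuine non-abelian $\brB_k$-sets, and there is no canonical single generator whose powers sweep out the orbit. One would need either a combinatorial classification of orbits in $X^k$ up to braid-equivalence for quandles $X$ in the relevant subclass (abelian, Alexander, certain affine), or a representation-theoretic trick using \Cref{thm_symmetrizer_commutes_gnk} to exhibit $\Phi_k$ as a polynomial in a single operator on each isotypic component of $N_t^k$. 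The examples worked out in \cref{sect_nich_sys_group_examples} suggest that such a structural reason does exist---in each tested case the kernel was concentrated in a single degree with a simple cyclotomic obstruction---but identifying and proving it in general is where the real work lies and where a new idea beyond the $k=2$ method seems needed.
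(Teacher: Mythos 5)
The statement you are trying to prove is \Cref{conj_kernel_shapo}, which the paper itself leaves as an open conjecture: it supplies no proof, only the degree-$2$ result \Cref{prop_ker_shapo_deg2} and the computer calculations of \cref{sect_nich_sys_group_examples} as evidence, and the remark immediately preceding the conjecture explicitly warns that the degree-$2$ method exploits the special forms $S_2 = 1 + c_1$ and $g_{1,1} = c_1$ and does not extend. Your proposal is therefore correctly framed as a strategy rather than a proof, and its first reduction is sound: in the quandle-with-cocycle setting the generators $c_j$ permute the basis $X^k$ up to scalars, so $N_t^{\ot k}$ splits into orbit subspaces $\fK O$, the symmetrizer $S_k$ and each $g_{k-1,j}$ preserve every $\fK O$, and \Cref{thm_shapoendo_via_gnk} reduces everything to the kernel of $\Phi_k = \sum_{j}(-\lambda_t)^j g_{k-1,j}$ on $\fK O/(\fK O\cap \ker S_k)$.

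The genuine gap is the one you yourself name, and it is not a technical loose end but the entire content of the conjecture: the existence of an adapted basis in which $\Phi_k$ restricted to each orbit quotient is governed by a single polynomial $\sum_{i=0}^{l}(-1)^i\lambda_t^i$ with at most a one-dimensional kernel. Assuming such a basis exists and then "deducing" the conjecture is circular. Moreover, the block-cyclic picture you sketch would a priori yield $\det(\Phi_k|_{\fK O/\ker S_k})$ as a \emph{product} of several cyclotomic-like factors, which is strictly weaker than what \Cref{conj_kernel_shapo} asserts per orbit, namely a single $l$ and a kernel that is either zero or exactly $a\cdot\gencomp{V}$ for one element $a$; if two factors could vanish simultaneously, or one factor with multiplicity, the kernel could be larger, so even granting the factorization you would still need to rule this out. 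The identities of \Cref{lem_gnk_inductive_formulas}(2) and \Cref{thm_symmetrizer_commutes_gnk} are necessary ingredients but, as the paper's own remark indicates, for $k\ge 3$ the non-abelian structure of the $\brB_k$-orbits means no single operator plays the role $c_1$ plays in degree $2$, and supplying that missing structural input is where a new idea is required. As it stands, your text is a plausible research plan, not a proof.
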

\chapter{Nichols systems of diagonal type} \label{chapt_nich_sys_diag}

In this final chapter we are interested in using the previous theory to describe $\ydmod{}$-modules over Nichols systems of diagonal type. To such a Nichols system we can associate a Dynkin diagram, which we therefor study first in \cref{sect_nich_sys_diag_dynkin}. We will also study the reflection theory of such Dynkin diagrams, to then show in \cref{sect_nich_sys_diag_main} that this is consistent with reflecting the $\ydmod{}$-modules (\Cref{prop_dynk_refl_equals_refl}). To do so, we make important use of the previous results about the Shapovalov morphism, most notably \Cref{thm_shapoendo_via_gnk}. We will also apply the results to the induced $\ydmod{}$-modules introduced in \cref{sect_ydmod_nich_sys_induced_yd_module}. Most notably we find, that the irreducibility of such an object can be characterized by a polynomial that is given by the positive roots of the Nichols system. In fact, this polynomial was already found in a similar context in \cite{MR2840165}, where it appeared as the Shapovalov determinant for bicharacters of finite root systems. Here, in a more general context, we obtain this polynomial without much effort from the Shapovalov morphism.
Finally in \cref{sect_nich_sys_diag_examples} we apply the theory to some explicit examples of Nichols systems of diagonal type.

Let $\theta \in \ndN$ and $\mathbb{I}=\lbrace 1,\ldots, \theta \rbrace$.
Let $\fK$ be a field, $H$ a Hopf algebra over $\fK$ with bijective antipode and let $\scr{C}=\ydmod{H}$.

\section{Dynkin diagrams and their reflections} \label{sect_nich_sys_diag_dynkin}

Besides defining and discussing Dynkin diagrams and their reflection theory, we will also associate a set of roots to a Dynkin diagram, which will later coincide with the set of roots of the Nichols system. As it turns out the reflections of the Dynkin diagrams are really only dependent on these roots, see \Cref{prop_refl_dynkin_relies_on_si}. At the end of the chapter in \Cref{defi_shapovalov_determinant} we will already define the polynomial known as the Shapovalov determinant, without giving much context yet. In \cref{sect_nich_sys_diag_main} we will see how such Dynkin diagrams appear in the theory of Nichols systems of diagonal type and what role the Shapovalov determinant plays.

\begin{defi}
    Let $\dynkcat{\theta}$ be the category where objects $D \in \dynkcat{\theta}$ are a collection $\dynk_1,\ldots,\dynk_{\theta} \in \fK^{\times}$ of vertices and edges $\dynk_{jk} \in \fK^{\times}$ for all $1\le j<k \le \theta$,
    and where a morphisms $\dynk \rightarrow \dynk'$ for $\dynk,\dynk' \in \dynkcat{\theta}$ is a triple $(\dynk', f, \dynk)$, where $f \in \Endo(\ndZ^\theta)$.
    For readability we also denote $\dynk_{kj}:=\dynk_{jk}$ for all $1\le j<k\le \theta$, $\dynk\in \dynkcat{\theta}$ and $\dynk_{jj} = \dynk_j^2$ for all $j \in \mathbb{I}$.
    
    Let $V_1,\ldots , V_\theta \in \scr{C}$ be one-dimensional objects and for $1 \le j \le \theta$ let $v_j \in V_j \setminus \lbrace 0 \rbrace$. Moreover for $1 \le j<k \le \theta$ let $r_j,r_{jk} \in \fK^{\times}$ such that
    \begin{align*}
        \brd_{V_j,V_j}^{\scr{C}}(v_j \ot v_j) = r_j v_j \ot v_j \matcom && \brd_{V_k,V_j}^{\scr{C}}\brd_{V_j,V_k}^{\scr{C}}(v_j \ot v_k) = r_{jk} v_j \ot v_k \matdot
    \end{align*}
    The \textbf{Dynkin diagram of $V_1,\ldots, V_\theta$} is the object $\dynk(V_1,\ldots,V_\theta) \in \dynkcat{\theta}$, where $\dynk(V_1,\ldots,V_\theta)_j := r_j$ for all $1\le j \le n$ and $\dynk(V_1,\ldots,V_\theta)_{jk} := r_{jk}$ for all $1\le j<k \le n$.
\end{defi}

\begin{rema}
    Basically an object $\dynk \in \dynkcat{\theta}$ is a labelled complete graph, where labels are in $\fK^\times$. When we visualize $\dynk$, we will omit all edges where $\dynk_{jk} = 1$ for $1 \le j<k \le \theta$.
\end{rema}

\begin{exa}
    Let $\theta=3$ and let $\omega$ be an third root of unity and assume that $\dynk \in \dynkcat{3}$ is the object where $\dynk_1 = \dynk_2 = \omega$, $\dynk_3=-1$, $\dynk_{12}=\omega^{-1}$, $\dynk_{13}=1$ and $\dynk_{23}=\omega^{-2}$. Then the visualisation of $\dynk$ is
\begin{align*}
\begin{tikzpicture}
    \node[dynnode, label={\tiny$\omega$}] (1) at (0,0) {};
    \node[dynnode, label={\tiny$\omega$}] (2) at (1.5,0) {};
    \node[dynnode, label={\tiny$-1$}] (3) at (3,0) {};
    \path[draw,thick]
    (1) edge node[above]{\tiny$\omega^{-1}$} (2)
    (2) edge node[above]{\tiny$\omega^{-2}$} (3);
\end{tikzpicture}
\matdot
\end{align*}
\end{exa}

\begin{defi} \label{defi_qbinom}
    For $n \in \ndN_0$, $0 \le k \le n$ define the following rational functions in $\ndQ(t)$ with indeterminate $t$
    \begin{align*}
        (n)_t &:= \sum_{i=0}^{n-1} t^i \matcom & (n)_t^! &:= \prod_{i=1}^{n} (i)_t \matcom  &\binom{n}{k}_t &:= \frac{(n)_t^! }{(k)_t^! (n-k)_t^! } \matdot
    \end{align*}
    Moreover for $i < 0$ or $i>n$ we let $\binom{n}{i}_t = 0$.
\end{defi}
\begin{rema}
    By \cite{HeSch}, Lemma 1.9.3(3) we obtain $\binom{n}{k}_t \in \ndZ[t]$ for all $n \in \ndN_0$, $0 \le k \le n$.
\end{rema}

\begin{defi}
    Let $n,k \in \ndN_0$ and $a \in \fK$. Define $(n)_a$, $(n)_a^!$ and $\binom{n}{k}_a$ to be the images of  $(n)_t$, $(n)_t^!$ and $\binom{n}{k}_t$, respectively, under the ring homomorphism $\ndZ[t] \rightarrow \fK$ given by mapping $t$ to $a$.
\end{defi}
    
    
    
\begin{rema} \label{rema_binomq_root_of_one}
    Let $m \in \ndN_0$ and $a \in \fK$.
    Since $(1-a) (m)_a = 1-a^m$ we obtain $a^m = 1$ if and only if $a=1$ or $(m)_a = 0$.
\end{rema}

Regarding the reflection theory of Dynkin diagrams we will establish a notion that is consistent with our previous notions.

\begin{defi}
    Let $\dynk \in \dynkcat{\theta}$ and $i \in \mathbb{I}$. For $j \in \mathbb{I}$ denote 
    \begin{align*}
        m_j^{\dynk} := \min \lbrace m \in \ndN_0 \mid (m+1)_{\dynk_{j}} = 0 \rbrace \matdot
    \end{align*}
    Moreover denote $a_{ii}^{D}=2$ and for all $j \in \mathbb{I} \setminus \lbrace i \rbrace$ denote
    \begin{align*}
        a_{ij}^{D} = - \min \lbrace m \in \ndN_0 \, | \, (m + 1)_{\dynk_{i}} (\dynk_{i}^m \dynk_{ij} - 1) = 0 \rbrace
        \matdot
    \end{align*}
    We call $\dynk$ \textbf{$i$-finite} if for all $j \in \mathbb{I} \setminus \lbrace i \rbrace$ we have $-a_{ij}^{D} \in \ndN_0$.
    If $\dynk$ is $i$-finite, then let $s^{\dynk}_{i} \in \Aut(\ndZ^\theta)$ be such that for all $j \in \mathbb{I}$
	\begin{align*}
		s^{\dynk}_{i} ( \alpha_j) = \alpha_j - a^{\dynk}_{ij} \alpha_{i} \matdot
	\end{align*}
\end{defi}


\begin{defi}
    Let $i \in \mathbb{I}$ and $\dynk \in \dynkcat{\theta}$ be $i$-finite.
    We define the \textbf{$i$-th reflection $\refl_i(\dynk)$ of $\dynk$} as follows:
    For $j,k \in \mathbb{I}$ let
    \begin{align*}
        \refl_i(\dynk)_j &= \dynk_j \dynk_{ij}^{-a^{\dynk}_{ij}} \dynk_i^{{a^{\dynk}_{ij}}^2} 
    \end{align*}
    and
    \begin{align*}
        \refl_i(\dynk)_{jk} &= \dynk_{jk} \dynk_{ik}^{-a^{\dynk}_{ij}} \dynk_{ij}^{-a^{\dynk}_{ik}} \dynk_{i}^{2a^{\dynk}_{ij}a^{\dynk}_{ik}} \matdot
    \end{align*}
\end{defi}

\begin{prop}
    Let $i \in \mathbb{I}$ and $\dynk \in \dynkcat{\theta}$ be $i$-finite. The following hold:
    \begin{enumerate}
        \item $m_i^\dynk = m_i^{\refl_i(\dynk)}$, $a_{ij}^\dynk = a_{ij}^{\refl_i(\dynk)}$ for all $j \in \mathbb{I}$ and $s_i^{\dynk} = s_i^{\refl_i(\dynk)}$.
        \item $\refl_i(\dynk)$ is $i$-finite.
        \item $\refl_i \refl_i(\dynk) = \dynk$.
    \end{enumerate}
\end{prop}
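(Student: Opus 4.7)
The plan is to first establish two explicit formulas by direct substitution into the reflection formulas, using the conventions $\dynk_{ii}=\dynk_i^2$ and $a^{\dynk}_{ii}=2$: namely $\refl_i(\dynk)_i=\dynk_i$ and $\refl_i(\dynk)_{ij}=\dynk_{ij}^{-1}\dynk_i^{2a^{\dynk}_{ij}}$ for $j\neq i$. The first identity immediately yields $m_i^{\refl_i(\dynk)}=m_i^{\dynk}$, and reduces the defining equation of $a_{ij}^{\refl_i(\dynk)}$ to $(m+1)_{\dynk_i}\bigl(\dynk_i^{m+2a^{\dynk}_{ij}}\dynk_{ij}^{-1}-1\bigr)=0$.

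The technical heart of the argument is showing $a_{ij}^{\refl_i(\dynk)}=a_{ij}^{\dynk}$ for $j\neq i$. Writing $a:=-a^{\dynk}_{ij}$, one first verifies that $m=a$ solves the reflected equation: by definition of $a$, at least one of $(a+1)_{\dynk_i}$ or $\dynk_i^a\dynk_{ij}-1$ vanishes, and each such vanishing forces the vanishing of the corresponding factor in the reflected equation.

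The harder direction, which I expect to be the main obstacle, is to rule out $m<a$. Assuming $\dynk_{ij}=\dynk_i^{m-2a}$ for some $0\le m<a$, the minimality of $a$ together with \Cref{rema_binomq_root_of_one} shows that $\dynk_i^k\neq 1$ for all integers $k$ in $[1,2a-m]$ (under the assumption $\dynk_i\neq 1$; the degenerate case $\dynk_i=1$ reduces the minimization to a condition on $\dynk_{ij}$ alone and is handled separately). A case split on which factor of the original minimization vanished at $m=a$ then produces a contradiction: if $\dynk_i^a\dynk_{ij}=1$, then combined with $\dynk_{ij}=\dynk_i^{m-2a}$ one gets $\dynk_i^{a-m}=1$ with $1\le a-m\le a$; if $(a+1)_{\dynk_i}=0$, then by \Cref{rema_binomq_root_of_one} the order of $\dynk_i$ is forced to be exactly $a+1$, and $a+1\le 2a-m$. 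Either way the forbidden interval is violated. The subtlety here lies in keeping track of which subcase produced $a$ and exploiting the specific shape of the new second factor.

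Once (1) is established, the remaining conclusions follow with almost no further work. The equality $s_i^{\refl_i(\dynk)}=s_i^{\dynk}$ is a formal consequence of the matching $a_{ij}$-values, since $s_i^{\dynk}$ is determined by them. Part (2) is then immediate since $a_{ij}^{\refl_i(\dynk)}=a_{ij}^{\dynk}\in-\ndN_0$ for all $j\neq i$. Part (3) is a direct computation: substituting the explicit forms from (1) into the reflection formulas a second time, all powers of $\dynk_i$ and $\dynk_{ij}$ telescope to recover $\dynk_j$ and $\dynk_{jk}$. The case $j=i$ or $k=i$ collapses trivially using $\refl_i(\dynk)_i=\dynk_i$, while the edge case for $j,k\neq i$ requires only careful bookkeeping of the exponents appearing in the two successive reflections.
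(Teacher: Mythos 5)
Your proposal is correct and follows essentially the same route as the paper: both rest on the formulas $\refl_i(\dynk)_i=\dynk_i$ and $\refl_i(\dynk)_{ij}=\dynk_{ij}^{-1}\dynk_i^{2a^{\dynk}_{ij}}$, check that $m=-a^{\dynk}_{ij}$ solves the reflected minimization, and exclude smaller solutions by a case analysis on vanishing factors via \Cref{rema_binomq_root_of_one}, after which (2) and (3) are immediate. The only divergence is organizational: the paper splits into three cases according to which factor attains each of the two minima, whereas you assume a smaller solution of the reflected equation, derive a single forbidden interval $[1,2a-m]$ of exponents $k$ with $\dynk_i^k\ne 1$, and contradict it — just note that extending that interval beyond $a$ genuinely uses the assumed relation $\dynk_{ij}=\dynk_i^{m-2a}$ together with the second-factor minimality $\dynk_i^k\dynk_{ij}\ne 1$ for $k<a$, not the first-factor minimality alone.
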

\begin{proof}
    (1): Since $\refl_i(\dynk)_i = \dynk_i$ we obtain $m_i^\dynk = m_i^{\refl_i(\dynk)}$ and for $m \in \ndN_0$, $j \in \mathbb{I} \setminus \lbrace i \rbrace$ we have
    \begin{align*}
        \refl_i(\dynk)_i^m \refl_i(\dynk)_{ij} = \dynk_i^{m + 2 a_{ij}^{\dynk}} \dynk_{ij}^{-1} \matdot
    \end{align*}
    By definition we have $(-a_{ij}^\dynk + 1)_{\dynk_i} ( \dynk_i^{-a_{ij}^{\dynk}} \dynk_{ij} - 1 ) = 0$, hence
    \begin{align*}
        &(-a_{ij}^\dynk + 1)_{\refl_i(\dynk)_i} ( \refl_i(\dynk)_i^{-a_{ij}^\dynk} \refl_i(\dynk)_{ij} - 1 ) 
        \\=& (-a_{ij}^\dynk + 1)_{\dynk_i} ( \dynk_i^{a_{ij}^{\dynk}} \dynk_{ij}^{-1} - 1 ) = 0 \matdot
    \end{align*}
    This implies $-a_{ij}^{\refl_i(\dynk)} \le -a_{ij}^{\dynk}$, in particular (2) holds. If $-a_{ij}^{\dynk}=0$, then (1) holds, so assume $-a_{ij}^{\dynk} \ge 1$, i.e. $D_{ij} \ne 1$. 
    We distinguish three cases: 
    
    First case: $(-a_{ij}^{\dynk} + 1)_{\dynk_i} = 0$.
    By definition for $0 \le m < -a_{ij}^\dynk$ we have $(m+1)_{\dynk_i} \ne 0$ and $\dynk_i^m \dynk_{ij} \ne 1$. Since $\dynk_{ij} \ne 1$ and $\dynk_i^{-a_{ij}^\dynk+1} = 1$, see \Cref{rema_binomq_root_of_one}, we also obtain $\dynk_i^{m} \dynk_{ij} \ne 1$ for all $m \in \ndZ \setminus -1+(-a_{ij}^\dynk+1)\ndZ$. Then for all $0 \le m \le -a_{ij}^\dynk-1$ we have $ \dynk_i^{-m-2a_{ij}^{\dynk}} \dynk_{ij} \ne 1$ and thus
    \begin{align*}
        &(m + 1)_{\refl_i(\dynk)_i} ( \refl_i(\dynk)_i^{m} \refl_i(\dynk)_{ij} - 1 ) 
        \\=& (m + 1)_{\dynk_i} ( \dynk_i^{m+2a_{ij}^{\dynk}} \dynk_{ij}^{-1} - 1 ) \ne 0 \matcom
    \end{align*}
    hence $-a_{ij}^{\refl_i(\dynk)} \ge -a_{ij}^{\dynk}$.
    
    Second case: $(-a_{ij}^{\refl_i(\dynk)} + 1)_{\dynk_i} = 0$. Then $-a_{ij}^{\refl_i(\dynk)} \ge -a_{ij}^{\dynk}$ by definition.
    
    Third case: $(-a_{ij}^{\dynk} + 1)_{\dynk_i} \ne 0$ and $(-a_{ij}^{\refl_i(\dynk)} + 1)_{\dynk_i} \ne 0$. By definition we have $\dynk_i^{a_{ij}^\dynk} = \dynk_{ij}$ (in particular $\dynk_i \ne 1$) and
    \begin{align*}
         0=\refl_i(\dynk)_i^{-a_{ij}^{\refl_i(\dynk)}} \refl_i(\dynk)_{ij}-1=\dynk_i^{-a_{ij}^{\refl_i(\dynk)} + 2 a_{ij}^{\dynk}} \dynk_{ij}^{-1} - 1
    \end{align*}
    i.e. $\dynk_i^{-a_{ij}^{\refl_i(\dynk)} + 2 a_{ij}^{\dynk}} = \dynk_{ij}$. Combining the two we get $\dynk_i^{- a_{ij}^{\dynk}+a_{ij}^{\refl_i(\dynk)} } = 1$. 
    Since $0 \le - a_{ij}^{\dynk}+a_{ij}^{\refl_i(\dynk)} \le -a_{ij}^\dynk$ and $\dynk_i^m \ne 1$ for all $1\le m\le -a_{ij}$, see \Cref{rema_binomq_root_of_one}, we conclude $- a_{ij}^{\dynk}+a_{ij}^{\refl_i(\dynk)} = 0$.
    
    In all cases we conclude $a_{ij}^{\refl_i(\dynk)} = a_{ij}^{\dynk}$ and hence (1) holds.
    Finally to check that~(3) holds is straight forward using~(1).
\end{proof}

\begin{lem} \label{lem_rel_refl_dynk_with_mi}
    Let $i \in \mathbb{I}$ and $\dynk \in \dynkcat{\theta}$ be $i$-finite. We assume that $m_i^\dynk \in \ndN_0$. Then for $j \in \mathbb{I}$ the following hold:
    \begin{align*}
        \dynk_{ij}^{m^\dynk_i + a^\dynk_{ij}} \dynk_i^{-a^\dynk_{ij}(1+2m^\dynk_i+a^\dynk_{ij})} &= 1 \matcom \tag{1}
        \\ \dynk_j \dynk_{ij}^{m^\dynk_i} \dynk_i^{-a^\dynk_{ij}(2m^\dynk_i +1)} &= \refl_i(\dynk)_j  \matdot \tag{2}
    \end{align*}
\end{lem}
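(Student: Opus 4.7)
The plan is to prove identity (1) first and then obtain (2) as a one-line algebraic consequence. Writing $m := m_i^\dynk$ and $a := a_{ij}^\dynk$ for readability, observe that (2) unfolds to
\begin{align*}
    \dynk_j \dynk_{ij}^{m} \dynk_i^{-a(2m+1)} = \dynk_j \dynk_{ij}^{-a} \dynk_i^{a^2},
\end{align*}
which after dividing by $\dynk_j$ and moving everything to one side is exactly
\begin{align*}
    \dynk_{ij}^{m+a} \dynk_i^{-a(1+2m+a)} = 1,
\end{align*}
i.e.\ identity (1). So all the work lies in (1).

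For (1), I first record that $-a \le m$: indeed $(m+1)_{\dynk_i} = 0$ by definition of $m$, so $n = m$ satisfies the condition $(n+1)_{\dynk_i}(\dynk_i^n \dynk_{ij}-1) = 0$ defining the minimum $-a$. Next I split into the two possible ways for the defining equation of $-a$ to be satisfied at its minimum value $n = -a$.

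In Case A, $\dynk_i^{-a}\dynk_{ij} = 1$, so $\dynk_{ij} = \dynk_i^{a}$. A direct substitution gives
\begin{align*}
    \dynk_{ij}^{m+a}\dynk_i^{-a(1+2m+a)} = \dynk_i^{a(m+a) - a(1+2m+a)} = \dynk_i^{-a(m+1)},
\end{align*}
and since $(m+1)_{\dynk_i} = 0$, \Cref{rema_binomq_root_of_one} yields $\dynk_i^{m+1} = 1$ (with the degenerate situation $\dynk_i = 1$ handled trivially), so the expression equals $1$. In Case B, $(-a+1)_{\dynk_i} = 0$, and the minimality of $m$ forces $-a + 1 \ge m + 1$, hence $-a \ge m$; combined with $-a \le m$ this pins down $-a = m$. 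Substituting $a = -m$ collapses the expression to $\dynk_{ij}^{0}\dynk_i^{m(m+1)} = (\dynk_i^{m+1})^m = 1$, again using $(m+1)_{\dynk_i} = 0$.

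The proof is therefore short and essentially mechanical; the only potential obstacle is the careful bookkeeping of the two cases from the definition of $a_{ij}^{\dynk}$ and the mild subtlety that \Cref{rema_binomq_root_of_one} must be supplemented by a direct verification when $\dynk_i = 1$ (where $\dynk_i^{m+1} = 1$ trivially). Once (1) is in hand, (2) is an immediate reformulation, as noted at the outset.
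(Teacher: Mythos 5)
Your proof is correct and follows essentially the same route as the paper: (2) is reduced to (1) by the same algebraic rearrangement, and (1) is handled by the same two-case analysis on the defining condition of $a_{ij}^{\dynk}$ (either $\dynk_{ij}\dynk_i^{-a_{ij}^{\dynk}}=1$ or $(-a_{ij}^{\dynk}+1)_{\dynk_i}=0$, the latter forcing $-a_{ij}^{\dynk}=m_i^{\dynk}$), together with $\dynk_i^{m_i^{\dynk}+1}=1$ from \Cref{rema_binomq_root_of_one}.
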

\begin{proof}
    (1): 
    Regarding \Cref{rema_binomq_root_of_one}, we obtain that $\dynk_i^{m^\dynk_i +1} = 1$. Hence $ \dynk_i^{-a^\dynk_{ij}(1+2m^\dynk_i+a^\dynk_{ij})}=\dynk_i^{-a^\dynk_{ij}(m^\dynk_i+ a^{\dynk}_{ij})}$ and it is enough to show
    \begin{align*}
        \left( \dynk_{ij} \dynk_i^{-a^\dynk_{ij}} \right)^{m^\dynk_i+ a^{\dynk}_{ij}} = 1 \matdot
    \end{align*}
    This is trivial if $m^\dynk_i = - a^{\dynk}_{ij}$. If $m^\dynk_i \ne - a^{\dynk}_{ij}$, then  $(-a^\dynk_{ij}+1)_{\dynk_i} \ne 0$, hence by definition $\dynk_{ij} \dynk_i^{-a^\dynk_{ij}}=1$.
    
    (2): Using (1) we obtain
    \begin{align*}
        \dynk_j \dynk_{ij}^{m^\dynk_i} \dynk_i^{-a^\dynk_{ij}(2m^\dynk_i +1)} 
        &= \dynk_j \dynk_{ij}^{m^\dynk_i -m^\dynk_i -a^\dynk_{ij}} \dynk_i^{-a^\dynk_{ij}(2m^\dynk_i +1 -1 -2m^\dynk_i -a_{ij})}
        \\ &= \dynk_j \dynk_{ij}^{-a^\dynk_{ij}} \dynk_i^{{a^\dynk_{ij}}^2} \matdot
    \end{align*}
    By definition this is equal to $\refl_i(\dynk)_j$.
\end{proof}

\begin{defi}
	Let $k \in \ndN_0$, $i_1, \ldots, i_k \in \mathbb{I}$ and $D\in \dynkcat{\theta}$.
	\begin{enumerate}
		\item We say $\dynk$ \textbf{admits the reflection sequence $(i_1,\ldots, i_k)$}, if $k=0$ or $\dynk$ is $i_1$-finite and $\refl_{i_1}(\dynk)$ admits the reflection sequence $(i_2,\ldots, i_k)$.
		\item We say $\dynk$ \textbf{admits all reflections}, if for all $n \in \ndN_0$, $i \in \mathbb{I}^n$,
		$\dynk$ admits the reflection sequence $i$.
	\end{enumerate}
\end{defi}

\begin{notation}
	Let $k \in \ndN_0$, $i_1, \ldots, i_k \in \mathbb{I}$ and $D\in \dynkcat{\theta}$.
	Assume $\dynk$ admits the reflection sequence $(i_1,\ldots,i_{k-1})$.
	First denote for all $j \in \mathbb{I}$
	\begin{align*}
	    \dynk_{i} := \refl_{(i_1,\ldots,i_{k-1})}(\dynk)_{i_k} \matcom
	    && \dynk_{i,j} := \refl_{(i_1,\ldots,i_{k-1})}(\dynk)_{i_k j}
	\end{align*}
    Moreover denote $m^{\dynk}_{()}=0$ and if $k \ge  1$ denote
	\begin{align*}
		m^{\dynk}_{i} := m^{\refl_{(i_1,\ldots,i_{k-1})}(\dynk)}_{i_k} \matdot
	\end{align*}
	Finally let $a^{\dynk}_{i,i_k} = 2$ and for $j \in \mathbb{I}\setminus \lbrace i_k \rbrace$ define
	\begin{align*}
		a^{\dynk}_{i,j} :=& \, a^{\refl_{(i_1,\ldots,i_{k-1})}(\dynk)}_{i_k j}
		\matdot
	\end{align*}
	If $\refl_{(i_1,\ldots,i_{k-1})}(\dynk)$ is $i_k$-finite, 
	then let $s^{\dynk}_{(i_1,\ldots,i_{k-1}),i_k} \in \Aut(\ndZ^\theta)$ be such that for all $j \in \mathbb{I}$
	\begin{align*}
		s^{\dynk}_{(i_1,\ldots,i_{k-1}),i_k} ( \alpha_j) = \alpha_j - a^{\dynk}_{i,j} \alpha_{i_k} \matdot
	\end{align*}
	Finally define $s^{\dynk}_i=s^{\dynk}_{(),i_1} s^{\dynk}_{(i_1),i_2} \cdots s^{\dynk}_{(i_1,\ldots,i_{k-1}),i_k}$ and $s^{\dynk}_{()}=\id_{\ndZ^\theta}$.
\end{notation}

\begin{defi}
    Let $k \in \ndN_0$, $i_1, \ldots, i_k \in \mathbb{I}$ and $D\in \dynkcat{\theta}$. If $\dynk$ admits the reflection sequence $i$, then for $j \in \mathbb{I}$ the element $s_i^{\dynk}(\alpha_j) \in \ndZ^\theta$ is called a \textbf{root of~$\dynk$}. The set of all roots is denoted $\roots{\dynk}$. 
    Furthermore we denote $\rootspos{\dynk} = \roots{\dynk} \cap \ndN_0^\theta$ for the set of \textbf{positive roots of $\dynk$}. 
\end{defi}

\begin{notation}
    Let $\dynk \in \dynkcat{\theta}$. Let $\indchar{\dynk} : \ndZ^\theta \rightarrow \fK^\times$ be the character (i.e. group homomorphism) that is given by $\indchar{\dynk}(\alpha_j) = \dynk_j$ for all $j \in \mathbb{I}$. Moreover let  
    \begin{align*}
        \indchar{\dynk} : \ndZ^\theta \times \ndZ^\theta \rightarrow \fK^\times
    \end{align*}
    be the bicharacter (i.e. group homomorphism), such that $\indchar{\dynk}  (\alpha_j, \alpha_j) = \dynk_j$ for all $j \in \mathbb{I}$, as well as $\indchar{\dynk}  (\alpha_j, \alpha_k)= \dynk_{jk}$ and $\indchar{\dynk} (\alpha_k, \alpha_j) = 1$ for all $1\le j < k \le \theta$ (while we are indeed using the symbol $\indchar{\dynk}$ for two different mappings here, it should always be clear which one we are refering to).
    Finally define the bicharacter
    \begin{align*}
        \bichar{\dynk} : \ndZ^\theta \times \ndZ^\theta \rightarrow \fK^\times
    \end{align*}
    that is given by $\bichar{\dynk} (\alpha_j, \alpha_k) = \dynk_{jk}$ for all $j,k \in \mathbb{I}$.
\end{notation}

\begin{lem} \label{lem_bichar_indchar}
    For $\dynk \in \dynkcat{\theta}$, $n=(n_1,\ldots,n_\theta), n'=(n'_1,\ldots,n'_\theta) \in \ndZ^\theta$ we have
    \begin{align*}
        \bichar{\dynk} (n, n') = \indchar{\dynk} (n,n') \indchar{\dynk} (n',n) \matdot
    \end{align*}
\end{lem}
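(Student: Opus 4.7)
The plan is to reduce the identity to a check on generators and then verify it case-by-case. Both $\bichar{\dynk}$ and the map $(n,n') \mapsto \indchar{\dynk}(n,n')\indchar{\dynk}(n',n)$ are bicharacters on $\ndZ^\theta$: the former by definition, and the latter because $\indchar{\dynk}$ itself is a bicharacter and the product of two bicharacters is again a bicharacter. Hence both sides factor through the basis $\alpha_1,\ldots,\alpha_\theta$, and it suffices to verify the identity on pairs $(\alpha_j,\alpha_k)$ with $j,k \in \mathbb{I}$.

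I would then split into three cases. For $j=k$, the left-hand side is $\bichar{\dynk}(\alpha_j,\alpha_j)=\dynk_{jj}$, which by the stated convention equals $\dynk_j^2$, while the right-hand side is $\indchar{\dynk}(\alpha_j,\alpha_j)\indchar{\dynk}(\alpha_j,\alpha_j) = \dynk_j \cdot \dynk_j$, matching. For $j < k$, the left-hand side is $\dynk_{jk}$, and the right-hand side is $\indchar{\dynk}(\alpha_j,\alpha_k)\indchar{\dynk}(\alpha_k,\alpha_j) = \dynk_{jk} \cdot 1$, matching. For $j > k$, the left-hand side is $\dynk_{jk}$, which equals $\dynk_{kj}$ by the symmetry convention introduced for the edge labels, and the right-hand side is $\indchar{\dynk}(\alpha_j,\alpha_k)\indchar{\dynk}(\alpha_k,\alpha_j) = 1 \cdot \dynk_{kj}$, again matching.

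Since the identity holds on all pairs of basis elements and both sides are bicharacters, it extends bilinearly to all $(n,n') \in \ndZ^\theta \times \ndZ^\theta$. There is no real obstacle here; the only subtlety is to keep track of the asymmetric definition of $\indchar{\dynk}$ (which concentrates the edge label $\dynk_{jk}$ in the argument order $j<k$) versus the symmetric diagonal convention $\dynk_{jj}=\dynk_j^2$, and to note that symmetrizing $\indchar{\dynk}$ precisely recovers $\bichar{\dynk}$.
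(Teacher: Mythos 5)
Your proof is correct and follows essentially the same route as the paper: the paper writes out the explicit product formula for $\indchar{\dynk}(n,n')$ in terms of its values on the basis vectors and then invokes $\dynk_{jj}=\dynk_j^2$ (and implicitly $\dynk_{kj}=\dynk_{jk}$), which amounts to exactly your reduction to generators plus the three case checks. No issues.
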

\begin{proof}
    The claim follows from 
    \begin{align*}
        \indchar{\dynk} (n,n') = \left( \prod_{1\le j \le \theta} \dynk_j^{n_j+n'_j}\right) \left( \prod_{1\le j < k \le \theta} \dynk_{jk}^{n_j+n'_k }\right)
    \end{align*}
    as well as $\dynk_j^2 = \dynk_{jj}$ for all $j \in \mathbb{I}$.
\end{proof}

\begin{prop} \label{prop_refl_dynkin_relies_on_si}
    Let $m \in \ndN_0$, $i_1, \ldots, i_m \in \mathbb{I}^m$ and assume $\dynk \in \dynkcat{\theta}$ admits the reflection sequence $i$.
	Then for $j,k \in \mathbb{I}$ we have
	\begin{align*}
	    \refl_i(\dynk)_j &=  \indchar{\dynk} \left( (s_i^{\dynk}(\alpha_j),s_i^{\dynk}(\alpha_j) \right) \matcom 
	    \\ \refl_i(\dynk)_{jk} &= \bichar{\dynk} \left( s_i^{\dynk}(\alpha_j),s_i^{\dynk}(\alpha_k) \right) \matdot 
	\end{align*}
    In particular if $s^{\dynk}_i = \id_{\ndZ^\theta}$, then $\refl_i(\dynk)=\dynk$.
\end{prop}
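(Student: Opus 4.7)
The plan is to proceed by induction on $m$. The base case $m=0$ is immediate: $s_{()}^{\dynk} = \id_{\ndZ^\theta}$, $\refl_{()}(\dynk) = \dynk$, and the two formulas reduce to the defining identities $\indchar{\dynk}(\alpha_j, \alpha_j) = \dynk_j$ and $\bichar{\dynk}(\alpha_j, \alpha_k) = \dynk_{jk}$.

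I would first establish the case $m=1$ by a direct computation. For an $i$-finite diagram $\dynk$, bilinearity of $\bichar{\dynk}$ together with $\bichar{\dynk}(\alpha_i, \alpha_i) = \dynk_i^2$ gives
\begin{align*}
\bichar{\dynk}\!\left(s_i^{\dynk}(\alpha_j), s_i^{\dynk}(\alpha_k)\right) = \dynk_{jk}\, \dynk_{ij}^{-a^{\dynk}_{ik}}\, \dynk_{ik}^{-a^{\dynk}_{ij}}\, \dynk_i^{2 a^{\dynk}_{ij} a^{\dynk}_{ik}},
\end{align*}
which matches the definition of $\refl_i(\dynk)_{jk}$. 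For the vertex case, expanding $\indchar{\dynk}(\alpha_j - a_{ij}^{\dynk}\alpha_i, \alpha_j - a_{ij}^{\dynk}\alpha_i)$ by bilinearity and collapsing the mixed terms via the identity $\indchar{\dynk}(\alpha_j,\alpha_i)\,\indchar{\dynk}(\alpha_i,\alpha_j) = \bichar{\dynk}(\alpha_i,\alpha_j) = \dynk_{ij}$ from \Cref{lem_bichar_indchar} yields $\dynk_j\, \dynk_{ij}^{-a^{\dynk}_{ij}}\, \dynk_i^{(a^{\dynk}_{ij})^2} = \refl_i(\dynk)_j$.

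For the inductive step, write $i' = (i_1,\ldots,i_{m-1})$. By construction $\refl_i(\dynk) = \refl_{i_m}(\refl_{i'}(\dynk))$, and since $a_{i,j}^{\dynk} = a_{i_m j}^{\refl_{i'}(\dynk)}$ we have $s_i^{\dynk} = s_{i'}^{\dynk} \circ s_{i_m}^{\refl_{i'}(\dynk)}$. Applying the $m=1$ case to $\refl_{i'}(\dynk)$ reduces the claim to showing, for all $v, w \in \ndZ^\theta$, that
\begin{align*}
\bichar{\refl_{i'}(\dynk)}(v, w) &= \bichar{\dynk}\!\left(s_{i'}^{\dynk}(v), s_{i'}^{\dynk}(w)\right), \\
\indchar{\refl_{i'}(\dynk)}(v, v) &= \indchar{\dynk}\!\left(s_{i'}^{\dynk}(v), s_{i'}^{\dynk}(v)\right).
\end{align*}
The first equation follows at once by bilinearity from the inductive hypothesis $\refl_{i'}(\dynk)_{jk} = \bichar{\dynk}(s_{i'}^{\dynk}(\alpha_j), s_{i'}^{\dynk}(\alpha_k))$.

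The main obstacle is the second equation, because $\indchar{\dynk}$ is not symmetric and its values on pairs $(\alpha_k, \alpha_j)$ with $k > j$ are not among the data preserved by the induction hypothesis. To resolve this, I would observe that for any $v = \sum_j v_j \alpha_j$,
\begin{align*}
\indchar{\dynk}(v, v) = \prod_{j} \indchar{\dynk}(\alpha_j, \alpha_j)^{v_j^2} \prod_{j < k} \bigl[\indchar{\dynk}(\alpha_j, \alpha_k)\, \indchar{\dynk}(\alpha_k, \alpha_j)\bigr]^{v_j v_k} = \prod_{j} \dynk_j^{v_j^2} \prod_{j<k} \dynk_{jk}^{v_j v_k},
\end{align*}
using \Cref{lem_bichar_indchar}; so $\indchar{\dynk}(v,v)$ depends only on the vertex and edge labels of $\dynk$. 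Applying the same formula to $\refl_{i'}(\dynk)$ and $s_{i'}^{\dynk}(v) \in \ndZ^\theta$ respectively, and substituting the inductive hypotheses $\refl_{i'}(\dynk)_j = \indchar{\dynk}(s_{i'}^{\dynk}(\alpha_j), s_{i'}^{\dynk}(\alpha_j))$ and $\refl_{i'}(\dynk)_{jk} = \bichar{\dynk}(s_{i'}^{\dynk}(\alpha_j), s_{i'}^{\dynk}(\alpha_k))$, yields the desired equality. The particular statement for $s_i^{\dynk} = \id_{\ndZ^\theta}$ is then immediate.
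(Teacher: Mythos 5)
Your proof is correct and takes essentially the same route as the paper: an induction on the length of the reflection sequence whose inductive step combines bilinearity of $\bichar{\dynk}$ and $\indchar{\dynk}$ with the identity $\bichar{\dynk}(n,n')=\indchar{\dynk}(n,n')\indchar{\dynk}(n',n)$ of \Cref{lem_bichar_indchar} to handle the asymmetry of $\indchar{\dynk}$. The only difference is packaging — you isolate the $m=1$ case and a "transport" identity for $\bichar{}$ and the diagonal of $\indchar{}$ under $s_{i'}^{\dynk}$, whereas the paper expands $s_i^{\dynk}(\alpha_j)=s_{i'}^{\dynk}(\alpha_j)-a_{i,j}^{\dynk}s_{i'}^{\dynk}(\alpha_{i_m})$ directly inside the forms and matches the result against the defining formula for $\refl_{i_m}(\refl_{i'}(\dynk))$.
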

\begin{proof}
    We do induction on $m$: The claim is trivial for $m=0$. Assume $m\ge 1$ and that the claim holds for $i' = (i_1,\ldots,i_{m-1})$. 
    For $j \in \mathbb{I}$ we have 
    \begin{align*}
        s^{\dynk}_i(\alpha_j) = s_{i'}(\alpha_j) - a_{i,j}^{\dynk} s_{i'}(\alpha_{i_m}) \matdot 
    \end{align*}
    Then by induction hypothesis for $j,k \in \mathbb{I}$ we have 
    \begin{align*}
        \bichar{\dynk} \left(s_i^{\dynk}(\alpha_j),s_{i'}^{\dynk}(\alpha_k) \right) &= \bichar{\dynk} \left(s_{i'}^{\dynk}(\alpha_j),s_{i'}^{\dynk}(\alpha_k) \right) \bichar{\dynk} \left( s_{i'}^{\dynk}(\alpha_{i_m}),s_{i'}^{\dynk}(\alpha_k) \right)^{-a^{\dynk}_{i,j}}
        \\&= \refl_{i'}(\dynk)_{jk} \refl_{i'}(\dynk)_{i_m k}^{-a^{\dynk}_{i,j}}
    \end{align*}
    as well as
     \begin{align*}
        \bichar{\dynk} \left(s_i^{\dynk}(\alpha_j),s_{i'}^{\dynk}(\alpha_{i_m}) \right) 
        &= \refl_{i'}(\dynk)_{j i_m} \refl_{i'}(\dynk)_{i_m}^{-2a^{\dynk}_{i,j}} \matdot
    \end{align*}
    Combing the above we obtain
    \begin{align*}
        \bichar{\dynk} \left(s_i^{\dynk}(\alpha_j),s_i^{\dynk}(\alpha_{k}) \right) 
        &= \bichar{\dynk} \left(s_i^{\dynk}(\alpha_j),s_{i'}^{\dynk}(\alpha_k) \right) \bichar{\dynk} \left(s_i^{\dynk}(\alpha_j),s_{i'}^{\dynk}(\alpha_{i_m}) \right)^{-a_{i,k}^{\dynk}}
        \\ &= \refl_{i'}(\dynk)_{jk} \refl_{i'}(\dynk)_{i_m k}^{-a^{\dynk}_{i,j}} \refl_{i'}(\dynk)_{j i_m}^{-a^{\dynk}_{i,k}} \refl_{i'}(\dynk)_{i_m}^{2a^{\dynk}_{i,j}a^{\dynk}_{i,k}}
        \\ &= \refl_i(\dynk)_{jk} \matdot
    \end{align*}
    This proves the second relation of the claim. The first relation is obtained similarly: By induction hypothesis we have
    \begin{align*}
        \indchar{\dynk} \left(s_i^{\dynk}(\alpha_j),s_{i'}^{\dynk}(\alpha_{j}) \right) 
        &= \refl_{i'}(\dynk)_{j} \, \indchar{\dynk} \left(s_{i'}^{\dynk}(\alpha_{i_m}),s_{i'}^{\dynk}(\alpha_{j}) \right)^{-a^\dynk_{i,j}}
    \end{align*}
    as well as
    \begin{align*}
        \indchar{\dynk} \left(s_i^{\dynk}(\alpha_j),s_{i'}^{\dynk}(\alpha_{i_m}) \right) 
        &= \indchar{\dynk} \left(s_{i'}^{\dynk}(\alpha_j),s_{i'}^{\dynk}(\alpha_{i_m}) \right) \refl_{i'}(\dynk)_{i_m}^{-a^{\dynk}_{i,j}} \matdot
    \end{align*}
    Hence combining the above and using \Cref{lem_bichar_indchar} as well as the second relation from the claim we obtain
    \begin{align*}
        & \indchar{\dynk} \left( (s_i^{\dynk}(\alpha_j),s_i^{\dynk}(\alpha_j) \right)
        \\=& \refl_{i'}(\dynk)_{j}   \Big( \indchar{\dynk} \left(s_{i'}^{\dynk}(\alpha_{i_m}),s_{i'}^{\dynk}(\alpha_{j}) \right) \indchar{\dynk} \left(s_{i'}^{\dynk}(\alpha_j),s_{i'}^{\dynk}(\alpha_{i_m}) \right) \Big)^{-a^\dynk_{i,j}} \refl_{i'}(\dynk)_{i_m}^{{a^{\dynk}_{i,j}}^2}
        \\=& \refl_{i'}(\dynk)_{j} \,  \bichar{\dynk} \left(s_{i'}^{\dynk}(\alpha_{i_m}),s_{i'}^{\dynk}(\alpha_{j}) \right)^{-a^\dynk_{i,j}} \refl_{i'}(\dynk)_{i_m}^{{a^{\dynk}_{i,j}}^2}
        \\=& \refl_{i'}(\dynk)_{j} \refl_{i'}(\dynk)_{i_m j}^{-a^{\dynk}_{i,j}} \refl_{i'}(\dynk)_{i_m}^{{a^{\dynk}_{i,j}}^2} = \refl_i(\dynk)_j \matcom
    \end{align*}
    implying the first relation.
\end{proof}

The following definition will be useful later when we describe Dynkin diagrams of induced $\ydmod{}$-modules.

\begin{defi}
    Let $\dynk \in \dynkcat{\theta}$ and $\fK(t_1,\ldots,t_k)$ the field of rational functions over the indeterminates $t_1,\ldots,t_k$.
    Define $\extdynk{\dynk} \in \dynkcatfld{\fK(t_1,\ldots,t_\theta)}{\theta+1}$ to be the object where $\extdynk{\dynk}_j = \dynk_j$, $\extdynk{\dynk}_{jk} = \dynk_{jk}$ for all $j,k \in \mathbb{I}$ as well as $\extdynk{\dynk}_{\theta+1}=1$ and $\extdynk{\dynk}_{j (\theta+1)}=t_j$.
\end{defi}

\begin{lem} \label{lem_ext_dynk}
    Let $\dynk \in \dynkcat{\theta}$ and $i \in \mathbb{I}$. The following hold:
    \begin{enumerate}
        \item $-a_{i (\theta+1)}^{\extdynk{\dynk}} = m_i^{\dynk}$.
        \item For $i \in \mathbb{I}$ we have that $\extdynk{\dynk}$ is $i$-finite if and only if $\dynk$ is $i$-finite and $m_i^\dynk \in \ndN_0$.
    \end{enumerate}
\end{lem}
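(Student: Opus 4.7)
The plan is to unwind all definitions and exploit the fact that the new variables $t_1,\ldots,t_\theta$ are algebraically independent over $\fK$, so that no nontrivial polynomial relation involving them can vanish in $\fK(t_1,\ldots,t_\theta)$.

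For (1), I will compute $-a_{i(\theta+1)}^{\extdynk{\dynk}}$ directly from the definition of $a^{D}$ applied to $D = \extdynk{\dynk}$. Since $\extdynk{\dynk}_i = \dynk_i$ and $\extdynk{\dynk}_{i(\theta+1)} = t_i$, we have
\begin{align*}
    -a_{i(\theta+1)}^{\extdynk{\dynk}}
    = \min\lbrace m \in \ndN_0 \,|\, (m+1)_{\dynk_i}(\dynk_i^m t_i - 1) = 0 \rbrace.
\end{align*}
The key observation is that $\dynk_i^m t_i - 1 \in \fK(t_1,\ldots,t_\theta)$ is a nonzero polynomial in $t_i$ of degree one (the coefficient of $t_i$ is the unit $\dynk_i^m$), hence is nonzero in the function field for every $m \in \ndN_0$. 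Consequently the product vanishes precisely when the first factor $(m+1)_{\dynk_i}$ vanishes, and the minimum on the right is, by definition, exactly $m_i^{\dynk}$ (interpreted in $\ndN_0 \cup \lbrace \infty \rbrace$).

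For (2), I will apply the $i$-finiteness criterion to both Dynkin diagrams. By definition, $\extdynk{\dynk}$ is $i$-finite iff $-a_{ij}^{\extdynk{\dynk}} \in \ndN_0$ for every $j \in \lbrace 1,\ldots,\theta+1 \rbrace \setminus \lbrace i \rbrace$. For $j \in \mathbb{I} \setminus \lbrace i \rbrace$ the scalars $\extdynk{\dynk}_i = \dynk_i$ and $\extdynk{\dynk}_{ij} = \dynk_{ij}$ coincide with those of $\dynk$, so the defining condition of $a_{ij}^{\extdynk{\dynk}}$ matches that of $a_{ij}^{\dynk}$, and therefore $a_{ij}^{\extdynk{\dynk}} = a_{ij}^{\dynk}$. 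For $j = \theta+1$, part (1) gives $-a_{i(\theta+1)}^{\extdynk{\dynk}} = m_i^{\dynk}$, which lies in $\ndN_0$ iff $m_i^{\dynk} \in \ndN_0$. Combining these two observations, the $i$-finiteness of $\extdynk{\dynk}$ is equivalent to the conjunction of the $i$-finiteness of $\dynk$ and $m_i^{\dynk} \in \ndN_0$.

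Neither step presents a real obstacle; the only point that requires slight care is noting that $t_i$ being an indeterminate (hence transcendental) forces $\dynk_i^m t_i - 1 \ne 0$ for all $m$, which is what cleanly separates the role of the first and second factors in the definition of $a$.
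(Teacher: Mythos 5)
Your proof is correct and follows essentially the same route as the paper's (much terser) argument: the key point in both is that $\dynk_i^m t_i - 1 \ne 0$ in $\fK(t_1,\ldots,t_\theta)$ because $t_i$ is an indeterminate, which reduces $-a_{i(\theta+1)}^{\extdynk{\dynk}}$ to $\min\lbrace m \in \ndN_0 \mid (m+1)_{\dynk_i}=0\rbrace = m_i^{\dynk}$, and (2) then follows by comparing the finiteness conditions coordinate by coordinate. Your write-up simply makes explicit the details the paper leaves implicit.
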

\begin{proof}
    We have $\dynk_i^m t_i \ne 1$ for all $m \in \ndN_0$. Hence
    \begin{align*}
        -a_{i (\theta+1)}^{D} = \min \lbrace m \in \ndN_0 \, | \, (m + 1)_{\dynk_{i}} = 0 \rbrace = m_i^\dynk \matdot
    \end{align*}
    This implies (1) and (2).
\end{proof}

We will now do a construction similar to \Cref{nota_bety_sys} and point out its connection with $\extdynk{\dynk}$ for $\dynk \in \dynkcat{\theta}$.

\begin{notation}
    Let $\dynk \in \dynkcat{\theta}$, $k \in \ndN_0$, $i = (i_1,\ldots, i_k) \in \mathbb{I}^k$ and assume $\dynk$ admits the reflection sequence $i$.
	For $m=(m_1,\ldots,m_k) \in \ndN_0^k$ let
	\begin{align*}
	\beta^{\dynk}_{i,m} =& - \sum_{r=1}^{k} m_r s^{\dynk}_{(i_1,\ldots,i_r)}(\alpha_{i_r})   \in \ndZ_0^\theta \matdot
	\end{align*}
	Moreover if $m^{\dynk}_{(i_1,\ldots,i_r)} \in \ndN_0$ for all $1 \le r \le k-1$, then for $0 \le m \le m^{\dynk}_i$ let $\beta^{\dynk}_{i,m} := \beta^{\dynk}_{i,(m^{\dynk}_{(i_1)},\ldots,m^{\dynk}_{(i_1,\ldots,i_{k-1})}),m)}$, where $\beta^{\dynk}_{(),0}=0$. If also $m^{\dynk}_i \in \ndN_0$, then let $\beta^{\dynk}_i := \beta^{\dynk}_{i,m^{\dynk}_i}$.
\end{notation}

\begin{prop} \label{prop_ext_dynk_beta}
    Let $k \in \ndN_0$, $i=(i_1,\ldots,i_k) \in \mathbb{I}^k$, let $\dynk \in \dynkcat{\theta}$, assume $\dynk$ admits the reflection sequence $i$ and that $m^{\dynk}_{(i_1,\ldots,i_r)} \in \ndN_0$ for all $1 \le r \le k$.
    Then we have $s_i^{\extdynk{\dynk}}( \alpha_j) = s_i^{\dynk}(\alpha_j)$ for all $j \in \mathbb{I}$ and
    \begin{align*}
        s_i^{\extdynk{\dynk}} ( \alpha_{\theta+1} ) = \alpha_{\theta+1} + \beta^\dynk_{i} \matdot
    \end{align*}
\end{prop}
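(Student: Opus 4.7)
The plan is to prove, by induction on $k$, the stronger assertion that $\extdynk{\dynk}$ admits the reflection sequence $i$ together with the two claimed equalities. The base case $k=0$ is immediate since $s^{\extdynk{\dynk}}_{()} = \id$ and $\beta^{\dynk}_{()} = 0$. For the inductive step, set $i' = (i_1,\ldots,i_{k-1})$ and assume the claim for $i'$.

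First I would use \Cref{prop_refl_dynkin_relies_on_si} together with the fact that $\bichar{\extdynk{\dynk}}$ and $\indchar{\extdynk{\dynk}}$ restricted to $\ndZ^\theta \times \ndZ^\theta$ coincide with $\bichar{\dynk}$ and $\indchar{\dynk}$ to conclude that $\refl_{i'}(\extdynk{\dynk})$ agrees with $\refl_{i'}(\dynk)$ on all vertices and edges indexed by $\mathbb{I}$. In particular, $\refl_{i'}(\extdynk{\dynk})_{i_k} = \dynk_i$ and $\refl_{i'}(\extdynk{\dynk})_{i_k j} = \dynk_{i,j}$ for $j \in \mathbb{I}$, so $a^{\extdynk{\dynk}}_{i, j} = a^{\dynk}_{i,j}$ for all $j \in \mathbb{I}$.

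The core step is to show $-a^{\extdynk{\dynk}}_{i, \theta+1} = m^{\dynk}_i$. Using \Cref{prop_refl_dynkin_relies_on_si} and the induction hypothesis,
\begin{align*}
\refl_{i'}(\extdynk{\dynk})_{i_k, \theta+1} = \bichar{\extdynk{\dynk}}\bigl(s^{\dynk}_{i'}(\alpha_{i_k}),\alpha_{\theta+1}\bigr)\,\bichar{\extdynk{\dynk}}\bigl(s^{\dynk}_{i'}(\alpha_{i_k}),\beta^{\dynk}_{i'}\bigr).
\end{align*}
If $s^{\dynk}_{i'}(\alpha_{i_k}) = \sum_{j \in \mathbb{I}} n'_j \alpha_j$, then the first factor is the monomial $\prod_{j} t_j^{n'_j}$; since $s^{\dynk}_{i'} \in \Aut(\ndZ^\theta)$ the exponent vector $(n'_j)$ is nonzero, so by algebraic independence of the $t_j$'s this monomial is not an element of $\fK$ in $\fK(t_1,\ldots,t_\theta)$. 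The second factor lies in $\fK$, as does $\refl_{i'}(\extdynk{\dynk})_{i_k}^m = \dynk_i^m$ for any $m$. Consequently $\refl_{i'}(\extdynk{\dynk})_{i_k}^m \refl_{i'}(\extdynk{\dynk})_{i_k, \theta+1} \ne 1$ for every $m \in \ndN_0$, so the defining minimum for $-a^{\extdynk{\dynk}}_{i,\theta+1}$ is attained via $(m+1)_{\dynk_i} = 0$; this minimum is $m^{\dynk}_i \in \ndN_0$, and it follows in particular that $\extdynk{\dynk}$ is $i$-finite.

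To conclude, I would apply $s^{\extdynk{\dynk}}_{i'}$ to the defining formula for $s^{\extdynk{\dynk}}_{i',i_k}$: for $j \in \mathbb{I}$,
\begin{align*}
s^{\extdynk{\dynk}}_{i}(\alpha_j) = s^{\dynk}_{i'}(\alpha_j) - a^{\dynk}_{i,j}\, s^{\dynk}_{i'}(\alpha_{i_k}) = s^{\dynk}_{i}(\alpha_j),
\end{align*}
and for the last coordinate, using $-a^{\extdynk{\dynk}}_{i,\theta+1} = m^{\dynk}_i$,
\begin{align*}
s^{\extdynk{\dynk}}_{i}(\alpha_{\theta+1}) = \alpha_{\theta+1} + \beta^{\dynk}_{i'} + m^{\dynk}_i\, s^{\dynk}_{i'}(\alpha_{i_k}) = \alpha_{\theta+1} + \beta^{\dynk}_{i},
\end{align*}
where the last equality uses $s^{\dynk}_{i}(\alpha_{i_k}) = -s^{\dynk}_{i'}(\alpha_{i_k})$ and the definition of $\beta^{\dynk}_{i}$. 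The main obstacle is the genericity observation in the third paragraph: one must cleanly invoke the algebraic independence of the indeterminates $t_j$ to rule out accidental cancellations that would otherwise shorten the $i_k$-string from $\alpha_{\theta+1}$ and spoil the identification of $a^{\extdynk{\dynk}}_{i,\theta+1}$ with $-m^{\dynk}_i$.
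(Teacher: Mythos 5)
Your proof is correct and follows essentially the same induction as the paper's: the identity $s_i^{\extdynk{\dynk}}(\alpha_j) = s_{i'}^{\extdynk{\dynk}}(\alpha_j - a^{\extdynk{\dynk}}_{i,j}\alpha_{i_k})$ combined with $a^{\extdynk{\dynk}}_{i,j} = a^{\dynk}_{i,j}$ for $j \in \mathbb{I}$ and $-a^{\extdynk{\dynk}}_{i,\theta+1} = m^{\dynk}_i$, followed by the same telescoping via $s_{i'}^{\dynk}(\alpha_{i_k}) = -s_i^{\dynk}(\alpha_{i_k})$ and the definition of $\beta^{\dynk}_i$. The one place you go beyond the paper is the justification of $-a^{\extdynk{\dynk}}_{i,\theta+1} = m^{\dynk}_i$ in the inductive step: the paper cites \Cref{lem_ext_dynk}(2), which on its face only treats the unreflected diagram, whereas you compute the reflected edge $\refl_{i'}(\extdynk{\dynk})_{i_k(\theta+1)}$ via \Cref{prop_refl_dynkin_relies_on_si} as a nonconstant monomial in $t_1,\ldots,t_\theta$ times a scalar and invoke algebraic independence of the indeterminates to rule out $\refl_{i'}(\extdynk{\dynk})_{i_k}^m\,\refl_{i'}(\extdynk{\dynk})_{i_k(\theta+1)} = 1$. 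This is precisely the genericity the construction of $\extdynk{\dynk}$ is designed to provide, and your explicit treatment (together with carrying the assertion that $\extdynk{\dynk}$ admits the reflection sequence $i$ through the induction) makes the argument self-contained where the paper is terse.
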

\begin{proof}
    We prove the claim by induction on $k$: The claim is trivial for $k=0$. Assume $k \ge 1$ and that the claim holds for $i' = (i_1,\ldots,i_{k-1})$.
    By induction hypothesis we have for $j \in \mathbb{I}$
    \begin{align*}
        s_i^{\extdynk{\dynk}}( \alpha_j) = s^{\extdynk{\dynk}}_{i'} (\alpha_j - a_{i, j}^{\dynk} \alpha_{i_k})= s^{\dynk}_{i'} (\alpha_j - a_{i, j}^{\dynk} \alpha_{i_k}) = s_i^{\dynk}(\alpha_j) \matdot
    \end{align*}
    Thus by induction hypothesis and \Cref{lem_ext_dynk}(2) we have
    \begin{align*}
        s_i^{\extdynk{\dynk}} ( \alpha_{\theta+1} ) = s_{i'}^{\extdynk{\dynk}} ( \alpha_{\theta+1} + m_i^{\dynk} \alpha_{i_k} )
        = \alpha_{\theta+1} + \beta^\dynk_{i'} + m_i^{\dynk} s_{i'}^{\dynk} (\alpha_{i_k})
    \end{align*}
    Since $s_{i'}^{\dynk} (\alpha_{i_k}) = - s_{i}^{\dynk} (\alpha_{i_k})$ we obtain $\beta^\dynk_{i'} + m_i^{\dynk} s_{i'}^{\dynk} (\alpha_{i_k}) = \beta_i^\dynk$.
\end{proof}

\begin{lem} \label{lem_extdynk_only_dep_on_root}
    Let $k \in \ndN_0$, $i=(i_1,\ldots,i_k) \in \mathbb{I}^k$, let $\dynk \in \dynkcat{\theta}$, assume $D$ admits the reflection sequence $i$ and $m^{\dynk}_{(i_1,\ldots,i_r)} \in \ndN_0$ for all $1 \le r \le k$. Then for $j\in \mathbb{I}$ the following holds:
    \begin{align*}
        \bichar{\dynk}\left( \beta_i^\dynk, s_i^\dynk(\alpha_j) \right) \indchar{\dynk}\left( s_i^\dynk(\alpha_j)\right) = \refl_i(\dynk)_j \matdot
    \end{align*}
\end{lem}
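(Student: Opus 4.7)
The argument is by induction on the length $k$ of the sequence~$i$.

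For $k=0$ the identity is immediate: $s^{\dynk}_{()}=\id_{\ndZ^\theta}$ and $\beta^{\dynk}_{()}=0$, so both sides of the claim collapse to $\indchar{\dynk}(\alpha_j) = \dynk_j$.

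For the inductive step, write $i' := (i_1,\ldots,i_{k-1})$ and abbreviate $v_l := s^\dynk_{i'}(\alpha_l)$ for $l\in \mathbb{I}$, $\beta' := \beta^\dynk_{i'}$, $m := m^\dynk_i$ and $a := a^\dynk_{i,j}$. By the definition of $s^\dynk_i$ and by \Cref{lem_rels_beta_i} together with $s^\dynk_i(\alpha_{i_k}) = -v_{i_k}$, one has
\[
s^\dynk_i(\alpha_j) = v_j - a\,v_{i_k},\qquad \beta^\dynk_i = \beta' + m\,v_{i_k}.
\]
The plan is to expand the left hand side of the claim by bilinearity of $\bichar{\dynk}$ and the character property of $\indchar{\dynk}$, and to expand the right hand side using the single-reflection formula
\[
\refl_i(\dynk)_j = \refl_{i'}(\dynk)_j\,\refl_{i'}(\dynk)_{i_k j}^{-a}\,\refl_{i'}(\dynk)_{i_k}^{a^2}.
\]
I then substitute $\refl_{i'}(\dynk)_j$ and $\refl_{i'}(\dynk)_{i_k}$ by the induction hypothesis, and $\refl_{i'}(\dynk)_{i_k j}=\bichar{\dynk}(v_{i_k},v_j)$, $\bichar{\dynk}(v_{i_k},v_{i_k}) = \refl_{i'}(\dynk)_{i_k i_k} = \refl_{i'}(\dynk)_{i_k}^{2}$ by \Cref{prop_refl_dynkin_relies_on_si}. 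The common factor $\bichar{\dynk}(\beta',v_j)\,\indchar{\dynk}(v_j)$ cancels on both sides, and the whole claim reduces to the single scalar identity
\[
\dynk_{i,j}^{\,m+a}\,\dynk_i^{-a(1+2m+a)} = 1,
\]
with $\dynk_i = \refl_{i'}(\dynk)_{i_k}$ and $\dynk_{i,j} = \refl_{i'}(\dynk)_{i_k j}$ as in the iterated notation. This is exactly \Cref{lem_rel_refl_dynk_with_mi}(1) applied to $\refl_{i'}(\dynk)$ at the index $i_k$; its hypotheses that $\refl_{i'}(\dynk)$ be $i_k$-finite and that $m^{\refl_{i'}(\dynk)}_{i_k} = m^\dynk_i \in \ndN_0$ hold are included in the hypotheses of the lemma.

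\textbf{Main obstacle.} There is no conceptual difficulty; the only delicate point is the careful bookkeeping of the exponents of $v_j$, $v_{i_k}$ and $\beta'$ in the cancellation step. As a sanity check, I would alternatively derive the identity from \Cref{prop_ext_dynk_beta} applied to $\extdynk{\dynk}$: by \Cref{prop_refl_dynkin_relies_on_si} the $(j,\theta+1)$-edge of the reflected extended diagram equals $\bichar{\extdynk{\dynk}}(s^\dynk_i(\alpha_j),\alpha_{\theta+1}+\beta^\dynk_i)$, and specializing $t_k \mapsto \dynk_k$ in this expression reproduces the claimed relation.
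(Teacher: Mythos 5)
Your proposal is correct and follows essentially the same route as the paper: induction on $k$, expanding the left-hand side via $\beta^\dynk_i = \beta^\dynk_{i'} + m^\dynk_i s^\dynk_{i'}(\alpha_{i_k})$ and $s^\dynk_i(\alpha_j)=s^\dynk_{i'}(\alpha_j)-a^\dynk_{i,j}s^\dynk_{i'}(\alpha_{i_k})$, applying the induction hypothesis together with \Cref{prop_refl_dynkin_relies_on_si}, and closing with \Cref{lem_rel_refl_dynk_with_mi}. The only cosmetic difference is that you cancel against the one-step reflection formula and invoke part (1) of that lemma, while the paper invokes part (2) directly (which is itself derived from (1)), so the two arguments are interchangeable; just be aware that your proposed "sanity check" via \Cref{lem_ext_dynk_explicit} would be circular as an actual proof, since that proposition is proved using the present lemma.
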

\begin{proof}
    We do induction on $k$: The claim is trivial for $k=0$. Assume $k\ge 1$ and that the claim holds for $i' = (i_1,\ldots,i_{k-1})$. Since $\beta_i = \beta_{i'} + m^\dynk_i s^\dynk_{i'} (\alpha_{i_k})$ and using \Cref{prop_refl_dynkin_relies_on_si} we obtain for $j \in \mathbb{I}$
    \begin{align*}
        &\bichar{\dynk}\left( \beta_i^\dynk, s_i^\dynk(\alpha_j) \right) \indchar{\dynk}\left( s_i^\dynk(\alpha_j)\right)
        \\=& \bichar{\dynk}\left( \beta_{i'}^\dynk, s_{i'}^\dynk(\alpha_j) \right) \indchar{\dynk}\left( s_{i'}^\dynk(\alpha_j)\right) \Big( \bichar{\dynk}\left( \beta_{i'}^\dynk, s_{i'}^\dynk(\alpha_{i_k}) \right) \indchar{\dynk}\left( s_{i'}^\dynk(\alpha_{i_k})\right) \Big)^{-a^\dynk_{i,j}} \\ &  \refl_{i'}(\dynk)_{i_k j}^{m^\dynk_i} \refl_{i'}(\dynk)_{i_k}^{-2 a^\dynk_{i,j} m^\dynk_i} 
        \matdot
    \end{align*}
    Hence we obtain by induction hypothesis
    \begin{align*}
        & \bichar{\dynk}\left( \beta_i^\dynk, s_i^\dynk(\alpha_j) \right) \indchar{\dynk}\left( s_i^\dynk(\alpha_j)\right)
        \\ =& \refl_{i'}(\dynk)_j \refl_{i'}(\dynk)_{i_k}^{-a^\dynk_{i,j}} \refl_{i'}(\dynk)_{i_k j}^{m^\dynk_i} \refl_{i'}(\dynk)_{i_k}^{-2 a^\dynk_{i,j} m^\dynk_i} \matdot
    \end{align*}
    By \Cref{lem_rel_refl_dynk_with_mi}(2) we obtain that this coincides with $\refl_i(\dynk)_j$.
\end{proof}

\begin{notation}
    For $v= \sum_{i=1}^{\theta} n_i \alpha_i \in \ndZ^\theta$, $n_1,\ldots,n_\theta \in \ndZ$ we define the monomials $t_v := \prod_{i=1}^{\theta} t_i^{n_i}$ in $\fK(t_1,\ldots,t_\theta)$, such that $t_0=1$.
\end{notation}

\begin{prop} \label{lem_ext_dynk_explicit}
    Let $k \in \ndN_0$, $i=(i_1,\ldots,i_k) \in \mathbb{I}^k$, let $\dynk \in \dynkcat{\theta}$ and assume $\extdynk{\dynk} \in \dynkcatfld{\fK(t_1,\ldots,t_\theta)}{\theta+1}$ admits the reflection sequence $i$. 
    Then for $j \in \mathbb{I}$ the following two relations hold
    \begin{align*}
        \refl_i(\extdynk{\dynk})_{j (\theta+1)} &= t_{s_i^{\dynk}(\alpha_j)} \bichar{\dynk} \left( \beta^\dynk_i, s_i^{\dynk}(\alpha_j) \right) 
        \\ &= t_{s_i^{\dynk}(\alpha_j)}
        \indchar{\dynk}\left(-s^\dynk_i(\alpha_j) \right) \indchar{\dynk} \left( s_i^{\dynk}(\alpha_j),s_i^{\dynk}(\alpha_j) \right) 
        \matdot
    \end{align*}
    In particular $\refl_i(\extdynk{\dynk})_{j (\theta+1)}$ only relies on the root $s_i^{\dynk}(\alpha_j)$.
\end{prop}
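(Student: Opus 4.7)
The plan is to apply \Cref{prop_refl_dynkin_relies_on_si} to the extended Dynkin diagram $\extdynk{\dynk}$ viewed as an object of $\dynkcatfld{\fK(t_1,\ldots,t_\theta)}{\theta+1}$, and then unpack the resulting bicharacter using \Cref{prop_ext_dynk_beta} and \Cref{lem_extdynk_only_dep_on_root}. First I would verify the hypotheses: by inductive use of \Cref{lem_ext_dynk}(2), the assumption that $\extdynk{\dynk}$ admits the reflection sequence $i$ guarantees that $\dynk$ admits $i$ and that $m^{\dynk}_{(i_1,\ldots,i_r)} \in \ndN_0$ for all $1 \le r \le k$, which is exactly what is needed to invoke both \Cref{prop_ext_dynk_beta} and \Cref{lem_extdynk_only_dep_on_root}.

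Applying \Cref{prop_refl_dynkin_relies_on_si} to $\extdynk{\dynk}$ gives
\begin{align*}
    \refl_i(\extdynk{\dynk})_{j(\theta+1)} = \bichar{\extdynk{\dynk}}\!\left( s_i^{\extdynk{\dynk}}(\alpha_j),\, s_i^{\extdynk{\dynk}}(\alpha_{\theta+1}) \right).
\end{align*}
By \Cref{prop_ext_dynk_beta}, $s_i^{\extdynk{\dynk}}(\alpha_j) = s_i^{\dynk}(\alpha_j) \in \ndZ^\theta$ and $s_i^{\extdynk{\dynk}}(\alpha_{\theta+1}) = \alpha_{\theta+1} + \beta_i^{\dynk}$. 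Since $\bichar{\extdynk{\dynk}}$ is a bicharacter, I can split
\begin{align*}
    \bichar{\extdynk{\dynk}}\!\left( s_i^{\dynk}(\alpha_j),\, \alpha_{\theta+1} + \beta_i^{\dynk} \right) = \bichar{\extdynk{\dynk}}\!\left( s_i^{\dynk}(\alpha_j), \alpha_{\theta+1} \right) \cdot \bichar{\extdynk{\dynk}}\!\left( s_i^{\dynk}(\alpha_j), \beta_i^{\dynk} \right).
\end{align*}
For the first factor, writing $s_i^{\dynk}(\alpha_j) = \sum_{l=1}^{\theta} n_l \alpha_l$ and using that $\extdynk{\dynk}_{l(\theta+1)} = t_l$, we get $\prod_l t_l^{n_l} = t_{s_i^{\dynk}(\alpha_j)}$. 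For the second factor, both arguments lie in $\ndZ^\theta$, so $\bichar{\extdynk{\dynk}}$ restricts to $\bichar{\dynk}$ on this pair, and by symmetry of $\bichar{\dynk}$ (since $\dynk_{lk}=\dynk_{kl}$) the value equals $\bichar{\dynk}(\beta_i^{\dynk}, s_i^{\dynk}(\alpha_j))$. This yields the first claimed equality.

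For the second equality I would rewrite using \Cref{lem_extdynk_only_dep_on_root}, which states
\begin{align*}
    \bichar{\dynk}\!\left( \beta_i^{\dynk}, s_i^{\dynk}(\alpha_j) \right) = \refl_i(\dynk)_j \cdot \indchar{\dynk}\!\left(-s_i^{\dynk}(\alpha_j)\right),
\end{align*}
and then replace $\refl_i(\dynk)_j$ by $\indchar{\dynk}(s_i^{\dynk}(\alpha_j), s_i^{\dynk}(\alpha_j))$ via \Cref{prop_refl_dynkin_relies_on_si}. The resulting identity is exactly the second displayed formula, and this simultaneously shows that the value depends only on the root $s_i^{\dynk}(\alpha_j)$ (and on $\dynk$), not on the particular reflection sequence $i$ representing it. The main obstacle is essentially bookkeeping: making sure the bicharacter of $\extdynk{\dynk}$ correctly restricts to $\bichar{\dynk}$ on $\ndZ^\theta \times \ndZ^\theta$ and that symmetry is invoked correctly; once that is pinned down, the rest is a direct substitution chain using results already established.
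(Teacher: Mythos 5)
Your proof is correct and follows essentially the same route as the paper: apply \Cref{prop_refl_dynkin_relies_on_si} to $\extdynk{\dynk}$, split the bicharacter via $s_i^{\extdynk{\dynk}}(\alpha_{\theta+1}) = \alpha_{\theta+1} + \beta_i^{\dynk}$ from \Cref{prop_ext_dynk_beta}, evaluate the $\alpha_{\theta+1}$-factor as $t_{s_i^{\dynk}(\alpha_j)}$, and finish with \Cref{lem_extdynk_only_dep_on_root} and a second application of \Cref{prop_refl_dynkin_relies_on_si}. The only cosmetic difference is that you take the bicharacter arguments in the opposite order and invoke the symmetry $\dynk_{lk}=\dynk_{kl}$ of $\bichar{\dynk}$ (correctly, and without wrongly assuming symmetry of $\indchar{\dynk}$), which the paper avoids by working with $\refl_i(\extdynk{\dynk})_{(\theta+1)j}$ directly.
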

\begin{proof}
    Using \Cref{lem_ext_dynk}(2), \Cref{prop_refl_dynkin_relies_on_si} and \Cref{prop_ext_dynk_beta} we get
    \begin{align*}
        \refl_i(\extdynk{\dynk})_{(\theta+1) j} &= \bichar{\extdynk{\dynk}} \left( s_i^{\extdynk{\dynk}}(\alpha_{\theta+1}), s_i^{\extdynk{\dynk}}(\alpha_j) \right)
        \\ 
        &=\bichar{\extdynk{\dynk}} \left( \alpha_{\theta+1}, s_i^{\dynk}(\alpha_j) \right) \bichar{\extdynk{\dynk}} \left( \beta^\dynk_i, s_i^{\dynk}(\alpha_j) \right) 
    \end{align*}
    Now $\extdynk{\dynk}_{(\theta+1) k} = t_k$ for all $k \in \mathbb{I}$, hence
    \begin{align*}
        \bichar{\extdynk{\dynk}} \left( \alpha_{\theta+1}, s_i^{\dynk}(\alpha_j) \right) = t_{s_i^{\dynk}(\alpha_j)} \matdot
    \end{align*}
    Since both $s_i^{\dynk}(\alpha_j)$ and $\beta_i^\dynk$ are linear combinations of $\alpha_1,\ldots,\alpha_\theta$, we obtain using \Cref{lem_extdynk_only_dep_on_root} that
    \begin{align*}
        \bichar{\extdynk{\dynk}} \left( \beta^\dynk_i, s_i^{\dynk}(\alpha_j) \right)
        = \bichar{\dynk} \left( \beta^\dynk_i, s_i^{\dynk}(\alpha_j) \right) 
        = \indchar{\dynk}\left( -s_i^\dynk(\alpha_j)\right) \refl_i(\dynk)_j \matdot
    \end{align*}
    Finally $\refl_i(\dynk)_j = \indchar{\dynk} \left( s_i^{\dynk}(\alpha_j),s_i^{\dynk}(\alpha_j) \right)$ by \Cref{prop_refl_dynkin_relies_on_si}.
\end{proof}

\begin{defi} \label{defi_shapovalov_determinant}
    Let $\dynk \in \dynkcat{\theta}$ and $\gamma \in \ndZ^\theta$. If
    \begin{align*}
        m_\gamma := \min \lbrace m \in \ndN_0 \mid (m+1)_{\indchar{\dynk}(\gamma,\gamma)} = 0 \rbrace \in \ndN_0 \matcom
    \end{align*}
    then we define the rational function
    \begin{align*}
        \shapodiag{\dynk}_\gamma := \prod_{m=1}^{m_\gamma} \left( t_{\gamma} -
        \indchar{\dynk}\left(\gamma \right) \indchar{\dynk} \left( \gamma,\gamma \right)^{-m} \right) \in \fK(t_1,\ldots,t_\theta)
    \end{align*}
    Finally if $\roots{\dynk}$ is finite and $m_\gamma \in \ndN_0$ for all $\gamma \in \rootspos{\dynk}$, then define the polynomial
    \begin{align*}
        \shapodiag{\dynk} := \prod_{\gamma \in \rootspos{\dynk}} \shapodiag{\dynk}_\gamma \in \fK[t_1,\ldots,t_\theta] \matdot
    \end{align*}
\end{defi}

\begin{rema}
    The polynomial in \Cref{defi_shapovalov_determinant} also appears in \cite{MR2840165} as the Shapovalov determinant for bicharacters of finite root systems. We will the importance of this polynomial later in \Cref{prop_ydind_refl_admit_equiv} and \Cref{cor_ydind_refl_admit_equiv}.
\end{rema}

\begin{lem} \label{lem_shapodiag_roots_negative_root}
    Let $\dynk \in \dynkcat{\theta}$, $\gamma \in \rootspos{\dynk}$ and assume
    \begin{align*}
        m_\gamma := \min \lbrace m \in \ndN_0 \mid (m+1)_{\indchar{\dynk}(\gamma,\gamma)} = 0 \rbrace \in \ndN_0 \matdot
    \end{align*}
    Let $r_1, \ldots, r_\theta \in \fK$.
    Then $\shapodiag{\dynk}_\gamma \in \fK[t_1,\ldots,t_\theta]$ and $\shapodiag{\dynk}_\gamma (r_1, \ldots, r_\theta) = 0$ if and only if $\shapodiag{\dynk}_{-\gamma}(r_1, \ldots, r_\theta) = 0$.
\end{lem}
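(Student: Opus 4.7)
The first assertion, that $\shapodiag{\dynk}_\gamma \in \fK[t_1,\ldots,t_\theta]$, is immediate: since $\gamma \in \rootspos{\dynk} \subset \ndN_0^\theta$, the monomial $t_\gamma$ has only nonnegative exponents, and every factor of $\shapodiag{\dynk}_\gamma$ is a polynomial.

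For the equivalence, I set $q := \indchar{\dynk}(\gamma,\gamma)$ and exploit the symmetries of characters and bicharacters: $\indchar{\dynk}(-\gamma) = \indchar{\dynk}(\gamma)^{-1}$, $\indchar{\dynk}(-\gamma,-\gamma) = q$, and $t_{-\gamma} = t_\gamma^{-1}$. In particular $m_{-\gamma} = m_\gamma$, so $\shapodiag{\dynk}_{-\gamma}$ is defined. Substituting and clearing the denominator yields
\begin{align*}
    t_\gamma^{m_\gamma} \shapodiag{\dynk}_{-\gamma} = \prod_{m=1}^{m_\gamma} \bigl(1 - \indchar{\dynk}(\gamma)^{-1} q^{-m} t_\gamma \bigr) \in \fK[t_1,\ldots,t_\theta],
\end{align*}
a polynomial whose roots in $t_\gamma$ are precisely the values $\indchar{\dynk}(\gamma) q^{m}$ for $1 \le m \le m_\gamma$, whereas the roots of $\shapodiag{\dynk}_\gamma$ in $t_\gamma$ are the values $\indchar{\dynk}(\gamma) q^{-m}$ for $1 \le m \le m_\gamma$. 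Hence the task is to show these two sets coincide.

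The central step is the identity $q^{m_\gamma+1} = 1$. This follows from the defining property $(m_\gamma+1)_q = 0$: if $q = 1$ then $(m_\gamma+1)_q = m_\gamma+1 = 0$ in $\fK$, so $q^{m_\gamma+1} = 1$ trivially; if $q \ne 1$ then $0 = (m_\gamma+1)_q = (1-q^{m_\gamma+1})/(1-q)$, hence again $q^{m_\gamma+1} = 1$ (this is essentially \Cref{rema_binomq_root_of_one}). Therefore $q^{-m} = q^{m_\gamma+1-m}$, and as $m$ ranges over $\{1,\ldots,m_\gamma\}$, so does $m_\gamma+1-m$. This proves the set equality $\{q^{-m}\} = \{q^m\}$ for $1 \le m \le m_\gamma$, and hence the two polynomials vanish on the same set of values of $t_\gamma$.

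The final step handles the evaluation at $(r_1,\ldots,r_\theta)$. If $r_\gamma := \prod_i r_i^{n_i} \ne 0$ (writing $\gamma = \sum n_i \alpha_i$ with $n_i \ge 0$), then $\shapodiag{\dynk}_{-\gamma}(r_1,\ldots,r_\theta)$ makes sense as a rational function value and differs from $(t_\gamma^{m_\gamma}\shapodiag{\dynk}_{-\gamma})(r_1,\ldots,r_\theta)$ only by the nonzero factor $r_\gamma^{-m_\gamma}$, so the equivalence with $\shapodiag{\dynk}_\gamma(r_1,\ldots,r_\theta) = 0$ is immediate from the previous paragraph. If on the other hand $r_\gamma = 0$, then each factor of $\shapodiag{\dynk}_\gamma$ evaluates to the nonzero scalar $-\indchar{\dynk}(\gamma) q^{-m}$, so $\shapodiag{\dynk}_\gamma(r_1,\ldots,r_\theta) \ne 0$; correspondingly the polynomial $t_\gamma^{m_\gamma}\shapodiag{\dynk}_{-\gamma}$ evaluates to $1 \ne 0$, so under the natural convention (clearing denominators before evaluating) $\shapodiag{\dynk}_{-\gamma}(r_1,\ldots,r_\theta)$ is likewise nonzero. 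The only potential obstacle is bookkeeping around this degenerate case, but once the root-matching via $q^{m_\gamma+1}=1$ is established, both sides are handled uniformly.
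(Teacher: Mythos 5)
Your proof is correct and follows essentially the same route as the paper: both arguments reduce the claim to matching the linear factors of $\shapodiag{\dynk}_{\gamma}$ and $\shapodiag{\dynk}_{-\gamma}$ via the identity $\indchar{\dynk}(\gamma,\gamma)^{m_\gamma+1}=1$ from \Cref{rema_binomq_root_of_one} and the reindexing $m\mapsto m_\gamma+1-m$. Your additional bookkeeping for the degenerate evaluation $t_\gamma(r_1,\ldots,r_\theta)=0$ is a minor refinement the paper omits, not a different method.
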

\begin{proof}
    We have that $\shapodiag{\dynk}_{-\gamma}(r_1, \ldots, r_\theta) \ne 0$ if and only if
    \begin{align*}
        t_{-\gamma} (r_1, \ldots, r_\theta) -
        \indchar{\dynk}\left(-\gamma \right) \indchar{\dynk} \left( -\gamma,-\gamma \right)^{-m} \ne 0 
    \end{align*}
    for all $1 \le m \le m_\gamma $. This is equivalent to
    \begin{align*}
        t_{\gamma} (r_1, \ldots, r_\theta) -
        \indchar{\dynk}\left(\gamma \right) \indchar{\dynk} \left( \gamma,\gamma \right)^{m} \ne 0 
    \end{align*}
    for all $1 \le m \le m_\gamma $. By \Cref{rema_binomq_root_of_one} we have $\indchar{\dynk} \left( \gamma,\gamma \right)^{m_\gamma +1} = 1$, hence the above is equivalent to
    \begin{align*}
        t_{\gamma} (r_1, \ldots, r_\theta) -
        \indchar{\dynk}\left(\gamma \right) \indchar{\dynk} \left( \gamma,\gamma \right)^{m-m_\gamma-1} \ne 0 \matdot
    \end{align*}
    for all $1 \le m \le m_\gamma$. This is equivalent to $\shapodiag{\dynk}_\gamma (r_1, \ldots, r_\theta) \ne 0$.
\end{proof}

\begin{prop} \label{prop_shapodiag_root_equivalence}
    Let $\dynk \in \dynkcat{\theta}$, $k \in \ndN_0$, $i \in \mathbb{I}^k$ and assume $\extdynk{\dynk}$ admits the reflection sequence $i$. Let $j \in \mathbb{I}$ and assume $m^{\refl_i(\dynk)}_j \in \ndN_0$, as well as $s_i^\dynk(\alpha_j) \in \ndN_0^\theta \cup -\ndN_0^\theta$. Then for $r_1,\ldots,r_\theta \in \fK$ the following are equivalent:
    \begin{enumerate}
        \item For all $0 \le m \le m_j^{\refl_i(\dynk)}-1$ we have
        \begin{align*}  
            \refl_{i}(\dynk)_{j}^{m} \refl_i(\extdynk{\dynk})_{j (\theta+1)} (r_1, \ldots, r_\theta) \ne 1 \matdot
        \end{align*}
        \item $\shapodiag{\dynk}_{s_i^\dynk(\alpha_j)} (r_1, \ldots, r_\theta) \ne 0$.
        \item $\shapodiag{\dynk}_{-s_i^\dynk(\alpha_j)} (r_1, \ldots, r_\theta) \ne 0$.
    \end{enumerate}
\end{prop}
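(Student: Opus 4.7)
The plan is to set $\gamma := s_i^\dynk(\alpha_j)$, rewrite condition (1) explicitly using the formulas from \Cref{prop_refl_dynkin_relies_on_si} and \Cref{lem_ext_dynk_explicit}, and then compare the resulting expression with the factors appearing in the definition of $\shapodiag{\dynk}_\gamma$.

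By \Cref{prop_refl_dynkin_relies_on_si} we have $\refl_i(\dynk)_j = \indchar{\dynk}(\gamma,\gamma)$, and by \Cref{lem_ext_dynk_explicit},
\begin{align*}
    \refl_i(\extdynk{\dynk})_{j(\theta+1)} = t_\gamma \, \indchar{\dynk}(-\gamma) \, \indchar{\dynk}(\gamma,\gamma) \matdot
\end{align*}
Combining both, for each $m \in \ndN_0$ we obtain
\begin{align*}
    \refl_i(\dynk)_j^m \, \refl_i(\extdynk{\dynk})_{j(\theta+1)}(r_1,\ldots,r_\theta) = t_\gamma(r_1,\ldots,r_\theta) \, \indchar{\dynk}(-\gamma) \, \indchar{\dynk}(\gamma,\gamma)^{m+1} \matdot
\end{align*}
Hence this expression equals $1$ precisely when $t_\gamma(r_1,\ldots,r_\theta) = \indchar{\dynk}(\gamma) \, \indchar{\dynk}(\gamma,\gamma)^{-(m+1)}$.

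Next, $\refl_i(\dynk)_j = \indchar{\dynk}(\gamma,\gamma)$ implies that the integer $m_\gamma$ from \Cref{defi_shapovalov_determinant} coincides with $m_j^{\refl_i(\dynk)}$. Therefore condition~(1) says exactly that $t_\gamma(r_1,\ldots,r_\theta) \ne \indchar{\dynk}(\gamma) \indchar{\dynk}(\gamma,\gamma)^{-m'}$ for every $1 \le m' \le m_\gamma$ (after the substitution $m' = m+1$), which is precisely the statement $\shapodiag{\dynk}_{\gamma}(r_1,\ldots,r_\theta) \ne 0$. This proves (1)$\Leftrightarrow$(2).

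For (2)$\Leftrightarrow$(3), I would invoke \Cref{lem_shapodiag_roots_negative_root}: by hypothesis $\gamma \in \ndN_0^\theta \cup -\ndN_0^\theta$, and $\gamma$ is a root of $\dynk$, hence either $\gamma \in \rootspos{\dynk}$ or $-\gamma \in \rootspos{\dynk}$. Moreover, $\indchar{\dynk}(\gamma,\gamma)=\indchar{\dynk}(-\gamma,-\gamma)$, so $m_\gamma = m_{-\gamma} = m_j^{\refl_i(\dynk)} \in \ndN_0$, so \Cref{lem_shapodiag_roots_negative_root} applies to the positive one of $\pm\gamma$ and yields the desired equivalence. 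No step here is really an obstacle; the main point is assembling \Cref{prop_refl_dynkin_relies_on_si}, \Cref{lem_ext_dynk_explicit} and \Cref{lem_shapodiag_roots_negative_root} in the right way, and verifying that $m_\gamma$ coincides with $m_j^{\refl_i(\dynk)}$.
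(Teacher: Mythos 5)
Your proposal is correct and follows essentially the same route as the paper: it combines \Cref{prop_refl_dynkin_relies_on_si} and \Cref{lem_ext_dynk_explicit} to rewrite condition (1) as $t_\gamma(r_1,\ldots,r_\theta) \ne \indchar{\dynk}(\gamma)\indchar{\dynk}(\gamma,\gamma)^{-m'}$ for $1 \le m' \le m_\gamma$, identifies $m_\gamma$ with $m_j^{\refl_i(\dynk)}$, and delegates (2)$\Leftrightarrow$(3) to \Cref{lem_shapodiag_roots_negative_root}. The only (minor) difference is that you are slightly more explicit than the paper about why that lemma applies when $\gamma$ might be a negative root.
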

\begin{proof}
    Denote $\gamma = s_i^\dynk(\alpha_j)$. First observe that by \Cref{prop_refl_dynkin_relies_on_si} we have 
    \begin{align*}
        m^{\refl_i(\dynk)}_j = \min \lbrace m \in \ndN_0 \mid (m+1)_{\indchar{\dynk}(\gamma,\gamma)} = 0 \rbrace \in \ndN_0 \matcom
    \end{align*}
    hence $\shapodiag{\dynk}_{s_i^\dynk(\alpha_j)}$ is well-defined. The equivalence of (2) and (3) is shown in \Cref{lem_shapodiag_roots_negative_root}.
    Now by \Cref{prop_refl_dynkin_relies_on_si} and \Cref{lem_ext_dynk_explicit} we obtain for $0 \le m \le m_j^{\refl_i(\dynk)}-1$ 
    \begin{align*}
        \refl_{i}(\dynk)_{j}^{m} \refl_i(\extdynk{\dynk})_{j (\theta+1)}
        = t_{\gamma}
        \indchar{\dynk}\left(-\gamma \right) \indchar{\dynk} \left( \gamma,\gamma \right)^{m+1} \matdot
    \end{align*}
    Hence $\refl_{i}(\dynk)_{j}^{m} \refl_i(\extdynk{\dynk})_{j (\theta+1)} (r_1, \ldots, r_\theta) \ne 1$ for all $0 \le m \le m_j^{\refl_i(\dynk)}-1$ if and only if
    \begin{align*}
        t_{\gamma} (r_1, \ldots, r_\theta) -
        \indchar{\dynk}\left(\gamma \right) \indchar{\dynk} \left( \gamma,\gamma \right)^{-m} \ne 0 \matdot
    \end{align*} 
    for all $1 \le m \le m_j^{\refl_i(\dynk)}=m_\gamma$. This proves the equivalence of (1) and~(2).
\end{proof}

\begin{cor} \label{cor_shapodiag_equivalence}
    Let $\dynk \in \dynkcat{\theta}$, assume $\extdynk{\dynk}$ admits all reflections, that $\roots{\dynk}$ is finite and $\roots{\dynk} = \rootspos{\dynk} \cup -\rootspos{\dynk}$. 
    Then \begin{align*}
        m_\gamma := \min \lbrace m \in \ndN_0 \mid (m+1)_{\indchar{\dynk}(\gamma,\gamma)} = 0 \rbrace \in \ndN_0
    \end{align*}
    for all $\gamma \in \roots{\dynk}$ and for $r_1,\ldots,r_\theta \in \fK$ the following are equivalent:
    \begin{enumerate}
        \item $\shapodiag{\dynk}(r_1,\ldots,r_\theta) \ne 0$.
        \item For all $k \in \ndN_0$, $i \in \mathbb{I}^k$, $j \in \mathbb{I}$, $0 \le m \le m_j^{\refl_i(\dynk)}-1$ we have
        \begin{align*}  
            \refl_{i}(\dynk)_{j}^{m} \refl_i(\extdynk{\dynk})_{j (\theta+1)} (r_1, \ldots, r_\theta) \ne 1 \matdot
        \end{align*}
    \end{enumerate}
\end{cor}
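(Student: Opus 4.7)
I would prove this corollary as a direct assembly of \Cref{prop_shapodiag_root_equivalence}, the product structure of $\shapodiag{\dynk}$, and the symmetry under $\gamma \mapsto -\gamma$ from \Cref{lem_shapodiag_roots_negative_root}. The argument splits naturally into a well-definedness step followed by a single reformulation, performed for each of (1) and (2) in turn.

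First I would verify that $m_\gamma \in \ndN_0$ for every $\gamma \in \roots{\dynk}$, so that $\shapodiag{\dynk}$ is well-defined as a polynomial. Given $\gamma = s_i^{\dynk}(\alpha_j)$ with $i = (i_1,\ldots,i_k) \in \mathbb{I}^k$, the hypothesis that $\extdynk{\dynk}$ admits the reflection sequence $(i_1,\ldots,i_k,j)$, combined with \Cref{prop_ext_dynk_beta} and \Cref{prop_refl_dynkin_relies_on_si} (which together identify $\refl_i(\extdynk{\dynk})$ restricted to $\mathbb{I}\times \mathbb{I}$ with $\refl_i(\dynk)$), lets me apply \Cref{lem_ext_dynk}(2) to $\refl_i(\dynk)$ at~$j$ and conclude $m_j^{\refl_i(\dynk)} \in \ndN_0$. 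Since $\indchar{\dynk}(\gamma,\gamma) = \refl_i(\dynk)_j$ by \Cref{prop_refl_dynkin_relies_on_si}, we obtain $m_\gamma = m_j^{\refl_i(\dynk)} \in \ndN_0$.

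Next I would rewrite condition~(1) as a universal statement over $\roots{\dynk}$. By definition
\[
\shapodiag{\dynk}(r_1,\ldots,r_\theta) = \prod_{\gamma \in \rootspos{\dynk}} \shapodiag{\dynk}_\gamma(r_1,\ldots,r_\theta),
\]
so (1) is equivalent to $\shapodiag{\dynk}_\gamma(r_1,\ldots,r_\theta) \ne 0$ for every $\gamma \in \rootspos{\dynk}$. Using \Cref{lem_shapodiag_roots_negative_root} together with the hypothesis $\roots{\dynk} = \rootspos{\dynk} \cup -\rootspos{\dynk}$, this is in turn equivalent to $\shapodiag{\dynk}_\gamma(r_1,\ldots,r_\theta) \ne 0$ for every $\gamma \in \roots{\dynk}$.

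Finally for condition~(2) I observe that every root arises as $s_i^{\dynk}(\alpha_j)$ for some pair $(i,j)$, and conversely every such pair produces a root. Since by hypothesis each such root lies in $\ndN_0^\theta \cup -\ndN_0^\theta$, \Cref{prop_shapodiag_root_equivalence} applies and translates the per-$(i,j)$ non-vanishing condition appearing in (2) exactly into $\shapodiag{\dynk}_{s_i^{\dynk}(\alpha_j)}(r_1,\ldots,r_\theta) \ne 0$. Comparing with the reformulation of (1) gives the equivalence. No serious obstacle is expected; the only delicate bookkeeping point is the identification of $\refl_i(\extdynk{\dynk})|_{\mathbb{I}}$ with $\refl_i(\dynk)$, which is handled by the already-established \Cref{prop_ext_dynk_beta} and \Cref{prop_refl_dynkin_relies_on_si}.
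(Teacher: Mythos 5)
Your proposal is correct and follows essentially the same route as the paper's proof: establish $m_\gamma = m_j^{\refl_i(\dynk)} \in \ndN_0$ via \Cref{prop_refl_dynkin_relies_on_si} and \Cref{lem_ext_dynk}(2), then reduce the equivalence of (1) and (2) to \Cref{prop_shapodiag_root_equivalence} applied root by root. The only difference is that you spell out the product decomposition of $\shapodiag{\dynk}$ and the $\gamma \mapsto -\gamma$ symmetry explicitly, which the paper leaves implicit.
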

\begin{proof}
    Let $\gamma \in \roots{\dynk}$, $k \in \ndN_0$, $i \in \mathbb{I}^k$, $j \in \mathbb{I}$, such that $\gamma = s^{\dynk}_i (\alpha_j)$. Then by \Cref{prop_refl_dynkin_relies_on_si} and  \Cref{lem_ext_dynk}(2) we have 
    \begin{align*}
        m_\gamma = m^{\refl_i(\dynk)}_j \in \ndN_0 \matdot
    \end{align*}
    The equivalence of (1) and (2) is implied by \Cref{prop_shapodiag_root_equivalence}.
\end{proof}

\begin{defi}
    Assume $D\in \dynkcat{\theta}$ admits all reflections. The \textbf{reflection graph of $\dynk$} is a graph with vertex set $\lbrace \refl_i(\dynk) \mid k\in \ndN_0, i \in \mathbb{I} \rbrace$ and where for every vertex $D$ and $j \in \mathbb{I}$, such that $D\ne \refl_j(D)$ there is an edge labelled $j$ from $D$ to $\refl_j(D)$.
\end{defi}

\begin{exa} \label{exa_dynk}
    Let $\omega \in \fK$ be a third root of $1$ and assume $\dynk \in \dynkcat{2}$ is given by
    \begin{align*}
    \begin{tikzpicture}
        \node[dynnode, label={\tiny$\omega$}] (1) at (0,0) {};
        \node[dynnode, label={\tiny$-1$}] (2) at (1.5,0) {};
        \path[draw,thick]
        (1) edge node[above]{\tiny$-\omega$} (2);
    \end{tikzpicture}
    \matcom
    \end{align*}
    i.e. $\dynk_1 = \omega$, $\dynk_2=-1$ and $\dynk_{12}=-\omega$.
    We can easily calculate that $\dynk$ is $1$-finite and $2$-finite, that
    $\refl_1(\dynk)=\dynk$ and that $\refl_2(\dynk)$ is given by
    \begin{align*}
    \begin{tikzpicture}
        \node[dynnode, label={\tiny$\omega^{-1}$}] (1) at (0,0) {};
        \node[dynnode, label={\tiny$-1$}] (2) at (1.5,0) {};
        \path[draw,thick]
        (1) edge node[above]{\tiny$-\omega^{-1}$} (2);
    \end{tikzpicture}
    \matdot
    \end{align*}
    We then calculate that $\refl_2(\dynk)$ is $1$-finite and $\refl_1 \refl_2 (\dynk)=\refl_2 (\dynk)$. Hence $\dynk$ admits all reflections and the reflection graph of $\dynk$ is
    \begin{align*}
    \begin{tikzpicture} \tikzset{every loop/.style={}};
        \node (a) at (0,0) {$
            \begin{tikzpicture}
                \node[dynnode, label={\tiny$\omega$}] (1) at (0,0) {};
                \node[dynnode, label={\tiny$-1$}] (2) at (1.5,0) {};
                \path[draw,thick]
                (1) edge node[above]{\tiny$-\omega$} (2);
            \end{tikzpicture}
        $};
        \node (b) at (3.5,0) {$
            \begin{tikzpicture}
                \node[dynnode, label={\tiny$\omega^{-1}$}] (1) at (0,0) {};
                \node[dynnode, label={\tiny$-1$}] (2) at (1.5,0) {};
                \path[draw,thick]
                (1) edge node[above]{\tiny$-\omega^{-1}$} (2);
            \end{tikzpicture}
        $};
        \path[draw,thick] (a) edge node[above]{$2$} (b);
    \end{tikzpicture}
    \matdot
    \end{align*}
    Hence for all $k \in \ndN_0$, $i \in \mathbb{I}^k$
    \begin{align*}
        (a^{\refl_{i}(\dynk)}_{jk})_{j,k \in \mathbb{I}} = \begin{pmatrix}
    2 & -2 \\
    -1 & 2 
    \end{pmatrix} \matcom
        && (m^{\refl_{i} (\dynk)}_j)_{j\in \mathbb{I}} = \begin{pmatrix}
    2 \\1
    \end{pmatrix}
    \matdot
    \end{align*}
\end{exa}

\begin{exa} \label{exa_dynk2}
    Let $a \in \lbrace 3,6 \rbrace$. Moreover let $\omega$ be an $a$-th root of unity and assume that $\dynk \in \dynkcat{3}$ is given by
\begin{align*}
\begin{tikzpicture}
    \node[dynnode, label={\tiny$\omega$}] (1) at (0,0) {};
    \node[dynnode, label={\tiny$\omega$}] (2) at (1.5,0) {};
    \node[dynnode, label={\tiny$-1$}] (3) at (3,0) {};
    \path[draw,thick]
    (1) edge node[above]{\tiny$\omega^{-1}$} (2)
    (2) edge node[above]{\tiny$\omega^{-2}$} (3);
\end{tikzpicture}
\matcom
\end{align*}
i.e. $\dynk_1 = \dynk_2 = \omega$, $\dynk_3=-1$, $\dynk_{12}=\omega^{-1}$, $\dynk_{13}=1$ and $\dynk_{23}=\omega^{-2}$.
We can calculate that $\dynk$ admits all reflections and that the reflection graph of $\dynk$ is
\begin{align*}
\begin{tikzpicture} \tikzset{every loop/.style={}};
    \node (a) at (0,0) {$
        \begin{tikzpicture}
            \node[dynnode, label={\tiny$\omega$}] (1) at (0,0) {};
            \node[dynnode, label={\tiny$\omega$}] (2) at (1.4,0) {};
            \node[dynnode, label={\tiny$-1$}] (3) at (2.8,0) {};
            \path[draw,thick]
            (1) edge node[above]{\tiny$\omega^{-1}$} (2)
            (2) edge node[above]{\tiny$\omega^{-2}$} (3);
        \end{tikzpicture}
    $};
    \node (b) at (4.5,0) {$
        \begin{tikzpicture}
            \node[dynnode, label={\tiny$\omega$}] (1) at (0,0) {};
            \node[dynnode, label={\tiny$-\omega^{-1}$}] (2) at (1.4,0) {};
            \node[dynnode, label={\tiny$-1$}] (3) at (2.8,0) {};
            \path[draw,thick]
            (1) edge node[above]{\tiny$\omega^{-1}$} (2)
            (2) edge node[above]{\tiny$\omega^{2}$} (3);
        \end{tikzpicture}
    $};
    \path[draw,thick] (a) edge node[above]{$3$} (b);
\end{tikzpicture}
\matdot
\end{align*}
Let $a' \in \lbrace 3,6 \rbrace$, such that $a \ne a'$. Then for $k \in \ndN_0$, $i=(i_1,\ldots,i_k) \in \mathbb{I}$ we obtain 
\begin{align*}
    (a^{\refl_{i}(\dynk)}_{jk})_{j,k \in \mathbb{I}}= 
    \begin{cases} 
        \begin{pmatrix}
            2 & -1 & 0 \\
            -1 & 2 & -2 \\
            0 & -1 & 2
        \end{pmatrix}
        & \text{if $\# \lbrace 1\le l \le k \, | \, i_l = 3\rbrace$ is even,} \\
        \begin{pmatrix}
            2 & -1 & 0 \\
            -2 & 2 & -2 \\
            0 & -1 & 2
        \end{pmatrix} & \text{else.}
    \end{cases}
\end{align*}
and $(m^{\refl_{i}(\dynk)}_j)_{j \in \mathbb{I}}=(a-1,a-1,1)$ if $\# \lbrace 1\le l \le k \, | \, i_l = 3\rbrace$ is even and $(m^{\refl_{i}(\dynk)}_j)_{j \in \mathbb{I}}=(a-1,a'-1,1)$ if not.
\end{exa}
\section{Nichols systems of diagonal type} \label{sect_nich_sys_diag_main}

The first half of this section will be about how the reflection theory of Nichols systems of diagonal type and its $\ydmod{}$-modules can be ascribed by the reflection theory of Dynkin diagrams. The core of this is \Cref{lem_ydmod_coact_diagonal_explicit}, where we calculate the Shapovalov morphism in this situation. Here the main results are \Cref{prop_dynk_refl_equals_refl} and its succeeding corollaries. In the second half we will apply these results to the theory of induced $\ydmod{}$-modules. We will see that in the case where the Nichols system is finite-dimensional, the information about what reflection sequences the induced $\ydmod{}$-module admits, and therefore also when it is irreducible, all depends on the roots of the Nichols system and the polynomial of \Cref{defi_shapovalov_determinant}, see \Cref{prop_ydind_refl_admit_equiv} and \Cref{cor_ydind_refl_admit_equiv}. At the end, we will also give an algorithm on how to calculate this polynomial.

Let $\Gamma$ be an abelian group, such that $\ndZ^\theta \subset \Gamma$.

\begin{defi}
    A pre-Nichols system $\scr{N}:=(Q,(N_1,\ldots,N_\theta))$, where $N_1,\ldots, N_\theta  \in \scr{C}$ are one-dimensional objects is called \textbf{pre-Nichols system of diagonal type}.

    The \textbf{Dynkin diagram of $\scr{N}$} is $\dynk^{\scr{N}} := \dynk(N_1,\ldots,N_\theta) \in \dynkcat{\theta}$. 
    For a homogeneously generated $\Gamma$-graded object $V \in \ydcat{Q}{\scr{C}}$, where $\gencomp{V}$ is one-dimensional, the \textbf{Dynkin diagram of $V$} is $\dynk^V := \dynk(N_1,\ldots,N_\theta, \gencomp{V}) \in  \dynkcat{\theta+1}$. 
\end{defi}

Let $\scr{N}:=(Q,(N_1,\ldots,N_\theta))$ be a pre-Nichols system of diagonal type. For $j \in \mathbb{I}$ let $x_j \in N_j \setminus \lbrace 0 \rbrace$, i.e. $N_j = \fK x_j$.

\begin{rema}
    Let $V \in \scr{C}$, $v \in V$. Let $k \in \ndN$, $i_1,\ldots,i_k \in \mathbb{I}$. We have
    \begin{align*}
        & (\id_V \ot \bimu_Q^{k-1}) (\brd^{\scr{C}}_{N_{i_{1}},V} \ot \id_{Q^{\ot k-1 }}) (\id_{Q} \ot \brd^{\scr{C}}_{N_{i_{2}},V} \ot \id_{Q^{\ot k-2 }}) \cdots (\id_{Q^{\ot k-1 }} \ot \brd^{\scr{C}}_{N_{i_k},V}) 
        \\& \left( x_{i_1} \ot \cdots \ot x_{i_k} \ot v \right)
        = \brd^{\scr{C}}_{Q,V} \left( \left( x_{i_1} \cdots x_{i_k} \right) \ot v \right)
    \end{align*}
    and
    \begin{align*}
        & (\bimu_Q^{k-1} \ot \id_V) (\id_{Q^{\ot k-1 }} \ot \brd^{\scr{C}}_{V,N_{i_k}}) (\id_{Q^{\ot k-2 }} \ot \brd^{\scr{C}}_{V,N_{i_{k-1}}} \ot \id_{Q}) \cdots (\brd^{\scr{C}}_{V,N_{i_{1}}} \ot \id_{Q^{\ot k-1 }})
        \\& \left( v \ot x_{i_1} \ot \cdots \ot x_{i_k}\right)
        = \brd^{\scr{C}}_{Q,V} \left( v \ot \left(x_{i_1} \cdots x_{i_k} \right) \right)
        \matdot
    \end{align*}
\end{rema}

\begin{lem} \label{lem_primitive_potency}
    For $i\in \mathbb{I}$ and $m \in \ndN_0$ the following hold:
    \begin{enumerate}
        \item $\bicomu_Q (x_i^m) = \sum_{k=0}^{m} \binom{m}{k}_{\dynk^{\scr{N}}_i} x_i^k \ot x_i^{m-k}$.
        \item $\antip_Q(x_i^m)=(-1)^m {\dynk^{\scr{N}}_i}^{\frac{1}{2}(m-1)m} x_i^m$.
        \item The following are equivalent:
        \begin{enumerate}
            \item $\subalgQ{N_i}$ is strictly graded.
            \item $m_i^{\scr{N}} = m_i^{\dynk^{\scr{N}}}$.
        \end{enumerate}
    \end{enumerate} 
\end{lem}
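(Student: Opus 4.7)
The plan is to prove the three parts in turn, using standard techniques for braided Hopf algebras of diagonal type but being careful with the braidings.

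For part (1) I would apply the quantum binomial theorem in the braided tensor product. Since $x_i$ is primitive we have $\bicomu_Q(x_i)=1\otimes x_i+x_i\otimes 1$. In $Q\otimes Q$ viewed as a braided Hopf algebra in $\scr{C}$, the elements $a:=x_i\otimes 1$ and $b:=1\otimes x_i$ satisfy the $q$-commutation relation $ba=\dynk^{\scr{N}}_i\,ab$, because the braided multiplication uses $\brd^{\scr{C}}_{N_i,N_i}(x_i\otimes x_i)=\dynk^{\scr{N}}_i\,x_i\otimes x_i$. A short induction on $m$ (or a direct appeal to the quantum binomial theorem, e.g.\ \cite{HeSch}, Lemma~1.9.3) then gives $(a+b)^m=\sum_{k=0}^m\binom{m}{k}_{\dynk^{\scr{N}}_i}a^k b^{m-k}$, and since $a^k b^{m-k}=x_i^k\otimes x_i^{m-k}$, part (1) follows from $\bicomu_Q(x_i^m)=\bicomu_Q(x_i)^m=(a+b)^m$.

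For part (2) I would do induction on $m$, using that the antipode of a braided Hopf algebra is a braided anti-algebra morphism, i.e.\ $\antip_Q\bimu_Q=\bimu_Q(\antip_Q\otimes\antip_Q)\brd^{\scr{C}}_{Q,Q}$. The base case $m=1$ is $\antip_Q(x_i)=-x_i$, which follows from $x_i$ being primitive. For $m\ge 2$, assuming the formula for $m-1$, the relation
\begin{align*}
\antip_Q(x_i^m)=\bimu_Q(\antip_Q\otimes\antip_Q)\brd^{\scr{C}}_{Q,Q}(x_i\otimes x_i^{m-1})
\end{align*}
combined with $\brd^{\scr{C}}_{Q,Q}(x_i\otimes x_i^{m-1})={\dynk^{\scr{N}}_i}^{m-1}x_i^{m-1}\otimes x_i$ and the inductive hypothesis yields ${\dynk^{\scr{N}}_i}^{m-1}\cdot(-1)^{m-1}{\dynk^{\scr{N}}_i}^{(m-2)(m-1)/2}x_i^{m-1}\cdot(-x_i)=(-1)^m{\dynk^{\scr{N}}_i}^{(m-1)m/2}x_i^m$, as required.

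For part (3) I would use that $\subalgQ{N_i}$ is strictly graded if and only if the canonical surjection $\subalgQ{N_i}\twoheadrightarrow\nich(N_i)$ is an isomorphism, as noted in \Cref{rema_nich_sys_hesch}. Since $\subalgQ{N_i}(n\alpha_i)=\fK x_i^n$ is at most one-dimensional for every $n\in\ndN_0$, and the projection is surjective in each degree, it is an isomorphism exactly when $\dim\subalgQ{N_i}(n\alpha_i)=\dim\nich(N_i)(n\alpha_i)$ for all $n$, i.e.\ when $m_i^{\scr{N}}$ coincides with the corresponding invariant of $\nich(N_i)$. The classical computation of the Nichols algebra of a one-dimensional object of braiding $q=\dynk^{\scr{N}}_i$ (via parts (1) and (2), or by citing \cite{HeSch}) shows that $x_i^n\ne 0$ in $\nich(N_i)$ precisely when $(n)_q^!\ne 0$, and this maximal $n$ is exactly $m_i^{\dynk^{\scr{N}}}$ (interpreting both sides as $\infty$ when $q=1$). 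Hence (a)$\Leftrightarrow$(b). The main care point is the degenerate case $\dynk^{\scr{N}}_i=1$, where both quantities are infinite and the statement should be read accordingly; the rest of the argument is essentially bookkeeping.
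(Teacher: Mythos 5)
Your proof is correct. Parts (1) and (2) follow essentially the same route as the paper: the paper proves (1) by the same induction that underlies the quantum binomial theorem (using the $q$-Pascal identity $\binom{m+1}{k}_q=\binom{m}{k-1}_q+q^k\binom{m}{k}_q$), and its proof of (2) is your induction with the factorization $x_i^{m}\cdot x_i$ instead of $x_i\cdot x_i^{m-1}$ (both bookkeepings of the exponent check out). Part (3) is where you genuinely diverge. The paper argues directly from the comultiplication formula in (1): if $(m_i^{\dynk^{\scr{N}}}+1)_{\dynk^{\scr{N}}_i}=0$ then $x_i^{m_i^{\dynk^{\scr{N}}}+1}$ is primitive and hence zero under strict gradedness, and conversely $x_i^{m_i^{\scr{N}}+1}=0$ forces $(m_i^{\scr{N}}+1)_{\dynk^{\scr{N}}_i}=0$ by looking at the coefficient of $x_i\ot x_i^{m_i^{\scr{N}}}$; for (b)$\Rightarrow$(a) it shows by an explicit coefficient comparison that any primitive $\sum_l\lambda_l x_i^l$ of degree $\le m_i^{\scr{N}}$ lies in $N_i$. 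You instead reduce to the canonical surjection $\subalgQ{N_i}\twoheadrightarrow\nich(N_i)$ and the known structure of the rank-one Nichols algebra; this is legitimate (the equivalence of strict gradedness with $\subalgQ{N_i}\cong\nich(N_i)$ is recorded in \Cref{rema_nich_sys_hesch}, and one-dimensionality of each graded piece plus surjectivity does give the degreewise dimension criterion), and it is shorter, at the cost of importing the computation of $\nich(N_i)$ from the literature rather than deriving everything from (1) as the paper does. One small inaccuracy: in your aside on the degenerate case, $\dynk^{\scr{N}}_i=1$ gives infinite $m_i^{\dynk^{\scr{N}}}$ only in characteristic $0$; in characteristic $p$ one has $(p)_1=0$, so $m_i^{\dynk^{\scr{N}}}=p-1$. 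Your main criterion ($x_i^n\ne0$ in $\nich(N_i)$ iff $(n)_{\dynk^{\scr{N}}_i}^!\ne0$) already covers this correctly, so the slip is harmless.
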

\begin{proof}
    (1): We do induction on $m$: Clearly the claim holds for $m = 0$. Assume the claim holds for $m \ge 0$.
    Then
    \begin{align*}
        &\bicomu_Q (x_i^{m+1})
        = \bimu_{Q \ot Q} (\bicomu_Q (x_i) \ot \bicomu_Q (x_i^m))
        \\=& \bimu_{Q \ot Q} \left( \left( x_i \ot 1 + 1 \ot x_i \right) \ot \left( \sum_{k=0}^{m} \binom{m}{k}_{\dynk^{\scr{N}}_i} x_i^k \ot x_i^{m-k} \right) \right)
        \\=& \sum_{k=0}^{m} \binom{m}{k}_{\dynk^{\scr{N}}_i} x_i^{k+1} \ot x_i^{m-k} + {\dynk^{\scr{N}}_i}^{k} \binom{m}{k}_{\dynk^{\scr{N}}_i} x_i^k \ot x_i^{m-k+1} 
        \\=& \sum_{k=0}^{m+1} \left(  \binom{m}{k-1}_{\dynk^{\scr{N}}_i}  + {\dynk^{\scr{N}}_i}^{k} \binom{m}{k}_{\dynk^{\scr{N}}_i} \right) x_i^k \ot x_i^{m-k+1} 
        \matdot
    \end{align*}
    Observe that
    \begin{align*}
        \binom{m+1}{k}_{\dynk^{\scr{N}}_i} = \binom{m}{k-1}_{\dynk^{\scr{N}}_i} + {\dynk^{\scr{N}}_i}^{k} \binom{m}{k}_{\dynk^{\scr{N}}_i} \matcom
    \end{align*}
    hence the above calculation yields
    \begin{align*}
        \bicomu_Q (x_i^{m+1}) = \sum_{k=0}^{m+1} \binom{m+1}{k}_{\dynk^{\scr{N}}_i} x_i^k \ot x_i^{m+1-k} \matdot
    \end{align*}
    
    (2): We do induction on $m$: Clearly the claim holds for $m = 0$. Assume the claim holds for $m \ge 0$. Then
    \begin{align*}
        \antip_Q(x_i^{m+1})
        =& \bimu_Q (\antip_Q \ot \antip_Q) \brd^{\scr{C}}_{Q,Q} (x_i^m \ot x_i)
        = {\dynk^{\scr{N}}_i}^m \antip_Q(x_i^m) \antip_Q(x_i) 
        \\=& (-1)^m {\dynk^{\scr{N}}_i}^{\frac{1}{2}(m-1)m+m} x_i^m (-x_i)
        =(-1)^{m+1} {\dynk^{\scr{N}}_i}^{\frac{1}{2}m(m+1)} x_i^{m+1} 
        \matdot
    \end{align*}
    
    (3): Assume (a) holds. Assume $m^{\dynk^{\scr{N}}}_i \in \ndN_0$, i.e. $(m^{\dynk^{\scr{N}}}_i+1)_{\dynk^{\scr{N}}_i} = 0$. Then (1) implies
    \begin{align*}
        \bicomu_Q (x_i^{m^{\dynk^{\scr{N}}}_i+1}) = 1 \ot x_i^{m^{\dynk^{\scr{N}}}_i+1} + x_i^{m^{\dynk^{\scr{N}}}_i+1} \ot 1 \matdot
    \end{align*}
    Hence $x_i^{m^{\dynk^{\scr{N}}}_i+1}=0$ by (a), since $m^{\dynk^{\scr{N}}}_i \ge 1$. This implies $m^{\scr{N}}_i \in \ndN_0$ and $m^{\scr{N}}_i \le m^{\dynk^{\scr{N}}}_i$. 
    On the other hand if $m^{\scr{N}}_i \in \ndN_0$ we have $x_i^{m^{\scr{N}}_i} \ne 0$ and $ x_i^{m^{\scr{N}}_i+1} = 0$. Hence (1) implies
    \begin{align*}
        0 = \bicomu_Q (x_i^{m^{\scr{N}}_i+1}) = \sum_{k=0}^{m^{\scr{N}}_i+1} \binom{m^{\scr{N}}_i+1}{k}_{\dynk^{\scr{N}}_i} x_i^k \ot x_i^{m^{\scr{N}}_i+1-k} \matcom
    \end{align*}
    in particular $\binom{m^{\scr{N}}_i+1}{1}_{\dynk^{\scr{N}}_i} x_i \ot x_i^{m^{\scr{N}}_i}=0$. Since $x_i^{m^{\scr{N}}_i} \ne 0$ we can conclude that $(m^{\scr{N}}_i+1)_{\dynk^{\scr{N}}_i}=\binom{m^{\scr{N}}_i+1}{1}_{\dynk^{\scr{N}}_i}=0$. Hence $m^{\dynk^{\scr{N}}}_i \in \ndN_0$ and $m^{\scr{N}}_i \ge m^{\dynk^{\scr{N}}}_i$. 
    So altogether either both $m^{\scr{N}}_i \notin \ndN_0$ and $m^{\dynk^{\scr{N}}}_i \notin \ndN_0$, or $m^{\scr{N}}_i  = m^{\dynk^{\scr{N}}}_i \in \ndN_0$, i.e. (b) holds.
    
    Now conversely assume (b) holds. Let $m \le m_i^{\scr{N}}$, $\lambda_0,\ldots,\lambda_m \in \fK$, such that $\lambda_m \ne 0$ and such that $x := \sum_{l=0}^m \lambda_l x_i^l$ is primitive. 
    Since $m \le m_i^{\dynk^{\scr{N}}}$ by (b), we obtain $(m)_{\dynk^{\scr{N}}_i} \ne 0$.
    Now (1) implies
    \begin{align*}
        \sum_{l=0}^m \lambda_l \left( x_i^l \ot 1 + 1 \ot x_i^l \right) = \bicomu_Q(x) = \sum_{l=0}^m \lambda_l \sum_{k=0}^{l} \binom{l}{k}_{\dynk^{\scr{N}}_i} x_i^k \ot x_i^{l-k} \matdot
    \end{align*}
    Comparing the coefficients for $1 \ot 1$ yields $2 \lambda_0 = \lambda_0$, hence $\lambda_0 = 0$. If $m \ge 2$, then comparing the coefficients for 
    $x_i \ot x_i^{m-1}$ yields
    \begin{align*}
        0 = \lambda_m \binom{m}{1}_{\dynk^{\scr{N}}_i} = \lambda_m (m)_{\dynk^{\scr{N}}_i} \ne 0 \matcom
    \end{align*}
    hence $m\le 1$, i.e. $m=1$ and $x = \lambda_1 x_i$. Thus (a) holds.
\end{proof}

\begin{lem} \label{lem_ydmod_coact_diagonal_explicit}
    Let $i \in \mathbb{I}$ and assume $\subalgQ{N_i}$ is strictly graded. Moreover let $V \in \ydcat{\subalgQ{N_i}}{\scr{C}}$ be a homogeneously $\Gamma$-graded object. Assume there exists $r \in \fK$ such that for all $v \in \gencomp{V}$ we have
    \begin{align*}
        \brd^{\scr{C}}_{V,Q} \brd^{\scr{C}}_{Q,V}(x_i \ot v) = r x_i \ot v 
        \matdot
    \end{align*}
    Then for $m \in \ndN_0$ and $v \in \gencomp{V}$ we have
        \begin{align*}
            & \shapoendo_V(x_i^{m} \cdot v) = \left(\prod_{l=0}^{m-1} (1-{\dynk^{\scr{N}}_i}^l r) \right) x_i^{m} \ot v 
        \matdot
        \end{align*}
\end{lem}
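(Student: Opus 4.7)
The plan is to apply \Cref{thm_shapoendo_via_gnk} with $n = m-1$ and the element $x = x_i^m \in N_i^m$, for which $\lambda_i = r$: the assumption $\brd^{\scr{C}}_{V,Q}\brd^{\scr{C}}_{Q,V}(x_i \ot v) = r\, x_i \ot v$ restricts (by naturality of the braiding and since $N_i \subset Q$, $\gencomp{V} \subset V$) to $\brd^{\scr{C}}_{\gencomp{V},N_i}\brd^{\scr{C}}_{N_i,\gencomp{V}} = r \, \id_{N_i \ot \gencomp{V}}$, which is exactly the hypothesis of the theorem. Thus
\begin{align*}
\shapoendo_V(x_i^m \cdot v) = (1-r)\sum_{k=0}^{m-1} (-r)^k g_{m-1,k}(x_i^m) \ot v,
\end{align*}
so the whole problem reduces to computing the scalar by which $g_{m-1,k}$ acts on $x_i^m \in N_i^m \cong N_i^{\ot m}/\ker S_m$.

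Setting $q := \dynk^{\scr{N}}_i$, the key observation is that $\brd^{\scr{C}}_{N_i,N_i}(x_i \ot x_i) = q\, x_i \ot x_i$, so every generator $c_j \in \brB_m$ acts on $x_i^{\ot m}$ as multiplication by $q$. Hence for any $\pi \in \SG_m$ and any reduced decomposition of length $\ell(\pi)$, we have $c_\pi(x_i^{\ot m}) = q^{\ell(\pi)} x_i^{\ot m}$. Using \Cref{prop_gnk_by_hnk}, write $\antishg_{m-1,k} = \SG_{m-1-k,k}\, \tilde{\omega}$ where $\tilde{\omega} = (s_{m-1}\cdots s_{m-k})(s_{m-1}\cdots s_{m-k+1})\cdots(s_{m-1})$ has length $k(k+1)/2$. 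By \Cref{lem_reduced_decomposition_shifts}(4), $\ell(\pi \tilde{\omega}) = \ell(\pi) + k(k+1)/2$ for $\pi \in \SG_{m-1-k,k}$, so
\begin{align*}
g_{m-1,k}(x_i^{\ot m}) = q^{k(k+1)/2}\Big(\sum_{\pi \in \SG_{m-1-k,k}} q^{\ell(\pi)}\Big) x_i^{\ot m} = q^{k(k+1)/2}\binom{m-1}{k}_{q} x_i^{\ot m},
\end{align*}
which passes to $N_i^m$ to give $g_{m-1,k}(x_i^m) = q^{k(k+1)/2} \binom{m-1}{k}_q\, x_i^m$.

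Substituting back, $\shapoendo_V(x_i^m \cdot v)$ equals
\begin{align*}
(1-r)\sum_{k=0}^{m-1} (-r)^k q^{k(k+1)/2} \binom{m-1}{k}_{q}\, x_i^m \ot v.
\end{align*}
The final step is the $q$-binomial identity $\sum_{k=0}^{n} q^{k(k-1)/2}\binom{n}{k}_{q} t^k = \prod_{l=0}^{n-1}(1 + q^l t)$; rewriting $(-r)^k q^{k(k+1)/2} = q^{k(k-1)/2}(-rq)^k$ and applying it with $n = m-1$ and $t = -rq$ gives
\begin{align*}
\sum_{k=0}^{m-1}(-r)^k q^{k(k+1)/2}\binom{m-1}{k}_{q} = \prod_{l=0}^{m-2}(1 - r q^{l+1}) = \prod_{l=1}^{m-1}(1 - r q^l),
\end{align*}
and multiplying by $(1-r)$ produces $\prod_{l=0}^{m-1}(1 - r q^l)$, as claimed.

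The only subtlety is the length-additivity $\ell(\pi\tilde{\omega}) = \ell(\pi) + \ell(\tilde{\omega})$ for $\pi \in \SG_{m-1-k,k}$, which is precisely \Cref{lem_reduced_decomposition_shifts}(4); beyond that the proof is a direct specialization of \Cref{thm_shapoendo_via_gnk} combined with the $q$-binomial theorem.
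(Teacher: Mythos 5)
Your proof is correct, but it takes a genuinely different route from the paper's. The paper proves this lemma by a direct induction on $m$ using \Cref{lem_shapovalov_endo_product_rack_case}: writing $x_i^{m+1}\cdot v = x_i\cdot(x_i^{m}\cdot v)$ and using that $\brd^{\scr{C}}_{Q,Q}(x_i\ot x_i^{m})=q^{m}\,x_i^{m}\ot x_i$ with $q=\dynk^{\scr{N}}_i$, the recursion contributes exactly one new factor $(1-q^{m}r)$ at each step, so the product builds up in two lines with no combinatorics. You instead specialize the already-established general formula of \Cref{thm_shapoendo_via_gnk} and then evaluate the scalars by which $g_{m-1,k}$ acts; this works, and your identification $g_{m-1,k}(x_i^{\ot m})=q^{k(k+1)/2}\binom{m-1}{k}_{q}\,x_i^{\ot m}$ (via \Cref{prop_gnk_by_hnk} and the length-additivity from \Cref{lem_reduced_decomposition_shifts}(4)) is a nice by-product not recorded in the paper. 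The cost is that you import two facts the paper never states: that $\sum_{\pi\in\SG_{n-k,k}}q^{\ell(\pi)}=\binom{n}{k}_{q}$ (this does follow by applying \Cref{lem_symmetrizer_inductive_formula}(3) to $x_i^{\ot(n+1)}$, on which every $c_j$ acts as multiplication by $q$, but you should say so) and Gauss's $q$-binomial identity for the final telescoping. Two small points to tighten: the case $m=0$ lies outside the range of \Cref{thm_shapoendo_via_gnk} (which requires $n\in\ndN_0$) and should be handled separately by gradedness of the coaction, and the theorem's hypothesis $\lambda_i\in\fK^{\times}$ versus the lemma's $r\in\fK$ is harmless only because the double braiding is invertible, which forces $r\ne 0$ whenever $\gencomp{V}\ne 0$ --- worth a half-sentence.
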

\begin{proof}
    We do induction on $m$. The case $m=0$ is trivial, since $\coact_V^Q$ is graded. Moreover with \Cref{lem_shapovalov_endo_product_rack_case} (by assumption we have $\lambda_i = r$), we obtain
    \begin{align*}
        \shapoendo_V(x_i^{m+1} \cdot v) = \left(
        \bimu_Q \ot \id_{\gencomp{V}}
        - r (\bimu_Q \brd^{\scr{C}}_{Q, Q}  \ot \id_{\gencomp{V}}) \right) \left(x_i \ot \shapoendo_V(x_i^m \cdot v) \right).
        \end{align*}
    By induction hypothesis this is equal to
    \begin{align*}
        \left(\prod_{l=0}^{m-1} (1-{\dynk^{\scr{N}}_i}^l r) \right) \left(
        \bimu_Q \ot \id_{\gencomp{V}}
        - r (\bimu_Q \brd^{\scr{C}}_{Q, Q} \ot \id_{\gencomp{V}}) \right) \left(x_i \ot x_i^m \ot v \right),
    \end{align*}
    which simplifies to $ \left(\prod_{l=0}^{m} (1-{\dynk^{\scr{N}}_i}^l r) \right) x_i^{m+1} \ot v $.
\end{proof}

\begin{prop} \label{prop_diag_well_graded_miV}
    Let $i \in \mathbb{I}$ and assume $\subalgQ{N_i}$ is strictly graded. Moreover let $V \in \ydcat{\subalgQ{N_i}}{\scr{C}}$ be a homogeneously $\Gamma$-graded object. Assume there exists $r \in \fK$ such that for all $v \in \gencomp{V}$ we have
    \begin{align*}
        \brd^{\scr{C}}_{V,Q} \brd^{\scr{C}}_{Q,V}(x_i \ot v) = r x_i \ot v 
        \matdot
    \end{align*}
    Let $v \in \gencomp{V}$ and $m_i^v = \max \lbrace m \in \ndN_0 \, | \, x_i^m \cdot v \ne 0 \rbrace$.
    The following are equivalent
    \begin{enumerate}
        \item $\subalgQ{N_i} \cdot v \in \ydcat{\subalgQ{N_i}}{\scr{C}}$ is well graded.
        \item $m_i^v= \min \lbrace m \in \ndN_0 \, | \, (m+1)_{\dynk^{\scr{N}}_{i}} ({\dynk^{\scr{N}}_{i}}^{m} r - 1) = 0 \rbrace$.
    \end{enumerate}
\end{prop}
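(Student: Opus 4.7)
My plan is to introduce $W := \subalgQ{N_i} \cdot v$ and reduce the equivalence to an analysis of the Shapovalov morphism $\shapoendo_W$ via \Cref{lem_ydmod_coact_diagonal_explicit}. Since $v \in \gencomp{V}$ and $\coact_V^{\subalgQ{N_i}}$ is graded, a standard counit argument gives $\coact_V^{\subalgQ{N_i}}(v) = 1 \ot v$, so $\fK v$ is a $\subalgQ{N_i}$-subcomodule of $V$, and by \Cref{lem_induced_Q_comodule_is_YD_module} then $W$ is a subobject of $V$ in $\ydcat{\subalgQ{N_i}}{\scr{C}}$. By construction $W$ is homogeneously generated with $\genind{W} = \genind{V}$, $\gencomp{W} = \fK v$ and $W(\genind{W} + n \alpha_i) = \fK x_i^n \cdot v$ for $0 \le n \le m_i^v$ (nonzero in each such degree, since $x_i^n \cdot v = 0$ would propagate to $x_i^{m_i^v} \cdot v = 0$, a contradiction). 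Since $\gencomp{W}$ is one-dimensional it is irreducible in $\ydmod{H} = \ydmod{\subalgQ{N_i}(0) \boso H}$, so \Cref{cor_irred_graded_ydmodule_o} turns well-gradedness of $W$ into irreducibility of $W$ in $\ydcat{\subalgQ{N_i}}{\scr{C}}$, which by \Cref{prop_shapoendo_max_subobject} is in turn equivalent to $\ker \shapoendo_W = 0$.

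Applying \Cref{lem_ydmod_coact_diagonal_explicit} to $W$ and writing $q := \dynk^{\scr{N}}_i$ yields
\begin{align*}
\shapoendo_W(x_i^n \cdot v) = \left( \prod_{l=0}^{n-1}(1 - q^l r) \right) x_i^n \ot v
\end{align*}
for every $n \in \ndN_0$. By \Cref{lem_primitive_potency}(3) the strictly graded hypothesis gives $m_i^{\scr{N}} = m_i^{\dynk^{\scr{N}}}$, so $x_i^n \ne 0$ in $\subalgQ{N_i}$ precisely for $n \le m_i^{\scr{N}}$. Since $W(\genind{W} + n\alpha_i) = 0$ for $n > m_i^v$, the condition $\ker \shapoendo_W = 0$ is equivalent to $q^l r \ne 1$ for all $0 \le l \le m_i^v - 1$; with $m_r^* := \min \lbrace m \in \ndN_0 \mid q^m r = 1 \rbrace$ (and $m_r^* = \infty$ if no such $m$ exists) this reads $m_r^* \ge m_i^v$. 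The same formula provides a complementary upper bound: if $m_i^v < m_i^{\scr{N}}$ then $x_i^{m_i^v + 1} \cdot v = 0$ while $x_i^{m_i^v + 1}$ is nonzero in $\subalgQ{N_i}$, forcing $\prod_{l=0}^{m_i^v}(1 - q^l r) = 0$ and hence $m_r^* \le m_i^v$.

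To conclude I will use \Cref{rema_binomq_root_of_one} to observe that the number $m^*$ appearing in (2) factorises as $m^* = \min(m_i^{\dynk^{\scr{N}}}, m_r^*) = \min(m_i^{\scr{N}}, m_r^*)$, and then run a two-case analysis. If $m_i^v = m_i^{\scr{N}}$, the upper bound is vacuous and (1) becomes $m_r^* \ge m_i^{\scr{N}}$, i.e.\ $m^* = m_i^{\scr{N}} = m_i^v$, so (2) holds. If $m_i^v < m_i^{\scr{N}}$, the YD-compatibility forces $m_r^* \le m_i^v$ and (1) upgrades this to $m_r^* = m_i^v$, whence $m^* = m_r^* = m_i^v$. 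Reading both cases backwards gives the converse implication. I expect the main subtlety to be the extraction of the matching upper bound $m_r^* \le m_i^v$ from the Shapovalov formula; once this YD-compatibility constraint is in hand, the equivalence is bookkeeping.
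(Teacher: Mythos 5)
Your proposal is correct and follows essentially the same route as the paper: reduce well-gradedness to vanishing of $\ker\shapoendo_{\subalgQ{N_i}\cdot v}$ (via irreducibility of the one-dimensional generator and \Cref{prop_shapoendo_max_subobject}), evaluate $\shapoendo$ with \Cref{lem_ydmod_coact_diagonal_explicit}, and pin down the minimum in (2) using both the non-vanishing of $\prod_{l=0}^{n-1}(1-{\dynk^{\scr{N}}_i}^l r)$ for $n\le m_i^v$ and the forced vanishing of $\shapoendo(x_i^{m_i^v+1}\cdot v)$ together with \Cref{lem_primitive_potency}(3). Your two-case bookkeeping via $m^*=\min(m_i^{\scr{N}},m_r^*)$ is just a reorganization of the paper's ``$\le$ then $\ge$'' argument and is sound.
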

\begin{proof}
First by \Cref{lem_induced_Q_comodule_is_YD_module} we know that $\subalgQ{N_i} \cdot v$ is indeed an $\ndN_0$-graded object in $\ydcat{\subalgQ{N_i}}{\scr{C}}$.
By \Cref{cor_reflection_sequence_shapo_equiv} we obtain that (1) is equivalent to 
\begin{align*}
    \ker \shapoendo_{\subalgQ{N_i} \cdot v} (N_i^m \cdot v) = 0
\end{align*}
for all $0 \le m \le m_i^{\subalgQ{N_i} \cdot v}=m_i^v$. This is equivalent to
\begin{align*}
    \shapoendo_{V} (x_i^m) \ne 0
\end{align*}
for all $0 \le m \le m_i^v$. By \Cref{lem_ydmod_coact_diagonal_explicit} this is equivalent to
\begin{align*}
    \prod_{l=0}^{m-1} (1-{\dynk^{\scr{N}}_{i}}^l r) \ne 0 .
\end{align*}
for all $0 \le m \le m_i^v$.
Hence (2) implies (1). Since $x_i^m \ne 0$ for all $0\le m \le m_i^v$ we get $(m)_{\dynk^{\scr{N}}_{i}} \ne 0$ by \Cref{lem_primitive_potency}(3). So assuming (1), we get 
\begin{align*}
    m_i^v \le \min \lbrace m \in \ndN_0 \, | \, (m+1)_{\dynk^{\scr{N}}_{i}} ({\dynk^{\scr{N}}_{i}}^{m} r - 1) = 0 \rbrace.
\end{align*}
If $m_i^v \notin \ndN_0$, then there is nothing more to show. Assuming $m_i^v \in \ndN_0$, again by \Cref{lem_ydmod_coact_diagonal_explicit} we know
\begin{align*}
    0 = \shapoendo(0) = \shapoendo(x_i^{m_i^v + 1}\cdot v) = \prod_{l=0}^{m_i^v}(1-{\dynk^{\scr{N}}_{i}}^{l} r) x_i^{m_i^v+1} \ot v ,
\end{align*}
So $ 1-{\dynk^{\scr{N}}_{i}}^{m_i^v} r  = 0$ or $x_i^{m_i^v+1} = 0$. With \Cref{lem_primitive_potency}(3) the latter implies $(m_i^v+1)_{{\dynk^{\scr{N}}_{i}}}= 0$, hence we obtain equality in (2).
\end{proof}

\begin{cor} \label{cor_diag_well_graded_miV}
    For $i \in \mathbb{I}$ the following are equivalent:
    \begin{enumerate}
        \item $\scr{N}$ is a Nichols system over $i$.
        \item The following hold:
        \begin{enumerate}
            \item $m^{\scr{N}}_i = m_i^{\dynk^{\scr{N}}}$.
            \item $a_{ij}^{\scr{N}} = a_{ij}^{\dynk^{\scr{N}}}$ for all $j \in \mathbb{I}$.
        \end{enumerate}
    \end{enumerate}
    In this case $\scr{N}$ is $i$-finite if and only if $\dynk^{\scr{N}}$ is $i$-finite.
\end{cor}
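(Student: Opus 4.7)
The plan is to characterize each of the two defining conditions of a Nichols system over $i$ (cf.\ \Cref{defi_nich_sys}) in terms of the Dynkin diagram data via two preparatory results, and then combine them.

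For the first condition --- that $\subalgQ{N_i}$ is strictly graded --- I would invoke the equivalence established in \Cref{lem_primitive_potency}(3), which says directly that strict gradedness of $\subalgQ{N_i}$ is equivalent to $m^{\scr{N}}_i = m_i^{\dynk^{\scr{N}}}$, i.e.\ condition (2)(a). This gives us (1)(a) $\iff$ (2)(a) independently.

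For the second condition --- that $\scr{N}$ is $j$-well graded over $i$ for all $j \in \mathbb{I}\setminus\{i\}$ --- I would apply \Cref{prop_diag_well_graded_miV} to $V := (\ad\,\subalgQ{N_i})(N_j)$, which by \Cref{prop_ad_kNi_Nj_is_subobj} is an $\ndN_0$-graded object in $\ydcat{\subalgQ{N_i}}{\scr{C}}$ generated (as $\subalgQ{N_i}$-module) by $\gencomp{V} = V(0) = N_j = \fK x_j$. Since $N_j$ is one-dimensional, the $\scr{C}$-braiding of $N_i$ and $\gencomp{V}$ is given by the Dynkin diagram entry: $\brd^{\scr{C}}_{N_j,N_i}\brd^{\scr{C}}_{N_i,N_j}(x_i \otimes x_j) = \dynk^{\scr{N}}_{ij}\, x_i \otimes x_j$. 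Hence the hypothesis of \Cref{prop_diag_well_graded_miV} is satisfied with $r = \dynk^{\scr{N}}_{ij}$, and $m_i^{x_j} = -a_{ij}^{\scr{N}}$ is immediate from the definition of $a_{ij}^{\scr{N}}$. The proposition then states that $V$ is well graded --- i.e.\ $\scr{N}$ is $j$-well graded over $i$ --- if and only if
\begin{align*}
-a_{ij}^{\scr{N}} = \min \lbrace m \in \ndN_0 \, | \, (m+1)_{\dynk^{\scr{N}}_{i}} ({\dynk^{\scr{N}}_{i}}^{m} \dynk^{\scr{N}}_{ij} - 1) = 0 \rbrace = -a_{ij}^{\dynk^{\scr{N}}},
\end{align*}
which is exactly condition (2)(b) at the index $j$. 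Combining this with the $j=i$ case (where both sides equal $2$ by definition) gives the full equivalence of $j$-well gradedness over $i$ for all $j\ne i$ with condition (2)(b).

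Combining the two characterizations yields (1) $\iff$ (2). For the final assertion, once (2)(b) is in force, the defining inequalities $-a_{ij}^{\scr{N}} \in \ndN_0$ (for all $j \ne i$) and $-a_{ij}^{\dynk^{\scr{N}}} \in \ndN_0$ (for all $j \ne i$) coincide term-by-term, so $i$-finiteness of $\scr{N}$ and of $\dynk^{\scr{N}}$ are equivalent. The main point requiring care, rather than any real obstacle, is the bookkeeping that identifies the parameter $r$ appearing in the hypothesis of \Cref{prop_diag_well_graded_miV} applied to $V = (\ad\,\subalgQ{N_i})(N_j) \subset Q$ as exactly the Dynkin edge $\dynk^{\scr{N}}_{ij}$; this uses that $\gencomp{V} = N_j$ inherits its $\scr{C}$-structure from its embedding in $Q$, so its braiding with $N_i$ is read off from $\dynk^{\scr{N}}$.
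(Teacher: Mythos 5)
Your proposal is correct and follows essentially the same route as the paper: strict gradedness of $\subalgQ{N_i}$ is matched with (2)(a) via \Cref{lem_primitive_potency}(3), and $j$-well-gradedness over $i$ is matched with (2)(b) by applying \Cref{prop_diag_well_graded_miV} to $V=(\ad\,\subalgQ{N_i})(N_j)$ with $r=\dynk^{\scr{N}}_{ij}$. The final $i$-finiteness remark is handled identically.
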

\begin{proof}
    By definition (1) holds if and only if $\subalgQ{N_i}$ is strictly graded and $(\ad \subalgQ{N_i})(N_j)=(\ad \subalgQ{x_i})(x_j) \in \ydcat{\subalgQ{N_i}}{\scr{C}}$ is well-graded for all $j \in \mathbb{I} \setminus \lbrace i \rbrace$.
    By \Cref{lem_primitive_potency}(3) the former one is equivalent to (a) and by \Cref{prop_diag_well_graded_miV} the latter one is equivalent to
    \begin{align*}
        &\max \lbrace n \in \ndN_0 \, | \, (\ad \, x_i)^m (x_j) \ne 0 \rbrace 
        \\=& \min \lbrace m \in \ndN_0 \, | \, (m+1)_{\dynk^{\scr{N}}_{i}} ({\dynk^{\scr{N}}_{i}}^{m} \dynk^{\scr{N}}_{ij} - 1) = 0 \rbrace
    \end{align*}
    for all $j \in \mathbb{I} \setminus \lbrace i \rbrace$, which is equivalent to (b).
    
    Now $\scr{N}$ is $i$-finite if and only if $-a_{ij}^{\scr{N}} \in \ndN_0$ for all $j \in \mathbb{I} \setminus \lbrace i \rbrace$ and $\dynk^{\scr{N}}$ is $i$-finite if and only if $-a_{ij}^{\dynk^{\scr{N}}} \in \ndN_0$ for all $j \in \mathbb{I} \setminus \lbrace i \rbrace$.
\end{proof}

\begin{cor} \label{cor_diag_well_graded_miV2}
     Let $i \in \mathbb{I}$ and assume $\subalgQ{N_i}$ is strictly graded. Let $V \in \ydcat{Q}{\scr{C}}$ be a homogeneously generated $\Gamma$-graded object, such that $\gencomp{V}$ is one-dimensional.
    The following are equivalent:
    \begin{enumerate}
        \item $V$ is $i$-well graded.
        \item $m_i^V = -a^{\dynk^V}_{i (\theta+1)}$. 
    \end{enumerate}
\end{cor}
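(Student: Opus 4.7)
The plan is to reduce the statement to \Cref{prop_diag_well_graded_miV} by restricting to the sub $\ydmod{}$-module generated by acting with $\subalgQ{N_i}$ on $\gencomp{V}$. Pick $0 \ne v \in \gencomp{V}$; since $\gencomp{V}$ is one-dimensional, $\gencomp{V} = \fK v$, and the object $W := \subalgQ{N_i} \cdot \gencomp{V} = \subalgQ{N_i} \cdot v$ is, by \Cref{lem_Ni_generated_submodule_is_yd}, a homogeneously $\ndN_0$-graded object in $\ydcat{\subalgQ{N_i}}{\scr{C}}$ with components $W(n) = N_i^n \cdot \gencomp{V}$ and generating component $\gencomp{W} = \gencomp{V}$; inheriting the $\Gamma$-grading from $V$ it is also homogeneously $\Gamma$-graded. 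By the very definition of $i$-well graded, statement~(1) is equivalent to $W$ being well graded in $\ydcat{\subalgQ{N_i}}{\scr{C}}$.

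Next I would verify the braiding hypothesis of \Cref{prop_diag_well_graded_miV} for $W$. Since both $N_i = \fK x_i$ and $\gencomp{V} = \fK v$ are one-dimensional, the composition $\brd^{\scr{C}}_{\gencomp{V}, N_i}\brd^{\scr{C}}_{N_i, \gencomp{V}}$ is an endomorphism of the one-dimensional space $N_i \ot \gencomp{V}$, hence acts by a scalar $r \in \fK$, and by the definition of $\dynk^V = \dynk(N_1,\ldots,N_\theta,\gencomp{V})$ this scalar is exactly $r = \dynk^V_{i(\theta+1)}$. By naturality of the braiding this then yields the identity $\brd^{\scr{C}}_{W,\subalgQ{N_i}}\brd^{\scr{C}}_{\subalgQ{N_i},W}(x_i \ot v) = r\, x_i \ot v$, so \Cref{prop_diag_well_graded_miV} applies and gives
\begin{align*}
W \text{ well graded} \iff m_i^v = \min \lbrace m \in \ndN_0 \mid (m+1)_{\dynk^{\scr{N}}_i} ((\dynk^{\scr{N}}_i)^m \dynk^V_{i(\theta+1)} - 1) = 0 \rbrace.
\end{align*}

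Finally two immediate identifications close the argument. Because $\gencomp{V} = \fK v$ and $N_i^m = \fK x_i^m$, we have $N_i^m \cdot \gencomp{V} = \fK(x_i^m \cdot v)$, so $m_i^V = m_i^v$. Because the self-braiding of $N_i$ is the same datum in both Dynkin diagrams, $\dynk^V_i = \dynk^{\scr{N}}_i$. With these substitutions the right hand side of the equivalence above is, by the definition of $a^{\dynk^V}_{i(\theta+1)}$, exactly $-a^{\dynk^V}_{i(\theta+1)}$, giving (1)$\iff$(2). I do not anticipate any substantive obstacle: the substantive analytical content is carried by \Cref{prop_diag_well_graded_miV}, and the only point to keep track of is that the one-dimensionality of $\gencomp{V}$ causes the braiding to be scalar with the specific value read off from the extended Dynkin diagram.
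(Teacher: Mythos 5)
Your proof is correct and follows essentially the same route as the paper: both reduce to \Cref{prop_diag_well_graded_miV} by observing that $i$-well gradedness of $V$ is by definition well gradedness of $\subalgQ{N_i}\cdot\gencomp{V}$, that one-dimensionality forces the braiding to act by the scalar $\dynk^V_{i(\theta+1)}$, and that $\dynk^V_i=\dynk^{\scr{N}}_i$ identifies the resulting minimum with $-a^{\dynk^V}_{i(\theta+1)}$.
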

\begin{proof}
    Let $v \in \gencomp{V} \setminus \lbrace 0 \rbrace$. Recall that by definition $V$ is $i$-well graded, if and only if $\subalgQ{N_i} \cdot \gencomp{V} = \subalgQ{x_i} \cdot v \in \ydcat{\subalgQ{N_i}}{\scr{C}}$ is well graded.
    Moreover we have $\brd^{\scr{C}}_{V,Q} \brd^{\scr{C}}_{Q,V}(x_i \ot v) = \dynk^V_{i(\theta+1)} x_i \ot v$, $\dynk^V_i = \dynk^{\scr{N}}_i$. Hence the claim is a special case of \Cref{prop_diag_well_graded_miV}.
\end{proof}

\begin{rema} \label{rema_cor_diag_well_graded_miV2}
    Let $i \in \mathbb{I}$ and assume $\subalgQ{N_i}$ is strictly graded. Let $V \in \ydcat{Q}{\scr{C}}$ be a homogeneously generated $\Gamma$-graded object, such that $\gencomp{V}$ is one-dimensional.
    By \Cref{cor_diag_well_graded_miV2} for all $j \in \mathbb{I}$ we have 
    \begin{align*}
       \refl_i(\dynk^{V})_{j(\theta+1)} &= {\dynk^{V}_{j(\theta+1)}} {\dynk^{V}_{i(\theta+1)}}^{-a^{\dynk^{\scr{N}}}_{ij}} {\dynk^{\scr{N}}_{ij}}^{m_i^V} {\dynk^{\scr{N}}_{i}}^{-2 a^{\dynk^{\scr{N}}}_{ij} m_i^V}
       \matdot
    \end{align*}
    In particular for all $j \in \mathbb{I}$ we have that $\refl_i(\dynk^{V})_{j(\theta+1)}$ only relies on $\dynk^{\scr{N}}$, $\dynk^{V}_{i(\theta+1)}$ and $\dynk^{V}_{j(\theta+1)}$.
\end{rema}

We will now discuss the relationship between reflections in $\dynkcat{\theta}$ and reflections of $\scr{N}$ or objects in $\ydcat{Q}{\scr{C}}_{\rat}$.

\begin{lem} \label{lem_diag_ad_explicit}
    Let $i,j \in \mathbb{I}$ such that $j \ne i$. Then
    \begin{align*}
        (\ad \, x_i)^m (x_j) = \sum_{k=0}^m \lambda_k x_i^{m-k} x_j x_i^k
    \end{align*}
    for some $\lambda_0,\ldots,\lambda_m \in \fK$.
\end{lem}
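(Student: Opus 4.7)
The plan is to proceed by induction on $m$. The base case $m=0$ is immediate, with $(\ad\, x_i)^0(x_j) = x_j$ corresponding to $\lambda_0 = 1$.

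For the induction step, the key observation is that for Nichols systems of diagonal type the braiding $\brd^{\scr{C}}_{N_i,N_i^{m-k}N_jN_i^k}$ takes $x_i\otimes x_i^{m-k}x_jx_i^k$ to a scalar multiple of $x_i^{m-k}x_jx_i^k \otimes x_i$. This is because each $N_l = \fK x_l$ is one-dimensional, so the braiding $\brd^{\scr{C}}_{N_i,N_l}$ acts as multiplication by a scalar (up to swap), and tensor-powers of these objects inherit the same feature. Combining this with the fact that $x_i$ is primitive and $\antip_Q(x_i) = -x_i$ (from \Cref{lem_primitive_potency}(2)), the formula for the adjoint action unwinds to
\begin{align*}
\ad\, x_i(y) = x_i\, y - c(y)\, y\, x_i
\end{align*}
whenever $y$ is homogeneous with respect to the $\ndN_0^\theta$-grading, where $c(y) \in \fK$ is the scalar provided by the braiding.

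Applying this identity to $y = x_i^{m-k} x_j x_i^k$ in the induction hypothesis $(\ad\, x_i)^m(x_j) = \sum_{k=0}^m \lambda_k\, x_i^{m-k} x_j x_i^k$ gives
\begin{align*}
(\ad\, x_i)^{m+1}(x_j) = \sum_{k=0}^m \lambda_k\, x_i^{m-k+1} x_j x_i^k - \sum_{k=0}^m \lambda_k c_k\, x_i^{m-k} x_j x_i^{k+1},
\end{align*}
where $c_k = c(x_i^{m-k} x_j x_i^k)$. Re-indexing the second sum and collecting coefficients then yields an expression of the form $\sum_{k=0}^{m+1} \mu_k\, x_i^{(m+1)-k} x_j x_i^k$, completing the induction.

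There is no real obstacle here: the only subtlety is justifying that the braiding on a product such as $x_i \otimes (x_i^{m-k}x_jx_i^k)$ merely permutes the tensor factors with a scalar, which is a direct consequence of one-dimensionality of each $N_l$ together with monoidality of $\brd^{\scr{C}}$ in both arguments. I would not compute the scalars $\lambda_k$ explicitly, since the statement only asserts existence in $\fK$.
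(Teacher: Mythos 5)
Your proof is correct, but it follows a different route from the paper's. The paper computes $(\ad\,x_i)^m(x_j)$ in a single step as $\ad(x_i^m)(x_j)$ (using that $\ad$ is an algebra action), expanding the formula $\bimu_Q(\bimu_Q\ot\antip_Q)(\id_Q\ot\brd^{\scr{C}}_{Q,Q})(\bicomu_Q(x_i^m)\ot x_j)$: since $\bicomu_Q(x_i^m)\in\oplus_k N_i^{m-k}\ot N_i^k$, the braiding sends $N_i^k\ot N_j$ into $N_j\ot N_i^k$ and the antipode preserves $N_i^k$, multiplying out lands in $\sum_k \fK\, x_i^{m-k}x_jx_i^k$. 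You instead induct on $m$ via the commutator identity $\ad(x_i)(y)=x_iy-c(y)yx_i$ for homogeneous $y$, which follows from primitivity of $x_i$, $\antip_Q(x_i)=-x_i$, and the diagonal braiding; your justification that the braiding acts on $x_i\ot x_i^{m-k}x_jx_i^k$ by a scalar times the flip (one-dimensionality of the $N_l$ plus compatibility of the braiding with multiplication) is exactly the point that needs saying. Your version has the small advantage of not needing the $q$-binomial coproduct formula for $x_i^m$ (\Cref{lem_primitive_potency}(1)), at the cost of an induction; the paper's is a one-line consequence of that formula. Both are complete for the existence statement, since neither needs the scalars explicitly.
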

\begin{proof}
    We know that
    \begin{align*}
        (\id_Q \ot \antip_Q) (\id_{Q} \ot \brd^{\scr{C}}_{Q,Q}) (\bicomu_{Q}(x_i^m) \ot x_j) \in \oplus_{k=0}^{m} N_i^{m-k} \ot N_j \ot N_i^k \matdot
    \end{align*}
    Applying $\bimu_Q (\bimu_Q \ot \id_Q)$ to this gives $(\ad \, x_i)^m (x_j)$.
\end{proof}

\begin{rema}
    Let $i\in \mathbb{I}$ and let $y_i \in N_i^*$ be defined by $y_i(x_i)=1$. For $j \in \mathbb{I}$ let $g_j \in H$, such that $\coact^H_Q(x_j) = g_j \ot x_j$. Then $g_j$ is an invertible, group-like element. 
    For $j \in \mathbb{I}$ let $\chi_j \in H^*$ be defined by $h \cdot x_j = \chi_j(h) x_j $ for all $h \in H$.
    Observe that $\chi_j(g_j) = \dynk^{\scr{N}}_j$ and $\chi_j(g_k) \chi_k(g_j) = {\dynk^{\scr{N}}_{jk}}$ for all $j,k \in \mathbb{I}$, $j\ne k$.
    Recall that then we have $N_i^* \in \scr{C}$ with $H$-action and $H$-coaction given by
    \begin{align*}
        h \cdot y_i = \chi_i(\antip_H (h)) y_i \matcom && \coact^H_{N_i^*}(y_i) = g_i^{-1} \ot y_i
    \end{align*}
    for all $h\in H$, for details refer to \cite{HeSch}, Lemma~4.2.2. 
    Hence we obtain
    \begin{align} \label{equa_rema_diag_dual1}
        \brd^{\scr{C}}_{N_i^*,N_i^*} (y_i \ot y_i) = g_i^{-1} \cdot y_i \ot y_i = \chi_i(g_i) y_i \ot y_i = \dynk^{\scr{N}}_i y_i \ot y_i \matdot
    \end{align}
    Moreover for $j \in \mathbb{I}$
    \begin{align*}
        \brd^{\scr{C}}_{N_i^*,N_j} (y_i \ot x_j) = \chi_j(g_i^{-1}) x_j \ot y_i
    \end{align*}
    and 
    \begin{align*}
        \brd^{\scr{C}}_{N_j,N_i^*} (x_j \ot y_i) = g_j \cdot y_i \ot x_j = \chi_i(g_j^{-1}) y_i \ot x_j
        \matdot
    \end{align*}
    Hence
    \begin{align} \label{equa_rema_diag_dual2}
        \brd^{\scr{C}}_{N_i,N_i^*} \brd^{\scr{C}}_{N_i^*,N_i} (y_i \ot x_i) = \chi_i(g_i^{-1}) \chi_i(g_i^{-1}) y_i \ot x_i = {\dynk^{\scr{N}}_{i}}^{-2} y_i \ot x_i
    \end{align}
    and for $j \in \mathbb{I}$, $j \ne i$
    \begin{align} \label{equa_rema_diag_dual3}
        \brd^{\scr{C}}_{N_j,N_i^*} \brd^{\scr{C}}_{N_i^*,N_j} (y_i \ot x_j) = \chi_i(g_j^{-1}) \chi_j(g_i^{-1}) y_i \ot x_j = {\dynk^{\scr{N}}_{ij}}^{-1} y_i \ot x_j
        \matdot
    \end{align}
    Finally let $V \in \ydcat{\subalgQ{N_i}}{\scr{C}}$. Moreover assume there exists $v \in V \setminus \lbrace 0 \rbrace$, such that $\coact^Q_V(v)=1 \ot v$ and $r \in \fK$ such that
    \begin{align*}
        \brd^{\scr{C}}_{V,Q} \brd^{\scr{C}}_{Q,V}(x_i \ot v) = r x_i \ot v 
        \matdot
    \end{align*}
    Then similar to above we obtain
    \begin{align} \label{equa_rema_diag_dual4}
        \brd^{\scr{C}}_{V,Q} \brd^{\scr{C}}_{Q,V}(y_i \ot v) = r^{-1} y_i \ot v \matdot
    \end{align}
\end{rema}

\begin{prop} \label{prop_dynk_refl_equals_refl}
    Let $i \in \mathbb{I}$ and assume $\scr{N}$ is an $i$-finite Nichols system over $i$. Then $\refl_i(\scr{N})$ is a Nichols system of diagonal type and we have
    \begin{align*}
        \refl_i(\dynk^{\scr{N}}) = \dynk^{\refl_i(\scr{N})}
        \matdot
    \end{align*}
    If $V \in \ydcat{Q}{\scr{C}}_{\rat}$ is homogeneously generated $i$-well $\Gamma$-graded, such that $\gencomp{V}$ is one-dimensional, then $\dynk^{V}$ is $i$-finite, $\gencomp{\refl_i(V)}$ is one-dimensional and we have
    \begin{align*}
       \refl_i(\dynk^{V}) = \dynk^{\refl_i(V)}
        \matdot
    \end{align*}
\end{prop}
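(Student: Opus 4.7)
The plan is to verify the claims for $\scr{N}$ first and then for $V$, in both cases reducing the braiding computation to an explicit calculation on the $H$-action/coaction. The central preliminary observation, valid in the diagonal setting, is that every $H$-homogeneous monomial $x_{i_1} \cdots x_{i_k} \in Q$ of total degree $\gamma = \sum_l n_l \alpha_l$ has $H$-coaction $g_{i_1} \cdots g_{i_k} \ot (\text{monomial})$ and $H$-action by the single character $\chi_\gamma = \prod_l \chi_l^{n_l}$, which does not depend on the ordering. Consequently, for any $w \in Q(\gamma)$ written as a sum of monomials and any $w' \in Q(\gamma')$, one has $w_{(-1)} \cdot w' = \chi_{\gamma'}(g_\gamma)\, w'$ for any choice of group-like product $g_\gamma$ in degree $\gamma$ (well-defined because $\chi_{\gamma'}$ is a character). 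A short direct expansion then yields $\chi_{\gamma'}(g_\gamma)\chi_{\gamma}(g_{\gamma'}) = \bichar{\dynk^{\scr{N}}}(\gamma, \gamma')$ and $\chi_\gamma(g_\gamma) = \indchar{\dynk^{\scr{N}}}(\gamma, \gamma)$, both independent of the (possibly non-commuting) choices of $\chi_l(g_{l'})$.

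For the $\scr{N}$-part, I would first invoke \Cref{lem_diag_ad_explicit} to see that $(\ad N_i)^{-a_{ij}^{\scr{N}}}(N_j) = \fK (\ad x_i)^{-a_{ij}^{\scr{N}}}(x_j)$ is one-dimensional for $j \ne i$ (nonzero by definition of $a_{ij}^{\scr{N}}$), so that $\refl_i(\scr{N})$ is of diagonal type; \Cref{prop_RiQ_is_nich_sys} then supplies the Nichols-system structure. Writing $z_j$ for a generator of $\refl_i(\scr{N})_j$, its $Q$-grading is $\gamma_j = s_i^{\dynk^{\scr{N}}}(\alpha_j) = \alpha_j - a_{ij}^{\dynk^{\scr{N}}}\alpha_i$ for $j \ne i$ by \Cref{cor_diag_well_graded_miV}, while for $j=i$ the element $z_i = y_i \in N_i^*$ sits in degree $-\alpha_i = s_i^{\dynk^{\scr{N}}}(\alpha_i)$, using the $H$-data of $y_i$ recorded in the preceding remark (equations $\brd^{\scr{C}}(y_i \ot y_i) = \dynk_i^{\scr{N}} y_i \ot y_i$ etc.). The preliminary observation then yields $\dynk^{\refl_i(\scr{N})}_j = \indchar{\dynk^{\scr{N}}}(\gamma_j, \gamma_j)$ and $\dynk^{\refl_i(\scr{N})}_{jk} = \bichar{\dynk^{\scr{N}}}(\gamma_j, \gamma_k)$, which coincide with $\refl_i(\dynk^{\scr{N}})_j$ and $\refl_i(\dynk^{\scr{N}})_{jk}$ respectively by \Cref{prop_refl_dynkin_relies_on_si}.

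For the $V$-part, \Cref{prop_funCD_refl_generator} gives $\gencomp{\refl_i(V)} = N_i^{m_i^V} \cdot \gencomp{V}$, which is one-dimensional as a product of two one-dimensional factors. The $i$-finiteness of $\dynk^V$ reduces to two cases: for $j \in \mathbb{I} \setminus \lbrace i \rbrace$ one has $a_{ij}^{\dynk^V} = a_{ij}^{\dynk^{\scr{N}}}$, whence the claim follows from $i$-finiteness of $\scr{N}$ via \Cref{cor_diag_well_graded_miV}; for $j = \theta + 1$, \Cref{cor_diag_well_graded_miV2} together with $m_i^V \in \ndN_0$ (from $V \in \ydcat{Q}{\scr{C}}_{\rat}$) yields $-a_{i(\theta+1)}^{\dynk^V} = m_i^V \in \ndN_0$. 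The entries $\dynk^{\refl_i(V)}_{jk}$ for $j,k \in \mathbb{I}$ coincide with $\refl_i(\dynk^V)_{jk}$ by the first part. For the remaining row and column, the generator $z = x_i^{m_i^V} \cdot v$ (with $v \in \gencomp{V}\setminus\{0\}$) inherits $H$-character $\chi_i^{m_i^V}\chi_v$ and coaction group-like $g_i^{m_i^V}g_v$ (by compatibility of the $Q$-action with the $H$-structure), which in the extended $\ndZ^{\theta+1}$-grading corresponds to degree $m_i^V \alpha_i + \alpha_{\theta+1} = s_i^{\dynk^V}(\alpha_{\theta+1})$ precisely because $-a_{i(\theta+1)}^{\dynk^V} = m_i^V$. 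The same bicharacter computation and \Cref{prop_refl_dynkin_relies_on_si} then give $\dynk^{\refl_i(V)}_{j(\theta+1)} = \refl_i(\dynk^V)_{j(\theta+1)}$ and $\dynk^{\refl_i(V)}_{\theta+1} = \refl_i(\dynk^V)_{\theta+1}$.

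The main technical subtlety will be justifying that $z_j = \sum_k \lambda_k x_i^{m-k} x_j x_i^k$, whose individual summands have $H$-coactions $g_i^{m-k} g_j g_i^k \ot (\ldots)$ that a priori depend on the (non-commutative) order of $g_l$'s in $H$, nonetheless produces a braiding scalar determined purely by $\dynk^{\scr{N}}$. The resolution is the observation that $\chi_{\gamma_k}(g_i^{m-k} g_j g_i^k) = \chi_{\gamma_k}(g_i)^m \chi_{\gamma_k}(g_j)$ is independent of $k$ by multiplicativity of the character, so when computing $\brd^{\scr{C}}(z_j \ot z_k)$ the mixed sum collapses to a single scalar multiple of $z_k \ot z_j$, and the same collapsing mechanism handles the self-braiding and the $V$-case.
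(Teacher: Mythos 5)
Your proof is correct and rests on the same core computation as the paper's: identify the reflected generators as explicit (sums of) monomials of fixed $\ndZ^\theta$-degree and read off the braiding scalars from the diagonal $H$-data, with the same "collapsing" argument taking care of the fact that the summands $x_i^{m-k}x_jx_i^k$ of $(\ad x_i)^{m}(x_j)$ carry different group-like coactions while their action characters and the resulting double-braiding scalars depend only on the multidegree. The one organizational difference is the final matching step: the paper computes each scalar explicitly (e.g.\ $\dynk^{\scr{N}}_j{\dynk^{\scr{N}}_{ij}}^{-a_{ij}}{\dynk^{\scr{N}}_i}^{{a_{ij}}^2}$ for the self-braiding of $\refl_i(\scr{N})_j$) and compares it term by term with the defining formulas for $\refl_i(\dynk)$, whereas you package the scalars as $\indchar{\dynk^{\scr{N}}}$ and $\bichar{\dynk^{\scr{N}}}$ evaluated at the reflected roots $s_i^{\dynk^{\scr{N}}}(\alpha_j)$ and then invoke \Cref{prop_refl_dynkin_relies_on_si} to conclude. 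This is slightly more economical, since \Cref{prop_refl_dynkin_relies_on_si} already encodes the matching; the price is that you must check, as you do, that the dual generator $y_i\in N_i^*$ fits into the bicharacter formalism in degree $-\alpha_i$, which is precisely the content of the displayed braiding relations in the remark preceding the proposition, and that the generator $x_i^{m_i^V}\cdot v$ of $\gencomp{\refl_i(V)}$ sits in extended degree $s_i^{\dynk^V}(\alpha_{\theta+1})=\alpha_{\theta+1}+m_i^V\alpha_i$, which uses \Cref{cor_diag_well_graded_miV2} exactly as the paper does.
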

\begin{proof}
    Let $y_i \in N_i^*$, such that $y_i(x_i)=1$, let $a_{ij}:=a_{ij}^{\scr{N}}=a_{ij}^{\dynk^{\scr{N}}}$ and let $y_j = (\ad \, x_i)^{-a_{ij}} (x_j) $ for all $j \in \mathbb{I}$, $j\ne i$. Then $y_j$ is a basis of $\refl_i(\scr{N})_j$ for all $j \in \mathbb{I}$ and thus $\refl_i(\scr{N})$ is a Nichols system of diagonal type.
    First $\dynk^{\refl_i(\scr{N})}_i = \dynk^{\scr{N}}_i = \refl_i(\dynk^{\scr{N}})_i$ was shown in (\ref{equa_rema_diag_dual1}). By \Cref{lem_diag_ad_explicit} for $j,k \in \mathbb{I} \setminus \lbrace i \rbrace$, $j\ne k$ there exist $\lambda_0,\ldots,\lambda_{-a_{ij}} \in \fK$ and $\mu_0,\ldots,\mu_{-a_{ik}} \in \fK$, such that 
    \begin{align*}
        y_j = \sum_{l=0}^{-a_{ij}} \lambda_l x_i^{-a_{ij}-l} x_j x_i^l \matcom && y_k = \sum_{l=0}^{-a_{ik}} \mu_l x_i^{-a_{ik}-l} x_k x_i^l \matdot
    \end{align*}
    Now for $0\le l \le -a_{ij}$ we have
    \begin{align*}
        &\brd^{\scr{C}}_{\refl_i(Q),\refl_i(Q)} ( x_i^{-a_{ij}-l} x_j x_i^l \ot x_i^{-a_{ij}-l} x_j x_i^l) 
        \\=& \dynk^{\scr{N}}_j {\dynk^{\scr{N}}_{ij}}^{-a_{ij}} {\dynk^{\scr{N}}_i}^{{a_{ij}}^2}  x_i^{-a_{ij}-l} x_j x_i^l \ot x_i^{-a_{ij}-l} x_j x_i^l
        \matdot
    \end{align*}
    This implies $\dynk^{\refl_i(\scr{N})}_j = \dynk^{\scr{N}}_j {\dynk^{\scr{N}}_{ij}}^{-a_{ij}} {\dynk^{\scr{N}}_i}^{{a_{ij}}^2} = \refl_i(\dynk^{\scr{N}})_{j}$. 
    Considering (\ref{equa_rema_diag_dual2}) and~(\ref{equa_rema_diag_dual3}), we obtain
    \begin{align*}
        \brd^{\scr{C}}_{\refl_i(Q),\refl_i(Q)}\brd^{\scr{C}}_{\refl_i(Q),\refl_i(Q)} ( y_i \ot x_i^{-a_{ij}-l} x_j x_i^l) 
        = {\dynk^{\scr{N}}_{ij}}^{-1} {\dynk^{\scr{N}}_{i}}^{2a_{ij}}  y_i \ot x_i^{-a_{ij}-l} x_j x_i^l \matcom
    \end{align*}
    hence $\dynk^{\refl_i(\scr{N})}_{ij} = {\dynk^{\scr{N}}_{ij}}^{-1} {\dynk^{\scr{N}}_{i}}^{2a_{ij}} = \refl_i(\dynk^{\scr{N}})_{ij}$.
    Similarly for $0 \le l' \le -a_{ik}$ we obtain 
    \begin{align*}
        & \brd^{\scr{C}}_{\refl_i(Q),\refl_i(Q)}\brd^{\scr{C}}_{\refl_i(Q),\refl_i(Q)} ( x_i^{-a_{ij}-l} x_j x_i^l \ot x_i^{-a_{ik}-l'} x_k x_i^{l'}) 
        \\=& \dynk^{\scr{N}}_{jk} {\dynk^{\scr{N}}_{ik}}^{-a_{ij}} {\dynk^{\scr{N}}_{ij}}^{-a_{ik}} {\dynk^{\scr{N}}_{i}}^{2a_{ij}a_{ik}} x_i^{-a_{ij}-l} x_j x_i^l \ot x_i^{-a_{ik}-l'} x_k x_i^{l'} \matcom
    \end{align*}
    implying $\dynk^{\refl_i(\scr{N})}_{jk} = \dynk^{\scr{N}}_{jk} {\dynk^{\scr{N}}_{ik}}^{-a_{ij}} {\dynk^{\scr{N}}_{ij}}^{-a_{ik}} {\dynk^{\scr{N}}_{i}}^{2a_{ij}a_{ik}} = \refl_i(\dynk^{\scr{N}})_{jk}$.
    
    Now $a_{ij}^{\dynk^V} = a_{ij}$ and $-a_{i(\theta+1)}^{\dynk^V} = m_i^V$ by \Cref{cor_diag_well_graded_miV2}, hence we obtain that $\dynk^V$ is $i$-finite.
    Let $0\ne v \in \gencomp{V}$, i.e. $\gencomp{V}=\fK v$. Then we have $\gencomp{\refl_i(V)} = N_i^{m_i^V} \cdot \gencomp{V} = \fK x_i^{m_i^V} \cdot v$ by \Cref{prop_funCD_refl_generator}, i.e. $\gencomp{\refl_i(V)}$ is one-dimensional.
    It remains to show that $\refl_i(\dynk^V)_{\theta+1} = \dynk^{\refl_i(V)}_{\theta+1}$, $\refl_i(\dynk^V)_{i(\theta+1)} = \dynk^{\refl_i(V)}_{i(\theta+1)}$ and that $\refl_i(\dynk^V)_{j(\theta+1)} = \dynk^{\refl_i(V)}_{j(\theta+1)}$. The first relation is implied by
    \begin{align*}
        \brd^{\scr{C}}_{V,V} (x_i^{m_i^V} \cdot v \ot x_i^{m_i^V} \cdot v) =\dynk^{V}_{\theta+1} {\dynk^{V}_{i(\theta+1)}}^{m_i^V} {\dynk^{\scr{N}}_{i}}^{{m_i^V}^2} x_i^{m_i^V} \cdot v \ot x_i^{m_i^V} \cdot v \matdot
    \end{align*}
    Now (\ref{equa_rema_diag_dual2}) and (\ref{equa_rema_diag_dual4}) imply
    \begin{align*}
        \brd^{\scr{C}}_{V,\refl_i(Q)}\brd^{\scr{C}}_{\refl_i(Q),V} (y_i \ot x_i^{m_i^V} \cdot v) = {\dynk^{\scr{N}}_{i}}^{-2m_i^V} {\dynk^V_{i(\theta+1)}}^{-1} y_i \ot x_i^{m_i^V} \cdot v
        \matcom 
    \end{align*}
    implying $\dynk^{\refl_i(V)}_{i(\theta+1)} = {\dynk^{\scr{N}}_{i}}^{-2m_i^V} {\dynk^V_{i(\theta+1)}}^{-1}=\refl_i(\dynk^V)_{i(\theta+1)}$. Finally
    \begin{align*}
        &\brd^{\scr{C}}_{V,\refl_i(Q)}\brd^{\scr{C}}_{\refl_i(Q),V} (x_i^{-a_{ij}-l} x_j x_i^l \ot x_i^{m_i^V} \cdot v) \\
        =& {\dynk^{\scr{N}}_{i}}^{-2m_i^V a_{ij}} {\dynk^{\scr{N}}_{ij}}^{m_i^V} {\dynk^V_{i(\theta+1)}}^{-a_{ij}} {\dynk^V_{j(\theta+1)}} x_i^{-a_{ij}-l} x_j x_i^l \ot x_i^{m_i^V} \cdot v
        \matcom
    \end{align*}
    implying $\dynk^{\refl_i(V)}_{j(\theta+1)} = {\dynk^{\scr{N}}_{i}}^{-2m_i^V a_{ij}} {\dynk^{\scr{N}}_{ij}}^{m_i^V} {\dynk^V_{i(\theta+1)}}^{-a_{ij}} {\dynk^V_{j(\theta+1)}}=\refl_i(\dynk^V)_{j(\theta+1)}$. 
\end{proof}

\begin{cor} \label{cor_dynk_refl_equals_refl}
    Let $k \in \ndN$, $i=(i_1,\ldots,i_k) \in \mathbb{I}^k$.
    The following are equivalent
    \begin{enumerate}
        \item $\scr{N}$ admits the reflection sequence $i$.
        \item $\scr{N}$ admits the reflection sequence $(i_1,\ldots,i_{k-1})$ and the following hold
        \begin{enumerate}
            \item $m_{i}^{\scr{N}} = m_{i}^{\dynk^{\scr{N}}}$.
            \item $a_{ij}^{\scr{N}} = a_{i j}^{\dynk^{\scr{N}}}$ for all $j \in \mathbb{I}$.
            \item $\refl_{(i_1,\ldots,i_{k-1})}(\dynk^{\scr{N}})$ is $i_k$-finite.
        \end{enumerate}
    \end{enumerate}
\end{cor}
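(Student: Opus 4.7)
The plan is to proceed by induction on $k$, with the base case $k=1$ following directly from \Cref{cor_diag_well_graded_miV}. For $k=1$, by definition $\scr{N}$ admits the reflection sequence $(i_1)$ if and only if $\scr{N}$ is a Nichols system over $i_1$ and $\scr{N}$ is $i_1$-finite. The empty reflection sequence $()$ is always admitted, and $\refl_{()}(\dynk^{\scr{N}})=\dynk^{\scr{N}}$, so this unpacks precisely to (2)(a), (2)(b) (via \Cref{cor_diag_well_graded_miV}(1)$\Leftrightarrow$(2)) together with (2)(c) (via the last assertion of \Cref{cor_diag_well_graded_miV}).

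For the inductive step, assume the claim holds for $k-1$ and let $i'=(i_1,\ldots,i_{k-1})$. By the very definition of admitting a reflection sequence, (1) is equivalent to the conjunction of the following: $\scr{N}$ admits the reflection sequence $i'$, the pre-Nichols system $\refl_{i'}(\scr{N})$ is a Nichols system over $i_k$, and $\refl_{i'}(\scr{N})$ is $i_k$-finite. Assuming $\scr{N}$ admits $i'$, iterated application of \Cref{prop_dynk_refl_equals_refl} along the reflection sequence $i'$ shows that $\refl_{i'}(\scr{N})$ is again a pre-Nichols system of diagonal type and that its Dynkin diagram satisfies
\[
	\dynk^{\refl_{i'}(\scr{N})} = \refl_{i'}(\dynk^{\scr{N}})\matdot
\]
Here we use the induction hypothesis at each intermediate step to guarantee that the successive reflections of $\scr{N}$ are defined, so that \Cref{prop_dynk_refl_equals_refl} can be applied.

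Next I apply \Cref{cor_diag_well_graded_miV} to the pre-Nichols system $\refl_{i'}(\scr{N})$ and the index $i_k$. The equivalence (1)$\Leftrightarrow$(2) of that corollary, combined with the identity of Dynkin diagrams displayed above, translates the condition ``$\refl_{i'}(\scr{N})$ is a Nichols system over $i_k$'' into the pair of equalities
\[
	m_{i_k}^{\refl_{i'}(\scr{N})} = m_{i_k}^{\refl_{i'}(\dynk^{\scr{N}})}\matcom \qquad a_{i_k j}^{\refl_{i'}(\scr{N})} = a_{i_k j}^{\refl_{i'}(\dynk^{\scr{N}})} \text{ for all } j \in \mathbb{I}\matdot
\]
By the notational conventions introduced for iterated reflections, the left-hand sides are $m_i^{\scr{N}}$ and $a_{i,j}^{\scr{N}}$, and the right-hand sides are $m_i^{\dynk^{\scr{N}}}$ and $a_{i,j}^{\dynk^{\scr{N}}}$, so these are exactly (2)(a) and (2)(b). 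Finally, the last sentence of \Cref{cor_diag_well_graded_miV}, applied again to $\refl_{i'}(\scr{N})$, gives that under these conditions $\refl_{i'}(\scr{N})$ is $i_k$-finite if and only if $\refl_{i'}(\dynk^{\scr{N}}) = \dynk^{\refl_{i'}(\scr{N})}$ is $i_k$-finite, which is (2)(c).

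No serious obstacle is expected; the only subtlety is bookkeeping, namely carefully checking that at each stage of the induction we are entitled to invoke \Cref{prop_dynk_refl_equals_refl} (so that the intermediate Dynkin diagrams $\dynk^{\refl_{(i_1,\ldots,i_r)}(\scr{N})}$ really do coincide with $\refl_{(i_1,\ldots,i_r)}(\dynk^{\scr{N}})$) and that the notational identifications $m_i^{\scr{N}}=m_{i_k}^{\refl_{i'}(\scr{N})}$, $a_{i,j}^{\scr{N}}=a_{i_k j}^{\refl_{i'}(\scr{N})}$ match the corresponding identities for the Dynkin diagrams under the same iterated-reflection notation.
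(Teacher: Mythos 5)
Your proof is correct and follows exactly the same route as the paper, whose entire proof is the one-line remark that the claim is implied by \Cref{cor_diag_well_graded_miV} and \Cref{prop_dynk_refl_equals_refl}; you have simply filled in the bookkeeping (unrolling the recursive definition of admitting a reflection sequence, chaining $\dynk^{\refl_{i'}(\scr{N})}=\refl_{i'}(\dynk^{\scr{N}})$ via \Cref{prop_dynk_refl_equals_refl}, and translating via the iterated-reflection notation). The only cosmetic remark is that the appeal to the induction hypothesis in the inductive step is not really needed, since the common hypothesis that $\scr{N}$ admits $(i_1,\ldots,i_{k-1})$ already licenses every intermediate application of \Cref{prop_dynk_refl_equals_refl}.
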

\begin{proof}
     Implied by \Cref{cor_diag_well_graded_miV} and \Cref{prop_dynk_refl_equals_refl}.
\end{proof}

\begin{cor} \label{cor_dynk_refl_equals_refl2}
    Let $V \in \ydcat{Q}{\scr{C}}_{\rat}$ be a homogeneously generated $\Gamma$-graded object, such that $\gencomp{V}$ is one-dimensional. Moreover let $k \in \ndN$, $i=(i_1,\ldots,i_k) \in \mathbb{I}^k$. Assume $\scr{N}$ admits the reflection sequence $i$. The following are equivalent
    \begin{enumerate}
        \item $V$ admits the reflection sequence $i$.
        \item $V$ admits the reflection sequence $(i_1,\ldots,i_{k-1})$ and \begin{align*}m_{i}^V = -a^{\dynk^{V}}_{i(\theta+1)} \matdot \end{align*}
    \end{enumerate}
\end{cor}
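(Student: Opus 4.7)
The plan is to reduce the statement to a direct application of \Cref{cor_diag_well_graded_miV2} after transporting the situation along the iterated reflection $\refl_{(i_1,\ldots,i_{k-1})}$. First I would unfold the definition: by definition, $V$ admits the reflection sequence $i=(i_1,\ldots,i_k)$ if and only if $V$ admits the reflection sequence $(i_1,\ldots,i_{k-1})$ and the object $\refl_{(i_1,\ldots,i_{k-1})}(V) \in \ydcat{\refl_{(i_1,\ldots,i_{k-1})}(Q)}{\scr{C}}_{\rat}$ is $i_k$-well graded. Hence both (1) and (2) already share the hypothesis that $V$ admits $(i_1,\ldots,i_{k-1})$, and the task reduces to comparing the remaining condition.

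Next, assuming $V$ admits $(i_1,\ldots,i_{k-1})$, I would iterate \Cref{prop_dynk_refl_equals_refl} along the reflections $\refl_{i_1},\refl_{i_2},\ldots,\refl_{i_{k-1}}$. At each step this proposition guarantees three things simultaneously: that the reflected Nichols system $\refl_{(i_1,\ldots,i_l)}(\scr{N})$ is again of diagonal type, that the generating component of the reflected object remains one-dimensional, and that its Dynkin diagram equals the Dynkin reflection of the previous one. An immediate induction on $l$ therefore yields
\[
    \dynk^{\refl_{(i_1,\ldots,i_{k-1})}(V)} = \refl_{(i_1,\ldots,i_{k-1})}(\dynk^V).
\]

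Now apply \Cref{cor_diag_well_graded_miV2} to the (diagonal type) Nichols system $\refl_{(i_1,\ldots,i_{k-1})}(\scr{N})$, the index $i_k$, and the object $\refl_{(i_1,\ldots,i_{k-1})}(V)$, whose generating component is still one-dimensional. This gives the equivalence that $\refl_{(i_1,\ldots,i_{k-1})}(V)$ is $i_k$-well graded if and only if
\[
    m_{i_k}^{\refl_{(i_1,\ldots,i_{k-1})}(V)} = -a^{\dynk^{\refl_{(i_1,\ldots,i_{k-1})}(V)}}_{i_k(\theta+1)}.
\]
The left-hand side equals $m_i^V$ by the notational convention for $m^V_i$, and by the identification of Dynkin diagrams above together with the notation for $a^{\dynk^V}_{i(\theta+1)}$, the right-hand side equals $-a^{\dynk^V}_{i(\theta+1)}$. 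Combining these identifications yields the desired equivalence. The only care needed is to make sure the iterative use of \Cref{prop_dynk_refl_equals_refl} is legitimate, i.e. that at each intermediate step the relevant Nichols system is a Nichols system over $i_l$ and is $i_l$-finite; but this is exactly ensured by the hypothesis that $\scr{N}$ admits the reflection sequence $i$. No genuine obstacle is expected: the result is essentially a bookkeeping translation between the two notations via the previously established compatibility of reflections with Dynkin diagrams.
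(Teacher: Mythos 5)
Your proposal is correct and follows exactly the route the paper takes: its proof of this corollary is the one-line citation of \Cref{cor_diag_well_graded_miV2} and \Cref{prop_dynk_refl_equals_refl}, and your argument simply spells out the intended details — unrolling the definition of admitting a reflection sequence, iterating \Cref{prop_dynk_refl_equals_refl} to identify $\dynk^{\refl_{(i_1,\ldots,i_{k-1})}(V)}$ with $\refl_{(i_1,\ldots,i_{k-1})}(\dynk^V)$, and then applying \Cref{cor_diag_well_graded_miV2} to the reflected object. The bookkeeping of the notations $m^V_i$ and $a^{\dynk^V}_{i(\theta+1)}$ is handled correctly, so nothing is missing.
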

\begin{proof}
    Implied by \Cref{cor_diag_well_graded_miV2} and \Cref{prop_dynk_refl_equals_refl}.
\end{proof}

\begin{cor} \label{cor_dynk_refl_equals_refl_roots}
    Let $k \in \ndN$, $i=(i_1,\ldots,i_k) \in \mathbb{I}^k$ and assume $\scr{N}$ admits the reflection sequence $i$. 
    Then $s_i^{\scr{N}} = s_i^{\dynk^{\scr{N}}}$. In particular if $\scr{N}$ admits all reflections we have
    \begin{align*}
        \roots{\dynk^{\scr{N}}} = \roots{\scr{N}} \matcom && \roots{\dynk^{\scr{N}}}= \rootspos{\dynk^{\scr{N}}} \cup - \rootspos{\dynk^{\scr{N}}} \matdot
    \end{align*}
    Moreover if $m_{(i_1,\ldots,i_r)}^{\scr{N}} \in \ndN_0$ for all $1 \le r \le k$, then for $1\le m \le m_i^{\scr{N}}$ we have
    \begin{align*}
        \beta^{\scr{N}}_{i,m} = \beta^{\dynk^{\scr{N}}}_{i,m} \matdot
    \end{align*}
    Let $V \in \ydcat{Q}{\scr{C}}_{\rat}$ be homogeneously generated $\Gamma$-graded, such that $\gencomp{V}$ is one-dimensional and $V$ admits the reflection sequence $i$. Then for $1\le m \le m_i^{V}$ we have
    \begin{align*}
        \beta^{V}_{i,m} := \beta^{\dynk^{\scr{N}}}_{i,(-a^{\dynk^{V}}_{(i_1),\theta+1},\ldots,-a^{\dynk^{V}}_{(i_1,\ldots,i_{k-1}),\theta+1}),m)} \matdot
    \end{align*}
\end{cor}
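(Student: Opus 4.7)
The plan is to establish $s_i^{\scr{N}} = s_i^{\dynk^{\scr{N}}}$ by induction on $k$ and then derive the remaining assertions as straightforward consequences. For $k=0$ both maps equal $\id_{\ndZ^\theta}$, so the base case is trivial. For the inductive step, I would use the factorisation
\[
s_i^{\scr{N}} = s^{\scr{N}}_{(i_1,\ldots,i_{k-1})} \, s^{\scr{N}}_{(i_1,\ldots,i_{k-1}),i_k}
\quad\text{and}\quad
s_i^{\dynk^{\scr{N}}} = s^{\dynk^{\scr{N}}}_{(i_1,\ldots,i_{k-1})} \, s^{\dynk^{\scr{N}}}_{(i_1,\ldots,i_{k-1}),i_k},
\]
so by the inductive hypothesis it suffices to show that the respective ``last-step'' reflections coincide. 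Both are determined on the $\alpha_j$'s by the coefficients $a^{\scr{N}}_{i,j}$ and $a^{\dynk^{\scr{N}}}_{i,j}$, respectively, so the point is to check $a^{\scr{N}}_{i,j} = a^{\dynk^{\scr{N}}}_{i,j}$ for every $j\in\mathbb{I}$. Since $\scr{N}$ admits the reflection sequence $i$, we know $\refl_{(i_1,\ldots,i_{k-1})}(\scr{N})$ is an $i_k$-finite Nichols system over $i_k$, so \Cref{cor_dynk_refl_equals_refl} gives $a^{\refl_{(i_1,\ldots,i_{k-1})}(\scr{N})}_{i_k,j} = a^{\dynk^{\refl_{(i_1,\ldots,i_{k-1})}(\scr{N})}}_{i_k,j}$, and by \Cref{prop_dynk_refl_equals_refl} applied iteratively $\dynk^{\refl_{(i_1,\ldots,i_{k-1})}(\scr{N})} = \refl_{(i_1,\ldots,i_{k-1})}(\dynk^{\scr{N}})$; combining the two identifications yields the desired equality of Cartan coefficients.

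With $s_i^{\scr{N}} = s_i^{\dynk^{\scr{N}}}$ in hand, the root identifications are immediate: evaluating both sides on each $\alpha_j$ shows $\roots{\dynk^{\scr{N}}} = \roots{\scr{N}}$ as subsets of $\ndZ^\theta$, and the decomposition $\roots{\dynk^{\scr{N}}} = \rootspos{\dynk^{\scr{N}}} \cup -\rootspos{\dynk^{\scr{N}}}$ then follows from \Cref{prop_roots_pos_and_neg}.

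For the equalities of the $\beta$'s, I would argue directly from the definitions. Recall
\[
\beta^{\scr{N}}_{i,m} = -\sum_{r=1}^{k} \widetilde{m}_r \, s^{\scr{N}}_{(i_1,\ldots,i_r)}(\alpha_{i_r}),
\]
where for $1\le r\le k-1$ the coefficient $\widetilde{m}_r$ is $m^{\scr{N}}_{(i_1,\ldots,i_r)}$, and $\widetilde{m}_k = m$, with the analogous formula defining $\beta^{\dynk^{\scr{N}}}_{i,m}$. The first part of the proof, applied to each truncated sequence $(i_1,\ldots,i_r)$, gives $s^{\scr{N}}_{(i_1,\ldots,i_r)} = s^{\dynk^{\scr{N}}}_{(i_1,\ldots,i_r)}$, and \Cref{cor_dynk_refl_equals_refl} gives $m^{\scr{N}}_{(i_1,\ldots,i_r)} = m^{\dynk^{\scr{N}}}_{(i_1,\ldots,i_r)}$ for all $1\le r\le k$ (using that $\scr{N}$ admits the corresponding reflection sequences); matching summands yields $\beta^{\scr{N}}_{i,m} = \beta^{\dynk^{\scr{N}}}_{i,m}$. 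The statement for $V$ is handled identically, except that the coefficients $m^V_{(i_1,\ldots,i_r)}$ are replaced via \Cref{cor_dynk_refl_equals_refl2} by $-a^{\dynk^{V}}_{(i_1,\ldots,i_r),\theta+1}$, which is precisely the substitution encoded in the claimed formula.

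The only nontrivial step is the induction for $s_i^{\scr{N}} = s_i^{\dynk^{\scr{N}}}$, and within that the bookkeeping to align the notation $a^{\scr{N}}_{i,j}$ (defined via the iterated Nichols system) with $a^{\dynk^{\scr{N}}}_{i,j}$ (defined via the iterated Dynkin diagram) through \Cref{prop_dynk_refl_equals_refl}. Once this notational synchronisation is written out cleanly, everything else is direct substitution.
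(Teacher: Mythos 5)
Your proposal is correct and follows essentially the same route as the paper, which simply cites \Cref{cor_dynk_refl_equals_refl}, \Cref{prop_roots_pos_and_neg} and \Cref{cor_dynk_refl_equals_refl2}: the equality $a^{\scr{N}}_{i,j}=a^{\dynk^{\scr{N}}}_{i,j}$ from \Cref{cor_dynk_refl_equals_refl} gives $s_i^{\scr{N}}=s_i^{\dynk^{\scr{N}}}$ inductively, \Cref{prop_roots_pos_and_neg} gives the positive/negative decomposition, and \Cref{cor_dynk_refl_equals_refl2} handles the substitution $m^V_{(i_1,\ldots,i_r)}=-a^{\dynk^V}_{(i_1,\ldots,i_r),\theta+1}$. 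Your explicit invocation of \Cref{prop_dynk_refl_equals_refl} to identify $\dynk^{\refl_{(i_1,\ldots,i_{k-1})}(\scr{N})}$ with $\refl_{(i_1,\ldots,i_{k-1})}(\dynk^{\scr{N}})$ just makes the notational bookkeeping explicit that the paper leaves implicit.
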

\begin{proof}
    Implied by \Cref{cor_dynk_refl_equals_refl}, \Cref{prop_roots_pos_and_neg} and \Cref{cor_dynk_refl_equals_refl2}.
\end{proof}

\begin{rema}
    Assume $\scr{N}$ admits all reflections. \Cref{cor_dynk_refl_equals_refl_roots} and \Cref{prop_support_is_spanned_by_beta_i_sys} imply that we can calculate the edges of $\Sup{Q}$ from $\dynk^{\scr{N}}$.
    Similarly if $V \in \ydcat{Q}{\scr{C}}_{\rat}$ is homogeneously generated $\Gamma$-graded, such that $\gencomp{V}$ is one-dimensional and $V$ admits all reflections, then we can calculate $\bouind{V}$ from $\dynk^V$.
\end{rema}


We will now discuss when $\ydind{Q}{U} \in \ydcat{Q}{\scr{C}}$ is irreducible, where $U \in \comodcat{Q}{\scr{C}}$ is a one-dimensional object.

\begin{lem} \label{lem_ydmod_polynomial_identity}
    Let $k \in \ndN_0$, $i=(i_1,\ldots,i_k) \in \mathbb{I}^k$ and let $U \in \comodcat{Q}{\scr{C}}$ be a one-dimensional object. 
    Assume that $\ydind{Q}{U}$ admits the reflection sequence~$i$. 
    Finally assume $m_{(i_1,\ldots,i_r)}^{\dynk^{\scr{N}}} \in \ndN_0$ for all $1\le r \le k$.
    Then 
    for $j \in \mathbb{I}$ we have
    \begin{align*}
        \refl_i(\dynk^{\ydind{Q}{U}})_{j(\theta+1)} = \refl_i(\extdynk{\dynk^{\scr{N}}})_{j (\theta+1)} 
        ( \dynk^{\ydind{Q}{U}}_{1(\theta+1)}, \ldots, \dynk^{\ydind{Q}{U}}_{\theta(\theta+1)}) \matdot
    \end{align*}
\end{lem}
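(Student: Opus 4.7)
The plan is to proceed by induction on $k$. The base case $k = 0$ is immediate: by the definition of $\extdynk{\dynk^{\scr{N}}}$, we have $\extdynk{\dynk^{\scr{N}}}_{j(\theta+1)} = t_j$, and the evaluation at $t_l = \dynk^{\ydind{Q}{U}}_{l(\theta+1)}$ simply returns $\dynk^{\ydind{Q}{U}}_{j(\theta+1)}$, as required.

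For the inductive step, write $i' = (i_1, \ldots, i_{k-1})$ and assume the statement holds for $i'$. The key idea is that both sides of the desired identity are produced by exactly the same reflection formula on Dynkin diagrams, applied to matching data. Specifically, I would first apply \Cref{rema_cor_diag_well_graded_miV2} to the object $\refl_{i'}(\ydind{Q}{U})$ viewed as a homogeneously $i_k$-well graded $\ydmod{}$-module over the Nichols system $\refl_{i'}(\scr{N})$; by \Cref{prop_dynk_refl_equals_refl} the underlying diagram of the Nichols system is $\refl_{i'}(\dynk^{\scr{N}})$. This gives
\begin{align*}
\refl_i(\dynk^{\ydind{Q}{U}})_{j(\theta+1)}
&= \refl_{i'}(\dynk^{\ydind{Q}{U}})_{j(\theta+1)} \cdot \refl_{i'}(\dynk^{\ydind{Q}{U}})_{i_k(\theta+1)}^{-a^{\dynk^{\scr{N}}}_{i,j}} \\
&\quad \cdot \refl_{i'}(\dynk^{\scr{N}})_{i_k j}^{\,m_{i_k}^{\refl_{i'}(\ydind{Q}{U})}} \cdot \refl_{i'}(\dynk^{\scr{N}})_{i_k}^{-2 a^{\dynk^{\scr{N}}}_{i,j}\, m_{i_k}^{\refl_{i'}(\ydind{Q}{U})}}.
\end{align*}
In parallel, I would expand $\refl_i(\extdynk{\dynk^{\scr{N}}})_{j(\theta+1)} = \refl_{i_k}(\refl_{i'}(\extdynk{\dynk^{\scr{N}}}))_{j(\theta+1)}$ using the definition of $\refl_{i_k}$ on $\dynkcatfld{\fK(t_1,\ldots,t_\theta)}{\theta+1}$, together with the identifications $a^{\refl_{i'}(\extdynk{\dynk^{\scr{N}}})}_{i_k j} = a^{\refl_{i'}(\dynk^{\scr{N}})}_{i_k j} = a^{\dynk^{\scr{N}}}_{i,j}$ (because $\extdynk{\dynk^{\scr{N}}}$ extends $\dynk^{\scr{N}}$ on the indices in $\mathbb{I}$) and $-a^{\refl_{i'}(\extdynk{\dynk^{\scr{N}}})}_{i_k (\theta+1)} = m_{i_k}^{\refl_{i'}(\dynk^{\scr{N}})} = m_i^{\dynk^{\scr{N}}}$ from \Cref{lem_ext_dynk}(1).

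The two expressions are then structurally identical modulo evaluating the $t_l$'s. To match them I would use three inputs: (a) the induction hypothesis applied to the indices $j$ and $i_k$, which identifies $\refl_{i'}(\dynk^{\ydind{Q}{U}})_{l(\theta+1)}$ with the evaluation of $\refl_{i'}(\extdynk{\dynk^{\scr{N}}})_{l(\theta+1)}$ for $l \in \lbrace j, i_k \rbrace$; (b) the fact that $\refl_{i'}(\extdynk{\dynk^{\scr{N}}})_{i_k j}$ and $\refl_{i'}(\extdynk{\dynk^{\scr{N}}})_{i_k}$ are independent of the $t_l$'s and coincide with $\refl_{i'}(\dynk^{\scr{N}})_{i_k j}$ and $\refl_{i'}(\dynk^{\scr{N}})_{i_k}$; and (c) the exponent identification $m_{i_k}^{\refl_{i'}(\ydind{Q}{U})} = m_i^{\ydind{Q}{U}} = m_i^{\scr{N}} = m_i^{\dynk^{\scr{N}}}$, which follows from \Cref{lem_ydind_mi} combined with \Cref{cor_diag_well_graded_miV}(2)(a).

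The main obstacle is purely bookkeeping: verifying that the $a$-values and $m$-values produced by the two reflection recursions---one numerical, one in $\fK(t_1, \ldots, t_\theta)$---agree at every step, so that the inductive hypothesis applied to finitely many entries suffices. This is guaranteed by the hypothesis that $m_{(i_1,\ldots,i_r)}^{\dynk^{\scr{N}}} \in \ndN_0$ for all $1 \le r \le k$, which together with \Cref{cor_dynk_refl_equals_refl} ensures the reflection sequence $i$ is meaningful for $\extdynk{\dynk^{\scr{N}}}$ as well and that no numerical invariant drifts between the two recursions.
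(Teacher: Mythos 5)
Your proposal is correct and follows essentially the same route as the paper: induction on $k$, with the inductive step carried out by expanding the reflection formula on both the extended diagram (via \Cref{lem_ext_dynk}) and the module diagram (via \Cref{rema_cor_diag_well_graded_miV2}), and matching them using the induction hypothesis together with $m_i^{\ydind{Q}{U}} = m_i^{\scr{N}} = m_i^{\dynk^{\scr{N}}}$ from \Cref{lem_ydind_mi} and \Cref{cor_diag_well_graded_miV}. The only difference is presentational: the paper rewrites the right-hand side until it becomes the left-hand side, while you expand both sides and compare, which amounts to the same computation.
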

\begin{proof}
    We prove the claim by induction on $k$.
    Since
    \begin{align*}
        \extdynk{\dynk^{\scr{N}}}_{j (\theta+1)} ( \dynk^{\ydind{Q}{U}}_{1(\theta+1)}, \ldots, \dynk^{\ydind{Q}{U}}_{\theta(\theta+1)}) = (t_j)(\dynk^{\ydind{Q}{U}}_{1(\theta+1)}, \ldots, \dynk^{\ydind{Q}{U}}_{\theta(\theta+1)}) = \dynk^{\ydind{Q}{U}}_{j(\theta+1)} \matcom
    \end{align*}
    the claim holds for $k=0$. Now assume $k \ge 1$ and that the claim holds for $k-1$. Then by definition, \Cref{lem_ext_dynk} and induction hypothesis we have
    \begin{align*}
        &\refl_i(\extdynk{\dynk^{\scr{N}}})_{j (\theta+1)} ( \dynk^{\ydind{Q}{U}}_{1(\theta+1)}, \ldots, \dynk^{\ydind{Q}{U}}_{\theta(\theta+1)}) 
        \\ =&  {\dynk^{\scr{N}}_{i,j}}^{m_{i}^{\dynk^{\scr{N}}}} {\dynk^{\scr{N}}_{i}}^{-2 a_{i, j}^{\dynk^{\scr{N}}} m_{i}^{\dynk^{\scr{N}}}}
        \\& \left( \refl_{(i_1,\ldots,i_{k-1})}(\extdynk{\dynk^{\scr{N}}})_{j (\theta+1)} \refl_{(i_1,\ldots,i_{k-1})}(\extdynk{\dynk^{\scr{N}}})_{i_k (\theta+1)}^{-a_{i, j}^{\dynk}} \right)
        (\dynk^{\ydind{Q}{U}}_{1(\theta+1)}, \ldots, \dynk^{\ydind{Q}{U}}_{\theta(\theta+1)})
        \\ = & {\dynk^{\scr{N}}_{i,j}}^{m_{i}^{\dynk^{\scr{N}}}} {\dynk^{\scr{N}}_{i}}^{-2 a_{i, j}^{\dynk^{\scr{N}}} m_{i}^{\dynk^{\scr{N}}}} \refl_{(i_1,\ldots,i_{k-1})}(\dynk^{\ydind{Q}{U}})_{j (\theta+1)} {\dynk^{\ydind{Q}{U}}_{i(\theta+1)}}^{-a_{i, j}^{\dynk} }
        \matdot
    \end{align*}
    By \Cref{lem_ydind_mi} and \Cref{cor_diag_well_graded_miV} we have 
    \begin{align*}
        m_i^{\ydind{Q}{U}} = m_i^{\scr{N}} = m_i^{\dynk^{\scr{N}}}
    \end{align*} 
    and thus by \Cref{rema_cor_diag_well_graded_miV2}
    \begin{align*}
        & {\dynk^{\scr{N}}_{i,j}}^{m_{i}^{\dynk^{\scr{N}}}} {\dynk^{\scr{N}}_{i}}^{-2 a_{i, j}^{\dynk^{\scr{N}}} m_{i}^{\dynk^{\scr{N}}}} \refl_{(i_1,\ldots,i_{k-1})}(\dynk^{\ydind{Q}{U}})_{j (\theta+1)} {\dynk^{\ydind{Q}{U}}_{i(\theta+1)}}^{-a_{i, j}^{\dynk} }
        \\=&\refl_i(\dynk^{\ydind{Q}{U}})_{j(\theta+1)} \matdot
    \end{align*}
    This finishes the induction.
\end{proof}

\begin{rema}  \label{rema_fin_dim_ydind_dynk_admits_refl}
    Assume that $Q$ is finite-dimensional.
    Let $k \in \ndN_0$, $i=(i_1,\ldots,i_k) \in \mathbb{I}^k$ and assume $\scr{N}$ admits the reflections sequence~$i$. 
    Then by \Cref{cor_diag_well_graded_miV} we obtain $m^{\dynk^{\scr{N}}}_{i} = m^{\scr{N}}_i \in \ndN_0$, since $Q$ is finite-dimensional.
    Hence by \Cref{lem_ext_dynk} we obtain that $\extdynk{\dynk^{\scr{N}}}$ admits the reflection sequence $i$.
    
    Let $U \in \comodcat{Q}{\scr{C}}$ be a one-dimensional object. 
    By induction on $k$ we show that $\dynk^{\ydind{Q}{U}}$ admits the reflection sequence $i$:
    Let $k\ge 1$ and assume $\dynk^{\ydind{Q}{U}}$ admits the reflection sequence $(i_1,\ldots,i_{k-1})$. As $m^{\dynk^{\scr{N}}}_{i} \in \ndN_0$ we have that  $(m)_{\dynk^{\scr{N}}_{i}} = 0$ for some $m \in \ndN$. This implies that for all $1 \le j \le \theta+1$, $j \ne i_k$ we have
    \begin{align*}
        (m)_{\dynk^{\scr{N}}_{i}} ({\dynk^{\scr{N}}_{i}}^{m-1} \dynk^{\ydind{Q}{U}}_{i, j} - 1) = 0 \matcom
    \end{align*}
    hence $\refl_{(i_1,\ldots,i_{k-1})}(\dynk^{\ydind{Q}{U}})$ is $i_k$-finite.
    
    In particular, $\refl_i(\dynk^{\ydind{Q}{U}})_{j(\theta+1)}$ is defined for all $j \in \mathbb{I}$.
\end{rema}

\begin{thm} \label{prop_ydind_refl_admit_equiv}
    Let $k \in \ndN_0$, $i=(i_1,\ldots,i_k) \in \mathbb{I}^k$, assume $\scr{N}$ admits the reflection sequence $i$ and that $Q$ is finite-dimensional.
    Let $U \in \comodcat{Q}{\scr{C}}$ be a one-dimensional object and $j \in \mathbb{I}$. The following are equivalent
    \begin{enumerate}
        \item $\ydind{Q}{U}$ admits the reflection sequence $(i_1,\ldots,i_k,j)$.
        \item $\ydind{Q}{U}$ admits the reflection sequence $i$ and 
        \begin{align*}
            \shapodiag{\dynk^{\scr{N}}}_{s_i^{\dynk^{\scr{N}}}(\alpha_j)} (\dynk^{\ydind{Q}{U}}_{1(\theta+1)}, \ldots, \dynk^{\ydind{Q}{U}}_{\theta(\theta+1)}) \ne 0 \matdot
        \end{align*}
        \item $\ydind{Q}{U}$ admits the reflection sequence $i$ and 
        \begin{align*}
            \shapodiag{\dynk^{\scr{N}}}_{-s_i^{\dynk^{\scr{N}}}(\alpha_j)} (\dynk^{\ydind{Q}{U}}_{1(\theta+1)}, \ldots, \dynk^{\ydind{Q}{U}}_{\theta(\theta+1)}) \ne 0 \matdot
        \end{align*}
    \end{enumerate}
\end{thm}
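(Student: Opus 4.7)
The plan is to deduce the equivalences from \Cref{prop_shapodiag_root_equivalence} after translating condition (1) into the language of Dynkin diagrams. For the equivalence of (2) and (3), I would apply \Cref{lem_shapodiag_roots_negative_root} to the root $\gamma = s_i^{\dynk^{\scr{N}}}(\alpha_j)$; the required $m_\gamma \in \ndN_0$ holds because $m^{\refl_i(\dynk^{\scr{N}})}_j \in \ndN_0$ (by finite-dimensionality of $\refl_i(Q)$), and this number coincides with $m_\gamma$ by \Cref{prop_refl_dynkin_relies_on_si}. The proof of \Cref{lem_shapodiag_roots_negative_root} is purely algebraic and does not actually require $\gamma \in \rootspos{\dynk^{\scr{N}}}$; only $m_\gamma \in \ndN_0$ is used.

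For the equivalence of (1) and (2), I would first apply \Cref{cor_dynk_refl_equals_refl2} to $V = \ydind{Q}{U}$ with the sequence $(i_1,\ldots,i_k,j)$, reducing (1) to $\ydind{Q}{U}$ admitting $i$ together with the single identity $m_{(i_1,\ldots,i_k,j)}^{\ydind{Q}{U}} = -a^{\refl_i(\dynk^{\ydind{Q}{U}})}_{j(\theta+1)}$. Using \Cref{lem_ydind_mi} and \Cref{cor_diag_well_graded_miV}, the left-hand side becomes $m^{\refl_i(\dynk^{\scr{N}})}_j$. For the right-hand side, observe that $\refl_i(\dynk^{\ydind{Q}{U}})_j = \refl_i(\dynk^{\scr{N}})_j$ for $j \in \mathbb{I}$ (the reflection rule for indices in $\mathbb{I}$ depends only on entries that agree between $\dynk^{\scr{N}}$ and $\dynk^{\ydind{Q}{U}}$), while by \Cref{lem_ydmod_polynomial_identity},
\[
    \refl_i(\dynk^{\ydind{Q}{U}})_{j(\theta+1)} = \refl_i(\extdynk{\dynk^{\scr{N}}})_{j(\theta+1)}\left(\dynk^{\ydind{Q}{U}}_{1(\theta+1)},\ldots,\dynk^{\ydind{Q}{U}}_{\theta(\theta+1)}\right)\matdot
\]
Unpacking the definition of $-a^{\refl_i(\dynk^{\ydind{Q}{U}})}_{j(\theta+1)}$, the factor $(m+1)_{\refl_i(\dynk^{\scr{N}})_j}$ vanishes exactly at $m = m^{\refl_i(\dynk^{\scr{N}})}_j$, so the desired equality holds if and only if $\refl_i(\dynk^{\scr{N}})_j^{m} \refl_i(\extdynk{\dynk^{\scr{N}}})_{j(\theta+1)}(\cdot) \ne 1$ for all $0 \le m \le m^{\refl_i(\dynk^{\scr{N}})}_j - 1$. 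This is precisely condition (1) of \Cref{prop_shapodiag_root_equivalence}, whose remaining hypotheses ($\extdynk{\dynk^{\scr{N}}}$ admits $i$, and $m^{\refl_i(\dynk^{\scr{N}})}_j \in \ndN_0$) are supplied by \Cref{rema_fin_dim_ydind_dynk_admits_refl} and the finite-dimensionality of $Q$. Invoking that proposition delivers the equivalence with condition (2).

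The main obstacle is to verify carefully that $\refl_i(\dynk^{\ydind{Q}{U}})_j = \refl_i(\dynk^{\scr{N}})_j$ as an actual identity for all $j \in \mathbb{I}$ — this is done by a straightforward induction on $k$ inspecting the explicit formulas for $\refl_i(\cdot)$ — and to confirm that the positivity hypothesis $s_i^{\dynk^{\scr{N}}}(\alpha_j) \in \ndN_0^\theta \cup -\ndN_0^\theta$ required by \Cref{prop_shapodiag_root_equivalence} is not actually needed for the equivalence of its conditions (1) and (2), by inspecting its proof (which uses the sign hypothesis only to pass to (3) via \Cref{lem_shapodiag_roots_negative_root}). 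Once these two bookkeeping points are in place, the theorem follows by a direct substitution into \Cref{prop_shapodiag_root_equivalence}.
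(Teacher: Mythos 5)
Your proposal is correct and follows essentially the same route as the paper: reduce (1) to the numerical condition $m_j^{\refl_i(\dynk^{\scr{N}})} = -a^{\refl_i(\dynk^{\ydind{Q}{U}})}_{j(\theta+1)}$ via \Cref{cor_dynk_refl_equals_refl}, \Cref{lem_ydind_mi} and \Cref{cor_dynk_refl_equals_refl2}, then conclude with \Cref{lem_ydmod_polynomial_identity} and \Cref{prop_shapodiag_root_equivalence}. The only (harmless) divergence is that the paper discharges the sign hypothesis of \Cref{prop_shapodiag_root_equivalence} by citing $s_i^{\dynk^{\scr{N}}}(\alpha_j) \in \ndN_0^\theta \cup -\ndN_0^\theta$ from \Cref{cor_dynk_refl_equals_refl_roots}, whereas you argue that hypothesis is not actually needed for the relevant implications.
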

\begin{proof}
First consider that $\refl_i(\extdynk{\dynk^{\scr{N}}})$ and $\refl_i(\dynk^{\ydind{Q}{U}})_{j(\theta+1)}$ are defined for all $j \in \mathbb{I}$, as discussed in \Cref{rema_fin_dim_ydind_dynk_admits_refl}.
Assume $\ydind{Q}{U}$ admits the reflection sequence $i$ and let $j \in \mathbb{I}$. 
Combining \Cref{cor_dynk_refl_equals_refl}, \Cref{lem_ydind_mi} and \Cref{cor_dynk_refl_equals_refl2} we obtain that $\ydind{Q}{U}$ admits the reflection sequence $(i_1,\ldots,i_k,j)$ if and only if $m_j^{\refl_i(\dynk^{\scr{N}})} = -a^{\refl_i(\dynk^{\ydind{Q}{U}})}_{j(\theta+1)}$.
By definition this is equivalent to
\begin{align*}
     \refl_i(\dynk^{\scr{N}})_{j}^{m} \refl_i(\dynk^{\ydind{Q}{U}})_{j(\theta+1)} - 1 \ne 0
\end{align*}
for all $0 \le m \le m_j^{\refl_i(\dynk^{\scr{N}})}-1$. Since $s_i^{\dynk^{\scr{N}}}(\alpha_j) \in \ndN_0^\theta \cup - \ndN_0^\theta$ by \Cref{cor_dynk_refl_equals_refl_roots}, using \Cref{lem_ydmod_polynomial_identity} and \Cref{prop_shapodiag_root_equivalence} completes the proof.
\end{proof}

\begin{thm} \label{cor_ydind_refl_admit_equiv}
    Assume $\scr{N}$ admits all reflections and that $Q$ is finite-dimensional.
    Let $U \in \comodcat{Q}{\scr{C}}$ be a one-dimensional object. The following are equivalent
    \begin{enumerate}
        \item $\ydind{Q}{U}$ is irreducible in $\ydcat{Q}{\scr{C}}$.
        \item $\shapodiag{\dynk^{\scr{N}}} (\dynk^{\ydind{Q}{U}}_{1(\theta+1)}, \ldots, \dynk^{\ydind{Q}{U}}_{\theta(\theta+1)}) \ne 0$.
    \end{enumerate}
\end{thm}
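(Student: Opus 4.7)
The proof will be a chain of equivalences relating (1) and (2) via the reflection admissibility of $\ydind{Q}{U}$.

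First, since $\scr{N}$ admits all reflections and $Q$ is finite-dimensional, \Cref{cor_ydind_irred_all_refl} reduces (1) to the statement that $\ydind{Q}{U}$ admits all reflections. So it suffices to equate this admissibility condition with (2).

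Next, I would show by induction on the length $k$ of reflection sequences, using \Cref{prop_ydind_refl_admit_equiv}, that $\ydind{Q}{U}$ admits all reflections if and only if
\begin{align*}
\shapodiag{\dynk^{\scr{N}}}_{s_i^{\dynk^{\scr{N}}}(\alpha_j)}(r) \ne 0 \qquad \text{for all } k\in\ndN_0,\ i\in\mathbb{I}^k,\ j\in\mathbb{I},
\end{align*}
where $r := (\dynk^{\ydind{Q}{U}}_{1(\theta+1)},\dots,\dynk^{\ydind{Q}{U}}_{\theta(\theta+1)})$. The base case $k=0$ is tautological, while the inductive step is exactly the equivalence (1)$\Leftrightarrow$(2) of \Cref{prop_ydind_refl_admit_equiv}, whose hypothesis that $\scr{N}$ admits the sequence $i$ is guaranteed by the standing assumption that $\scr{N}$ admits all reflections.

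Finally, I would identify the indexed family with the root system. By the definition of $\roots{\dynk^{\scr{N}}}$, the set $\{s_i^{\dynk^{\scr{N}}}(\alpha_j) : k\in\ndN_0, i\in\mathbb{I}^k, j\in\mathbb{I}\}$ equals $\roots{\dynk^{\scr{N}}}$, and by \Cref{cor_dynk_refl_equals_refl_roots} we have $\roots{\dynk^{\scr{N}}} = \rootspos{\dynk^{\scr{N}}} \cup -\rootspos{\dynk^{\scr{N}}}$. Applying \Cref{lem_shapodiag_roots_negative_root} to each pair $\gamma,-\gamma$ reduces the criterion to $\shapodiag{\dynk^{\scr{N}}}_{\gamma}(r) \ne 0$ for every $\gamma \in \rootspos{\dynk^{\scr{N}}}$. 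To invoke the definition of the product polynomial $\shapodiag{\dynk^{\scr{N}}}$, I would check well-definedness: for every positive root $\gamma = s_i^{\dynk^{\scr{N}}}(\alpha_j)$ one has $m_\gamma = m_j^{\refl_i(\dynk^{\scr{N}})} = m_j^{\refl_i(\scr{N})} \in \ndN_0$ because $Q$ and hence $\refl_i(Q)$ are finite-dimensional (using \Cref{cor_dynk_refl_equals_refl}), and $\rootspos{\dynk^{\scr{N}}}$ is finite since it coincides with the positive real root set of a finite semi-Cartan graph as noted in the proof of \Cref{prop_roots_pos_and_neg}. Then $\shapodiag{\dynk^{\scr{N}}}(r)\ne 0$ is equivalent to simultaneous non-vanishing of all factors, concluding the equivalence (1)$\Leftrightarrow$(2).

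The main obstacle is the inductive bookkeeping in the middle step, where one must carefully track that at each inductive stage the prior sequence is already admitted by $\ydind{Q}{U}$ so that \Cref{prop_ydind_refl_admit_equiv} applies as an equivalence rather than just a one-sided implication. A secondary technical point is the verification that $\rootspos{\dynk^{\scr{N}}}$ is finite; one may alternatively appeal directly to \Cref{cor_shapodiag_equivalence} (applied to $\dynk^{\scr{N}}$ with the relation $\refl_i(\dynk^{\ydind{Q}{U}})_{j(\theta+1)} = \refl_i(\extdynk{\dynk^{\scr{N}}})_{j(\theta+1)}(r)$ from \Cref{lem_ydmod_polynomial_identity}) to package the second and third steps into a single application, provided one first confirms that $\extdynk{\dynk^{\scr{N}}}$ admits all reflections (by \Cref{lem_ext_dynk} and finiteness of $Q$).
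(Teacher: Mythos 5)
Your proposal is correct and follows essentially the same route as the paper: reduce (1) to ``$\ydind{Q}{U}$ admits all reflections'' via \Cref{cor_ydind_irred_all_refl}, then apply \Cref{prop_ydind_refl_admit_equiv} inductively over reflection sequences, using \Cref{cor_dynk_refl_equals_refl_roots} to split $\roots{\dynk^{\scr{N}}}$ into $\pm\rootspos{\dynk^{\scr{N}}}$ so that the criterion collapses to the non-vanishing of $\shapodiag{\dynk^{\scr{N}}}$ at the positive roots. Your extra care about the well-definedness of $\shapodiag{\dynk^{\scr{N}}}$ (finiteness of the root set and $m_\gamma\in\ndN_0$) is a detail the paper leaves implicit, not a deviation in method.
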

\begin{proof}
    By \Cref{cor_ydind_irred_all_refl} (1) holds if and only if $\ydind{Q}{U}$ admits all reflections. Since $\roots{\dynk^{\scr{N}}} = \rootspos{\dynk^{\scr{N}}} \cup - \rootspos{\dynk^{\scr{N}}}$ by \Cref{cor_dynk_refl_equals_refl_roots}, the equivalence of (1) and (2) is inductively implied by \Cref{prop_ydind_refl_admit_equiv}.
\end{proof}

    \Cref{cor_ydind_refl_admit_equiv} states that whether $\ydind{Q}{U}$ is irreducible in $\ydcat{Q}{\scr{C}}$ can be determined by calculating some rational functions in $\fK(t_1,\ldots,t_\theta)$ and checking if $(\dynk^{\ydind{Q}{U}}_{1(\theta+1)}, \ldots, \dynk^{\ydind{Q}{U}}_{\theta(\theta+1)})$ is a root of any of those functions. By definition these functions only rely on the roots of the Nichols system.
    The following is an algorithm that calculates these functions and more.

\begin{algo} \label{algo_diag_irred}
    Assume $\scr{N}$ admits all reflections and $m_i^{\scr{N}}\in \ndN_0$ for all $k \in \ndN_0$, $i \in \mathbb{I}^k$.
    Let $S$ be an empty list with entries in $\Aut(\ndZ^\theta)$ and $X$ be an empty list where entries are objects in $\dynkcat{\theta}$. Moreover let $R$ and $B$ be empty lists with entries in $\ndZ_0^\theta$ (actually $B$ has entries in $\ndN_0^\theta$). Finally let $P$ be an empty list with entries in $\fK(t_1,\ldots,t_\theta)$.
    For any list $L$ and $n \in \ndN$ we denote $L[n]$ for the $n$-th entry in $L$ and let $\#L$ be the number of entries in $L$. 
    Now do the following.
    \begin{enumerate}
        \item Add $\id_{\ndZ^\theta}$ to $S$, $\dynk^{\scr{N}}$ to $X$ and $0\in \ndN_0^\theta$ to $B$. Let $n=1$.
        \item While $n \le \#S$ do the following:
        \begin{enumerate}
            \item Let $s=S[n]$, $\dynk = X[n]$ and $\beta=B[n]$.
            \item For all $j \in \mathbb{I}$ do:
            \begin{enumerate}
                \item Define the monomial
                \begin{align*}
                    q' = t_{s(\alpha_j)} \bichar{\dynk^{\scr{N}}} \left( \beta, s(\alpha_j) \right)  \matcom
                \end{align*}
                let $r=q'(1,\ldots,1) \in \fK^\times$ and $q = r^{-1} q'$. (Careful: Here $\dynk^{\scr{N}}$ is used, not $D$.)
                \item For all $1 \le m \le m_j^\dynk$ do:
                \begin{itemize}
                    \item[] If $q - r^{-1} \dynk_j^{1-m} \notin P$ and $q^{-1} - r \dynk_j^{m-1} \notin P$, then add $q - r^{-1} \dynk_j^{1-m}$ to $P$.
                \end{itemize}
                \item Add $s(\alpha_j)$ to $R$, if it is not already contained.
                \item Let $s' = s s^\dynk_j$.
                \item If $s'$ is not contained in $S$, then add $s'$ to $S$, $\refl_j(\dynk)$ to $X$ and $\beta - m_j^\dynk s'(\alpha_j)$ to $B$.
            \end{enumerate}
            \item Increment $n$ by $1$.
        \end{enumerate}
    \end{enumerate}
    If the algorithm terminates, then the following hold:
    \begin{enumerate}[label=(\Alph*)]
        \item $Q$ is finite-dimensional.
        \item $\roots{\scr{N}} = R$.
        \item $B \subset \Sup{Q}$ and $\Sup{Q}$ is contained in the convex hull of $B$.
        \item For any one-dimensional object $U \in \comodcat{Q}{\scr{C}}$ we have that $\ydind{Q}{U}$ is irreducible in $\ydcat{Q}{\scr{C}}$ if and only if for all $p \in P$ we have
        \begin{align*}
            p(\dynk^{\ydind{Q}{U}}_{1(\theta+1)}, \ldots, \dynk^{\ydind{Q}{U}}_{\theta(\theta+1)}) & \ne 0 \matdot
        \end{align*}
    \end{enumerate}
\end{algo}
\begin{proof}
    We show the following:
    \begin{enumerate}[label=(\Roman*)]
        \item For all $1 \le n \le \#S$ there exists $k\in \ndN_0$, $i \in \mathbb{I}^k$ such that $S[n]=s_i^{\dynk^{\scr{N}}}$, $X[n]=\refl_i(\dynk^{\scr{N}})$ and $B[n]=\beta^{\dynk^{\scr{N}}}_i$.
        \item For all $k\in \ndN_0$, $i \in \mathbb{I}^k$ there exists $1 \le n \le \#S$ such that $S[n]=s_i^{\dynk^{\scr{N}}}$, $X[n]=\refl_i(\dynk^{\scr{N}})$ and $B[n]=\beta^{\dynk^{\scr{N}}}_i$.
    \end{enumerate}
    Then (B) and (C) are implied by (II), \Cref{prop_support_is_spanned_by_beta_i_sys} and \Cref{cor_dynk_refl_equals_refl_roots}, (A) is implied by (C) and~(D) is implied by (II),  \Cref{cor_ydind_refl_admit_equiv}, \Cref{prop_shapodiag_root_equivalence} and \Cref{lem_ext_dynk_explicit}.
    
    (I): Assume that $S[n]=s_i^{\dynk^{\scr{N}}}$, $X[n]=\refl_i(\dynk^{\scr{N}})$ and $B[n]=\beta^{\dynk^{\scr{N}}}_i$ for some $1 \le n \le \#S$, $k \in \ndN_0$, $i=(i_1,\ldots,i_k)\in\mathbb{I}^k$ (given for $n=1$, $k=0$ and $i =()$). Then for $j \in \mathbb{I}$ we have 
    \begin{align*}
        s_i^{\dynk^{\scr{N}}} s_j^{\refl_i(\dynk^{\scr{N}})} = s_i^{\dynk^{\scr{N}}} s_{i,j}^{\dynk^{\scr{N}}} = s_{(i_1,\ldots,i_k,j)}^{\dynk^{\scr{N}}}
    \end{align*}
    as well as $\refl_j (\refl_i(\dynk^{\scr{N}}))= \refl_{(i_1,\ldots,i_k,j)}(\dynk^{\scr{N}})$ and 
    \begin{align*}
        \beta_i^{\dynk^{\scr{N}}} - m_j^{\refl_i(\dynk^{\scr{N}})} s_{(i_1,\ldots,i_k,j)}^{\dynk^{\scr{N}}}(\alpha_j) = \beta_{(i_1,\ldots,i_k,j)}^{\dynk^{\scr{N}}} \matdot
    \end{align*}
    Considering (v) we conclude that (I) holds.
    
    (II): We do induction on $k$: For $k=0$ (II) holds via choosing $n=1$. Let $k\in \ndN_0$, $i \in \mathbb{I}^k$ as well as $1 \le n \le \#S$ and assume that $S[n]=s_i^{\dynk^{\scr{N}}}$, $X[n]=\refl_i(\dynk^{\scr{N}})$ and $B[n]=\beta^{\dynk^{\scr{N}}}_i$. Let $j \in \mathbb{I}$.
    We will show that (II) holds for $i'=(i_1,\ldots,i_k,j)$.
    There are two possibilities: Either in (v) $s_i^{\dynk^{\scr{N}}} s_j^{\refl_i(\dynk^{\scr{N}})} = s_{(i_1,\ldots,i_k,j)}^{\dynk^{\scr{N}}}$ was contained in $S$ or not. In the latter case similar to (I) we conclude that (II) holds for $i'$. So assume in (v) $s_{(i_1,\ldots,i_k,j)}^{\dynk^{\scr{N}}}$ was contained in $S$, i.e. that there exists $1 \le n' \le \#S$, such that $s_{(i_1,\ldots,i_k,j)}^{\dynk^{\scr{N}}}=S[n']$. Now by (I) there exists  $l\in \ndN_0$, $a=(a_1,\ldots,a_l) \in \mathbb{I}^l$, such that $S[n']=s_a^{\dynk^{\scr{N}}}$, $X[n']=\refl_a(\dynk^{\scr{N}})$ and $B[n']=\beta^{\dynk^{\scr{N}}}_a$.
    Since $s_{(i_1,\ldots,i_k,j)}^{\dynk^{\scr{N}}}=s_a^{\dynk^{\scr{N}}}$, \Cref{prop_refl_dynkin_relies_on_si} implies $\refl_{(i_1,\ldots,i_k,j)}(\dynk^{\scr{N}}) = \refl_a(\dynk^{\scr{N}})=X[n']$. Finally \Cref{cor_dynk_refl_equals_refl_roots} and \Cref{cor_sup_iterated_reflection2} imply $\beta_{(i_1,\ldots,i_k,j)}^{\dynk^{\scr{N}}}=\beta^{\dynk^{\scr{N}}}_a=B[n']$.
\end{proof}
\section{Examples} \label{sect_nich_sys_diag_examples}

In this last section we will apply \Cref{algo_diag_irred} to two examples of Nichols systems of diagonal type. The examples appear in \cite{MR2379892} and \cite{MR2500361}, respectively.

\begin{exa}
    Assume $\theta=2$, let $\omega \in \fK$ be a third root of $1$ and assume $\scr{N}$
    is a pre-Nichols system that admits all reflections and has Dynkin diagram
    \begin{align*}
    \begin{tikzpicture}
        \node[dynnode, label={\tiny$\omega$}] (1) at (0,0) {};
        \node[dynnode, label={\tiny$-1$}] (2) at (1.5,0) {};
        \path[draw,thick]
        (1) edge node[above]{\tiny$-\omega$} (2);
    \end{tikzpicture}
    \matcom
    \end{align*}
    In \Cref{exa_dynk} we saw that the reflection graph of $\dynk^{\scr{N}}$ is
    \begin{align*}
    \begin{tikzpicture} \tikzset{every loop/.style={}};
        \node (a) at (0,0) {$
            \begin{tikzpicture}
                \node[dynnode, label={\tiny$\omega$}] (1) at (0,0) {};
                \node[dynnode, label={\tiny$-1$}] (2) at (1.5,0) {};
                \path[draw,thick]
                (1) edge node[above]{\tiny$-\omega$} (2);
            \end{tikzpicture}
        $};
        \node (b) at (3.5,0) {$
            \begin{tikzpicture}
                \node[dynnode, label={\tiny$\omega^{-1}$}] (1) at (0,0) {};
                \node[dynnode, label={\tiny$-1$}] (2) at (1.5,0) {};
                \path[draw,thick]
                (1) edge node[above]{\tiny$-\omega^{-1}$} (2);
            \end{tikzpicture}
        $};
        \path[draw,thick] (a) edge node[above]{$2$} (b);
    \end{tikzpicture}
    \matcom
    \end{align*}
    and that for all $k \in \ndN_0$, $i \in \mathbb{I}^k$
    \begin{align*}
        (a^{\refl_{i}(\scr{N})}_{jk})_{j,k \in \mathbb{I}} = \begin{pmatrix}
    2 & -2 \\
    -1 & 2 
    \end{pmatrix} \matcom
        && (m^{\refl_{i} (\scr{N})}_j)_{j\in \mathbb{I}} = \begin{pmatrix}
    2 \\1
    \end{pmatrix}
    \matdot
    \end{align*}
    Then $s_j^{\refl_{i}(\scr{N})} = s_j^{\scr{N}}$ for all $j \in \mathbb{I}$ and we obtain 
    \begin{align*}
        s^{\scr{N}}_{(1212)}=s_1^{\scr{N}} s_2^{\scr{N}}s_1^{\scr{N}} s_2^{\scr{N}} =- \id_{\ndZ^\theta} = s_2^{\scr{N}}s_1^{\scr{N}} s_2^{\scr{N}}s_1^{\scr{N}} = s^{\scr{N}}_{(2121)} \matdot
    \end{align*}
    We do \Cref{algo_diag_irred} step by step:
    \begin{itemize}
        \item Add $s^{\dynk}_{()}=\id_{\ndZ^\theta}$ to $S$, $\dynk^{\scr{N}}$ to $X$ and $\beta_{()}^{\scr{N}}$ to $B$.
        \item Let $s=S[1]=\id_{\ndZ^\theta}$, $\dynk = X[1] = \dynk^{\scr{N}}$ and $\beta=B[1] = 0$.
        \begin{itemize}
            \item For $j=1$ in (i) we obtain $q=t_1$, $r=1$, hence in (ii) we add $t_1 - 1$ and $t_1 - \omega^2$ to $P$.
            \item For $j=1$ in (iii) we add $(1,0)$ to $R$.
            \item For $j=1$ in (v) we add $s_1^{\dynk}$ to $S$, $\refl_1(\dynk)$ to $X$ and $\beta_1^{\dynk} = (2,0)$ to $B$.
            \item For $j=2$ in (i) we obtain $q=t_2$, $r=1$, hence in (ii) we add $t_2 - 1$ to $P$.
            \item For $j=2$ in (iii) we add $(0,1)$ to $R$.
            \item For $j=2$ in (v) we add $s_2^{\dynk}$ to $S$, $\refl_2(\dynk)$ to $X$ and $\beta_2^{\dynk} = (0,1)$ to $B$.
        \end{itemize}
        \item Let $s=S[2]=s_1^{\dynk}$, $\dynk = X[2] = \refl_1(\dynk^{\scr{N}})$ and $\beta=B[2] = \beta_1^{\dynk}$.
        \begin{itemize}
            \item For $j=1$ we obtain $q=t_1^{-1}$, $r=\omega^2$ and $s'=\id_{\ndZ^\theta}$, hence only add $(-1,0)$ to $R$ in (iii).
            \item For $j=2$ in (i) we obtain $q=t_1^2 t_2$, $r=\omega$, hence in (ii) we add $t_1^2 t_2-\omega^2$ to $P$.
            \item For $j=2$ in (iii) we add $(2,1)$ to $R$.
            \item For $j=2$ in (v) we add $s_{(12)}^{\dynk}$ to $S$, $\refl_{(12)}(\dynk)$ to $X$ and $\beta_{(12)}^{\dynk} = (4,1)$ to $B$.
        \end{itemize}
        \item Let $s=S[3]=s_2^{\dynk}$, $\dynk = X[3] = \refl_2(\dynk^{\scr{N}})$ and $\beta=B[3] = \beta_2^{\dynk}$.
        \begin{itemize}
            \item For $j=1$ in (i) we obtain $q=t_1t_2$, $r=-\omega$, hence in (ii) we add $t_1 t_2+\omega^2$ and $t_1t_2+1$ to $P$.
            \item For $j=1$ in (iii) we add $(1,1)$ to $R$.
            \item For $j=1$ in (v) we add $s_{(21)}^{\dynk}$ to $S$, $\refl_{(21)}(\dynk)$ to $X$ and $\beta_{(21)}^{\dynk} = (2,3)$ to $B$.
            \item For $j=2$ we only add $(0,-1)$ to $R$ in (iii).
        \end{itemize}
        
        \item Let $s=S[4]=s_{(12)}^{\dynk}$, $\dynk = X[4] = \refl_{(12)}(\dynk^{\scr{N}})$ and $\beta=B[4] = \beta_{(12)}^{\dynk}$.
        \begin{itemize}
            \item For $j=1$ we do nothing in (ii).
            \item For $j=1$ in (v) we add $s_{(121)}^{\dynk}$ to $S$, $\refl_{(121)}(\dynk)$ to $X$ and $\beta_{(121)}^{\dynk} = (6,3)$ to $B$.
            \item For $j=2$ we only add $(-2,-1)$ to $R$ in (iii).
        \end{itemize}
        \item Let $s=S[5]=s_{(21)}^{\dynk}$, $\dynk = X[5] = \refl_{(21)}(\dynk^{\scr{N}})$ and $\beta=B[5] = \beta_{(21)}^{\dynk}$.
        \begin{itemize}
            \item For $j=1$ we only add $(-1,-1)$ to $R$ in (iii).
            \item For $j=2$ we do nothing in (ii).
            \item For $j=2$ in (v) we add $s_{(212)}^{\dynk}$ to $S$, $\refl_{(212)}(\dynk)$ to $X$ and $\beta_{(212)}^{\dynk} = (4,4)$ to $B$.
        \end{itemize}
        \item Let $s=S[6]=s_{(121)}^{\dynk}$, $\dynk = X[6] = \refl_{(121)}(\dynk^{\scr{N}})$ and $\beta=B[6] = \beta_{(121)}^{\dynk}$.
        \begin{itemize}
            \item For $j=1$ we do nothing.
            \item For $j=2$ we do nothing in (ii).
            \item For $j=2$ in (v) we add $s_{(1212)}^{\dynk}=-\id_{\ndZ^\theta}$ to $S$, $\refl_{(1212)}(\dynk)$ to $X$ and $\beta_{(1212)}^{\dynk} = (6,4)$ to $B$.
        \end{itemize}
        \item Let $s=S[7]=s_{(212)}^{\dynk}$, $\dynk = X[7] = \refl_{(212)}(\dynk^{\scr{N}})$ and $\beta=B[7] = \beta_{(212)}^{\dynk}$.
        \begin{itemize}
            \item For $j=1$ we do nothing.
            \item For $j=2$ we do nothing.
        \end{itemize}
        \item Let $s=S[8]=s_{(1212)}^{\dynk}$, $\dynk = X[8] = \refl_{(1212)}(\dynk^{\scr{N}})$ and $\beta=B[8] = \beta_{(1212)}^{\dynk}$.
        \begin{itemize}
            \item For $j=1$ we do nothing.
            \item For $j=2$ we do nothing.
        \end{itemize}
        \item The algorithm terminates.
    \end{itemize}
    In total we have
    \begin{align*}
        R &= [ ( 1, 0 ), ( 0, 1 ), ( -1, 0 ), ( 2, 1 ), ( 1, 1 ), ( 0, -1 ), ( -2, -1 ), ( -1, -1 ) ] \matcom \\
        B &= [(0,0), (2,0),(0,1), (4,1),(2,3), (6,3),(4,4), (6,4) ] \matcom
        \\ P &= [t_1 - 1, t_1 - \omega^2, t_2 - 1, t_1^2 t_2-\omega^2,t_1 t_2+\omega^2,t_1 t_2+1] \matdot
    \end{align*}
    Hence $Q$ is finite-dimensional, $\Sup{Q}$ is contained in the set drawn in 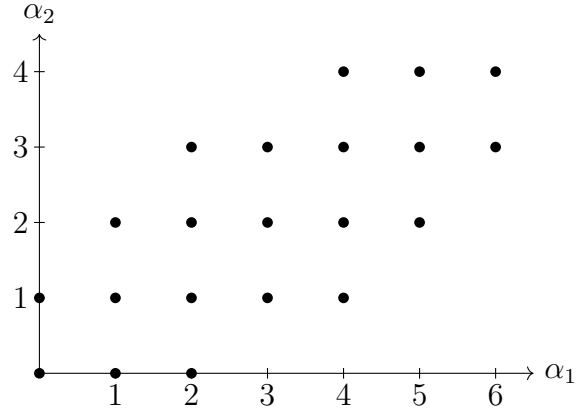
\begin{figure}[t] 
        \begin{tikzpicture}
        \draw[thin,->] (0,0) -- (6.5,0) node[right] {$\alpha_1$};
        \draw[thin,->] (0,0) -- (0,4.5) node[above] {$\alpha_2$};
        \foreach \x [count=\xi starting from 0] in {1,2,3,4}{
            \draw (\x,2pt) -- (\x,-2pt);
            \draw (2pt,\x) -- (-2pt,\x);
                \node[anchor=north] at (\x,0) {$\x$};
                \node[anchor=east] at (0,\x) {$\x$};
        }
        \foreach \x [count=\xi starting from 0] in {5,6}{
            \draw (\x,2pt) -- (\x,-2pt);
            \node[anchor=north] at (\x,0) {$\x$};
        }
        \foreach \point in {(0,0),(1,0),(2,0),(0,1),(1,1),(2,1),(3,1),(4,1), (1,2),(2,2),(3,2),(4,2),(5,2), (2,3),(3,3),(4,3),(5,3), (6,3),(4,4),(5,4), (6,4)}{
            \fill \point circle (2pt);
        }
        \end{tikzpicture}
        \centering
        \caption{A subset of $\ndN_0^2$ in which $\Sup{Q}$ is contained in.}
        \label{figure_supQ_exa1}
    \end{figure} \Cref{figure_supQ_exa1} and for a one-dimensional object $U \in \comodcat{Q}{\scr{C}}$, where $r_1, r_2 \in \fK^\times$ are such that for $u \in U$, $u \ne 0$, $j \in \lbrace 1,2 \rbrace$ we have
    \begin{align*}
        \brd_{U,N_j}^{\scr{C}} \brd_{N_j,U}^{\scr{C}} ( x_j \ot u) = r_j x_j \ot u \matcom
    \end{align*}
    then we obtain that $\ydind{Q}{U}$ is irreducible if and only if
    \begin{align*}
        (r_1-1) (r_1-\omega^2)(r_2-1)(r_1^2 r_2-\omega^2) (r_1 r_2+\omega^2)(r_1 r_2+1) \ne 0 \matdot
    \end{align*} 
\end{exa}

\begin{exa}
    Let $a \in \lbrace 3,6 \rbrace$, assume $\omega \in \fK$ is an $a$-th root of $1$ and assume $\scr{N}$
    is a pre-Nichols system that admits all reflections and has Dynkin diagram
    \begin{align*}
    \begin{tikzpicture}
        \node[dynnode, label={\tiny$\omega$}] (1) at (0,0) {};
        \node[dynnode, label={\tiny$\omega$}] (2) at (1.5,0) {};
        \node[dynnode, label={\tiny$-1$}] (3) at (3,0) {};
        \path[draw,thick]
        (1) edge node[above]{\tiny$\omega^{-1}$} (2)
        (2) edge node[above]{\tiny$\omega^{-2}$} (3);
    \end{tikzpicture}
    \matdot
    \end{align*}
    As seen in \Cref{exa_dynk2} the reflection graph of $\dynk^{\scr{N}}$ is
    \begin{align*}
    \begin{tikzpicture} \tikzset{every loop/.style={}};
        \node (a) at (0,0) {$
            \begin{tikzpicture}
                \node[dynnode, label={\tiny$\omega$}] (1) at (0,0) {};
                \node[dynnode, label={\tiny$\omega$}] (2) at (1.4,0) {};
                \node[dynnode, label={\tiny$-1$}] (3) at (2.8,0) {};
                \path[draw,thick]
                (1) edge node[above]{\tiny$\omega^{-1}$} (2)
                (2) edge node[above]{\tiny$\omega^{-2}$} (3);
            \end{tikzpicture}
        $};
        \node (b) at (4.5,0) {$
            \begin{tikzpicture}
                \node[dynnode, label={\tiny$\omega$}] (1) at (0,0) {};
                \node[dynnode, label={\tiny$-\omega^{-1}$}] (2) at (1.4,0) {};
                \node[dynnode, label={\tiny$-1$}] (3) at (2.8,0) {};
                \path[draw,thick]
                (1) edge node[above]{\tiny$\omega^{-1}$} (2)
                (2) edge node[above]{\tiny$\omega^{2}$} (3);
            \end{tikzpicture}
        $};
        \path[draw,thick] (a) edge node[above]{$3$} (b);
    \end{tikzpicture}
    \matdot
    \end{align*}
    Let $a' \in \lbrace 3,6 \rbrace$, such that $a \ne a'$. For all $k \in \ndN_0$, $i=(i_1,\ldots,i_k) \in \mathbb{I}$ we obtain 
    \begin{align*}
        (a^{\refl_{i}(\dynk)}_{jk})_{j,k \in \mathbb{I}}= 
        \begin{cases} 
            \begin{pmatrix}
                2 & -1 & 0 \\
                -1 & 2 & -2 \\
                0 & -1 & 2
            \end{pmatrix}
            & \text{if $\# \lbrace 1\le l \le k \, | \, i_l = 3\rbrace$ is even,} \\
            \begin{pmatrix}
                2 & -1 & 0 \\
                -2 & 2 & -2 \\
                0 & -1 & 2
            \end{pmatrix} & \text{else.}
        \end{cases}
    \end{align*}
    and $(m^{\refl_{i}(\dynk)}_j)_{j \in \mathbb{I}}=(a-1,a-1,1)$ if $\# \lbrace 1\le l \le k \, | \, i_l = 3\rbrace$ is even and $(m^{\refl_{i}(\dynk)}_j)_{j \in \mathbb{I}}=(a-1,a'-1,1)$ else.
   
    Here \Cref{algo_diag_irred} terminates after $\#S = 96$. We obtain
    \begin{align*}
        R=& [( 1, 0, 0 ), ( 0, 1, 0 ), ( 0, 0, 1 ), ( -1, 0, 0 ), ( 1, 1, 0 ), ( 0, -1, 0 ), ( 0, 2, 1 ), ( 0, 1, 1 ),\\ & ( 0, 0, -1 ), ( -1, -1, 0 ),
  ( 2, 2, 1 ), ( 1, 1, 1 ), ( 0, -2, -1 ), ( 1, 2, 2 ), ( 0, -1, -1 ), \\ & ( -2, -2, -1 ), ( -1, -1, -1 ), ( 1, 2, 1 ), ( 1, 3, 2 ), ( -1, -2, -2 ), ( 2, 3, 2 ), \\ & ( -1, -2, -1 ), ( -1, -3, -2 ), ( 2, 4, 3 ), ( -2, -3, -2 ),
  ( -2, -4, -3 ) ] \matdot
    \end{align*}
    If $a=3$, then we obtain that $B$ and $P$ are given by
    \begin{align*}
        B=&[( 0, 0, 0 ), ( 2, 0, 0 ), ( 0, 2, 0 ), ( 0, 0, 1 ), ( 4, 2, 0 ), ( 2, 0, 1 ), ( 2, 4, 0 ), ( 0, 4, 1 ), 
        \\ & ( 0, 5, 6 ), ( 4, 4, 0 ), ( 6, 4, 1 ), ( 7, 5, 6 ), ( 2, 6, 1 ), ( 0, 9, 6 ), ( 2, 9, 10 ), ( 0, 7, 7 ),
        \\ &   ( 6, 6, 1 ), ( 11, 9, 6 ), ( 9, 9, 10 ), ( 9, 7, 7 ), ( 7, 16, 6 ), ( 2, 15, 10 ), ( 0, 9, 7 ), 
        \\ & ( 7, 14, 15 ),  ( 2, 11, 11 ), ( 11, 16, 6 ), ( 15, 15, 10 ), ( 11, 9, 7 ), ( 9, 14, 15 ), 
        \\ & ( 11, 11, 11 ), ( 9, 22, 10 ),  ( 9, 18, 7 ), ( 7, 25, 15 ), ( 2, 15, 11 ), ( 9, 18, 18 ), 
        \\ & ( 4, 17, 15 ), ( 15, 22, 10 ), ( 11, 18, 7 ),  ( 20, 25, 15 ), ( 15, 15, 11 ), ( 11, 18, 18 ), 
        \\ & ( 15, 17, 15 ), ( 9, 27, 15 ), ( 11, 24, 11 ), ( 9, 29, 18 ), ( 4, 19, 15 ), ( 11, 24, 22 ), 
        \\ & ( 6, 21, 18 ), ( 20, 27, 15 ), ( 15, 24, 11 ), ( 22, 29, 18 ), ( 17, 19, 15 ), ( 15, 24, 22 ),
        \\ & ( 17, 21, 18 ), ( 11, 31, 18 ), ( 15, 30, 15 ), ( 11, 33, 22 ), ( 6, 23, 18 ), ( 15, 30, 26 ), 
        \\ & ( 11, 26, 23 ), ( 22, 31, 18 ), ( 17, 30, 15 ), ( 24, 33, 22 ), ( 19, 23, 18 ), ( 17, 30, 26 ), 
        \\ & ( 17, 26, 23 ),  ( 15, 37, 22 ), ( 17, 34, 18 ), ( 15, 39, 26 ), ( 11, 33, 23 ), ( 15, 32, 27 ), 
        \\ & ( 24, 37, 22 ), ( 19, 34, 18 ), ( 26, 39, 26 ), ( 24, 33, 23 ), ( 19, 32, 27 ), ( 17, 41, 26 ), 
        \\ & ( 17, 39, 23 ), ( 15, 39, 27 ), ( 20, 42, 32 ),  ( 26, 41, 26 ), ( 24, 39, 23 ), ( 26, 39, 27 ), 
        \\ & ( 24, 42, 32 ), ( 19, 43, 27 ), ( 20, 44, 32 ), ( 22, 44, 33 ), ( 26, 43, 27 ), ( 26, 44, 32 ), 
        \\ & ( 24, 44, 33 ), ( 24, 48, 32 ), ( 22, 46, 33 ), ( 26, 48, 32 ), ( 26, 46, 33 ),  ( 24, 48, 33 ), 
        \\ & ( 26, 48, 33 )] 
    \end{align*}
    and
    \begin{align*}
        P=& [t_1-1, t_1- \omega^2, t_2-1, t_2- \omega^2, t_3-1, t_1 t_2- \omega, t_1 t_2-1, t_2^2 t_3- \omega^2, 
        \\ & t_2 t_3- \omega^2, t_2 t_3+1,  t_2 t_3- \omega, t_2 t_3+ \omega^2, t_2 t_3-1, t_1^2 t_2^2 t_3- \omega, t_1 t_2 t_3-1, 
        \\ & t_1 t_2 t_3+ \omega, t_1 t_2 t_3- \omega^2, t_1 t_2 t_3+1, t_1 t_2 t_3- \omega, t_1 t_2^2 t_3^2- \omega^2, t_1 t_2^2 t_3^2- \omega,
        \\ &  t_1 t_2^2 t_3- \omega, t_1 t_2^2 t_3+ \omega^2, t_1 t_2^2 t_3-1, t_1 t_2^2 t_3+ \omega, t_1 t_2^2 t_3- \omega^2, t_1 t_2^3 t_3^2-1,
        \\ &  t_1 t_2^3 t_3^2- \omega^2, t_1^2 t_2^3 t_3^2- \omega, t_1^2 t_2^3 t_3^2-1,  t_1^2 t_2^4 t_3^3-1] \matdot
    \end{align*}
    Moreover if $a=6$, then $B$ and $P$ are given by
    \begin{align*}
        B=&[   ( 0, 0, 0 ), ( 5, 0, 0 ), ( 0, 5, 0 ), ( 0, 0, 1 ), ( 10, 5, 0 ), ( 5, 0, 1 ), ( 5, 10, 0 ), ( 0, 7, 1 ), 
        \\&  ( 0, 2, 3 ), ( 10, 10, 0 ), ( 12, 7, 1 ), ( 7, 2, 3 ), ( 5, 12, 1 ), ( 0, 9, 3 ), ( 5, 12, 13 ), 
        \\& ( 0, 4, 4 ), ( 12, 12, 1 ), ( 14, 9, 3 ), ( 12, 12, 13 ), ( 9, 4, 4 ), ( 7, 16, 3 ), ( 5, 24, 13 ),
        \\& ( 0, 9, 4 ), ( 7, 14, 15 ), ( 5, 14, 14 ), ( 14, 16, 3 ), ( 24, 24, 13 ), ( 14, 9, 4 ), 
        \\& ( 12, 14, 15 ), ( 14, 14, 14 ), ( 12, 31, 13 ),( 9, 18, 4 ), ( 7, 28, 15 ), ( 5, 24, 14 ), 
        \\& ( 9, 18, 18 ),( 10, 29, 24 ), ( 24, 31, 13 ), ( 14, 18, 4 ),( 26, 28, 15 ), ( 24, 24, 14 ), 
        \\& ( 14, 18, 18 ), ( 24, 29, 24 ),( 12, 33, 15 ), ( 14, 33, 14 ), ( 9, 32, 18 ),( 10, 34, 24 ), 
        \\& ( 14, 33, 28 ), ( 12, 33, 27 ), ( 26, 33, 15 ), ( 24, 33, 14 ), ( 28, 32, 18 ), ( 29, 34, 24 ),
        \\& ( 24, 33, 28 ), ( 26, 33, 27 ), ( 14, 37, 18 ), ( 24, 48, 24 ), ( 14, 42, 28 ), ( 12, 38, 27 ),
        \\& ( 24, 48, 38 ), ( 14, 35, 29 ), ( 28, 37, 18 ), ( 29, 48, 24 ), ( 33, 42, 28 ), ( 31, 38, 27 ), 
        \\& ( 29, 48, 38 ), ( 26, 35, 29 ), ( 24, 52, 28 ), ( 26, 52, 27 ), ( 24, 57, 38 ), ( 14, 42, 29 ), 
        \\& ( 24, 50, 39 ), ( 33, 52, 28 ), ( 31, 52, 27 ), ( 38, 57, 38 ), ( 33, 42, 29 ), ( 31, 50, 39 ), 
        \\& ( 29, 62, 38 ), ( 26, 54, 29 ), ( 24, 57, 39 ), ( 26, 54, 41 ), ( 38, 62, 38 ), ( 33, 54, 29 ), 
        \\&( 38, 57, 39 ), ( 33, 54, 41 ), ( 31, 64, 39 ), ( 26, 59, 41 ), ( 28, 56, 42 ), ( 38, 64, 39 ), 
        \\&( 38, 59, 41 ), ( 33, 56, 42 ), ( 33, 66, 41 ), ( 28, 61, 42 ), ( 38, 66, 41 ), ( 38, 61, 42 ),
        \\& ( 33, 66, 42 ), ( 38, 66, 42 ) ]
    \end{align*}
    and
    \begin{align*}
        P=&[ t_1-1, t_1+ \omega^2, t_1- \omega^4, t_1+1, t_1- \omega^2, t_2-1, t_2+ \omega^2, t_2- \omega^4, 
        \\& t_2+1, t_2- \omega^2, t_3-1, t_1 t_2+ \omega^4, t_1 t_2-1, t_1 t_2+ \omega^2, t_1 t_2- \omega^4, 
        \\& t_1 t_2+1, t_2^2 t_3- \omega^2, t_2 t_3- \omega^2, t_2 t_3-1, t_1^2 t_2^2 t_3- \omega^4, t_1 t_2 t_3+1, 
        \\& t_1 t_2 t_3+ \omega^4, t_1 t_2^2 t_3^2- \omega^2, t_1 t_2^2 t_3^2+ \omega^4, t_1 t_2^2 t_3^2-1, t_1 t_2^2 t_3^2+ \omega^2, 
        \\& t_1 t_2^2 t_3^2- \omega^4, t_1 t_2^2 t_3- \omega^4, t_1 t_2^2 t_3- \omega^2, t_1 t_2^3 t_3^2+1, t_1 t_2^3 t_3^2- \omega^2, 
        \\& t_1 t_2^3 t_3^2+ \omega^4, t_1 t_2^3 t_3^2-1, t_1 t_2^3 t_3^2+ \omega^2, t_1^2 t_2^3 t_3^2- \omega^4, t_1^2 t_2^3 t_3^2+1, 
        \\& t_1^2 t_2^3 t_3^2- \omega^2, t_1^2 t_2^3 t_3^2+ \omega^4, t_1^2 t_2^3 t_3^2-1, t_1^2 t_2^4 t_3^3-1 ]
        \matdot
    \end{align*}
    Hence $Q$ is finite-dimensional, $\Sup{Q}$ is contained in the convex hull of $B$ and for a one-dimensional object $U \in \comodcat{Q}{\scr{C}}$, where $r_1, r_2, r_3 \in \fK^\times$ are such that for $u \in U$, $u \ne 0$, $j \in \lbrace 1,2,3 \rbrace$ we have
    \begin{align*}
        \brd_{U,N_j}^{\scr{C}} \brd_{N_j,U}^{\scr{C}} ( x_j \ot u) = r_j x_j \ot u \matcom
    \end{align*}
    then we obtain that $\ydind{Q}{U}$ irreducible if and only if
    \begin{align*}
        \prod_{p \in P} p(r_1,r_2,r_3) \ne 0 \matdot
    \end{align*}
    
\end{exa}

\nocite{MR2840165}
\nocite{MR2490263}
\nocite{MR1684154}
\nocite{MR1938453}
\nocite{MR2462836}
\nocite{MR2596372}

\cleardoublepage
\phantomsection
\addcontentsline{toc}{chapter}{References}

\bibliographystyle{amsalpha}
\bibliography{bibl}

\providecommand{\bysame}{\leavevmode\hbox to3em{\hrulefill}\thinspace}
\providecommand{\MR}{\relax\ifhmode\unskip\space\fi MR }
\providecommand{\MRhref}[2]{%
  \href{http://www.ams.org/mathscinet-getitem?mr=#1}{#2}
}
\providecommand{\href}[2]{#2}
\begin{thebibliography}{AGn03}

\bibitem[AGn03]{MR1994219}
Nicol\'{a}s Andruskiewitsch and Mat\'{\i}as Gra\~{n}a, \emph{From racks to
  pointed {H}opf algebras}, Adv. Math. \textbf{178} (2003), no.~2, 177--243.
  \MR{1994219}

\bibitem[BK02]{MR1938453}
Jonathan Brundan and Alexander Kleshchev, \emph{Cartan determinants and
  {S}hapovalov forms}, Math. Ann. \textbf{324} (2002), no.~3, 431--449.
  \MR{1938453}

\bibitem[CK16]{GBNP}
A.M. Cohen and J.W. Knopper, \emph{{GBNP} - a {GAP} package},
  \url{https://www.mathdox.org/gbnp/}, 2016, Version 1.0.3.

\bibitem[Gro21]{GAP}
The~GAP Group, \emph{{GAP} - groups, algorithms and programming},
  \url{https://www.gap-system.org/}, 2021, Version 4.11.1.

\bibitem[Hec07]{MR2500361}
Istv\'{a}n Heckenberger, \emph{Classification of arithmetic root systems of
  rank 3}, Proceedings of the {XVI}th {L}atin {A}merican {A}lgebra {C}olloquium
  ({S}panish), Bibl. Rev. Mat. Iberoamericana, Rev. Mat. Iberoamericana,
  Madrid, 2007, pp.~227--252. \MR{2500361}

\bibitem[Hec08]{MR2379892}
I.~Heckenberger, \emph{Rank 2 {N}ichols algebras with finite arithmetic root
  system}, Algebr. Represent. Theory \textbf{11} (2008), no.~2, 115--132.
  \MR{2379892}

\bibitem[Hec09]{MR2462836}
Istv\'{a}n Heckenberger, \emph{Classification of arithmetic root systems}, Adv.
  Math. \textbf{220} (2009), no.~1, 59--124. \MR{2462836}

\bibitem[Hec10]{MR2596372}
\bysame, \emph{Lusztig isomorphisms for {D}rinfel'd doubles of bosonizations of
  {N}ichols algebras of diagonal type}, J. Algebra \textbf{323} (2010), no.~8,
  2130--2182. \MR{2596372}

\bibitem[HS20]{HeSch}
Istv\'{a}n Heckenberger and Hans-J\"{u}rgen Schneider, \emph{Hopf algebras and
  root systems}, Mathematical Surveys and Monographs, vol. 247, American
  Mathematical Society, Providence, RI, 2020. \MR{4164719}

\bibitem[HY10]{MR2840165}
Istv\'{a}n Heckenberger and Hiroyuki Yamane, \emph{Drinfel'd doubles and
  {S}hapovalov determinants}, Rev. Un. Mat. Argentina \textbf{51} (2010),
  no.~2, 107--146. \MR{2840165}

\bibitem[KM99]{MR1684154}
Alexandre Khomenko and Volodymyr Mazorchuk, \emph{On the determinant of
  {S}hapovalov form for generalized {V}erma modules}, J. Algebra \textbf{215}
  (1999), no.~1, 318--329. \MR{1684154}

\bibitem[LL08]{MR2490263}
A.~V. Lebedev and D.~A. Le\u{\i}tes, \emph{The {S}hapovalov determinant for
  loop superalgebras}, Teoret. Mat. Fiz. \textbf{156} (2008), no.~3, 378--397.
  \MR{2490263}

\bibitem[Ven12]{FominKirillovAlgebra}
Leandro Vendramin, \emph{Fomin-kirillov algebras},
  \url{https://arxiv.org/abs/1210.5423}, 2012, arXiv:1210.5423 [math.QA].

\bibitem[Ven21]{Zoo}
\bysame, \emph{Nichols algebras on nonabelian group type},
  \url{http://mate.dm.uba.ar/~lvendram/zoo/}, 2021.

\end{thebibliography}

\cfoot{\tiny\hiddenmessage}

\end{document}